\tikzstyle{box} = [rectangle, minimum width=3cm, text centered, text width=3cm, draw=black]
\tikzstyle{arrow} = [thick,->,>=stealth]
\let\oldtocsection=\tocsection
\let\oldtocsubsection=\tocsubsection
\renewcommand{\tocsection}[2]{\hspace{0em}\oldtocsection{#1}{#2}}
\renewcommand{\tocsubsection}[2]{\hspace{1em}\oldtocsubsection{#1}{#2}}
\theoremstyle{definition}
\newtheorem{theorem}{Theorem}[section]
\newtheorem{prop}[theorem]{Proposition}
\newtheorem{lemma}[theorem]{Lemma}
\newtheorem{cor}[theorem]{Corollary}
\newtheorem{notation}[theorem]{Notation}
\newtheorem{ex}[theorem]{Example}
\theoremstyle{remark}
\newtheorem{dfn}[theorem]{Definition}
\newtheorem{remark}[theorem]{Remark}
\numberwithin{equation}{section}
\def\co{\colon\thinspace}
\def\delbar{\bar{\partial}}
\def\ep{\epsilon}
\def\R{\mathbb{R}}
\def\Z{\mathbb{Z}}
\def\N{\mathbb{N}}
\def\C{\mathbb{C}}
\DeclareMathOperator{\Img}{Im}
\begin{document}
\title{Abstract interlevel persistence for Morse-Novikov and Floer theory}
\author{Michael Usher}
\address{Department of Mathematics, University of Georgia, 
Athens, GA 30602}
\email{usher@uga.edu}
\begin{abstract}
    We develop a general algebraic framework involving ``Poincar\'e--Novikov structures'' and ``filtered matched pairs'' to provide an abstract approach to the barcodes associated to the homologies of interlevel sets of $\R$- or $S^1$-valued Morse functions, which can then be applied to Floer-theoretic situations where no readily apparent analogue of an interlevel set is available.   The resulting barcodes satisfy abstract versions of stability and duality theorems, and in the case of Morse or Novikov theory they coincide with the standard barcodes coming from interlevel persistence.  In the case of Hamiltonian Floer theory, the lengths of the bars yield multiple quantities that are reminiscent of the spectral norm of a Hamiltonian diffeomorphism.
\end{abstract}
\maketitle
\tableofcontents

\section{Introduction}

If $f\co X\to \R$ is a suitably tame function on a compact topological space $X$, the \emph{sublevel persistence module} of $f$ (say with coefficients in a field $\kappa$) consists of the data of the homologies of the sublevel sets $H_{*}(X^{\leq t};\kappa)$ where $X^{\leq t}=f^{-1}((-\infty,t])$, together with the inclusion-induced maps $H_{*}(X^{\leq s};\kappa)\to H_*(X^{\leq t};\kappa)$.  Aspects of this have been studied at least since \cite{Mor}; in contemporary language, as seen in \cite{Bar} and \cite{ZC}, the sublevel persistence module can be regarded as arising from the homologies of the terms in a filtered chain complex, and decomposes as a sum  of interval modules $\kappa_{[s,t)}$ where $-\infty<s<t\leq \infty$.  Each summand $\kappa_{[s,t)}$ corresponds to a one-dimensional subspace that first appears in $H_{*}(X^{\leq s};\kappa)$ and, if $t=\infty$,  maps injectively to $H_{*}(X;\kappa)$, while if $t<\infty$ the subspace maps to zero in the homologies of the sublevels $H_{*}(X^{\leq t'};\kappa)$ iff $t'\geq t$.  The collection of intervals $[s,t)$ in this decomposition is called the sublevel barcode of $f$.  

Ideas related to (what is now called) sublevel persistence have for some time been influential in symplectic topology; key early works in this direction include \cite{Vit},\cite{FH},\cite{Oh},\cite{Sc00}.  While the first of these references used the (relative) singular homologies of sublevel sets of a function on an auxiliary topological space, work since that time has more often used some version of filtered Floer homology.  The latter involves a filtered chain complex that is constructed by analogy with the Morse complex familiar from Morse theory on finite-dimensional manifolds; however, as the filtered Floer complex is based on a function $\mathcal{A}$ on an infinite-dimensional manifold (such as the free loop space $\mathcal{L}M$ of a symplectic manifold $(M,\omega)$), with critical points of infinite Morse index, the associated filtered Floer groups do not actually represent the homologies of the sublevel sets of $\mathcal{A}$.  Nonetheless, from an algebraic standpoint the filtered Floer persistence module is fairly well-behaved, and various quantities that can be extracted from it convey interesting geometric information about, \emph{e.g.}, Hamiltonian diffeomorphisms and Lagrangian submanifolds.  The language of persistent homology and in particular barcodes was first brought to Floer theory in \cite{PS} and additional relevant theory was developed in \cite{UZ}; this framework has found use in a variety of  recent symplectic applications such as \cite{She},\cite{CGG}.

An \emph{interlevel set} of a function $f\co X\to \R$ is by definition the preimage $X_{[a,b]}:=f^{-1}([a,b])$ of some closed interval $[a,b]\subset \R$; for example level sets arise as the special case that $a=b$.  One then obtains a persistence module parametrized by the poset of closed intervals, based on the inclusion-induced maps $H_k(X_{[a,b]};\kappa)\to H_k(X_{[c,d]};\kappa)$ for $[a,b]\subset [c,d]$.  This persistence module, under suitable hypotheses, also satisfies a decomposition theorem which allows it to be classified by a collection of intervals which we will refer to as the interlevel barcode of $f$; this can be understood either in terms of the quiver-theoretic classification of zigzag diagrams such as \[ \cdots \leftarrow H_k(X_{[s,s]};\kappa)\to H_k(X_{[s,t]};\kappa)\leftarrow H_k(X_{[t,t]};\kappa)\to H_k(X_{[t,u]};\kappa)\leftarrow \cdots \] as in \cite{CDM}, or in terms of the fact that the Mayer-Vietoris sequence imposes special structure on the interlevel persistence module within the class of two-dimensional peristence modules, as in \cite{CO},\cite{BGO}.\footnote{As the Mayer-Vietoris sequence works better with open sets than closed ones, \cite{CO},\cite{BGO} use preimages of open intervals, $X_{(a,b)}=f^{-1}((a,b))$ in the role of their interlevel sets.  In the motivating cases for this paper, $f$ will be a Morse function, which implies that $X_{[a,b]}$ is a deformation retract of $X_{(a-\epsilon,b+\epsilon)}$ for sufficiently small $\epsilon>0$, so working with closed intervals will not cause additional difficulty.  See \cite{CDKM} and \cite{Bubook} for considerations related to (closed) interlevel persistence for less well-behaved functions $f$.}  The interlevel barcode comprises, in general, finite-length intervals of all four types $[a,b),(a,b],(a,b),[a,b]$; roughly speaking, the presence of an interval $I$ in the interlevel barcode corresponds to a one-dimensional summand in each level-set homology $H_k(X_{[t,t]};\kappa)$ for $t\in I$, such that if $[s,t]\subset I$ the respective summands of $H_k(X_{[s,s]};\kappa)$ and $H_k(X_{[t,t]};\kappa)$ have common, nontrivial image in $H_k(X_{[s,t]};\kappa)$.

The interlevel persistence barcode of a suitably well-behaved function $f\co X\to \R$ contains more information than the sublevel persistence barcode.  From \cite[Section 3]{CDM},\cite{ext} one can see that the (finite-length) bars of form $[a,b)$ are the same for both the sublevel and interlevel barcodes, while the bars of form $(a,b]$ in the interlevel barcode of $f$ correspond (modulo grading adjustment) to the finite-length bars $[-b,-a)$ in the sublevel barcode of $-f$; in the case that $f$ is a Morse function on a compact smooth $\kappa$-oriented manifold, Poincar\'e duality relates the latter to bars in the sublevel barcode of $f$.  Both varieties of half-open bars $[a,b),(a,b]$ detect homology classes in interlevel (or sublevel) sets of $\pm f$ that vanish upon inclusion into the whole space $X$; the closed or open bars $[a,b],(a,b)$, on the other hand, correspond to classes that are nontrivial in $H_*(X;\kappa)$.  Indeed, in the case of a Morse function on a compact smooth  $\kappa$-oriented manifold,  such bars come in pairs $\{[a,b],(a,b)\}$ with homological degrees adding to $n-1$, and each such pair corresponds to two infinite-length bars $[a,\infty),[b,\infty)$ in the sublevel barcode of $f$.  Thus, in this Morse case, the additional information provided by the interlevel barcode in comparison to the sublevel barcode amounts to a \emph{pairing} between the endpoints of the infinite-length bars of the sublevel barcode. See Figure \ref{genustwo} for a simple example of two functions on a genus-two surface having the same sublevel barcode but different interlevel barcodes.

\begin{center}
\begin{figure}
\includegraphics[width=4.5 in]{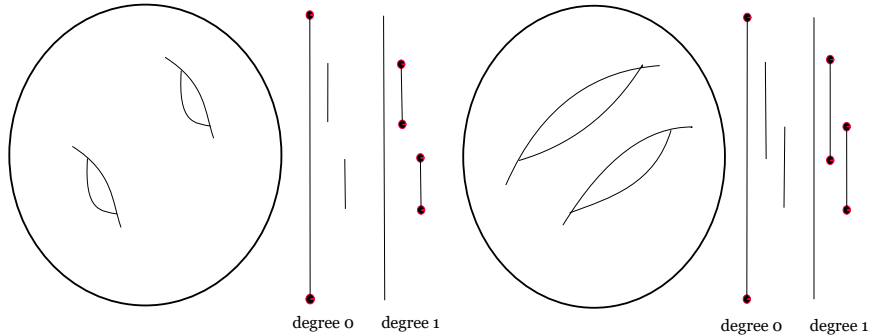}
\caption{The height functions on the above genus two surfaces are perfect Morse functions (\emph{i.e.} the Morse inequalities are equalities), so their interlevel barcodes have no half-open intervals, and their sublevel barcodes consist only of semi-infinite intervals beginning at the critical values, which are identical in the two cases.  However, the closed and open intervals in their interlevel barcodes differ, as shown. (Segments with solid endpoints represent closed intervals; those without them represent open intervals.)}
\label{genustwo}
\end{figure}
\end{center}

The present work grew out of a project to develop an abstract algebraic setup for interlevel persistence that would be general enough to provide interlevel-type barcodes in the context of Hamiltonian Floer theory and its variants in symplectic topology, just as the ``Floer-type complexes'' of \cite{UZ} allow one to construct symplectic-Floer-theoretic versions of sublevel barcodes.  Since these sublevel barcodes completely classify Floer-type complexes up to suitable isomorphism of filtered complexes by \cite[Theorems A and B]{UZ}, while the interlevel barcodes should contain more information than the sublevel barcodes, our treatment must take into account features separate from the filtered isomorphism type of the Floer complex.  Our resolution to this problem uses the abstract algebraic notion of a ``\textbf{Poincar\'e--Novikov structure},'' discussed briefly in Section \ref{intropn} and in more detail in Section \ref{pnsect}, which, when applied to Hamiltonian Floer theory in Section \ref{floersect}, incorporates information from the PSS isomorphism \cite{PSS} as well as the Poincar\'e intersection pairing in the underlying symplectic manifold. A general algebraic procedure associates to a Poincar\'e--Novikov structure a ``\textbf{filtered matched pair}'' in every grading, from which interlevel-barcode-type invariants can be extracted.   
While the constructions in this paper were originally motivated by symplectic topology, they are algebraic in nature and as such are adaptable enough to potentially be of broader interest.  Various classical-topological instances of our constructions are described in Section \ref{geomintro}.

There are two main issues that make the adaptation of interlevel persistence to Floer theory nontrivial; in isolation, either of these could perhaps be addressed by small modifications of standard methods, but in combination they seem to require new techniques such as the ones we develop here.  The first point is that, while the homologies of sublevel sets are naturally mimicked by the homologies of filtered subcomplexes of the Floer complex, there does not seem to be any established Floer-theoretic counterpart to the homology of an interlevel set.\footnote{The often-used ``action window Floer homology,'' denoted $S^{[a,b)}$ in \cite{FH}, can be considered a counterpart to the \emph{relative} homology $H_{*}(f^{-1}([a,b)),f^{-1}(\{a\}))$, but for interlevel persistence we would want absolute homology---in particular we would not want the homology to vanish if $[a,b)$ contains no critical values, as is the case for action-window homology.}  Thus the zigzag or two-parameter persistence modules that are usually used to develop interlevel persistence theory are not available to us, at least initially.  (We will eventually construct an object that can serve this purpose (see (\ref{hkdef})), but it should be regarded as a byproduct of our theory, not as an input.)  

A way around this difficulty is suggested by the isomorphism from \cite{CDM} between interlevel persistence and the \emph{extended persistence} of \cite{ext}.  The latter is based on constructions directly with sublevel and superlevel sets of a function rather than interlevel sets, and thus seems conceptually closer to Floer theory.  However, this runs into our second issue, namely that in many cases of interest the filtered Floer complex is an analogue not of the Morse complex of a smooth function on a compact manifold $X$ but rather of the Novikov complex \cite{Nov} of a function $\tilde{f}\co \tilde{X}\to \R$ on a covering space $\tilde{X}$ of $X$, such that $d\tilde{f}$ is the pullback of  closed one-form $\theta$ on $X$.  (If the group of periods $\Gamma=\{\int_{S^1}\gamma^*\theta|\gamma\co S^1\to X\}$ is nontrivial and discrete, $\theta$ can be considered as the derivative of a function $X\to S^1$.)   While versions of interlevel persistence have been developed in the Novikov-theoretic context  both for discrete (\cite{BD},\cite{BH17},\cite{B18}) and indiscrete (\cite{B1}\cite{B20b}) period groups, an analogous construction for extended persistence does not seem to be available in the literature in a form that lends itself to straightforward adaptations to Floer theory.  Some of the constructions in this paper can be regarded as providing such an extension of extended persistence, in a general algebraic framework.  See Section \ref{connext} for more about the connection to extended persistence, and Remark \ref{burghelea} for a comparison to Burghelea's work.

\subsection{Organization of the paper}  The following two sections can be regarded as an extended introduction: Section \ref{algintro} gives an overview of the key elements of our algebraic setup, and Section \ref{geomintro} explains more concretely how our structures arise in finite-dimensional Morse and Novikov theory and other contexts, and indicates relations to other constructions.  Sections \ref{algintro} and \ref{geomintro} both contain few proofs, and frequently refer forward in the paper for precise definitions.   We defer the discussion of the motivating case of Hamiltonian Floer theory to Section \ref{floersect}, after the relevant algebra has been explained in more detail.  Section \ref{basic} sets up some of the basic algebraic ingredients for the rest of the paper, including an abstraction of Poincar\'e duality in Section \ref{pdsec}.  Section \ref{fmpsec} introduces the central notion of a filtered matched pair and proves several key results about such objects, notably a stability theorem (Theorem \ref{stab}) and, in Section \ref{basisconstruct}, an existence theorem for the doubly-orthogonal bases that we use to construct the open and closed intervals in our version of the interlevel barcode when the period group $\Gamma$ is discrete. Section \ref{pnsect} introduces our notion of a Poincar\'e--Novikov structure, which mixes abstract versions of Poincar\'e duality and filtered Novikov theory, and connects Poincar\'e--Novikov structures to filtered matched pairs.  Having developed the algebra, we then apply it in Section \ref{floersect} to Hamiltonian Floer theory, yielding (again when $\Gamma$ is discrete, though some structure remains when it is not) an ``essential barcode'' associated to a Hamiltonian flow that is designed to play the role of the collection of intervals in an interlevel barcode that are either open or closed. (The remaining, half-open intervals in interlevel barcodes are analogous to the finite-length intervals in the barcodes from \cite{UZ}.)  Example \ref{smallfloer} shows that this analogy becomes a precise equivalence in the case that the Hamiltonian is small and time-independent, with suitable Floer-theoretic data equivalent to Morse-theoretic data.   At the end of Section \ref{floersect} we identify the endpoints of the intervals in the essential barcode (up to sign) with particular spectral invariants of the Hamiltonian flow and of its inverse; the lengths of these intervals---of which there are as many as the dimension of the homology of the manifold---could be regarded in somewhat the same spirit as the spectral norm of a Hamiltonian diffeomorphism.  Section \ref{floersect} and the related material on orientations in Section \ref{pssor} are the only parts of the rest of the paper that discuss symplectic topology, and can be skipped by those whose interests lie elsewhere.

The constructions through Section \ref{floersect} focus on persistence features that are ``homologically essential'' in that they arise from and can be computed in terms of the behavior of nontrivial global homology classes.  For a full picture one should also incorporate data---represented in interlevel barcodes by half-open intervals---that vanish in global homology.  Thus in Section \ref{clsec} we introduce chain-level versions of our constructions which make such information visible.  We synthesize in (\ref{hkdef}) a two-dimensional persistence module from such a chain-level structure that is designed to play the role of the two-dimensional persistence module given by homologies of interlevel sets. We also show that this persistence module can be computed in terms of the homology-level structures discussed in Section \ref{fmpsec} together with the sublevel-persistence-type barcodes from \cite{UZ}. Section \ref{concretemorse} reviews basic features of Morse and Novikov complexes, and explains in detail how to associate our chain-level structures to such complexes. Then, in Section \ref{isosect}, we use ideas from \cite{Pa} to prove (Theorem \ref{bigiso}) that the two-dimensional persistence module given by applying (\ref{hkdef}) to these chain-level structures in the case of the Novikov complex of an $\R$- or $S^1$-valued function is in fact isomorphic to the usual interlevel homology persistence module. Apart from its intrinsic interest, this justifies thinking of our Floer-theoretic barcodes as the correct analogues of interlevel barcodes, since Floer complexes and their associated Poincar\'e--Novikov structures are formally very similar to the corresponding structures in Novikov theory.  The paper concludes with an appendix concerning orientations in Morse and Floer theory, with a focus on explicitly describing the conventions used and working out their implications for the signs that arise in the respective Morse- and Floer-theoretic versions of Poincar\'e duality.

 \subsection*{Acknowledgements}
The seeds for this paper were planted at the 2018 Tel Aviv workshop ``Topological data analysis meets symplectic topology,'' in particular through discussions there with Dan Burghelea which left me curious as to whether there might be a more abstract version of his constructions.  Early stages of the work were supported by NSF grant DMS-1509213.  I am also grateful to the Geometry and Topology group at the University of Iowa and to IBS Center for Geometry and Physics for the opportunity to present this work at seminars in 2021 when it was still at a relatively early stage of development.

\section{The general setup} \label{algintro}
We now begin to introduce our algebraic framework, which is meant to include relevant features of the filtered Novikov complexes associated to functions $\tilde{f}\co\tilde{X}\to\R$ as discussed earlier, as well as their Floer-theoretic analogues.  In the background throughout the paper we fix a coefficient field $\kappa$, and a finitely-generated additive subgroup $\Gamma$ of $\R$ which in the case of the Novikov complex corresponds to the period group of a one-form on $X$ which pulls back to $d\tilde{f}$ under the covering map $\tilde{X}\to X$, and which is isomorphic to the deck transformation group of the cover.  $\Gamma$ might well be trivial, corresponding to the case of classical Morse theory.  Associated to $\kappa$ and $\Gamma$, as discussed in more detail in Section \ref{basic}, are the \textbf{group algebra} $\Lambda:=\kappa[\Gamma]$ as well as two versions of the \textbf{Novikov field} which we denote by $\Lambda_{\uparrow}$ and $\Lambda^{\downarrow}$, each of which contains $\Lambda$ as a subring.  In the case that $\Gamma$ is trivial, each of $\Lambda,\Lambda_{\uparrow},\Lambda^{\downarrow}$ degenerates to the original coefficient field $\kappa$.  If $\Gamma=\Z$, then $\Lambda$ is the Laurent polynomial ring $\kappa[T^{-1},T]$, while $\Lambda_{\uparrow}$ and $\Lambda^{\downarrow}$ are the two Laurent series fields $\kappa[T^{-1},T]]$ and $\kappa[[T^{-1},T]$.  The latter are of course isomorphic as fields, but we distinguish them because they are not isomorphic as $\Lambda$-algebras.

\subsection{Filtered matched pairs}
In the case, alluded to above, of the Novikov complex of a function $\tilde{f}\co\tilde{X}\to \R$ on a covering space $\tilde{X}\to X$ with $\Gamma$ playing the role of the period group, the deck transformation action of $\Gamma$ on $\tilde{X}$ makes the homology $H_{*}(\tilde{X};\kappa)$ into a graded $\Lambda$-module.  The usual Novikov chain complex of $\tilde{f}$ (reviewed in Section \ref{novsect} and written there as $\mathbf{CN}(f;\xi)_{\uparrow}$, with $\xi$ denoting the cohomology class of the form on $X$ of which $d\tilde{f}$ is the pullback) is a  chain complex of finite-dimensional vector-spaces over $\Lambda_{\uparrow}$, equipped with an ascending real-valued filtration.  The complex  $\mathbf{CN}(f;\xi)_{\uparrow}$ is constructed by considering negative gradient trajectories for $\tilde{f}$; one can equally well use positive gradient trajectories of $\tilde{f}$ to obtain a complex $\mathbf{CN}(\tilde{f};\xi)^{\downarrow}$ of vector spaces over $\Lambda^{\downarrow}$, with a descending real-valued filtration.\footnote{Of course, this other complex can also be understood as the usual Novikov complex of $-\tilde{f}$, after adjusting the coefficients and filtration.  Assuming orientability, it can also be interpreted as the dual of the chain complex $\mathbf{CN}(\tilde{f};\xi)_{\uparrow}$ (\emph{cf}. Proposition \ref{negdualnov}), but we grade both complexes $\mathbf{CN}(\tilde{f};\xi)_{\uparrow}$ and $\mathbf{CN}(\tilde{f};\xi)^{\downarrow}$ homologically.}

Our notion of a \textbf{filtered matched pair}, defined precisely in Definition \ref{fmpdfn}, models the relation between $H_{*}(\tilde{X};\kappa)$ and the homologies of the two versions of the filtered Novikov complex mentioned above.  A filtered matched pair $\mathcal{P}$ amounts to a diagram \begin{equation}\label{mpgen} \xymatrix{ & (V^{\downarrow},\rho^{\downarrow}) \\ H\ar[ru]^{\phi^{\downarrow}}\ar[rd]_{\phi_{\uparrow}} & \\ & (V_{\uparrow},\rho_{\uparrow}) } \end{equation} where $H$ is a finitely-generated $\Lambda$-module, $V_{\uparrow}$ (resp. $V^{\downarrow}$) is a $\Lambda_{\uparrow}$-vector space (resp. a $\Lambda^{\downarrow}$-vector space), the two maps are $\Lambda$-module homomorphisms that become isomorphisms after coefficient extension, and $\rho_{\uparrow},\rho^{\downarrow}$ are ``filtration functions'' on $V_{\uparrow},V^{\downarrow}$, interpreted as assigning to an element the first filtration level at which it appears.\footnote{The filtration on $V_{\uparrow}$ is ascending and that on $V^{\downarrow}$ is descending, so ``first'' means ``minimal'' in the case of $\rho_{\uparrow}$ and ``maximal'' in the case of $\rho^{\downarrow}$.  The precise conditions required of $\rho_{\uparrow}$ and $\rho^{\downarrow}$ are specified in Definition \ref{normdef}.}    

If $d$ is the maximal cardinality of a $\Lambda$-independent set in $H$, in Definition \ref{gapdfn} we associate to the filtered matched pair $\mathcal{P}$ as in (\ref{mpgen}) a sequence of real numbers $G_1(\mathcal{P})\geq \cdots\geq G_d(\mathcal{P})$.  In favorable situations---including, as we show in Theorem \ref{basistheorem}, all cases in which the period group $\Gamma$ is discrete---$\mathcal{P}$ will admit what we call a \textbf{doubly-orthogonal basis} $\{e_1,\ldots,e_d\}\subset H$.  In this case one has (modulo reordering) $G_i(\mathcal{P})
=\rho^{\downarrow}(\phi^{\downarrow}e_i)-\rho_{\uparrow}(\phi_{\uparrow}e_i)$, and the \textbf{basis spectrum} of $\mathcal{P}$ is defined as the multiset $\Sigma(\mathcal{P})$ of elements \[ \left(\rho_{\uparrow}(\phi_{\uparrow}e_i)\,\mathrm{mod}\Gamma,\,\,\rho^{\downarrow}(\phi^{\downarrow}e_i)-\rho_{\uparrow}(\phi_{\uparrow}e_i)\right) \] of $(\R/\Gamma)\times \R$.  By Proposition \ref{barinvt} this multiset is independent of the choice of doubly-orthogonal basis.  

In the examples that we have in mind, the modules $H,V_{\uparrow},V^{\downarrow}$ appearing in a filtered matched pair arise as, for some $k\in\Z$, the degree-$k$ homologies of chain complexes $\mathcal{C},\mathcal{C}_{\uparrow},\mathcal{C}^{\downarrow}$, with $\mathcal{C}_{\uparrow}$ and $\mathcal{C}^{\downarrow}$ admitting filtrations that make them into Floer-type complexes as in \cite{UZ}.  (See the notion of a ``chain-level filtered matched pair'' in Definition \ref{cfmpdfn}.) Elements of the basis spectrum are intended to correspond to intervals (canonical up to $\Gamma$-translation) in interlevel barcodes connecting endpoints $\rho^{\downarrow}(\phi^{\downarrow}e_i)$ and $\rho_{\uparrow}(\phi_{\uparrow}e_i)$; the relevant interval is closed and in homological degree $k$ when $\rho^{\downarrow}(\phi^{\downarrow}e_i)\geq \rho_{\uparrow}(\phi_{\uparrow}e_i)$ and open and in homological degree $k-1$ when $\rho^{\downarrow}(\phi^{\downarrow}e_i)< \rho_{\uparrow}(\phi_{\uparrow}e_i)$.    The absolute values of the ``gaps'' $G_i(\mathcal{P})$ then correspond to the lengths of these intervals (which do not need to be reduced modulo $\Gamma$).  By design, a filtered matched pair only carries information that survives to the homologies $H,V_{\uparrow},V^{\downarrow}$ of the full chain complexes $\mathcal{C},\mathcal{C}_{\uparrow},\mathcal{C}^{\downarrow}$; the remaining (half-open) bars in interlevel barcodes can be found by applying the methods of \cite{UZ} to the Floer-type complexes $\mathcal{C}_{\uparrow},\mathcal{C}^{\downarrow}$.

Note that the $G_i(\mathcal{P})$, unlike the basis spectrum, are defined regardless of whether $\mathcal{P}$ admits a doubly-orthogonal basis.  By Theorem \ref{basistheorem}, the only cases in which $\mathcal{P}$ might not admit a doubly-orthogonal basis occur when the subgroup $\Gamma$ of $\R$ is indiscrete and hence dense. In this scenario, the only quantity associated to an interval modulo $\Gamma$-translation that is robust to perturbation is its length, as represented by $G_i(\mathcal{P})$; thus in the cases where we do not have a well-defined basis spectrum the $G_i(\mathcal{P})$ could be said to carry all the geometrically meaningful information that such a spectrum might have conveyed.  Nonetheless, it would be interesting to know whether Theorem \ref{basistheorem} can be generalized to some cases\footnote{For such a generalization one would minimally need to assume that the $\Lambda$-module $H$ splits as a direct sum of a free module and a torsion module, which is not always true when $\Gamma$ is dense.  This hypothesis does hold in the motivating case of Hamiltonian Floer theory (in fact in that case $H$ is free).}  in which $\Gamma$ is dense, especially if this could be done in a way which (like our proof of Theorem \ref{basistheorem} in the discrete case) is constructive and hence would give a way of computing the $G_i(\mathcal{P})$, which seems difficult to do directly from the definition when $\Gamma$ is dense.

A crucial general property of persistence barcodes is \textbf{stability}: for instance, for interlevel persistence a small change in the function whose interlevel sets are being considered leads to a correspondingly small change in the barcode.  We now describe such a result in our algebraic framework of filtered matched pairs.  Since filtered matched pairs are only intended to model the contributions to interlevel persistence that correspond to global homology classes, the appropriate stability result should compare filtered matched pairs whose versions of the respective modules $H,V_{\uparrow},V^{\downarrow}$ are isomorphic, but with different versions of the filtration functions $\rho_{\uparrow},\rho^{\downarrow}$.  For ease of exposition in this introduction we assume the versions of the $H,V_{\uparrow},V^{\downarrow}$ are equal, not just isomorphic; we give a slightly more general formulation in Section \ref{fmpsec}.  Here is the fundamental stability result:

\begin{theorem}\label{introstab} Consider two filtered matched pairs
\[
\xymatrix{ & & (V^{\downarrow},\rho^{\downarrow}) & & & &  (V^{\downarrow},\hat{\rho}^{\downarrow})  \\ \mathcal{P}  \ar@{}[r]^(.35){}="a"^(.65){}="b" \ar@{=} "a";"b"    & H\ar[ru]^{\phi^{\downarrow}}\ar[rd]_{\phi_{\uparrow}} & & \mbox{and} & \hat{\mathcal{P}}  \ar@{}[r]^(.35){}="a"^(.65){}="b" \ar@{=} "a";"b"    & H\ar[ru]^{\phi^{\downarrow}}\ar[rd]_{\phi_{\uparrow}}   \\ & & (V_{\uparrow},\rho_{\uparrow}) & & & & (V_{\uparrow},\hat{\rho}_{\uparrow}) }
\] with the same data $H,V^{\downarrow},V_{\uparrow},\phi^{\downarrow},\phi_{\uparrow}$ but (possibly) different filtration functions $\rho^{\downarrow},\rho_{\uparrow},\hat{\rho}^{\downarrow},\hat{\rho}_{\uparrow}$, and let \[ t=\max\left\{\max_{V^{\downarrow}\setminus\{0\}}|\rho^{\downarrow}-\hat{\rho}^{\downarrow}|,\max_{V_{\uparrow}\setminus\{0\}}|\rho_{\uparrow}-\hat{\rho}_{\uparrow}|\right\}.\]  Then the gaps $G_i$ satisfy \[ |G_i(\mathcal{P})-G_i(\hat{\mathcal{P}})|\leq 2t.\]  Moreover, if $\Gamma$ is discrete, so that the basis spectra $\Sigma(\mathcal{P})$ and $\Sigma(\hat{\mathcal{P}})$ are well-defined, then there is a bijection $\sigma\co \Sigma(\mathcal{P})\to\Sigma(\hat{\mathcal{P}})$ such that for each $([a],\ell)\in\Sigma(\mathcal{P})$, the image $([\hat{a}],\hat{\ell})=\sigma([a],\ell)$ satisfies (for some choice of $\hat{a}$ within its $\Gamma$-coset) \[ |\hat{a}-a|\leq t\quad \mbox{and}\quad |(\hat{a}+\hat{\ell})-(a+\ell)|\leq t.\]
\end{theorem}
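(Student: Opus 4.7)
The proof splits into the gap bound (which will hold for arbitrary $\Gamma$) and the basis-spectrum bijection (which uses the discrete case). My plan is to establish the gap bound via a Courant--Fischer-type comparison, and then handle the bijection through a path of filtered matched pairs interpolating between $\mathcal{P}$ and $\hat{\mathcal{P}}$.

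For the gap bound, the key observation is the pointwise inequality
\[
\Bigl| \bigl( \rho^{\downarrow}(\phi^{\downarrow} e) - \rho_{\uparrow}(\phi_{\uparrow} e) \bigr) - \bigl( \hat{\rho}^{\downarrow}(\phi^{\downarrow} e) - \hat{\rho}_{\uparrow}(\phi_{\uparrow} e) \bigr) \Bigr| \leq 2t
\]
for every $e \in H$ with $\phi_{\uparrow}e \neq 0$ and $\phi^{\downarrow}e \neq 0$, which follows from the triangle inequality applied to the pointwise hypotheses on $\rho^{\downarrow} - \hat{\rho}^{\downarrow}$ and $\rho_{\uparrow} - \hat{\rho}_{\uparrow}$. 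Anticipating that Definition \ref{gapdfn} characterizes $G_i(\mathcal{P})$ by a min-max formula over $\Lambda$-submodules $W \subseteq H$ of rank at least $i$ of the functional $e \mapsto \rho^{\downarrow}(\phi^{\downarrow} e) - \rho_{\uparrow}(\phi_{\uparrow} e)$, the inequality $|G_i(\mathcal{P}) - G_i(\hat{\mathcal{P}})| \leq 2t$ would then follow by comparing the two min-maxes over the same family of submodules, which is intrinsic to the shared module $H$.

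For the basis-spectrum bijection, Theorem \ref{basistheorem} supplies doubly-orthogonal bases for both $\mathcal{P}$ and $\hat{\mathcal{P}}$, so $\Sigma(\mathcal{P})$ and $\Sigma(\hat{\mathcal{P}})$ are well defined. I would then consider the convex combinations $\rho^{\downarrow}_s = (1-s) \rho^{\downarrow} + s \hat{\rho}^{\downarrow}$ and $\rho_{\uparrow,s} = (1-s) \rho_{\uparrow} + s \hat{\rho}_{\uparrow}$ for $s \in [0,1]$, verify that the resulting filtration data still define a filtered matched pair $\mathcal{P}_s$ (the axioms of Definition \ref{fmpdfn} should be convex in the filtration functions), and apply Theorem \ref{basistheorem} at every $s$. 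Discreteness of $\Gamma$ then forces the multiset $\Sigma(\mathcal{P}_s) \subset (\R/\Gamma) \times \R$ to be locally constant away from isolated ``collision'' values of $s$ at which two bars can exchange an endpoint; the bijection $\sigma$ is obtained by tracking bars along this path and patching across collisions, and the endpoint bounds $|\hat{a} - a| \leq t$ and $|(\hat{a} + \hat{\ell}) - (a + \ell)| \leq t$ follow from accumulating along $[0,1]$ the elementary shifts of the endpoints, each of which is controlled by the corresponding pointwise shift of the interpolated filtration.

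The main obstacle is controlling the tracking through collision events. I expect this to require perturbing the interpolation to a generic path on which only transverse pairwise collisions occur---each such collision affecting only two bars, in a manner consistent with the endpoint bounds---and then taking a limit as the perturbation tends to zero. If the interpolation approach proves awkward (for example, if the filtered matched pair axioms fail the convexity check, or if endpoint collisions cannot be rendered sufficiently generic), a natural fallback is induction on $\mathrm{rank}_{\Lambda} H$: peel off a bar of $\mathcal{P}$ realizing $G_1(\mathcal{P})$, match it with a bar of $\hat{\mathcal{P}}$ of comparable gap by combining the first-part gap inequality with an ancillary endpoint-matching argument on $V_{\uparrow}$ and $V^{\downarrow}$ separately, then quotient both sides by the corresponding rank-one submodules and apply the inductive hypothesis to the resulting lower-rank filtered matched pairs.
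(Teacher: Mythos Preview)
Your gap-bound argument matches the paper's (Proposition~\ref{gapcts}): any independent $i$-tuple in $H$ serves as a witness for both $G_i(\mathcal{P})$ and $G_i(\hat{\mathcal{P}})$, with pointwise difference at most $2t$. (Minor correction: Definition~\ref{gapdfn} is a supremum over independent $i$-tuples of the minimum individual gap, not a min-max over submodules, but this does not affect the argument.)

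For the basis spectrum, your interpolation $\rho_{\uparrow}^s,\rho^{\downarrow}_s$ is also the paper's opening move in Theorem~\ref{stab}; the paper likewise reduces to a local statement and composes matchings along a finite cover of $[0,1]$. The gap is in your local step. Your claim that $\Sigma(\mathcal{P}_s)$ is ``locally constant away from isolated collisions'' is false: the endpoints $\rho_{\uparrow}^s(\phi_{\uparrow}e_i)$ and $\rho^{\downarrow}_s(\phi^{\downarrow}e_i)$ vary continuously in $s$, so the multiset moves rather than jumping. More substantively, your tracking-through-generic-perturbation sketch does not supply a mechanism that produces the bijection with the claimed endpoint bounds; you would need to show bars can be followed unambiguously, that collisions are generically simple, and that accumulated drift stays bounded by $t$ rather than by an integral that could exceed $t$---none of which is established. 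The paper's local lemma (Lemma~\ref{localstab}) takes a different route: it invokes Proposition~\ref{barinvt} to express multiplicities as dimensions of quotient $\kappa$-spaces
\[
V_{[a',a]\times[b,b']}=\frac{H_{\uparrow}^{\leq a}\cap H^{\downarrow}_{\geq b}}{(H_{\uparrow}^{<a'}\cap H^{\downarrow}_{\geq b})+(H_{\uparrow}^{\leq a}\cap H^{\downarrow}_{>b'})},
\]
uses the inclusions $H_{\uparrow}^{\leq a-t}\subset\hat{H}_{\uparrow}^{\leq a}\subset H_{\uparrow}^{\leq a+t}$ (and their $\downarrow$-analogues) to factor a $V$-isomorphism through the corresponding $\hat{V}$-space, and finishes by a pigeonhole count on the total multiplicity $d$. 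This rectangle-measure argument is the missing ingredient in your proposal.

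Your inductive fallback (peel off a bar, quotient, recurse) also has an unaddressed issue: you would need the quotient to again be a filtered matched pair whose basis spectrum is the residual one, and you would need to match the peeled bar to one of $\hat{\mathcal{P}}$ with endpoint control---which is essentially the local problem again.
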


\begin{proof}
The statement about the $G_i$ follows from Proposition \ref{gapcts} (using the identity map in the role of the ``$t$-morphism'' of that proposition), and the statement about the basis spectra follows from Theorem \ref{stab}.
\end{proof}

The combination of Theorem \ref{introstab} (for the closed and open bars) with \cite[Theorem 8.17]{UZ} (for the half-open bars) can be regarded as an algebraic version of stability theorems for interlevel persistence such as the LZZ stability theorem of \cite{CDM} or \cite[Theorem 1.2]{BH17}.  As noted in \cite{BH17}, continuous deformations of a function can result in a closed interlevel barcode interval in degree $k$ shrinking and then transforming into an open interlevel barcode interval in degree $k-1$; in our context this corresponds to the parameter $\ell$ in an element $([a],\ell)$ of the basis spectrum (corresponding to the difference $\rho^{\downarrow}(\phi^{\downarrow}e_i)-\rho_{\uparrow}(\phi_{\uparrow}e_i)$) passing from positive to negative.

\subsection{Poincar\'e-Novikov structures} \label{intropn} A filtered matched pair involves two vector spaces $V_{\uparrow}$ and $V^{\downarrow}$ over (different) Novikov fields, one with an ascending filtration and the other with a descending filtration. In motivating examples, there is a filtered matched pair in every grading $k$, and (for some fixed integer $n$) a version of Poincar\'e duality relates the degree $k$ version of $V^{\downarrow}$ to the dual of the degree $n-k$ version of $V_{\uparrow}$.  This is abstracted in our definition of a $n$-\textbf{Poincar\'e-Novikov structure}, see Definition \ref{pnstr}.  Here one has, for each $k\in \Z$, a $\Lambda$-module $H_k$, a $\Lambda_{\uparrow}$-vector space $V_k$ equipped with an ascending filtration function $\rho_k$, and (as in the bottom half of the diagram (\ref{mpgen})) a module homomorphism $S_k\co H_k\to V_k$ that becomes an isomorphism after coefficient extension.  Moreover, we require an abstraction of the Poincar\'e intersection pairing between $H_k$ and $H_{n-k}$, taking the form of a suitable map $\mathcal{D}_k\co H_k\to {}^{\vee}\!H_{n-k}$ where ${}^{\vee}\!H_{n-k}$ is the ``conjugated dual'' (see Section \ref{dualsec}) of $H_{n-k}$.  (One obtains a ``strong'' or a ``weak'' $n$-Poincar\'e--Novikov structure depending on precisely what requirements are put on the $\mathcal{D}_k$.)  This information is sufficient to yield a filtered matched pair of the form  $\mathcal{P}(\mathcal{N})_k$ of the form \[\xymatrix{ & ({}^{\vee}(V_{n-k}),{}^{\vee}\rho_{n-k}) \\ H_k\ar[ru]^{\widetilde{S}_k} \ar[rd]_{S_k} & \\ & (V_k,\rho_k)} \] constructed in detail in Section \ref{pnsect}.  From the $\mathcal{P}(\mathcal{N})_{k}$ we can define the gaps $\mathcal{G}_{k,i}(\mathcal{N}):=G_i(\mathcal{P}(\mathcal{N})_k)$ and, when the $\mathcal{P}(\mathcal{N})_k$ admit doubly-orthogonal bases, the ``essential barcode'' of $\mathcal{N}$; the latter is defined in Definition \ref{pnbar} in terms of the basis spectra of the $\mathcal{P}(\mathcal{N})_k$ and consists, in each grading, of equivalence classes (written as $[a,b]^{\Gamma}$ or $(a,b)^{\Gamma}$) of closed or open intervals modulo the translation action of $\Gamma$.  The word ``essential'' alludes to the fact that such intervals correspond to nontrivial global homology classes in the case of interlevel persistence.   Theorem \ref{introstab} readily implies stability results for these invariants of Poincar\'e-Novikov structures, see Corollaries \ref{pngapstab} and \ref{pnstab}.

There is a natural notion of the dual of a filtered matched pair, developed in Section \ref{fmpdual}.  In the case of the filtered matched pairs $\mathcal{P}(\mathcal{N})_k$ associated to a $n$-Poincar\'e--Novikov structure, the dual of $\mathcal{P}(\mathcal{N})_k$ is closely related to $\mathcal{P}(\mathcal{N})_{n-k}$ (see Proposition \ref{dualpnchar} for a precise statement).  This leads to Corollary \ref{pndual}, asserting that if $\mathcal{N}$ is a strong $n$-Poincar\'e--Novikov structure such that the $\mathcal{P}(\mathcal{N})_k$ all admit doubly-orthogonal bases\footnote{For instance, if $\Gamma$ is discrete then the Poincar\'e--Novikov structures associated to filtered Novikov complexes and their Hamiltonian-Floer-theoretic analogues satisfy these properties, using Proposition \ref{cohtopdstr} and Theorem \ref{basistheorem}.} then for each $k$ there is a bijection between elements $[a,b]^{\Gamma}$ of the degree-$k$ essential barcode of $\mathcal{N}$ and elements $(a,b)^{\Gamma}$ of the degree-$(n-k-1)$ essential barcode of $\mathcal{N}$.  In the context of interlevel persistence for an $\mathbb{R}$- or $S^1$-valued Morse function $f$ on a smooth manifold, this, together with a similar statement for the half-open bars in the full barcode that can be inferred from \cite[Proposition 6.7]{UZ}\footnote{see the discussion in Section \ref{cpnstr}}, can be regarded as demonstrating Poincar\'e duality (at least in the classical sense of a symmetry of Betti numbers) for the regular fibers of $f$.  Thus our general formalism of Poincar\'e--Novikov structures is used to infer, entirely algebraically, a statement corresponding to Poincar\'e duality for the regular fibers of a map from hypotheses including Poincar\'e duality (in the form of the maps $\mathcal{D}_k\co H_k\to {}^{\vee}\!H_{n-k}$) for the full domain of the map.

\subsection{Chain-level notions}

While the modules $H,V_{\uparrow},V^{\downarrow}$ in a filtered matched pair as in (\ref{mpgen}) are in practice usually obtained as the homologies (in some degree) of chain complexes $\mathcal{C},\mathcal{C}_{\uparrow},\mathcal{C}^{\downarrow}$, the formalism in Section \ref{pnsect} does not require this, and the gaps and basis spectra of a filtered matched pair are defined only in terms of $H,V_{\uparrow},V^{\downarrow}$ and maps between them, without reference to any chain complex.  Likewise, the definition and properties of a $n$-Poincar\'e--Novikov structure do not appeal to any chain complexes having $H_k,V_k$ as their homologies.  However, in order to justify our interpretation of these as modeling  aspects of interlevel persistence, and also to capture the parts of interlevel persistence arising from homology classes of interlevel sets that vanish upon inclusion into the whole space, we should incorporate data from such chain complexes and not just their homologies.  This motivates the definitions of a ``chain-level filtered matched pair'' (Definition \ref{cfmpdfn}) and a ``chain-level $n$-Poincar\'e--Novikov structure'' (second paragraph of Section \ref{cpnstr}).  Roughly speaking, these chain-level definitions replace the relevant modules over $\Lambda,\Lambda_{\uparrow},\Lambda^{\downarrow}$ with chain complexes, and replace the conditions that various maps be isomorphisms after coefficient extension with the condition that they be chain homotopy equivalences after coefficient extension.

The data of a chain-level filtered matched pair include chain complexes $\mathcal{C}_{\uparrow},\mathcal{C}^{\downarrow}$ over $\Lambda_{\uparrow}$ and $\Lambda^{\downarrow}$, respectively, equipped with respective filtration functions $\ell_{\uparrow}$ and $\ell^{\downarrow}$ that make them Floer-type complexes in the the sense of \cite{UZ} (with a slight modification of the definition in the case of $\mathcal{C}^{\downarrow}$ so that the filtration will be descending rather than ascending).  Applying the homology functor to a chain-level filtered matched pair yields a filtered matched pair (in the original sense) in each degree $k$; the relevant (ascending) filtration function $\rho_{\uparrow}$ on $H_k(\mathcal{C}_{\uparrow})$ is given by \[ \rho_{\uparrow}(h)=\inf\{c\in \mathcal{C}_{\uparrow}|\partial_{\mathcal{C}_{\uparrow}}c=0,\,[c]=h\} \] and similarly the descending filtration function on $H_k(\mathcal{C}^{\downarrow})$ is given by \[ \rho^{\downarrow}(h)=\sup\{c\in \mathcal{C}^{\downarrow}|\partial_{\mathcal{C}^{\downarrow}}c=0,\,[c]=h\}.\]  (In the language of chain-level symplectic Floer theory, $\rho_{\uparrow}(h)$ or $\rho^{\downarrow}(h)$ would be called the \emph{spectral invariant} of the homology class $h$.)

To a chain-level filtered matched pair $\mathcal{CP}$ and a grading $k\in \Z$ we associate in (\ref{hkdef}) a two-parameter persistence module $\mathbb{H}_k(\mathcal{CP})$ in the category of vector spaces over $\kappa$; thus for all $(s,t)\in \R^2$, we have a $\kappa$-vector space $\mathbb{H}_k(\mathcal{CP})_{s,t}$, with suitably compatible maps $\varepsilon_{(s,t),(s',t')}\co \mathbb{H}_k(\mathcal{CP})_{s,t}\to \mathbb{H}_k(\mathcal{CP})_{s',t'}$ whenever $s\leq s'$ and $t\leq t'$.  It is this persistence module that most directly generalizes persistence modules built from interlevel sets.  To wit, suppose that $X$ is a smooth oriented $n$-dimensional manifold and $p\co \tilde{X}\to X$ is a regular covering space with deck transformation group $\tilde{\Gamma}$.  Suppose also that $\tilde{f}\co \tilde{X}\to \R$ is a Morse function such that for some isomorphism $\omega\co \tilde{\Gamma}\to\Gamma$ (where $\Gamma$ is the same subgroup of $\R$ as before), we have $\tilde{f}(\gamma\tilde{x})=\tilde{f}(x)-\omega(\gamma)$ for all $\tilde{x}\in\tilde{X}$.  One can then use the filtered Novikov complex of $\tilde{f}$, together with Poincar\'e duality for $\tilde{X}$ (as adapted to regular covering spaces  in, \emph{e.g.}, \cite[Section 4.5]{Ran}) to construct a chain-level $n$-Poincar\'e--Novikov structure $\mathcal{CN}(f;\xi)$; see Definition \ref{cndef}.  In this context we prove the following, which follows from Theorem \ref{bigiso}:
\begin{theorem}\label{introiso}  Assume that $\Gamma$ is discrete.  Then, for $s+t\geq 0$, there are isomorphisms \[ \sigma_{s,t}\co \mathbb{H}_k(\mathcal{CP}(\mathcal{CN}(\tilde{f};\xi)))_{s,t}\to H_k(\tilde{f}^{-1}([-s,t]);\kappa) \] such that, when $s\leq s'$ and $t\leq t'$, we have a commutative diagram \[ \xymatrix{ \mathbb{H}_k(\mathcal{CP}(\mathcal{CN}(\tilde{f};\xi)))_{s,t} \ar[r]^{\varepsilon_{(s,t),(s',t')}} \ar[d]_{\sigma_{s,t}} & \mathbb{H}_k(\mathcal{CP}(\mathcal{CN}(\tilde{f};\xi)))_{s',t'}  \ar[d]^{\sigma_{s',t'}} \\  H_k(\tilde{f}^{-1}([-s,t]);\kappa) \ar[r] & H_k(\tilde{f}^{-1}([-s',t']);\kappa) }  \]  where the bottom horizontal map is the map on singular homology induced by inclusion. 
\end{theorem}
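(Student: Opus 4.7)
The plan is to interpret $\mathbb{H}_k(\mathcal{CP}(\mathcal{CN}(\tilde{f};\xi)))_{s,t}$ as the homology of a chain-level model of Mayer--Vietoris for the decomposition $\tilde{X}=\tilde{X}^{\leq t}\cup\tilde{X}^{\geq -s}$, whose intersection is precisely $\tilde{f}^{-1}([-s,t])$. The hypothesis $s+t\geq 0$ is exactly what makes this a covering of $\tilde{X}$ (since then every real number is either $\leq t$ or $\geq -s$), and so it enters naturally rather than as an accident of the construction. Unpacking (\ref{hkdef}), the persistence group $\mathbb{H}_k(\mathcal{CP})_{s,t}$ should fit into a long exact sequence relating it to $H_k(\mathcal{C}_{\uparrow}^{\leq t})\oplus H_k(\mathcal{C}^{\downarrow,\geq -s})$ and $H_k(\mathcal{C})$ via the matched-pair maps $\phi_{\uparrow}$ and $\phi^{\downarrow}$; this is the algebraic counterpart of the topological Mayer--Vietoris triangle, and everything in the argument will be organized around comparing the two.

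The key technical step, following ideas of \cite{Pa}, is to construct for each filtration level $t\in\R$ a filtered chain-homotopy equivalence $\mathbf{CN}(\tilde{f};\xi)_{\uparrow}^{\leq t}\simeq C_*(\tilde{X}^{\leq t};\kappa)$ that is natural in $t$ and $\Gamma$-equivariant for the deck transformation action, so that everything is compatible with the relevant $\Lambda$-module structures. The analogous identification $\mathbf{CN}(\tilde{f};\xi)^{\downarrow,\geq -s}\simeq C_*(\tilde{X}^{\geq -s};\kappa)$ is built from positive gradient flow and must be linked, via the chain-level Poincar\'e--Novikov structure $\mathcal{CN}(\tilde{f};\xi)$, to the ascending identification through Poincar\'e--Lefschetz duality for the cover as in \cite{Ran}. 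The underlying geometric input is classical: sublevel sets are built up by attaching handles at critical points of $\tilde{f}$, and each attachment contributes a generator to the Morse--Novikov complex at exactly the corresponding filtration level; discreteness of $\Gamma$ is what guarantees that the subcomplex at level $\leq t$ is finitely generated over $\Lambda$ and therefore well-behaved.

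Once these filtered identifications are in hand, the Mayer--Vietoris short exact sequence of singular chains \[ 0\to C_*(\tilde{f}^{-1}([-s,t]);\kappa)\to C_*(\tilde{X}^{\leq t};\kappa)\oplus C_*(\tilde{X}^{\geq -s};\kappa)\to C_*(\tilde{X};\kappa)\to 0 \] compares termwise with the algebraic exact sequence defining $\mathbb{H}_k(\mathcal{CP}(\mathcal{CN}(\tilde{f};\xi)))_{s,t}$. Applying the five lemma to the induced comparison of long exact sequences produces the isomorphism $\sigma_{s,t}$, and naturality of the filtered identifications in $s,t$ yields commutativity of the displayed square under inclusion $(s,t)\leq (s',t')$.

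The main obstacle is arranging the chain-level identifications between Novikov complexes and singular chains so that they simultaneously respect (i) the ascending, respectively descending, filtration parameters and their inclusion maps, (ii) the $\Lambda$-module structure coming from deck transformations, and (iii) the Poincar\'e--Lefschetz duality pairing that in the chain-level Poincar\'e--Novikov structure links $\mathbf{CN}(\tilde{f};\xi)_{\uparrow}$ to $\mathbf{CN}(\tilde{f};\xi)^{\downarrow}$. The third compatibility is the most delicate, because it forces specific sign and orientation conventions---exactly the ones worked out in the appendix on orientations---so that the Mayer--Vietoris connecting morphism agrees, up to sign, with the algebraic boundary built from the matched-pair maps. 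Once that compatibility is settled, the rest of the argument is essentially a diagram chase.
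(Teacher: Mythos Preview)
Your broad strategy---Mayer--Vietoris for the cover $\tilde{X}^{\leq t}\cup\tilde{X}^{\geq -s}$, combined with Pajitnov-style equivalences between filtered Morse/Novikov complexes and singular chains---is exactly the paper's approach.  But two of the steps you sketched do not go through as stated, and the obstacle you single out as the hardest is in fact handled elsewhere.

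First, the claimed equivalence $\mathbf{CN}(\tilde{f};\xi)_{\uparrow}^{\leq t}\simeq C_*(\tilde{X}^{\leq t};\kappa)$ is false when $\xi\neq 0$.  The filtered Novikov subcomplex consists of formal sums that may be infinite in the $\tilde{f}\to-\infty$ direction, whereas singular chains are finite $\kappa$-linear combinations; even at the level of homology these need not agree.  (Relatedly, $\mathbf{CN}(\tilde{f};\xi)_{\uparrow}^{\leq t}$ is not a $\Lambda$-module, since the $\Gamma$-action shifts the filtration, so your remark about finite generation over $\Lambda$ does not apse.)  What Pajitnov actually provides is an equivalence $\mathbf{CN}(\tilde{f};\xi)_{\uparrow}^{\leq t}\simeq \kappa[[x]]\otimes_{\kappa[x]}\mathbf{S}_*(\tilde{X}^{\leq t})$ with the \emph{completion}.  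Correspondingly, the target of the cone map in (\ref{hkdef}) compares not to $\mathbf{S}_*(\tilde{X})$ but to $\Lambda_{\updownarrow}\otimes_{\Lambda}\mathbf{S}_*(\tilde{X})$, and the Mayer--Vietoris sequence you wrote down must be replaced by an analogue in this completed setting.  The paper establishes surjectivity of the completed difference map by restricting to the subcomplex $\mathbf{S}_*^{\delta}$ of simplices with bounded $\tilde{f}$-oscillation (via the theorem of small chains), so that each simplex can be absorbed into one side of the cover after a controlled $\Gamma$-shift; this step has no counterpart in your outline.

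Second, what you flag as the main obstacle---making the descending-filtration identification compatible with the Poincar\'e--Lefschetz duality built into $\mathcal{CN}(\tilde{f};\xi)$---is in the paper not part of the proof of the isomorphism at all.  It is dealt with beforehand by Proposition~\ref{downup}, which shows that $\mathcal{CP}(\mathcal{CN}(\tilde{f};\xi))$ is filtered matched homotopy equivalent to the pair built directly from the Latour maps $\mathcal{L}_{h_0\tilde{f}}$ and $\bar{\mathcal{L}}_{h_0\tilde{f}}$ into $\mathbf{CN}(\tilde{f};\xi)_{\uparrow}$ and $\mathbf{CN}(\tilde{f};\xi)^{\downarrow}$.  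After invoking that, the proof of Theorem~\ref{bigiso} involves no duality whatsoever: both filtered pieces are compared to (completed) singular chains via the same Pajitnov machinery applied to $\tilde{f}$ and to $-\tilde{f}$, and the remaining work is the completed Mayer--Vietoris argument described above.
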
 

When $\Gamma$ is discrete, we show in Section \ref{decompsect} that, up to a notion of ``filtered matched homotopy equivalence'' (Definition \ref{fmhedef}) that preserves the isomorphism types of the associated persistence modules $\mathbb{H}_k(\mathcal{CP})$, any chain-level filtered matched pair  $\mathcal{CP}=(\mathcal{C},\mathcal{C}^{\downarrow},\mathcal{C}_{\uparrow},\phi^{\downarrow},\phi_{\uparrow})$ splits as a direct sum of five types of simple building blocks, leading to a corresponding decomposition of each $\mathbb{H}_k(\mathcal{CP})$.   Two of the five types of building block  correspond to homologically trivial summands in decompositions from \cite{UZ} of the Floer-type complexes $\mathcal{C}_{\uparrow},\mathcal{C}^{\downarrow}$.  Another two correspond to elements $([a],\ell)$ of the basis spectrum of the filtered matched pair $\mathcal{P}$ obtained by applying the homology functor to $\mathcal{CP}$ (with one variant corresponding to elements with $\ell\geq 0$ and the other to elements with $\ell<0$).  The remaining building block corresponds to the $\Lambda$-torsion part of the homology  of the chain complex of $\Lambda$-modules $\mathcal{C}$, and yields a contribution to each $\mathbb{H}_k(\mathcal{CP})_{s,t}$ that is independent of the  parameters $s$ and $t$ and of the filtrations on the Floer-type complexes $\mathcal{C}_{\uparrow},\mathcal{C}^{\downarrow}$.  (If $\Gamma=\{0\}$ then $\Lambda$ is the field $\kappa$ so the $\Lambda$-torsion is trivial; for nontrivial $\Gamma$ the $\Lambda$-torsion part of $H_k(\mathcal{CP})$ corresponds to what is denoted $V_{r}(\xi)$ at the start of \cite[Section 4]{BH17}, and thus to the ``Jordan cells'' of \cite{BD},\cite{BH17}.)  

The ``full barcode'' of $\mathcal{CP}$ (Definition \ref{fullbar}) comprises intervals-modulo-$\Gamma$-translation $[a,b]^{\Gamma}$ and $(a,b)^{\Gamma}$ obtained from the basis spectrum of $\mathcal{P}$, and $[a,b)^{\Gamma}$ and $(a,b]^{\Gamma}$ obtained from the concise barcodes (as in \cite{UZ}) of $\mathcal{C}_{\uparrow},\mathcal{C}^{\downarrow}$.  Then, continuing to assume that $\Gamma$ is discrete, $\mathbb{H}_k(\mathcal{CP})$ splits as a direct sum of $\Gamma$-periodic versions of the block modules of \cite{CO} associated to the intervals in the full barcode, together with a contribution from the torsion part of $H_k(\mathcal{C})$; see Theorem \ref{bigdecomp} for a precise statement.   
As a practical matter, this implies that the constructions of Section \ref{fmpsec} at the homological level, together with those of \cite{UZ} associated with the Floer-type complexes $\mathcal{C}_{\uparrow},\mathcal{C}^{\downarrow}$, are sufficient for the computation of all invariants associated to our algebraic version of interlevel persistence when $\Gamma$ is discrete; one does not need to engage directly with the somewhat more complicated constructions in Section \ref{clsec}, though such constructions have conceptual importance in that they justify our interpretation of the invariants via Theorem \ref{introiso}.  Note that both Theorem \ref{basistheorem} which allows one to find the intervals $[a,b]^{\Gamma}$ and $(a,b)^{\Gamma}$ in the full barcode, and \cite[Theorem 3.5]{UZ} which allows one to find the intervals $[a,b)^{\Gamma}$ and $(a,b]^{\Gamma}$, have proofs that are at least implicitly algorithmic.

If $\Gamma$ is not discrete, one cannot expect chain-level Poincar\'e--Novikov structures $\mathcal{CP}$ to be classified up to filtered matched homotopy equivalence in as straightforward a way as in the discrete case;  after all, any such classification would need to be at least as complicated as the classification of the possible $H_k(\mathcal{C})$, which can be arbitrary finitely-generated $\Lambda$-modules, and the indiscreteness of $\Gamma$ implies that $\Lambda$ is not a PID, so that its finitely-generated modules do not admit a simple classification.  From the persistent homology viewpoint, though, one would be interested mainly in the invariants of $\mathcal{CP}$ that depend nontrivially on the filtrations on $\mathcal{C}_{\uparrow}$ and $\mathcal{C}^{\downarrow}$ and are suitably stable under perturbation.  Examples of such invariants are the gaps $G_i$ of the filtered matched pair $\mathcal{P}$ obtained by applying the homology functor to $\mathcal{CP}$, and the lengths of the finite-length bars in the concise barcodes of $\mathcal{C}_{\uparrow}$ and $\mathcal{C}^{\downarrow}$.  It is not clear to me whether there are other such invariants going beyond these.

\section{Examples, connections, and interpretations}\label{geomintro}

\subsection{Morse functions and extended persistence} \label{connext}
We now discuss in a bit more detail how our algebraic setup arises in practice.  Let us begin with the special case of a Morse function $f\co X\to \R$ on a compact $n$-dimensional $\kappa$-oriented smooth manifold $X$.  (So in this case $\Gamma=\{0\}$.)  As we review in Section \ref{morseintro}, associated to $f$ is a $\Lambda_{\uparrow}$-Floer type complex $\mathbf{CM}(f)_{\uparrow}$; this notation encompasses  both the usual Morse chain complex which we denote on its own by $\mathbf{CM}_{*}(f)$ and the filtration function $\ell_{\uparrow}^{f}$ which assigns to a formal sum of critical points the maximal critical value of a critical point appearing in the sum.  There is a standard isomorphism $\phi_f\co H_*(X;\kappa)\to \mathbf{HM}_*(f)$ between the $\kappa$-coefficient singular homology of $X$ and the homology of $\mathbf{CM}_{*}(f)$ (see, \emph{e.g.}, \cite[Chapter 6]{Pa}).  Definition \ref{cmdef} associates to $f$ a chain-level $n$-Poincar\'e--Novikov structure; upon passing to homology (and using the isomorphism $\phi_{h_0}$ to identify the Morse homology of the auxiliary Morse function $h_0$ in Definition \ref{cmdef} with $H_*(X;\kappa)$) one obtains a $n$-Poincar\'e--Novikov structure consisting of the following data:
\begin{itemize} \item the isomorphism $\phi_f\co H_*(X;\kappa)\to \mathbf{HM}_*(f)$;
\item the filtration function $\rho_{\uparrow}^{f}\co \mathbf{HM}_{*}(f)\to\R\cup\{-\infty\}$ defined by $\rho_{\uparrow}^{f}(h)=\inf\{\ell_{\uparrow}^{f}(c)|c\in \mathbf{CM}_{*}(f),\,[c]=h\}$; and \item for each $k$, the map $\mathcal{D}_k\co H_k(X;\kappa)\to \mathrm{Hom}_{\kappa}(H_{n-k}(X;\kappa),\kappa)$ corresponding to the Poincar\'e intersection pairing: for $a\in H_k(X;\kappa)$ and $b\in H_{n-k}(X;\kappa)$, $(\mathcal{D}_ka)(b)=a\cap b$ is the signed count of intersections between suitably generic representatives of $a$ and $b$.
\end{itemize}

Denote this Poincar\'e--Novikov structure by $\mathcal{N}(f)$.  The general procedure of Section \ref{pnsect} then yields, for each $k\in \Z$, a filtered matched pair $\mathcal{P}(\mathcal{N}(f))_k$.  By using Proposition \ref{downup} (applied to the case that $\Gamma=\{0\}$) and then passing to homology,  one sees that $\mathcal{P}(\mathcal{N}(f))_k$ is isomorphic in the natural sense to the filtered matched pair \begin{equation}\label{introupdown} \xymatrix{ & (\mathbf{HM}_{k}(-f),-\rho_{\uparrow}^{-f}) \\ H_k(X;\kappa) \ar[ru]^{\phi_{-f}} \ar[rd]_{\phi_f} & \\ & (\mathbf{HM}_k(f),\rho_{\uparrow}^{f})  } \end{equation}  Let us write $\mu_{\uparrow}^{f}=\rho_{\uparrow}^{f}\circ \phi_f$ and $\mu^{\downarrow}_{f}=(-\rho^{\downarrow}_{-f})\circ\phi_{-f}$ for the 
pullbacks of the filtration functions appearing in the above diagram to functions on $H_k(X;\kappa)$.  These define, respectively, an ascending and a descending filtration on $H_k(X;\kappa)$; using, \emph{e.g.}, \cite[Theorem 3.8]{Qin} one can see that, for any $t\in \R$, \begin{align}\label{qiniso} \{h\in H_k(X;\kappa)|\mu_{\uparrow}^{f}(h)\leq t\}&=\mathrm{Im}\left(H_k(X^{\leq t};\kappa)\to H_k(X;\kappa)\right)  \\ 
\{h\in H_k(X;\kappa)|\mu^{\downarrow}_{f}(h)\geq t\} &=  \mathrm{Im}\left(H_k(X_{\geq t};\kappa)\to H_k(X;\kappa)\right) \nonumber\end{align} where the maps on homology are induced by inclusion and, as before, $X^{\leq t}=f^{-1}((-\infty,t]))$ and $X_{\geq t}=f^{-1}([t,\infty))$.  Thus $\mu_{\uparrow}^{f}$ associates to a homology class $h$ its ``minimax value''--the infimum, among all cycles $c$ representing $h$, of the maximal value of $f$ on $c$---while $\mu^{\downarrow}_{f}(h)$ is the similarly-defined maximin value.

A doubly-orthogonal basis for the filtered matched pair (\ref{introupdown})---as defined in general in Definition \ref{dodfn}---then amounts to a basis $\{e_1,\ldots,e_d\}$ for $H_k(X;\kappa)$ such that, for $c_1,\ldots,c_d\in \kappa$, one has both \[ \mu_{\uparrow}^{f}\left(\sum_{i=1}^{d}c_ie_i\right)=\max\left\{\left.\mu_{\uparrow}^{f}(e_i)\right| c_i\neq 0\right\}\quad\mbox{and}\quad \mu^{\downarrow}_{f}\left(\sum_{i=1}^{d}c_ie_i\right)=\min\left\{\left.\mu_{f}^{\downarrow}(e_i)\right| c_i\neq 0\right\} .\]  Our general prescription then associates a barcode interval $[\mu_{\uparrow}^{f}(e_i),\mu^{\downarrow}_{f}(e_i)]$ in degree $k$ to each $e_i$ with  $\mu_{\uparrow}^{f}(e_i)\leq \mu^{\downarrow}_{f}(e_i)$, and a barcode interval $\left(\mu^{\downarrow}_{f}(e_i),\mu_{\uparrow}^{f}(e_i)\right)$ in degree $k-1$ to each $e_i$ with $\mu^{\downarrow}_{f}(e_i)<\mu_{\uparrow}^{f}(e_i)$.

Recall from \cite{ext}, \cite[Section 3]{CDM} that the extended persistence of $f$ is constructed from the persistence module given by choosing
an increasing sequence $(s_0,\ldots,s_m)$ with one element in each connected component of the set of regular values of $f$ (so $s_0<\min f$ and $s_m>\max f$) and considering the diagram \begin{align} \label{essdiag} 0=& H_k(X^{\leq s_0};\kappa)\to H_k(X^{\leq s_1};\kappa)\to\cdots\to H_k(X^{\leq s_m};\kappa)=H_k(X;\kappa)=H_k(X,X_{\geq s_m};\kappa)\\ &\to H_k(X,X_{\geq s_{m-1}};\kappa)\to\cdots\to H_k(X,X_{\geq s_0};\kappa)=0\nonumber \end{align} where all maps are induced by inclusion. 
The standard persistence module decomposition theorem \cite{ZC} splits this persistence module into a direct sum of interval modules, \emph{i.e.} diagrams of form \begin{equation}\label{interval} 0\to\cdots 0\to \kappa\to\kappa\to\cdots\kappa\to 0\to\cdots\to 0\end{equation} where each map $\kappa\to\kappa$ is the identity. This yields a persistence diagram which is subdivided into ``ordinary,'' ``relative,'' and ``extended'' subdiagrams corresponding respectively to summands (\ref{interval}) in which the nonzero terms all appear in the first half of the diagram, all appear in the second half of the diagram, or bridge the two halves (and thus include a one-dimensional summand of $H_k(X;\kappa)$).  The ordinary subdiagram evidently corresponds to the finite-length part of the sublevel barcode of $f$, and by the EP Symmetry Corollary in \cite{CDM} the relative subdiagram corresponds to the finite-length part of the sublevel barcode of $-f$.  

We claim that there is a straightforward correspondence between the extended subdiagram and doubly-orthogonal bases for the filtered matched pair (\ref{introupdown}).  Indeed, for $i=1,\ldots,m$ let $c_i$ be the unique critical value for $f$ lying in the interval $(s_{i-1},s_i)$, and consider a summand (\ref{interval}) of (\ref{essdiag}) whose first nonzero term lies in $H_k(X^{\leq s_i};\kappa)$ and whose last nonzero term lies in $H_k(X,X_{\geq s_j};\kappa)$.  Let $h$ generate the $H_k(X;\kappa)$-term of this summand.  Then (since $s_i\in (a_i,a_{i+1})$ and the image of $H_k(X^{\leq s};\kappa)$ in $H_k(X;\kappa)$ is the same for all $s\in (a_i,a_{i+1})$) we see that $a_i$ is the infimal value of $s$ with the property that $h\in \mathrm{Im}(H_k(X^{\leq s};\kappa)\to H_k(X;\kappa))$, \emph{i.e.} that $\mu_{\uparrow}^{f}(h)=a_i$.  Similarly $a_{j}$ is the supremum of all values of $s$ such that $h\in \ker(H_k(X;\kappa)\to H_k(X,X_{\geq s};\kappa))$; by the exactness of the homology exact sequences of the pairs $(X,X_{\geq s})$ this is equivalent to the statement that $\mu^{\downarrow}_{f}(h)=a_j$.  

Now given a decomposition of (\ref{essdiag}) into interval modules, let $e_1,\ldots,e_d$ denote generators for the $H_k(X;\kappa)$-terms of those summands in the decomposition for which the $H_k(X;\kappa)$ term is nontrivial (\emph{i.e.}, of those summands that contribute to the extended subdiagram). Thus, for $\ell=1,\ldots,d$, the values $\mu_{\uparrow}^{f}(e_{\ell})$ and $\mu^{\downarrow}_{f}(e_{\ell})$ can be read off in the manner described above from the first and last nonzero terms of the summand corresponding to $e_{\ell}$.  More generally, if we apply the same reasoning to an arbitrary linear combination of the $e_{\ell}$, we see that  \[ \mu_{\uparrow}^{f}\left(\sum_{i=1}^{d}c_ie_i\right)=\max\left\{\left.\mu_{\uparrow}^{f}(e_i)\right| c_i\neq 0\right\}\quad\mbox{and}\quad \mu^{\downarrow}_{f}\left(\sum_{i=1}^{d}c_ie_i\right)=\min\left\{\left.\mu_{\uparrow}^{f}(e_i)\right| c_i\neq 0\right\} ,\] \emph{i.e.} that $\{e_1,\ldots,e_d\}$ is a doubly-orthogonal basis.  Thus we have a direct relationship between the extended subdiagram for (\ref{essdiag}) and the basis spectrum of (\ref{introupdown}), and hence of the essential barcode of $\mathcal{N}(f)$.  In view of this, basis spectra of arbitrary filtered matched pairs might be regarded as generalizing the extended subdiagram of extended persistence, and then Theorem \ref{introiso} could be seen as a Novikov-theoretic extension of the EP Equivalence Theorem (between extended and interlevel persistence) from \cite{CDM}.

\subsection{Novikov homology}

Let $X$ be a $\kappa$-oriented $n$-dimensional smooth manifold, let $\xi\in H^1(X;\R)$, and let $\pi\co\tilde{X}\to X$ be the smallest covering space such that $\pi^*\xi=0$.  The deck transformation group of $\tilde{X}$ is then naturally isomorphic to the subgroup $\Gamma=\{\langle\xi,a\rangle|a\in H_1(X;\Z)\}$ of $\R$, and we use this group $\Gamma$ (along with the field $\kappa$) to form the ring $\Lambda$ and the fields $\Lambda_{\uparrow},\Lambda^{\downarrow}$ as in Section \ref{basic}. For $\tilde{f}\co\tilde{X}\to\R$ a Morse function whose exterior derivative $d\tilde{f}$ is the pullback by $\pi$ of a de Rham representative of $\xi$, we recall in Section \ref{novsect} the $\Lambda_{\uparrow}$-Floer-type complex $\mathbf{CN}(\tilde{f};\xi)_{\uparrow}$.  The homology of this complex, $\mathbf{HN}_{*}(\tilde{f};\xi)$, comes equipped with a filtration function $\rho^{\tilde{f}}_{\uparrow}$ defined as usual by taking infima of filtration levels of representing cycles.  

The deck transformations of the covering space $\tilde{X}$ make its $\kappa$-coefficient singular homology into a graded module over $\Lambda=\kappa[\Gamma]$; we write this homology as $H_*(\tilde{X};\Lambda)$ to emphasize this module structure.  As originally shown in \cite{Lat}, \cite{Pa95},  there is a natural isomorphism $\Lambda_{\uparrow}\otimes_{\Lambda}H_*(\tilde{X};\Lambda)\cong\mathbf{HN}_{*}(f;\xi)$.  Let us write $\phi_{\tilde{f}}\co H_*(\tilde{X};\Lambda)\to\mathbf{HN}_{*}(f;\xi)$ for the composition of this isomorphism with the coefficient extension map  $H_*(\tilde{X};\Lambda)\to \Lambda_{\uparrow}\otimes_{\Lambda}H_*(\tilde{X};\Lambda)$.  (Then $\phi_f$ is the map induced on homology by the ``Latour map'' of Section \ref{novsect}.)  If $\Gamma\neq \{0\}$ (so that $\Lambda_{\uparrow}$ includes infinite sums that do not lie in $\Lambda$) this map will not be surjective unless its codomain vanishes; it also will typically not be injective, as its kernel is the $\Lambda$-torsion submodule of $H_*(\tilde{X};\Lambda)$.  

The Poincar\'e--Novikov structure $\mathcal{N}(\tilde{f};\xi)$ associated to $f$---obtained by passing to homology from the chain-level Poincar\'e--Novikov structure $\mathcal{CN}(f;\xi)$ from Section \ref{novsect}---is then given by:
\begin{itemize} \item the map $\phi_{f}\co H_*(\tilde{X};\Lambda)\to \mathbf{HN}_{*}(\tilde{f};\xi)$;
\item the aforementioned filtration function $\rho_{\uparrow}^{f}$ on $\mathbf{HN}_{*}(\tilde{f};\xi)$; and \item for each $k$, the map $\mathcal{D}_k\co H_k(\tilde{X};\Lambda)\to {}^{\vee}H_{n-k}(\tilde{X};\Lambda)$, where the notation ${}^{\vee}$ is defined in Section \ref{dualsec}, such that for $a\in H_k(\tilde{X};\Lambda)$ and $b\in H_{n-k}(\tilde{X};\Lambda)$, $\langle a,b\rangle:=(\mathcal{D}_ka)(b)\in \Lambda$ is the intersection pairing of $a$ with $b$ in the sense of \cite[Definition 4.66]{Ran}.
\end{itemize}
In other words, writing elements of $\Lambda=\kappa[\Gamma]$ as $\sum_{g\in \Gamma}c_gT^g$, we have $\langle a,b\rangle=\sum_{g\in\Gamma}\left((T^ga)\cap b\right)T^g$, with $\left((T^ga)\cap b\right)\in\kappa$ denoting the signed count of intersections of the image under the deck transformation corresponding to $g$ of a generic representative of $a$  with a generic representative of $b$.  The map \[ \langle\cdot,\cdot\rangle \co H_k(\tilde{X};\Lambda)\times H_{n-k}(\tilde{X};\Lambda)\to \Lambda \] is \emph{sesquilinear} in that it is biadditive and obeys, for $\lambda,\mu\in \Lambda$, \[ \langle\lambda a,\mu b\rangle=\bar{\lambda}\mu\langle a,b\rangle\] where the conjugation operation $\lambda\mapsto\bar{\lambda}$ is defined in Section \ref{conjsect}.  Note that the latter property illustrates that this pairing cannot be extended to a pairing on $\Lambda_{\uparrow}\otimes_{\Lambda}H_*(\tilde{X};\Lambda)$ (or on $\mathbf{HN}_{*}(\tilde{f};\xi)$ to which it is isomorphic): if $\lambda,\mu\in \Lambda_{\uparrow}\setminus \Lambda$, then while there is an element $\bar{\lambda}$ of the field $\Lambda^{\downarrow}$, there will usually be no way of making sense of the product $\bar{\lambda}\mu$.  The fact that the Poincar\'e pairing cannot be defined directly on Novikov homology is part of the motivation for the way that we have defined Poincar\'e--Novikov structures.

Following the procedure of Section \ref{pnsect} then yields, for each $k$, a filtered matched pair $\mathcal{P}(\mathcal{N}(\tilde{f};\xi))_{k}$ which Proposition \ref{downup} (after passing to homology) identifies with   \begin{equation}\label{novupdown} \xymatrix{ & (\overline{\mathbf{HN}_{k}(-\tilde{f};-\xi)},\rho^{\downarrow}_{\tilde{f}}) \\ H_k(\tilde{X};\Lambda) \ar[ru]^{\phi_{-\tilde{f}}} \ar[rd]_{\phi_{\tilde{f}}} & \\ & (\mathbf{HN}_k(\tilde{f};\xi),\rho_{\uparrow}^{\tilde{f}})  } \end{equation}  Here we write $\rho^{\downarrow}_{\tilde{f}}$ for $(-\rho_{\uparrow}^{-\tilde{f}})$, and the notation $\overline{\mathbf{HN}_{k}(-\tilde{f};-\xi)}$ refers to the conjugation functor defined in Section \ref{conjsect}, which converts $\Lambda_{\uparrow}$-vector spaces into $\Lambda^{\downarrow}$-vector spaces. The space $\overline{\mathbf{HN}_{k}(-\tilde{f};-\xi)}$ can be more directly interpreted in terms of $\tilde{f}$ as the $k$th homology of a $\Lambda^{\downarrow}$-Floer-type complex $\mathbf{CN}(f;\xi)^{\downarrow}$ with differential that counts positive gradient flowlines of $\tilde{f}$, just as $\mathbf{HN}_k(\tilde{f};\xi)$ is the $k$th homology of a $\Lambda_{\uparrow}$-Floer-type complex $\mathbf{CN}(f;\xi)_{\uparrow}$ whose differential counts negative gradient flowlines of $\tilde{f}$.  

If $\Gamma$ is discrete, Theorem \ref{basistheorem} shows that each filtered matched pair $\mathcal{P}(\mathcal{N}(\tilde{f};\xi))_k$ admits a doubly-orthogonal basis $\{e_1,\ldots,e_d\}\subset H_k(\tilde{X};\Lambda)$, yielding an essential barcode (Definition \ref{pnbar}) for $\mathcal{N}(\tilde{f};\xi)$ consisting of intervals modulo $\Gamma$-translation $\left[\rho_{\uparrow}^{\tilde{f}}(\phi_{\tilde{f}}e_i),\rho_{\tilde{f}}^{\downarrow}(\phi_{-\tilde{f}}e_i)\right]^{\Gamma}$ or $\left(\rho_{\tilde{f}}^{\downarrow}(\phi_{-\tilde{f}}e_i),\rho_{\uparrow}^{\tilde{f}}(\phi_{\tilde{f}}e_i)\right)^{\Gamma}$ (depending on which of $\rho_{\uparrow}^{\tilde{f}}(\phi_{\tilde{f}}e_i),\rho_{\tilde{f}}^{\downarrow}(\phi_{-\tilde{f}}e_i)$ is larger). In view of the discussion in Section \ref{connext} this essential barcode could be seen as providing a Novikov-theoretic analogue of the extended subdiagram of extended persistence.  The bars in the essential barcode of $\mathcal{N}(\tilde{f};\xi)$ represent contributions to the homologies of interlevel sets of $\tilde{f}$ according to Theorems \ref{bigdecomp} and \ref{introiso}.

Regardless of whether $\Gamma$ is discrete, the Poincar\'e--Novikov structure $\mathcal{N}(\tilde{f};\xi)$ has associated gaps $\mathcal{G}_{k,i}(\mathcal{N}(\tilde{f};\xi))$ (Definitions \ref{pngap} and \ref{gapdfn}), whose absolute values are equal to the lengths of the bars in the essential barcode in case the latter is defined.   Given two functions $\tilde{f}_0,\tilde{f}_1\co \tilde{X}\to \R$ of the type being considered in this section, a standard argument based on an estimate as in (\ref{filtchange}) for the behavior of the filtrations with respect to the usual continuation maps $\mathbf{CN}(\tilde{f}_0;\xi)_{\uparrow}\to \mathbf{CN}(\tilde{f}_1;\xi)_{\uparrow}$ and $\mathbf{CN}(\tilde{f}_1;\xi)_{\uparrow}\to \mathbf{CN}(\tilde{f}_0;\xi)_{\uparrow}$  implies that there is a $\|\tilde{f}_1-\tilde{f}_0\|_{C^0}$-morphism (in the sense of Definition \ref{pntmorph}) between $\mathcal{N}(\tilde{f}_0;\xi)$ and $\mathcal{N}(\tilde{f}_1;\xi)$.  Via  Corollaries \ref{pngapstab} and \ref{pnstab} of Theorem \ref{introstab}, this implies stability results for the gaps and (if $\Gamma$ is discrete) essential barcodes of $\mathcal{N}(\tilde{f}_0;\xi)$ and $\mathcal{N}(\tilde{f}_1;\xi)$ analogous to \cite[LZZ Stability Theorem]{CDM}, \cite[Theorem 1.2]{B18}, and \cite[Theorem 1.3]{B1}.

For $h\in H_*(\tilde{X};\Lambda)$, the values $\rho^{\downarrow}_{\tilde{f}}(\phi_{-\tilde{f}}h)$ and $\rho_{\uparrow}^{\tilde{f}}(\phi_{\tilde{f}}h)$ are, respectively, ``maximin'' and ``minimax'' quantities describing the levels of the function $\tilde{f}$ at which $h$ can be represented in the Novikov chain complexes of $-\tilde{f}$ and $\tilde{f}$.  As a sample of the kind of information that can be read from our techniques, we prove:

\begin{cor} With notation as above, suppose that $\Gamma=\Z$, so that $\tilde{f}\co \tilde{X}\to \R$ is a lift of a Morse function $f\co X\to \R/\Z$. Let $m\in \N$, and suppose that for each $k\in \Z$ there is a subset $S_k\subset H_k(\tilde{X};\Lambda)$, independent over $\Lambda$, such that for each $h\in S_k$ we have  $\rho^{\downarrow}_{\tilde{f}}(\phi_{-\tilde{f}}h)-\rho_{\uparrow}^{\tilde{f}}(\phi_{\tilde{f}}h)\geq m$.  Also let $\tau_k$ denote the dimension (over the field $\kappa$) of the $\Lambda$-torsion submodule $tH_k(\tilde{X};\Lambda)$ of $H_k(\tilde{X};\Lambda)$.  Then for each regular value $\theta$ of $f$ we have \[ \dim_{\kappa}H_k(f^{-1}(\{\theta\};\kappa)\geq \tau_{k}+m\left(\#S_k+\#S_{n-k-1}\right).\]
\end{cor}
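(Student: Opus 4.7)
Plan. The strategy combines three ingredients: (i) the existence of doubly-orthogonal bases for the filtered matched pairs $\mathcal{P}(\mathcal{N}(\tilde{f};\xi))_j$ for every $j$ (Theorem \ref{basistheorem}, applicable since $\Gamma=\Z$ is discrete), which yields the essential barcode of $\mathcal{N}(\tilde{f};\xi)$ via Definition \ref{pnbar}; (ii) the Poincar\'e duality bijection of Corollary \ref{pndual}; and (iii) the interlevel-set identification of Theorem \ref{introiso} combined with the decomposition Theorem \ref{bigdecomp}.

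The first step is a gap-count argument. Using a doubly-orthogonal basis of $\mathcal{P}(\mathcal{N}(\tilde{f};\xi))_k$ and the defining min/max properties of $\rho_\uparrow^{\tilde{f}}\circ\phi_{\tilde{f}}$ and $\rho^\downarrow_{\tilde{f}}\circ\phi_{-\tilde{f}}$ on $\Lambda$-combinations, I would show that the hypotheses on $S_k$ force at least $\#S_k$ of the gaps $\mathcal{G}_{k,i}(\mathcal{N}(\tilde{f};\xi))$ to be $\geq m$. By Definition \ref{pnbar} this produces at least $\#S_k$ closed intervals $[a,b]^\Gamma$ of length $\geq m$ in the degree-$k$ essential barcode. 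The same reasoning in degree $n-k-1$ yields $\geq\#S_{n-k-1}$ closed intervals of length $\geq m$ there, which Corollary \ref{pndual} converts via the duality bijection into $\#S_{n-k-1}$ open intervals $(a,b)^\Gamma$ of length $\geq m$ in degree $n-(n-k-1)-1=k$.

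The second step identifies $H_k(f^{-1}(\{\theta\});\kappa)$ with a value of $\mathbb{H}_k(\mathcal{CP}(\mathcal{CN}(\tilde{f};\xi)))$. Fix a lift $\tilde{\theta}\in\R$ of $\theta$; since $\theta$ is regular, the restriction of the covering map to $\tilde{f}^{-1}(\{\tilde{\theta}\})$ is a homeomorphism onto $f^{-1}(\{\theta\})$. For $\varepsilon>0$ so small that $[\tilde{\theta}-\varepsilon,\tilde{\theta}+\varepsilon]$ contains no critical values of $\tilde{f}$, standard Morse theory provides a deformation retract of $\tilde{f}^{-1}([\tilde{\theta}-\varepsilon,\tilde{\theta}+\varepsilon])$ onto $\tilde{f}^{-1}(\{\tilde{\theta}\})$; Theorem \ref{introiso} applied with $(s,t)=(-\tilde{\theta}+\varepsilon,\tilde{\theta}+\varepsilon)$ (so that $s+t=2\varepsilon>0$) then gives
\[ H_k(f^{-1}(\{\theta\});\kappa)\cong\mathbb{H}_k(\mathcal{CP}(\mathcal{CN}(\tilde{f};\xi)))_{-\tilde{\theta}+\varepsilon,\,\tilde{\theta}+\varepsilon}. \]
Theorem \ref{bigdecomp} decomposes this vector space as a direct sum of contributions from $\Gamma$-periodic block modules associated with the intervals of the full barcode, plus a constant-in-$(s,t)$ summand of dimension $\tau_k$ coming from the $\Lambda$-torsion of $H_k(\mathcal{C})$. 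In this decomposition each closed bar $[a,b]^\Gamma$ or open bar $(a,b)^\Gamma$ in degree $k$ contributes $\#\{n\in\Z:\tilde{\theta}-b<n<\tilde{\theta}-a\}$ to the total dimension (strict inequalities being automatic because $\tilde{\theta}$ is not a critical value of $\tilde{f}$).

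The proof concludes with a counting argument. Since $\theta$ is a regular value of $f\co X\to\R/\Z$, the lift $\tilde{\theta}$ is not congruent modulo $\Z$ to any critical value of $\tilde{f}$, so $\tilde{\theta}-a$ and $\tilde{\theta}-b$ are nonintegral whenever $a,b$ are critical values. An elementary calculation shows that an open interval in $\R$ of length $L\geq m$ with nonintegral endpoints contains at least $m$ integers (this is where the assumption $m\in\N$ is used). Each of the $\#S_k$ closed and $\#S_{n-k-1}$ open bars of length $\geq m$ in degree $k$ therefore contributes at least $m$ dimensions to $H_k(f^{-1}(\{\theta\});\kappa)$, and summing with $\tau_k$ yields the claimed inequality. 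The main obstacle is the variational first step: $\rho_\uparrow$ and $\rho^\downarrow$ on a nontrivial $\Lambda$-combination $\sum c_ie_i$ depend not just on the support $\{i:c_i\neq 0\}$ but also on the Novikov valuations of the coefficients, so one must use the doubly-orthogonal property with some care to convert $\Lambda$-independence of $S_k$ plus a pointwise gap bound $\geq m$ into $\#\{i:\mathcal{G}_{k,i}\geq m\}\geq\#S_k$. Once this is established the remaining assembly is routine.
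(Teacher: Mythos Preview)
Your proposal is correct and follows essentially the same route as the paper's proof: pass to $\mathbb{H}_k$ via Theorem~\ref{introiso}, decompose via Theorem~\ref{bigdecomp}, produce closed bars of length $\geq m$ in degree $k$ from $S_k$ and open bars in degree $k$ from $S_{n-k-1}$ via Corollary~\ref{pndual}, and count integers in intervals.

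The one place where you make the argument harder than necessary is what you call the ``main obstacle.'' Showing that at least $\#S_k$ of the gaps $\mathcal{G}_{k,i}$ are $\geq m$ does \emph{not} require any analysis of doubly-orthogonal bases or of how $\rho_\uparrow,\rho^\downarrow$ behave on $\Lambda$-combinations: it is immediate from Definition~\ref{gapdfn}. The $i$th gap is \emph{defined} as the supremum of $\gamma$ for which some independent $i$-tuple has all members satisfying the gap bound $\geq\gamma$; since any $i\leq\#S_k$ elements of $S_k$ form such a tuple with $\gamma=m$, you get $\mathcal{G}_{k,i}\geq m$ for $i=1,\dots,\#S_k$ directly. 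The paper then invokes Proposition~\ref{gapchar} to convert this into the statement that at least $\#S_k$ closed intervals in the degree-$k$ essential barcode have length $\geq m$. Your $\varepsilon$-perturbation in applying Theorem~\ref{introiso} is also unnecessary (the theorem allows $s+t=0$), though harmless.
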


\begin{proof}
If $\theta'\in\R$ projects to $\theta\in \R/\Z$ then $\theta'$ is a regular value of $\tilde{f}$ and the covering projection sends $\tilde{f}^{-1}(\{\theta'\})$ diffeomorphically to $f^{-1}(\{\theta\})$ so it suffices to prove the inequality with $f^{-1}(\{\theta\})$ replaced by $\tilde{f}^{-1}(\{\theta'\})$.  By Theorem \ref{introiso}, $H_k(\tilde{f}^{-1}(\{\theta'\});\kappa)$ is isomorphic to what is there denoted by $\mathbb{H}_k(\mathcal{CP}(\mathcal{CN}(f;\xi)))_{-\theta',\theta'}$.  By Theorem \ref{bigdecomp} (and Definition  \ref{fullbar}), $\dim_{\kappa}\mathbb{H}_k(\mathcal{CP}(\mathcal{CN}(f;\xi)))_{-\theta',\theta'}$ is equal to the sum of $\tau_k$ and, for each interval-mod-$\Z$-translation $I^{\Z}$ in the degree-$k$ full barcode of 
$\mathcal{CP}(\mathcal{CN}(f;\xi))$, the number of $g\in \Z$ such that $\theta'+g\in I$.  This degree-$k$ full barcode includes the degree-$k$ essential barcode of $\mathcal{N}(\tilde{f};\xi)$.  

Let us write $s_k=\#S_k$.  By hypothesis and Definition \ref{gapdfn}, for $i=1,\dots,s_k$ the gap $\mathcal{G}_{k,i}(\mathcal{N}(f;\xi))$ is greater than or equal to $m$.  It then follows from Proposition \ref{gapchar} that the degree-$k$ essential barcode of $\mathcal{N}(\tilde{f};\xi)$ includes at least $s_k$ elements $I^{\Gamma}$, counted with multilplicity, for which $I$ is a closed interval of length at least $m$.  Reversing the roles of $k$ and $n-k-1$, it follows from the duality result Corollary \ref{pndual} that the degree-$k$ essential barcode of $\mathcal{N}(\tilde{f};\xi)$ also includes at least $s_{n-k-1}$ elements $I^{\Gamma}$ for which $I$ is an open interval of length at least $m$.  Since the endpoints of any interval in the essential barcode are necessarily critical values of $\tilde{f}$,\footnote{This follows, even in the more subtle case where $\Gamma$ is dense, from \cite[Theorem 1.4]{U08}, since these endpoints are spectral numbers for Floer-type complexes all of whose nonzero elements have filtration equal to critical values of $\tilde{f}$.} regardless of whether such an interval $I$ is open or closed, if its length is at least $m$ then it must contain at least $m$ of the regular values $\theta'+g$ as $g$ ranges through $\Z$.  Thus from these (at least) $(s_k+s_{n-k-1})$-many intervals of length at least $m$ in the degree-$k$ full barcode we obtain a contribution $m(s_k+s_{n-k-1})$ to $\dim_{\kappa}(\tilde{f}^{-1}(\{\theta\});\kappa)$.  Combining this with the contribution $\tau_{k}$ mentioned in the previous paragraph implies the result.
\end{proof}

In the presence of more specific information about the bars in the essential barcode of $\mathcal{N}(\tilde{f};\xi)$ one could adapt the above proof to yield a sharper result, such as one that applies to some but not all of the regular level sets of $f$.

 \begin{remark}\label{burghelea} Let us  compare our approach to barcodes for Novikov homology to the one taken by Burghelea in works such as \cite{Bubook}, \cite{B18},\cite{B1}.  In both cases, the essential-barcode-type invariants can be viewed as being derived from the interaction between an ascending and a descending filtration by $\kappa$-subspaces of the graded $\Lambda$-module $H_*(\tilde{X};\Lambda)$.  For Burghelea these filtrations are given by the images of the inclusion-induced maps $H_*(\tilde{X}^{\leq t};\kappa)\to H_*(\tilde{X};\Lambda)$ (for the ascending filtration) and $H_*(\tilde{X}_{\geq t};\kappa)\to H_*(\tilde{X};\Lambda)$ (for the descending filtration), see, \emph{e.g.}, the notation at the start of \cite[Section 4]{B1}.  In the present work the filtrations are given by, respectively, sublevel and superlevel sets of the functions $\rho^{\tilde{f}}_{\uparrow}\circ\phi_{f}$ and $\rho_{\tilde{f}}^{\downarrow}\circ\phi_{-f}$ on $H_*(\tilde{X};\Lambda)$---said differently, by \emph{pre}images under $\phi_{\tilde{f}}$ and $\phi_{-\tilde{f}}$ of sub- and superlevel sets of the functions $\rho^{\tilde{f}}_{\uparrow}$ and $\rho_{\tilde{f}}^{\downarrow}$ defined on Novikov homology.  If $\Gamma=\{0\}$ then our filtrations are the same as Burghelea's in view of (\ref{qiniso}).  If $\Gamma\neq \{0\}$, the fact (and its analogue in Floer theory) that there is no natural inclusion-induced map to $H_*(\tilde{X};\Lambda)$ from the homologies of filtered subcomplexes of the Novikov complex---as the Novikov complex involves semi-infinite chains---motivated our different formulation.  When $\Gamma$ is discrete, one could perhaps use \cite[Proposition 5.2]{B18}, which connects Burghelea's filtrations to analogous ones on Borel--Moore homology, to obtain a relation to our approach.  

If $\Gamma$ is not discrete, the collection of our gaps $\mathcal{G}_{k,1}(\mathcal{N}(\tilde{f};\xi)),\ldots,\mathcal{G}_{k,d}(\mathcal{N}(\tilde{f};\xi))$, for $d$ equal to the maximal cardinality of a $\Lambda$-independent set in $H_k(\tilde{X};\Lambda)$, appears to be analogous to what is denoted $\mathbf{\delta}_{k}^{\omega}$ in \cite{B1}; in both cases these are multisets of $\R$ with cardinality $d$ that satisfy a stability theorem (Corollary \ref{pngapstab} for the $\mathcal{G}_{k,i}$, \cite[Theorem 1.3]{B1} for $\mathbf{\delta}_{k}^{\omega}$).  While one might speculate that   $\mathbf{\delta}_{k}^{\omega}$ consists precisely of our $\mathcal{G}_{k,i}$, any attempt to prove this lies beyond the scope of this paper.  Part of the difficulty is that, since in the indiscrete case the function $\tilde{f}$ will not be proper, the connection between homologies of filtered subcomplexes of the Novikov complex and homologies of sublevel sets of $\tilde{f}$ is less clear.  

In the indiscrete case, we also leave open the question of a duality result for the $\mathcal{G}_{k,i}(\mathcal{N}(\tilde{f};\xi))$, which would assert that (modulo reordering) the $\mathcal{G}_{k,i}$ are the negatives of the $\mathcal{G}_{n-k,i}$.  (If $\Gamma$ is discrete this follows from Corollary \ref{pndual}.  In general, Corollary \ref{weakdual} implies that the $\mathcal{G}_{k,i}$ are bounded above by the negatives of the $\mathcal{G}_{n-k,i}$ after reordering.)  There is a corresponding question of whether Burghelea's $\mathbf{\delta}_{k}^{\omega}$ is obtained by negating the elements of $\mathbf{\delta}_{n-k}^{\omega}$; by \cite[Theorem 1.4]{B20b} this is true if $\mathbf{\delta}_{n-k}^{\omega}$ is replaced by a Borel--Moore analogue ${}^{BM}\!\mathbf{\delta}_{n-k}^{\omega}$, but in the indiscrete case it is apparently unknown whether $\mathbf{\delta}_{n-k}^{\omega}={}^{BM}\!\mathbf{\delta}_{n-k}^{\omega}$.
\end{remark}

\subsection{Other filtered matched pairs}
The preceding two subsections have discussed constructions of Poincar\'e--Novikov structures $\mathcal{N}$, which give rise to essential barcodes via a general procedure that first associates to $\mathcal{N}$ filtered matched pairs $\mathcal{P}(\mathcal{N})_k$ and then associates basis spectra to the latter.  One can also consider filtered matched pairs that do not arise from Poincar\'e--Novikov structures; these still have gaps   and (if $\Gamma$ is discrete) basis spectra that satisfy the stability result Theorem \ref{introstab}, though they will not typically satisfy symmetry results such as Corollary \ref{pndual}.

For example, if one drops the hypothesis in the two preceding subsections that $X$ is $\kappa$-oriented, then due to sign issues one will no longer have the same type of Poincar\'e intersection pairing, so the construction of $\mathcal{N}(\tilde{f};\xi)$ fails, but one can still form the Novikov complexes $\mathbf{CN}(\pm \tilde{f};\pm \xi)_{\uparrow}$ and the filtered matched pairs (\ref{novupdown}). In Section \ref{isosect} we consider a persistence module $\mathbb{H}_k(\tilde{f})$ that is constructed directly from a chain-level version of (\ref{novupdown}) via (\ref{hkdef}) without using the orientation of $X$.  The isomorphism result Theorem \ref{introiso}, while stated for oriented manifolds, is in fact a consequence of Theorem \ref{bigiso} which, with no orientability assumption, connects the persistence module $\mathbb{H}_k(\tilde{f})$ to the homologies of interlevel sets. Thus, if $\Gamma$ is discrete, we will have the same relation between the basis spectrum of (\ref{novupdown}) and homologies of interlevel sets as in the previous subsection.

Another small variation on the preceding subsection would be to consider two different Morse functions $\tilde{f},\tilde{g}\co \tilde{X}\to \R$, both having derivatives equal to pullbacks of de Rham representatives of the same cohomology class $\xi$, and, for any grading $k$, form the filtered matched pair \[  \xymatrix{ & (\overline{\mathbf{HN}_{k}(-\tilde{g};-\xi)},\rho^{\downarrow}_{\tilde{g}}) \\ H_k(\tilde{X};\Lambda) \ar[ru]^{\phi_{-g}} \ar[rd]_{\phi_f} & \\ & (\mathbf{HN}_k(\tilde{f};\xi),\rho_{\uparrow}^{\tilde{f}})  } \]  The basis spectrum of this would (at least if $\Gamma$ is discrete) encode homological information about how the sublevel sets of $\tilde{f}$ interact with the superlevel sets of $\tilde{g}$.  As described in Remark \ref{twostab}, the stability theorem can be refined to give information about how the basis spectrum varies as $\tilde{f}$ and $\tilde{g}$ are varied independently of each other.

A setting that is closer to standard constructions in computational topology involves a polyhedron (\emph{i.e.}, geometric realization of a finite simplicial complex) $X$ together with a PL function $f\co X\to \R$.  The simplicial chain complex $\Delta_{*}(X;\kappa)$ carries an ascending filtration function $\ell_{\uparrow}$ and a descending filtration function $\ell^{\downarrow}$ defined by sending a simplicial chain to the maximal (resp. minimal) value of $f$ on the chain. Thus for any $t\in \R$ the subcomplex $\Delta_{*}^{\leq t}(X;\kappa)=\{a\in\Delta_*(X;\kappa)|\ell_{\uparrow}(a)\leq t\}$ is the simplicial chain complex of the subpolyhedron of $X$ consisting of simplices that are entirely contained in the sublevel set $X^{\leq t}$; by an argument on \cite[p. 135]{EH} this subpolyhedron is a deformation retract of $X^{\leq t}$.  An analogous remark applies to relate the filtration given by $\ell^{\downarrow}$ to the superlevel sets of $f$.    

This fits into our general theory of chain-level filtered matched pairs with $\Gamma=\{0\}$, taking all three chain complexes $\mathcal{C},\mathcal{C}_{\uparrow},\mathcal{C}^{\downarrow}$ in Definition \ref{cfmpdfn} equal to $\Delta_{*}(X;\kappa)$ (with the filtrations on $\mathcal{C}_{\uparrow}$ and $\mathcal{C}^{\downarrow}$ being given by $\ell_{\uparrow}$ and $\ell^{\downarrow}$ respectively), and taking the maps $\phi_{\uparrow},\phi^{\downarrow}$ both equal to the identity.  One can verify that the full barcode (Definition \ref{fullbar}) of this chain-level filtered matched pair coincides with the interlevel barcode of $f$, for instance by passing through an identification with extended persistence along the lines of Section \ref{connext}.

If we instead let $f\co X\to S^1$ be a PL function on a polyhedron (\emph{i.e.}, the restriction of $f$ to each simplex $\Delta\subset X$ should be the composition of an affine map $\Delta\to \R$ and the projection $\R\to S^1=\R/\Z$), then for a suitable infinite cyclic cover $\tilde{X}\to X$, $f$ lifts to a PL map $\tilde{f}\co \tilde{X}\to \R$.  The simplicial chain complex $\Delta_{*}(\tilde{X};\kappa)$ of the (infinite) simplicial complex $\tilde{X}$ is then a chain complex of free finite-rank modules over $\Lambda=\kappa[\Z]$.  We obtain a chain-level filtered matched pair (with $\Gamma=\Z$) by setting $\mathcal{C}=\Delta_{*}(\tilde{X};\kappa)$, $\mathcal{C}_{\uparrow}=\Lambda_{\uparrow}\otimes_{\Lambda}\mathcal{C}$, and $\mathcal{C}^{\downarrow}=\Lambda^{\downarrow}\otimes_{\Lambda}\mathcal{C}$, with $\phi_{\uparrow},\phi^{\downarrow}$ each given by coefficient extension.  As before, the filtration functions $\ell_{\uparrow},\ell^{\downarrow}$ are defined by taking, respectively, maxima and minima of $f$ on chains.  The resulting full barcode should agree with the one considered in \cite{BD},\cite{BH17}, though our method for computing it---based on the doubly-orthogonal basis constructed in the proof of Theorem \ref{basistheorem}---is rather different from the algorithm from \cite[Section 6]{BD}.

\section{Some algebraic preliminaries}\label{basic}

We now begin to give more precise explanations of the algebraic ingredients in this work.  
In the background throughout what follows---and often suppressed from the notation for brevity---are: 
\begin{itemize} \item a field $\kappa$ (which serves as a homology coefficient ring, though in some cases the appropriate ring will be an extension of $\kappa$) and
\item a finitely generated additive subgroup $\Gamma$ of $\mathbb{R}$.\end{itemize}

In motivating examples, the group $\Gamma$ is (isomorphic to) the deck transformation group of a covering space $p\co \tilde{X}\to X$, and we wish to understand the persistence theory of functions $\tilde{f}\co \tilde{X}\to \mathbb{R}$ that arise as lifts of functions $f\co X\to \R/\Gamma$.  The case that $\Gamma=\{0\}$ corresponds to classical Morse theory, or in the situation of Section \ref{floersect} to Hamiltonian Floer theory on weakly exact or positively or negatively monotone symplectic manifolds.  Readers who are (at least on first reading) inclined to confine themselves to this case might skim the present section with the understanding that, when $\Gamma=\{0\}$, the objects denoted below by $\Lambda,\Lambda_{\uparrow},\Lambda^{\downarrow},\mathcal{Q}(\Lambda),$ and $\Lambda_{\updownarrow}$ are all simply equal to the field $\kappa$, and the conjugation operation discussed in Section \ref{conjsect} is the identity, as a result of which many of the results of this section are vacuous in this special case.  

With ``$T$'' denoting a formal variable, we consider the following rings containing $\kappa$ (with addition and multiplication given by the obvious generalizations of the corresponding operations on polynomials or power series):

\begin{enumerate}\item \[ \Lambda = \left\{\left.\sum_{g\in \Gamma}a_gT^g\right|a_g\in \kappa,\, \#\{g|a_g\neq 0\}<\infty\right\} \]
\item \[  \Lambda_{\uparrow}=\left\{\left.\sum_{g\in \Gamma}a_gT^g\right|a_g\in \kappa,\, (\forall C\in \R)\left(\#\{g|a_g\neq 0\mbox{ and }g<C\}<\infty\right)\right\}  \]
\item \[ \Lambda^{\downarrow} = \left\{\left.\sum_{g\in \Gamma}a_gT^g\right|a_g\in \kappa,\, (\forall C\in \R)\left(\#\{g|a_g\neq 0\mbox{ and }g>-C\}<\infty\right)\right\}  \]\end{enumerate}

The ring $\Lambda$ is just the group algebra $\kappa[\Gamma]$ of $\Gamma$ over $\kappa$.  Since $\Gamma$ (being a finitely generated subgroup of $\mathbb{R}$) is isomorphic to $\mathbb{Z}^d$ for some $d\geq 0$, we see that $\Lambda$ is isomorphic as a $\kappa$-algebra to the multivariable Laurent polynomial algebra $\kappa[t_1,t_{1}^{-1},\ldots,t_d,t_{d}^{-1}]$.  In particular, $\Lambda$ is an integral domain.  

The rings $\Lambda_{\uparrow}$ and $\Lambda^{\downarrow}$ are each \emph{fields}, as can be seen for instance from \cite[Theorem 4.1]{HS}.  Since $\Lambda$ is a subring both of $\Lambda_{\uparrow}$ and of $\Lambda^{\downarrow}$, if $\mathcal{Q}(\Lambda)$ denotes the field of fractions of $\Lambda$, the inclusions of $\Lambda$ into $\Lambda_{\uparrow}$ and $\Lambda^{\downarrow}$ extend uniquely to field extensions $\mathcal{Q}(\Lambda)\hookrightarrow \Lambda_{\uparrow}$ and $\mathcal{Q}(\Lambda)\hookrightarrow \Lambda^{\downarrow}$.  Like any field of fractions of an integral domain, $\mathcal{Q}(\Lambda)$ is flat as a $\Lambda$-module (apply \cite[Theorem 4.80]{Rot} with the multiplicative set $S$ equal to the set of all nonzero elements of $\Lambda$).  So since field extensions are likewise flat as modules, it follows that the inclusions $i_{\uparrow}\co \Lambda\hookrightarrow \Lambda_{\uparrow}$ and $i^{\downarrow}\co \Lambda\hookrightarrow\Lambda^{\downarrow}$ make $\Lambda_{\uparrow}$ and $\Lambda^{\downarrow}$ into flat $\Lambda$-modules.  Related to this, we have:

\begin{prop}\label{kerext}
Let $M$ be any (left) $\Lambda$-module.  Then the coefficient extension maps $ i_{\uparrow}\otimes 1_M\co M\to \Lambda_{\uparrow}\otimes_{\Lambda}M$ and $ i^{\downarrow}\otimes 1_M\co M\to \Lambda^{\downarrow}\otimes_{\Lambda}M$ each have kernel equal to the $\Lambda$-torsion submodule \[ tM:=\{m\in M|(\exists \lambda\in \Lambda\setminus\{0\})(\lambda m=0)\}.\]  
\end{prop}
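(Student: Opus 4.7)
The plan is to prove both inclusions $tM \subseteq \ker$ and $\ker \subseteq tM$ for each of the two coefficient extension maps. The paragraph preceding the proposition establishes that $\Lambda_{\uparrow}$ and $\Lambda^{\downarrow}$ are both fields, both contain $\Lambda$ as a subring, and are each flat as $\Lambda$-modules; these are the only properties I will use, so the arguments are formally identical and I will write the one for $i_{\uparrow}\otimes 1_M$.

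The inclusion $tM \subseteq \ker(i_{\uparrow}\otimes 1_M)$ is the easy direction: if $\lambda m = 0$ with $\lambda\in \Lambda\setminus\{0\}$, then $\lambda$ is a unit in the field $\Lambda_{\uparrow}$, so in $\Lambda_{\uparrow}\otimes_{\Lambda}M$ one has $1\otimes m = \lambda^{-1}\otimes \lambda m = 0$.

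For the reverse inclusion my plan is to reduce to the cyclic case. Given $m\in M$ with $1\otimes m=0$, let $N=\Lambda m\subseteq M$ and $I=\mathrm{Ann}(m)\subseteq \Lambda$, so that $N\cong \Lambda/I$ as $\Lambda$-modules. Flatness of $\Lambda_{\uparrow}$ applied to the inclusion $N\hookrightarrow M$ shows that $\Lambda_{\uparrow}\otimes_{\Lambda}N\to\Lambda_{\uparrow}\otimes_{\Lambda}M$ is injective, so $1\otimes m$ already vanishes in $\Lambda_{\uparrow}\otimes_{\Lambda}N$. Applying flatness a second time to the short exact sequence $0\to I\to \Lambda\to \Lambda/I\to 0$ identifies $\Lambda_{\uparrow}\otimes_{\Lambda}N$ with the quotient $\Lambda_{\uparrow}/(\Lambda_{\uparrow}I)$, where $\Lambda_{\uparrow}I$ denotes the ideal of $\Lambda_{\uparrow}$ generated by the image of $I$; under this identification $1\otimes m$ corresponds to the class of $1$. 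Hence $1\otimes m=0$ is equivalent to $\Lambda_{\uparrow}I=\Lambda_{\uparrow}$.

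The key observation, and the place where the field structure of $\Lambda_{\uparrow}$ enters decisively, is that this last equality holds if and only if $I$ contains some nonzero element of $\Lambda$: any such element is a unit in $\Lambda_{\uparrow}$, while conversely the extension of the zero ideal is the zero ideal. But $I$ contains a nonzero element precisely when $m\in tM$, which completes the reverse inclusion. The argument for $i^{\downarrow}\otimes 1_M$ is word-for-word the same with $\Lambda^{\downarrow}$ in place of $\Lambda_{\uparrow}$. I anticipate no real obstacle beyond bookkeeping—the statement is essentially a flatness-plus-field-structure exercise, with no input needed about the specific form of $\Lambda_{\uparrow}$ or $\Lambda^{\downarrow}$ beyond what is already recorded just before the proposition.
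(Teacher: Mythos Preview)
Your proof is correct, but it takes a different route from the paper's. The paper factors the coefficient extension as $M\to\mathcal{Q}(\Lambda)\otimes_{\Lambda}M\to\Lambda_{\uparrow}\otimes_{\Lambda}M$ through the fraction field, invokes the standard fact (\cite[Proposition 4.78]{Rot}) that localization at all nonzero elements has kernel exactly $tM$, and then observes that the second map is injective because $\mathcal{Q}(\Lambda)\hookrightarrow\Lambda_{\uparrow}$ is a field extension. Your argument instead stays inside $\Lambda_{\uparrow}$: you pass to the cyclic submodule $\Lambda m\cong\Lambda/I$, use flatness to reduce to the vanishing of the image of $1$ in $\Lambda_{\uparrow}/\Lambda_{\uparrow}I$, and conclude $I\neq 0$ from the field structure of $\Lambda_{\uparrow}$. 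Your approach is more self-contained (no external citation, no auxiliary ring $\mathcal{Q}(\Lambda)$), while the paper's is shorter and highlights that the only relevant property of $\Lambda_{\uparrow}$ is that it receives a map from the fraction field.
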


\begin{proof}
The two cases are identical so we just consider the case of $1_M\otimes i_{\uparrow}$.  This map can be written as a composition \[ M=\Lambda\otimes_{\Lambda}M\to \mathcal{Q}(\Lambda)\otimes_{\Lambda}M\to \Lambda_{\uparrow}\otimes_{\Lambda}M \] of maps obtained by tensoring the identity $1_M$ with injections $\Lambda\hookrightarrow \mathcal{Q}(\Lambda)\hookrightarrow \Lambda_{\uparrow}$.  The map $ M\to \mathcal{Q}(\Lambda)\otimes_{\Lambda}M$ has kernel equal to $tM$ by \cite[Proposition 4.78]{Rot}, while the map $\mathcal{Q}(\Lambda)\otimes_{\Lambda}M\to \Lambda_{\uparrow}\otimes_{\Lambda}M$ has trivial kernel since $\mathcal{Q}(\Lambda)\hookrightarrow \Lambda_{\uparrow}$ is a field extension.
\end{proof}

Now let us introduce, for use beginning in Section \ref{clsec}, \begin{equation}\label{updown} \Lambda_{\updownarrow}=\left\{\left.\sum_{g\in \Gamma}a_gT^g\right|a_g\in \kappa,\, (\forall C\in \R)\left(\#\{g|a_g\neq 0\mbox{ and }|g|<C\}<\infty\right)\right\}.  \end{equation}  We thus have inclusions $j_{\uparrow}\co \Lambda_{\uparrow}\to \Lambda_{\updownarrow}$ and $j^{\downarrow}\co \Lambda^{\downarrow}\to \Lambda_{\updownarrow}$. Since multiplication of elements of $\Lambda_{\uparrow}$ by elements of $\Lambda^{\downarrow}$ is typically ill-defined, their common superset $\Lambda_{\updownarrow}$ is not naturally a ring (except in the case $\Gamma=\{0\}$); however there is no difficulty in making sense of addition of elements of $\Lambda_{\updownarrow}$, or of multiplication of elements of $\Lambda_{\updownarrow}$ by elements of $\Lambda$, and so $\Lambda_{\updownarrow}$ is naturally a $\Lambda$-module.  Indeed we evidently have a short exact sequence of $\Lambda$-modules \begin{equation}\label{updownses} \xymatrix{0 \ar[r] & \Lambda\ar[r]^<<<<<{(i^{\downarrow},i_{\uparrow})} &  \Lambda^{\downarrow}\oplus \Lambda_{\uparrow}\ar[r]^<<<<<{\,\,-j^{\downarrow}+j_{\uparrow}} & \Lambda_{\updownarrow}\ar[r]&0}\end{equation} 

\begin{prop}\label{tor}  For any left $\Lambda$-module $M$ we have a natural isomorphism of $\Lambda$-modules $\mathrm{Tor}^{\Lambda}_{1}(\Lambda_{\updownarrow},M)\cong tM$ where $tM$ is the $\Lambda$-torsion submodule of $M$, and $\mathrm{Tor}^{\Lambda}_{i}(\Lambda_{\updownarrow},M)=\{0\}$ for all $i>1$.
\end{prop}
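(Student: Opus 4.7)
The plan is to exploit the short exact sequence (\ref{updownses}) of $\Lambda$-modules,
\[ 0 \to \Lambda \xrightarrow{(i^{\downarrow},i_{\uparrow})} \Lambda^{\downarrow}\oplus \Lambda_{\uparrow} \xrightarrow{-j^{\downarrow}+j_{\uparrow}} \Lambda_{\updownarrow} \to 0, \]
as a short flat resolution of $\Lambda_{\updownarrow}$.  Indeed, $\Lambda$ is a free $\Lambda$-module, and the paragraph preceding Proposition~\ref{kerext} showed that $\Lambda_{\uparrow}$ and $\Lambda^{\downarrow}$ are flat $\Lambda$-modules (being obtained from the flat module $\mathcal{Q}(\Lambda)$ by field extension), so their direct sum is flat as well.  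Hence this is a length-one flat resolution of $\Lambda_{\updownarrow}$, which immediately forces $\mathrm{Tor}^{\Lambda}_{i}(\Lambda_{\updownarrow},M)=\{0\}$ for $i>1$.

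To compute $\mathrm{Tor}^{\Lambda}_{1}(\Lambda_{\updownarrow},M)$, I would tensor the above short exact sequence with $M$ over $\Lambda$ and take the long exact sequence of $\mathrm{Tor}$.  The vanishing of $\mathrm{Tor}^{\Lambda}_{1}(\Lambda^{\downarrow}\oplus \Lambda_{\uparrow},M)$ yields the exact sequence
\[ 0 \to \mathrm{Tor}^{\Lambda}_{1}(\Lambda_{\updownarrow},M) \to \Lambda\otimes_{\Lambda} M \xrightarrow{\,\,(i^{\downarrow}\otimes 1_M,\,i_{\uparrow}\otimes 1_M)\,\,} (\Lambda^{\downarrow}\otimes_{\Lambda} M)\oplus (\Lambda_{\uparrow}\otimes_{\Lambda} M), \]
so that under the standard identification $\Lambda\otimes_{\Lambda}M\cong M$, $\mathrm{Tor}^{\Lambda}_{1}(\Lambda_{\updownarrow},M)$ is precisely the kernel of $(i^{\downarrow}\otimes 1_M,\,i_{\uparrow}\otimes 1_M)\co M\to (\Lambda^{\downarrow}\otimes_{\Lambda} M)\oplus (\Lambda_{\uparrow}\otimes_{\Lambda} M)$.

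Finally, I would apply Proposition~\ref{kerext}, which identifies $\ker(i_{\uparrow}\otimes 1_M)=\ker(i^{\downarrow}\otimes 1_M)=tM$.  Since the kernel of the product map is the intersection of the individual kernels, this intersection is simply $tM$, yielding the desired isomorphism $\mathrm{Tor}^{\Lambda}_{1}(\Lambda_{\updownarrow},M)\cong tM$.  Naturality in $M$ is automatic from the construction since every step (the resolution, the $\mathrm{Tor}$ long exact sequence, and Proposition~\ref{kerext}) is functorial in $M$.  The only step requiring any nontrivial input is the flatness of $\Lambda_{\uparrow}$ and $\Lambda^{\downarrow}$, but this has already been established earlier in the section, so there is no real obstacle here; the argument is essentially a direct homological bookkeeping once (\ref{updownses}) and Proposition~\ref{kerext} are in hand.
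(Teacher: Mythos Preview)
Your proof is correct and follows essentially the same approach as the paper's: both use the long exact sequence in $\mathrm{Tor}$ associated to the short exact sequence (\ref{updownses}), the flatness of $\Lambda_{\uparrow}$ and $\Lambda^{\downarrow}$, and Proposition~\ref{kerext} to identify the relevant kernel with $tM$. The only cosmetic difference is that you phrase the vanishing of higher $\mathrm{Tor}$ as an immediate consequence of having a length-one flat resolution, whereas the paper deduces it from the isomorphism $\mathrm{Tor}^{\Lambda}_{i}(\Lambda_{\updownarrow},M)\cong \mathrm{Tor}_{i-1}^{\Lambda}(\Lambda,M)$ for $i>1$; these amount to the same thing.
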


\begin{proof} We use the long exact sequence on $\mathrm{Tor}^{\Lambda}_{*}(\cdot,M)$ induced by (\ref{updownses}); see for instance \cite[Corollary 6.30]{Rot}.  Since $\Lambda_{\uparrow}$ and $\Lambda^{\downarrow}$ are flat $\Lambda$-modules we have $\mathrm{Tor}_{i}^{\Lambda}(\Lambda_{\uparrow}\oplus\Lambda^{\downarrow},M)=\{0\}$ for all $i\geq 1$.  So part of the long exact sequence reads \[ \xymatrix{0\ar[r]&  \mathrm{Tor}^{\Lambda}_{1}(\Lambda_{\updownarrow},M)\ar[r]^{\partial} & \Lambda\otimes_{\Lambda}M\ar[rr]^-{(i^{\downarrow}\otimes 1_M,i_{\uparrow}\otimes 1_M)}& & (\Lambda^{\downarrow}\otimes_{\Lambda}M)\oplus (\Lambda_{\uparrow}\otimes_{\Lambda}M)  }.\]  By Proposition \ref{kerext}, the last map above has kernel equal to $tM$, and so the connecting homomorphism $\partial$ maps $\mathrm{Tor}^{\Lambda}_{1}(\Lambda_{\updownarrow},M)$ isomorphically to $tM$.  If $i>1$ then the exact sequence and the flatness of $\Lambda_{\uparrow}$ and $\Lambda^{\downarrow}$ imply an isomorphism $\mathrm{Tor}^{\Lambda}_{i}(\Lambda_{\updownarrow},M)\cong \mathrm{Tor}_{i-1}^{\Lambda}(\Lambda,M)$, but of course the latter vanishes since $i-1\geq 1$.
\end{proof}

\subsection{Conjugation}\label{conjsect}

Given our field $\kappa$ and subgroup $\Gamma\subset \R$, with $\Lambda=\kappa[\Gamma]$ define the \emph{conjugation} automorphism $\mathfrak{c}\co\Lambda\to\Lambda$ by \[ \mathfrak{c}\left(\sum_g a_gT^g\right)=\sum_g a_gT^{-g} \]  In general, we will denote $\mathfrak{c}(\lambda)$ as $\bar{\lambda}$ for $\lambda\in \Lambda$. Note that $\mathfrak{c}$ extends, using the same formula as above, to a field isomorphism $\Lambda_{\uparrow}\to\Lambda^{\downarrow}$, or just as well to a field isomorphism $\Lambda^{\downarrow}\to\Lambda_{\uparrow}$, and we will continue to use the notation $\bar{\lambda}$ for the image of an element $\lambda$ of $\Lambda^{\downarrow}$ or of $\Lambda_{\uparrow}$ under these conjugation maps.  While conjugation defines an isomorphism of fields $\Lambda_{\uparrow}\cong\Lambda^{\downarrow}$, if $\Gamma\neq\{0\}$ this is of course not an isomorphism of $\Lambda$-algebras (if it were, all elements of $\Lambda$ would be fixed by $\mathfrak{c}$), and issues related to this are why we will maintain the distinction between $\Lambda_{\uparrow}$ and $\Lambda^{\downarrow}$ rather than using conjugation to work exclusively over one or the other of them.

\begin{remark}\label{conjrmk} For \emph{any} group $\pi$, abelian or not, the group algebra $\kappa[\pi]$ admits a conjugation map $\mathfrak{c}$ similar to that above, though if $\pi$ is not abelian (and hence $\kappa[\pi]$ is not commutative) this is not an algebra automorphism but rather an anti-automorphism in that $\mathfrak{c}(\lambda\mu)=\mathfrak{c}(\mu)\mathfrak{c}(\lambda)$.  This map $\mathfrak{c}$ arises in considerations of Poincar\'e duality for regular covering spaces with deck transformation group $\pi$, \emph{cf}. \cite[Section 4.5]{Ran}.  In this setting, despite the noncommutativity of $\kappa[\pi]$, a left $\kappa[\pi]$-module $M$ can be converted to a right $\kappa[\pi]$-module by defining $m\lambda=\mathfrak{c}(\lambda)m$ for $\lambda\in \kappa[\pi]$ and $m\in M$.  This conjugation operation on modules remains useful in cases such as ours where the group $\pi$ (our $\Gamma$) is abelian.  For this reason, contrary to the custom of making no distinction between the left and right modules of a commutative ring, we will often specify a module over $\Lambda$  as being a left module or a right module, with the understanding that if one wants to switch between these the appropriate way to do so is by conjugation, not by trivially 
renaming multiplication on the left as multiplication on the right.  An exception to this is that when one of the rings $\Lambda,\Lambda_{\uparrow},$ or $\Lambda^{\downarrow}$ is regarded as a module over its subring $\Lambda$, both the left and the right module structure should be interpreted as simply given by the ring multiplication, with no conjugation.  Also, in vector spaces over the fields $\Lambda_{\uparrow}$ or $\Lambda^{\downarrow}$ the scalar multiplication will consistently be written as acting on the left.  \end{remark}

Given a left (resp. right) $\Lambda$-module $M$, as just suggested we define its conjugate module $\bar{M}$ to be the right (resp. left) $\Lambda$-module whose underlying abelian group is the same as $M$, and with the scalar multiplication given by $m\lambda=\bar{\lambda}m$ (resp. $\lambda m=m\bar{\lambda}$).  Similarly, if $V$ is a vector space over $\Lambda_{\uparrow}$, it has an associated conjugate vector space $\bar{V}$ over $\Lambda^{\downarrow}$ with the same underlying abelian group but with the new scalar multiplication given by $\bar{\lambda}v=\lambda v$, and likewise with the roles of $\Lambda_{\uparrow}$ and $\Lambda^{\downarrow}$ reversed.  These operations are trivially functorial, in that if $\phi\co M\to N$ is a left $\Lambda$-module homomorphism (resp. $\Lambda_{\uparrow}$-vector space homomomrphism) then the same function $\phi$ can equally well be regarded as a right $\Lambda$-module homomorphism (resp. $\Lambda^{\downarrow}$-vector space homomorphism) $\bar{M}\to \bar{N}$.

The proof of the following is left as an exercise to readers who wish to check their understanding of the notation.
\begin{prop}\label{flipconj}
If $M$ is a right $\Lambda$-module there is an isomorphism of $\Lambda^{\downarrow}$-vector spaces \[ \alpha\co \overline{M\otimes_{\Lambda}\Lambda_{\uparrow}}\to \Lambda^{\downarrow}\otimes_{\Lambda}\bar{M} \] defined by, for $m_i\in M$ and $\lambda_i\in \Lambda_{\uparrow}$, \[ \alpha\left(\sum_i m_i\otimes\lambda_i\right) = \sum_i\bar{\lambda}_i\otimes m_i.\]
\end{prop}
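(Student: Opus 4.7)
The plan is to define $\alpha$ via the universal property of the tensor product, then verify $\Lambda^{\downarrow}$-linearity with respect to the conjugated scalar actions, and finally exhibit an explicit two-sided inverse by the evident symmetric construction. All steps are formal; the only nontrivial aspect is careful bookkeeping of the conjugation conventions set up at the end of Section \ref{conjsect}.

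First I would construct $\alpha$ from the biadditive map $M \times \Lambda_{\uparrow} \to \Lambda^{\downarrow}\otimes_{\Lambda}\bar{M}$ given by $(m,\lambda)\mapsto \bar{\lambda}\otimes m$. To check that this is $\Lambda$-balanced, fix $\mu\in\Lambda$ and compare the images of $(m\mu,\lambda)$ and $(m,\mu\lambda)$. By the definition of the left $\Lambda$-action on $\bar{M}$, namely $\nu\cdot m = m\bar{\nu}$, we have $m\mu = \bar{\mu}\cdot m$ in $\bar{M}$, so
\[
\bar{\lambda}\otimes m\mu = \bar{\lambda}\otimes (\bar{\mu}\cdot m) = (\bar{\lambda}\bar{\mu})\otimes m = \overline{\mu\lambda}\otimes m,
\]
where the middle equality uses the $\Lambda$-balance of the target tensor (noting $\bar{\mu}\in\Lambda$), and the last uses that conjugation is a ring homomorphism together with commutativity of $\Lambda_{\uparrow}$. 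The right-hand side is the image of $(m,\mu\lambda)$, so $\alpha$ is well-defined as an additive map $M\otimes_{\Lambda}\Lambda_{\uparrow}\to \Lambda^{\downarrow}\otimes_{\Lambda}\bar{M}$.

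Next I would verify $\Lambda^{\downarrow}$-linearity as a map $\overline{M\otimes_{\Lambda}\Lambda_{\uparrow}}\to \Lambda^{\downarrow}\otimes_{\Lambda}\bar{M}$. Using commutativity to view $M\otimes_{\Lambda}\Lambda_{\uparrow}$ as a left $\Lambda_{\uparrow}$-vector space, the conjugated scalar action of $\mu\in\Lambda^{\downarrow}$ on $m\otimes\lambda$ is $\bar{\mu}\cdot(m\otimes\lambda) = m\otimes\bar{\mu}\lambda$, and then
\[
\alpha(\mu\cdot(m\otimes\lambda)) = \overline{\bar{\mu}\lambda}\otimes m = \mu\bar{\lambda}\otimes m = \mu\cdot\alpha(m\otimes\lambda),
\]
using that $\bar{\,\cdot\,}$ is involutive.

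Finally, running the symmetric construction with $(\Lambda_{\uparrow},M)$ replaced by $(\Lambda^{\downarrow},\bar{M})$ (and exploiting $\bar{\bar{M}}=M$, $\bar{\bar{\lambda}}=\lambda$) produces a $\Lambda_{\uparrow}$-linear map $\beta\co \Lambda^{\downarrow}\otimes_{\Lambda}\bar{M}\to \overline{M\otimes_{\Lambda}\Lambda_{\uparrow}}$ sending $\mu\otimes m\mapsto m\otimes\bar{\mu}$. On simple tensors, $\alpha\beta(\mu\otimes m) = \alpha(m\otimes\bar{\mu}) = \bar{\bar{\mu}}\otimes m = \mu\otimes m$, and analogously $\beta\alpha = \mathrm{id}$, so $\beta$ is a two-sided inverse and $\alpha$ is the claimed isomorphism. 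The main potential pitfall I foresee is purely notational: correctly identifying the left $\Lambda$-action on $\bar{M}$ and the left $\Lambda^{\downarrow}$-action on the conjugated tensor product, so that the relations $m\mu = \bar{\mu}\cdot m$ and $\mu\cdot(m\otimes\lambda) = m\otimes\bar{\mu}\lambda$ are applied in the correct places. Once these conventions are fixed, the calculations are mechanical.
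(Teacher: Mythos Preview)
Your proof is correct and complete. The paper leaves this proposition as an exercise to the reader, and your verification of well-definedness (balancing), $\Lambda^{\downarrow}$-linearity, and the explicit two-sided inverse is exactly the routine check intended.
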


\subsection{Non-archimedean norms and orthogonal bases}\label{nonarch}

Let us define functions \[ \nu_{\uparrow}\co \Lambda\to \mathbb{R}\cup\{\infty\}\qquad \nu^{\downarrow}\co \Lambda\to \R\cup\{-\infty\} \] by \begin{equation}\label{nuformula} \nu_{\uparrow}\left(\sum_ga_gT^g\right)=\min\{g|a_g\neq 0\},\qquad \nu^{\downarrow}\left(\sum_ga_gT^g\right)=\max\{g|a_g\neq 0\} \end{equation} (with the convention that the empty set has maximum $-\infty$ and minimum $\infty$).  Setting $\nu$ equal either to $\nu_{\uparrow}$ or to $-\nu^{\downarrow}$ gives a non-Archimedean valuation on $\Lambda$, which is to say a function $\nu\co \Lambda\to\R\cup\{\infty\}$ obeying: \begin{itemize} \item[(i)] $\nu(\lambda)=\infty$ iff $\lambda=0$; \item[(ii)] $\nu(\lambda\mu)=\nu(\lambda)+\nu(\mu)$ for all $\lambda,\mu\in\Lambda$; \item[(iii)] $\nu(\lambda+\mu)\geq\min\{\nu(\lambda),\nu(\mu)\}$ for all $\lambda,\mu\in \Lambda$. \end{itemize}

Thus either of the formulas $(\lambda,\mu)\mapsto e^{-\nu_{\uparrow}(\lambda-\mu)}$ or $(\lambda,\mu)\mapsto e^{\nu^{\downarrow}(\lambda-\mu)}$ defines a metric on $\Lambda$, and the fields $\Lambda_{\uparrow}$ and $\Lambda^{\downarrow}$ are, respectively, the completions of $\Lambda$ with respect to these metrics.   The functions $\nu_{\uparrow}$ and $\nu^{\downarrow}$ extend by the same formulas as in (\ref{nuformula}) to functions $\nu_{\uparrow}\co \Lambda_{\uparrow}\to \R\cup\{\infty\}$ and $\nu^{\downarrow}\co \Lambda^{\downarrow}\to \R\cup\{-\infty\}$ (note that while $\Lambda_{\uparrow}$ and $\Lambda^{\downarrow}$ contain certain infinite sums $\sum a_g T^g$,  the $\min$ in (\ref{nuformula}) will still be attained for a nonzero element of $\Lambda_{\uparrow}$, and the $\max$ in (\ref{nuformula}) will still be attained for a nonzero element of $\Lambda^{\downarrow}$).  Evidently we have \[ \nu_{\uparrow}(\lambda)=-\nu^{\downarrow}(\bar{\lambda}) \] for $\lambda\in \Lambda_{\uparrow}$, using the conjugation map $\Lambda_{\uparrow}\to\Lambda^{\downarrow}$ defined in Section \ref{conjsect}.

There is a standard notion of a non-Archimedean norm on a vector space over a non-Archimedean field, which the following adapts (\emph{cf}. \cite{MoSp},\cite[Definition 2.2]{UZ}):
\begin{dfn}\label{normdef} \begin{enumerate} \item A \textbf{normed $\Lambda_{\uparrow}$-space} is a pair $(V_{\uparrow},\rho_{\uparrow})$ where $V_{\uparrow}$ is a finite-dimensional vector space over $\Lambda_{\uparrow}$ and the function $\rho_{\uparrow}\co V_{\uparrow}\cup\{-\infty\}$ satisfies: 
		\begin{itemize} \item[(i)] $\rho_{\uparrow}(x)=-\infty$ if and only if $x=0$; \item[(ii)] $\rho_{\uparrow}(\lambda x)=\rho_{\uparrow}(x)-\nu_{\uparrow}(\lambda)$ for all $\lambda\in \Lambda_{\uparrow},x\in V_{\uparrow}$; and 
		\item[(iii)] $\rho_{\uparrow}(x+y)\leq \max\{\rho_{\uparrow}(x),\rho_{\uparrow}(y)\}$ for all $x,y\in V_{\uparrow}$.\end{itemize}
\item A \textbf{normed $\Lambda^{\downarrow}$-space} is a pair $(V^{\downarrow},\rho^{\downarrow})$ where $V^{\downarrow}$ is a finite-dimensional vector space over $\Lambda^{\downarrow}$ and the function $\rho^{\downarrow}\co V^{\downarrow}\to \R\cup\{\infty\}$ satisfies:		
\begin{itemize} \item[(i)] $\rho^{\downarrow}(x)=\infty$ if and only if $x=0$; \item[(ii)] $\rho^{\downarrow}(\lambda x)=\rho_{\downarrow}(x)-\nu^{\downarrow}(\lambda)$ for all $\lambda\in \Lambda^{\downarrow},x\in V^{\downarrow}$; and 
\item[(iii)] $\rho^{\downarrow}(x+y)\geq \min\{\rho^{\downarrow}(x),\rho^{\downarrow}(y)\}$ for all $x,y\in V^{\downarrow}$.\end{itemize} \end{enumerate}
\end{dfn}

(The actual ``norm'' on what we are calling a ``normed $\Lambda_{\uparrow}$-space'' $(V_{\uparrow},\rho_{\uparrow})$ or a ``normed $\Lambda^{\downarrow}$-space'' $(V^{\downarrow},\rho^{\downarrow})$ would be $e^{\rho_{\uparrow}}$ or $e^{-\rho^{\downarrow}}$.)

If $(V_{\uparrow},\rho_{\uparrow})$ is a normed $\Lambda_{\uparrow}$-space, then $\rho_{\uparrow}$ encodes an ascending  filtration on $V_{\uparrow}$, by $\kappa$-subspaces $V_{\uparrow}^{t}=\{x\in V_{\uparrow}|\rho_{\uparrow}(x)\leq t\}$ for $t\in \R$, satisfying $V_{\uparrow}^{s}\subset V_{\uparrow}^{t}$ for $s\leq t$.  Dually, if $(V^{\downarrow},\rho^{\downarrow})$ is a normed $\Lambda^{\downarrow}$-space then we have a descending filtration of $V^{\downarrow}$ by the $\kappa$-subspaces $\{x\in V^{\downarrow}|\rho^{\downarrow}(x)\geq t\}$ for $t\in \R$.

The usual notion of non-Archimedean orthogonality will be important for us:

\begin{dfn}\label{orthdef}
	Let $(V_{\uparrow},\rho_{\uparrow})$ be a normed $\Lambda_{\uparrow}$-space.  A subset $\{x_1,\ldots,x_k\}\subset  V_{\uparrow}$ is said to be $\rho_{\uparrow}$-\textbf{orthogonal} if one has, for all $\lambda_1,\ldots,\lambda_k\in \Lambda_{\uparrow}$, \[ \rho_{\uparrow}\left(\sum_{i=1}^{k}\lambda_ix_i\right)=\max_{1\leq i\leq k}\left(\ell_{\uparrow}(x_i)-\nu_{\uparrow}(\lambda_i)\right).\]  Similarly if $(V^{\downarrow},\rho^{\downarrow})$ is a normed $\Lambda^{\downarrow}$-space, a subset $\{x_1,\ldots,x_k\}\subset  V^{\downarrow}$ is said to be $\rho^{\downarrow}$-\textbf{orthogonal} if one has, for all $\lambda_1,\ldots,\lambda_k\in \Lambda^{\downarrow}$, \[ \rho^{\downarrow}\left(\sum_{i=1}^{k}\lambda_ix_i\right)=\min_{1\leq i\leq k}\left(\ell_{\uparrow}(x_i)-\nu_{\uparrow}(\lambda_i)\right).\]
 
 A normed $\Lambda_{\uparrow}$-space (resp. normed $\Lambda^{\downarrow}$-space) is called an \textbf{orthogonalizable $\Lambda_{\uparrow}$-space} (resp. orthogonalizable $\Lambda^{\downarrow}$-space) if it has a $\rho_{\uparrow}$-orthogonal (resp. $\rho^{\downarrow}$-orthogonal) basis.
\end{dfn}

\begin{remark}\label{equalcase}
	A standard exercise shows that, in any normed $\Lambda_{\uparrow}$-space, the non-Archimedean triangle inequality  $\ell_{\uparrow}(x+y)\leq \max\{\ell_{\uparrow}(x),\ell_{\uparrow}(y)\}$ is an equality whenever $\ell_{\uparrow}(x)\neq \ell_{\uparrow}(y)$.   From this it is easy to see that, if nonzero elements $x_1,\ldots,x_k$ of a normed $\Lambda_{\uparrow}$-space $(V_{\uparrow},\rho_{\uparrow})$ have the property that the real numbers $\rho_{\uparrow}(x_1),\ldots,\rho_{\uparrow}(x_k)$ all have distinct reductions modulo $\Gamma$, then $\{x_1,\ldots,x_k\}$ must be $\rho_{\uparrow}$-orthogonal.  (In particular, it must be linearly independent.) Since by our definition normed $\Lambda_{\uparrow}$-spaces are finite-dimensional, it follows that $\rho_{\uparrow}(V_{\uparrow}\setminus\{0\})$ is a union of at most $(\dim V_{\uparrow})$-many cosets of $\Gamma$ for any normed $\Lambda_{\uparrow}$-space $(V_{\uparrow},\rho_{\uparrow})$, orthogonalizable or not.  In particular, $\rho_{\uparrow}(V_{\uparrow}\setminus\{0\})$ is discrete if $\Gamma$ is discrete.  
	
	Furthermore, if $\rho_{\uparrow}(V_{\uparrow})\setminus\{0\})$ consists of exactly $(\dim V_{\uparrow})$-many cosets of $\Gamma$, then a general basis $\{x_1,\ldots,x_d\}$ for $V_{\uparrow}$ is $\rho_{\uparrow}$-orthogonal if and only if the reductions of $\rho_{\uparrow}(x_i)$ mod $\Gamma$ are distinct for $i=1,\ldots,d$.
	 
	Similar remarks of course apply to normed $\Lambda^{\downarrow}$-spaces.
\end{remark}

\begin{remark}\label{switch}
	If $(V_{\uparrow},\rho_{\uparrow})$ is an normed $\Lambda_{\uparrow}$-space then $(\overline{V_{\uparrow}},-\rho_{\uparrow})$ is an normed $\Lambda^{\downarrow}$-space, and similarly with the symbols $\uparrow$ and $\downarrow$ reversed.   Moreover $\{e_1,\ldots,e_m\}$ is a $\rho_{\uparrow}$-orthogonal basis for $V_{\uparrow}$ if and only if the same set $\{e_1,\ldots,e_d\}$ is a $(-\rho_{\uparrow})$-orthogonal basis for $\overline{V_{\uparrow}}$. This allows one to convert basic results about orthogonalizable $\Lambda_{\uparrow}$-spaces such as those appearing in \cite[Section 2]{UZ} to results about orthogonalizable $\Lambda^{\downarrow}$-spaces.  For instance, subspaces of such spaces are always orthogonalizable (\cite[Corollary 2.17]{UZ}).
\end{remark}



\begin{remark}
	Let us consider Definition \ref{orthdef} in the case that $\Gamma=\{0\}$ and hence $\Lambda_{\uparrow}$ and $\Lambda^{\downarrow}$ both coincide with the coefficient field $\kappa$.  Both $\nu_{\uparrow}$ and $\nu^{\downarrow}$ send all nonzero elements of $\kappa$ to $0$, and they send $0$ to $+\infty$ and $-\infty$, respectively.  So an orthogonalizable $\Lambda_{\uparrow}$-space $(V_{\uparrow},\rho_{\uparrow})$ has a basis $\{e_1,\ldots,e_m\}$ for which $\rho_{\uparrow}\left(\sum_i\lambda_ie_i\right)=\max\{\rho_{\uparrow}(e_i)|\lambda_i\neq 0\}$, while an orthogonalizable $\Lambda^{\downarrow}$-space has a basis $\{e_1,\ldots,e_m\}$ for which $\rho^{\downarrow}\left(\sum_i\lambda_ie_i\right)=\min\{\rho^{\downarrow}(e_i)|\lambda_i\neq 0\}$.\end{remark}

The following could be regarded as an instance of \cite[Th\'eor\`eme 1.1]{MoSp}, though part of the proof below is needed to confirm that the hypotheses of that theorem are satisfied.

\begin{prop}\label{discretenormed}
	Suppose that $\Gamma$ is discrete.  Then every normed $\Lambda_{\uparrow}$-space is orthgonalizable. 
\end{prop}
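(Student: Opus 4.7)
The plan is to argue by induction on $d=\dim_{\Lambda_\uparrow}V_\uparrow$, the case $d=0$ being trivial.  For the inductive step I would choose any nonzero $v_1\in V_\uparrow$ and equip the $\Lambda_\uparrow$-quotient $V_\uparrow/\Lambda_\uparrow v_1$ with the function \[ \bar\rho_\uparrow([x]):=\inf\{\rho_\uparrow(x-\lambda v_1):\lambda\in\Lambda_\uparrow\}. \]  Given that $\bar\rho_\uparrow$ is in fact a norm in the sense of Definition \ref{normdef} and that the above infimum is attained whenever $[x]\neq 0$, the inductive hypothesis produces a $\bar\rho_\uparrow$-orthogonal basis $\{\bar v_2,\ldots,\bar v_d\}$ of the quotient; I would lift each $\bar v_i$ to a representative $v_i\in V_\uparrow$ at which the infimum defining $\bar\rho_\uparrow(\bar v_i)$ is achieved, so that $\rho_\uparrow(v_i)=\bar\rho_\uparrow(\bar v_i)$.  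A short case analysis using the strict non-Archimedean triangle inequality (Remark \ref{equalcase})---splitting on whether the $v_1$-contribution to $\sum_i\lambda_iv_i$ strictly dominates the contributions from $v_2,\ldots,v_d$---then verifies that $\{v_1,\ldots,v_d\}$ is $\rho_\uparrow$-orthogonal.

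Everything thus hinges on a \emph{best approximation} lemma: for any nonzero $v_1\in V_\uparrow$ and any $x\in V_\uparrow\setminus\Lambda_\uparrow v_1$, $\inf_\lambda\rho_\uparrow(x-\lambda v_1)$ is a finite real number that is attained by some $\lambda_\star\in\Lambda_\uparrow$.  This is the main obstacle.  The key observation (from Remark \ref{equalcase}) is that $\rho_\uparrow(V_\uparrow\setminus\{0\})$ lies in a union of at most $\dim V_\uparrow$ cosets of $\Gamma$, so when $\Gamma$ is discrete this image is a \emph{discrete} subset of $\R$; consequently any subset of the image that is bounded below automatically has a minimum.  It therefore suffices to show that when $x\notin\Lambda_\uparrow v_1$, the set $\{\rho_\uparrow(x-\lambda v_1):\lambda\in\Lambda_\uparrow\}$ is bounded below in $\R$.

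Suppose for contradiction that there were $\lambda_n\in\Lambda_\uparrow$ with $\rho_\uparrow(x-\lambda_n v_1)\to-\infty$.  The non-Archimedean triangle inequality gives \[ \rho_\uparrow((\lambda_n-\lambda_m)v_1)\leq\max\{\rho_\uparrow(x-\lambda_nv_1),\rho_\uparrow(x-\lambda_mv_1)\}\to-\infty, \] so $\nu_\uparrow(\lambda_n-\lambda_m)\to\infty$ and $\{\lambda_n\}$ is Cauchy in the $\nu_\uparrow$-metric.  Since $\Lambda_\uparrow$ is topologically complete with respect to $\nu_\uparrow$ by its very construction, $\lambda_n\to\lambda_\infty$ for some $\lambda_\infty\in\Lambda_\uparrow$; a further application of the triangle inequality to $x-\lambda_\infty v_1=(x-\lambda_n v_1)+(\lambda_n-\lambda_\infty)v_1$ then forces $\rho_\uparrow(x-\lambda_\infty v_1)=-\infty$, i.e., $x\in\Lambda_\uparrow v_1$, contradicting the assumption.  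The two ingredients that make this argument go through---discreteness of $\rho_\uparrow(V_\uparrow\setminus\{0\})$ and topological completeness of $\Lambda_\uparrow$ (which, for $\Gamma$ discrete, coincides with its spherical completeness; note that in this case $\Lambda_\uparrow$ is either $\kappa$ or the formal Laurent series field $\kappa((T^c))$ for some $c>0$)---together supply the hypothesis of \cite[Th\'eor\`eme 1.1]{MoSp} that the author alludes to needing to verify before the theorem can be invoked.
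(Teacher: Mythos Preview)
Your proof is correct and follows essentially the same strategy as the paper's: both arguments reduce to a best-approximation statement and then exploit the discreteness of $\rho_\uparrow(V_\uparrow\setminus\{0\})$ (Remark \ref{equalcase}) to upgrade ``bounded below'' to ``attained.''  The differences are in packaging rather than substance.  The paper builds the basis from the bottom up---choosing $e_1$ arbitrarily and then, using \cite[Lemma 2.15]{UZ}, adjoining $e_{k+1}$ as an element realizing the infimal distance to $\mathrm{span}\{e_1,\ldots,e_k\}$---whereas you work top-down via the quotient norm, which has the mild advantage that you only ever need best approximation in a \emph{one-dimensional} subspace.  For the ``bounded below'' step the paper invokes \cite[I.2.3.Corollary 1]{Bou} to see that finite-dimensional subspaces are closed, while your direct Cauchy-sequence argument (using that $\Lambda_\uparrow$ is the completion of $\Lambda$) is a self-contained proof of exactly this closedness in the one-dimensional case.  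Both routes are short; yours trades external citations for a slightly longer but more elementary argument.
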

\begin{proof}
	Let $(V_{\uparrow},\rho_{\uparrow})$ be an orthogonalizable $\Lambda_{\uparrow}$-space.  By \cite[Lemma 2.15]{UZ}, if $\{e_1,\ldots,e_k\}$ is a $\rho_{\uparrow}$-orthogonal subset of $V_{\uparrow}$ and if $e_{k+1}\in V_{\uparrow}\setminus\mathrm{span}_{\Lambda_{\uparrow}}\{e_1,\ldots,e_k\}$, then $\{e_1,\ldots,e_k,e_{k+1}\}$ is $\rho_{\uparrow}$-orthogonal if and only if \[ \rho_{\uparrow}(e_{k+1})=\inf\left\{\rho_{\uparrow}(e_{k+1}-w)|w\in\mathrm{span}_{\Lambda_{\uparrow}}\{e_1,\ldots,e_k\}\right\}.\]
	So it suffices to show that for any orthogonalizable subspace $V\leq V_{\uparrow}$ and any $x\in V_{\uparrow}\setminus V$, there is $v\in V$ such that $\rho_{\uparrow}(x-v)=\inf\{\rho_{\uparrow}(x-w)|w\in V\}$, for then we may form an orthogonal basis for $V_{\uparrow}$ by choosing $e_1\in V_{\uparrow}\setminus\{0\}$ arbitrarily and inductively, having chosen an orthogonal set $\{e_1,\ldots,e_k\}$ with span $V_k$ and $k<\dim_{\Lambda_{\uparrow}}V_{\uparrow}$, setting $e_{k+1}=x-v$ where $x\in V_{\uparrow}\setminus V_k$ and $v\in V_k$ satisfies $\rho_{\uparrow}(x-v)=\inf\{\rho_{\uparrow}(x-w)|w\in V_k\}$.
	
	To see that optimization problems $\rho_{\uparrow}(x-v)=\inf\{\rho_{\uparrow}(x-w)|w\in V\}$ can indeed be solved for arbitrary subspaces $V<V_{\uparrow}$ and $x\notin V$, note that our assumption that $\Gamma$ is discrete implies via Remark \ref{equalcase} that the set $\{\rho_{\uparrow}(x-w)|w\in V\}$ is discrete, so its infimum will be attained by some $v\in V$ provided that it is bounded below.  If $\Gamma=\{0\}$ this is obvious since $\rho_{\uparrow}$ only takes finitely many values by Remark \ref{equalcase}. If $\Gamma$ is nontrivial, then \cite[I.2.3.Corollary 1]{Bou} applies to show that $V$ is closed with respect to the topology induced by the norm $e^{\rho_{\uparrow}}$ on $V_{\uparrow}$, so that for $x\notin V$ the set $\{\rho_{\uparrow}(x-w)|w\in V\}$ is again bounded below.
\end{proof}

On the other hand, if $\Gamma$ is not discrete then there are normed $\Lambda_{\uparrow}$-spaces $(V_{\uparrow},\rho_{\uparrow})$ over $\Lambda_{\uparrow}$ that are not orthogonalizable.  Indeed, if $V_{\uparrow}$ is orthogonalizable then by \cite[Corollary 2.18]{UZ}, for any $m$-dimensional subspace $V\leq V_{\uparrow}$ one can find an orthogonal basis $\{e_1,\ldots,e_d\}$ for $V_{\uparrow}$ such that $\{e_1,\ldots,e_m\}$ is a (necessarily orthogonal) basis for $V$.  This readily implies that for any $x\in V_{\uparrow}\setminus V$ the ``best approximation problem'' of finding $v\in V$ minimizing the value of $\rho_{\uparrow}(x-v)$ has a solution, as one can take $v$ equal to the projection of $x$ to $V$ given by the orthogonal basis.  But, as noted in \cite{U08}, if $\Gamma$ is not discrete one can adapt a construction of \cite{IH} to give a two-dimensional normed $\Lambda_{\uparrow}$ space with a subspace $V$ and a vector $x$ for which the best approximation problem has no solution.  

To describe such an example in detail, let $\Lambda_{\uparrow}^{\mathsf{w}}$ consist of generalized formal power series $\sum_{g\in \Gamma}a_g T^g$ where $a_g\in \kappa$ and, in place of the finiteness condition defining $\Lambda_{\uparrow}$, we impose the weaker condition that $\{g|a_g\neq 0\}$ is a well-ordered subset of $\R$. Then our non-Archimedean valuation $\nu_{\uparrow}\left(\sum_{g\in G}a_gT^g\right)=\min\{g|a_g\neq 0\}$ extends by the same formula to a function on $\Lambda_{\uparrow}^{\mathsf{w}}$ which in fact makes it into a non-Archimedean field extension of $\Lambda_{\uparrow}$, though we will merely view it as a $\Lambda_{\uparrow}$-vector space.  If we choose a strictly increasing but bounded sequence $\{\ep_i\}_{i=0}^{\infty}$ in $\Gamma$ (as is possible since $\Gamma$ is an indiscrete subgroup of $\R$) and set $v=\sum_{i=0}^{\infty}T^{\ep_i}$, then the $\Lambda_{\uparrow}$-subspace $V_{\uparrow}=\mathrm{span}_{\Lambda_{\uparrow}}\{1,v\}$ of $\Lambda_{\uparrow}^{\mathsf{w}}$ is a normed $\Lambda_{\uparrow}$-space with respect to the function $\ell=-\nu_{\uparrow}$, and the element $v\in V$ has no best approximation in the subspace $\mathrm{span}_{\Lambda_{\uparrow}}\{1\}$.

To provide some context, we now indicate some motivating examples of normed $\Lambda_{\uparrow}$- or $\Lambda^{\downarrow}$-spaces.
For an example with $\Gamma=\{0\}$, one can consider a continuous function $f\co X\to \R$ on a compact topological space $X$, let $V_{\uparrow}=H_k(X;\kappa)$ for some choice of degree $k$, and define $\rho_{\uparrow}^{f}\co V_{\uparrow}\to\R\cup\{-\infty\}$ by \begin{equation}\label{morsespec} \rho_{\uparrow}^f(a)=\inf\{t\in \R|a\in \Img(H_k(f^{-1}((-\infty,t]);\kappa)\to H_k(X;\kappa))\}\end{equation} (where the map is the obvious one induced by the inclusion $f^{-1}((-\infty,t])\to X$).  Thus $\rho_{\uparrow}^{f}(a)$ records the level at which the homology class $a$ first appears in the homologies of sublevel sets of $f$.  If $\{e_1,\ldots,e_m\}$ is a $\rho_{\uparrow}$-orthogonal basis for $V_{\uparrow}$ (as exists automatically by Proposition \ref{discretenormed}), then one can verify that the infinite intervals appearing in the degree-$k$ sublevel persistence barcode of $f$ are $[\rho_{\uparrow}^{f}(e_1),\infty),\ldots,[\rho_{\uparrow}^{f}(e_m),\infty)$.

Dually, an example of an orthogonalizable $\Lambda^{\downarrow}$-space with $\Gamma=\{0\}$ is given by again taking $V^{\downarrow}=H_k(X;\kappa)$ but now putting \[ \rho^{\downarrow}_f(a)=\sup\{t\in \R|a\in \Img(H_k(f^{-1}([t,\infty));\kappa)\to H_k(X;\kappa))\}.\]  Thus $\rho_{\uparrow}^f$ records how the images in $H_k(X;\kappa)$ of the homologies of sublevel sets grow as the level increases, while $\rho^{\downarrow}_f$ records how the images in $H_k(X;\kappa)$  of the homologies of superlevel sets grow as the level decreases.  (This is the motivation for the use of the arrows $\uparrow$ and $\downarrow$ in the notation.)

For general $\Gamma$, the constructions in \cite{UZ} (in particular, that paper's Proposition 6.6) show that, for a compact smooth manifold $X$, a cover $\tilde{X}\to X$ of with deck transformation group $\Gamma$ and a Morse function $\tilde{f}\co \tilde{X}\to \R$ such that $d\tilde{f}$ is the pullback of a closed one-form on $X$ representing the class $\xi\in H^1(X;\R)$, the Novikov homology $\mathbf{HN}_k(\tilde{f};\xi)$ (which is a vector space over $\Lambda_{\uparrow}$) again admits the structure of an orthogonalizable $\Lambda_{\uparrow}$-space, using the function $\rho_{\uparrow}^{\tilde{f}}\co \mathbf{HN}_k(\tilde{f};\xi)\to \R\cup\{-\infty\}$ that sends a class $a$ to the infimal filtration level at which $a$ is represented in the Novikov chain complex. The values $\rho_{\uparrow}^{\tilde{f}}(e_1),\ldots\rho_{\uparrow}^{\tilde{f}}(e_m)$ for an arbitrary $\rho_{\uparrow}^{\tilde{f}}$-orthogonal basis are the left endpoints of the infinite intervals of \cite{UZ}'s version of the sublevel barcode of $\tilde{f}$; the multiset consisting of the equivalence classes of these intervals modulo translation by $\Gamma$ is independent of the choice of $\rho_{\uparrow}^{\tilde{f}}$-orthogonal basis.  

Up to canonical isomorphisms induced by continuation maps such as those recalled in Sections \ref{morseintro} and \ref{novsect}, the $\Lambda_{\uparrow}$-vector space $\mathbf{HN}_k(\tilde{f};\xi)$ depends only on the cohomology class $\xi$ and not on the Morse function $\tilde{f}$ whose derivative is a pullback of a de Rham representative of $\xi$, so (with $\xi$ fixed) we can regard each $\rho_{\uparrow}^{\tilde{f}}$ as being defined on a common vector space $V_{\xi}\cong \mathbf{HN}_k(\tilde{f};\xi)$.  Moreover, based (\ref{filtchange}), it is not hard to see that, within the class of such $\tilde{f}$, the assignment $\tilde{f}\mapsto \rho_{\uparrow}^{\tilde{f}}$ is continuous with respect to the $C^0$ topology on the space of such $\tilde{f}$  and the $L^{\infty}$ norm on maps $V_{\xi}\setminus\{0\}\to\R$.  Hence we can define $\rho^{\tilde{f}}_{\uparrow}$ by continuity even if we drop the Morse condition on $\tilde{f}$.  Without the Morse hypothesis, continuity implies that $(V_{\xi},\rho^{\tilde{f}}_{\uparrow})$ will still be a normed $\Lambda_{\uparrow}$-space.  Hence if $\Gamma$ is discrete it will still be orthogonalizable by Proposition \ref{discretenormed}; if $\Gamma$ is not discrete I do not know whether this conclusion still always holds.

	%

\subsection{Duals}\label{dualsec}

If $M$ is a left module over a not-necessarily-commutative ring $R$, then $M^*:=\mathrm{Hom}_R(M,R)$ is naturally regarded as a \emph{right} $R$-module, with scalar multiplication of an element $\phi\in M^*$ by $r\in R$ being given by, for all $m\in M$, $(\phi r)(m)=\phi(m)r$.  In the case that $R=\Lambda$ (or more generally if $R$ is a group algebra) we can then convert this right module into a left module by conjugation; doing so yields the notion of module duality that is most useful for this paper.

\begin{dfn}
If $M$ is a left (resp. right) $\Lambda$-module we define its dual module ${}^{\vee}\!M$ to be the left (resp. right) $\Lambda$-module \[ {}^{\vee}\!M=\overline{\mathrm{Hom}_{\Lambda}(M,\Lambda)}.\]  Similarly, if $V$ is a $\Lambda_{\uparrow}$-vector space we define ${}^{\vee}\!V$ as the $\Lambda^{\downarrow}$-vector space \[ {}^{\vee}\!V=\overline{\mathrm{Hom}_{\Lambda_{\uparrow}}(V,\Lambda_{\uparrow})}.\]
\end{dfn}

If $A\co M\to N$ is a morphism of $\Lambda$-modules, we obtain an adjoint ${}^{\vee}\!A\co {}^{\vee}\!N\to {}^{\vee}\!M$ by taking the usual transpose $\phi\mapsto \phi\circ A$ from $\mathrm{Hom}_{\Lambda}(N,\Lambda)\to \mathrm{Hom}_{\Lambda}(M,\Lambda)$ and regarding this as a map between the conjugate modules ${}^{\vee}\!N$ and ${}^{\vee}\!M$.  If $M$ and $N$ happen to be free finite-rank (left) modules, with respective $\Lambda$-bases $\{e_1,\ldots,e_m\}$, $\{f_1,\ldots,f_n\}$, then the usual dual bases $\{e_{1}^{*},\ldots,e_{m}^{*}\}$,$\{f_{1}^{*},\ldots,f_{n}^{*}\}$ for the right modules $\mathrm{Hom}_{\Lambda}(M,\Lambda)$ and $\mathrm{Hom}_{\Lambda}(N,\Lambda)$ serve just as well as bases for the left modules ${}^{\vee}\!M$ and ${}^{\vee}\!N$; the conjugation however has the effect that coefficients in representations with respect to these bases are conjugated and hence the matrix representing ${}^{\vee}\!A$ with respect to the dual bases is the \emph{conjugate} transpose  of the matrix representing $A$ with respect to the original bases.

One of course has ${}^{\vee}\!(B\circ A)={}^{\vee}\!A\circ {}^{\vee}\!B$ for composable morphisms $A$ and $B$; in particular, if $A$ is invertible then so too is ${}^{\vee}\!A$, with $({}^{\vee}\!A)^{-1}={}^{\vee}\!(A^{-1})$.  Consequently we may write ${}^{\vee}\!A^{-1}$ without fear of confusion about order of operations.

Similarly, if $A\co V\to W$ is a morphism of $\Lambda_{\uparrow}$-vector spaces, we obtain an adjoint morphism of $\Lambda^{\downarrow}$-vector spaces ${}^{\vee}\!A\co {}^{\vee}\!W\to {}^{\vee}\!V$ by applying the conjugation functor to the transpose $A^*\co \mathrm{Hom}_{\Lambda_{\uparrow}}(W,\Lambda_{\uparrow})\to \mathrm{Hom}_{\Lambda_{\uparrow}}(W,\Lambda_{\uparrow})$.  Note that, although ${}^{\vee}\!A$ and $A^*$ are set-theoretically the same function, ${}^{\vee}\!A$ is a morphism of $\Lambda^{\downarrow}$-vector spaces while $A^{*}$ is a morphism of $\Lambda_{\uparrow}$-vector spaces. 

Of course, if instead $B\co X\to Y$ is a morphism of $\Lambda^{\downarrow}$-vector spaces, the same construction as in the previous paragraph gives an adjoint morphism ${}^{\vee}\!B\co {}^{\vee}\!Y\to {}^{\vee}\!X$ of $\Lambda_{\uparrow}$-vector spaces.

If $V$ is a $\Lambda_{\uparrow}$-vector space and $v\in V$ then we obtain a $\Lambda^{\downarrow}$-linear map $\iota_v\co {}^{\vee}V\to \Lambda^{\downarrow}$ defined by $\iota_v(\phi)=\overline{\phi(v)}$ for $\phi\in {}^{\vee}V$.  Now ${}^{\vee}\!({}^{\vee}\!V)=\overline{\mathrm{Hom}_{\Lambda^{\downarrow}}({}^{\vee}\!V,\Lambda^{\downarrow})}$, so $\iota_v$ can be regarded as an element of the $\Lambda_{\uparrow}$-vector space ${}^{\vee}\!({}^{\vee}\!V)$, and the map $\alpha_V:v\mapsto\iota_v$ is a $\Lambda_{\uparrow}$-linear map $V\to {}^{\vee}\!({}^{\vee}\!V)$.  This map $\alpha_V$ is obviously injective, so if $V$ is finite-dimensional $\alpha_V$ must be a $\Lambda_{\uparrow}$-vector space isomorphism $V\cong {}^{\vee}\!({}^{\vee}\!V)$ by dimensional considerations.  Indeed, if $\{e_1,\ldots,e_n\}$ is a basis for $V$ with dual basis $\{e_{1}^{*},\ldots,e_{n}^{*}\}$ for ${}^{\vee}\!V$, then the dual basis for ${}^{\vee}\!({}^{\vee}\!V)$ of  $\{e_{1}^{*},\ldots,e_{n}^{*}\}$ is evidently $\{\alpha_V(e_1),\ldots,\alpha_V(e_n)\}$.  

If $A\co V\to W$ is a morphism of finite-dimensional $\Lambda_{\uparrow}$-vector spaces, we obtain a double-adjoint ${}^{\vee}\!({}^{\vee}\!A)\co {}^{\vee}\!({}^{\vee}\!V)\to {}^{\vee}\!({}^{\vee}\!W)$, and it is straightforward to check that ${}^{\vee}\!({}^{\vee}\!A)$ coincides with $A$ under the isomorphisms $\alpha_V\co V\cong{}^{\vee}\!({}^{\vee}\!V)$ and $\alpha_W\co W\cong {}^{\vee}\!({}^{\vee}\!V)$.

By the same token, if $H$ is a $\Lambda$-module, we obtain a $\Lambda$-module homomorphism $\alpha_H\co H\to {}^{\vee}\!({}^{\vee}\!H)$ by setting $\alpha_H(h)$ equal to the map ${}^{\vee}\!H\to \Lambda$ defined by $\phi\mapsto \overline{\phi(h)}$.  In contrast to the situation with finite-dimensional vector spaces over $\Lambda_{\uparrow}$ or $\Lambda^{\downarrow}$, the map $\alpha_H$ need not be an isomorphism for general finitely-generated modules $H$ over $\Lambda$; for instance, if $h$ is a torsion element of $H$ then $\alpha_H(h)$ will be zero.  If $A\co H_0\to H_1$ is a $\Lambda$-module homomorphism we have a commutative diagram \begin{equation}\label{ddnat} \xymatrix{ H_0\ar[r]^{A}\ar[d]_{\alpha_{H_0}} & H_1\ar[d]^{\alpha_{H_1}} \\ {}^{\vee}\!({}^{\vee}\!H_0)\ar[r]_{{}^{\vee}\!{}^{\vee}\!A} & {}^{\vee}\!({}^{\vee}\!H_1)},\end{equation} as both compositions send $x\in H_0$ to the map on ${}^{\vee}\!H_1$ given by $\psi\mapsto \overline{\psi(Ax)}$.



\subsection{PD structures}\label{pdsec}

One ingredient in our constructions is the following pair of definitions, which are designed to mimic maps obtained from Poincar\'e duality in covering spaces.  

\begin{dfn}\label{pdstr}
Let $H_*=\oplus_{k\in \Z}H_k$ be a graded left $\Lambda$-module such that each $H_k$ is finitely generated over $\Lambda$, and let $n\in \Z$.  A \textbf{weak $n$-PD structure} on $H_*$ consists of module homomorphisms $\mathcal{D}_k\co H_k\to {}^{\vee}\!H_{n-k}$ for all $k\in \Z$ obeying the symmetry condition \begin{equation}\label{pdsym}  (\mathcal{D}_kx)(y)=\pm \overline{(\mathcal{D}_{n-k}y)(x)} \quad\mbox{for all }x\in H_k,y\in H_{n-k} \end{equation} for some sign $\pm$ that may depend on $k$, such that  $1\otimes \mathcal{D}_k\co \mathcal{Q}(\Lambda)\otimes_{\Lambda}H_k\to \mathcal{Q}(\Lambda)\otimes_{\Lambda}{}^{\vee}\!H_{n-k}$ is an isomorphism where $\mathcal{Q}(\Lambda)$ is the fraction field of $\Lambda$.  A \textbf{strong $n$-PD structure} on $H_*$ consists of module homomorphisms $\mathcal{D}_k\co H_k\to {}^{\vee}\!H_{n-k}$ obeying (\ref{pdsym}) which are surjective and have kernel equal to the $\Lambda$-torsion submodule $tH_k$ of $H_k$.
\end{dfn}

\begin{remark}\label{trivpdstr}
 If $\Gamma=\{0\}$ and hence $\Lambda$ is equal to the field $\kappa$, then there is no $\Lambda$-torsion and $\mathcal{Q}(\Lambda)=\Lambda$, so either a weak or a strong $n$-PD structure just amounts to isomorphisms $H_k\cong {}^{\vee}\!H_{n-k}$ obeying (\ref{pdsym}).  \end{remark}

For general $\Gamma$, from the facts (\cite[Proposition 4.78 and Theorem 4.80]{Rot}) that $\mathcal{Q}(\Lambda)$ is flat over $\Lambda$ and that the coefficient extension map $H_k\to \mathcal{Q}(\Lambda)\otimes_{\Lambda}H_k$ has kernel equal to $tH_k$, one easily sees that a strong $n$-PD structure is also a weak $n$-PD structure. For a counterexample to the converse if $\Gamma\neq\{0\}$ (and hence $\Lambda$ is not a field), one could start with a strong $n$-PD structure and then multiply all $\mathcal{D}_k$ by some nonzero, non-invertible, self-conjugate element of $\Lambda$ (such as $T^g+T^{-g}$ where $g\in\Gamma\setminus\{0\}$); the resulting map would no longer be surjective but would still become an isomorphism after tensoring with $\mathcal{Q}(\Lambda)$.  We will revisit this case in Example \ref{weakdualstrict}.

The motivating topological model for Definition \ref{pdstr} involves a regular covering space $p\co \tilde{X}\to X$ where $X$ is a smooth closed oriented manifold and the deck transformation group of $p$ is isomorphic to $\Gamma$.  The $\Gamma$ action on $\tilde{X}$ makes the singular chain complex of $\tilde{X}$ into complex of $\Lambda$-modules and for $H_k$ we can take the $k$th homology of this complex.\footnote{$H_k$ is finitely generated because $X$, being a smooth closed manifold, admits a finite cell decomposition which then lifts to a $\Gamma$-equivariant cell decomposition of $\tilde{X}$, and $H_k$ is isomorphic to the $k$th homology of the complex of finitely-generated $\Lambda$-modules associated to this cell decomposition.}  Poincar\'e duality as in \cite[Theorem 4.65]{Ran} then provides an isomorphism $H_k\cong H^{n-k}$ where $H^{n-k}$ is the cohomology of the dual complex to the singular chain complex.  Now Definition \ref{pdstr} does not reference anything that directly plays the role of the cohomology $H^{n-k}$; rather we use ${}^{\vee}\!H_{n-k}$, the dual of the homology, which if $\Gamma\neq\{0\}$ is a different thing.  More specifically, there is an evaluation map $\mathrm{ev}\co H^{n-k}\to{}^{\vee}\!H_{n-k}$ from cohomology to the dual of homology, and the maps $\mathcal{D}_k\co H_k\to {}^{\vee}\!H_{n-k}$ should be understood as the composition of the Poincar\'e duality isomorphism with $\mathrm{ev}$.   The mapping $(x,y)\mapsto (\mathcal{D}_kx)(y)$ is the homology intersection pairing defined in \cite[Definition 4.66]{Ran}; as noted there, one has $(\mathcal{D}_kx)(y)=(-1)^{k(n-k)}\overline{(\mathcal{D}_{n-k}y)(x)}$ so the symmetry condition (\ref{pdsym}) holds.

At this point we encounter the basic trichotomy of algebraic complexity in our analysis, depending on nature of the subgroup $\Gamma$ of $\mathbb{R}$.  If $\Gamma=\{0\}$ then most of the discussion up to this point collapses to something rather more simple: all of $\Lambda,\Lambda_{\uparrow},\Lambda^{\downarrow},\Lambda_{\updownarrow}$ are just equal to the field $\kappa$; ``conjugation'' is the identity map on $\kappa$;  the map $\mathrm{ev}$ of the previous paragraph is an isomorphism; and, as already noted in Remark \ref{trivpdstr}, either notion of a $n$-PD structure in Definition \ref{pdstr} just amounts to isomorphisms $H_k\to {}^{\vee}\!H_{n-k}$ that satisfy (\ref{pdsym}) (and in this case the conjugation symbol in (\ref{pdstr}) may be ignored). 

The next simplest possibility is that $\Gamma$ is a discrete, nontrivial subgroup of $\R$; as is well-known and not hard to show, this implies that $\Gamma$ is isomorphic to $\mathbb{Z}$ and hence that the group ring $\Lambda$ is isomorphic to the Laurent polynomial ring $\kappa[t,t^{-1}]$.  Thus in this case $\Lambda$ is a PID, and so the universal coefficient theorem \cite[Theorem 7.59]{Rot} applies to show that $\mathrm{ev}$ is surjective and to compute its (generally nontrivial) kernel.  

The remaining possibility is that $\Gamma\leq \R$ is a dense subgroup, in which case (since we assume $\Gamma$ to be finitely generated) $\Lambda$ is isomorphic to a multivariable Laurent polynomial ring $\kappa[t_1,t_{1}^{-1},\ldots,t_d,t_{d}^{-1}]$ with $d>1$, which is not a PID.  Finitely generated modules over $\Lambda$ can therefore be rather complicated, and moreover the usual universal coefficient theorem does not apply.  Instead, there is a universal coefficient spectral sequence described in \cite[Section 2]{Lev}, and if some differential on an $E^r$ page of this spectral sequence with $r\geq 2$ does not vanish on the bottom row then the map $\mathrm{ev}$ will not be surjective.

Our definition of a weak $n$-PD structure is designed to be flexible enough to apply in relevant situations even if $\Gamma$ is a dense subgroup of $\mathbb{R}$, while our definition of a strong $n$-PD structure is meant to allow for sharper statements such as Corollary \ref{pndual} that hold when $\Gamma$ is discrete.  To see that such structures do indeed arise from Poincar\'e duality, we prove:

\begin{prop}\label{cohtopdstr}
Let $C_*=\oplus_{k\in \Z}C_k$ be a graded left $\Lambda$-module with each $C_k$ free, let $\partial\co C_{*}\to C_{*-1}$ obey $\partial\circ\partial=0$, and assume that each homology module $H_k=\frac{\ker(\partial\co C_k\to C_{k-1})}{\Img(\partial\co C_{k+1}\to C_k)}$ is finitely generated as a $\Lambda$-module.  Write $C^k={}^{\vee}\!(C_k)$, and $\delta\co C^*\to C^{*+1}$ for the map given on $C^k$ by ${}^{\vee}\!((-1)^{k}\partial|_{C_{k+1}})$.  Assume that for each $k$ we have an isomorphism $D_k\co H_k\to H^{n-k}$ where $H^{n-k}=\frac{\ker(\delta\co C^k\to C^{k+1})}{\Img(\delta\co C^{k-1}\to C^k)}$, and let $\mathrm{ev}\co H^{n-k}\to {}^{\vee}\!H_{n-k}$ be the evaluation map: $\mathrm{ev}([\phi])([c])=\phi(c)$ whenever $\phi\in C^{n-k}$ has $\delta\phi=0$ and $c\in C_{n-k}$ has $\partial c=0$.  Assume moreover that the maps $\mathcal{D}_k=\mathrm{ev}\circ D_k$ obey (\ref{pdsym}). Then:
\begin{itemize}\item[(i)] If $\Gamma$ is discrete then the maps $\mathcal{D}_k=\mathrm{ev}\circ D_k$ define a strong $n$-PD structure.
\item[(ii)] Regardless of whether $\Gamma$ is discrete, if each $C_k$ is finitely generated then the maps $\mathcal{D}_k=\mathrm{ev}\circ D_k$ define a weak $n$-PD structure.
\end{itemize}\end{prop}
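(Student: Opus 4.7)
The plan is to exploit the factorization $\mathcal{D}_k=\mathrm{ev}\circ D_k$, in which $D_k$ is already a $\Lambda$-module isomorphism by hypothesis; so every property we need of $\mathcal{D}_k$ follows from the corresponding property of the evaluation map $\mathrm{ev}\co H^{n-k}\to {}^{\vee}\!H_{n-k}$, since $D_k$ carries $tH_k$ isomorphically onto $tH^{n-k}$ and $\mathcal{Q}(\Lambda)\otimes_{\Lambda}D_k$ is still an isomorphism. The symmetry condition (\ref{pdsym}) is imposed as a hypothesis, so it needs no verification. Both parts ultimately reduce to a universal coefficient theorem, with the main subtlety being to track the conjugation-twisted $\Lambda$-action on ${}^{\vee}$-duals carefully enough that the UCT in its standard (untwisted) form can be invoked; the conjugation is just a relabeling of scalar multiplication, and so does not affect whether the relevant underlying abelian-group maps are surjections, injections, or isomorphisms.

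For (i), the discreteness of $\Gamma$ forces $\Lambda$ to be a PID (either $\kappa$ or $\kappa[t,t^{-1}]$), and the hypothesis that $C_*$ is a complex of free $\Lambda$-modules puts me in the setting of the classical universal coefficient theorem for cohomology (\cite[Theorem 7.59]{Rot}). The resulting short exact sequence
\[ 0\to\mathrm{Ext}^{1}_{\Lambda}(H_{n-k-1},\Lambda)\to H^{n-k}\xrightarrow{\mathrm{ev}}\mathrm{Hom}_{\Lambda}(H_{n-k},\Lambda)\to 0 \]
shows $\mathrm{ev}$ is surjective. The structure theorem for finitely generated modules over a PID guarantees that $\mathrm{Hom}_{\Lambda}(M,\Lambda)$ is torsion-free and $\mathrm{Ext}^{1}_{\Lambda}(M,\Lambda)$ is torsion for any finitely generated $M$; thus $\ker(\mathrm{ev})$ is precisely the torsion submodule of $H^{n-k}$, and the reduction via $D_k$ delivers both surjectivity of $\mathcal{D}_k$ and $\ker(\mathcal{D}_k)=tH_k$, as required for a strong $n$-PD structure.

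For (ii), I need $1\otimes\mathcal{D}_k$ to be an isomorphism after base change to $\mathcal{Q}(\Lambda)$, which by the above factorization reduces to showing $1\otimes\mathrm{ev}$ is an isomorphism. Flatness of $\mathcal{Q}(\Lambda)$ over $\Lambda$ lets cohomology commute with base change, so $\mathcal{Q}(\Lambda)\otimes_{\Lambda}H^{n-k}$ is the cohomology of $\mathcal{Q}(\Lambda)\otimes_{\Lambda}C^{*}$. The added hypothesis that each $C_k$ is finitely generated is exactly what is needed to commute $\mathrm{Hom}_{\Lambda}(-,\Lambda)$ with base change: for each finitely generated free $C_k$ there is a natural isomorphism
\[ \mathcal{Q}(\Lambda)\otimes_{\Lambda}\mathrm{Hom}_{\Lambda}(C_k,\Lambda)\xrightarrow{\cong}\mathrm{Hom}_{\mathcal{Q}(\Lambda)}(\mathcal{Q}(\Lambda)\otimes_{\Lambda}C_k,\mathcal{Q}(\Lambda)), \]
verified first for $C_k=\Lambda$ and then extended by additivity over finite direct sums. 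Consequently the base-changed cochain complex $\mathcal{Q}(\Lambda)\otimes_{\Lambda}C^{*}$ is the $\mathcal{Q}(\Lambda)$-linear dual of $\mathcal{Q}(\Lambda)\otimes_{\Lambda}C_{*}$, a complex of finite-dimensional $\mathcal{Q}(\Lambda)$-vector spaces. Over a field the UCT has no Ext obstruction, so evaluation on cohomology is an isomorphism, which is exactly what is needed after the conjugation twist is transported across the base change in the manner of Proposition \ref{flipconj} (conjugation extending naturally to $\mathcal{Q}(\Lambda)$ since it is already a ring automorphism of $\Lambda$). The main obstacle, such as it is, lies here in (ii), and consists of verifying that the finite-generation hypothesis on each $C_k$ really is essential---without it the displayed natural map from $\mathcal{Q}(\Lambda)\otimes_{\Lambda}\mathrm{Hom}_{\Lambda}(C_k,\Lambda)$ would generally fail to be surjective, and the whole identification of the base-changed cochain complex with a linear dual would collapse.
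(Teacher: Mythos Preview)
Your approach is essentially the same as the paper's, and part (i) is complete. In part (ii), however, there is a small gap at the very end. You have shown that the $\mathcal{Q}(\Lambda)$-linear evaluation map
\[ H^{n-k}\bigl(\mathrm{Hom}_{\mathcal{Q}(\Lambda)}(\mathcal{Q}(\Lambda)\otimes_{\Lambda}C_*,\mathcal{Q}(\Lambda))\bigr)\longrightarrow \mathrm{Hom}_{\mathcal{Q}(\Lambda)}(\mathcal{Q}(\Lambda)\otimes_{\Lambda}H_{n-k},\mathcal{Q}(\Lambda)) \]
is an isomorphism, but the target of $1\otimes\mathrm{ev}$ is $\mathcal{Q}(\Lambda)\otimes_{\Lambda}{}^{\vee}\!H_{n-k}$. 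Identifying these two targets (modulo conjugation and the tensor-Hom adjunction) requires that the natural map
\[ \mathcal{Q}(\Lambda)\otimes_{\Lambda}\mathrm{Hom}_{\Lambda}(H_{n-k},\Lambda)\longrightarrow\mathrm{Hom}_{\Lambda}(H_{n-k},\mathcal{Q}(\Lambda)) \]
be an isomorphism. Your ``additivity over finite direct sums'' argument establishes this for the \emph{free} finitely generated modules $C_k$, but $H_{n-k}$ is merely finitely generated, not free, so that argument does not apply to it.

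The paper closes this gap with a separate lemma (Lemma~\ref{extend-to-frac}): for any finitely generated module $M$ over an integral domain $A$ with fraction field $Q$, the natural map $\mathrm{Hom}_A(M,A)\otimes_A Q\to\mathrm{Hom}_A(M,Q)$ is an isomorphism, proved by clearing denominators to show that $\mathrm{Hom}_A(M,Q)/\mathrm{Hom}_A(M,A)$ is torsion and then tensoring the resulting short exact sequence with $Q$. This lemma is then applied twice---once to the chain modules $C_k$ (where your argument also works) and once to the homology modules $H_{n-k}$ (which you omit).
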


\begin{proof}[Proof of Proposition \ref{cohtopdstr}(i)]  
Since $\Gamma$ is discrete and so $\Lambda$ is a PID the universal coefficient theorem gives (after applying the conjugation functor) a short exact sequence \[ \xymatrix{0\ar[r] &\overline{\mathrm{Ext}^{1}_{\Lambda}(H_{n-k-1},\Lambda)}\ar[r]^{\,\,\,\,\,\,\quad j} & H^{n-k}\ar[r]^{\mathrm{ev}} & {}^{\vee}\!H_{n-k}\ar[r] & 0 }.\]  Since we assume that $D_k\co H_k\to H^{n-k}$ is an isomorphism it follows that $\mathcal{D}_k=\mathrm{ev}\circ D_k$ is a surjection with kernel equal to $D_{k}^{-1}(\Img j)$. Since ${}^{\vee}\!H_{n-k}=\overline{\mathrm{Hom}_{\Lambda}(H_{n-k},\Lambda)}$ is torsion-free this kernel must contain all $\Lambda$-torsion elements of $H_k$.  Conversely, all elements of $\ker\mathcal{D}_k=D_{k}^{-1}(\mathrm{Im}j)$ are torsion since $\mathrm{Ext}^{1}_{\Lambda}(H_{n-k-1},\Lambda)$ is a torsion $\Lambda$-module (as can be seen by direct calculation from the classification of finitely generated modules over a PID, noting that $H_{n-k-1}$ is assumed to be finitely generated).
\end{proof}

For the proof of the second part of the proposition it is helpful to have the following lemma.
  
\begin{lemma}\label{extend-to-frac}
Let $A$ be an integral domain with fraction field $Q$, and let $M$ be a finitely generated $A$-module.  Then the $Q$-vector space homomorphism \[ \alpha\co \mathrm{Hom}_{A}(M,A)\otimes_A Q\to \mathrm{Hom}_A(M,Q),\] defined on simple tensors $\phi\otimes q$ by $\left(\alpha(\phi\otimes q)\right)(m)=q\phi(m)$ for all $m\in M$, is an isomorphism.
\end{lemma}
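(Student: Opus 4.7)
My plan is to verify injectivity and surjectivity of $\alpha$ separately, using only the structure of $Q$ as the localization $S^{-1}A$ where $S=A\setminus\{0\}$. Injectivity will not require finite generation of $M$, while surjectivity will be where that hypothesis does essential work.

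For injectivity, I will use that every element of $\mathrm{Hom}_A(M,A)\otimes_A Q$ can, after clearing denominators, be written in the form $\phi\otimes(1/b)$ with $\phi\in \mathrm{Hom}_A(M,A)$ and $b\in S$. (Here I use that $\mathrm{Hom}_A(M,A)$ is torsion-free as a submodule of the torsion-free module $A^M$, so the equivalence relation defining the localization collapses to the expected one.) Then $\alpha(\phi\otimes(1/b))$ is the map $m\mapsto \phi(m)/b$, which vanishes as an element of $\mathrm{Hom}_A(M,Q)$ exactly when $\phi=0$, giving injectivity.

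For surjectivity, suppose $f\in \mathrm{Hom}_A(M,Q)$, and choose generators $m_1,\dots,m_n$ of $M$. Write $f(m_i)=a_i/b_i$ with $a_i\in A$ and $b_i\in S$, and let $b=b_1\cdots b_n\in S$. Then $bf(m_i)\in A$ for each $i$, and because $bf$ is $A$-linear and the $m_i$ generate $M$, the image of $bf$ lies in $A$. Thus $bf\in \mathrm{Hom}_A(M,A)$, and $\alpha((bf)\otimes(1/b))=f$.

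I expect no real obstacle: the argument is essentially the standard commutativity of Hom with localization in the ``finitely generated source'' direction. The one place to be careful is in writing a general element of the tensor product in the form $\phi\otimes(1/b)$ for the injectivity step; this uses torsion-freeness of $\mathrm{Hom}_A(M,A)$ over the domain $A$, which holds automatically. Since finite generation enters only in the denominator-clearing step for surjectivity, one also sees immediately how the hypothesis is used.
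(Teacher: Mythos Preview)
Your proof is correct. Both you and the paper hinge on the same key observation---that finite generation of $M$ lets one clear denominators so that any $f\in\mathrm{Hom}_A(M,Q)$ satisfies $bf\in\mathrm{Hom}_A(M,A)$ for some nonzero $b$---but the arguments are packaged differently. You proceed directly by checking injectivity and surjectivity by hand, whereas the paper recasts the denominator-clearing observation as the statement that $\mathrm{Hom}_A(M,Q)/\mathrm{Hom}_A(M,A)$ is a torsion module, then applies the exact functor $(-)\otimes_A Q$ to the short exact sequence $0\to\mathrm{Hom}_A(M,A)\to\mathrm{Hom}_A(M,Q)\to\mathrm{Hom}_A(M,Q)/\mathrm{Hom}_A(M,A)\to 0$ and finishes with $\mathrm{Hom}_A(M,Q)\otimes_A Q\cong\mathrm{Hom}_A(M,Q)$. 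Your route is more elementary and self-contained; the paper's route has the minor advantage that injectivity is absorbed into flatness of $Q$ rather than requiring the separate observation that elements of the tensor product can be written as $\phi\otimes(1/b)$. (Your aside about torsion-freeness of $\mathrm{Hom}_A(M,A)$ is correct but not actually needed: once you conclude $\phi=0$, the tensor $\phi\otimes(1/b)$ vanishes regardless.)
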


\begin{proof}
First note that, because $M$ is finitely-generated, for each $\phi\in \mathrm{Hom}_A(M,Q)$, there is $a\in A$ such that $a\phi\in \mathrm{Hom}_A(M,A)$.  Indeed, if $\{x_1,\ldots,x_m\}$ is a generating set for $M$ with $\phi(x_i)=\frac{p_i}{q_i}$ with $p_i,q_i\in A$ and $q_i\neq 0$, we can take $a=\prod_{i=1}^{m}q_i$.  Thus the quotient $\frac{\mathrm{Hom}_A(M,Q)}{\mathrm{Hom}_A(M,A)}$ is a torsion $A$-module, and hence its tensor product with the fraction field $Q$ vanishes.  Now $Q$ is a flat $A$-module, so applying the exact functor $\_\otimes_A Q$ to the short exact sequence \[ 0\to \mathrm{Hom}_A(M,A)\to \mathrm{Hom}_A(M,Q)\to \frac{\mathrm{Hom}_A(M,Q)}{\mathrm{Hom}_A(M,A)}\to 0\] shows that the map \begin{equation}\label{domtofrac}  \mathrm{Hom}_A(M,A)\otimes_A Q\to \mathrm{Hom}_A(M,Q)\otimes_A Q \end{equation} induced by inclusion of $A$ into $Q$ is an isomorphism. But there is an isomorphism of $Q$-vector spaces $\mathrm{Hom}_A(M,Q)\otimes_A Q\to \mathrm{Hom}_A(M,Q)$ given on simple tensors by $\phi\otimes q\mapsto q\phi$ (this follows for instance from the proof of \cite[Corollary 4.79(ii)]{Rot}); composing this with (\ref{domtofrac}) yields the result.
\end{proof}

\begin{proof}[Proof of Proposition \ref{cohtopdstr}(ii)]
Write $(C^*,\delta)$ for the usual (unconjugated) dual complex to $(C_*,\partial)$, so $C^i=\mathrm{Hom}_{\Lambda}(C_i,\Lambda)$ and Lemma \ref{extend-to-frac} gives isomorphisms $C^i\otimes_{\Lambda}\mathcal{Q}(\Lambda)\cong \mathrm{Hom}_{\Lambda}(C_i,\mathcal{Q}(\Lambda))$.  Since $\mathcal{Q}(\Lambda)$ is a flat $\Lambda$-module, the obvious map $H^{n-k}(C^*)\otimes_{\Lambda}\mathcal{Q}(\Lambda)\to H^{n-k}(C^{*}\otimes_{\Lambda}\mathcal{Q}(\Lambda))$ is an isomorphism.   So we have  an isomorphism $H^{n-k}(C^*)\otimes_{\Lambda}\mathcal{Q}(\Lambda)\cong H^{n-k}(\mathrm{Hom}_{\Lambda}(C_*,\mathcal{Q}(\Lambda)))$.  

In turn, since $\mathcal{Q}(\Lambda)$ is an injective $\Lambda$-module, the evaluation map $H^{n-k}(\mathrm{Hom}_{\Lambda}(C_*,\mathcal{Q}(\Lambda)))\to \mathrm{Hom}_{\Lambda}(H_{n-k}(C_*),\mathcal{Q}(\Lambda))$ is an isomorphism.  Finally, another application of Lemma \ref{extend-to-frac}
gives an isomorphism $ \mathrm{Hom}_{\Lambda}(H_{n-k}(C_*),\mathcal{Q}(\Lambda))\cong \mathrm{Hom}_{\Lambda}(H_{n-k}(C_*),\Lambda)\otimes_{\Lambda} \mathcal{Q}(\Lambda)$.

Stringing together all of the above isomorphisms gives an isomorphism $H^{n-k}(C^*)\otimes_{\Lambda}\mathcal{Q}(\Lambda)\to  \mathrm{Hom}_{\Lambda}(H_{n-k}(C_*),\Lambda)\otimes_{\Lambda} \mathcal{Q}(\Lambda)$ which is easily seen to coincide with the coefficient extension of the evaluation map.  Conjugating then gives that $1_{\mathcal{Q}(\Lambda)}\otimes \mathrm{ev}\co \mathcal{Q}(\Lambda)\otimes_{\Lambda}H^{n-k}\to \mathcal{Q}(\Lambda)\otimes_{\Lambda}{}^{\vee}\!H_{n-k}$ is an isomorphism.  Composing this isomorphism with the coefficient extension to $\mathcal{Q}(\Lambda)$ of the isomorphism $D_k\co H_k\to H^{n-k}$ from the statement of the proposition, we see that the coefficient extension to $\mathcal{Q}(\Lambda)$ of $\mathcal{D}_k=\mathrm{ev}\circ D_k$ gives an isomorphism $\mathcal{Q}(\Lambda)\otimes_{\Lambda}H_k\to\mathcal{Q}(\Lambda)\otimes_{\Lambda} {}^{\vee}\!H_{n-k}$, as desired.
\end{proof}

\begin{remark}\label{weirdsign}
	The sign in the definition of the differential $\delta\co C^*\to C^{*+1}$ in Proposition \ref{cohtopdstr} is opposite to the conventional sign on the differential of a dual complex (see, \emph{e.g.}, \cite[Remark VI.10.28]{Dold}); of course this does not affect the (co)homology-level conclusions of Proposition \ref{cohtopdstr} or its proof, but this sign will reappear in Section \ref{cpnstr}.  
	
	We can justify our sign convention in terms of conjugation as follows.  The  sign on the differential on $\mathrm{Hom}(C_{*},\Lambda)$ that is used in \cite{Dold} is defined so as to make the evaluation map $\mathrm{Hom}(C_{-*},\Lambda)\otimes_{\Lambda} C_{*}\to \Lambda$ a chain map, where the codomain is regarded as a chain complex concentrated in degree zero (with trivial differential).  But with $(C_*,\partial)$ a complex of left $\Lambda$-modules, the various $C^k=\overline{\mathrm{Hom}(C_k,\Lambda)}$ are naturally also regarded as left modules, and in place of the usual evaluation map we have an evaluation map $\overline{C_{*}}\otimes_{\Lambda}C^{-*}\to \Lambda$. Our convention in Proposition \ref{cohtopdstr} makes this latter evaluation map---with elements of $C^*$ acting on the right---a chain map.
	
	In truth, this justification is somewhat \emph{post hoc}; my actual motivation for the choice of sign in Proposition \ref{cohtopdstr} (and hence in the definition of a chain-level Poincar\'e--Novikov structure in Section \ref{cpnstr}) is that it works better with the orientation conventions in Morse and Floer theory in Appendix \ref{orsect}.
	\end{remark}

\section{Filtered matched pairs}\label{fmpsec}

We now finally define the basic structures that will ultimately give rise to our version of the open and closed intervals in interlevel barcodes.  Recall the notions of a normed $\Lambda_{\uparrow}$-space and a normed $\Lambda^{\downarrow}$-space from Definition \ref{normdef}.

\begin{dfn}\label{fmpdfn}  Fix as before a field $\kappa$ and a finitely generated subgroup $\Gamma<\R$.
A \textbf{filtered matched pair} is a diagram \[ \xymatrix{ & (V^{\downarrow},\rho^{\downarrow}) \\ H\ar[ru]^{\phi^{\downarrow}}\ar[rd]_{\phi_{\uparrow}} & \\ & (V_{\uparrow},\rho_{\uparrow}) } \] where:
\begin{itemize} \item $H$ is a finitely-generated left $\Lambda$-module;
\item $(V_{\uparrow},\rho_{\uparrow})$ is an normed $\Lambda_{\uparrow}$-space, and the $\Lambda$-module homomorphism $\phi_{\uparrow}\co H\to V_{\uparrow}$ has the property that $1_{\Lambda_{\uparrow}}\otimes\phi_{\uparrow} \co \Lambda_{\uparrow}\otimes_{\Lambda}H\to V_{\uparrow}$ is an isomorphism; and 
\item $(V^{\downarrow},\rho^{\downarrow})$ is an normed $\Lambda^{\downarrow}$-space, and the $\Lambda$-module homomorphism $\phi^{\downarrow}\co H\to V^{\downarrow}$ has the property that $1_{\Lambda^{\downarrow}}\otimes \phi^{\downarrow}\co \Lambda^{\downarrow}\otimes_{\Lambda}H\to V^{\downarrow}$ is an isomorphism.
\end{itemize}\end{dfn}

If $\Gamma=\{0\}$ then $\Lambda_{\uparrow}=\Lambda^{\downarrow}=\Lambda=\kappa$, so $\phi_{\uparrow}$ and $\phi^{\downarrow}$ are already isomorphisms; thus in this case a filtered matched pair provides not much more information than simply a $\kappa$-vector space isomorphism (namely $\phi^{\downarrow}\circ\phi_{\uparrow}^{-1}$) between $V_{\uparrow}$ and $V^{\downarrow}$.  However, if $\Gamma\neq\{0\}$ then $V_{\uparrow}$ and $V^{\downarrow}$ are not even vector spaces over the same field so they certainly cannot be isomorphic.  One could conjugate $V^{\downarrow}$ to obtain a vector space over the same field as $V_{\uparrow}$, but conjugation is incompatible with the $\Gamma$-action which is meant to encode geometrically significant information.  Definition \ref{fmpdfn}  allows for comparison between $V_{\uparrow}$ and $V^{\downarrow}$ by providing a venue, $H$, in which elements of these spaces can sometimes be ``matched'' while simultaneously respecting the $\Gamma$-actions on both spaces.

\begin{remark} With notation as in Definition \ref{fmpdfn}, since $\Lambda_{\uparrow}$ and $\Lambda^{\downarrow}$ are both field extensions of the fraction field $\mathcal{Q}(\Lambda)$, the map $\phi_{\uparrow}$ factors as $H\to \mathcal{Q}(\Lambda)\otimes_{\Lambda}H\to V_{\uparrow}$ where the first map is the coefficient extension map and the second is the restriction of $1_{\Lambda_{\uparrow}}\otimes \phi_{\uparrow}$ and in particular is injective.  A similar remark applies to $\phi^{\downarrow}$.  So since the coefficient extension map $H\to \mathcal{Q}(\Lambda)\otimes_{\Lambda}H$ has kernel equal to the $\Lambda$-torsion submodule $tH$ of $H$, it follows that $\ker\phi_{\uparrow}=\ker\phi^{\downarrow}=tH$ and that $\phi_{\uparrow}$ (resp. $\phi^{\downarrow}$) descends to an injection $\frac{H}{tH}\to \Lambda_{\uparrow}$ (resp. $\frac{H}{tH}\to \Lambda^{\downarrow}$) which becomes an isomorphism after tensoring with $\Lambda_{\uparrow}$ (resp. $\Lambda^{\downarrow}$).  Moreover the dimensions of $V_{\uparrow}$ and $V^{\downarrow}$ are both equal to $\dim_{\mathcal{Q}(\Lambda)}\mathcal{Q}(\Lambda)\otimes_{\Lambda}H$.
\end{remark}

Let us now extract quantitative data from a filtered matched pair $\mathcal{P}=(H,V_{\uparrow},\rho_{\uparrow},\phi_{\uparrow},V^{\downarrow},\rho^{\downarrow},\phi^{\downarrow})$.  In favorable situations these data will emerge from finding a special kind of basis for the torsion-free module $\frac{H}{tH}$.  In our current level of generality such a special basis may not exist---indeed if $\Gamma$ is a dense subgroup of $\R$ so that $\Lambda$ is not a PID, then $\frac{H}{tH}$ may well have no basis at all---but we can still make the following definition:

\begin{dfn}\label{gapdfn}
For a filtered matched pair $\mathcal{P}$ as above, let $d=\dim_{\mathcal{Q}(\Lambda)}\mathcal{Q}(\Lambda)\otimes_{\Lambda}H$ and let $i$ be an integer with $1\leq i\leq d$.  We define the \textbf{$i$th gap} of $\mathcal{P}$ as \[ G_i(\mathcal{P})=\sup\left\{\gamma\in \R\left|\begin{array}{c}(\exists x_1,\ldots,x_i\in H \mbox{ independent})\\ (\rho^{\downarrow}(\phi^{\downarrow}x_j)-\rho_{\uparrow}(\phi_{\uparrow}x_j)\geq \gamma\mbox{ for all }j=1,\ldots,i)\end{array}\right.\right\}.\]
\end{dfn}

(Here a collection of elements of a module $H$ over an integral domain is said to be independent if the elements become linearly independent after tensoring with the fraction field of the domain.)

For filtered matched pairs obtained from Poincar\'e--Novikov structures via the construction in Section \ref{pnsect}, the $G_i(\mathcal{P})$ which are positive will appear as lengths of the closed intervals in the associated barcode, while the absolute values of the negative $G_i(\mathcal{P})$ will appear as lengths of open intervals.  (The latter will be shifted down by $1$ in homological degree.)

One might hope to have $G_i(\mathcal{P})=\rho^{\downarrow}(\phi^{\downarrow}x_i)-\rho_{\uparrow}(\phi_{\uparrow}x_i)$ for some collection of elements $\{x_1,\ldots,x_d\}\subset H$.  We will see that this holds if the $x_i$ form a \emph{doubly-orthogonal basis} in the following sense:

\begin{dfn}\label{dodfn}
For a filtered matched pair $\mathcal{P}$ as above, a \textbf{doubly-orthogonal basis} is a subset $\{e_1,\ldots,e_d\}\subset H$ such that:
\begin{itemize}\item[(i)] $\{e_1,\ldots,e_d\}$ projects under the quotient map $H\to \frac{H}{tH}$ to a basis for the $\Lambda$-module $\frac{H}{tH}$, where $tH$ is the $\Lambda$-torsion module of $H$;
\item[(ii)] $\{\phi_{\uparrow}(e_1),\ldots,\phi_{\uparrow}(e_d)\}$ is a $\rho_{\uparrow}$-orthogonal basis for $V_{\uparrow}$; and
\item[(iii)] $\{\phi^{\downarrow}(e_1),\ldots,\phi^{\downarrow}(e_d)\}$ is a $\rho^{\downarrow}$-orthogonal basis for $V^{\downarrow}$.
\end{itemize}
\end{dfn}

We will prove the following in Section \ref{basisconstruct}.

\begin{theorem}\label{basistheorem} Suppose that the subgroup $\Gamma<\R$ is discrete.  Then any filtered matched pair has a doubly-orthogonal basis.
\end{theorem}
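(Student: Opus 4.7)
The plan is to prove the theorem by induction on $d=\dim_{\mathcal{Q}(\Lambda)}\mathcal{Q}(\Lambda)\otimes_{\Lambda}H$. Since the orthogonality conditions (ii)--(iii) of Definition \ref{dodfn} depend only on $\phi_\uparrow(e_i)$ and $\phi^\downarrow(e_i)$, and both maps have kernel equal to $tH$, one may replace $H$ by $H/tH$ without loss of generality, with condition (i) then amounting to $\{e_1,\ldots,e_d\}$ being a $\Lambda$-basis of $H/tH$. Because $\Gamma$ is discrete, $\Lambda$ is a PID (equal to $\kappa$ or, up to isomorphism, $\kappa[t,t^{-1}]$), so $H/tH$ is free of rank $d$; the case $d=0$ is vacuous.

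For the inductive step, the plan is to choose $e_1\in H$ maximizing $D(x):=\rho^\downarrow(\phi^\downarrow x)-\rho_\uparrow(\phi_\uparrow x)$ over $H\setminus\{0\}$. This supremum equals $G_1(\mathcal{P})$ and is attained because: $D$ takes values in a discrete subset of $\mathbb{R}$ (by Remark \ref{equalcase}, the ranges of $\rho_\uparrow$ and $\rho^\downarrow$ lie in finitely many cosets of $\Gamma$); $D$ is bounded above, as one sees by expanding in $\rho_\uparrow$- and $\rho^\downarrow$-orthogonal bases of $V_\uparrow,V^\downarrow$ (which exist by Proposition \ref{discretenormed}); and a direct computation using the normed-space axioms gives $D(\lambda x)=D(x)+\nu_\uparrow(\lambda)-\nu^\downarrow(\lambda)\leq D(x)$ for $\lambda\in\Lambda\setminus\{0\}$, with equality iff $\lambda$ is a monomial, which forces any maximizer to be primitive in $H/tH$. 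With $e_1$ chosen, form the quotient filtered matched pair $\tilde{\mathcal{P}}$ with $\tilde H=H/\Lambda e_1$, $\tilde V_\uparrow=V_\uparrow/\Lambda_\uparrow\phi_\uparrow(e_1)$, $\tilde V^\downarrow=V^\downarrow/\Lambda^\downarrow\phi^\downarrow(e_1)$, and quotient filtrations $\tilde\rho_\uparrow([v])=\inf_\mu\rho_\uparrow(v-\mu\phi_\uparrow e_1)$, $\tilde\rho^\downarrow([v])=\sup_\mu\rho^\downarrow(v-\mu\phi^\downarrow e_1)$; both optima are realized by the best-approximation argument used in the proof of Proposition \ref{discretenormed}. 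The inductive hypothesis then supplies a doubly-orthogonal basis $\{\tilde e_2,\ldots,\tilde e_d\}$ of $\tilde{\mathcal{P}}$.

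The main obstacle is the simultaneous lifting of the $\tilde e_i$ back to elements $e_i\in H$ that achieve both quotient optima, that is, $\rho_\uparrow(\phi_\uparrow e_i)=\tilde\rho_\uparrow(\tilde\phi_\uparrow\tilde e_i)$ and $\rho^\downarrow(\phi^\downarrow e_i)=\tilde\rho^\downarrow(\tilde\phi^\downarrow\tilde e_i)$. For any initial lift $e_i^0$, alternative lifts are $e_i=e_i^0-\lambda e_1$ with $\lambda\in\Lambda$; the set of $\mu\in\Lambda_\uparrow$ achieving the $\rho_\uparrow$-infimum is an affine coset $\mu_\uparrow^{*}+\{\delta:\nu_\uparrow(\delta)\geq C_\uparrow\}$ where $C_\uparrow=\rho_\uparrow(\phi_\uparrow e_1)-\tilde\rho_\uparrow(\tilde\phi_\uparrow\tilde e_i)$, and symmetrically the set of $\mu\in\Lambda^\downarrow$ achieving the $\rho^\downarrow$-supremum is $\mu^\downarrow_{*}+\{\delta:\nu^\downarrow(\delta)\leq C^\downarrow\}$. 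Because elements of $\Lambda$ have finite $T^g$-support, while these two cosets only constrain the low-degree coefficients (degrees $<C_\uparrow$) of $\mu_\uparrow^{*}$ and the high-degree coefficients (degrees $>C^\downarrow$) of $\mu^\downarrow_{*}$, a common $\lambda\in\Lambda$ exists as long as the constraints do not conflict in the overlap range $(C^\downarrow,C_\uparrow)$; when $C_\uparrow\leq C^\downarrow$ the overlap is empty and there is nothing to check. The hard part is to show that the extremal choice $D(e_1)=G_1(\mathcal{P})$ forces this compatibility for each $\tilde e_i$---either by establishing the inequality $D(e_1)\geq \tilde\rho^\downarrow(\tilde\phi^\downarrow\tilde e_i)-\tilde\rho_\uparrow(\tilde\phi_\uparrow\tilde e_i)$ (which is equivalent to $C_\uparrow\leq C^\downarrow$) or, when that fails, by showing the canonical low- and high-degree parts of $\mu_\uparrow^{*}$ and $\mu^\downarrow_{*}$ automatically agree on the overlap. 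Once these lifts are constructed, a routine application of \cite[Lemma 2.15]{UZ} combined with the doubly-orthogonal property of $\{\tilde e_2,\ldots,\tilde e_d\}$ in $\tilde{\mathcal{P}}$ shows that $\{e_1,\ldots,e_d\}$ is a doubly-orthogonal basis for $\mathcal{P}$, completing the induction.
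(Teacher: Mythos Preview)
Your approach—induction on $d$ via the quotient filtered matched pair—is genuinely different from the paper's proof, which instead starts from separate $\rho_\uparrow$- and $\rho^\downarrow$-orthogonal bases of $H$ (Lemma~\ref{triconstruct}), records the unitriangular change-of-basis matrix, and then performs a sequence of structured row/column operations (A1)--(A5) to reduce this matrix to the identity.  Termination of that algorithm is controlled by a ``misalignment'' quantity (Definition~\ref{misdef}) taking values in a discrete set; the key technical content is Lemma~\ref{mainbasisstep}, whose ``hard case'' ($f_0\neq 0$) involves a compression step rather than a direct elimination.

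Your proposal has a genuine gap at precisely the point you flag as ``the hard part,'' and your two suggested escape routes do not both survive.  Alternative~(b) is in fact vacuous: if a lift $e_i=e_i^0-\lambda e_1$ with $\lambda\in\Lambda$ achieves \emph{both} quotient optima, then $D(e_i)=\tilde\rho^\downarrow(\tilde\phi^\downarrow\tilde e_i)-\tilde\rho_\uparrow(\tilde\phi_\uparrow\tilde e_i)=\tilde D(\tilde e_i)$, and since $e_i\in H$ we get $\tilde D(\tilde e_i)=D(e_i)\leq D(e_1)$.  So whenever the overlap range $(C^\downarrow,C_\uparrow)$ is nonempty (i.e.\ $\tilde D(\tilde e_i)>D(e_1)$), no simultaneous lift can exist, regardless of whether the low- and high-degree parts of $\mu_\uparrow^{*}$ and $\mu^\downarrow_{*}$ happen to agree there.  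Hence your entire argument hinges on alternative~(a): the inequality $G_1(\tilde{\mathcal P})\leq G_1(\mathcal P)$.

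You give no argument for (a), and the obvious route is circular: one can show that any maximizer $e_1$ of $D$ extends to a doubly-orthogonal basis of $\mathcal P$ (via Proposition~\ref{orthpres}(iii) applied simultaneously on both sides), from which (a) follows immediately---but this presupposes that $\mathcal P$ already admits a doubly-orthogonal basis, which is the statement you are proving.  The inequality (a) is in fact true (it is an \emph{a posteriori} consequence of the theorem), but a non-circular proof of it would itself be the substance of the theorem, and nothing in your outline supplies one.  The paper's compression mechanism in Lemma~\ref{mainbasisstep} is doing real work that your inductive scheme has not replaced.
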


As alluded to earlier, if $\Gamma$ is not discrete then doubly-orthogonal bases might not exist for the trivial reason that $\frac{H}{tH}$ might not be a free module.  Also, as noted after Proposition \ref{discretenormed}, if $\Gamma$ is not discrete then $(V_{\uparrow},\rho_{\uparrow})$ and $(V^{\downarrow},\rho^{\downarrow})$ might not be orthogonalizable.  I do not know whether, in the indiscrete case, filtered matched pairs in which $\frac{H}{tH}$ is free and $(V_{\uparrow},\rho_{\uparrow})$ and $(V^{\downarrow},\rho^{\downarrow})$ are orthogonalizable always admit 
doubly-orthogonal bases.  While the individual steps of the algorithm implicitly described in the proof of Theorem \ref{basistheorem} can be implemented for any $\Gamma$, the proof that the algorithm terminates uses the assumption that $\Gamma$ is discrete.

Doubly-orthogonal bases are related to the gaps of Definition \ref{gapdfn} as follows:

\begin{prop}\label{gapchar}
Let $\{e_1,\ldots,e_d\}$ be a doubly-orthogonal basis for a filtered matched pair $\mathcal{P}$, ordered in such a way that $\rho^{\downarrow}(\phi^{\downarrow}(e_i))-\rho_{\uparrow}(\phi_{\uparrow}(e_i))\geq \rho^{\downarrow}(\phi^{\downarrow}(e_{i+1}))-\rho_{\uparrow}(\phi_{\uparrow}(e_{i+1}))$ for each $i\in \{1,\ldots,d-1\}$.  Then, for each $i$, \[ G_i(\mathcal{P})=\rho^{\downarrow}(\phi^{\downarrow}(e_i))-\rho_{\uparrow}(\phi_{\uparrow}(e_i)).\]
\end{prop}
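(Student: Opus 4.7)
The plan is to establish the equality by proving two inequalities: a lower bound $G_i(\mathcal{P}) \geq \rho^{\downarrow}(\phi^{\downarrow}e_i) - \rho_{\uparrow}(\phi_{\uparrow}e_i)$ and a matching upper bound.

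For the lower bound I would simply take the test tuple $(x_1,\ldots,x_i) = (e_1,\ldots,e_i)$. Since $\{e_1,\ldots,e_d\}$ descends to a $\Lambda$-basis of $H/tH$, the elements $e_1,\ldots,e_i$ are independent in the sense of Definition \ref{gapdfn}. By the ordering hypothesis the quantities $\rho^{\downarrow}(\phi^{\downarrow}e_j) - \rho_{\uparrow}(\phi_{\uparrow}e_j)$ are each at least $\rho^{\downarrow}(\phi^{\downarrow}e_i) - \rho_{\uparrow}(\phi_{\uparrow}e_i)$ for $j \leq i$, which gives the lower bound directly from the definition.

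For the upper bound, suppose $x_1,\ldots,x_i \in H$ are independent. Since $\phi_{\uparrow}$ and $\phi^{\downarrow}$ both kill $tH$ (as noted after Definition \ref{fmpdfn}), writing each $x_j$ modulo torsion as $\sum_{k=1}^d c_{jk} e_k$ with $c_{jk} \in \Lambda$ yields the two expansions $\phi_{\uparrow}(x_j) = \sum_k c_{jk}\phi_{\uparrow}(e_k)$ in $V_{\uparrow}$ and $\phi^{\downarrow}(x_j) = \sum_k c_{jk}\phi^{\downarrow}(e_k)$ in $V^{\downarrow}$, \emph{with the same coefficients}. This coincidence of coefficients is what makes the argument work. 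Independence of $\{x_1,\ldots,x_i\}$ over $\mathcal{Q}(\Lambda)$ translates into the $i\times d$ matrix $(c_{jk})$ having rank $i$, so by a rank count its first $i-1$ columns cannot support rank $i$; hence there exist $j_0$ and $k_0 \geq i$ with $c_{j_0 k_0} \neq 0$.

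I would then apply the $\rho^{\downarrow}$-orthogonality of $\{\phi^{\downarrow}(e_k)\}$ to get $\rho^{\downarrow}(\phi^{\downarrow}x_{j_0}) \leq \rho^{\downarrow}(\phi^{\downarrow}e_{k_0}) - \nu^{\downarrow}(c_{j_0 k_0})$ (since the minimum over $k$ is at most the value at $k_0$), and the $\rho_{\uparrow}$-orthogonality of $\{\phi_{\uparrow}(e_k)\}$ to get $\rho_{\uparrow}(\phi_{\uparrow}x_{j_0}) \geq \rho_{\uparrow}(\phi_{\uparrow}e_{k_0}) - \nu_{\uparrow}(c_{j_0 k_0})$. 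Subtracting and using the elementary inequality $\nu_{\uparrow}(\lambda) \leq \nu^{\downarrow}(\lambda)$ for $\lambda \in \Lambda$ (clear from the formulas in (\ref{nuformula})) makes the valuation terms drop out favorably, yielding
\[ \rho^{\downarrow}(\phi^{\downarrow}x_{j_0}) - \rho_{\uparrow}(\phi_{\uparrow}x_{j_0}) \leq \rho^{\downarrow}(\phi^{\downarrow}e_{k_0}) - \rho_{\uparrow}(\phi_{\uparrow}e_{k_0}). \]
Finally the ordering hypothesis, together with $k_0 \geq i$, bounds the right-hand side by $\rho^{\downarrow}(\phi^{\downarrow}e_i) - \rho_{\uparrow}(\phi_{\uparrow}e_i)$, proving the upper bound.

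The main (mild) obstacle I anticipate is setting up the bookkeeping: one must observe that although $x_j$ may have a torsion component, $\phi_{\uparrow}$ and $\phi^{\downarrow}$ both factor through $H/tH$, so that the expansions on the $V_{\uparrow}$- and $V^{\downarrow}$-sides share the same coefficient matrix $(c_{jk}) \in \Lambda^{i\times d}$. This shared-coefficients property is what allows the $\nu_{\uparrow}$ versus $\nu^{\downarrow}$ comparison to eliminate the dependence on the $c_{jk}$ and reduce everything to the values at a basis element $e_{k_0}$; everything else is a rank count and invocation of the orthogonality axioms.
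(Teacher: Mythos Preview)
Your proposal is correct and follows essentially the same argument as the paper's proof: both establish the lower bound by testing on $(e_1,\ldots,e_i)$, and for the upper bound both use the rank/span observation that an independent $i$-tuple cannot have all its coordinates supported on $\{e_1,\ldots,e_{i-1}\}$, then apply the two orthogonality conditions at the resulting index $k_0\geq i$ together with $\nu_{\uparrow}(\lambda)\leq\nu^{\downarrow}(\lambda)$ on $\Lambda$ to eliminate the coefficient and conclude. Your explicit emphasis on the shared coefficient matrix is a nice expository touch, but the underlying logic is identical.
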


\begin{proof}  That $G_i(\mathcal{P})\geq \rho^{\downarrow}(\phi^{\downarrow}(e_i))-\rho_{\uparrow}(\phi_{\uparrow}(e_i))$ follows immediately from the definition (take $x_j=e_j$). To prove the reverse inequality we must show that, whenever $\{x_1,\ldots,x_i\}$ is an independent set in $H$, there is some $j\leq i$ such that $\rho^{\downarrow}(\phi^{\downarrow}(x_j))-\rho_{\uparrow}(\phi_{\uparrow}(x_j))\leq \rho^{\downarrow}(\phi^{\downarrow}(e_i))-\rho_{\uparrow}(\phi_{\uparrow}(e_i))$.  

To see this, note that since $\{x_1,\ldots,x_i\}$ is an independent set, it is not contained in the $\Lambda$-span of $\{e_i,\ldots,e_{i-1}\}$.  Hence there are $j\leq i$ and $k\geq i$ such that, expressing $x_j$ in terms of our basis as $x_j=t+\sum_{\ell}\lambda_{\ell}e_{\ell}$ where $t\in tH$, the coefficient $\lambda_k\in \Lambda$ is nonzero.  We hence have, by the double-orthogonality of $\{e_1,\ldots,e_d\}$ and the fact that $\phi_{\uparrow}$ and $\phi^{\downarrow}$ vanish on $tH$, \begin{align*} \rho^{\downarrow}(\phi^{\downarrow}(x_j))-\rho_{\uparrow}(\phi_{\uparrow}(x_j))& \leq \left(\rho^{\downarrow}(\phi^{\downarrow}(e_k))-\nu^{\downarrow}(\lambda_k)\right)-\left(\rho_{\uparrow}(\phi_{\uparrow}(e_k))-\nu_{\uparrow}(\lambda_k\right) 
\\ &\leq \left(\rho^{\downarrow}(\phi^{\downarrow}(e_i))-\rho_{\uparrow}(\phi_{\uparrow}(e_i))\right)-\left(\nu^{\downarrow}(\lambda_k)-\nu_{\uparrow}(\lambda_k)\right) 
\\ &\leq \left(\rho^{\downarrow}(\phi^{\downarrow}(e_i))-\rho_{\uparrow}(\phi_{\uparrow}(e_i))\right),\end{align*}
where the second inequality uses that $k\geq i$ and the last inequality uses that, as is clear from (\ref{nuformula}), $\nu^{\downarrow}(\lambda)-\nu_{\uparrow}(\lambda)\geq 0$ for all nonzero elements $\lambda$ of $\Lambda$.
\end{proof}

It follows in particular that the multiset $\{\rho^{\downarrow}(\phi^{\downarrow}(e_i))-\rho_{\uparrow}(\phi_{\uparrow}(e_i))\}$ is the same for any choice of doubly-orthogonal basis.  Note that for this to hold it is essential that, in Definition \ref{dodfn}, we require that $\{e_1,\ldots,e_d\}$ project to a \textbf{basis} for the module $\frac{H}{tH}$ and not merely a maximal independent set (assuming that $\Gamma\neq \{0\}$ so that there is a distinction between these notions).  Indeed, if we start with a doubly-orthogonal basis $\{e_1,\ldots,e_d\}$ and then multiply, say, $e_1$ by a nonzero, non-invertible element $\lambda\in \Lambda$, the new set $\{\lambda e_1,e_2,\ldots,e_d\}$ would still have the property that its images under $\phi_{\uparrow}$ and $\phi^{\downarrow}$ are orthogonal bases of $V_{\uparrow}$ and $V^{\downarrow}$ respectively, but since $\nu^{\downarrow}(\lambda)-\nu_{\uparrow}(\lambda)$ is nonzero for non-invertible (\emph{i.e.}, non-monomial) elements of $\Lambda$ this operation would change the value of $\rho^{\downarrow}(\phi^{\downarrow}(e_1))-\rho_{\uparrow}(\phi_{\uparrow}(e_1))$.

The absolute values of the $\rho^{\downarrow}(\phi^{\downarrow}(e_i))-\rho_{\uparrow}(\phi_{\uparrow}(e_i))$ for a doubly-orthogonal basis $\{e_i\}$ are meant to correspond to the lengths of bars in a barcode; it is then natural to expect that the endpoints of these bars would be $ \rho_{\uparrow}(\phi_{\uparrow}(e_i))$ and $\rho^{\downarrow}(\phi^{\downarrow}(e_i))$.  As in \cite{UZ}, in the presence of the deck transformation group $\Gamma$ we would expect these bars to be well-defined only up to translation by the group $\Gamma$; this corresponds to the fact that multiplying each $e_i$ by the invertible element $T^{g_i}$ of $\Lambda$ for some $g_i\in \Gamma$ preserves the double-orthogonality of the basis.  Modulo this ambiguity, we will show in Proposition \ref{barinvt} that different choices of doubly-orthogonal bases yield the same collection of ``bars.''

To set up some notation, given a filtered matched pair $\mathcal{P}$ as above, define functions $\ell_{\uparrow}\co \frac{H}{tH}\to\R\cup\{-\infty\}$ and $\ell^{\downarrow}\co \frac{H}{tH}\to \R\cup\{\infty\}$ by, for a coset $[x]$ in $\frac{H}{tH}$ of $x\in H$, \[ \ell_{\uparrow}([x])=\rho_{\uparrow}(\phi_{\uparrow}(x)),\qquad \ell^{\downarrow}([x])=\rho^{\downarrow}(\phi^{\downarrow}(x)).\]  Also, if $a\in \R$, put \begin{align}\label{hupdown} H_{\uparrow}^{\leq a}&=\left\{\left.[x]\in \frac{H}{tH}\right|\ell_{\uparrow}([x])\leq a\right\},\qquad H_{\uparrow}^{< a}=\left\{\left.[x]\in \frac{H}{tH}\right|\ell_{\uparrow}([x])< a\right\},\\ H^{\downarrow}_{\geq a} &=\left\{\left.[x]\in \frac{H}{tH}\right|\ell^{\downarrow}([x])\geq a\right\}, \qquad H^{\downarrow}_{> a} =\left\{\left.[x]\in \frac{H}{tH}\right|\ell^{\downarrow}([x])> a\right\}.\nonumber\end{align}  It follows from Definition \ref{normdef} that each of $H_{\uparrow}^{\leq a},H_{\uparrow}^{>a},H^{\downarrow}_{\geq a},H^{\downarrow}_{>a}$ is a $\kappa$-vector subspace of $\frac{H}{tH}$. 

\begin{prop}\label{barinvt}
With notation as above, if $\{e_1,\ldots,e_d\}\subset H$ is a doubly-orthogonal basis for the filtered matched pair $\mathcal{P}$ and if $a,b\in \R$, then \begin{align}\label{invteqn} &\#\{i|(\exists g\in \Gamma)(\rho_{\uparrow}(\phi_{\uparrow}(e_i))=a+g\mbox{ and }\rho^{\downarrow}(\phi^{\downarrow}(e_i))=b+g)\} \\ &\qquad = \dim_{\kappa}\left(\frac{H_{\uparrow}^{\leq a}\cap H^{\downarrow}_{\geq b}}{(H_{\uparrow}^{<a}\cap H^{\downarrow}_{\geq b})+(H_{\uparrow}^{\leq a}\cap H^{\downarrow}_{>b})}\right).\nonumber\end{align}
\end{prop}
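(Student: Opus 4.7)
The plan is to identify both sides of (\ref{invteqn}) with the cardinality of a single explicit set of basis indices. Since $\{[e_1],\ldots,[e_d]\}$ is a $\Lambda$-basis of $H/tH$ and $\{T^g\}_{g\in\Gamma}$ is a $\kappa$-basis of $\Lambda$, the family $\mathcal{B}=\{T^g[e_i]:g\in\Gamma,\,1\leq i\leq d\}$ is a $\kappa$-basis of $H/tH$. Writing $a_i=\rho_\uparrow(\phi_\uparrow e_i)$ and $b_i=\rho^\downarrow(\phi^\downarrow e_i)$, the first step is to use the $\rho_\uparrow$-orthogonality of $\{\phi_\uparrow(e_i)\}$ together with the formula $\nu_\uparrow(\sum_g c_g T^g)=\min\{g:c_g\neq 0\}$ to compute, for $x=\sum_{i,g}c_{i,g}T^g e_i\in H$,
$$\ell_\uparrow([x])=\max\{a_i-g:c_{i,g}\neq 0\},\qquad \ell^\downarrow([x])=\min\{b_i-g:c_{i,g}\neq 0\},$$
the second equality coming by the dual argument using $\nu^\downarrow(\sum_g c_g T^g)=\max\{g:c_g\neq 0\}$ and the $\rho^\downarrow$-orthogonality of $\{\phi^\downarrow(e_i)\}$.

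From these two formulas each of the four $\kappa$-subspaces appearing in (\ref{invteqn}) will be recognizable as the $\kappa$-span of an explicit subset of $\mathcal{B}$. Specifically, $H_\uparrow^{\leq a}\cap H^\downarrow_{\geq b}$ is spanned by those basis vectors $T^g[e_i]$ with $a_i-a\leq g\leq b_i-b$, while $H_\uparrow^{<a}\cap H^\downarrow_{\geq b}$ and $H_\uparrow^{\leq a}\cap H^\downarrow_{>b}$ are spanned by the subsets obtained by replacing the respective weak inequality by a strict one. Because subspaces of $H/tH$ that are spanned by subsets of $\mathcal{B}$ are closed under intersection (by uniqueness of basis expansion) and the sum of any two such subspaces is spanned by the union of the spanning sets, the denominator $(H_\uparrow^{<a}\cap H^\downarrow_{\geq b})+(H_\uparrow^{\leq a}\cap H^\downarrow_{>b})$ will be spanned by exactly those $T^g[e_i]$ with $a_i-a\leq g\leq b_i-b$ and either $g>a_i-a$ or $g<b_i-b$.

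The quotient in (\ref{invteqn}) therefore has a $\kappa$-basis indexed by those $T^g[e_i]$ that lie in the numerator but not the denominator, that is, those for which $g=a_i-a$ and $g=b_i-b$ simultaneously; equivalently those indices $i$ for which there exists $g\in\Gamma$ with $a_i=a+g$ and $b_i=b+g$, the value of $g$ being uniquely determined as $a_i-a$. This count coincides with the left-hand side of (\ref{invteqn}), proving the identity.

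I do not expect any substantive obstacle beyond careful bookkeeping: the orthogonality already built into a doubly-orthogonal basis, combined with the explicit description of $\nu_\uparrow$ and $\nu^\downarrow$ on $\Lambda$, reduces the entire problem to a direct comparison of two subsets of $\mathcal{B}$. The one step worth verifying carefully is that the four subspaces from (\ref{invteqn}) are indeed $\mathcal{B}$-spanned, as opposed to merely containing those spans; this however is immediate once one notes that the conditions $\ell_\uparrow([x])\leq a$ and $\ell^\downarrow([x])\geq b$, when applied to an expansion of $x$ in $\mathcal{B}$, amount by the max/min formulas to conditions on each individual nonzero basis coefficient separately.
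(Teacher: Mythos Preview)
Your proposal is correct and follows essentially the same approach as the paper's proof: both expand elements of $H/tH$ in the $\kappa$-basis $\{T^g[e_i]\}$, use double-orthogonality together with the explicit formulas for $\nu_{\uparrow},\nu^{\downarrow}$ to recognize each of the four subspaces as the $\kappa$-span of an explicit subset of this basis, and then read off that the quotient is spanned by exactly those $T^g[e_i]$ with $g=a_i-a=b_i-b$. Your write-up is slightly more streamlined in packaging the argument as ``subspaces spanned by subsets of a common basis,'' but the underlying computation is identical to the paper's.
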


\begin{proof}
An element of $z\in \frac{H}{tH}$ belongs to $H_{\uparrow}^{\leq a}\cap H^{\downarrow}_{\geq b}$ if and only if  its coordinate expression $z=\sum_i\lambda_i[e_i]$ in terms of the basis $[e_1],\ldots,[e_d]$ has, for all $i$, both $\rho_{\uparrow}(\phi_{\uparrow}(e_i))-\nu_{\uparrow}(\lambda_i)\leq a$ and $\rho^{\downarrow}(\phi^{\downarrow}(e_i))-\nu^{\downarrow}(\lambda_i)\geq b$, \emph{i.e.} both $\nu_{\uparrow}(\lambda_i)\geq \rho_{\uparrow}(\phi_{\uparrow}(e_i))-a$ and $\nu^{\downarrow}(\lambda_i)\leq  \rho^{\downarrow}(\phi^{\downarrow}(e_i))-b$.  Expressing the coefficients $\lambda_i\in \Lambda$ as finite sums $\lambda_i=\sum_{g\in \Gamma}k_{g,i}T^{g}$ where $k_{g,i}\in \kappa$, this shows that $z=\sum_{i=1}^{d}\sum_{g\in\Gamma}  k_{g,i}T^g[e_i]$ belongs to $H_{\uparrow}^{\leq a}\cap H^{\downarrow}_{\geq b}$ iff $k_{g,i}$ vanishes for all $g,i$ other than those for which $g$ lies in the interval $[\rho_{\uparrow}(\phi_{\uparrow}(e_i))-a,\rho^{\downarrow}(\phi^{\downarrow}(e_i))-b]$.  

Similarly, $z=\sum_{i,g} k_{g,i}T^g[e_i]$ belongs to $H_{\uparrow}^{<a}\cap H^{\downarrow}_{\geq b}$ iff $k_{g,i}=0$ whenever $g\notin (\rho_{\uparrow}(\phi_{\uparrow}(e_i))-a,\rho^{\downarrow}(\phi^{\downarrow}(e_i))-b]$, and belongs to $H_{\uparrow}^{\leq a}\cap H^{\downarrow}_{>b}$ iff $k_{g,i}=0$ whenever $g\notin [\rho_{\uparrow}(\phi_{\uparrow}(e_i))-a,\rho^{\downarrow}(\phi^{\downarrow}(e_i))-b)$.  So a vector space complement to $(H_{\uparrow}^{<a}\cap H^{\downarrow}_{\geq b})+(H_{\uparrow}^{\leq a}\cap H^{\downarrow}_{>b})$ in $H_{\uparrow}^{\leq a}\cap H^{\downarrow}_{\geq b}$ is given by the space of all classes $\sum_{i,g} k_{g,i}T^g[e_i]$ having the property that the only $k_{g,i}$ which might be nonzero are those for which $g$ belongs to $[\rho_{\uparrow}(\phi_{\uparrow}(e_i))-a,\rho^{\downarrow}(\phi^{\downarrow}(e_i))-b]$ but belongs to neither $ (\rho_{\uparrow}(\phi_{\uparrow}(e_i))-a,\rho^{\downarrow}(\phi^{\downarrow}(e_i))-b]$ nor  $[\rho_{\uparrow}(\phi_{\uparrow}(e_i))-a,\rho^{\downarrow}(\phi^{\downarrow}(e_i))-b)$.  In other words, $k_{g,i}=0$ unless $g=\rho_{\uparrow}(\phi_{\uparrow}(e_i))-a=\rho^{\downarrow}(\phi^{\downarrow}(e_i))-b$.  So if $I_0$ is the subset of $\{1,\ldots,d\}$ appearing on the left in (\ref{invteqn}), then $\frac{H_{\uparrow}^{\leq a}\cap H^{\downarrow}_{\geq b}}{(H_{\uparrow}^{<a}\cap H^{\downarrow}_{\geq b})+(H_{\uparrow}^{\leq a}\cap H^{\downarrow}_{>b})}$ is isomorphic as a $\kappa$-vector space to the $\kappa$-span of $\{T^{\rho_{\uparrow}(\phi_{\uparrow}(e_i))-a}[e_i]|i\in I_0\}$.
\end{proof}

We accordingly make the following definition:

\begin{dfn}\label{hbsdef}
Suppose that $\mathcal{P}$ is a filtered matched pair which admits a doubly-orthogonal basis.  The \textbf{basis spectrum} $\Sigma(\mathcal{P})$ of $\mathcal{P}$ is the sub-multiset of $(\R/\Gamma)\times \R$ given by \[ \left\{\left([\rho_{\uparrow}(\phi_{\uparrow}(e_i))],\rho^{\downarrow}(\phi^{\downarrow}(e_i))-\rho_{\uparrow}(\phi_{\uparrow}(e_i))\right)|i\in\{1,\ldots,d\}\right\} \] for one and hence (by Proposition \ref{barinvt}) every doubly-orthogonal basis $\{e_1,\ldots,e_d\}$ of $\mathcal{P}$.
\end{dfn}

Thus Proposition \ref{barinvt} shows that, for $([a],\ell)\in (\R/\Gamma)\times \R$ with $[a]$ denoting the coset in $\R/\Gamma$ of the real number $a$, the multiplicity of $([a],\ell)$ in $\Sigma(\mathcal{P})$ equals $\dim_{\kappa}\left(\frac{H_{\uparrow}^{\leq a}\cap H^{\downarrow}_{\geq b}}{(H_{\uparrow}^{<a}\cap H^{\downarrow}_{\geq b})+(H_{\uparrow}^{\leq a}\cap H^{\downarrow}_{>b})}\right)$.  This characterization is similar to the definition of the quantity denoted $\delta_r^f(a,b)$ in \cite[Section 3]{B1}.

As was mentioned in Section \ref{algintro}, in the motivating topological example an element $([a],\ell)$ of $\Sigma(\mathcal{P})$ corresponds to a bar in the interlevel persistence barcode of form $[a,a+\ell]$ if $\ell\geq 0$ and $(a+\ell,a)$ if $\ell<0$ (with homological degree shifted down by $1$ in the latter case).  When $\Gamma$ is nontrivial, as is reflected in Proposition \ref{barinvt}, bars can be shifted by elements of $\Gamma$; as in \cite{UZ} our notation accounts for this ambiguity by having the first entry of an element of $\Sigma(\mathcal{P})$ only be defined modulo $\Gamma$ while the second (representing the difference of the endpoints) is a real number.  (A difference with \cite{UZ} is that here the second entry is allowed to be negative, reflecting that these ``homologically essential'' bars reflect invariant topological features and so do not disappear when their ``lengths'' go to zero; rather, they change into different types of bars.)

\subsection{Stability}\label{stabsec}

We now discuss the algebraic counterpart to the notion that a $C^0$-small change in a function should lead to a correspondingly small change in its interlevel barcode.  The $\Lambda$-module $H$ in the definition of a filtered matched pair is meant to correspond to (a graded piece of) the homology of the cover on which our function is defined, and thus does not change with the function.  Since the vector spaces $V_{\uparrow}$ and $V^{\downarrow}$ are isomorphic to $\Lambda_{\uparrow}\otimes_{\Lambda}H$ and $\Lambda^{\downarrow}\otimes_{\Lambda}H$, respectively, their vector space isomorphism types do not change either, but the filtration functions $\rho_{\uparrow}$ and $\rho^{\downarrow}$ will be sensitive to the function.

\begin{dfn}\label{tmorphdfn}
Let $t\geq 0$ and let 
\[
\xymatrix{ & & (V^{\downarrow},\rho^{\downarrow}) & & & &  (\hat{V}^{\downarrow},\hat{\rho}^{\downarrow})  \\ \mathcal{P}  \ar@{}[r]^(.35){}="a"^(.65){}="b" \ar@{=} "a";"b"    & H\ar[ru]^{\phi^{\downarrow}}\ar[rd]_{\phi_{\uparrow}} & & \mbox{and} & \hat{\mathcal{P}}  \ar@{}[r]^(.35){}="a"^(.65){}="b" \ar@{=} "a";"b"    & H\ar[ru]^{\psi^{\downarrow}}\ar[rd]_{\psi_{\uparrow}}   \\ & & (V_{\uparrow},\rho_{\uparrow}) & & & & (\hat{V}_{\uparrow},\hat{\rho}_{\uparrow}) }
\] 
be two filtered matched pairs with the same $\Lambda$-module $H$.  A \textbf{$t$-morphism} from $\mathcal{P}$ to $\hat{\mathcal{P}}$ consists of a $\Lambda_{\uparrow}$-vector space isomorphism $\alpha_{\uparrow}\co V_{\uparrow}\to \hat{V}_{\uparrow}$ and a $\Lambda^{\downarrow}$-vector space isomorphism $\alpha^{\downarrow}\co V^{\downarrow}\to\hat{V}^{\downarrow}$ such that:
\begin{itemize}\item $\psi_{\uparrow}=\alpha_{\uparrow}\circ\phi_{\uparrow}$;
\item $\psi^{\downarrow}=\alpha^{\downarrow}\circ\phi^{\downarrow}$; and \item $|\rho_{\uparrow}(\phi_{\uparrow}(x))-\hat{\rho}_{\uparrow}(\psi_{\uparrow}(x))|\leq t$   and  $|\rho^{\downarrow}(\phi^{\downarrow}(x))-\hat{\rho}^{\downarrow}(\psi^{\downarrow}(x))|\leq t$ for all $x\in H\setminus\{0\}$.\end{itemize} 
\end{dfn}


Let us first show stability for the gaps $G_i$ from Definition \ref{gapdfn}. 

\begin{prop}\label{gapcts}
Suppose that there exists a $t$-morphism from $\mathcal{P}$ to $\hat{\mathcal{P}}$.  Then $|G_i(\mathcal{P})-G_i(\hat{\mathcal{P}})|\leq 2t$ for all $i$.
\end{prop}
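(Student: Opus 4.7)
The plan is to unpack the definition of $G_i$ directly and observe that the bounds built into the definition of a $t$-morphism transport an $i$-dimensional ``witness'' from one filtered matched pair to the other while shrinking the value $\gamma$ by at most $2t$. Concretely, I would first fix $\gamma<G_i(\mathcal{P})$ arbitrarily and choose independent $x_1,\ldots,x_i\in H$ with $\rho^{\downarrow}(\phi^{\downarrow}x_j)-\rho_{\uparrow}(\phi_{\uparrow}x_j)\geq \gamma$ for every $j=1,\ldots,i$. Since an independent set cannot contain torsion elements (independence is preserved after tensoring with $\mathcal{Q}(\Lambda)$, while torsion elements vanish there), the images $\phi_{\uparrow}(x_j)$ and $\phi^{\downarrow}(x_j)$ are all nonzero and the filtration values are finite, so the two inequalities $|\rho_{\uparrow}(\phi_{\uparrow}x)-\hat{\rho}_{\uparrow}(\psi_{\uparrow}x)|\leq t$ and $|\rho^{\downarrow}(\phi^{\downarrow}x)-\hat{\rho}^{\downarrow}(\psi^{\downarrow}x)|\leq t$ from Definition \ref{tmorphdfn} apply to each $x_j$.

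Combining these inequalities for each $j$ gives
\[ \hat{\rho}^{\downarrow}(\psi^{\downarrow}x_j)-\hat{\rho}_{\uparrow}(\psi_{\uparrow}x_j)\,\geq\,\bigl(\rho^{\downarrow}(\phi^{\downarrow}x_j)-t\bigr)-\bigl(\rho_{\uparrow}(\phi_{\uparrow}x_j)+t\bigr)\,\geq\, \gamma-2t. \]
Since the same elements $x_1,\ldots,x_i\in H$ (with their same independence) witness this inequality for $\hat{\mathcal{P}}$, the definition of $G_i$ yields $G_i(\hat{\mathcal{P}})\geq \gamma-2t$. Letting $\gamma\nearrow G_i(\mathcal{P})$ gives $G_i(\hat{\mathcal{P}})\geq G_i(\mathcal{P})-2t$.

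For the reverse inequality, I would note that a $t$-morphism can be inverted: the $\Lambda_{\uparrow}$- and $\Lambda^{\downarrow}$-vector space isomorphisms $\alpha_{\uparrow}^{-1}\co \hat{V}_{\uparrow}\to V_{\uparrow}$ and $(\alpha^{\downarrow})^{-1}\co \hat{V}^{\downarrow}\to V^{\downarrow}$ still satisfy $\phi_{\uparrow}=\alpha_{\uparrow}^{-1}\circ\psi_{\uparrow}$ and $\phi^{\downarrow}=(\alpha^{\downarrow})^{-1}\circ\psi^{\downarrow}$, and the absolute-value bounds on the filtration differences are manifestly symmetric in the two pairs. Hence the inverse data define a $t$-morphism from $\hat{\mathcal{P}}$ to $\mathcal{P}$, and the argument above applied to this reversed $t$-morphism gives $G_i(\mathcal{P})\geq G_i(\hat{\mathcal{P}})-2t$. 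Together these yield the claim. I do not anticipate a real obstacle: the result is essentially a direct translation of the pointwise filtration bounds from a $t$-morphism into a bound on the supremum defining $G_i$, with the only minor subtlety being to verify that independent sets avoid the torsion locus so that the $t$-morphism bounds apply without ambiguity.
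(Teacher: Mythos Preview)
Your proof is correct and follows essentially the same approach as the paper's: pick an independent witnessing set for a value $\gamma$ close to $G_i(\mathcal{P})$, apply the two filtration bounds from the $t$-morphism to lose at most $t$ on each side, and then reverse roles. Your extra remarks (that independent elements are non-torsion so the filtration values are finite, and that the inverse data give a $t$-morphism in the other direction) are careful elaborations of points the paper leaves implicit, but the argument is the same.
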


\begin{proof} This is almost immediate from the definitions.  If, as in Definition \ref{gapdfn}, $\{x_1,\ldots,x_i\}\subset H$ is an independent set and $\gamma$ is a real number such that $\rho^{\downarrow}(\phi^{\downarrow}(x_j))-\rho_{\uparrow}(\phi_{\uparrow}(x_j))\geq \gamma$ for all $j\in \{1,\ldots,i\}$, then by Definition \ref{tmorphdfn} we have $\hat{\rho}^{\downarrow}(\psi^{\downarrow}(x_j))\geq \rho^{\downarrow}(\phi^{\downarrow}(x_j))-t$ and $\hat{\rho}_{\uparrow}(\phi_{\uparrow}(x_j))\leq \rho_{\uparrow}(\phi_{\uparrow}(x_j))+t$.  Thus $\hat{\rho}^{\downarrow}(\psi^{\downarrow}(x_j))-\hat{\rho}_{\uparrow}(\phi_{\uparrow}(x_j))\geq \gamma-2t$.  So taking the $\sup$ over such $\gamma$ shows that $G_i(\hat{\mathcal{P}})\geq G_i(\mathcal{P})-2t$.  Combining this with the same statement with the roles of $\mathcal{P}$ and $\hat{\mathcal{P}}$ reversed proves the result.
\end{proof}

In the case that $\mathcal{P}$ and $\hat{\mathcal{P}}$ admit doubly-orthogonal bases, we now consider improving Proposition \ref{gapcts} to a statement about the basis spectra $\Sigma(\mathcal{P}),\Sigma(\hat{\mathcal{P}})$.  Now by Proposition \ref{gapchar}, $\Sigma(\mathcal{P})$ takes the form $\{([a_1],G_1(\mathcal{P})),\ldots,([a_d],G_d(\mathcal{P}))\}$ for suitable $[a_i]\in \R/\Gamma$, and likewise for $\Sigma(\hat{\mathcal{P}})$.  Proposition \ref{gapcts} thus bounds the difference between the second coordinates of these.  If $\Gamma<\R$ is dense, there is no meaningful nontrivial distance between the first coordinates (the natural topology on $\R/\Gamma$ is the trivial one), so even laying aside the point that Theorem \ref{basistheorem} is proven only for the discrete case, in the dense case Proposition \ref{gapcts} should be regarded as providing all the stability information that we could ask for.  Accordingly,  Proposition \ref{stab} below will assume that $\Gamma$ is discrete.

To compare the basis spectra let us first introduce the following definition.

\begin{dfn}
Let $\mathcal{S}$ and $\hat{\mathcal{S}}$ be two multisets whose elements belong to $(\mathbb{R}/\Gamma)\times \R$, and let $t\geq 0$. A \textbf{strong $t$-matching} between $\mathcal{S}$ and $\hat{\mathcal{S}}$ is a bijection $\sigma\co \mathcal{S}\to\hat{\mathcal{S}}$ such that, for all $([a],\ell)\in \mathcal{S}$, if we write $\sigma([a],\ell)=([\hat{a}],\hat{\ell})$, then there is $g\in \Gamma$ such that both $|g+\hat{a}-a|\leq t$ and $|(g+\hat{a}+\hat{\ell})-(a+\ell)|\leq  t$.
\end{dfn}

(Our use of the word ``strong'' is meant to distinguish this notion from the notion of matching that arises in sublevel persistence, which allows for deleting elements with $\ell\leq 2t$.)

\begin{theorem}\label{stab}
Suppose that $\Gamma$ is discrete and that there is a $t$-morphism from $\mathcal{P}$ to $\hat{\mathcal{P}}$, where $t\geq 0$. Then there is a strong $t$-matching between the basis spectra $\Sigma(\mathcal{P})$ and $\Sigma(\hat{\mathcal{P}})$.\end{theorem}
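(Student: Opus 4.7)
My approach is to use the multiplicity formula of Proposition \ref{barinvt} to reframe the existence of a strong $t$-matching as a $\Gamma$-equivariant Hall-type condition on dimensions of certain subquotients of $H/tH$, and then to verify that condition using the subspace containments furnished by the $t$-morphism.

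By Theorem \ref{basistheorem}, the discreteness of $\Gamma$ implies that both $\mathcal{P}$ and $\hat{\mathcal{P}}$ admit doubly-orthogonal bases, so $\Sigma(\mathcal{P})$ and $\Sigma(\hat{\mathcal{P}})$ are well-defined multisets of the same cardinality $d=\dim_{\mathcal{Q}(\Lambda)}\mathcal{Q}(\Lambda)\otimes_{\Lambda}H$. Proposition \ref{barinvt} expresses the multiplicity of $([a],b-a)$ in $\Sigma(\mathcal{P})$ as the $\kappa$-dimension
\[ m_{\mathcal{P}}(a,b)=\dim_{\kappa}\frac{H_{\uparrow}^{\leq a}\cap H^{\downarrow}_{\geq b}}{(H_{\uparrow}^{<a}\cap H^{\downarrow}_{\geq b})+(H_{\uparrow}^{\leq a}\cap H^{\downarrow}_{> b})}, \]
and similarly for $m_{\hat{\mathcal{P}}}$ with the hatted subspaces $\hat{H}_{\uparrow}^{\leq a}$, $\hat{H}^{\downarrow}_{\geq b}$ defined via $\hat{\rho}_{\uparrow},\hat{\rho}^{\downarrow}$ in place of $\rho_{\uparrow},\rho^{\downarrow}$ in (\ref{hupdown}); both functions are invariant under the diagonal $\Gamma$-action on $\R^2$. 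Since $\psi_{\uparrow}=\alpha_{\uparrow}\circ\phi_{\uparrow}$ with $\alpha_{\uparrow}$ an isomorphism (and analogously for $\downarrow$), the bound $|\rho_{\uparrow}(\phi_{\uparrow}(x))-\hat{\rho}_{\uparrow}(\psi_{\uparrow}(x))|\leq t$ immediately yields the subspace containments $H_{\uparrow}^{\leq a}\subset\hat{H}_{\uparrow}^{\leq a+t}$ and $\hat{H}_{\uparrow}^{\leq a}\subset H_{\uparrow}^{\leq a+t}$ in $H/tH$, together with the corresponding inclusions $H^{\downarrow}_{\geq b}\subset\hat{H}^{\downarrow}_{\geq b-t}$ and $\hat{H}^{\downarrow}_{\geq b}\subset H^{\downarrow}_{\geq b-t}$; versions with strict inequalities hold as well.

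The heart of the argument is a box inequality: for every closed axis-parallel rectangle $R=[a_-,a_+]\times[b_-,b_+]\subset \R^2$ and every finite union $U$ of $\Gamma$-translates of $R$ under the diagonal action, the total multiplicity of $U$ in the canonical $\Gamma$-periodic lift of $\Sigma(\mathcal{P})$ to $\R^2$ is at most the total multiplicity of the $L^{\infty}$-$t$-neighborhood $N_t(U)$ in the corresponding lift of $\Sigma(\hat{\mathcal{P}})$. Via the multiplicity formula this reduces to comparing $\kappa$-dimensions of subquotients of $H_{\uparrow}^{\leq a_+}\cap H^{\downarrow}_{\geq b_-}$ and of $\hat{H}_{\uparrow}^{\leq a_++t}\cap\hat{H}^{\downarrow}_{\geq b_--t}$; using the $t$-interleaving containments above together with an inclusion-exclusion computation on the $\Gamma$-orbit decomposition of the rectangles making up $U$, one shows that the smaller subquotient embeds into (a fixed $\Gamma$-invariant summand of) the larger one.

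Finally, consider the finite bipartite multigraph on $\Sigma(\mathcal{P})\sqcup\Sigma(\hat{\mathcal{P}})$ with an edge between $([a],\ell)$ and $([\hat{a}],\hat{\ell})$ whenever some $g\in\Gamma$ witnesses $|\hat{a}+g-a|\leq t$ and $|(\hat{a}+\hat{\ell}+g)-(a+\ell)|\leq t$. The box inequality, applied to the $\Gamma$-orbit of a bounding rectangle of any subset of $\Sigma(\mathcal{P})$, is exactly Hall's condition on this graph, and Hall's marriage theorem produces the desired bijection $\sigma$. The main obstacle I expect is a clean proof of the box inequality: although each pairwise containment is immediate from the $t$-morphism, assembling them into a global dimension comparison requires a filtration of $U$ by ``small'' sub-rectangles so that the contributions telescope additively, while simultaneously respecting the $\Gamma$-action and avoiding overcounting where different $\Gamma$-orbits overlap $N_t(U)$.
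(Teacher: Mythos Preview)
Your plan has a genuine gap at the step linking the box inequality to Hall's condition. You write that the box inequality, ``applied to the $\Gamma$-orbit of a bounding rectangle of any subset of $\Sigma(\mathcal{P})$, is exactly Hall's condition.'' This is false: if $S\subset\Sigma(\mathcal{P})$ and $R$ is a bounding rectangle for representatives of $S$, then the $\Gamma$-orbit of $R$ will typically contain points of $\Sigma(\mathcal{P})$ not in $S$, and $N_t(\Gamma R)$ will typically contain points of $\Sigma(\hat{\mathcal{P}})$ not in $N(S)$. So from $\mu_{\mathcal{P}}(\Gamma R)\leq \mu_{\hat{\mathcal{P}}}(N_t(\Gamma R))$ you cannot conclude $|S|\leq |N(S)|$. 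To make Hall work you would instead need the box inequality for an \emph{arbitrary} finite union of small rectangles (one around each point of $S$), not just the $\Gamma$-orbit of a single rectangle.

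That brings the real difficulty into focus: the box inequality for a union $U$ of several distinct rectangles. For a single small rectangle the interleaving containments $H_{\uparrow}^{\leq a}\subset\hat{H}_{\uparrow}^{\leq a+t}$ etc.\ do give a factorization $V_R\to\hat{V}_{R'}\to V_{R''}$ and hence a rank inequality; this is essentially what the paper exploits. But for a union, inclusion--exclusion expresses $\mu_{\mathcal{P}}(U)$ as an \emph{alternating} sum of such dimensions, and an inequality in one direction for each summand does not survive the alternating combination. Your own caveat at the end is on point: the ``telescoping additively'' you hope for does not come from the subspace containments alone, and you have not supplied the missing ingredient.

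The paper sidesteps exactly this obstacle. It interpolates linearly between the two filtration functions to get a one-parameter family $\mathcal{P}_s$, reducing to a local statement (Lemma~\ref{localstab}) in which $t<\tau(\mathcal{P})$ is small enough that the $2t$-boxes around distinct spectrum points have disjoint $\Gamma$-orbits. In that regime the single-rectangle inequality, combined with the global constraint $\sum_i m_i=d=\sum_i\hat d_i$, forces $\hat d_i=m_i$ for every $i$ and produces the matching without any appeal to Hall or to multi-rectangle estimates. If you want to salvage your route, you would need either an honest proof of the union-box inequality (which is not easier than the paper's argument) or some other mechanism to verify Hall's condition from single-rectangle data.
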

\begin{proof}
The overall outline of the proof bears some resemblance to \cite[Section 4]{CDGO} in that we use interpolation to reduce to a local result which is then proven by considering certain rectangle measures, though the details are different.

By replacing $(\hat{V}_{\uparrow},\hat{\rho}_{\uparrow})$ and $(\hat{V}^{\downarrow},\hat{\rho}^{\downarrow})$ by $(V_{\uparrow},\hat{\rho}_{\uparrow}\circ\alpha_{\uparrow})$ and $(V^{\downarrow},\hat{\rho}^{\downarrow}\circ\alpha^{\downarrow})$ we may as well assume that $\hat{V}_{\uparrow}=V_{\uparrow}$ and $\hat{V}^{\downarrow}=V^{\downarrow}$, and that $\alpha_{\uparrow}$ and $\alpha^{\downarrow}$ are the respective identity maps.  So we have functions $\rho_{\uparrow},\hat{\rho}_{\uparrow}\co V_{\uparrow}\to\R\cup\{-\infty\}$ such that $(V_{\uparrow},\rho_{\uparrow})$ and $(V_{\uparrow},\hat{\rho}_{\uparrow})$ are each orthogonalizable $\Lambda_{\uparrow}$-spaces, and likewise we have functions $\rho^{\downarrow},\hat{\rho}^{\downarrow}\co V^{\downarrow}\to\R\cup\{\infty\}$ such that $(V^{\downarrow},\rho^{\downarrow})$ and $(V^{\downarrow},\hat{\rho}^{\downarrow})$ are orthogonalizable $\Lambda^{\downarrow}$-spaces.  Moreover $\max|(\rho_{\uparrow}-\hat{\rho}_{\uparrow})\circ\phi_{\uparrow})|\leq t$ and $\max|(\rho^{\downarrow}-\hat{\rho}^{\downarrow})\circ\phi^{\downarrow}|\leq t$.

For each $s\in [0,1]$, put \[ \rho_{\uparrow}^{s}=(1-s)\rho_{\uparrow}+s\hat{\rho}_{\uparrow},\qquad \rho^{\downarrow}_{s}=(1-s)\rho^{\downarrow}+s\hat{\rho}^{\downarrow}.\] 
Then for all $s\in [0,1]$, $\phi_{\uparrow}\co H\to (V_{\uparrow},\rho_{\uparrow}^{s})$ and $\phi^{\downarrow}\co H\to (V^{\downarrow},\rho^{\downarrow}_{s})$ together provide a filtered matched pair which we denote by $\mathcal{P}_s$.  Moreover if $s,s'\in [0,1]$ then the identity maps provide a $(t|s-s'|)$-morphism between $\mathcal{P}_{s}$ and $\mathcal{P}_{s'}$.

It consequently suffices to prove the following ``local'' version of Theorem \ref{stab}:

\begin{lemma}\label{localstab}
Suppose that $\Gamma$ is discrete and that $\mathcal{P}$ is a filtered matched pair.  Then there is $\tau(\mathcal{P})>0$ such that, if $\hat{\mathcal{P}}$ is any other filtered matched pair such that there exists a $t$-morphism from $\mathcal{P}$ to $\hat{\mathcal{P}}$ with $t<\tau(\mathcal{P})$, then there is a strong $t$-matching between $\mathcal{P}$ and $\hat{\mathcal{P}}$.
\end{lemma}

Indeed, assuming Lemma \ref{localstab}, for all $s\in [0,1]$ there will be $\delta_s>0$ such that if $s'\in [0,1]$ obeys $|s-s'|<\delta_s$ then there is a strong $t|s-s'|$-matching between $\Sigma(\mathcal{P}_s)$ and $\Sigma(\mathcal{P}_{s'})$.  Using a finite cover of $[0,1]$ by intervals of form $(s-\delta_s,s+\delta_s)$, one can then find $0=s_0<s_1<\cdots<s_{N-1}<s_N=1$ such that there is a $t(s_{i+1}-s_i)$-matching between $\Sigma(\mathcal{P}_{s_i})$ and $\Sigma(\mathcal{P}_{s_{i+1}})$ for each $i\in\{0,\ldots,N-1\}$.  The composition of these matchings would then be the desired $t$-matching between $\Sigma(\mathcal{P}_0)$ and $\Sigma(\mathcal{P}_1)$.

Accordingly, to complete the proof of Theorem \ref{stab} we now prove Lemma \ref{localstab}.  Since $\Gamma$ is a discrete subgroup of $\R$, either $\Gamma=\{0\}$ or $\Gamma$ is infinite cyclic.  In the former case set $\lambda_0=\infty$, and in the latter case set $\lambda_0$ equal to the positive generator of $\Gamma$.  Let $\{([a_1],\ell_1),\ldots,([a_d],\ell_d)\}$ denote the basis spectrum of $\mathcal{P}$, and for the representatives $a_i$ of the $\Gamma$-cosets $[a_i]$ take the unique element of the coset that belongs to $\left(-\frac{\lambda_0}{2},\frac{\lambda_0}{2}\right]$. Choose $\tau(\mathcal{P})>0$ to be such that both $\tau(\mathcal{P})<\frac{\lambda_0}{2}$ and, for all $i\in \{1,\ldots,d\}$ and $j\neq i$, either $([a_j],\ell_j)=([a_i],\ell_i)$ or else the $\ell^{\infty}$-distance in $\R^2$ from $(a_i,a_i+\ell_i)$ to the $\Gamma$-orbit $\{(a_j+g,a_j+\ell_j+g)|g\in \Gamma\}$ is greater than $2\tau(\mathcal{P})$.


For $a,b\in \R$ let $H_{\uparrow}^{\leq a},H_{\uparrow}^{<a},H^{\downarrow}_{\geq b},H^{\downarrow}_{>b}$ be the $\kappa$-vector subspaces of $\frac{H}{tH}$ defined in (\ref{hupdown}).  If $a'<a$ and $b'>b$, and if $\max\{|a'-a|,|b'-b|\}<\lambda_0$, then a similar analysis to that in the proof of Proposition \ref{barinvt} shows that the $\kappa$-vector space \[ V_{[a',a]\times [b,b']}:=\frac{H_{\uparrow}^{\leq a}\cap H^{\downarrow}_{\geq b}}{(H_{\uparrow}^{< a'}\cap H^{\downarrow}_{\geq b})+(H_{\uparrow}^{\leq a}\cap H^{\downarrow}_{>b'})}\] has dimension equal to the number of $i$ (counting multiplicity) for which the set $O_i:=\{(a_i+g,a_i+\ell_i+g)|g\in \Gamma\}\subset \R^2$ contains an element of the rectangle $[a',a]\times [b,b']$.\footnote{Such an element is necessarily unique by the assumption that $\max\{|a'-a|,|b'-b|\}<\lambda_0$; without this assumption the correct statement would have been that $\dim_{\kappa}V_{[a',a]\times [b,b']}$ is the sum of the cardinalities of the sets $O_i\cap ([a',a]\times [b,b'])$.}  If $a'<x'\leq a<x$ and $b'>y'\geq b>y$ then the inclusions $H_{\uparrow}^{\leq a}\subset H_{\uparrow}^{\leq x}$, $H_{\uparrow}^{<a'}\subset H_{\uparrow}^{<x'}$, $H^{\downarrow}_{\geq b}\subset H^{\downarrow}_{\geq y}$, and $H^{\downarrow}_{>b'}\subset H^{\downarrow}_{>y'}$ induce a map $V_{[a',a]\times [b,b']}\to V_{[x',x]\times[y,y']}$, and the rank of this map equals the number of $i$ for which some element of $O_i$ lies in both rectangles $[a',a]\times[b,b']$ and $[x',x]\times [y,y']$  (\emph{i.e.}, lies in the intersection $[x',a]\times[b,y']$).  

In particular, the inclusion-induced map $V_{[a',a]\times [b,b']}\to V_{[x',x]\times[y,y']}$ is an isomorphism provided that every element of $\cup_iO_i$ that lies in either one of $[a',a]\times[b,b']$ or $[x',x]\times [y,y']$ in fact lies in both of them.  For example, if $([a_i],\ell_i)\in \Sigma(\mathcal{P})$ we can apply this with $x'=a=a_i$ and $b=y'=a_i+\ell_i$ to see that, if $t$ is smaller than the number $\tau(\mathcal{P})$ defined above, then the inclusion-induced map \[ V_{[a_i-2t,a_i]\times [a_i+\ell_i,a_i+\ell_i+2t]}\to V_{[a_i,a_i+2t]\times [a_{i}+\ell_i-2t,a_i+\ell_i]} \] is an isomorphism between $\kappa$-vector spaces of dimension equal to the multiplicity of $([a_i],\ell_i)$ in $\Sigma(\mathcal{P})$.  

Now assuming that there is a $t$-morphism from $\mathcal{P}$ to $\hat{\mathcal{P}}$ with $t<\tau(\mathcal{P})$, for closed intervals $I,J\subset \R$ let $\hat{V}_{I\times J}$ be the vector space analogous to $V_{I\times J}$ but constructed from $\hat{\mathcal{P}}$ instead of from $\mathcal{P}$, and similarly define $\hat{H}_{\uparrow}^{\leq a},\hat{H}_{\uparrow}^{<a},\hat{H}^{\downarrow}_{\geq b},\hat{H}^{\downarrow}_{>b}$.  Because $|\rho_{\uparrow}\circ\phi_{\uparrow}-\hat{\rho}_{\uparrow}\circ\hat{\psi}_{\uparrow}|\leq t$ and $|\rho_{\uparrow}\circ\phi_{\uparrow}-\hat{\rho}_{\uparrow}\circ\hat{\psi}_{\uparrow}|\leq t$, for all $a\in \R$ we have inclusions $H_{\uparrow}^{\leq a-t}\subset  
\hat{H}_{\uparrow}^{\leq a}\subset H_{\uparrow}^{\leq a+t}$, $H_{\uparrow}^{< a-t}\subset  
\hat{H}_{\uparrow}^{<a}\subset H_{\uparrow}^{<a+t}$, $H^{\downarrow}_{\geq b+t}\subset  
\hat{H}^{\downarrow}_{\geq b}\subset H^{\downarrow}_{\geq b-t}$, and $H^{\downarrow}_{> b+t}\subset  
\hat{H}^{\downarrow}_{>b}\subset H^{\downarrow}_{>b-t}$.  So for any rectangle $[a',a]\times[b,b']$ we have a composition of maps induced by these inclusions: \[ V_{[a'-t,a-t]\times[b+t,b'+t]}\to \hat{V}_{[a',a]\times[b,b']}\to V_{[a'+t,a+t]\times[b-t,b'-t]}.\]  

So if $([a_i],\ell_i)$ belongs to $\Sigma(\mathcal{P})$ with positive multiplicity $m_i$, the isomorphism  of $m_i$-dimensional vector spaces   $V_{[a_i-2t,a_i]\times [a_i+\ell_i,a_i+\ell_i+2t]}\to V_{[a_i,a_i+2t]\times [a_{i}+\ell_i-2t,a_i+\ell_i]}$ factors through the $\kappa$-vector space $\hat{V}_{[a_i-t,a_i+t]\times [a_i+\ell_i-t,a_i+\ell_i+t]}$, whence the latter vector space has dimension at least $m_i$. Temporarily denote $\dim_{\kappa}\hat{V}_{[a_i-t,a_i+t]\times [a_i+\ell_i-t,a_i+\ell_i+t]}$ as $\hat{d}_i$; let us now show that $\hat{d}_i=m_i$. Using again that $2t<\lambda_0$, the dimension $\hat{d}_i$
is equal to sum of the multiplicities of all elements $([\hat{a}_j],\hat{\ell}_j)$ of $\Sigma(\hat{\mathcal{P}})$ such that $a_j$ can be chosen within its $\Gamma$-coset to obey $(\hat{a}_j,\hat{a}_j+\hat{\ell}_j)\in [a_i-t,a_i+t]\times [a_i+\ell_i-t,a_i+\ell_i+t]$.  Because $t<\tau(\mathcal{P})$, the orbits of the rectangles $[a_i-t,a_i+t]\times [a_i+\ell_i-t,a_i+\ell_i+t]$ under the diagonal action of $\Gamma$ are pairwise disjoint as $i$ varies, in view of which $\sum_i\hat{d}_i$ is no larger than the sum of the multiplicities of all elements of $\Sigma(\hat{\mathcal{P}})$.  But since $\Sigma(\mathcal{P})$ and $\Sigma(\hat{\mathcal{P}})$ are each in one-to-one correspondence with a basis for the free rank-$d$ $\Gamma$-module $\frac{H}{tH}$, it follows that each of these multisets has the sum of the multiplicities of its elements equal to $d$.  So we have shown on the one hand that $\sum_{i}\hat{d}_i\leq \sum_i m_i=d$, and on the other hand that, for each $i$, $\hat{d}_i\geq m_i$, whence indeed $\hat{d}_i=m_i$ for all $i$.  Furthermore, \emph{every} one of the $d$ elements (counted with multiplicity) of $\Sigma(\hat{\mathcal{P}})$ is among those enumerated by the various $\hat{d}_i$, for otherwise it could not hold both that $\hat{d}_i=m_i$ and that  $\sum_i m_i=d$.

It follows that there is a bijection between $\Sigma(\hat{\mathcal{P}})$ and $\Sigma(\mathcal{P})$ that sends $([\hat{a}],\hat{\ell})\in \Sigma(\hat{\mathcal{P}})$ to (one of the $d_i$ copies of) the unique $([a_i],\ell_i)$ for which there is $g\in \Gamma$ such that $(\hat{a}+g,\hat{a}+\hat{\ell}+g)\in [a_i-t,a_i+t]\times [a_i+\ell_i-t,a_i+\ell_i+t]$.  This bijection is then a $t$-matching, completing the proof of Lemma \ref{localstab} and hence of Theorem \ref{stab}.
\end{proof}

\begin{remark}
Conversely, if there is a $t$-matching between $\Sigma(\mathcal{P})$ and $\Sigma(\hat{\mathcal{P}})$ then it is easy to construct a $t$-morphism between $\mathcal{P}$ and $\hat{\mathcal{P}}$, by taking $\alpha_{\uparrow}$ and $\alpha^{\downarrow}$  to intertwine the basis elements corresponding to the elements of $(\mathbb{R}/\Gamma)\times \R$ that correspond under the $t$-matching.  Thus the above stability theorem implies an isometry theorem (on the space of filtered matched pairs with appropriate morphism and matching pseudometrics) like the familiar one from sublevel persistence (as in \cite{CDGO}).
\end{remark}

\begin{remark}
As in \cite[Remark 8.5]{UZ}, a ``two-parameter'' version of the notion of a $t$-morphism could be defined by declaring, for any real numbers $s,t$ with $s+t\geq 0$, a $(s,t)$-morphism 
to be given by data as in Definition \ref{tmorphdfn} except with the last bullet point replaced by the conditions \[ \rho_{\uparrow}\circ\phi_{\uparrow}-s\leq \hat{\rho}_{\uparrow}\circ\psi_{\uparrow}\leq \rho_{\uparrow}\circ\phi_{\uparrow}+t,\qquad
\rho^{\downarrow}\circ\phi^{\downarrow}-s\leq \hat{\rho}^{\downarrow}\circ\psi^{\downarrow}\leq \rho^{\downarrow}\circ\phi^{\downarrow}\circ+t.\]  The existence of an $(s,t)$-morphism then implies that of a ``$(s,t)$-matching'' in the sense of a bijection between $\Sigma(\mathcal{P})$ and $\Sigma(\hat{\mathcal{P}})$ such that, for corresponding elements $([a],\ell)\in\Sigma(\mathcal{P})$ and $([\hat{a}],\hat{\ell})$ and suitable choices of $a,\hat{a}$ within their $\Gamma$-cosets, one has $a-s\leq \hat{a}\leq a+t$ and $a+\ell-s\leq \hat{a}+\hat{\ell}\leq a+\ell+t$.  Indeed, this can be inferred formally from what we have done by considering the filtered matched pair $\tilde{\mathcal{P}}$ given by setting $\tilde{\rho}_{\uparrow}=\hat{\rho}_{\uparrow}+\frac{s-t}{2}$ and $\tilde{\rho}^{\downarrow}=\hat{\rho}^{\downarrow}+\frac{s-t}{2}$.  There is then a $\frac{s+t}{2}$-morphism between $\mathcal{P}$ and $\tilde{\mathcal{P}}$ and hence (by Theorem \ref{stab}) a $\frac{s+t}{2}$-matching between $\Sigma(\mathcal{P})$ and $\Sigma(\tilde{\mathcal{P}})$, and then our claim follows by the obvious shift that relates $\Sigma(\tilde{\mathcal{P}})$ to $\Sigma(\hat{\mathcal{P}})$.

For example (set $s=0$), if $\Gamma=\{0\}$ then the basis spectra are monotone in the sense that if $\hat{\rho}_{\uparrow}\circ\phi_{\uparrow}\geq\rho_{\uparrow}\circ\psi_{\uparrow}$ and $\hat{\rho}^{\downarrow}\circ\psi^{\downarrow}\geq \rho^{\downarrow}\circ\psi^{\downarrow}$ then there is a bijection from $\Sigma(\mathcal{P})$ to $\Sigma(\hat{\mathcal{P}})$ that sends each $(a,\ell)$ to $(\hat{a},\hat{\ell})$ with both $\hat{a}\geq a$ and $\hat{a}+\hat{\ell}\geq a+\ell$. 
\end{remark}

\begin{remark}\label{twostab}
Another modest generalization of the notion of a $t$-morphism would involve allowing $(V_{\uparrow},\rho_{\uparrow})$ and $(V^{\downarrow},\rho^{\downarrow})$ to vary independently of each other, so we would choose two nonnegative numbers $t_{\uparrow},t^{\downarrow}$ and require  $|\rho_{\uparrow}(\phi_{\uparrow}(x))-\hat{\rho}_{\uparrow}(\psi_{\uparrow}(x))|\leq t_{\uparrow}$ for all $x\in V_{\uparrow}\setminus\{0\}$  and  $|\rho^{\downarrow}(\phi^{\downarrow}(x))-\hat{\rho}^{\downarrow}(\psi^{\downarrow}(x))|\leq t^{\downarrow}$ for all $x\in V^{\downarrow}\setminus\{0\}$.  Of course one could then apply Theorem \ref{stab} with $t=\max\{t_{\uparrow},t^{\downarrow}\}$, but if $t_{\uparrow}\neq t^{\downarrow}$ a slightly stronger statement holds.  Namely, the $t$-matching provided by Theorem \ref{stab} can be taken to have the property that, for matched elements $([a_i],\ell_i),([\hat{a}_i],\hat{\ell}_i)$ and suitable representatives $a_i,\hat{a}_i$ of the $\Gamma$-cosets, we have $|\hat{a}_i-a_i|\leq t_{\uparrow}$ and $|(\hat{a}_i+\hat{\ell}_i)-(a_i+\ell_i)|\leq t^{\downarrow}$.  If $t_{\uparrow},t^{\downarrow}$ are both nonzero this can be deduced formally from Theorem \ref{stab} by multiplying $\rho^{\downarrow},\hat{\rho}^{\downarrow}$ by $\frac{t_{\uparrow}}{t^{\downarrow}}$, applying  Theorem \ref{stab} with $t=t_{\uparrow}$, and then rescaling again to return to the original $\rho^{\downarrow},\hat{\rho}^{\downarrow}$.  If instead $t^{\downarrow}=0$ (say), one can observe that, given the discreteness of $\Gamma$, the existence of a matching of the desired form for all $0<t^{\downarrow}<t_{\uparrow}$ implies the same for $t^{\downarrow}=0$, since if $|(\hat{a}_i+\hat{\ell}_i)-(a_i+\ell_i)|$ is sufficiently small then it must be zero. 

For example, one might take $V_{\uparrow}$ to arise from the sublevel persistence of some function which is allowed to vary, and take $V^{\downarrow}$ to arise from the superlevel persistence of a different function which is held fixed.  The basis spectra $\Sigma(\mathcal{P}_s)$ of the resulting filtered matched pairs $\mathcal{P}_s$ would then vary continuously with the first function, with the values $a+\ell$ for $([a],\ell)\in \Sigma(\mathcal{P}_s)$ remaining fixed as $s$ varies.
\end{remark}

\subsection{Duality}\label{fmpdual}

Given a filtered matched pair $\mathcal{P}$ consisting of  data \begin{equation}\label{pdisp}\xymatrix{ & (V^{\downarrow},\rho^{\downarrow}) \\ H\ar[ru]^{\phi^{\downarrow}}\ar[rd]_{\phi_{\uparrow}} & \\ & (V_{\uparrow},\rho_{\uparrow}) }\end{equation} satisfying the conditions in Definition \ref{fmpdfn}, we shall now define the \textbf{dual matched pair} ${}^{\vee}\!\mathcal{P}$ with corresponding data \begin{equation}\label{dualm}  \xymatrix{ & ({}^{\vee}\!(V_{\uparrow}),{}^{\vee}\!\rho_{\uparrow}) \\ {}^{\vee}\!H\ar[ru]^{\delta^{\downarrow}(\phi_{\uparrow})}\ar[rd]_{\delta_{\uparrow}(\phi^{\downarrow})}& \\ & ({}^{\vee}\!(V^{\downarrow}),{}^{\vee}\!\rho^{\downarrow})}\end{equation}  (The arrows are not misplaced---recall that if $W$ is a $\Lambda_{\uparrow}$-vector space then ${}^{\vee}\!W=\overline{\mathrm{Hom}_{\Lambda_{\uparrow}}(W,\Lambda_{\uparrow})}$ is a $\Lambda^{\downarrow}$-vector space, and vice versa.)

The construction proceeds in a fairly straightforward way.  First, we need to define the functions ${}^{\vee}\!\rho_{\uparrow}\co {}^{\vee}\!(V_{\uparrow})\to \R\cup\{\infty\}$ and ${}^{\vee}\!\rho^{\downarrow}\co {}^{\vee}\!(V^{\downarrow})\to \R\cup\{-\infty\}$ in such a way that $({}^{\vee}\!(V_{\uparrow}),{}^{\vee}\!\rho_{\uparrow})$ is an orthogonalizable $\Lambda^{\downarrow}$-space and   $({}^{\vee}\!(V^{\downarrow}),{}^{\vee}\!\rho_{\downarrow})$ is an orthogonalizable $\Lambda_{\uparrow}$-space.  We essentially follow \cite[Section 2.4]{UZ}, in which, for a normed $\Lambda_{\uparrow}$-space $(V,\rho)$, a function $\rho^*\co \mathrm{Hom}_{\Lambda_{\uparrow}}(V,\Lambda_{\uparrow})\to \R\cup\{-\infty\}$ was defined by $\rho^*(\zeta)=\sup_{0\neq x\in V}(-\rho(x)-\nu_{\uparrow}(\zeta(x)))$. As verified in \cite{UZ}, $(\mathrm{Hom}_{\Lambda_{\uparrow}}(V,\Lambda_{\uparrow}),\rho^*)$ is then a normed $\Lambda_{\uparrow}$-space, and hence by Remark \ref{switch} $({}^{\vee}V,-\rho^*)$ is a normed $\Lambda^{\downarrow}$-space.

Thus, for a normed $\Lambda_{\uparrow}$-space $(V_{\uparrow},\rho_{\uparrow})$ we obtain a normed $\Lambda^{\downarrow}$-space $({}^{\vee}\!(V_{\uparrow}),{}^{\vee}\!\rho_{\uparrow})$ by setting, for each $\zeta\in {}^{\vee}\!(V_{\uparrow})=\overline{\mathrm{Hom}_{\Lambda_{\uparrow}}(V_{\uparrow},\Lambda_{\uparrow})}$, \[ {}^{\vee}\!\rho_{\uparrow}(\zeta)=\inf_{0\neq x\in V_{\uparrow}}(\nu_{\uparrow}(\zeta(x))+\rho_{\uparrow}(x)).\]  Symmetrically, for a normed $\Lambda^{\downarrow}$-space $(V^{\downarrow},\rho^{\downarrow})$ we obtain a normed $\Lambda_{\uparrow}$-space  $({}^{\vee}(V^{\downarrow}),{}^{\vee}\!\rho^{\downarrow})$ by setting, for each $\zeta\in {}^{\vee}\!(V^{\downarrow})$, \[ {}^{\vee}\!\rho^{\downarrow}(\zeta)=\sup_{0\neq x\in V^{\downarrow}}(\nu^{\downarrow}(\zeta(x))+\rho^{\downarrow}(x)).\]  This suffices to define the notation ${}^{\vee}\!\rho_{\uparrow}$, ${}^{\vee}\!\rho^{\downarrow}$ that appears in (\ref{dualm}).

\begin{remark}\label{dualbasisrem}
After adjusting signs for the effect of conjugation, \cite[Proposition 2.20]{UZ} shows that if $\{v_1,\ldots,v_d\}$ is an orthogonal basis for $V_{\uparrow}$ (resp. for $V^{\downarrow}$), then, with respect to ${}^{\vee}\!\rho_{\uparrow}$ or ${}^{\vee}\!\rho^{\downarrow}$, the dual basis $\{v_{1}^{*},\ldots,v_{d}^{*}\}$ is likewise an orthogonal basis for ${}^{\vee}\!(V_{\uparrow})$ (resp. for ${}^{\vee}\!(V^{\downarrow})$), and moreover that  ${}^{\vee}\!\rho_{\uparrow}(v_{i}^{*})=\rho_{\uparrow}(v_i)$ (resp. ${}^{\vee}\!\rho^{\downarrow}(v_{i}^{*})=\rho^{\downarrow}(v_i)$) for each $i$.
\end{remark}

We next define the maps $\delta^{\downarrow}(\phi_{\uparrow})$, $\delta_{\uparrow}(\phi^{\downarrow})$ that appear in (\ref{dualm}).  In this direction, it is helpful to note the following:

\begin{prop}\label{pullout} If $H$ is a finitely-generated $\Lambda$-module then the $\Lambda^{\downarrow}$-linear map $\beta\co \Lambda^{\downarrow}\otimes_{\Lambda}{}^{\vee}\!H \to {}^{\vee}(\Lambda_{\uparrow}\otimes_{\Lambda}H)$, defined by extending linearly from \[ (\beta(\mu\otimes\phi))(\lambda\otimes x)=\bar{\mu}\lambda\phi(x) \] whenever $\mu\in \Lambda^{\downarrow},\phi\in {}^{\vee}\!H,\lambda\in \Lambda_{\uparrow},x\in H$, is an isomorphism.  Likewise, the same formula defines an isomorphism $\beta$ of $\Lambda_{\uparrow}$-vector spaces $\Lambda_{\uparrow}\otimes_{\Lambda}{}^{\vee}\!H\to {}^{\vee}(\Lambda^{\downarrow}\otimes_{\Lambda}H)$.
\end{prop}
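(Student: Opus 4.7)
The plan is to reduce the proposition to the case $H=\Lambda$ via a standard naturality/five-lemma argument, using finite generation of $H$ together with flatness of $\Lambda_{\uparrow}$ and $\Lambda^{\downarrow}$ over $\Lambda$.

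First I would verify that the formula $(\beta(\mu\otimes\phi))(\lambda\otimes x)=\bar{\mu}\lambda\phi(x)$ really defines a $\Lambda^{\downarrow}$-linear map on $\Lambda^{\downarrow}\otimes_{\Lambda}{}^{\vee}\!H$ landing in ${}^{\vee}(\Lambda_{\uparrow}\otimes_{\Lambda}H)$.  This is a routine check: respecting the tensor-product relations on both sides uses commutativity of $\Lambda$ (so that $\overline{\mu a}=\bar{a}\bar{\mu}=\bar{\mu}\bar{a}$ for $a\in\Lambda$) together with the right-module-to-left-module rule for conjugation.  The same routine check shows that the assignment $H\mapsto \beta_H$ is natural in $H$: for any $\Lambda$-module map $f\co H_1\to H_2$, both compositions in the naturality square send $\mu\otimes\phi$ to the functional $\lambda\otimes x\mapsto\bar{\mu}\lambda\phi(f(x))$.

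Next I would settle the base case $H=\Lambda$.  Here $\Lambda^{\downarrow}\otimes_{\Lambda}\overline{\Lambda}\cong \Lambda^{\downarrow}$ via $\mu\otimes x\mapsto \mu\bar{x}$, and on the other side ${}^{\vee}\!(\Lambda_{\uparrow}\otimes_{\Lambda}\Lambda)\cong \overline{\mathrm{Hom}_{\Lambda_{\uparrow}}(\Lambda_{\uparrow},\Lambda_{\uparrow})}\cong \overline{\Lambda_{\uparrow}}\cong\Lambda^{\downarrow}$, where the last step is the conjugation isomorphism $y\mapsto \bar{y}$.  A direct computation shows that, under these identifications, $\beta_{\Lambda}$ becomes the identity of $\Lambda^{\downarrow}$.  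Taking finite direct sums immediately gives that $\beta_{\Lambda^n}$ is an isomorphism for every $n\geq 0$.

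For general $H$, finite generation over $\Lambda$ (together with the fact that $\Lambda\cong\kappa[t_1,t_1^{-1},\ldots,t_d,t_d^{-1}]$ is Noetherian) permits a finite presentation $\Lambda^m\xrightarrow{A}\Lambda^n\to H\to 0$.  Applying $\Lambda^{\downarrow}\otimes_{\Lambda}{}^{\vee}\!(-)$ and ${}^{\vee}(\Lambda_{\uparrow}\otimes_{\Lambda}-)$ each yields a four-term exact sequence starting with $0$: the first because conjugation is exact, $\mathrm{Hom}_{\Lambda}(-,\Lambda)$ is left-exact, and $\Lambda^{\downarrow}$ is flat over $\Lambda$; the second because $\Lambda_{\uparrow}$ is flat over $\Lambda$ and $\mathrm{Hom}_{\Lambda_{\uparrow}}(-,\Lambda_{\uparrow})$ is left-exact.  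Naturality of $\beta$ produces a morphism between these two left-exact sequences in which the vertical arrows at $\Lambda^n$ and $\Lambda^m$ are isomorphisms by the previous paragraph.  The five lemma (really a short kernel chase) then forces $\beta_H$ to be an isomorphism.  The second claim, with the roles of $\Lambda_{\uparrow}$ and $\Lambda^{\downarrow}$ interchanged, is proved by the identical argument.

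The only real obstacle is accurate bookkeeping of the conjugation operation and the distinction between the left- and right-module structures introduced in Section \ref{conjsect}; once this is unwound, every step is standard homological algebra.
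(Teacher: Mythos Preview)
Your argument is correct and is a genuinely different route from the paper's proof.  The paper first uses Proposition~\ref{flipconj} to strip off the conjugation, reducing to a statement about the unconjugated map $\gamma\co\mathrm{Hom}_{\Lambda}(H,\Lambda)\otimes_{\Lambda}\Lambda_{\uparrow}\to\mathrm{Hom}_{\Lambda_{\uparrow}}(\Lambda_{\uparrow}\otimes_{\Lambda}H,\Lambda_{\uparrow})$, and then shows $\gamma$ is an isomorphism by factoring through the fraction field $\mathcal{Q}(\Lambda)$: one step invokes \cite[Lemma~4.87]{Rot} (which requires $H$ to be finitely presented), and the other step is just the canonical identification of duals under a field extension applied to the $\mathcal{Q}(\Lambda)$-vector space $\mathcal{Q}(\Lambda)\otimes_{\Lambda}H$.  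Your naturality/base-case/five-lemma argument is more self-contained and avoids both the fraction-field detour and the appeal to Rotman; it is also the textbook way one proves ``Hom commutes with flat base change for finitely presented modules'' from scratch.  The paper's route, by contrast, aligns with its recurring strategy of passing through $\mathcal{Q}(\Lambda)$ and highlights that the result is really about field extensions of the localized module.  Both approaches rely on the same essential inputs---Noetherianity of $\Lambda$ (so $H$ is finitely presented) and flatness of $\Lambda_{\uparrow},\Lambda^{\downarrow}$ over $\Lambda$---so neither is more general than the other.
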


\begin{remark}\label{betabasis}
If $H$ is a free module, say with basis $\{e_1,\ldots,e_r\}$, then we obtain a basis $\{1\otimes e_1,\ldots,1\otimes e_r\}$ for $\Lambda_{\uparrow}\otimes_{\Lambda}H$ and then dual bases $\{e_{1}^{\vee},\ldots,e_{r}^{\vee}\}$ for ${}^{\vee}\!H$ and $\{(1\otimes e_1)^{\vee},\ldots,(1\otimes e_r)^{\vee}\}$ for ${}^{\vee}\!(\Lambda_{\uparrow}\otimes_{\Lambda} H)$. Then $\beta\co \Lambda^{\downarrow}\otimes_{\Lambda}{}^{\vee}\!H \to {}^{\vee}(\Lambda_{\uparrow}\otimes_{\Lambda}H)$ is the isomorphism that sends $1\otimes (e_{i}^{\vee})\in \Lambda^{\downarrow}\otimes_{\Lambda}{}^{\vee}\!H$ to $(1\otimes e_i)^{\vee}\in  {}^{\vee}(\Lambda_{\uparrow}\otimes_{\Lambda}H)$.  However the argument that we give below applies regardless of whether $H$ is free.
\end{remark}

\begin{proof}
Applying Proposition \ref{flipconj} with $M$ equal to $\mathrm{Hom}_{\Lambda}(H,\Lambda)$ (so by definition ${}^{\vee}H=\bar{M}$), our map is the composition of the inverse of the map $\alpha$ from that proposition with the map $\gamma\co\overline{\mathrm{Hom}_{\Lambda}(H,\Lambda)\otimes_{\Lambda}\Lambda_{\uparrow}}\to  \overline{\mathrm{Hom}_{\Lambda_{\uparrow}}(\Lambda_{\uparrow}\otimes_{\Lambda}H,\Lambda_{\uparrow})}$ that sends a simple tensor $\phi\otimes \mu$ to the map defined on simple tensors by $\lambda\otimes x\mapsto \mu\lambda\phi(x)$.  Let us remove the conjugation symbols and regard this map $\gamma$ as a $\Lambda_{\uparrow}$-linear map $\mathrm{Hom}_{\Lambda}(H,\Lambda)\otimes_{\Lambda}\Lambda_{\uparrow}\to \mathrm{Hom}_{\Lambda_{\uparrow}}(\Lambda_{\uparrow}\otimes_{\Lambda}H,\Lambda_{\uparrow})$.  (Recall that the conjugation functor does not change the underlying set-theoretic map.)   

Passing without comment through various identifications $R\otimes_RN\cong N$ for $R$-modules $N$, and recalling that $\mathcal{Q}(\Lambda)$ is the fraction field of $\Lambda$, the map $\gamma$ factors as \begin{align*} \left(\mathrm{Hom}_{\Lambda}(H,\Lambda)\otimes_{\Lambda}\mathcal{Q}(\Lambda)\right)\otimes_{\mathcal{Q}(\Lambda)}\Lambda_{\uparrow}&\to \mathrm{Hom}_{\mathcal{Q}(\Lambda)}(\mathcal{Q}(\Lambda)\otimes_{\Lambda}H,\mathcal{Q}(\Lambda))\otimes_{\mathcal{Q}(\Lambda)}\Lambda_{\uparrow} \\ & \to \mathrm{Hom}_{\Lambda_{\uparrow}}(\Lambda_{\uparrow}\otimes_{\Lambda}H,\Lambda_{\uparrow}) \end{align*} where both maps are defined in a straightforward way that is very similar to the definition of $\gamma$.  The first map above is an isomorphism by \cite[Lemma 4.87]{Rot}\footnote{This lemma applies because our module $H$, being finitely generated over the Noetherian ring $\Lambda$, is finitely presented.}, while the second map is the canonical isomorphism between the coefficient extension to $\Lambda_{\uparrow}$ of the dual of the  $\mathcal{Q}(\Lambda)$-vector space $W:=\mathcal{Q}(\Lambda)\otimes_{\Lambda}H$ with the dual of the coefficient extension $\Lambda_{\uparrow}\otimes_{\mathcal{Q}(\Lambda)}W$.  Thus $\gamma$ is an isomorphism, and hence $\beta=\gamma\circ\alpha^{-1}$ is also an isomorphism.  The isomorphism $\Lambda_{\uparrow}\otimes_{\Lambda}{}^{\vee}\!H\cong {}^{\vee}(\Lambda^{\downarrow}\otimes_{\Lambda}H)$ holds by the same argument.
\end{proof}  

Given the filtered matched pair $\mathcal{P}$ with data as in (\ref{pdisp}), so that in particular $1_{\Lambda_{\uparrow}}\otimes \phi_{\uparrow}\co \Lambda_{\uparrow}\otimes_{\Lambda}H\to V_{\uparrow}$ and $1_{\Lambda^{\downarrow}}\otimes \phi^{\downarrow}\co \Lambda^{\downarrow}\otimes_{\Lambda}H\to V^{\downarrow}$ are isomorphisms, we \textbf{define the maps $\delta_{\uparrow}(\phi^{\downarrow}),\delta^{\downarrow}(\phi_{\uparrow})$ in (\ref{dualm}) to be the respective compositions} \[ \xymatrix{ {}^{\vee}\!H\ar[r]& \Lambda_{\uparrow}\otimes_{\Lambda}{}^{\vee}\!H\ar[r]^{\beta}& {}^{\vee}\!(\Lambda^{\downarrow}\otimes_{\Lambda}H)\ar[rr]^{{}^{\vee}\!(1_{\Lambda^{\downarrow}}\otimes \phi^{\downarrow})^{-1}} & & {}^{\vee}\!(V^{\downarrow})} \] and \begin{equation}\label{dualpss}  \xymatrix{ {}^{\vee}\!H\ar[r]& \Lambda^{\downarrow}\otimes_{\Lambda}{}^{\vee}\!H\ar[r]^{\beta}& {}^{\vee}\!(\Lambda_{\uparrow}\otimes_{\Lambda}H)\ar[rr]^{{}^{\vee}\!(1_{\Lambda_{\uparrow}}\otimes \phi_{\uparrow})^{-1}}  & & {}^{\vee}\!(V_{\uparrow})} \end{equation} where in each case the first map is coefficient extension and the map $\beta$ is the isomorphism from Proposition \ref{pullout}.  Replacing $\delta_{\uparrow}(\phi^{\downarrow})$ by $1_{\Lambda_{\uparrow}}\otimes \delta_{\uparrow}(\phi^{\downarrow})$ and $\delta^{\downarrow}(\phi_{\uparrow})$ by $1_{\Lambda^{\downarrow}}\otimes \delta^{\downarrow}(\phi_{\uparrow})$ has the effect of converting the first maps in the above composition to the respective identities, and so since in each case the second and third maps are isomorphisms it follows that $1_{\Lambda_{\uparrow}}\otimes \delta_{\uparrow}(\phi^{\downarrow})$ and $1_{\Lambda^{\downarrow}}\otimes \delta^{\downarrow}(\phi_{\uparrow})$ are isomorphisms.  This suffices to show that, with our definitions, the dual ${}^{\vee}\!\mathcal{P}$ from (\ref{dualm}) is a filtered matched pair.

Here is, perhaps, a more transparent characterization of the maps $\delta_{\uparrow}(\phi^{\downarrow})$ and $\delta^{\downarrow}(\phi^{\uparrow})$.

\begin{prop}\label{deltachar}
For each $\zeta\in {}^{\vee}\!H$, the element $\delta_{\uparrow}(\phi^{\downarrow})(\zeta)\in {}^{\vee}\!(V^{\downarrow})$ is uniquely characterized by the fact that \begin{equation}\label{deltachareqn} \mbox{for all $h\in H$,}\quad \left(\delta_{\uparrow}(\phi^{\downarrow})(\zeta)\right)(\phi^{\downarrow}h)=\zeta(h).\end{equation}  Similarly, $\delta^{\downarrow}(\phi_{\uparrow})(\zeta)\in {}^{\vee}\!(V_{\uparrow})$ is uniquely characterized by the fact that \[ \mbox{for all $h\in H$,}\quad \left(\delta^{\downarrow}(\phi_{\uparrow})(\zeta)\right)(\phi_{\uparrow}h)=\zeta(h).\]
\end{prop}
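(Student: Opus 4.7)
The plan is to prove the characterization for $\delta_{\uparrow}(\phi^{\downarrow})$, after which the parallel statement for $\delta^{\downarrow}(\phi_{\uparrow})$ follows by a symmetric argument (swapping the roles of $\uparrow$ and $\downarrow$). I will split the argument into uniqueness and existence.

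For uniqueness, the key observation is that since $1_{\Lambda^{\downarrow}}\otimes \phi^{\downarrow}\co \Lambda^{\downarrow}\otimes_{\Lambda}H\to V^{\downarrow}$ is surjective (being an isomorphism by Definition \ref{fmpdfn}), the image $\phi^{\downarrow}(H)$ spans $V^{\downarrow}$ as a $\Lambda^{\downarrow}$-vector space. Any element of ${}^{\vee}\!(V^{\downarrow})=\overline{\mathrm{Hom}_{\Lambda^{\downarrow}}(V^{\downarrow},\Lambda^{\downarrow})}$ is, as a set-theoretic function $V^{\downarrow}\to \Lambda^{\downarrow}$, still $\Lambda^{\downarrow}$-linear (since conjugation does not alter the underlying additive map), and hence is determined by its values on any $\Lambda^{\downarrow}$-spanning set, in particular by its values on $\phi^{\downarrow}(H)$. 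Therefore at most one $\eta\in {}^{\vee}\!(V^{\downarrow})$ can satisfy (\ref{deltachareqn}).

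For existence, I will unravel the defining composition. Given $\zeta\in {}^{\vee}\!H$, coefficient extension sends $\zeta$ to $1\otimes \zeta\in \Lambda_{\uparrow}\otimes_{\Lambda}{}^{\vee}\!H$; applying the isomorphism $\beta$ of Proposition \ref{pullout} (in the version for $\Lambda_{\uparrow}\otimes_{\Lambda}{}^{\vee}\!H\to {}^{\vee}(\Lambda^{\downarrow}\otimes_{\Lambda}H)$, where $\bar{1}=1$) produces the element of ${}^{\vee}(\Lambda^{\downarrow}\otimes_{\Lambda}H)$ that acts by $\mu\otimes h\mapsto \mu \zeta(h)$. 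Then $\eta:=\delta_{\uparrow}(\phi^{\downarrow})(\zeta)\in {}^{\vee}\!(V^{\downarrow})$ is by definition the unique preimage under ${}^{\vee}(1_{\Lambda^{\downarrow}}\otimes \phi^{\downarrow})$, so it satisfies $\eta\circ(1_{\Lambda^{\downarrow}}\otimes \phi^{\downarrow})=\beta(1\otimes \zeta)$. Evaluating both sides at the simple tensor $1\otimes h$ (noting that $(1_{\Lambda^{\downarrow}}\otimes \phi^{\downarrow})(1\otimes h)=\phi^{\downarrow}(h)$) yields exactly $\eta(\phi^{\downarrow}(h))=\zeta(h)$, which is (\ref{deltachareqn}).

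The second claim, concerning $\delta^{\downarrow}(\phi_{\uparrow})(\zeta)\in {}^{\vee}\!(V_{\uparrow})$, is proved identically: one uses that $\phi_{\uparrow}(H)$ $\Lambda_{\uparrow}$-spans $V_{\uparrow}$ for uniqueness, and unwinds the composition (\ref{dualpss}) with the other version of $\beta$ for existence. There is no substantive obstacle to this proof; the only thing to be careful about is bookkeeping of conjugations and which scalar ring is acting on which side, so I would include the explicit formula for $\beta$ in the displayed computation to make this bookkeeping transparent to the reader.
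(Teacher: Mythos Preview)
Your proof is correct and follows essentially the same approach as the paper: uniqueness from the fact that $\phi^{\downarrow}(H)$ spans $V^{\downarrow}$ over $\Lambda^{\downarrow}$, and existence by unwinding the defining composition and evaluating $\beta(1\otimes\zeta)$ at $1\otimes h$. The paper compresses the existence check into a single displayed chain of equalities, but the content is identical to what you wrote.
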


\begin{proof} The proofs of the two sentences are identical so we just prove the first.  There can be at most one $\eta\in {}^{\vee}\!(V^{\downarrow})$ obeying $\eta(\phi^{\downarrow}h)=\zeta(h)$ for all $h\in H$ because the image of $H$ under $\phi^{\downarrow}$ spans $V^{\downarrow}$ over $\Lambda^{\downarrow}$, so we just need to check that (\ref{deltachareqn}) holds.  By definition, \begin{align*} 
\left(\delta_{\uparrow}(\phi^{\downarrow})(\zeta)\right)(\phi^{\downarrow}h)&=\left({}^{\vee}(1_{\Lambda^{\downarrow}}\otimes \phi^{\downarrow})^{-1}\beta(1\otimes \zeta)\right)(\phi^{\downarrow}h)
\\ &= \left(\beta(1\otimes\zeta)\right)(1\otimes h)=\zeta(h),\end{align*} as desired.
\end{proof}

When $\mathcal{P}$ has a doubly-orthogonal basis, we readily obtain a similar such basis for ${}^{\vee}\!\mathcal{P}$, as follows.

\begin{prop}\label{bardual}
If $\mathcal{P}$ is a filtered matched pair having a doubly-orthogonal basis $\{e_1,\ldots,e_d\}$, then the dual filtered matched pair has a doubly-orthogonal basis $\{e_{1}^{*},\ldots,e_{d}^{*}\}$ such that, with notation as in (\ref{pdisp}) and (\ref{dualm})) ${}^{\vee}\!\rho_{\uparrow}(\delta^{\downarrow}(\phi_{\uparrow})e_{i}^{*})=\rho_{\uparrow}(\phi_{\uparrow}e_i)$ and  ${}^{\vee}\!\rho^{\downarrow}(\delta_{\uparrow}(\phi^{\downarrow})e_{i}^{*})=\rho^{\downarrow}(\phi^{\downarrow}e_i)$ for all $i$.  Hence the basis spectra of ${}^{\vee}\!\mathcal{P}$ and $\mathcal{P}$ are related by \[ \Sigma({}^{\vee}\!\mathcal{P})=\{([a_i+\ell_i],-\ell_i)|([a_i],\ell_i)\in \Sigma(\mathcal{P})\}.\]
\end{prop}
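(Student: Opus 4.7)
The plan is to build the claimed doubly-orthogonal basis of ${}^{\vee}\!\mathcal{P}$ as the $\Lambda$-dual basis to $\{e_1,\ldots,e_d\}$, then identify its images under $\delta^{\downarrow}(\phi_{\uparrow})$ and $\delta_{\uparrow}(\phi^{\downarrow})$ with the dual bases of the orthogonal bases $\{\phi_{\uparrow}e_i\}$ and $\{\phi^{\downarrow}e_i\}$, and finally invoke Remark \ref{dualbasisrem} to transfer orthogonality and filtration values.

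First I would record that, by condition (i) of Definition \ref{dodfn}, the quotient map identifies $H/tH$ with the free $\Lambda$-module on $\{[e_1],\ldots,[e_d]\}$. Since $\mathrm{Hom}_{\Lambda}(\cdot,\Lambda)$ kills torsion, this gives a canonical identification ${}^{\vee}\!H=\overline{\mathrm{Hom}_{\Lambda}(H/tH,\Lambda)}$, and in particular ${}^{\vee}\!H$ is a free (hence torsion-free) $\Lambda$-module with a dual basis $\{e_1^*,\ldots,e_d^*\}$ characterized by $e_i^*(e_j)=\delta_{ij}$ (and $e_i^*|_{tH}=0$).

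Next, by Proposition \ref{deltachar}, the element $\delta^{\downarrow}(\phi_{\uparrow})(e_i^*)\in {}^{\vee}\!(V_{\uparrow})$ is the unique $\Lambda_{\uparrow}$-linear functional on $V_{\uparrow}$ satisfying $\delta^{\downarrow}(\phi_{\uparrow})(e_i^*)(\phi_{\uparrow}h)=e_i^*(h)$ for all $h\in H$; evaluating on $h=e_j$ shows this sends $\phi_{\uparrow}e_j$ to $\delta_{ij}$, so it is precisely the functional dual to the basis $\{\phi_{\uparrow}e_1,\ldots,\phi_{\uparrow}e_d\}$ of $V_{\uparrow}$. Symmetrically $\delta_{\uparrow}(\phi^{\downarrow})(e_i^*)$ is dual to the basis $\{\phi^{\downarrow}e_1,\ldots,\phi^{\downarrow}e_d\}$ of $V^{\downarrow}$. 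Applying Remark \ref{dualbasisrem} to each of the orthogonal bases $\{\phi_{\uparrow}e_i\}$ for $V_{\uparrow}$ and $\{\phi^{\downarrow}e_i\}$ for $V^{\downarrow}$ (which exist by assumption that $\{e_i\}$ is doubly-orthogonal), one concludes both that $\{\delta^{\downarrow}(\phi_{\uparrow})(e_i^*)\}$ is ${}^{\vee}\!\rho_{\uparrow}$-orthogonal in ${}^{\vee}\!(V_{\uparrow})$ with ${}^{\vee}\!\rho_{\uparrow}(\delta^{\downarrow}(\phi_{\uparrow})e_i^*)=\rho_{\uparrow}(\phi_{\uparrow}e_i)$, and that $\{\delta_{\uparrow}(\phi^{\downarrow})(e_i^*)\}$ is ${}^{\vee}\!\rho^{\downarrow}$-orthogonal in ${}^{\vee}\!(V^{\downarrow})$ with ${}^{\vee}\!\rho^{\downarrow}(\delta_{\uparrow}(\phi^{\downarrow})e_i^*)=\rho^{\downarrow}(\phi^{\downarrow}e_i)$. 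Combined with the first paragraph this verifies the three conditions of Definition \ref{dodfn} for $\{e_1^*,\ldots,e_d^*\}$ against the dual matched pair, so it is doubly-orthogonal.

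Finally, for the basis spectrum computation, note that in the dual matched pair (\ref{dualm}) the role of the ascending normed space is played by $({}^{\vee}\!(V^{\downarrow}),{}^{\vee}\!\rho^{\downarrow})$ and the role of the descending one by $({}^{\vee}\!(V_{\uparrow}),{}^{\vee}\!\rho_{\uparrow})$. Applying Definition \ref{hbsdef} to the basis $\{e_i^*\}$ and substituting the filtration values just computed gives that $\Sigma({}^{\vee}\!\mathcal{P})$ consists of the points $\left([\rho^{\downarrow}(\phi^{\downarrow}e_i)],\,\rho_{\uparrow}(\phi_{\uparrow}e_i)-\rho^{\downarrow}(\phi^{\downarrow}e_i)\right)=([a_i+\ell_i],-\ell_i)$, as claimed. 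No step is genuinely delicate here; the only point requiring care is matching up the ``up/down'' conventions in ${}^{\vee}\!\mathcal{P}$ (where $\uparrow$ and $\downarrow$ swap) so that the signs in the spectrum come out correctly.
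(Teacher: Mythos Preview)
Your proof is correct and follows essentially the same route as the paper: you construct $\{e_i^*\}$ as the dual basis of the free module ${}^{\vee}\!H$ (using that $\mathrm{Hom}_\Lambda$ kills torsion), identify $\delta^{\downarrow}(\phi_{\uparrow})e_i^*$ and $\delta_{\uparrow}(\phi^{\downarrow})e_i^*$ with the dual bases of $\{\phi_{\uparrow}e_i\}$ and $\{\phi^{\downarrow}e_i\}$ via Proposition~\ref{deltachar}, and then invoke Remark~\ref{dualbasisrem} for orthogonality and filtration values. Your final remark about keeping track of the swap of $\uparrow$ and $\downarrow$ in the dual pair is exactly the bookkeeping needed for the spectrum formula.
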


\begin{proof}
The existence of the doubly-orthogonal basis $\{e_1,\ldots,e_d\}$ for $\mathcal{P}$ implies in particular that $\frac{H}{tH}$ is free with basis consisting of the cosets $[e_i]$ of $e_i$.  Since, $\Lambda$ being an integral domain, we have $\mathrm{Hom}_{\Lambda}(tH,\Lambda)=\{0\}$, by dualizing the canonical exact sequence $tH\to H\to \frac{H}{tH}\to 0$ we see that the pullback map ${}^{\vee}\!\left(\frac{H}{tH}\right)\to {}^{\vee}\!H$ is an isomorphism.  So ${}^{\vee}\!H$ has a basis $\{e_{i}^{*}\}$ consisting of the pullbacks to $H$ of the elements of the dual basis to $\{[e_1],\ldots,[e_d]\}$ for  ${}^{\vee}\!\left(\frac{H}{tH}\right)$.   It then follows immediately from Proposition \ref{deltachar} that the pairing of $\delta_{\uparrow}(\phi^{\downarrow})e_{i}^{*}$ with $\phi^{\downarrow}e_j$ is $1$ if $i=j$ and $0$ otherwise.  So $\{\delta_{\uparrow}(\phi^{\downarrow})e_{1}^{*},\ldots,\delta_{\uparrow}(\phi^{\downarrow})e_d^*\}$ is the dual basis for the $\Lambda_{\uparrow}$-vector space ${}^{\vee}(V^{\downarrow})$ to the orthogonal basis $\{\phi^{\downarrow}e_1,\ldots,\phi^{\downarrow}e_d\}$ for $V^{\downarrow}$.  Similarly, $\{\delta^{\downarrow}(\phi_{\uparrow})e_1^*,\ldots,\delta^{\downarrow}(\phi_{\uparrow})e_d^*\}$ is the dual basis to the orthogonal basis $\{\phi_{\uparrow}e_1,\ldots\phi_{\uparrow}e_d\}$ for  $V_{\uparrow}$.  By Remark \ref{dualbasisrem}, the bases $\{\delta_{\uparrow}(\phi^{\downarrow})e_1^*,\ldots,\delta_{\uparrow}(\phi^{\downarrow})e_d^*\}$ and  $\{\delta^{\downarrow}(\phi_{\uparrow})e_1^*,\ldots,\delta^{\downarrow}(\phi_{\uparrow})e_d^*\}$ are orthogonal, with their values of ${}^{\vee}\!\rho^{\downarrow}$ and ${}^{\vee}\!\rho_{\uparrow}$ as claimed in the proposition.  The last sentence of the proposition follows directly from this and from the definition of the basis spectrum.
\end{proof}

Our constructions behave well with respect to double duals:

\begin{prop} \label{ddcom} For a filtered matched pair $\mathcal{P}$ with notation as in (\ref{pdisp}) we have commutative diagrams \[ \xymatrix{ H\ar[r]^{\phi_{\uparrow}}\ar[d]_{\alpha_H} & V_{\uparrow}\ar[d]^{\alpha_{V_{\uparrow}}} \\ {}^{\vee\vee}\!H\ar[r]_{\delta_{\uparrow}(\delta^{\downarrow}(\phi_{\uparrow}))} & {}^{\vee\vee}\!V_{\uparrow} },\qquad \xymatrix{ H\ar[r]^{\phi^{\downarrow}}\ar[d]_{\alpha_H} & V^{\downarrow}\ar[d]^{\alpha_{V^{\downarrow}}} \\ {}^{\vee\vee}\!H\ar[r]_{\delta^{\downarrow}(\delta_{\uparrow}(\phi^{\downarrow}))} & {}^{\vee\vee}\!V^{\downarrow} }\] where $\alpha_H,\alpha_{V_{\uparrow}},\alpha_{V^{\downarrow}}$ are the canonical maps to double duals from Section \ref{dualsec}.
\end{prop}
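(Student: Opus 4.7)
The plan is to use the universal characterization of the $\delta$-maps provided by Proposition \ref{deltachar}, together with the definitions of the double-dual maps $\alpha_H,\alpha_{V_{\uparrow}},\alpha_{V^{\downarrow}}$, to verify directly that both sides of each diagram agree when evaluated on appropriate test elements. Since the two diagrams are completely symmetric under exchange of the decorations $\uparrow$ and $\downarrow$, it suffices to establish commutativity of the first; I will sketch that argument.

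Fix $h\in H$. I want to show that the two elements $\delta_{\uparrow}(\delta^{\downarrow}(\phi_{\uparrow}))(\alpha_Hh)$ and $\alpha_{V_{\uparrow}}(\phi_{\uparrow}h)$ of ${}^{\vee\vee}V_{\uparrow}=\overline{\mathrm{Hom}_{\Lambda^{\downarrow}}({}^{\vee}V_{\uparrow},\Lambda^{\downarrow})}$ coincide as $\Lambda^{\downarrow}$-linear functionals on ${}^{\vee}V_{\uparrow}$. Because ${}^{\vee}\!\mathcal{P}$ is itself a filtered matched pair, the map $\delta^{\downarrow}(\phi_{\uparrow})\co{}^{\vee}H\to{}^{\vee}V_{\uparrow}$ becomes an isomorphism after coefficient extension to $\Lambda^{\downarrow}$, so its image $\Lambda^{\downarrow}$-spans ${}^{\vee}V_{\uparrow}$. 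Thus it suffices to check equality of the two functionals on every element of the form $\delta^{\downarrow}(\phi_{\uparrow})(\zeta)$ with $\zeta\in{}^{\vee}H$.

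On one side, Proposition \ref{deltachar} applied to the $\delta_{\uparrow}$-construction built from $\delta^{\downarrow}(\phi_{\uparrow})$ (viewed as a map from the $\Lambda$-module ${}^{\vee}H$ into the normed $\Lambda^{\downarrow}$-space ${}^{\vee}V_{\uparrow}$) gives
\[ \bigl(\delta_{\uparrow}(\delta^{\downarrow}(\phi_{\uparrow}))(\alpha_Hh)\bigr)\!\bigl(\delta^{\downarrow}(\phi_{\uparrow})\zeta\bigr) \;=\; (\alpha_Hh)(\zeta) \;=\; \overline{\zeta(h)}. \]
On the other side, unwinding the definition of $\alpha_{V_{\uparrow}}$ and then invoking the characterization of $\delta^{\downarrow}(\phi_{\uparrow})$ itself from Proposition \ref{deltachar} yields
\[ \bigl(\alpha_{V_{\uparrow}}(\phi_{\uparrow}h)\bigr)\!\bigl(\delta^{\downarrow}(\phi_{\uparrow})\zeta\bigr) \;=\; \overline{\bigl(\delta^{\downarrow}(\phi_{\uparrow})\zeta\bigr)(\phi_{\uparrow}h)} \;=\; \overline{\zeta(h)}. \]
The two expressions agree, proving commutativity of the first diagram. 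The second diagram is handled by the identical argument with $\uparrow$ and $\downarrow$ swapped and $\phi_{\uparrow}$ replaced by $\phi^{\downarrow}$.

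There is no substantive obstacle here; the argument is a bookkeeping check that the universal property encoded in Proposition \ref{deltachar} composes correctly with itself. The only subtle point worth flagging in the write-up is the need to verify that the image of the relevant single-dual map $\Lambda^{\downarrow}$- (respectively $\Lambda_{\uparrow}$-) spans the ambient dual space, which legitimizes reducing the comparison to the chosen test elements — and this is immediate from the fact that the dual ${}^{\vee}\!\mathcal{P}$ satisfies the axioms of a filtered matched pair.
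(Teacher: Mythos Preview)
Your proof is correct and follows essentially the same approach as the paper's own proof: both reduce to evaluating the two candidate elements of ${}^{\vee\vee}V_{\uparrow}$ on elements of the form $\delta^{\downarrow}(\phi_{\uparrow})\zeta$ and invoking Proposition~\ref{deltachar} twice to see that each evaluates to $\overline{\zeta(h)}$. The paper phrases this via the uniqueness clause of Proposition~\ref{deltachar} rather than explicitly noting that the image of $\delta^{\downarrow}(\phi_{\uparrow})$ spans, but the content is identical.
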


\begin{proof}
The proofs are the same for the two diagrams so we just check the commutativity of the first one.

Let $h\in H$.  By Proposition \ref{deltachar}, the element $\eta:=(\delta_{\uparrow}(\delta^{\downarrow}(\phi_{\uparrow})))(\alpha_Hh)$ of ${}^{\vee\vee}V_{\uparrow}$ is uniquely characterized by the fact that, whenever $\zeta\in {}^{\vee}\!H$, we have $\eta(\delta^{\downarrow}(\phi_{\uparrow})\zeta)=(\alpha_Hh)(\zeta)$.  So we just need to check that $\alpha_{V_{\uparrow}}(\phi_{\uparrow}h)$ satisfies this same property.  
For $\zeta\in {}^{\vee}\!H$, one has \begin{align*} (\alpha_{V_{\uparrow}}(\phi_{\uparrow}h))(\delta^{\downarrow}(\phi_{\uparrow})\zeta)&=\overline{(\delta^{\downarrow}(\phi_{\uparrow}\zeta))(\phi_{\uparrow}h)}
\\ &= \overline{\zeta(h)}=(\alpha_Hh)(\zeta),\end{align*} as desired, where in the penultimate equality we have again used Proposition \ref{deltachar}.
\end{proof}

If $\mathcal{P}$ is a filtered matched pair with notation again as in (\ref{pdisp}), then with $d=\dim_{\mathcal{Q}(\Lambda)}\mathcal{Q}(\Lambda)\otimes_{\Lambda}H$ we have the gaps $G_1(\mathcal{P}),\ldots,G_d(\mathcal{P})$ of Definition \ref{gapdfn}.  The dimension of $\mathcal{Q}(\Lambda)\otimes_{\Lambda}{}^{\vee}\!H$ is also equal to $d$ (for instance this follows from \cite[Lemma 4.87]{Rot}), so there are also gaps $G_1({}^{\vee}\!\mathcal{P}),\ldots,G_{d}({}^{\vee}\!\mathcal{P})$.  If $\mathcal{P}$ admits a doubly-orthogonal basis (as always holds if $\Gamma$ is discrete) then Propositions \ref{gapchar} and \ref{bardual} imply that we have $G_i(\mathcal{P})=-G_{d+1-i}({}^{\vee}\!\mathcal{P})$ for each $i=1,\ldots,d$.  More generally we at least have an inequality:

\begin{prop} \label{gapdual} With notation as above, for each $i\in\{1,\ldots,d\}$ we have $G_{i}(\mathcal{P})\leq -G_{d+1-i}({}^{\vee}\!\mathcal{P})$.  
\end{prop}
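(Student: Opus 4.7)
My plan is to prove the inequality by using the evaluation pairing $\mathrm{ev}\colon {}^{\vee}\!H\times H\to\Lambda$ to link witnesses for the two gaps. Fix arbitrary $\alpha<G_i(\mathcal{P})$ and $\beta<G_{d+1-i}({}^{\vee}\!\mathcal{P})$, and choose independent sets $x_1,\ldots,x_i\in H$ with $\rho^{\downarrow}(\phi^{\downarrow}x_k)-\rho_{\uparrow}(\phi_{\uparrow}x_k)\geq\alpha$ for each $k$, and $y_1,\ldots,y_{d+1-i}\in{}^{\vee}\!H$ with ${}^{\vee}\!\rho_{\uparrow}(\delta^{\downarrow}(\phi_{\uparrow})y_\ell)-{}^{\vee}\!\rho^{\downarrow}(\delta_{\uparrow}(\phi^{\downarrow})y_\ell)\geq\beta$ for each $\ell$. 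It will suffice to show $\alpha+\beta\leq 0$, since the full result then follows by taking suprema over such $\alpha,\beta$.

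The first step is a dimension count in $\mathcal{Q}(\Lambda)\otimes_{\Lambda}H$, which is $d$-dimensional over $\mathcal{Q}(\Lambda)$. By Lemma \ref{extend-to-frac}, the space $\mathcal{Q}(\Lambda)\otimes_{\Lambda}{}^{\vee}\!H$ is canonically isomorphic (via coefficient extension of $\mathrm{ev}$) to the conjugated dual of $\mathcal{Q}(\Lambda)\otimes_{\Lambda}H$. The images of the $x_k$ span a subspace of dimension $i$, whose annihilator in $\mathcal{Q}(\Lambda)\otimes_{\Lambda}{}^{\vee}\!H$ has dimension $d-i$; the images of the $y_\ell$ span a subspace of dimension $d+1-i$. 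Since $d+1-i>d-i$, some $\mathcal{Q}(\Lambda)$-combinations $y=\sum b_\ell y_\ell$ and $x=\sum a_k x_k$ satisfy $y(x)\neq 0$. Expanding this evaluation sesquilinearly and clearing denominators, it follows that some individual value $y_\ell(x_k)\in\Lambda$ is nonzero.

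For this single pair $(k,\ell)$, Proposition \ref{deltachar} yields $\delta^{\downarrow}(\phi_{\uparrow})(y_\ell)(\phi_{\uparrow}x_k)=y_\ell(x_k)=\delta_{\uparrow}(\phi^{\downarrow})(y_\ell)(\phi^{\downarrow}x_k)$. Testing the infimum defining ${}^{\vee}\!\rho_{\uparrow}$ against the vector $\phi_{\uparrow}x_k$ gives $\nu_{\uparrow}(y_\ell(x_k))\geq{}^{\vee}\!\rho_{\uparrow}(\delta^{\downarrow}(\phi_{\uparrow})y_\ell)-\rho_{\uparrow}(\phi_{\uparrow}x_k)$, and dually testing the supremum defining ${}^{\vee}\!\rho^{\downarrow}$ against $\phi^{\downarrow}x_k$ gives $\nu^{\downarrow}(y_\ell(x_k))\leq{}^{\vee}\!\rho^{\downarrow}(\delta_{\uparrow}(\phi^{\downarrow})y_\ell)-\rho^{\downarrow}(\phi^{\downarrow}x_k)$. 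Subtracting the first from the second and invoking $\nu^{\downarrow}(\lambda)-\nu_{\uparrow}(\lambda)\geq 0$ for $\lambda\in\Lambda\setminus\{0\}$ (the same fact used in the proof of Proposition \ref{gapchar}) will rearrange to
\[
\alpha+\beta\ \leq\ \bigl(\rho^{\downarrow}(\phi^{\downarrow}x_k)-\rho_{\uparrow}(\phi_{\uparrow}x_k)\bigr)+\bigl({}^{\vee}\!\rho_{\uparrow}(\delta^{\downarrow}(\phi_{\uparrow})y_\ell)-{}^{\vee}\!\rho^{\downarrow}(\delta_{\uparrow}(\phi^{\downarrow})y_\ell)\bigr)\ \leq\ 0,
\]
as required.

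The main point requiring care will be the sesquilinear bookkeeping in the dimension step: the scalar action on ${}^{\vee}\!H$ is twisted by conjugation, so that the expansion of $y(x)$ for $y=\sum b_\ell y_\ell$ takes the form $\sum_{k,\ell}\bar{b}_\ell a_k\,y_\ell(x_k)$, and I should verify that nonvanishing of this sum still forces some single $y_\ell(x_k)\neq 0$---which is immediate. Apart from that piece of bookkeeping and the application of Proposition \ref{deltachar}, the argument is a short juxtaposition of the inf/sup defining ${}^{\vee}\!\rho_{\uparrow},{}^{\vee}\!\rho^{\downarrow}$ with the elementary inequality $\nu^{\downarrow}\geq\nu_{\uparrow}$ on $\Lambda$.
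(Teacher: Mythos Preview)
Your proof is correct and follows essentially the same approach as the paper's: both arguments pick independent witnesses for the two gaps, use a dimension count in $\mathcal{Q}(\Lambda)\otimes_{\Lambda}H$ to find a pair $(x_k,y_\ell)$ with $y_\ell(x_k)\neq 0$, invoke Proposition~\ref{deltachar} to compute the dual filtrations on this pair, and conclude via $\nu^{\downarrow}\geq\nu_{\uparrow}$ on $\Lambda\setminus\{0\}$. The only cosmetic difference is that the paper phrases the contradiction in terms of a single parameter $c>-G_{d+1-i}({}^{\vee}\!\mathcal{P})$ rather than your pair $(\alpha,\beta)$, and finds the nonvanishing pairing directly from the annihilator dimension rather than via a linear combination.
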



\begin{proof}
Suppose that $c>-G_{d+1-i}({}^{\vee}\!\mathcal{P})$; it suffices to show that for any independent set $\{h_1,\ldots,h_i\}\subset H$ there is some $\ell\in \{1,\ldots,i\}$ with $\rho^{\downarrow}(\phi^{\downarrow}h_{\ell})-\rho_{\uparrow}(\phi_{\uparrow}h_{\ell})<c$.

Now the assumption that $c>-G_{d+1-i}({}^{\vee}\!\mathcal{P})$ implies that there is an independent set $\{\zeta_1,\ldots,\zeta_{d+1-i}\}\subset {}^{\vee}\!H$ such that, for each $m\in\{1,\ldots,d+1-i\}$, we have \begin{equation}\label{dualc} {}^{\vee}\!\rho_{\uparrow}(\delta^{\downarrow}(\phi_{\uparrow})\zeta_m)-{}^{\vee}\!\rho^{\downarrow}(\delta_{\uparrow}(\phi^{\downarrow})\zeta_m)>-c.\end{equation}  For any independent set $\{h_1,\ldots,h_i\}\subset H$, the annihilator of $\{1\otimes h_1,\ldots,1\otimes h_i\}\subset \mathcal{Q}(\Lambda)\otimes_{\Lambda}H$ is a $(d-i)$-dimensional subspace of $\mathrm{Hom}_{\mathcal{Q}(\Lambda)}(\mathcal{Q}(\Lambda)\otimes_{\Lambda}H,\mathcal{Q}(\Lambda))$.  Since the $(d+1-i)$-element set $\{\zeta_1,\ldots,\zeta_{d+1-i}\}$ becomes linearly independent on coefficient extension to $\mathcal{Q}(\Lambda)$, it follows that at least one $\zeta_{m}$ with $1\leq m\leq d+1-i$ must not vanish identically on $\{h_1,\ldots,h_i\}$.  So choose $\ell\in\{1,\ldots,i\}$ such that $\zeta_{m}(h_{\ell})\neq 0$.  

By the definition of ${}^{\vee}\!\rho_{\uparrow}$ and Proposition \ref{deltachar}, we have \[ 
{}^{\vee}\!\rho_{\uparrow}(\delta^{\downarrow}(\phi_{\uparrow})\zeta_m)\leq \nu_{\uparrow}((\delta^{\downarrow}(\phi_{\uparrow})\zeta_m)(\phi_{\uparrow}h_{\ell}))+\rho_{\uparrow}(\phi_{\uparrow}h_{\ell})=\nu_{\uparrow}(\zeta_m(h_{\ell}))+\rho_{\uparrow}(\phi_{\uparrow}h_{\ell});\] similarly, the definition of ${}^{\vee}\!\rho^{\downarrow}$ yields \[  {}^{\vee}\!\rho^{\downarrow}(\delta_{\uparrow}(\phi^{\downarrow})\zeta_m)\geq \nu^{\downarrow}((\delta_{\uparrow}(\phi^{\downarrow})\zeta_m)(\phi^{\downarrow}h_{\ell}))+\rho^{\downarrow}(\phi^{\downarrow}h_{\ell})=\nu^{\downarrow}(\zeta_m(h_{\ell}))+\rho^{\downarrow}(\phi^{\downarrow}h_{\ell}).\]  
Hence (\ref{dualc}) gives \[ -c<\left(\nu_{\uparrow}(\zeta_m(h_{\ell}))+\rho_{\uparrow}(\phi_{\uparrow}h_{\ell})\right)-\left(\nu^{\downarrow}(\zeta_m(h_{\ell}))+\rho^{\downarrow}(\phi^{\downarrow}h_{\ell})\right).\]  But any nonzero $\lambda\in \Lambda$, such as $\lambda=\zeta_m(h_{\ell})$, obeys $\nu_{\uparrow}(\lambda)-\nu^{\downarrow}(\lambda)\leq 0$, so we infer from the above that \[ -c<-\left(\rho^{\downarrow}(\phi^{\downarrow}h_{\ell})-\rho_{\uparrow}(\phi_{\uparrow}h_{\ell})\right).\]  Since the number $c>-G_{d+1-i}({}^{\vee}\!\mathcal{P})$ and the independent $i$-element set $\{h_1,\ldots,h_i\}\subset H$ were arbitrary this confirms that $G_{i}(\mathcal{P})\leq -G_{d+1-i}({}^{\vee}\!\mathcal{P})$.
\end{proof}

It would be interesting to know whether equality always holds in Proposition \ref{gapdual}.

\subsection{Constructing doubly-orthogonal bases}\label{basisconstruct}

In this section we prove Theorem \ref{basistheorem} asserting that, if $\Gamma$ is discrete, any filtered matched pair admits a doubly-orthogonal basis. The proof is rather involved, and the ideas introduced in it are not used elsewhere in the paper, so it might be omitted on first reading. In the case that $\Gamma=\{0\}$ the result can be established with much less effort than we expend for the general discrete case: in the notation used below, the maps $\phi_{\uparrow}$ and $\phi^{\downarrow}$ are just isomorphisms of $\kappa$-vector spaces when $\Gamma=\{0\}$, and one can then apply \cite[Theorem 3.4]{UZ} (or \cite[Lemma 2]{Bar}) to the map $\phi_{\uparrow}\circ\phi^{\downarrow -1}$ between the orthogonalizable $\Lambda_{\uparrow}$-spaces $(\bar{V}^{\downarrow},-\rho^{\downarrow})$ and $(V_{\uparrow},\rho_{\uparrow})$ to obtain the desired basis.

   Assume the filtered matched pair to be given by maps $\phi_{\uparrow}\co H\to V_{\uparrow}$ and $\phi^{\downarrow}\co H\to V^{\downarrow}$, where (in view of Proposition \ref{discretenormed}) $(V_{\uparrow},\rho_{\uparrow})$ is an orthogonalizable $\Lambda_{\uparrow}$-space and $(V^{\downarrow},\rho^{\downarrow})$ is an orthogonalizable $\Lambda^{\downarrow}$-space.  Write $F$ for the quotient $\frac{H}{tH}$; then $F$ is a free $\Lambda$-module because $H$ is a finitely generated module over $\Lambda$, which is a PID because $\Gamma$ is discrete.  Since $\phi_{\uparrow}$ and $\phi^{\downarrow}$ each factor through the quotient projection $H\to F$, and this quotient projection becomes an isomorphism after tensoring with $\Lambda_{\uparrow}$ or $\Lambda^{\downarrow}$, we may for simplicity just consider the case of a filtered matched pair of the form \[ \xymatrix{ & (V^{\downarrow},\rho^{\downarrow}) \\ F\ar[ru]^{\phi^{\downarrow}}\ar[rd]_{\phi_{\uparrow}} & \\ & (V_{\uparrow},\rho_{\uparrow}) } \] where  $F$ is a finitely-generated free $\Lambda$-module, as a doubly-orthogonal basis for this filtered matched pair will lift via the quotient projection to a doubly-orthogonal basis for the original one.

We thus need to show that there is a basis $\{e_1,\ldots,e_d\}$ for the free module $F$ such that, simultaneously, $\{\phi_{\uparrow}e_1,\ldots,\phi_{\uparrow}e_d\}$ is $\rho_{\uparrow}$-orthogonal for $V_{\uparrow}$ and $\{\phi^{\downarrow}e_1,\ldots,\phi^{\downarrow}e_d\}$ is $\rho^{\downarrow}$-orthogonal for $V^{\downarrow}$.   A first step, accomplished in Lemma \ref{triconstruct}, is to show that, separately, $(V_{\uparrow},\rho_{\uparrow})$ and $(V^{\downarrow},\rho^{\downarrow})$ each admit orthogonal bases that are  images under $\phi_{\uparrow}$ and $\phi^{\downarrow}$, respectively, of bases $\{x_1,\ldots,x_d\}$ and $\{y_1,\ldots,y_d\}$ of $F$; subsequent to this, we will iteratively modify these two bases, preserving  the respective orthogonality criteria, until they are the same basis for $F$.  For both of these purposes, it is helpful to record some general operations which preserve the property of orthogonality.

\begin{prop}\label{orthpres}
Let $(V_{\uparrow},\rho_{\uparrow})$ be an orthogonalizable $\Lambda_{\uparrow}$-space and let $\{v_1,\ldots,v_d\}$ be a $\rho_{\uparrow}$-orthogonal basis for $V_{\uparrow}$.
\begin{itemize} \item[(i)] If $\mu_1,\ldots,\mu_d$ are nonzero elements of $\Lambda_{\uparrow}$ then $\{\mu_1v_1,\ldots,\mu_dv_d\}$ is still a $\rho_{\uparrow}$-orthogonal basis for $V_{\uparrow}$.
\item[(ii)] If $w_1,\ldots,w_d\in V_{\uparrow}$ obey $\rho_{\uparrow}(w_i)<\rho_{\uparrow}(v_i)$ for all $i$, then $\{v_1+w_1,\ldots,v_d+w_d\}$ is still a $\rho_{\uparrow}$-orthogonal basis for $V_{\uparrow}$.
\item[(iii)] If $v\in \mathrm{span}_{\Lambda_{\uparrow}}\{v_2,\ldots,v_d\}$ obeys $\rho_{\uparrow}(v)\leq \rho_{\uparrow}(v_1)$, and if $\lambda\in \Lambda_{\uparrow}$ with $\nu_{\uparrow}(\lambda)=0$, then $\{\lambda v_1+v,v_2,\ldots,v_d\}$ is still a $\rho_{\uparrow}$-orthogonal basis for $V_{\uparrow}$.
\end{itemize}
\end{prop}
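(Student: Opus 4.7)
All three statements amount to verifying the orthogonality identity
\[
\rho_{\uparrow}\!\left(\sum_{i=1}^{d}\lambda_{i}u_{i}\right)=\max_{i}\!\left(\rho_{\uparrow}(u_{i})-\nu_{\uparrow}(\lambda_{i})\right)
\]
for the new basis $\{u_{1},\ldots,u_{d}\}$ in each case, given that this identity holds for $\{v_{1},\ldots,v_{d}\}$. Linear independence of each proposed new basis will be automatic once orthogonality is established, since orthogonality forces that $\sum \lambda_{i}u_{i}=0$ only when every $\lambda_{i}u_{i}=0$. Throughout, the basic inputs are the multiplicativity $\nu_{\uparrow}(\lambda\mu)=\nu_{\uparrow}(\lambda)+\nu_{\uparrow}(\mu)$, the strong triangle inequality $\nu_{\uparrow}(\lambda+\mu)\geq \min\{\nu_{\uparrow}(\lambda),\nu_{\uparrow}(\mu)\}$, and the fact (Remark \ref{equalcase}) that for $x,y\in V_{\uparrow}$ with $\rho_{\uparrow}(x)\neq\rho_{\uparrow}(y)$ one has $\rho_{\uparrow}(x+y)=\max\{\rho_{\uparrow}(x),\rho_{\uparrow}(y)\}$.

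For (i), I simply rewrite $\sum_{i}\lambda_{i}(\mu_{i}v_{i})=\sum_{i}(\lambda_{i}\mu_{i})v_{i}$ and apply orthogonality of the $v_{i}$ together with $\nu_{\uparrow}(\lambda_{i}\mu_{i})=\nu_{\uparrow}(\lambda_{i})+\nu_{\uparrow}(\mu_{i})$ to collapse the expression into $\max_{i}(\rho_{\uparrow}(\mu_{i}v_{i})-\nu_{\uparrow}(\lambda_{i}))$. For (ii), the step I would carry out first is to note $\rho_{\uparrow}(v_{i}+w_{i})=\rho_{\uparrow}(v_{i})$ thanks to the equality case above. Then I would split
\[
\sum_{i}\lambda_{i}(v_{i}+w_{i})=\sum_{i}\lambda_{i}v_{i}+\sum_{i}\lambda_{i}w_{i},
\]
observe that the first sum has $\rho_{\uparrow}$-value $A:=\max_{i}(\rho_{\uparrow}(v_{i})-\nu_{\uparrow}(\lambda_{i}))$ by orthogonality of $\{v_{i}\}$, while the second has $\rho_{\uparrow}$-value bounded above by $\max_{i}(\rho_{\uparrow}(w_{i})-\nu_{\uparrow}(\lambda_{i}))<A$. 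Another application of the equality case then yields the desired identity.

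For (iii), I write $v=\sum_{i\geq 2}c_{i}v_{i}$ and expand
\[
\mu_{1}(\lambda v_{1}+v)+\sum_{i\geq 2}\mu_{i}v_{i}=(\mu_{1}\lambda)v_{1}+\sum_{i\geq 2}(\mu_{i}+\mu_{1}c_{i})v_{i}.
\]
Applying orthogonality of $\{v_{1},\ldots,v_{d}\}$ to the right-hand side and using $\nu_{\uparrow}(\mu_{1}\lambda)=\nu_{\uparrow}(\mu_{1})$ (since $\nu_{\uparrow}(\lambda)=0$), one obtains a maximum over the $v_{1}$-contribution $\rho_{\uparrow}(v_{1})-\nu_{\uparrow}(\mu_{1})$ and the $v_{i}$-contributions $\rho_{\uparrow}(v_{i})-\nu_{\uparrow}(\mu_{i}+\mu_{1}c_{i})$. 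The strong triangle inequality gives $\rho_{\uparrow}(v_{i})-\nu_{\uparrow}(\mu_{i}+\mu_{1}c_{i})\leq \max\{\rho_{\uparrow}(v_{i})-\nu_{\uparrow}(\mu_{i}),\,\rho_{\uparrow}(c_{i}v_{i})-\nu_{\uparrow}(\mu_{1})\}$, and the hypothesis $\rho_{\uparrow}(v)\leq\rho_{\uparrow}(v_{1})$ combined with orthogonality of $\{v_{i}\}$ forces $\rho_{\uparrow}(c_{i}v_{i})\leq\rho_{\uparrow}(v_{1})$. These bounds show the right-hand side is at most $\max\{\rho_{\uparrow}(v_{1})-\nu_{\uparrow}(\mu_{1}),\max_{i\geq 2}(\rho_{\uparrow}(v_{i})-\nu_{\uparrow}(\mu_{i}))\}$, and a symmetric argument (solving for $\mu_{1}\lambda v_{1}$ in the expansion and using that $\lambda$ is a unit so that the change-of-basis matrix is unit-triangular and hence invertible) yields the matching lower bound. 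The main subtlety to watch for is the possibility that $\rho_{\uparrow}(\lambda v_{1}+v)<\rho_{\uparrow}(v_{1})$ when $\rho_{\uparrow}(v)=\rho_{\uparrow}(v_{1})$ and cancellation occurs; one checks that in this event $\rho_{\uparrow}(\lambda v_{1}+v)-\nu_{\uparrow}(\mu_{1})$ is still bounded above by the max of the other orthogonality contributions, so the identity still holds with the correct normalization.
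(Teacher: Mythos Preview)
Your arguments for (i) and (ii) match the paper's proof and are correct.

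For (iii) you take a somewhat different route than the paper. The paper first reduces to $\lambda=1$ via (i), observes once and for all that $\rho_{\uparrow}(v_1+v)=\rho_{\uparrow}(v_1)$ (since $v_1$ is orthogonal to $\mathrm{span}_{\Lambda_{\uparrow}}\{v_2,\ldots,v_d\}$), and then argues by a two-case split according to whether $\rho_{\uparrow}(v_1+v)-\nu_{\uparrow}(\mu_1)$ or $\max_{i\geq 2}(\rho_{\uparrow}(v_i)-\nu_{\uparrow}(\mu_i))$ is larger, in each case reducing to orthogonality of $\{v_1,\ldots,v_d\}$. Your coefficient-expansion approach is a legitimate alternative, and the upper bound you derive is correct. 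The lower bound you gesture at with ``a symmetric argument'' also works, though your description of it is vague: the clean statement is that writing $\mu_i=(\mu_i+\mu_1 c_i)-\mu_1 c_i$ and applying the strong triangle inequality again gives $\rho_{\uparrow}(v_i)-\nu_{\uparrow}(\mu_i)\leq\max\{\rho_{\uparrow}(v_i)-\nu_{\uparrow}(\mu_i+\mu_1 c_i),\,\rho_{\uparrow}(c_iv_i)-\nu_{\uparrow}(\mu_1)\}$, and both terms on the right are bounded by the expression you already computed.

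However, your final paragraph is confused. The ``subtlety'' you raise---that $\rho_{\uparrow}(\lambda v_1+v)<\rho_{\uparrow}(v_1)$ might occur through cancellation when $\rho_{\uparrow}(v)=\rho_{\uparrow}(v_1)$---cannot happen: since $\lambda v_1\in\mathrm{span}_{\Lambda_{\uparrow}}\{v_1\}$ and $v\in\mathrm{span}_{\Lambda_{\uparrow}}\{v_2,\ldots,v_d\}$, orthogonality of the original basis forces $\rho_{\uparrow}(\lambda v_1+v)=\max\{\rho_{\uparrow}(\lambda v_1),\rho_{\uparrow}(v)\}=\rho_{\uparrow}(v_1)$. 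The paper states exactly this at the outset of its argument for (iii), and once you have it your two bounds already give the orthogonality identity with no residual case to check. Your attempted patch (``bounded above by the max of the other orthogonality contributions, so the identity still holds with the correct normalization'') is not a meaningful argument for a nonexistent case; simply delete the paragraph and record $\rho_{\uparrow}(\lambda v_1+v)=\rho_{\uparrow}(v_1)$ up front.
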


\begin{remark}\label{orthrev}
By Remark \ref{switch}, this proposition implies analogous results for orthogonalizable $\Lambda^{\downarrow}$-spaces, replacing the various apperances of $\rho_{\uparrow}$ with $(-\rho^{\downarrow})$.  Specifically, the $\Lambda^{\downarrow}$ version of (ii) (which will be used in the proof of Lemma \ref{triconstruct}) says that, if $(V^{\downarrow},\rho^{\downarrow})$ has an orthogonal basis $\{v_1,\ldots,v_d\}$ and if $\rho^{\downarrow}(w_i)>\rho^{\downarrow}(v_i)$ for each $i$ then $\{v_1+w_1,\ldots,v_d+w_d\}$ is still an orthogonal basis.
\end{remark}
\begin{proof}
(i) follows from noting that, due to the general relation $\nu_{\uparrow}(\lambda\mu)=\nu_{\uparrow}(\lambda)+\nu_{\uparrow}(\mu)$, for any $\lambda_1,\ldots,\lambda_d\in \Lambda_{\uparrow}$ we have \[ \nu_{\uparrow}\left(\sum_{i=1}^{d}\lambda_i\mu_iv_i\right)=\max_i\left(\rho_{\uparrow}(v_i)-\nu_{\uparrow}(\mu_i)-\nu_{\uparrow}(\lambda_i)\right)=\max_i\left(\rho_{\uparrow}(\mu_iv_i)-\nu_{\uparrow}(\lambda_i)\right).\]

For (ii), recall that, as noted in Remark \ref{equalcase}, one has $\rho_{\uparrow}(x+y)=\max\{\rho_{\uparrow}(x)+\rho_{\uparrow}(y)\}$ whenever $\rho_{\uparrow}(x)\neq\rho_{\uparrow}(y)$.  Given $\lambda_1,\ldots,\lambda_d\in \Lambda_{\uparrow}$, the hypothesis $\rho_{\uparrow}(w_i)<\rho_{\uparrow}(v_i)$ for all $i$ implies that \[ \rho_{\uparrow}\left(\sum_i\lambda_iw_i\right)\leq \max_i\left(\rho_{\uparrow}(w_i)-\nu_{\uparrow}(\lambda_i)\right)<\max_i\left(\rho_{\uparrow}(v_i)-\nu_{\uparrow}(\lambda_i)\right)=\rho_{\uparrow}\left(\sum_i\lambda_iv_i\right)\] and hence \begin{align*} \rho_{\uparrow}\left(\sum_i\lambda_i(v_i+w_i)\right)& = \rho_{\uparrow}\left(\sum_{i}\lambda_iv_i+\sum_i\lambda_iw_i\right)=\rho_{\uparrow}\left(\sum_i\lambda_iv_i\right) \\ &=\max_i\left(\rho_{\uparrow}(v_i)-\nu_{\uparrow}(\lambda_i)\right)=\max_i\left(\rho_{\uparrow}(v_i+w_i)-\nu_{\uparrow}(\lambda_i)\right).\end{align*}
Thus the set $\{v_1+w_1,\ldots,v_d+w_d\}$ is $\rho_{\uparrow}$-orthogonal.  A  $\rho_{\uparrow}$-orthogonal set of nonzero vectors is automatically linearly independent since the orthogonality condition implies that any nontrivial linear combination of the vectors has $\rho_{\uparrow}>-\infty$ while $\rho_{\uparrow}(0)=-\infty$.  So since by hypothesis $\dim V_{\uparrow}=d$ it follows that our $\rho_{\uparrow}$-orthogonal set $\{v_1+w_1,\ldots,v_d+w_d\}$ is a basis for $V_{\uparrow}$.

We now prove (iii).  First note that, by first applying (i) with $\mu_1=\lambda$ and $\mu_2=\cdots=\mu_d=1$, it suffices to prove (iii) in the case that $\lambda=1$.  Because $\rho_{\uparrow}(v)\leq \rho_{\uparrow}(v_1)$ and $v\in \mathrm{span}_{\Lambda_{\uparrow}}\{v_2,\ldots,v_d\}$ where $\{v_1,\ldots,v_d\}$ is orthogonal, one has $\rho_{\uparrow}(v_1+v)=\rho_{\uparrow}(v_1)$.  Let $\lambda_1,\ldots,\lambda_d\in \Lambda_{\uparrow}$.  If   $\rho_{\uparrow}(v_1+v)-\nu_{\uparrow}(\lambda_1)<\max_{i\geq 2}(\rho_{\uparrow}(v_i)-\nu_{\uparrow}(\lambda_i))$ then, using Remark \ref{equalcase}, \begin{align*} & \rho_{\uparrow}\left(\lambda_1(v_1+v)+\sum_{i\geq 2}\lambda_iv_i\right)=\rho_{\uparrow}\left(\sum_{i\geq 2}\lambda_iv_i\right)=\max_{i\geq 2}(\rho_{\uparrow}(v_i)-\nu_{\uparrow}(\lambda_i)) \\ & \qquad = \max\left\{\rho_{\uparrow}(v_1+v)-\nu_{\uparrow}(\lambda_1),\rho_{\uparrow}(v_2)-\nu_{\uparrow}(\lambda_2),\ldots,\rho_{\uparrow}(v_d)-\nu_{\uparrow}(\lambda_d)\right\}.\end{align*}  The remaining case is that $\rho_{\uparrow}(v_1+v)-\nu_{\uparrow}(\lambda_1)\geq \max_{i\geq 2}(\rho_{\uparrow}(v_i)-\nu_{\uparrow}(\lambda_i))$. Then   $\lambda_1v+\sum_{i=2}^{d}\lambda_iv_i\in\mathrm{span}_{\Lambda_{\uparrow}}\{v_2,\ldots,v_d\}$ with $\rho_{\uparrow}\left(\lambda_1v+\sum_{i=2}^{d}\lambda_iv_i\right)\leq \rho_{\uparrow}(\lambda_1v_1)$, and so by the orthogonality of $\{v_1,\ldots,v_d\}$ we have \begin{align*}  &\rho_{\uparrow}\left(\lambda_1(v_1+v)+\sum_{i=2}^{d}\lambda_iv_i\right)=\rho_{\uparrow}\left(\lambda_1v_1+\left(\lambda_1v+\sum_{i=2}^{d}\lambda_iv_i\right)\right)=\rho_{\uparrow}(\lambda_1v_1)\\ &=\rho_{\uparrow}(v_1)-\nu_{\uparrow}(\lambda_1)=\rho_{\uparrow}(v_1+v)-\nu_{\uparrow}(\lambda_1) \\& =\max\left\{\rho_{\uparrow}(v_1+v)-\nu_{\uparrow}(\lambda_1),\rho_{\uparrow}(v_2)-\nu_{\uparrow}(\lambda_2),\ldots,\rho_{\uparrow}(v_d)-\nu_{\uparrow}(\lambda_d)\right\}.\end{align*}  So indeed the set $\{v_1+v,v_2,\ldots,v_d\}$ (which is obviously a basis) is $\rho_{\uparrow}$-orthogonal.
\end{proof}

We now begin constructing the desired bases.  Throughout the following we continue to assume that $F$ is a free $\Lambda$-module of rank $d$, equipped with $\Lambda$-module homomorphisms $\phi_{\uparrow}\co F\to V_{\uparrow}$ and $\phi^{\downarrow}\co F\to V^{\downarrow}$ where $(V_{\uparrow},\rho_{\uparrow})$ is an orthogonalizable $\Lambda_{\uparrow}$-space and $(V^{\downarrow},\rho^{\downarrow})$ is an orthogonalizable $\Lambda^{\downarrow}$-space, such that $1_{\Lambda_{\uparrow}}\otimes\phi_{\uparrow}$ and $1_{\Lambda^{\downarrow}}\otimes \phi^{\downarrow}$ are vector space isomorphisms.

\begin{lemma} \label{triconstruct} There are bases $\{x_1,\ldots,x_d\}$ and $\{y_1,\ldots,y_d\}$ for the free $\Lambda$-module $F$ such that:
\begin{itemize} \item[(i)] $\{\phi_{\uparrow}x_1,\ldots,\phi_{\uparrow}x_d\}$ is an orthogonal basis for $(V_{\uparrow},\rho_{\uparrow})$;
\item[(ii)] $\{\phi^{\downarrow}y_1,\ldots,\phi^{\downarrow}y_d\}$ is an orthogonal basis for $(V^{\downarrow},\rho^{\downarrow})$; and 
\item[(iii)] For each $i\in\{1,\ldots,d\}$, $x_i-y_i\in \mathrm{span}_{\Lambda}\{y_1,\ldots,y_{i-1}\}$.
\end{itemize}
\end{lemma}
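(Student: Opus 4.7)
The plan is to induct on the rank $d$ of $F$, with trivial base case $d=0$. For the inductive step, I would construct a ``last'' element $y_d$, a rank-$(d-1)$ direct summand $F'\leq F$ with $F = F'\oplus\Lambda y_d$, and $x_d\in y_d+F'$, subject to the conditions that $\phi^{\downarrow}y_d$ is $\rho^{\downarrow}$-orthogonal to the $(d-1)$-dimensional $\Lambda^{\downarrow}$-subspace $V^{\downarrow}_{d-1}:=\phi^{\downarrow}(F')\otimes_{\Lambda}\Lambda^{\downarrow}$ of $V^{\downarrow}$, and $\phi_{\uparrow}x_d$ is $\rho_{\uparrow}$-orthogonal to $V_{\uparrow,d-1}:=\phi_{\uparrow}(F')\otimes_{\Lambda}\Lambda_{\uparrow}$, each in the sense of the analog of \cite[Lemma 2.15]{UZ}. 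Once this is done, the restrictions $\phi_{\uparrow}|_{F'}$ and $\phi^{\downarrow}|_{F'}$ form a rank-$(d-1)$ filtered matched pair (the isomorphism conditions after coefficient extension are immediate), to which the inductive hypothesis applies, producing bases $\{y_1,\ldots,y_{d-1}\}$ and $\{x_1,\ldots,x_{d-1}\}$ of $F'$ with the triangular structure (iii) within $F'$ and with $\{\phi^{\downarrow}y_i\}_{i<d}$, $\{\phi_{\uparrow}x_i\}_{i<d}$ orthogonal bases of $V^{\downarrow}_{d-1}$, $V_{\uparrow,d-1}$ respectively. Appending $y_d$ and $x_d$ then yields bases of $F$ satisfying all three conditions: (iii) is immediate from $x_d-y_d\in F'=\mathrm{span}_{\Lambda}\{y_1,\ldots,y_{d-1}\}$, and (i), (ii) follow from the analog of \cite[Lemma 2.15]{UZ}.

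To produce $(F',y_d,x_d)$: starting from any $\Lambda$-basis $\{f_1,\ldots,f_d\}$ of $F$, take $F'=\mathrm{span}_{\Lambda}\{f_1,\ldots,f_{d-1}\}$; then $V^{\downarrow}_{d-1}$ is closed in $V^{\downarrow}$ (as in the proof of Proposition \ref{discretenormed}, using \cite[I.2.3.Corollary 1]{Bou}), so the descending analog of that proof yields some $w^*\in V^{\downarrow}_{d-1}$ realizing the supremum $\sup_{w\in V^{\downarrow}_{d-1}}\rho^{\downarrow}(\phi^{\downarrow}f_d-w)$, which is finite since $\phi^{\downarrow}f_d\notin V^{\downarrow}_{d-1}$. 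The crucial observation is that $\phi^{\downarrow}(F')$ is dense in $V^{\downarrow}_{d-1}$ with respect to the $\rho^{\downarrow}$-topology, because $\Lambda$ is dense in $\Lambda^{\downarrow}$ (any element of $\Lambda^{\downarrow}$ is approximated by a $\Lambda$-truncation to arbitrarily large $\rho^{\downarrow}$). Hence I can choose $u\in F'$ with $\rho^{\downarrow}(w^*-\phi^{\downarrow}u) > \rho^{\downarrow}(\phi^{\downarrow}f_d-w^*)$, and set $y_d:=f_d-u$. The decomposition $\phi^{\downarrow}y_d = (\phi^{\downarrow}f_d - w^*) + (w^*-\phi^{\downarrow}u)$ combined with the non-Archimedean equality at distinct filtration values (Remark \ref{equalcase}) gives $\rho^{\downarrow}(\phi^{\downarrow}y_d) = \rho^{\downarrow}(\phi^{\downarrow}f_d-w^*)$; since adding $u\in F'$ does not change $\sup_{w\in V^{\downarrow}_{d-1}}\rho^{\downarrow}(\phi^{\downarrow}y_d-w)$, we obtain $\rho^{\downarrow}(\phi^{\downarrow}y_d) = \sup_{w\in V^{\downarrow}_{d-1}}\rho^{\downarrow}(\phi^{\downarrow}y_d-w)$, the desired orthogonality to $V^{\downarrow}_{d-1}$. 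The construction of $x_d\in y_d+F'$ is entirely symmetric: apply the ascending best-approximation statement to $\phi_{\uparrow}y_d$ inside $V_{\uparrow,d-1}$, and absorb the error using the density of $\phi_{\uparrow}(F')$ in $V_{\uparrow,d-1}$.

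The main obstacle is the mismatch between the $\Lambda$-coefficients required by the ``last element'' constructed inside $F$ itself and the $\Lambda_{\uparrow}$- or $\Lambda^{\downarrow}$-coefficients with which the best-approximation element $w^*$ produced by Proposition \ref{discretenormed} naturally lives. Density of $\Lambda$ in the two Novikov completions, combined with the ultrametric strict inequality, resolves this obstacle at no cost---this is where discreteness of $\Gamma$ plays its essential role, both ensuring that suprema of filtration values are attained and that the required density-based modification can be carried out within the discrete set of filtration values.
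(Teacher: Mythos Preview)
Your proof is correct and uses the same underlying ideas as the paper's: orthogonalize over the Novikov field via best approximation (which exists because $\Gamma$ is discrete), then exploit the density of $\Lambda$ in $\Lambda_{\uparrow}$ and $\Lambda^{\downarrow}$ to replace Novikov-field coefficients by $\Lambda$-coefficients while preserving orthogonality. The only organizational difference is that the paper carries out the full triangular Gram--Schmidt in one pass by citing \cite[Theorem 2.16]{UZ}, truncating all the resulting coefficients $\mu_{ij}$ simultaneously via a uniform bound (Proposition~\ref{orthpres}(ii)), first producing the entire $y$-basis and then the $x$-basis on top of it; you instead build the last elements $y_d,x_d$ first and then induct on the rank-$(d-1)$ summand $F'$. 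Both are valid and neither offers a real advantage over the other.
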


\begin{proof}
Let us first construct $\{y_1,\ldots,y_d\}$.  Let $\{f_1,\ldots,f_d\}$ be an arbitrary basis for the free module $F$.  Then $\{\phi^{\downarrow}f_1,\ldots,\phi^{\downarrow}f_d\}$ is a (probably not orthogonal) basis for $V^{\downarrow}$.  In view of Remark \ref{switch}, \cite[Theorem 2.16]{UZ} shows that there is an orthogonal basis $\{v_1,\ldots,v_d\}$ for $(V^{\downarrow},\rho^{\downarrow})$ such that, for each $i$, we have \[ v_i=\phi^{\downarrow}f_i+\sum_{j<i}\mu_{ij}\phi^{\downarrow}f_j \] for suitable $\mu_{ij}\in \Lambda^{\downarrow}$.  These $v_i$ might not belong to the image of $\phi^{\downarrow}$ because the $\mu_{ij}$ may not belong to the subset $\Lambda$ of $\Lambda^{\downarrow}$, but we can remedy this by using Proposition \ref{orthpres} (and its variant from Remark \ref{orthrev}).  

Specifically, let $a_0=\max_i \rho^{\downarrow}(v_i)$, and let $d_0=\min_j\rho^{\downarrow}(\phi^{\downarrow}f_j)-a_0$.  For a general element $\mu\in \Lambda^{\downarrow}$, so that $\mu$ takes the form $\sum_{g\in \Gamma}c_gT^g$ where $c_g\in\kappa$ and, for any $g_0\in \R$, only finitely many of the $c_g$ with $g\geq g_0$ are nonzero, let us write \[ \widehat{\mu}=\sum_{g\in \Gamma:g\geq d_0}c_gT^g \] for the sum of only those terms in the generalized power series defining $\mu$ for which the power of $T$ is at least $d_0$.  In particular, the sum defining $\widehat{\mu}$ above is a finite sum, so that $\widehat{\mu}\in \Lambda$.  Moreover, $\nu^{\downarrow}(\widehat{\mu}-\mu)<d_0$.  Hence for each $i$ and $j$ and each $\mu\in \Lambda^{\downarrow}$ we have \[ \rho^{\downarrow}\left((\widehat{\mu}-\mu)\phi^{\downarrow}f_j\right)>\rho^{\downarrow}(\phi^{\downarrow}f_j)-d_0\geq a_0\geq \rho^{\downarrow}(v_i).\]  So by Remark \ref{orthrev}, if we let \[ \widehat{v}_i=v_i+\sum_{j<i}(\widehat{\mu}_{ij}-\mu_{ij})\phi^{\downarrow}f_j=\phi^{\downarrow}f_i+\sum_{j<i}\widehat{\mu}_{ij}\phi^{\downarrow}f_j \] then $\{\widehat{v}_1,\ldots,\widehat{v}_d\}$ will be an orthogonal basis for $V^{\downarrow}$.  So define the elements $y_i\in F$ by \[ y_i=f_i+\sum_{j<i}\widehat{\mu}_{ij}f_j;\] by construction, $v_i=\phi^{\downarrow}y_i$ and so $\{\phi^{\downarrow}y_1,\ldots,\phi^{\downarrow}y_d\}$ is an orthogonal basis for $V^{\downarrow}$.  Moreover, since $\{y_1,\ldots,y_d\}$ is related to $\{f_1,\ldots,f_d\}$ by a triangular $\Lambda$-valued matrix with ones on the diagonal, the fact that $\{f_1,\ldots,f_d\}$ is a basis for the $\Lambda$-module $F$ (and not merely a maximal independent set) implies that $\{y_1,\ldots,y_d\}$ is also a basis for $F$.

Now that we have our basis $\{y_1,\ldots,y_d\}$, the construction of $\{x_1,\ldots,x_d\}$ proceeds in essentially the same way (adjusting for signs), using $\{y_1,\ldots,y_d\}$ as the initial basis instead of $\{f_1,\ldots,f_d\}$.  \cite[Theorem 2.16]{UZ} gives an orthogonal basis $\{w_1,\ldots,w_d\}$ for $V_{\uparrow}$ with each $w_i$ having the form \[ w_i=\phi_{\uparrow}y_i+\sum_{j<i}\lambda_{ij}\phi_{\uparrow}y_j \] with $\lambda_i\in \Lambda_{\uparrow}$.  If we let $\widehat{\lambda}_{ij}$ denote the element of $\Lambda$ that results from deleting from $\lambda_{ij}$ all terms involving powers $T^d$ with $d>\max_j\rho_{\uparrow}(\phi_{\uparrow}y_j)-\min_i\rho_{\uparrow}(w_i)$, then the elements $x_i=y_i+\sum_{j<i}\widehat{\lambda}_{ij}y_j$ evidently comprise a basis for $F$ which satisfies property (iii) in the lemma, and Proposition \ref{orthpres} (ii) implies that $\{\phi_{\uparrow}x_1,\ldots,\phi_{\uparrow}x_d\}$ is an orthogonal basis for $(V_{\uparrow},\rho_{\uparrow})$.
\end{proof}

With Lemma \ref{triconstruct}'s bases $\{x_1,\ldots,x_d\}$ and $\{y_1,\ldots,y_d\}$ for $F$ in hand, we set about describing a linear algebraic procedure to modify these until they equal a common doubly-orthogonal basis.  Let $M$ denote the $d\times d$ matrix with coefficients in $\Lambda$ relating our two bases in the sense that $x_j=\sum_{i=1}^{d}M_{ij}y_i$.  Thus by Lemma \ref{triconstruct}(iii), $M$ is \textbf{unitriangular}, \emph{i.e.} upper triangular with all diagonal entries equal to $1$.  Changes to the basis $\{x_1,\ldots,x_d\}$ (resp. $\{y_1,\ldots,y_d\}$) of course correspond to column (resp. row) operations on $M$; we would like to confine ourselves to row or column operations that correspond to basis changes that preserve the property that $\{\phi_{\uparrow}x_1,\ldots,\phi_{\uparrow}x_d\}$ and $\{\phi^{\downarrow}y_1,\ldots,\phi^{\downarrow}y_d\}$ are respectively $\rho_{\uparrow}$- and $\rho^{\downarrow}$-orthogonal.  To assist with this we keep track of, along with the change of basis matrix $M$, the values $\xi_i=\rho_{\uparrow}(\phi_{\uparrow}x_i)$ and $\eta_j=\rho^{\downarrow}(\phi^{\downarrow}y_j)$.

\begin{dfn} A \textbf{labeled basis change matrix} is a triple $\mathcal{M}=(M,\vec{\xi},\vec{\eta})$ where $M$ is an invertible matrix with coefficients in $\Lambda$, say of size $d$-by-$d$, and $\vec{\xi},\vec{\eta}\in \mathbb{R}^d$.
\end{dfn}  

It turns out to be helpful to introduce the following quantity:

\begin{dfn} \label{misdef} Let $\mathcal{M}=(M,\vec{\xi},\vec{\eta})$ be a labeled basis change matrix with the property that $M$ is unitriangular.  The \textbf{misalignment} of $\mathcal{M}$ is the number \[ m(\mathcal{M})=\sum_{i<j}|(\eta_i-\xi_i)-(\eta_j-\xi_j)|.\]
\end{dfn}

We  now consider some transformations of labeled basis change matrices $(M,\vec{\xi},\vec{\eta})$ which correspond to modifications of the bases $\{x_1,\ldots,x_d\}$ and/or $\{y_1,\ldots,y_d\}$ that preserve the desired orthogonality properties:

\begin{itemize}
\item[(A1)] Reordering the $y_i$, say by swapping $y_i$ with $y_j$, corresponds to swapping the $i$th and $j$th rows of $M$, and also swapping $\eta_i$ with $\eta_j$.
\item[(A2)] Multiplying the basis element $x_j$ by a unit in $\Lambda$ (which is necessarily of form $aT^g$ where $a\in \kappa^{\times}$ and $g\in \Gamma$) corresponds to multiplying the $j$th column of $M$ by $aT^g$ while subtracting $g$ from $\xi_j$.
\item[(A3)] By Proposition \ref{orthpres} and Remark \ref{orthrev}, the basis $\{y'_1,\ldots,y'_d\}$ obtained by, for some distinct $i$ and $j$, setting $y'_j=y_j+\mu y_i$ and all other $y'_k=y_k$, will continue to have the property that $\{\phi^{\downarrow}y'_1,\ldots,\phi^{\downarrow}y'_d\}$ is $\rho^{\downarrow}$-orthogonal provided that $\rho^{\downarrow}(\mu y_i)\geq \rho^{\downarrow}(y_j)$.  This corresponds to subtracting  $\mu$ times row $j$ from row $i$ while leaving all $\eta_k,\xi_k$ unchanged, subject to the condition that $\nu^{\downarrow}(\mu)\leq \eta_i-\eta_j$.
\item[(A4)] Similarly to (A3), Proposition \ref{orthpres} implies that the basis $\{x'_1,\ldots,x'_d\}$ obtained by, for some distinct $i$ and $j$, setting $x'_j=x_j+\mu x_i$ and all other $x'_k=x_k$ will continue to have $\{\phi_{\uparrow}x'_1,\ldots,\phi_{\uparrow}x'_d\}$ $\rho_{\uparrow}$-orthogonal provided that $\rho_{\uparrow}(\mu x_i)\leq \rho_{\uparrow}(x_j)$.  This corresponds to adding $\mu$ times column $i$ to column $j$ while leaving all $\eta_k,\xi_k$ unchanged, subject to the condition that $\nu_{\uparrow}(\mu)\geq \xi_i-\xi_j$.
\item[(A5)] Generalizing (A4), let $Z=\left(\begin{array}{cc} d_i & \mu \\ \lambda & d_j\end{array}\right)$ be a $2\times 2$ matrix that is invertible over $\Lambda$, with $\nu_{\uparrow}(d_i)=\nu_{\uparrow}(d_j)=0$ and  $\nu_{\uparrow}(\lambda)>\xi_j-\xi_i$ while $\nu_{\uparrow}(\mu)\geq\xi_i-\xi_j$ (corresponding to the conditions $\rho_{\uparrow}(\lambda x_j)<\rho_{\uparrow}(x_i)$ and $\rho_{\uparrow}(\mu x_i)\leq\rho_{\uparrow}(x_j)$).  Proposition \ref{orthpres} implies that the basis $\{x'_1,\ldots,x'_d\}$ obtained by setting $x'_i=d_ix_i+\lambda x_j$, $x'_j=\mu x_i+d_jx_j$, and all other $x'_k=x_k$ will continue to have $\{\phi_{\uparrow}x'_1,\ldots,\phi_{\uparrow}x'_d\}$ $\rho_{\uparrow}$-orthogonal.  The corresponding operation on labeled basis change matrices $(M,\vec{\xi},\vec{\eta})$ leaves $\vec{\xi},\vec{\eta}$ unchanged and multiplies $M$ on the right by the $d\times d$ matrix $E$ which coincides with the identity except that $E_{ii}=d_i,E_{ij}=\mu,E_{ji}=\lambda,E_{jj}=d_j$ (again, subject to the conditions that $\nu_{\uparrow}(d_i)=\nu_{\uparrow}(d_j)=0$, $\nu_{\uparrow}(\lambda)>\xi_j-\xi_i$, and $\nu_{\uparrow}(\mu)\geq \xi_i-\xi_j$).
\end{itemize}

In view of Lemma \ref{triconstruct}, to prove Theorem \ref{basistheorem} it thus suffices to show that, if $\mathcal{M}=(M,\vec{\xi},\vec{\eta})$ is any labeled basis change matrix such that $M$ is unitriangular, there is a sequence of operations of types (A1),(A2),(A3),(A4),(A5) as above which convert $M$ to the identity (for the corresponding bases $\{x'_1,\ldots,x'_d\}$ and $\{y'_1,\ldots,y'_d\}$ will then be equal to each other, and will continue to have the properties that $\{\phi_{\uparrow}x'_1,\ldots,\phi_{\uparrow}x'_d\}$ is $\rho_{\uparrow}$-orthogonal and $\{\phi^{\downarrow}y'_1,\ldots,\phi^{\downarrow}y'_d\}$ is $\rho^{\downarrow}$-orthogonal; thus $\{x'_1,\ldots,x'_d\}=\{y'_1,\ldots,y'_d\}$ will be a doubly orthogonal basis).   To facilitate an inductive argument, we will prove a  more specific statement that includes information about the labels $\vec{\xi}$ and $\vec{\eta}$.  Let us first make the following definition.

\begin{dfn}
Let $\alpha=(\alpha_1,\ldots,\alpha_d)\in\R^d$.  A \textbf{simple compression} of $\vec{\alpha}$ is an element $\vec{\beta}=(\beta_1,\ldots,\beta_d)\in \R^d$ such that, for some distinct $k,\ell\in\{1,\ldots,d\}$, we have \[ \beta_k+\beta_{\ell}=\alpha_k+\alpha_{\ell},\quad |\beta_k-\beta_{\ell}|<|\alpha_k-\alpha_{\ell}|,\,\,\mbox{ and }\beta_i=\alpha_i\mbox{ for all }i\notin\{k,\ell\}.\]  A \textbf{compression} of $\alpha$ is an element $\vec{\alpha}'\in\R^d$ such that, for some $m\geq 1$, there are $\vec{\alpha}^{(0)}=\alpha,\,\vec{\alpha}^{(1)},\ldots,\vec{\alpha}^{(m)}=\vec{\alpha}'$ such that $\vec{\alpha}^{(i+1)}$ is a simple compression of $\vec{\alpha}^{(i)}$ for each $i\in\{0,\ldots,m-1\}$.\end{dfn}

It will be helpful to know the following (\emph{cf}. Definition \ref{misdef}):

\begin{prop}\label{compmis}
Suppose that $\vec{\alpha}'$ is a compression of $\vec{\alpha}$.  Then \[ \sum_{i<j}|\alpha'_i-\alpha'_j|<\sum_{i<j}|\alpha_i-\alpha_j|.\]
\end{prop}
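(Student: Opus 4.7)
My plan is to reduce to the case of a single simple compression and then compare the pairwise distance sum term by term. Since any compression is by definition a finite chain of simple compressions, and the desired inequality is strict and transitive under $\leq$, it is enough to prove: if $\vec{\beta}$ is a simple compression of $\vec{\alpha}$ via the pair of indices $k,\ell$, then $\sum_{i<j}|\beta_i-\beta_j|<\sum_{i<j}|\alpha_i-\alpha_j|$.

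With this reduction in hand, the sum splits into three groups of terms: those with $\{i,j\}\cap\{k,\ell\}=\varnothing$ (which are manifestly unchanged since $\beta_r=\alpha_r$ for $r\notin\{k,\ell\}$), the single $(k,\ell)$-term (which strictly decreases by hypothesis), and the ``cross terms'' $|\beta_k-\alpha_i|+|\beta_\ell-\alpha_i|$ for each $i\notin\{k,\ell\}$. The crux is to show that each such cross-term is no larger than $|\alpha_k-\alpha_i|+|\alpha_\ell-\alpha_i|$; combined with the strict decrease of the $(k,\ell)$-term and the equalities of the remaining terms, this yields the result.

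The key identity for the cross-term comparison is the following elementary fact: for any real numbers $x,y,\alpha$ with $S:=x+y$ fixed, one has
\[
|x-\alpha|+|y-\alpha|=\max\{|x-y|,\,|S-2\alpha|\}.
\]
Indeed, if $\alpha$ lies in the closed interval between $x$ and $y$, the left-hand side equals $|x-y|$, and otherwise it equals $|(x+y)-2\alpha|=|S-2\alpha|$, and in each case the other quantity in the max is seen to be no larger. Applying this with $\alpha=\alpha_i$ and with $(x,y)$ equal to $(\alpha_k,\alpha_\ell)$ and $(\beta_k,\beta_\ell)$ in turn (both pairs have the same sum $S=\alpha_k+\alpha_\ell$), the hypothesis $|\beta_k-\beta_\ell|<|\alpha_k-\alpha_\ell|$ immediately gives $|\beta_k-\alpha_i|+|\beta_\ell-\alpha_i|\leq|\alpha_k-\alpha_i|+|\alpha_\ell-\alpha_i|$, as required.

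No single step here is expected to be an obstacle; the only mild subtlety is verifying the max-identity displayed above by a case analysis on the position of $\alpha$ relative to $\min\{x,y\}$ and $\max\{x,y\}$, which is routine.
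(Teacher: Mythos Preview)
Your proof is correct and follows essentially the same approach as the paper: both reduce to a simple compression, split the pairwise sum into the unchanged terms, the strictly decreasing $(k,\ell)$-term, and the cross terms, and then show each cross term is nonincreasing via a case analysis on whether $\alpha_i$ lies between the two moving coordinates. Your packaging of that case analysis as the identity $|x-\alpha|+|y-\alpha|=\max\{|x-y|,|x+y-2\alpha|\}$ is a clean way to express what the paper does directly, but the underlying argument is the same.
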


\begin{proof}
By transitivity it suffices to prove this in the case of a simple compression, so assume that, for some $k,\ell$, say with $\alpha'_k\leq \alpha'_{\ell}$, we have $\alpha'_k+\alpha'_{\ell}=\alpha_k+\alpha_{\ell}$, $|\alpha'_k-\alpha'_{\ell}|<|\alpha_k-\alpha_{\ell}|$, and $\alpha'_i=\alpha_i$ for all $i\notin\{k,\ell\}$.  For any $t\in \R$, if $t$ does not lie in the open interval $(\alpha'_k,\alpha'_{\ell})$ then \[ |t-\alpha'_k|+|t-\alpha'_{\ell}|=|2t-(\alpha'_{k}+\alpha'_{\ell})|=|2t-(\alpha_k+\alpha_{\ell})|\leq |t-\alpha_k|+|t-\alpha_{\ell}|;\] on the other hand, if $t$ does lie in $(\alpha'_k,\alpha'_{\ell})$ then \[ |t-\alpha'_k|+|t-\alpha'_{\ell}|=|\alpha'_k-\alpha'_{\ell}|<|\alpha_k-\alpha_{\ell}|\leq |\alpha_k-t|+|t-\alpha_{\ell}|.\]  So in fact $|t-\alpha'_{k}|+|t-\alpha'_{\ell}|\leq |t-\alpha_k|+|t-\alpha_{\ell}|$ for all $t\in \R$.  Applying this with $t=\alpha_{i}$ for $i\notin \{k,\ell\}$, we see that sum of just those terms in $\sum_{i<j}|\alpha'_i-\alpha'_j|$ for which $\{i,j\}\neq\{k,\ell\}$ is bounded above by the corresponding sum in $\sum_{i<j}|\alpha_i-\alpha_j|$.  So since $|\alpha'_k-\alpha'_{\ell}|<|\alpha_k-\alpha_{\ell}|$ the result follows.
\end{proof}

Here then is the main technical ingredient in the proof of Theorem \ref{basistheorem}.

\begin{prop}\label{reduction}
Let $\mathcal{M}=(M,\vec{\xi},\vec{\eta})$ be a labeled basis change matrix such that $M$ is a $d\times d$ unitriangular matrix, and assume that $\Gamma$ is discrete.  Then, by a sequence of operations of types (A1),(A2),(A3),(A4),(A5), $\mathcal{M}$ may be converted to a labeled basis change matrix of the form $\mathcal{M}'=(I,\vec{\xi}',\vec{\eta}')$ where $I$ is the $d\times d$ identity, and the tuple $(\eta'_1-\xi'_1,\ldots,\eta'_d-\xi'_d)$ either is equal to or is a compression of $(\eta_1-\xi_1,\ldots,\eta_d-\xi_d)$.
\end{prop}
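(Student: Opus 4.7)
The plan is to iteratively reduce $M$ to the identity by clearing off-diagonal entries, introducing compressions only when forced to do so. I would organize the argument as a double induction: an outer loop on the misalignment $m(\mathcal{M})=\sum_{i<j}|\alpha_i-\alpha_j|$ where $\alpha_k:=\eta_k-\xi_k$ (by Proposition \ref{compmis}, compressions strictly decrease this quantity, and since $\Gamma$ is discrete, $m$ takes only finitely many values in the bounded interval below the starting value), and an inner loop on the number of nonzero terms in the off-diagonal part of $M$.

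The central observation is that any off-diagonal entry $M_{ij}$ (with $i<j$, since $M$ is unitriangular) decomposes into three parts according to the degrees $g\in\Gamma$ of its monomial summands: a \textbf{high} part with $g\geq\xi_i-\xi_j$, a \textbf{middle} part with $g\in(\eta_i-\eta_j,\xi_i-\xi_j)$, and a \textbf{low} part with $g\leq\eta_i-\eta_j$. The high part can be eliminated by (A4), since its $\nu_{\uparrow}$ is at least $\xi_i-\xi_j$; the low part by (A3), since its $\nu^{\downarrow}$ is at most $\eta_i-\eta_j$; and both of these operations preserve $\vec{\xi}$ and $\vec{\eta}$. When the middle part is empty---automatic in particular whenever $\alpha_i\geq\alpha_j$, since then the open interval $(\eta_i-\eta_j,\xi_i-\xi_j)$ is itself empty---the entire entry $M_{ij}$ can be zeroed out without disturbing the $\alpha$-tuple. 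In the inner induction I would clear all such ``easy'' entries, processing columns from right to left so that (A3) applied to clear $M_{ij}$ only affects the column $j$ currently being processed and never re-contaminates a previously cleared column.

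The hard part will be resolving an obstinate entry $M_{ij}$ whose middle part is nonempty: the nonemptiness of $(\eta_i-\eta_j,\xi_i-\xi_j)$ forces $\alpha_j>\alpha_i$, precisely the configuration in which a compression at the pair $(i,j)$ is possible. To handle this I would invoke an (A1) row swap of rows $i$ and $j$, which exchanges $\eta_i\leftrightarrow\eta_j$ (thus preserving $\alpha_i+\alpha_j$) at the cost of destroying unitriangularity, and then restore $M$ to the identity using the $(\vec\xi,\vec\eta)$-preserving operations (A3)--(A5) (possibly together with (A2), used in paired fashion so that the net shifts cancel). The key technical challenge, which I expect to be the main obstacle, is specifying a restoration recipe whose cumulative effect on the differences is precisely a simple compression at coordinates $(i,j)$, with all other $\alpha_k$ left unchanged. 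The asymmetric valuation constraints built into (A5)---namely $\nu_{\uparrow}(\lambda)>\xi_j-\xi_i$ and $\nu_{\uparrow}(\mu)\geq\xi_i-\xi_j$---appear tailored to absorbing the ``off-triangular pollution'' introduced by the swap in a manner compatible with the $\xi$-ordering; verifying that the right choice of block makes the post-swap matrix reducible to identity by these conservative moves is the crux.

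Termination then follows directly: each invocation of the hard case strictly reduces $m(\mathcal{M})$ by a positive amount drawn from a discrete set, so only finitely many compressions are needed, and between them only $\vec{\xi},\vec{\eta}$-preserving operations are used. The final difference tuple $(\alpha'_1,\ldots,\alpha'_d)$ is therefore either equal to $(\alpha_1,\ldots,\alpha_d)$ (if no compression was ever needed) or obtained from it by a finite composition of simple compressions---itself a compression by definition---yielding the advertised conclusion.
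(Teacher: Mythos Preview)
Your broad strategy---decompose entries into high, middle, and low parts; clear high and low via (A4) and (A3); handle a surviving middle part with a row swap meant to yield a compression; and terminate via discreteness of the misalignment values---matches the paper's approach. But the step you flag as ``the crux'' contains the entire difficulty, and two concrete points remain unresolved.

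First, a bare (A1) swap of rows $i$ and $j$ does \emph{not} in general produce a compression. Writing $a=\eta_j-\eta_i$ and $b=\xi_i-\xi_j$, the swap sends $\alpha_j-\alpha_i=a+b$ to $\alpha'_j-\alpha'_i=b-a$, and $|b-a|<|a+b|$ fails whenever $ab\le 0$. For instance $a=-1,\,b=2$ gives a nonempty middle interval $(\eta_i-\eta_j,\xi_i-\xi_j)=(1,2)$ yet the swap \emph{increases} the gap from $1$ to $3$. The paper obtains the compression by combining the swap with paired (A2)-shifts of $\xi_i,\xi_j$ by $\mp g$, where $g=\nu_{\uparrow}(f_0)$ is the lowest exponent actually present in the middle piece; the membership $g\in(\eta_i-\eta_j,\xi_i-\xi_j)$ is precisely what forces the new $\alpha$'s to lie strictly between the old ones. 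Your ``paired (A2)'' remark points in the right direction, but the shift value is not free---it is dictated by the entry.

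Second, your double-induction framework does not accommodate the restoration of unitriangularity after the swap. The paper argues by induction on the dimension $d$: one first reduces (via the hypothesis applied to the lower $(d{-}1)\times(d{-}1)$ block) to matrices whose only off-diagonal entries lie in row $1$, and then clears those left to right. In the hard case an explicit (A5) move with
\[
Z=\begin{pmatrix}-a(1-r)&T^gr^{N+1}\\T^{-g}&a^{-1}(1+r+\cdots+r^N)\end{pmatrix}
\]
(using $(1-r)(1+\cdots+r^N)=1-r^{N+1}$, so $\det Z=-1$) precedes the swap and the (A2)-rescalings. The outcome is again unitriangular, but with new off-diagonal entries now in row $k{+}1$; these are re-cleared by a second appeal to the dimension hypothesis on the lower $(d{-}k)\times(d{-}k)$ block. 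That recursive call may introduce further compressions, which is why the statement allows ``equal to or a compression of'' rather than ``a single simple compression''. Your inner loop on the number of nonzero entries has no analogue of this recursive mechanism.
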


\begin{proof}
The proof is by induction on the dimension, so we assume that the result holds for all unitriangular $k\times k$ labeled basis change matrices whenever $k<d$.  

If $M$ is $d\times d$ and unitriangular and $\ell\in \{2,\ldots,d\}$, let $M(\ell)$ denote the lower right $(d-\ell+1)\times (d-\ell+1)$ block of $M$ (so the entries of $M(\ell)$ are the $M_{ij}$ with both $i,j\geq \ell$).  Then $M(\ell)$ is still unitriangular.  
Given an operation on labeled $(d-\ell+1)\times(d-\ell+1)$ basis change matrices of type (A1)-(A5), that operation extends in obvious fashion to an operation, which is again of type (A1)-(A5), on labeled $d\times d$ basis change matrices: if the original operation is given at the level of matrices by left or right multiplication by a $(d-\ell+1)\times (d-\ell+1)$ matrix $E$ then the extended operation acts by left or right multiplication by the  block matrix $\left(\begin{array}{cc} I & 0\\ 0 & E\end{array}\right)$, and the  operation on labels $\vec{\xi},\vec{\eta}$ extends trivially.  The extended operation has the same effect on the lower right block $M(\ell)$ as did the original operation; it has no effect on the upper left $(\ell-1)\times (\ell-1)$ block of $M$; and while it might affect the other blocks for general matrices $M$ it preserves the condition that, in the lower left block, $M_{ij}=0$ for all $i\geq \ell$ and $j<\ell$.

With $\mathcal{M}=(M,\vec{\xi},\vec{\eta})$ as in the statement of the proposition, we may apply the inductive hypothesis to the $(d-1)\times (d-1)$ submatrix $M(2)$ with labels $(\xi_2,\ldots,\xi_d)$ and $(\eta_2,\ldots,\eta_d)$, and extend the sequence of operations to $M$ in the manner described in the previous paragraph.  This reduces us to the case that $M(2)$ is the identity matrix, so that $M$ has the form: \begin{equation}\label{1cleared} M=\left(\begin{array}{ccccc}1 & f_2 & f_3 & \cdots & f_d \\ 0 & 1 & 0 & \cdots & 0 \\ 0 & 0 & 1 & \cdots  & 0 \\ \vdots & \vdots & \vdots & \ddots & \vdots \\ 0 & 0 & 0 & \cdots & 1\end{array}\right),\end{equation} and our task now is to eliminate the entries $f_i$.     For $k\in\{1,\ldots,d\}$, let us say that a $d\times d$-matrix $M$ is $k$-\textbf{cleared} if $M_{ij}=\delta_{ij}$ (Kronecker delta) whenever either $i\geq 2$ or   $j\leq k$.  Thus a matrix as in (\ref{1cleared}) is $1$-cleared, and it is $k$-cleared iff $f_j=0$ for all $j\leq k$.  

The main steps in our reduction procedure are contained in the proof of the following lemma:

\begin{lemma}\label{mainbasisstep}
Let $\mathcal{M}=(M,\vec{\xi},\vec{\eta})$ be a labeled basis change matrix such that the $d\times d$ matrix $M$ is $k$-cleared, for some $k\in \{1,\ldots,d-1\}$.  Then there is a sequence of operations of type (A1)-(A5)  which converts $\mathcal{M}$ to a labeled basis change matrix $\mathcal{M}'=(M',\vec{\xi}',\vec{\eta}')$ such that either:
\begin{itemize} \item $M'$ is $(k+1)$-cleared, and $\vec{\xi}'=\vec{\xi}$ and $\vec{\eta}'=\vec{\eta}$; or \item $M'$ is  $k$-cleared, and  $(\eta'_1-\xi'_1,\ldots,\eta'_d-\xi'_d)$ is a compression of $(\eta_1-\xi_1,\ldots,\eta_d-\xi_d)$.
\end{itemize}
\end{lemma}

We now complete the proof of Proposition \ref{reduction} (and hence, as noted before the proposition, of Theorem \ref{basistheorem}) assuming Lemma \ref{mainbasisstep}, deferring the proof of the latter to the end of the section.

The main remaining observation is that, due to the discreteness of $\Gamma$, if we iteratively apply Lemma \ref{mainbasisstep} then there are only finitely many values that the misalignment $m$ can take during the iteration.  To see this, for any labeled basis change matrix $\mathcal{M}=(M,\vec{\xi},\vec{\eta})$ let \[ \mathcal{S}(\mathcal{M})=\{(\eta_i-\xi_j)-(\eta_k-\xi_{\ell})+g|1\leq i,j,k,\ell\leq d,\,g\in \Gamma\}.\]  Thus $\mathcal{S}(\mathcal{M})$ is a finite union of cosets of the discrete subgroup $\Gamma<\R$, and so $\mathcal{S}(\mathcal{M})$ is a discrete subset of $\R$.   Among the operations (A1)-(A5), the only ones that modify the labels   $\vec{\xi},\vec{\eta}$ are (A1), which swaps two of the $\eta_i$, and (A2), which subtracts an element of $\Gamma$ from one of the $\xi_i$. Since $\Gamma$ is an additive subgroup of $\R$ it follows that $\mathcal{S}(\mathcal{M}')=\mathcal{S}(\mathcal{M})$ whenever $\mathcal{M}'$ is obtained from $\mathcal{M}$ by an operation from among (A1)-(A5), and hence also whenever $\mathcal{M}'$ is obtained from $\mathcal{M}$ by a sequence of such operations.

Given $\mathcal{M}=(M,\vec{\xi},\vec{\eta})$ as in the statement of the proposition, as noted already we can apply the inductive hypothesis to $M(2)$ to reduce to the case that $M$ is $1$-cleared, setting us up to apply Lemma \ref{mainbasisstep}.  Let $\mathcal{M}^{(0)}=\mathcal{M}$, and, assuming inductively that $\mathcal{M}^{(i)}=(M^{(i)},\vec{\xi}^{(i)},\vec{\eta}^{(i)})$ is $k_i$-cleared with $k_i<d$, let $\mathcal{M}^{(i+1)}$ result from applying Lemma \ref{mainbasisstep} to $\mathcal{M}^{(i)}$. By the preceding paragraph, $\mathcal{S}(\mathcal{M}^{(i)})=\mathcal{S}(\mathcal{M}^{(0)})$ for all $i$. Also, by Proposition \ref{compmis}, the misalignments $m(\mathcal{M}^{(i)})$ obey $m(\mathcal{M}^{(i+1)})\leq m(\mathcal{M}^{(i)})$, with equality only when $M^{(i+1)}$ is $(k_i+1)$-cleared.  Now the misalignments $m(\mathcal{M}^{(i)})$ form a nonincreasing sequence within the set of finite sums of absolute values of elements of the discrete set $\mathcal{S}(\mathcal{M}^{(0)})$; this set of sums is itself a discrete set of nonnegative numbers.  But any nonincreasing sequence in a discrete set of nonnegative numbers must eventually stabilize: there is $i_0$ such that $m(\mathcal{M}^{(i+1)})=m(\mathcal{M}^{i})$ whenever $i\geq i_0$.  After this point in the iteration, the first alternative in Lemma \ref{mainbasisstep} must always hold, and so since $M^{(i_0)}$ is $k_{i_0}$-cleared  it follows that $M^{(i_0+d-k_{i_0})}$ is $d$-cleared, \emph{i.e.} is equal to the identity matrix.  Thus the proposition holds with $\mathcal{M}'=\mathcal{M}^{(i_0+d-k_{i_0})}$, completing the proof modulo Lemma \ref{mainbasisstep}.
\end{proof}

\begin{proof}[Proof of Lemma \ref{mainbasisstep}]
Our input is a labeled basis change matrix $\mathcal{M}=(M,\vec{\xi},\vec{\eta})$ such that $M_{ij}=\delta_{ij}$ for all $i\geq 2$, and $M_{1j}=\delta_{1j}$ for all $j\leq k$, so that the first (possibly) nonzero entry off of the diagonal in the first row of $M$ is $f:=M_{1,k+1}$.  Our task is to use operations (A1)-(A5)  either to eliminate $f$ while preserving $\vec{\xi}$ and $\vec{\eta}$, or else to preserve the property of being $k$-cleared while compressing the vector $\vec{\eta}-\vec{\xi}$. 

By definition, $f$ is an element of the group algebra $\Lambda=\kappa[\Gamma]$, so takes the form $f=\sum_{g\in \Gamma}a_gT^g$ for some $a_g\in \kappa$ (only finitely many of which are nonzero).  Let us split this sum up as \[ f=\underbrace{\sum_{g\leq \eta_1-\eta_{k+1}}a_gT^g}_{f_-}+\underbrace{\sum_{\eta_1-\eta_{k+1}<g<\xi_1-\xi_{k+1}}a_gT^g}_{f_0} + \underbrace{\sum_{g\geq \xi_1-\xi_{k+1}}a_gT^g}_{f_+}.\]  (If $\eta_{1}-\eta_{k+1}\geq \xi_{1}-\xi_{k+1}$ such a decomposition is not unique, but we  choose one.)  Thus $\nu^{\downarrow}(f_-)\leq \eta_1-\eta_{k+1}$, and $\nu_{\uparrow}(f_+)\geq \xi_1-\xi_{k+1}$.  So we can apply operation (A3), subtracting $f_-$ times row $k+1$ from row $1$; since row $k+1$ is the $(k+1)$th standard basis vector this has the sole effect of changing $M_{1,k+1}$ from $f$ to $f_0+f_+$.  Then apply operation (A4), subtracting $f_+$ times column $1$ from column $k+1$, after which $M_{1,k+1}$ will be equal to $f_0$ but the rest of the matrix $M$, and also the labels $\vec{\xi}$ and $\vec{\eta}$, will be unchanged from what they were at the start of the proof.  By construction, we have \begin{equation}\label{nuf0} \nu_{\uparrow}(f_0)>\eta_1-\eta_{k+1}\mbox{  and }\nu^{\downarrow}(f_0)<\xi_1-\xi_{k+1}.\end{equation}

If $f_0=0$ (as will automatically be the case if $\eta_{1}-\eta_{k+1}\geq \xi_{1}-\xi_{k+1}$) then we are done: the matrix is now $(k+1)$-cleared and the labels $\vec{\xi}$ and $\vec{\eta}$ are unchanged, so the first alternative in the conclusion of the lemma holds.

So suppose for the rest of the proof that $f_0\neq 0$.  The lowest degree term in $f_0$ then takes the form $aT^g$ where $a\in \kappa^{\times}$ and $\eta_1-\eta_{k+1}<g< \xi_1-\xi_{k+1}$; let us factor this out and express $f_0$ in the form \[ f_0=aT^g(1-r) \mbox{ where }a\in\kappa^{\times},\,\eta_1-\eta_{k+1}<g< \xi_1-\xi_{k+1}\,\,\nu_{\uparrow}(r)>0.\]  (This includes the possibility that $r=0$, as $\nu_{\uparrow}(0)=\infty$.)   Since $\nu_{\uparrow}$ take products to sums, and since $\nu_{\uparrow}(r)>0$, we may choose $N\in \N$ such that \begin{equation}\label{nupower} \nu_{\uparrow}(r^{N+1})>\xi_1-\xi_{k+1}-g. \end{equation}  We shall apply (A5) with $i=1$, $j=k+1$, and \[ Z=\left(\begin{array}{cc} -a(1-r) & T^gr^{N+1}\\ T^{-g} & a^{-1}(1+r+\cdots+r^{N})\end{array}\right).\]  Note that this $Z$ is invertible over $\Lambda$: the relation $(1-r)(1+r+\cdots+r^{N})=1-r^{N+1}$ implies that $\det Z=-1$.  Also, $\nu_{\uparrow}(T^{-g})=-g>\xi_{k+1}-\xi_1$ and $\nu_{\uparrow}(T^gr^{N+1})=g+\nu_{\uparrow}(r^{N+1})>\xi_1-\xi_{k+1}$ so the conditions on $Z$ required in (A5) are indeed satisfied.  Applying this transformation leaves all of the data in $\mathcal{M}$ unchanged except that the principal submatrix of $M$ corresponding to rows and columns $1$ and $k+1$ becomes \[ \left(\begin{array}{cc} 0 & T^gr^{N+1}+T^g(1-r^{N+1}) \\ T^{-g} & a^{-1}(1+r+\cdots+r^N)\end{array}\right)=\left(\begin{array}{cc} 0 & T^g \\ T^{-g} & a^{-1}(1+r+\cdots+r^N)\end{array}\right).\]  
  Next, apply (A1) (swapping rows $1$ and $k+1$), and then apply (A2), multiplying column $1$ by $T^g$ and column $k+1$ by $T^{-g}$. The result of this sequence of operations is a labeled basis change matrix $\mathcal{M}''=(M'',\vec{\xi}'',\vec{\eta}'')$ where $M''$ is a unitriangular matrix of form 
   \begin{equation}\label{bigmatrix} \left(\begin{array}{cccccc} 
	1 & \cdots & a^{-1}T^{-g}(1+r+\cdots+r^N) & 0& \cdots & 0 \\
	0 & 1 & 0 & \cdots & \cdots & 0\\ 
	\vdots & & & & & \vdots \\
	0 & \cdots & 1 & \star & \cdots & \star \\ 
	\vdots & & & \ddots & & \vdots \\
	\vdots & & & & \ddots & \vdots \\
	0 & \cdots & \cdots & \cdots & 0 & 1\end{array}\right) \end{equation} and where the labels $\vec{\xi}'',\vec{\eta}''$ coincide with $\vec{\xi},\vec{\eta}$ except that \begin{equation}\label{primeprime} \xi''_{1}=\xi_1-g,\quad \xi''_{k+1}=\xi_{k+1}+g,\quad \eta''_{1}=\eta_{k+1},\quad \eta''_{k+1}=\eta_1.\end{equation}  
In particular, we have \begin{equation}\label{sumsame}   
(\eta''_1-\xi''_1)+(\eta''_{k+1}-\xi''_{k+1})=(\eta_1-\xi_1)+(\eta_{k+1}-\xi_{k+1}).
\end{equation}  Also, \[ (\eta''_1-\xi''_1)-(\eta_1-\xi_1)=(\eta_{k+1}-\eta_1)+g>0 \] and \[ (\eta''_1-\xi''_1)-(\eta_{k+1}-\xi_{k+1})=\xi_{k+1}-\xi_1+g<0,\] where we have twice used (\ref{nuf0}), as $f_0=aT^g(1-r)$ has $g=\nu_{\uparrow}(f_0)\leq \nu^{\downarrow}(f_0)$.

By (\ref{sumsame}), since $\eta''_1-\xi''_1$ lies strictly between $\eta_1-\xi_1$ and $\eta_{k+1}-\xi_{k+1}$, we see that $\eta''_{k+1}-\xi''_{k+1}$ also lies strictly between $\eta_1-\xi_1$ and $\eta_{k+1}-\xi_{k+1}$ and hence that $|(\eta''_{1}-\xi''_1)-(\eta''_{k+1}-\xi''_{k+1})|<|(\eta_1-\xi_1)-(\eta_{k+1}-\xi_{k+1})|$.  Thus $\vec{\eta}''-\vec{\xi}''$ is a simple compression of $\vec{\eta}-\vec{\xi}$. 

Now $\mathcal{M}''$ is, perhaps, not $k$-cleared because our row swap may have introduced nonzero terms above the diagonal in row $k+1$.  However, we can apply the inductive hypothesis to the $(d-k)\times (d-k)$ lower right block $M''(k+1)$ with labels $\vec{\xi}''(k+1)=(\xi''_{k+1},\ldots,\xi''_d)$ and $\vec{\eta}''(k+1)=(\eta''_1,\ldots,\eta''_d)$ to obtain operations of type (A1)-(A5) that convert $M''(k+1)$ to the $(d-k)\times(d-k)$ identity matrix, with new labels $\vec{\xi}'(k+1)=(\xi'_{k+1},\ldots,\xi'_d),\vec{\eta}'(k+1)=(\eta'_{k+1},\ldots,\eta'_d)$ such that $\vec{\eta}'(k+1)-\vec{\xi}'(k+1)$ either is equal to or is a compression of $\vec{\eta}''(k+1)-\vec{\xi}''(k+1)$.  When these operations on the lower right block $M''(k+1)$ are extended in the natural way to operations on the entire matrix $M''$, they do not affect the fact that columns $1$ through $k$ and rows $2$ through $k$ of $M''$ coincide with the corresponding columns and rows of the identity matrix, while they convert the lower $(d-k)\times (d-k)$ block to the identity, and so the resulting matrix $M'$ is still $k$-cleared.  Moreover the labels $\vec{\xi}''$ and $\vec{\eta}''$ are transformed, respectively, to $\vec{\xi}':=(\xi''_1,\ldots,\xi''_{k},\xi'_{k+1},\ldots,\xi'_d)$ and $\vec{\eta}'=
(\eta''_1,\ldots,\eta''_k,\eta'_{k+1},\ldots,\eta'_d)$.  The fact that $\vec{\eta}'(k+1)-\vec{\xi}'(k+1)$ either is equal to or is a compression of $\vec{\eta}''(k+1)-\vec{\xi}''(k+1)$ implies that $\vec{\eta'}-\vec{\xi}'$ either is equal to or is a compression of $\vec{\eta}''-\vec{\xi}''$.  So since $\vec{\eta}''-\vec{\xi}''$ is a simple compression of $\vec{\eta}-\vec{\xi}$, it follows that $\vec{\eta}'-\vec{\xi}'$ is a compression  of $\vec{\eta}-\vec{\xi}$.  Our new labeled basis change matrix $\mathcal{M}'=(M',\vec{\xi}',\vec{\eta}')$ thus satisfies the second alternative of the conclusion of Lemma \ref{mainbasisstep}, completing the proof.\end{proof}

\section{Poincar\'e-Novikov structures}\label{pnsect}

We now introduce a new type of structure that gives rise to filtered matched pairs that satisfy a version of Ponicar\'e duality.  We continue to work with respect to a fixed field $\kappa$ and finitely generated subgroup $\Gamma<\R$.

\begin{dfn}\label{pnstr}
A weak (resp. strong) $n$-\textbf{Poincar\'e-Novikov structure} $\mathcal{N}$ consists of the following data: \begin{itemize} \item A graded $\Lambda$-module $H_*=\oplus_{k\in \Z}H_k$ with each $H_k$ finitely generated over $\Lambda$, and maps $\mathcal{D}_k\co H_k\to {}^{\vee}\!H_{n-k}$ that comprise the data of a weak (resp. strong) $n$-PD structure (see Definition \ref{pdstr}); 
\item for each $k\in \Z$, a normed $\Lambda_{\uparrow}$-space $(V_k,\rho_k)$; and
\item for each $k\in \Z$, a homomorphism of $\Lambda$-modules $S_k\co H_k\to V_k$ such that $1_{\Lambda_{\uparrow}}\otimes S_k\co \Lambda_{\uparrow}\otimes_{\Lambda}H_k\to V_k$ is an isomorphism of $\Lambda_{\uparrow}$-vector spaces.
\end{itemize}
\end{dfn}

If $\mathcal{N}$ is as in Definition \ref{pnstr}, for each $k\in \Z$ we obtain a filtered matched pair $\mathcal{P}(\mathcal{N})_k$ of the form \begin{equation}\label{pnkdef} \xymatrix{ & & ({}^{\vee}(V_{n-k}),{}^{\vee}\rho_{n-k}) \\ \mathcal{P}(\mathcal{N})_k  \ar@{}[r]^(.45){}="a"^(.7){}="b" \ar@{=} "a";"b"    & H_k\ar[ru]^{\widetilde{S}_k} \ar[rd]_{S_k} & \\ & & (V_k,\rho_k)} \end{equation} where ${}^{\vee}\rho_{n-k}$ is defined from $\rho_{n-k}$ as in Section \ref{fmpdual} and where $\widetilde{S}_k=\delta^{\downarrow}(S_{n-k})\circ \mathcal{D}_k$. Thus, in view of Proposition \ref{deltachar}, the map $\widetilde{S}_k\co H_k\to {}^{\vee}\!(V_{n-k})$ is characterized by the property that, for all $x\in H_k$ and $y\in H_{n-k}$, we have \begin{equation}\label{tildesk}  (\widetilde{S}_kx)(S_{n-k}y)=(\mathcal{D}_kx)(y).\end{equation}

We now introduce a barcode associated to a Poincar\'e-Novikov structure. Elements of barcodes in this paper will be taken to be intervals modulo $\Gamma$-translation, expressed as follows:

\begin{notation}\label{intnot}
If $I$ is a nonempty interval in $\R$ (of any type $[a,b),(a,b],[a,b],(a,b)$, allowing $b=\infty$ in the first and fourth cases and $a=-\infty$ in the second and fourth cases), we denote by $I^{\Gamma}$ the equivalence class of $I$ under the equivalence relation $\sim$ on the collection $\mathcal{I}$ of all   intervals in $\R$ according to which, for $I,J\in\mathcal{I}$, $I\sim J$ if and only if there is $g\in \Gamma$ such that the translation $x\mapsto x+g$ maps $I$ bijectively to $J$.
\end{notation}

 Our conventions in the following definition are motivated by the outcome of the calculation in Section \ref{mabsect}; see also Definition \ref{fullbar}.

\begin{dfn}\label{pnbar}
Let $\mathcal{N}$ be a weak $n$-Poincar\'e-Novikov structure such that each $\mathcal{P}(\mathcal{N})_k$ admits a doubly-orthogonal basis.  The \textbf{essential barcode} of $\mathcal{N}$ is the collection $\{\mathcal{B}_k(\mathcal{N})|k\in\mathbb{Z}\}$  of multisets of intervals modulo $\Gamma$-translation, where $\mathcal{B}_k(\mathcal{N})$ consists of the elements $[a,a+\ell]^{\Gamma}$ for each $([a],\ell)\in \Sigma(\mathcal{P}(\mathcal{N})_k)$ with $\ell\geq 0$, together with the elements $(a+\ell,a)^{\Gamma}$ for each $([a],\ell)\in \Sigma(\mathcal{P}(\mathcal{N})_{k+1})$ with $\ell<0$.  (Here $\mathcal{P}(\mathcal{N})_k$ is defined in (\ref{pnkdef}).)\end{dfn}

Under weaker hypotheses we can adapt the gaps of Definition \ref{gapdfn} as follows:
\begin{dfn}\label{pngap}
If $\mathcal{N}$ is a weak $n$-Poincar\'e-Novikov structure with modules $H_k$ as in Definition \ref{pnstr} having $d_k=\dim_{\mathcal{Q}(\Lambda)}\mathcal{Q}(\Lambda)\otimes_{\Lambda}H_k$, for any $k\in \Z$ and $i\in \{1,\ldots,d_k\}$ we define \[ \mathcal{G}_{k,i}(\mathcal{N})=G_i(\mathcal{P}(\mathcal{N})_k).\]
\end{dfn}

It is straightforward to adapt the stability results for filtered matched pairs from Section \ref{stabsec} to prove analogous results for Poincar\'e-Novikov structures.  First we define the appropriate notion of approximate isomorphism:

\begin{dfn}\label{pntmorph}  Let $\mathcal{N}$ and $\mathcal{N}'$ be two weak $n$-Poincar\'e-Novikov structures, having the same modules $H_k$ and duality morphisms $\mathcal{D}_k\co H_k\to {}^{\vee}\!H_{n-k}$ and (possibly) different normed $\Lambda_{\uparrow}$-spaces $(V_k,\rho_k)$, $(V'_k,\rho'_k)$ and $\Lambda$-module homomorphisms $S_k\co H_k\to V_k$ and $S'_k\co H_k\to V'_k$.  If $t\geq 0$, a $t$-\textbf{morphism} from $\mathcal{N}$ to $\mathcal{N}'$ consists of, for each $k$,  $\Lambda_{\uparrow}$-linear isomorphisms $a_k\co V_k\to V'_k$ such that $S'_k=a_k\circ S_k$ and, for all $x\in V_k$, $|\rho'_k(a_kx)-\rho_k(x)|\leq t$.
\end{dfn}

\begin{prop}\label{pntofmp}
If there exists a $t$-morphism between the weak $n$-Poincar\'e-Novikov structures $\mathcal{N}$ and $\mathcal{N}'$, then for each $k$ there is a $t$-morphism (in the sense of Definition \ref{tmorphdfn}) between the corresponding filtered matched pairs $\mathcal{P}(\mathcal{N})_k$ and $\mathcal{P}(\mathcal{N}')_k$.
\end{prop}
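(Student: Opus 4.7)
The plan is to construct the required $t$-morphism of filtered matched pairs directly from the data of the $t$-morphism of Poincar\'e--Novikov structures. Given the isomorphisms $a_k\co V_k\to V'_k$ from Definition \ref{pntmorph}, I propose to define, for each $k$,
\[ \alpha_{\uparrow}=a_k\co V_k\to V'_k,\qquad \alpha^{\downarrow}=\bigl({}^{\vee}\!a_{n-k}\bigr)^{-1}={}^{\vee}\!(a_{n-k}^{-1})\co {}^{\vee}\!V_{n-k}\to {}^{\vee}\!V'_{n-k}.\]
Both are isomorphisms, and the second is a morphism of $\Lambda^{\downarrow}$-vector spaces by the discussion of duals in Section \ref{dualsec}. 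Set-theoretically, $\alpha^{\downarrow}$ sends $\eta\in {}^{\vee}\!V_{n-k}$ to $\eta\circ a_{n-k}^{-1}$.

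The intertwining $S'_k=\alpha_{\uparrow}\circ S_k$ is immediate from the definition of a $t$-morphism of Poincar\'e--Novikov structures. For the dual side, I need $\widetilde{S}'_k=\alpha^{\downarrow}\circ\widetilde{S}_k$, which by the formulas $\widetilde{S}_k=\delta^{\downarrow}(S_{n-k})\circ\mathcal{D}_k$ and $\widetilde{S}'_k=\delta^{\downarrow}(S'_{n-k})\circ\mathcal{D}_k$ reduces to checking that $\delta^{\downarrow}(S'_{n-k})=\alpha^{\downarrow}\circ\delta^{\downarrow}(S_{n-k})$. I would verify this by appealing to the uniqueness clause of Proposition \ref{deltachar}: for any $\zeta\in {}^{\vee}\!H_{n-k}$ and any $h\in H_{n-k}$, one computes
\[ \bigl(\alpha^{\downarrow}\delta^{\downarrow}(S_{n-k})(\zeta)\bigr)(S'_{n-k}h)=\bigl(\delta^{\downarrow}(S_{n-k})(\zeta)\bigr)(a_{n-k}^{-1}S'_{n-k}h)=\bigl(\delta^{\downarrow}(S_{n-k})(\zeta)\bigr)(S_{n-k}h)=\zeta(h), \]
using $S'_{n-k}=a_{n-k}\circ S_{n-k}$ and Proposition \ref{deltachar} once more. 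By uniqueness this identifies $\alpha^{\downarrow}\delta^{\downarrow}(S_{n-k})(\zeta)$ with $\delta^{\downarrow}(S'_{n-k})(\zeta)$.

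The remaining ingredient is the filtration estimate. The bound $|\rho'_k(\alpha_{\uparrow}x)-\rho_k(x)|\leq t$ holds by hypothesis. For the descending filtrations, given nonzero $\eta\in {}^{\vee}\!V_{n-k}$, I would compute directly from the definition of ${}^{\vee}\!\rho_{n-k}$ in Section \ref{fmpdual}: substituting $x'=a_{n-k}x$,
\[ {}^{\vee}\!\rho'_{n-k}(\alpha^{\downarrow}\eta)=\inf_{0\neq x'\in V'_{n-k}}\!\bigl(\nu_{\uparrow}(\eta(a_{n-k}^{-1}x'))+\rho'_{n-k}(x')\bigr)=\inf_{0\neq x\in V_{n-k}}\!\bigl(\nu_{\uparrow}(\eta(x))+\rho'_{n-k}(a_{n-k}x)\bigr). \]
Applying $|\rho'_{n-k}(a_{n-k}x)-\rho_{n-k}(x)|\leq t$ inside the infimum gives $|{}^{\vee}\!\rho'_{n-k}(\alpha^{\downarrow}\eta)-{}^{\vee}\!\rho_{n-k}(\eta)|\leq t$. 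Since all the verifications reduce to bookkeeping with the definitions and the characterization in Proposition \ref{deltachar}, there is no substantive obstacle; the mildly delicate step is tracking which side of the conjugation and which direction of duality are in play when writing down $\alpha^{\downarrow}$, which is why I prefer to express it as $\eta\mapsto \eta\circ a_{n-k}^{-1}$.
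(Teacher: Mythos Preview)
Your proof is correct and follows essentially the same approach as the paper: both take $\alpha_{\uparrow}=a_k$ and $\alpha^{\downarrow}={}^{\vee}\!a_{n-k}^{-1}$, verify the intertwining via the characterization of $\widetilde{S}_k$ (you route through Proposition \ref{deltachar} applied to $\delta^{\downarrow}(S_{n-k})$, the paper uses (\ref{tildesk}) directly, which amounts to the same thing), and then bound the dual filtration by substituting $x'=a_{n-k}x$ inside the infimum defining ${}^{\vee}\!\rho'_{n-k}$.
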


\begin{proof}
For the maps $\alpha_{\uparrow}$ of Definition \ref{tmorphdfn} we may use $a_k$ from Definition \ref{pntmorph}, as by definition $a_k$ obeys $S'_k=a_k\circ S_k$ and $|\rho'_k\circ S'_k-\rho_k\circ S_k|\leq t$.  For the map $\alpha^{\downarrow}\co {}^{\vee}\!V_{n-k}\to {}^{\vee}\!V'_{n-k}$, since $a_{n-k}$ is a vector space isomorphism we may use the map ${}^{\vee}a_{n-k}^{-1}$.  Indeed, if $x\in H_k$ and $y\in H_{n-k}$ then by (\ref{tildesk}) we have $(\tilde{S}_{k}x)(S_{n-k}y)=(\tilde{S}'_kx)(S'_{n-k}y)=(\mathcal{D}_kx)(y)$ and hence \[ ({}^{\vee}a_{n-k}^{-1}\tilde{S}_kx)(S'_{n-k}y)=(\tilde{S}_kx)(a_{n-k}^{-1}S'_{n-k}y)=(\tilde{S}_{k}x)(S_{n-k}y)=(\tilde{S}'_kx)(S'_{n-k}y),\] confirming that ${}^{\vee}a_{n-k}^{-1}\tilde{S}_k=\tilde{S}'_k$ since the image of $S'_{n-k}$ spans $V'_{n-k}$ over $\Lambda_{\uparrow}$.  Moreover if $\zeta\in {}^{\vee}\!V'_{n-k}$ then \[ {}^{\vee}\!\rho_{n-k}({}^{\vee}\!a_{n-k}\eta)=\inf_{0\neq x\in V_{n-k}}\left(\nu_{\uparrow}(\zeta(a_{n-k}x))+\rho_k(x)\right) \] which differs from \[ {}^{\vee}\!\rho'_{n-k}(\eta)=\inf_{0\neq y\in V'_{n-k}}(\nu_{\uparrow}(\zeta(y))+\rho'_k(y)) \] by at most $t$ since $a_{n-k}$ is surjective and $|\rho'_{n-k}\circ a_{n-k}-\rho_{n-k}|\leq t$ by hypothesis.  This shows that $|{}^{\vee}\!\rho'_{n-k}\circ {}^{\vee}\!a_{n-k}^{-1}-{}^{\vee}\!\rho_{n-k}|\leq t$, so the continuity requirement on $\alpha^{\downarrow}={}^{\vee}\!a_{n-k}^{-1}$ is satisfied.
\end{proof}

We can now read off the following stability results.

\begin{cor}\label{pngapstab}
If $\mathcal{N}$ and $\mathcal{N}'$ are weak $n$-Poincar\'e-Novikov structures between which there exists a $t$-morphism, then for all $k$ and $i$ we have $|\mathcal{G}_{k,i}(\mathcal{N})-\mathcal{G}_{k,i}(\mathcal{N}')|\leq 2t$.
\end{cor}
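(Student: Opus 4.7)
The proof is essentially immediate from the results established earlier in the paper; the entire content is that a $t$-morphism at the level of Poincaré--Novikov structures produces a $t$-morphism at the level of the associated filtered matched pairs, after which the gap stability theorem applies.

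The plan is first to invoke Proposition \ref{pntofmp}, which asserts that a $t$-morphism between weak $n$-Poincaré--Novikov structures $\mathcal{N}$ and $\mathcal{N}'$ induces, for each $k \in \mathbb{Z}$, a $t$-morphism (in the sense of Definition \ref{tmorphdfn}) between the filtered matched pairs $\mathcal{P}(\mathcal{N})_k$ and $\mathcal{P}(\mathcal{N}')_k$. Second, I would apply Proposition \ref{gapcts} to this induced $t$-morphism, which yields the bound $|G_i(\mathcal{P}(\mathcal{N})_k) - G_i(\mathcal{P}(\mathcal{N}')_k)| \leq 2t$ for each $i$. Finally, unpacking Definition \ref{pngap}, which sets $\mathcal{G}_{k,i}(\mathcal{N}) = G_i(\mathcal{P}(\mathcal{N})_k)$ and likewise for $\mathcal{N}'$, gives the desired inequality $|\mathcal{G}_{k,i}(\mathcal{N}) - \mathcal{G}_{k,i}(\mathcal{N}')| \leq 2t$.

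There is no real obstacle: all the substantive work has been done in Propositions \ref{pntofmp} and \ref{gapcts}, and the corollary is a one-line composition of these facts with the definition of $\mathcal{G}_{k,i}$. If anything, the one small point worth being explicit about is that the bound holds uniformly in $k$ and $i$ simply because Proposition \ref{pntofmp} produces such a $t$-morphism for every $k$, and Proposition \ref{gapcts} then bounds every gap index $i \in \{1,\ldots,d_k\}$ by the same constant $2t$.
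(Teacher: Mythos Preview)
Your proposal is correct and matches the paper's own proof exactly: the paper writes simply ``This is immediate from Propositions \ref{gapcts} and \ref{pntofmp},'' which is precisely the composition you describe.
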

\begin{proof}
This is immediate from Propositions \ref{gapcts} and \ref{pntofmp}.
\end{proof}

\begin{cor}\label{pnstab}
Assume that $\Gamma$ is discrete and that $\mathcal{N}$ and $\mathcal{N}'$ are two weak $n$-Poincar\'e-Novikov structures between which there exists a $t$-morphism. Then there is a bijection $\sigma\co \cup_k\mathcal{B}_k(\mathcal{N})\to\cup\mathcal{B}_k(\mathcal{N}')$ such that each $[a,b]^{\Gamma}\in \mathcal{B}_k(\mathcal{N})$ has $\sigma([a,b]^{\Gamma})$ equal either to some $[a',b']^{\Gamma}\in\mathcal{B}_k(\mathcal{N}')$ or to some $(b',a')^{\Gamma}\in\mathcal{B}_{k-1}(\mathcal{N}')$ where in either case $|a'-a|\leq t$ and $|b'-b|\leq t$, and similarly each $(a,b)^{\Gamma}\in\mathcal{B}_k(\mathcal{N}')$ has $\sigma((a,b)^{\Gamma})$ equal either to some $(a',b')^{\Gamma}\in\mathcal{B}_k(\mathcal{N}')$ or to some $[b',a']^{\Gamma}\in\mathcal{B}_{k+1}(\mathcal{N}')$ where in either case $|a'-a|\leq t$ and $|b'-b|\leq t$,
\end{cor}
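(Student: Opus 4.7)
The plan is to chain Proposition \ref{pntofmp} with Theorem \ref{stab} to produce strong $t$-matchings between basis spectra in each degree, then translate them into a bijection on essential barcodes via the dictionary of Definition \ref{pnbar}.

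First, I will apply Proposition \ref{pntofmp} to convert the given $t$-morphism $\mathcal{N} \to \mathcal{N}'$ into a $t$-morphism between the filtered matched pairs $\mathcal{P}(\mathcal{N})_k$ and $\mathcal{P}(\mathcal{N}')_k$ for each $k \in \mathbb{Z}$. Since $\Gamma$ is assumed discrete, Theorem \ref{stab} then provides, in each degree, a strong $t$-matching $\sigma_k \co \Sigma(\mathcal{P}(\mathcal{N})_k) \to \Sigma(\mathcal{P}(\mathcal{N}')_k)$.

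Next, I will assemble these into the required bijection $\sigma$. By Definition \ref{pnbar}, the union $\cup_k \mathcal{B}_k(\mathcal{N})$ is canonically parametrized by $\bigsqcup_k \Sigma(\mathcal{P}(\mathcal{N})_k)$: an element $([a],\ell) \in \Sigma(\mathcal{P}(\mathcal{N})_k)$ corresponds to the closed bar $[a,a+\ell]^{\Gamma} \in \mathcal{B}_k(\mathcal{N})$ when $\ell \geq 0$, and to the open bar $(a+\ell,a)^{\Gamma} \in \mathcal{B}_{k-1}(\mathcal{N})$ when $\ell < 0$. These two rules partition the basis-spectrum data bijectively onto the closed and open bars of the essential barcode; call the resulting bijection $\Phi$, and let $\Phi'$ be its primed analogue. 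Then I set $\sigma := \Phi' \circ \bigl(\bigsqcup_k \sigma_k\bigr) \circ \Phi^{-1}$.

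Finally I will verify the endpoint estimates by inspecting the four sign combinations of $\ell$ and $\hat{\ell}$ in the relation $\sigma_k([a],\ell) = ([\hat{a}],\hat{\ell})$. Theorem \ref{stab} provides representatives for which $|\hat{a}-a| \leq t$ and $|(\hat{a}+\hat{\ell}) - (a+\ell)| \leq t$; in each case the two endpoints of the image bar are $\hat{a}$ and $\hat{a}+\hat{\ell}$ arranged in the correct order (closed or open, in degree $k$ or $k \pm 1$), and comparing with the corresponding arrangement of $a$ and $a+\ell$ on the preimage side gives exactly the bounds $|a'-a| \leq t$ and $|b'-b| \leq t$ claimed in the statement, with the correct choice of output barcode $\mathcal{B}_k(\mathcal{N}')$ or $\mathcal{B}_{k \pm 1}(\mathcal{N}')$ depending on the signs. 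The main obstacle, such as it is, is purely combinatorial bookkeeping between the two conventions used for indexing closed versus open bars in terms of basis spectra; all analytic content is encapsulated in Proposition \ref{pntofmp} and Theorem \ref{stab}.
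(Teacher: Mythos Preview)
Your proposal is correct and follows exactly the approach of the paper, which simply states that the corollary is immediate from Theorem~\ref{stab}, Proposition~\ref{pntofmp}, and the conventions in Definition~\ref{pnbar}. Your write-up expands this one-line proof into the explicit bookkeeping needed to translate strong $t$-matchings on basis spectra into the claimed bijection on bars, but the logical content is identical.
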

\begin{proof}
This is immediate from Theorem \ref{stab}, Proposition \ref{pntofmp}, and the conventions in Definition \ref{pnbar}.
\end{proof}


We now develop consequences of the duality analysis in Section \ref{fmpdual}.  First, we find the duals of the filtered matched pairs $\mathcal{P}(\mathcal{N})_k$ associated to a Poincar\'e-Novikov structure via (\ref{pnkdef}).

\begin{prop}\label{dualpnchar} For a weak $n$-Poincar\'e Novikov structure with notation as above, the dual filtered matched pair  ${}^{\vee}\!\mathcal{P}(\mathcal{N})_k$ (in the sense of Section \ref{fmpdual}) takes the form \[ \xymatrix{ & {}^{\vee}\! V_k \\ {}^{\vee}\!H_k \ar[ru]^{\delta^{\downarrow}(S_k)} \ar[rd]_{F_k} \\ & {}^{\vee}\!({}^{\vee}V_{n-k}) } \]
where the map $F_k$ satisfies \[ F_k\circ \mathcal{D}_{n-k}=\pm \alpha_{V_{n-k}}\circ S_{n-k} \] where $\pm$ is the sign from (\ref{pdsym}) and $\alpha_{V_{n-k}}\co V_{n-k}\to {}^{\vee}\!({}^{\vee}V_{n-k})$ is the isomorphism from Section \ref{dualsec}.
\end{prop}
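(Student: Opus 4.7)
The plan is straightforward: unpack the definition of ${}^{\vee}\!\mathcal{P}(\mathcal{N})_k$ and then verify the asserted identity by testing both sides against elements of the spanning set $\widetilde{S}_k(H_k) \subset {}^{\vee}V_{n-k}$, using Proposition~\ref{deltachar} and the symmetry condition (\ref{pdsym}) that defines the $\mathcal{D}_k$.

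First I would match the dual of $\mathcal{P}(\mathcal{N})_k$ as computed in Section~\ref{fmpdual} with the claimed diagram: since in $\mathcal{P}(\mathcal{N})_k$ the ``$\uparrow$'' map is $S_k\co H_k\to V_k$ and the ``$\downarrow$'' map is $\widetilde{S}_k\co H_k\to {}^{\vee}\!V_{n-k}$, the general recipe (\ref{dualm}) yields the top arrow $\delta^{\downarrow}(S_k)\co {}^{\vee}\!H_k\to {}^{\vee}\!V_k$ and the bottom arrow $F_k:=\delta_{\uparrow}(\widetilde{S}_k)\co {}^{\vee}\!H_k\to {}^{\vee}\!({}^{\vee}\!V_{n-k})$. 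So the entire content of the proposition is the identity $F_k\circ \mathcal{D}_{n-k}=\pm \alpha_{V_{n-k}}\circ S_{n-k}$.

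Next, fix $y\in H_{n-k}$; I would verify the identity by evaluating both elements $F_k(\mathcal{D}_{n-k}y)$ and $\pm\alpha_{V_{n-k}}(S_{n-k}y)$ of ${}^{\vee}\!({}^{\vee}\!V_{n-k})$ on an arbitrary element $\widetilde{S}_k h\in {}^{\vee}\!V_{n-k}$ with $h\in H_k$. For the left-hand side, Proposition~\ref{deltachar} (with $\zeta=\mathcal{D}_{n-k}y$) gives
\[
\bigl(F_k(\mathcal{D}_{n-k}y)\bigr)(\widetilde{S}_k h)=(\mathcal{D}_{n-k}y)(h).
\]
For the right-hand side, by the defining property of $\alpha_{V_{n-k}}$ (see Section~\ref{dualsec}) we have
\[
\bigl(\alpha_{V_{n-k}}(S_{n-k}y)\bigr)(\widetilde{S}_k h)=\overline{(\widetilde{S}_k h)(S_{n-k}y)}=\overline{(\mathcal{D}_k h)(y)},
\]
where the second equality is (\ref{tildesk}). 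Finally, the symmetry condition (\ref{pdsym}) gives $(\mathcal{D}_k h)(y)=\pm\overline{(\mathcal{D}_{n-k}y)(h)}$, so that $\overline{(\mathcal{D}_k h)(y)}=\pm (\mathcal{D}_{n-k}y)(h)$, matching the LHS up to the same sign that appears in (\ref{pdsym}).

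To conclude, I would note that both $F_k(\mathcal{D}_{n-k}y)$ and $\alpha_{V_{n-k}}(S_{n-k}y)$ are $\Lambda^{\downarrow}$-linear functionals on ${}^{\vee}\!V_{n-k}$, and that elements of the form $\widetilde{S}_k h$ span ${}^{\vee}\!V_{n-k}$ over $\Lambda^{\downarrow}$ (because $1_{\Lambda^{\downarrow}}\otimes \widetilde{S}_k$ is an isomorphism, which is part of the datum of the filtered matched pair $\mathcal{P}(\mathcal{N})_k$). Hence agreement on this spanning set forces the two functionals to agree globally, giving the desired equality up to sign. I do not anticipate a serious obstacle here: the content is really just tracing through the definitions of $\delta_{\uparrow}$, $\alpha$, and the Poincar\'e symmetry; the only mildly delicate point is keeping careful track of which maps live on which side of the conjugation.
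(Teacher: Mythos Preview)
Your proof is correct and follows essentially the same approach as the paper: identify $F_k=\delta_{\uparrow}(\widetilde{S}_k)$, then test both sides of the claimed identity against elements $\widetilde{S}_kh$ using Proposition~\ref{deltachar}, the definition of $\alpha_{V_{n-k}}$, equation~(\ref{tildesk}), and finally the symmetry~(\ref{pdsym}). The paper's argument is virtually identical, writing out $\widetilde{S}_kh=\delta^{\downarrow}(S_{n-k})\circ\mathcal{D}_k(h)$ explicitly and justifying the spanning by noting that both $\mathcal{D}_k$ and $\delta^{\downarrow}(S_{n-k})$ become isomorphisms after tensoring with $\Lambda^{\downarrow}$, which amounts to the same thing as your observation that $1_{\Lambda^{\downarrow}}\otimes\widetilde{S}_k$ is an isomorphism.
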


\begin{proof} That the map ${}^{\vee}\!H_k\to {}^{\vee}\!V_k$ in ${}^{\vee}\!\mathcal{P}(\mathcal{N})_k$ is equal to $\delta^{\downarrow}(S_{k})$ is true by definition.  The other map $F_k$ is equal by definition to $\delta_{\uparrow}(\tilde{S}_k)=\delta_{\uparrow}(\delta^{\downarrow}(S_{n-k})\circ \mathcal{D}_k)$, so we are to show that, for all $y\in H_{n-k}$, \begin{equation}\label{dualnoveqn} \delta_{\uparrow}(\delta^{\downarrow}(S_{n-k})\circ \mathcal{D}_k)(\mathcal{D}_{n-k}y)=\pm \alpha_{V_{n-k}}(S_{n-k}y)\end{equation} as elements of ${}^{\vee}\!({}^{\vee}\!V_{n-k})$.  Since both $\mathcal{D}_k\co H_k\to {}^{\vee}\!H_{n-k}$ and $\delta^{\downarrow}(S_{n-k})\co {}^{\vee}\!H_{n-k}\to {}^{\vee}\!V_{n-k}$ become isomorphisms after tensoring with the field $\Lambda^{\downarrow}$, it suffices to show that the two sides of (\ref{dualnoveqn}) are equal when evaluated on arbitrary elements of the image of $\delta^{\downarrow}(S_{n-k})\circ \mathcal{D}_k$.  To check this, we find from Proposition \ref{deltachar} that for any $x\in H_k$, \[ 
\left(\delta_{\uparrow}(\delta^{\downarrow}(S_{n-k})\circ \mathcal{D}_k)(\mathcal{D}_{n-k}y)\right)((\delta^{\downarrow}(S_{n-k})\circ \mathcal{D}_k)x)=(\mathcal{D}_{n-k}y)(x),\] while \[ \left(\alpha_{V_{n-k}}(S_{n-k}y)\right)((\delta^{\downarrow}(S_{n-k})\circ \mathcal{D}_k)x)=\overline{\left(\delta^{\downarrow}(S_{n-k})(\mathcal{D}_kx)\right)(S_{n-k}y)}=\overline{(\mathcal{D}_kx)(y)}.\] Thus the desired equality is 
an expression of the symmetry property (\ref{pdsym}).\end{proof}

\begin{cor}\label{pndual}
If $\mathcal{N}$ is a strong $n$-Poincar\'e-Novikov structure such that each $\mathcal{P}(\mathcal{N})_k$ admits a doubly-orthogonal basis, there is a bijection from $\cup_k\mathcal{B}_k(\mathcal{N})$ to itself which pairs each element $[a,b]^{\Gamma}\in\mathcal{B}_k(\mathcal{N})$ (resp. $(a,b)^{\Gamma}\in \mathcal{B}_k(\mathcal{N})$) such that $a<b$ with an element $(a,b)^{\Gamma}$ (resp. $[a,b]^{\Gamma}$) of $\mathcal{B}_{n-1-k}(\mathcal{N})$, and which pairs any element of form $[a,a]^{\Gamma}\in\mathcal{B}_k(\mathcal{N})$ with an element $[a,a]^{\Gamma}$ of $\mathcal{B}_{n-k}(\mathcal{N})$.
\end{cor}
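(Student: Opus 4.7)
The plan is to identify, for each $k$, the dual filtered matched pair ${}^\vee\mathcal{P}(\mathcal{N})_k$ with $\mathcal{P}(\mathcal{N})_{n-k}$ via the strong Poincar\'e duality isomorphism $\mathcal{D}_{n-k}$, deduce that their basis spectra coincide, apply Proposition \ref{bardual} to convert this into a relation between $\Sigma(\mathcal{P}(\mathcal{N})_k)$ and $\Sigma(\mathcal{P}(\mathcal{N})_{n-k})$, and then translate via the conventions in Definition \ref{pnbar} into the desired pairing of barcode elements.

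For the identification, since $\mathcal{N}$ is strong and ${}^\vee\! H_k$ is torsion-free, the map $\mathcal{D}_{n-k}\co H_{n-k}\to {}^\vee\! H_k$ has kernel exactly $tH_{n-k}$ and is surjective, so it descends to an isomorphism of $\Lambda$-modules $\bar{\mathcal{D}}_{n-k}\co H_{n-k}/tH_{n-k}\to {}^\vee\! H_k$. By Proposition \ref{dualpnchar}, ${}^\vee\mathcal{P}(\mathcal{N})_k$ carries maps $\delta^\downarrow(S_k)\co {}^\vee\! H_k\to {}^\vee\! V_k$ and $F_k\co {}^\vee\! H_k\to {}^\vee({}^\vee\! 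V_{n-k})$ with $F_k\circ\mathcal{D}_{n-k}=\pm\alpha_{V_{n-k}}\circ S_{n-k}$. Under $\bar{\mathcal{D}}_{n-k}$, the descending filtration function on $H_{n-k}/tH_{n-k}$ coming from $\mathcal{P}(\mathcal{N})_{n-k}$, namely $y\mapsto {}^\vee\rho_k(\delta^\downarrow(S_k)\mathcal{D}_{n-k}y)$, visibly agrees with the descending filtration on ${}^\vee\! H_k$ from ${}^\vee\mathcal{P}(\mathcal{N})_k$. Matching of the ascending filtrations $y\mapsto \rho_{n-k}(S_{n-k}y)$ and $\eta\mapsto {}^\vee({}^\vee\rho_{n-k})(F_k\eta)$ then reduces, using the relation $F_k\mathcal{D}_{n-k}y=\pm\alpha_{V_{n-k}}S_{n-k}y$ and the vanishing of $\nu_\uparrow(\pm 1)$, to the claim that $\alpha_{V_{n-k}}$ is a filtration-preserving isomorphism from $(V_{n-k},\rho_{n-k})$ to $({}^\vee({}^\vee\! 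V_{n-k}),{}^\vee({}^\vee\rho_{n-k}))$. This last claim is the main technical point of the argument, and I would verify it by applying Remark \ref{dualbasisrem} twice to an orthogonal basis $\{v_i\}$ of $V_{n-k}$, whose double-dual basis $\{\alpha_{V_{n-k}}(v_i)\}$ is then an orthogonal basis of ${}^\vee({}^\vee\! V_{n-k})$ with ${}^\vee({}^\vee\rho_{n-k})(\alpha_{V_{n-k}}(v_i))=\rho_{n-k}(v_i)$ for each $i$.

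By Proposition \ref{barinvt}, applied on both sides (both $\mathcal{P}(\mathcal{N})_{n-k}$, by hypothesis, and ${}^\vee\mathcal{P}(\mathcal{N})_k$, by Proposition \ref{bardual}, admit doubly-orthogonal bases), this identification yields $\Sigma(\mathcal{P}(\mathcal{N})_{n-k})=\Sigma({}^\vee\mathcal{P}(\mathcal{N})_k)$. Combining with the formula from Proposition \ref{bardual} gives \[ \Sigma(\mathcal{P}(\mathcal{N})_{n-k})=\{([a+\ell],-\ell):([a],\ell)\in\Sigma(\mathcal{P}(\mathcal{N})_k)\}. \] To finish, I pair each $([a],\ell)\in\Sigma(\mathcal{P}(\mathcal{N})_k)$ with its partner $([a+\ell],-\ell)\in\Sigma(\mathcal{P}(\mathcal{N})_{n-k})$ and translate via Definition \ref{pnbar}: when $\ell>0$, the former contributes $[a,a+\ell]^\Gamma$ to $\mathcal{B}_k$ while the latter, having negative second coordinate, contributes $(a,a+\ell)^\Gamma$ to $\mathcal{B}_{(n-k)-1}=\mathcal{B}_{n-1-k}$, giving the pairing of $[a,b]^\Gamma\in\mathcal{B}_k$ with $(a,b)^\Gamma\in\mathcal{B}_{n-1-k}$; the case $\ell<0$ is symmetric; and when $\ell=0$, $([a],0)$ contributes $[a,a]^\Gamma$ to $\mathcal{B}_k$ while $([a],0)\in\Sigma(\mathcal{P}(\mathcal{N})_{n-k})$ contributes $[a,a]^\Gamma$ to $\mathcal{B}_{n-k}$, as required.
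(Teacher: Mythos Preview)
Your proof is correct and follows essentially the same approach as the paper: both reduce to showing $\Sigma(\mathcal{P}(\mathcal{N})_{n-k})=\Sigma({}^{\vee}\!\mathcal{P}(\mathcal{N})_k)$ by using the strong hypothesis to identify $H_{n-k}/tH_{n-k}$ with ${}^{\vee}\!H_k$ via $\mathcal{D}_{n-k}$, invoking Proposition~\ref{dualpnchar} for the structure maps, and Remark~\ref{dualbasisrem} (applied twice) for the compatibility of $\alpha_{V_{n-k}}$ with the filtrations. The only cosmetic difference is that the paper phrases the conclusion in terms of transporting doubly-orthogonal bases through the intermediate filtered matched pair~(\ref{otherfmp}), whereas you match the filtration functions directly and appeal to Proposition~\ref{barinvt}; these are equivalent.
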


\begin{proof} Note that, under our usual correspondence between elements $([a],\ell)\in(\R/\Gamma)\times\R$ and intervals modulo $\Gamma$-translation $[a,a+\ell]^{\Gamma}$ (for $\ell\geq 0$) and $(a+\ell,a)^{\Gamma}$ (for $\ell<0$), the involution $([a],\ell)\leftrightarrow([a+\ell],-\ell)$ corresponds to interchanging $[a,b]^{\Gamma}$ with $(a,b)^{\Gamma}$ (except in the case that $\ell=0$ in which case it has no effect). So
in view of Proposition \ref{bardual}, the statement is equivalent to the claim that, for each $k$, the basis spectra $\Sigma(\mathcal{P}(\mathcal{N})_{n-k})$ and $\Sigma({}^{\vee}\!\mathcal{P}(\mathcal{N})_k)$ coincide.   By hypothesis, with notation as in Definition \ref{pnstr}, $\mathcal{D}_{n-k}\co H_{n-k}\to{}^{\vee}\!H_k$ is a surjection with kernel equal to the torsion module $tH_{n-k}$.  Hence by Proposition \ref{dualpnchar}, a doubly-orthogonal basis for ${}^{\vee}\mathcal{P}(\mathcal{N})_{k}$ lifts via $\mathcal{D}_{n-k}$ to a doubly orthogonal basis for the filtered matched pair \begin{equation}\label{otherfmp} \xymatrix{ & {}^{\vee}\! V_k \\ H_{n-k} \ar[ru]^{\delta^{\downarrow}(S_k)\circ\mathcal{D}_{n-k}} \ar[rd]_{\pm \alpha_{V_{n-k}}\circ S_{n-k}} \\ & {}^{\vee}\!({}^{\vee}V_{n-k}) 
}.\end{equation}  By definition, the map $\delta^{\downarrow}(S_k)\circ\mathcal{D}_{n-k}$ is equal to the map $\widetilde{S}_{n-k}$ in the definition of $\mathcal{P}(\mathcal{N})_{n-k}$.  Also, it follows from Remark \ref{dualbasisrem} that any orthogonal basis $\{v_1,\ldots,v_d\}$ for $V_{n-k}$ will be mapped by the isomorphism $\pm\alpha_{V_{n-k}}\co V_{n-k}\to {}^{\vee\vee}\!V_{n-k}$ to an orthogonal basis for  ${}^{\vee\vee}\!V_{n-k}$, with ${}^{\vee\vee}\rho_{\uparrow}(\pm\alpha_{V_{n-k}}v_i)=\rho_{\uparrow}(v_i)$ for each $i$.   Thus any doubly-orthogonal basis for $\mathcal{P}(\mathcal{N})_{n-k}$ is also a doubly-orthogonal basis for the filtered matched pair in (\ref{otherfmp}), and the values of the various versions of the functions $\rho_{\uparrow},\rho^{\downarrow}$ coincide under this correspondence.  So indeed $\Sigma(\mathcal{P}(\mathcal{N})_{n-k})=\Sigma({}^{\vee}\!\mathcal{P}(\mathcal{N})_k)$ 
\end{proof}

Under weaker hypotheses on $\mathcal{N}$ we still have a duality inequality for the gaps $\mathcal{G}_{k,i}(\mathcal{N})$:

\begin{cor}\label{weakdual}
Let $\mathcal{N}$ be a weak $n$-Poincar\'e-Novikov structure and for each $k$ let $d_k$ be as in Definition \ref{pngap}.  Then $d_{n-k}=d_k$, and for all $i\in \{1,\ldots,d_k\}$ we have $\mathcal{G}_{n-k,i}(\mathcal{N})+\mathcal{G}_{k,d_k+1-i}(\mathcal{N})\leq 0$.
\end{cor}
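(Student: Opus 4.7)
The plan is to derive the desired inequality by combining the general duality bound for gaps of filtered matched pairs in Proposition \ref{gapdual} with the identification of ${}^{\vee}\!\mathcal{P}(\mathcal{N})_k$ given by Proposition \ref{dualpnchar}. Specifically, I would aim to show that
\[ G_i(\mathcal{P}(\mathcal{N})_k) \leq -G_{d_k+1-i}({}^{\vee}\!\mathcal{P}(\mathcal{N})_k) \leq -G_{d_{n-k}+1-i}(\mathcal{P}(\mathcal{N})_{n-k}), \]
where the first inequality is immediate from Proposition \ref{gapdual}, and the second requires comparing gaps of ${}^{\vee}\!\mathcal{P}(\mathcal{N})_k$ and $\mathcal{P}(\mathcal{N})_{n-k}$ via the transpose map $\mathcal{D}_{n-k}$.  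After reindexing $i \leftrightarrow d_k+1-i$, this is equivalent to the statement of the corollary.

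First I would check $d_k = d_{n-k}$. Since $\mathcal{D}_k$ becomes an isomorphism after tensoring with $\mathcal{Q}(\Lambda)$, we have $d_k = \dim_{\mathcal{Q}(\Lambda)}\mathcal{Q}(\Lambda) \otimes_{\Lambda} {}^{\vee}\!H_{n-k}$; by Lemma \ref{extend-to-frac} (applied to $\mathrm{Hom}_{\Lambda}(H_{n-k},\Lambda)$), this coincides with $\dim_{\mathcal{Q}(\Lambda)}\mathrm{Hom}_{\Lambda}(H_{n-k}, \mathcal{Q}(\Lambda)) = d_{n-k}$ (using that $H_{n-k}$ is finitely generated). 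Next, for the central inequality $G_j({}^{\vee}\!\mathcal{P}(\mathcal{N})_k) \geq G_j(\mathcal{P}(\mathcal{N})_{n-k})$, I would start with an independent set $x_1,\ldots,x_j\in H_{n-k}$ and set $\zeta_l := \mathcal{D}_{n-k}(x_l)\in {}^{\vee}\!H_k$.  The $\{\zeta_l\}$ remain independent because coefficient extension to $\mathcal{Q}(\Lambda)$ turns $\mathcal{D}_{n-k}$ into an isomorphism. Then by the characterization of $\widetilde{S}_{n-k}$ via Proposition \ref{deltachar} we have $\delta^{\downarrow}(S_k)(\zeta_l) = \widetilde{S}_{n-k}(x_l)$, and from Proposition \ref{dualpnchar} together with $\rho_{n-k}(v) = {}^{\vee}({}^{\vee}\!\rho_{n-k})(\alpha_{V_{n-k}}(v))$ (a twofold application of the orthogonal-basis-preservation in Remark \ref{dualbasisrem}) we have $\rho_{n-k}(S_{n-k}(x_l))= {}^{\vee}({}^{\vee}\!\rho_{n-k})(F_k(\zeta_l))$.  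Hence the gap quantity for $\zeta_l$ in ${}^{\vee}\!\mathcal{P}(\mathcal{N})_k$ coincides on the nose with the gap quantity for $x_l$ in $\mathcal{P}(\mathcal{N})_{n-k}$, which yields the desired inequality on $G_j$.

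The main point requiring care is the bookkeeping of which side of each filtered matched pair is ``$V^{\downarrow}$'' and which is ``$V_{\uparrow}$'' after dualization, together with the double-dual identification $\alpha_{V_{n-k}}$. Concretely, in ${}^{\vee}\!\mathcal{P}(\mathcal{N})_k$ the descending-filtered side is $({}^{\vee}\!V_k, {}^{\vee}\!\rho_k)$ and the ascending-filtered side is $({}^{\vee}({}^{\vee}\!V_{n-k}), {}^{\vee}({}^{\vee}\!\rho_{n-k}))$, which must be matched via $\mathcal{D}_{n-k}$ with the descending side $({}^{\vee}\!V_k, {}^{\vee}\!\rho_k)$ and ascending side $(V_{n-k},\rho_{n-k})$ of $\mathcal{P}(\mathcal{N})_{n-k}$; the sign $\pm$ in $F_k \circ \mathcal{D}_{n-k} = \pm \alpha_{V_{n-k}} \circ S_{n-k}$ is harmless because the filtration functions are insensitive to sign. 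It is worth noting why only an inequality (not equality) can be expected to come out of this argument: the reverse comparison requires lifting an arbitrary independent $\zeta\in {}^{\vee}\!H_k$ to some $y \in H_{n-k}$ with $\mathcal{D}_{n-k}(y) = \lambda\zeta$ for a nonzero $\lambda \in \Lambda$, and then the gap quantity of $y$ differs from that of $\zeta$ by $\nu_{\uparrow}(\lambda) - \nu^{\downarrow}(\lambda) \leq 0$.  The possibility of this term being strictly negative when $\Gamma$ is indiscrete is what prevents the weak duality from being sharp, and this corresponds exactly to the open question mentioned after Proposition \ref{gapdual}.
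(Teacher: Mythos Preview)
Your proof is correct and follows essentially the same approach as the paper's own proof: both apply Proposition~\ref{gapdual} to $\mathcal{P}(\mathcal{N})_k$, then use Proposition~\ref{dualpnchar} and the double-dual identification from Remark~\ref{dualbasisrem} to show that $\mathcal{D}_{n-k}$ carries independent sets in $H_{n-k}$ to independent sets in ${}^{\vee}\!H_k$ with the same gap quantities, yielding $G_j(\mathcal{P}(\mathcal{N})_{n-k}) \leq G_j({}^{\vee}\!\mathcal{P}(\mathcal{N})_k)$. The only difference is cosmetic reindexing in how the two inequalities are chained together.
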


\begin{proof}  That $d_k=d_{n-k}$ follows from the fact that, with notation as in Definition \ref{pnstr}, $\mathcal{Q}(\Gamma)\otimes_{\Lambda}H_k$ is isomorphic (by the coefficient extension of $\mathcal{D}_k$) to $\mathcal{Q}(\Lambda)\otimes_{\Lambda}{}^{\vee}\!H_{n-k}$.

By definition we have $\mathcal{G}_{n-k,i}(\mathcal{N})=G_i(\mathcal{P}(\mathcal{N})_{n-k})$ and $\mathcal{G}_{k, d_{k}+1-i}(\mathcal{N})=G_{d_k+1-i}(\mathcal{P}(\mathcal{N})_k)$, while Proposition \ref{gapdual} shows that $G_{i}({}^{\vee}\!\mathcal{P}(\mathcal{N})_k)+G_{d_k+1-i}(\mathcal{P}(\mathcal{N})_k)\leq 0$.  So it suffices to show that $G_i(\mathcal{P}(\mathcal{N})_{n-k})\leq G_{i}({}^{\vee}\!\mathcal{P}(\mathcal{N})_k)$.  By Proposition \ref{dualpnchar}, the maps $\tilde{S}_{n-k}\co H_{n-k}\to {}^{\vee}\!V_k$ and $S_{n-k}\co H_{n-k}\to V_{n-k}$ that define $\mathcal{P}(\mathcal{N})_{n-k}$ factor through the analogous maps $\delta^{\downarrow}(S_k)$ and $F_k$ for ${}^{\vee}\!\mathcal{P}(\mathcal{N})_k$ as, respectively, \[ \xymatrix{ H_{n-k}\ar[r]^{\mathcal{D}_{n-k}} & {}^{\vee}\!H_k\ar[r]^{\delta^{\downarrow}(S_k)} & {}^{\vee}\!V_k } \] and \[ \xymatrix{
H_{n-k}\ar[r]^{\mathcal{D}_{n-k}} & {}^{\vee}\!H_k\ar[r]^{F_k} & {}^{\vee}\!({}^{\vee}\!V_{n-k})\ar[r]^{\pm\alpha_{V_{n-k}}^{-1}} & V_{n-k}}.\] Moreover by Remark \ref{dualbasisrem} we have ${}^{\vee\vee}\!\rho_{n-k}(x)=\rho_{n-k}(\pm\alpha_{V_{k}}^{-1}x)$ for all $x\in {}^{\vee}\!({}^{\vee}\!V_{n-k})$.  Our hypotheses imply that $\mathcal{D}_{n-k}$, while not necessarily surjective, maps independent subsets of $H_{n-k}$ to independent subsets of ${}^{\vee}\!H_k$.  Consequently the set of which Definition \ref{gapdfn} defines $G_{i}({}^{\vee}\!\mathcal{P}(\mathcal{N})_k)$ as the supremum contains the corresponding set for $G_i(\mathcal{P}(\mathcal{N})_{n-k})$.  So indeed $G_i(\mathcal{P}(\mathcal{N})_{n-k})\leq G_{i}({}^{\vee}\!\mathcal{P}(\mathcal{N})_k)$.  
\end{proof}

\begin{ex}\label{weakdualstrict}
We provide an example showing that, if $\mathcal{N}$ is not a strong Poincar\'e--Novikov structure, then the inequality in Corollary \ref{weakdual} may be strict. In the notation of Definition \ref{pnstr}, take $n=0$ and $H_k=\left\{\begin{array}{ll} \Lambda & k=0 \\ \{0\} & k\neq 0\end{array}\right.$.  The only nontrivial $V_k$ will necessarily correspond to $k=0$, and we take $(V_0,\rho_0)=(\Lambda_{\uparrow},-\nu_{\uparrow})$, and use for the map $S_0\co H_0\to V_0$ the inclusion $\Lambda\to\Lambda_{\uparrow}$.  To define the PD structure, choose $\alpha\in \Lambda\setminus\{0\}$ such that $\bar{\alpha}=\alpha$, and define $\mathcal{D}_0\co H_0\to {}^{\vee}\!H_0$ by $(\mathcal{D}_0(\lambda))(\mu)=\alpha\bar{\lambda}\mu$.  Let $\mathcal{N}_{\alpha}$ denote the weak Poincar\'e--Novikov structure consisting of the data in this paragraph.  (The condition $\bar{\alpha}=\alpha$ ensures that (\ref{pdsym}) holds.)

There is an isomorphism $\Lambda^{\downarrow}\to {}^{\vee}\!\Lambda_{\uparrow}$ sending $\mu\in \Lambda^{\downarrow}$ to the map $(\lambda\mapsto \bar{\mu}\lambda)$. One can check that the filtration function ${}^{\vee}\!\rho_0$ on ${}^{\vee}\!\Lambda_{\uparrow}$ corresponds under this isomorphism to $-\nu^{\downarrow}\co \Lambda^{\downarrow}\to \R\cup\{\infty\}$, and that the map $\tilde{S}_0$ from (\ref{pnkdef}) corresponds to the map $\Lambda\to\Lambda^{\downarrow}$ given by $\alpha$ times the inclusion.  

Since $d_0=1$ and all other $d_k$ are zero, the only gap to consider is $\mathcal{G}_{0,1}(\mathcal{N}_{\alpha})$, and Corollary \ref{weakdual} asserts that $2\mathcal{G}_{0,1}(\mathcal{N}_{\alpha})\leq 0$.   Now from the definitions and the identifications in the previous paragraph one obtains \begin{align*} \mathcal{G}_{0,1}(\mathcal{N}_{\alpha})&=\sup\left\{\nu_{\uparrow}(\lambda)-\nu^{\downarrow}(\alpha\lambda)|\lambda\neq 0\right\} 
\\ & = -\nu^{\downarrow}(\alpha)+\sup\left\{\nu_{\uparrow}(\lambda)-\nu^{\downarrow}(\lambda)|\lambda\neq 0\right\}=-\nu^{\downarrow}(\alpha)
\end{align*}
 So if $\Gamma\neq \{0\}$ and $\alpha=T^g+T^{-g}$ for some $g>0$ then we have $\mathcal{G}_{0,1}(\mathcal{N}_{\alpha})=-g<0$ and the inequality of Corollary \ref{weakdual} is indeed strict.   
\end{ex}

The above example also illustrates the importance of the symmetry condition (\ref{pdsym}), which we used in Proposition \ref{dualpnchar}; if (\ref{pdsym}) had not been required to hold, we could have taken $\alpha$ to be an arbitrary nonzero element of $\Lambda$ and, by choosing $\alpha$ with $\nu^{\downarrow}(\alpha)<0$, obtained a counterexample to Proposition \ref{weakdual}.

\section{Hamiltonian Floer theory}\label{floersect}
We now turn to the case that motivated this work, namely Floer theory for Hamiltonian flows on compact symplectic manifolds.  We will not attempt to be self-contained, and refer to \cite[Chapters 19 and 20]{ohbook} for the general theory of Hamiltonian Floer homology (and to Part 1 of \cite{ohbook} for more elementary background about symplectic manifolds and Hamiltonian flows on them).  There are certainly other variants of Floer theory to which our methods would apply; for instance, one could consider Floer theory for (not necessarily Hamiltonian) symplectic isotopies as in \cite{LO}, to which some of the considerations about Novikov homology in Section \ref{novsect} would be relevant.  However, we will restrict attention to Hamiltonian Floer theory here.  

Let $(M,\omega)$ be a compact, $2m$-dimensional semipositive symplectic manifold.\footnote{See \cite[Definition 19.6.1]{ohbook} for the definition of semipositivity; this condition should not be essential  if one adapts virtual techniques as in \cite{BX},\cite{Par}.  However, we will use in Proposition \ref{floerpair} some facts related to the PSS map that, to the best of my knowledge, are not available in the literature in the non-semipositive case.}  Writing $S^1=\R/\Z$, let $\mathcal{L}_0M$ denote the space of contractible loops $\gamma\co S^1\to M$, and let $\widetilde{\mathcal{L}_0M}$ denote the covering space of $\mathcal{L}_0M$ whose fiber over $\gamma\in \mathcal{L}_0M$ consists of equivalence classes $[\gamma,w]$ of pairs $(\gamma,w)$ where $w\co D^2\to M$ satisfies $w(e^{2\pi it})=\gamma(t)$ for all $t$, with $(\gamma,v)$ and $(\gamma,w)$ equivalent iff both $\int_{D^2}v^*\omega=\int_{D^2}w^*\omega$ and $\langle c_1,[\bar{v}\#w]\rangle =0$.  Here $c_1\in H^2(M;\Z)$ is the first Chern class of $TM$   and $[\bar{v}\#w]$ is the homology class of the sphere that results from gluing the disks $w$ and $v$ along their boundaries, with the orientation on $v$ reversed.  

 For a generic nondegenerate smooth Hamiltonian function $H\co S^1\times M\to \R$, we then have an action functional $\mathcal{A}_H\co \widetilde{\mathcal{L}_0M}\to \R$ defined by \[ \mathcal{A}_H([\gamma,w])=-\int_{D^2}w^{*}\omega-\int_{0}^{1}H(t,\gamma(t))dt.\] Formally speaking, the Floer complex $CF_{*}(H)$ is the Novikov complex of this functional (defined with the assistance of a suitable almost complex structure $J$ in order to define the analogue of the gradient flow;  we suppress $J$ from the notation as different choices lead to naturally isomorphic filtered Floer homologies\footnote{We are following the approach in \cite{HS} to evade bubbling issues, by taking the almost complex structure  independent of the $S^1$-variable, which requires a further genericity condition on the Hamiltonian beyond nondegeneracy.}). The critical points of $\mathcal{A}_H$ are those $[\gamma,w]$ such that $\gamma$ is a closed orbit of the Hamiltonian flow of $H$ (using the sign convention of \cite{ohbook} that the Hamiltonian vector field of $H$ is given at time $t$ by $\omega(X_{H_t},\cdot)=d(H(t,\cdot))$).  Such a critical point has an associated index $\mu_H([\gamma,w])\in \Z$  which serves in Floer theory as a substitute for the Morse index (as the true Morse index is infinite); we will follow the convention in \cite{Sal} of defining $\mu_H$ to be $m-\mu_{CZ}(\Psi)$ where $\Psi$ is the path of symplectic matrices obtained from the linearization of the Hamiltonian flow along $\gamma$ in terms of a trivialization of $w^*TM$, and $\mu_{CZ}$ is the Conley--Zehnder index.  In this convention, the isomorphism between Floer and coefficient-extended singular homology preserves grading.

In contrast to the finite-dimensional Novikov context, two critical points $[\gamma,v]$ and $[\gamma,w]$ lying in the same fiber of the projection $\widetilde{\mathcal{L}_0M}\to\mathcal{L}_0M$ may have different Conley--Zehnder indices.  To be precise, if $A\in \pi_2(M)$ and $[\gamma,w]\in\widetilde{\mathcal{L}_0M}$ let $[\gamma,A\#w]$ be the result of gluing a representative of $A$ to $w$; one then has \begin{equation}\label{muchange} \mu_H([\gamma,A\#w])=\mu_H([\gamma,w])-2\langle c_1,A\rangle \end{equation} (where we have implicitly mapped $A$ to $H_2(M;\Z)$ by the Hurewicz map).     Similarly, by the definition of  $\mathcal{A}_H$, we have \begin{equation}\label{omegachange} \mathcal{A}_H([\gamma,A\#w])=\mathcal{A}_H([\gamma,w])-\langle [\omega],A\rangle,\end{equation} where $[\omega]$ is the de Rham cohomology class of the closed $2$-form $\omega$.

Let $I_{\omega}\co \pi_2(M)\to\R$ and $I_{c_1}\co \pi_2(M)\to\Z$ be the maps defined by evaluating, respectively, $[\omega]$ and $c_1$ on spheres, and denote \begin{equation}\label{gammas} \hat{\Gamma}=\frac{\pi_2(M)}{\ker(I_{\omega})\cap \ker(I_{c_1})},\qquad \Gamma'=\frac{\ker(I_{c_1})}{\ker(I_{\omega})\cap \ker(I_{c_1})},\end{equation} so that $I_{\omega}$ and $I_{c_1}$ descend to maps defined on $\hat{\Gamma},\Gamma'$.  Thus $\hat{\Gamma}$ is the full deck transformation group of the covering space $\widetilde{\mathcal{L}_0M}$ and $\Gamma'$ is the subgroup which preserves the Conley--Zehnder indices $\mu_H$ of critical points of $\mathcal{A}_H$.  For our subgroup $\Gamma$ of $\R$ (used in the definitions of $\Lambda,\Lambda_{\uparrow},\Lambda^{\downarrow}$ in Section \ref{basic}) we use the image under the monomorphism $I_{\omega}|_{\Gamma'}\co \Gamma'\to \R$.  (In particular, if $(M,\omega)$ is positively or negatively monotone, or weakly exact, then $\Gamma=\{0\}$.)  Modulo the identification of $\Gamma'$ with $\Gamma$ given by integration of $\omega$, our Novikov field $\Lambda_{\uparrow}$ then coincides with the field denoted $\Lambda_{\omega}^{(0)}$ in \cite[Proposition 19.1.5]{ohbook}.  

For $k\in\Z$, let $\tilde{P}_k(H)$ denote the set of critical points $[\gamma,w]$ of $\mathcal{A}_H$ having $\mu_{H}([\gamma,w])=k$.  The degree-$k$ part of the Floer complex, $CF_k(H)$, consists of formal sums \[ \sum_{[\gamma,w]\in\tilde{P}_k(H)}a_{[\gamma,w]}[\gamma,w] \] where $a_{[\gamma,w]}\in\kappa$ and, for all $c\in \R$, only finitely many $[\gamma,w]$ have both $\mathcal{A}_H([\gamma,w])\geq c$ and  $a_{[\gamma,w]}\neq 0$.  Any $\lambda\in \Lambda_{\uparrow}$ can be written as $\lambda=\sum_{A\in \ker(I_{c_1})}\lambda_AT^{I_{\omega}(A)}$ where for all $c\in \R$ only finitely many $A$ have both $\lambda_A\neq 0$ and $I_{\omega}(A)\leq c$.  Given (\ref{omegachange}) and the definition of the equivalence relation on $\widetilde{\mathcal{L}_0M}$, $CF_k(H)$ is a $\Lambda_{\uparrow}$-vector space with the scalar multiplication \[ \left(\sum_{A\in \ker(I_{c_1})}\lambda_AT^{I_{\omega}(A)}\right)\cdot \left(\sum_{[\gamma,w]\in\tilde{P}_k(H)}a_{[\gamma,w]}[\gamma,w]\right)=\sum_{[\gamma,w],A}\lambda_Aa_{[\gamma,w]}[\gamma,A\#w].\]  One has a Floer boundary operator $\partial_H\co CF_k(H)\to CF_{k-1}(H)$ defined as in \cite[Section 19.2]{ohbook}, and a filtration function $\ell_{\uparrow}^{H}\co CF_k(H)\to\R\cup\{-\infty\}$ defined by $\ell_{\uparrow}^{H}\left( \sum_{[\gamma,w]\in\tilde{P}_k(H)}a_{[\gamma,w]}[\gamma,w]\right)=\max\{\mathcal{A}_{H}([\gamma,w])|a_{[\gamma,w]}\neq 0\}$.  Writing $CF_{*}(H)=\oplus_kCF_k(H)$, we then have (in the language of Section \ref{pmdefsec}) a $\Lambda_{\uparrow}$-Floer-type complex $\mathbf{CF}(H)_{\uparrow}=(CF_*(H),\partial_H,\ell_{\uparrow}^{H})$.

With $HF_k(H)$ defined as the $k$th homology of the complex $CF_*(H)$, the general \cite[Proposition 6.6]{UZ} implies that $HF_k(H)$ is an orthogonalizable $\Lambda_{\uparrow}$-space, in the sense of Definition \ref{orthdef}, with respect to the filtration function $\rho_{\uparrow}^{H}$ which sends a homology class $h$ to the infimal (in fact minimal, if $h\neq 0$, by \cite[Theorem 1.4]{U08}) value of $\ell_{\uparrow}^{H}$ on cycles representing $h$.  The Poincar\'e--Novikov structure $\mathcal{N}(M,\omega,H)$ associated to $H$ will then be based on the orthogonalizable $\Lambda_{\uparrow}$-spaces $(HF_k(H),\rho_{\uparrow}^{H})$ together with data associated to the PSS isomorphism $QH_*(M,\omega)\to HF_*(H)$ from quantum homology and the (classical) Poincar\'e intersection pairing on $M$.

To be precise about this we should first fix conventions for quantum homology.  Recalling (\ref{gammas}), let $\hat{\Lambda}=\kappa[\hat{\Gamma}]$, so that $\hat{\Lambda}$ consists of finite sums $\sum_{A\in \hat{\Gamma}}b_AT^A$ where $b_A\in \kappa$), and let $\hat{\Lambda}_{\uparrow}$ denote the ring of (possibly infinite) sums $\sum_{A\in\hat{\Gamma}}b_AT^A$ such that for each $c\in \R$ there are only finitely many $A$ with $b_A\neq 0$ and $I_{\omega}(A)\leq c$, and such that moreover the set $\{ I_{c_1}(A)|b_A\neq 0\}\subset \Z$ is finite. We regard both $\hat{\Lambda}_{\uparrow}$ and its subring $\hat{\Lambda}$ as graded $\kappa$-algebras, with $k$th graded part consisting of sums $\sum_Ab_AT^A$ for which every $A$ appearing in the sum has $-2 I_{c_1}(A)=k$. (The $-2$ is motivated by (\ref{muchange}).) The integration isomorphism $I_{\omega}\co \Gamma'\cong \Gamma$ identifies our earlier rings $\Lambda=\kappa[\Gamma]$ and $\Lambda_{\uparrow}$ with the degree-zero parts of $\hat{\Lambda}$ and $\hat{\Lambda}_{\uparrow}$.  If $I_{c_1}\co \pi_2(M)\to \Z$ identically vanishes then these degree-zero parts are the entire rings (in which case we put $q=1$ in what follows).  Otherwise, letting $N$ denote the minimal Chern number of  $(M,\omega)$ (\emph{i.e.} the positive generator of the image of $I_{c_1}$), by choosing some $A_0\in \hat{\Gamma}$ with $I_{c_1}(A_0)=N$ we may identify $\hat{\Lambda}\cong \Lambda[q^{-1},q]$ and $\hat{\Lambda}_{\uparrow}\cong \Lambda_{\uparrow}[q^{-1},q]$ where $q=T^{A_0}$ is a variable of degree $-2N$.  
Note that, correspondingly, the action of $\Lambda_{\uparrow}$ on $CF_*(H)$ extends to an action of $\hat{\Lambda}_{\uparrow}$ by the obvious extension of the prescription $T^A[\gamma,w]=[\gamma,A\#w]$, and this action is consistent with the gradings in that the grading $j$ part of $\hat{\Lambda}_{\uparrow}$ sends $CF_k(H)$ to $CF_{j+k}(H)$.

The \textbf{quantum homology} of $(M,\omega)$, $QH_{*}(M,\omega)$, is then the tensor product of graded modules $\hat{\Lambda}_{\uparrow}\otimes_{\kappa}H_*(M;\kappa)$ where $H_*(M;\kappa)$ is singular homology with coefficients in $\kappa$.  Thus if $I_{c_1}=0$ then $QH_k(M,\omega)=\Lambda_{\uparrow}\otimes_{\kappa}H_k(M;\kappa)$; otherwise, with notation in the previous paragraph, we have \[ QH_k(M,\omega)=\bigoplus_{j\in \Z}q^{j}\Lambda_{\uparrow}\otimes_{\kappa}H_{k+2Nj}(M;\kappa),\] and multiplication by $q$ induces isomorphisms $QH_k(M,\omega)\cong QH_{k-2N}(M,\omega)$.  The PSS isomorphism $\Phi_{PSS}^{H}$ as in \cite{PSS},\cite[Section 20.3]{ohbook} is then an isomorphism of graded $\hat{\Lambda}_{\uparrow}$-modules $QH_{*}(M,\omega)\to HF_{*}(H)$.

We can now  make the following definition:

\begin{dfn}\label{pnfloer} Let $H\co S^1\times M\to \R$ be a generic non-degenerate Hamiltonian on a $2m$-dimensional semipositive symplectic manifold $(M,\omega)$.  
We define $\mathcal{N}(M,\omega,H)$ to be the Poincar\'e--Novikov structure having its data $n,H_k,\mathcal{D}_k,V_k,\rho_k,S_k$ as in Definition \ref{pnstr} given as follows (with notation as above):
\begin{itemize}   \item $n=2m$.
 \item $H_k$ is the $k$th graded part of the tensor product $\hat{\Lambda}\otimes_{\kappa}H_*(M;\kappa)$.  Thus if $I_{c_1}=0$ then $H_k=\Lambda\otimes_{\kappa}H_{k}(M;\kappa)$, and otherwise $H_k=\oplus_{j\in\Z}q^j\Lambda\otimes_{\kappa}H_{k+2Nj}(M;\kappa)$.
\item $\mathcal{D}_k\co H_{k}\to {}^{\vee}\!H_{2m-k}=\overline{\mathrm{Hom}_{\Lambda}(H_{2m-k},\Lambda)}$ is given by, for $x_{i,A}\in H_{k+2Ni}(M;\kappa)$ and $y_{j,B}\in H_{2m-k+2Nj}(M;\kappa)$, \begin{equation}\label{dquant} \left(\mathcal{D}_k\left(\sum_{A\in\Gamma',i\in \Z}q^iT^Ax_{i,A}\right)\right)\left(\sum_{B\in\Gamma',j\in \Z}q^jT^By_{j,B}\right)=\sum_{A,B\in \Gamma,\,i,j\in\Z}T^{I_{\omega}(-A+B)}x_{i,A}\cap y_{j,B},\end{equation}  Here $x_{i,A}\cap y_{j,B}$ is the usual signed count of intersections between generic representatives of $x_{i,A}$ and $y_{j,B}$, and is taken to vanish if these are not of complementary dimension.  (Hence the only contributions to the above sum have $j=-i$.)
\item $V_k=HF_k(H)$, and $\rho_k=\rho_{\uparrow}^{H}$.
\item $S_k\co H_k\to V_k$ is the restriction to $H_k=\left(\hat{\Lambda}\otimes_{\kappa}H_{*}(M,\omega)\right)_{k}\subset QH_{k}(M,\omega)$ of the degree-$k$ part of the $PSS$ isomorphism $\Phi_{PSS}^{H}\co QH_{*}(M,\omega)\to HF_{*}(H)$.\end{itemize}\end{dfn}

Note that because we use $\hat{\Lambda}$ and not $\hat{\Lambda}_{\uparrow}$ in the definition of $H_k$, all sums in (\ref{dquant}) are finite.  It is routine to check that $\mathcal{D}_k\co H_k\to {}^{\vee}\!H_{2m-k}$  is an isomorphism that satisfies the symmetry condition (\ref{pdsym}) based on the corresponding fact for the intersection pairing on $H_*(M;\kappa)$, so in the terminology of Definition \ref{pnstr} $\mathcal{N}(M,\omega,H)$ is a strong $2m$-Poincar\'e--Novikov structure.

\begin{remark}\label{unnatural}
In cases where both $I_{c_1}$ and $\Gamma$ are nontrivial, a somewhat unnatural aspect of our formulation is that, in order to identify $\hat{\Lambda}$ with $\Lambda[q^{-1},q]$,  we have chosen an element $A_0\in \hat{\Gamma}$ having $I_{c_1}(A_0)=N$ and interpreted $q$ as $T^{A_0}$. The set of possible such choices $A_0$ is a coset of $\Gamma'\cong \Gamma$ in $\hat{\Gamma}$, and  if both  $I_{c_1}$ and $\Gamma$ are nontrivial then (\ref{dquant}) does depend on this choice.  Correspondingly, the map $\mathfrak{c}$ that appears in Corollary \ref{describepn} below also depends on the choice of $A_0$.  An arguably more natural version of (\ref{dquant}) would omit reference to $q$ and allow $A,B$ to vary through $\hat{\Gamma}$ rather than $\Gamma'$, leading to a pairing \begin{equation}\label{modifiedpair} \left(\sum_{A\in\hat{\Gamma}}T^Ax_A,\sum_{B\in \hat{\Gamma}}T^By_B\right)\mapsto \sum_{A,B}T^{I_{\omega}(-A+B)}x_A\cap y_B.\end{equation}  However, if $I_{\omega}(\hat{\Gamma})$ properly contains $\Gamma=I_{\omega}(\Gamma')$ then the right-hand side of (\ref{modifiedpair}) typically does not belong to $\Lambda=\kappa[\Gamma]$, so other modifications also would be required for this to fit into the general theory, such as using the larger group $I_{\omega}(\hat{\Gamma})$ in the role of $\Gamma$ and extending coefficients in the Floer complexes accordingly.  Enlarging $\Gamma$ in such a fashion seems undesirable, especially if doing so causes $\Gamma$ to change from being discrete to being dense.  On the other hand, if $I_{\omega}(\hat{\Gamma})=I_{\omega}(\Gamma')$ then (\ref{modifiedpair}) agrees with (\ref{dquant}) provided that we use for $A_0$ the unique element of $\hat{\Gamma}$ having $I_{c_1}(A_0)=N$ and $I_{\omega}(A_0)=0$, which can be regarded as a natural choice in this case.
\end{remark}

With $\mathcal{N}(M,\omega,H)$ defined, the general constructions of Section \ref{pnsect} then yield, for each $k\in \Z$, a filtered matched pair $\mathcal{P}(\mathcal{N}(M,\omega,H))_{k}$, gaps  $\mathcal{G}_{k,1}(\mathcal{N}(M,\omega,H)),\ldots,\mathcal{G}_{k,\dim_{\Lambda_{\uparrow}}QH_k(M,\omega)}(\mathcal{N}(M,\omega,H))$, and, if $\Gamma$ is discrete, an essential barcode $\mathcal{B}_k(\mathcal{N}(M,\omega,H))$ (using Theorem \ref{basistheorem} to obtain suitable doubly-orthogonal bases).   The various maps in our definition of $\mathcal{P}(\mathcal{N}(M,\omega,H))$ are obtained by passing to homology from maps defined at chain level, so one could rephrase the definition to yield a chain-level Poincar\'e--Novikov structure as in Section \ref{cpnstr}. (The signs work out properly by virtue of Corollary \ref{floerdualdiff} and an argument like that in the proof of Proposition \ref{negdual}.)  If $\Gamma$ is discrete this structure has an associated ``full barcode,'' intended to be considered as a Floer-theoretic version of the interlevel persistence barcode, which can be read off via Theorem \ref{bigdecomp} from the essential barcode of $\mathcal{N}(M,\omega,H)$  together with the concise barcode (from \cite{UZ}) of the Floer-type complex $\mathbf{CF}(H)_{\uparrow}$.

\begin{remark} \label{unnat2} Related to Remark \ref{unnatural}, if $x\in H_k=(\hat{\Lambda}\otimes_{\kappa}H_*(M;\kappa))_{k}$, then with notation as in (\ref{pnkdef}) and Definition \ref{pnfloer} one has \[ \rho_{k-2N}(S_{k-2n}(qx))=\rho_{k}(S_kx)-I_{\omega}(A_0),\qquad {}^{\vee}\!\rho_{k-2N}(\tilde{S}_{k-2N}(qx))={}^{\vee}\!\rho_k(\tilde{S}_kx)+I_{\omega}(A_0). \]  (This is easiest to see from Corollary \ref{describepn}, but can also be inferred by directly unwinding the various definitions.)  In particular, the gaps $\mathcal{G}_{k-2N,i}(\mathcal{N}(M,\omega,H))$ are shifted by $2I_{\omega}(A_0)$ relative to the gaps $\mathcal{G}_{k,i}(\mathcal{N}(M,\omega,H))$
\end{remark}

 If $H_0,H_1\co S^1\times M\to \R$ are two different nondegenerate Hamiltonians, we have a continuation chain homotopy equivalence  $h_{H_0,H_1}\co CF_{*}(H_0)\to CF_{*}(H_1)$ whose induced map $\underline{h}_{H_0H_1}$ on homology obeys $\underline{h}_{H_0,H_1}\circ\Phi_{PSS}^{H_0}=\Phi_{PSS}^{H_1}$.  Standard estimates for the behavior of the filtration functions $\ell_{\uparrow}^{H_i}$ under $h_{H_0,H_1}$ then imply as in \cite[Proposition 21.3.6]{ohbook} that $\underline{h}_{H_0H_1}$ provides a $t$-morphism in the sense of Definition \ref{pntmorph} from $\mathcal{N}(M,\omega,H_0)$ to $\mathcal{N}(M,\omega,H_1)$, with $t=\int_{0}^{1}\max_M|H_1(t,\cdot)-H_0(t,\cdot)|dt$.  Hence Corollaries \ref{pngapstab} and \ref{pnstab} imply that the gaps and essential barcode of 
$\mathcal{N}(M,\omega,H)$ depend in Lipschitz fashion\footnote{Implicit in this is a metric on barcodes which can be inferred from Corollary \ref{pngapstab}, based on an inf over bijections which match open intervals in degree $k-1$ either to other open intervals in degree $k-1$ or closed intervals in degree $k$, and likewise match closed intervals in degree $k$ either to other closed intervals in degree $k$ or open intervals in degree $k-1$.} on the Hamiltonian $H$ with respect to the $C^0$-norm.  In particular we may extend these by continuity to arbitrary Hamiltonians $H$, including degenerate ones.  The analogous continuity statement for the concise barcodes of the $\mathbf{CF}(H_i)_{\uparrow}$ is \cite[Corollary 1.5]{UZ}.  Note that the behavior of short intervals under continuous variation is different in the concise and essential barcodes, as the bottleneck distance with respect to which the concise barcode is continuous allows for intervals to disappear after their lengths continuously shrink to zero.

\begin{remark}\label{htopyinv}
By using the ``optimal continuation maps'' denoted $h_{(\phi^{\rho},J)}$ in \cite[Section 21.6.2]{ohbook}, which relate the Floer complexes of mean-normalized Hamiltonians $H_0,H_1\co S^1\times M\to \R$ having fixed-endpoint-homotopic flows $\{\phi_{H_i}^{t}\}_{0\leq t\leq 1}$ and which preserve the filtrations $\ell_{\uparrow}^{H_i}$ by \cite[Corollary 21.6.5]{ohbook}, one obtains in this situation an isomorphism ($t$-morphism with $t=0$) between the Poincar\'e--Novikov structures  $\mathcal{N}(M,\omega,H_i)$.  Hence the gaps and (when applicable) the essential barcode can be regarded as being associated to an element $\tilde{\phi}$ of the universal cover $\widetilde{\mathrm{Ham}}(M,\omega)$ of the Hamiltonian diffeomorphism group by choosing any mean-normalized Hamiltonian whose flow represents $\tilde{\phi}$, and they vary in Lipschitz fashion with respect to the Hofer (pseudo-)norm on $\widetilde{\mathrm{Ham}}(M,\omega)$.  One can also deduce a similar invariance result for the concise barcodes using these optimal continuation maps.  

Also, any symplectomorphism $\psi$ of $(M,\omega)$ induces an isomorphism $\mathcal{N}(M,\omega,H)\cong \mathcal{N}(M,\omega,H\circ\psi^{-1})$ (provided that the ambiguity in the choice of $A_0$ mentioned in Remark \ref{unnatural} is resolved consistently for $\mathcal{N}(M,\omega,H)$ and $\mathcal{N}(M,\omega,H\circ\psi^{-1})$, \emph{e.g.} by requiring the $I_{\omega}(A_0)$ to agree), so the gaps and/or essential barcodes associated to an element of $\widetilde{\mathrm{Ham}}(M,\omega)$ are invariant under conjugation by the symplectomorphism group $\mathrm{Symp}(M,\omega)$.
\end{remark}

\begin{ex}\label{smallfloer}
Suppose that $H\co S^1\times M\to \R$ is independent of the $S^1$-variable, say $H(t,x)=h(x)$ for some Morse function $h\co M\to \R$.  The Hamiltonian flow of $H$ then has rest points at each critical point of $h$. If $h$ is sufficiently $C^2$-small then these critical points $x\in \mathrm{Crit}(h)$ will be the only fixed points of the time-one map $\phi_{H}^{1}$ of $H$, and we will have $\mu_{H}([\gamma_x,w_x])=\mathrm{ind}_{-h}(x)$ where $\gamma_x\co S^1\to M$ and $w_x\co D^2\to M$ are constant maps to $x$ and $\mathrm{ind}_{-h}(x)$ is the Morse index of $x$ with respect to the Morse function $-h$.    Moreover $\mathcal{A}_{H}([\gamma_x,w_x])=-h(x)$.  In this situation one may reasonably expect the Floer complex $CF_{*}(H)$ to coincide, under the map $x\mapsto [\gamma_x,w_x]$, with (the coefficient extension of) the Morse complex, $\hat{\Lambda}_{\uparrow}\otimes \mathbf{CM}_{*}(-h)$, and also for the PSS map to coincide with the standard isomorphism between singular and Morse homology.  For example, under additional hypotheses and after multiplying $H$ by a small constant, the statement about the Floer complex follows from \cite[Proposition 7.4]{HS}; if  one uses the virtual techniques of  \cite{Par} (and hence characteristic zero coefficients) to define the Floer complex then the identification of the Floer complex of $H$ with the coefficient-extended Morse complex of $-h$ is established quite generally in the proof of \cite[Theorem 10.7.1]{Par}, and Pardon's $S^1$-localization techniques could plausibly also be used to prove the statement about the PSS map.  At any rate, let us assume that we are in a case where these correspondences hold.

In this case our Poincar\'e--Novikov structure $\mathcal{N}(M,\omega,H)$ coincides with the coefficient extension of the Morse-theoretic Poincar\'e--Novikov structure for $-h$ constructed in Section \ref{connext}.  Hence, in view of Theorem \ref{introiso}, the gaps and (for discrete $\Gamma$) essential barcode of $\mathcal{N}(M,\omega,H)$ can be read off from the interlevel barcode of the function $-h$.  (One should adjust appropriately for the coefficient extension by $\hat{\Lambda}$, in particular noting Remark \ref{unnat2} if $I_{c_1}\neq 0$.)  For a concrete example, if we assume that the functions in Figure \ref{genustwo} are scaled to be small enough that our assumptions hold, then the difference between their interlevel barcodes implies a lower bound on the Hofer distance between the elements in $\widetilde{\mathrm{Ham}}$ associated to their time-one Hamiltonian flows.  The fact that the sublevel barcodes of these functions are identical implies that no positive lower bound can be deduced from their concise barcodes, so the essential barcodes provide new information.
\end{ex}

To better understand the invariants associated to $\mathcal{N}(M,\omega,H)$ let us work toward an explicit description, up to isomorphism, of the corresponding filtered matched pairs $\mathcal{P}(\mathcal{N}(M,\omega,H))_k$.    By Definition \ref{pnfloer} and (\ref{pnkdef}), for any $k\in \Z$,  $\mathcal{P}(\mathcal{N}(M,\omega,H))_k$ takes the form \begin{equation}\label{hfpn} \xymatrix{ & \left({}^{\vee}\!HF_{2m-k}(H),{}^{\vee}\!\rho_{\uparrow}^{H}\right) \\ \left(\hat{\Lambda}\otimes_{\kappa}H_{*}(M;\kappa)\right)_{k} \ar[ru]^{\tilde{S}_{k}}\ar[rd]_{S_{k}} & \\ & \left(HF_k(H),\rho_{\uparrow}^{H}\right) } \end{equation}  where $S_k$ is the restriction of the PSS map $\Phi^{H}_{PSS}$ and the $\Lambda$-module homomorphism $\tilde{S}_k$ is, by (\ref{tildesk}), uniquely characterized by the property that, for all $x\in \left(\hat{\Lambda}\otimes_{\kappa}H_{*}(M;\kappa)\right)_{k} $ and $y\in \left(\hat{\Lambda}\otimes_{\kappa}H_{*}(M;\kappa)\right)_{2m-k}$, we have $(\tilde{S}_{k}x)(\Phi_{PSS}^{H}y)=(\mathcal{D}_kx)(y)$.  

 To begin to motivate what follows, define $\hat{H}\co S^1\times M\to \R$ by $\hat{H}(t,x)=-H(1-t,x)$.  Thus the time-one maps of $\hat{H}$ and $H$ are inverse to each other, and a loop $\gamma\co S^1\to M$ is a closed orbit of the Hamiltonian flow of $H$ if and only if its time-reversal $\bar{\gamma}\co S^1\to M$, given by $\bar{\gamma}(t)=\gamma(1-t)$, is a closed orbit of the Hamiltonian flow of $\hat{H}$.  The association $\gamma\mapsto \bar{\gamma}$ lifts to an involution $\mathbb{I}\co\widetilde{\mathcal{L}_0M}\to\widetilde{\mathcal{L}_0M}$ given by $\mathbb{I}([\gamma,w])=[\bar{\gamma},\bar{w}]$ where for any $w\co D^2\to M$ we define $\bar{w}\co D^2\to M$ by $\bar{w}(z)=w(\bar{z})$.  It is easy to see that the action functionals are related by $\mathcal{A}_{\hat{H}}=-\mathcal{A}_H\circ \mathbb{I}$.  Thus, via the action of $\mathbb{I}$, the (formal) positive gradient flow of $\mathcal{A}_H$ corresponds to the negative gradient flow of $\hat{H}$.  So in analogy with  (\ref{novupdown}), we would expect that ${}^{\vee}\!HF_{2m-k}(H)$ could be replaced in (\ref{hfpn}) by $\overline{HF_k(\hat{H})}$.  This will be reflected in Proposition \ref{floerpair}; the effect of the involution $\mathbb{I}$ appears as the conjugation involved in the map to $\overline{HF_k(\hat{H})}$, which, at least if $I_{c_1}=0$, can be interpreted as the action of $\mathbb{I}$ on the homology of the restriction of the covering space $\widetilde{\mathcal{L}_0M}$ to constant loops, corresponding to the submodule $\hat{\Lambda}\otimes_{\kappa}H_*(M;\kappa)$ of quantum homology.

For any $k\in \Z$, write $\tilde{P}_{2m-k}(H)$ for the set of critical points $[\gamma,w]$ for $\mathcal{A}_H$ having $\mu_{H}([\gamma,w])=2m-k$.  Denote by $P_{2m-k}(H)=\{\gamma_1,\ldots,\gamma_r\}$ the image of $\tilde{P}_{2m-k}(H)$ under the covering projection $\pi\co \widetilde{\mathcal{L}_0M}\to\mathcal{L}_{0}M$, and for $i=1,\ldots,r$ choose an element $[\gamma_i,w_i]\in\pi^{-1}(\{\gamma_i\})$.  Then $CF_{2m-k}(H)$ has $\{[\gamma_1,w_1],\ldots,[\gamma_r,w_r]\}$ as a basis over $\Lambda_{\uparrow}$, and $CF_{k}(\hat{H})$ has $\{[\bar{\gamma}_1,\bar{w}_1],\ldots,[\bar{\gamma}_r,\bar{w}_r]\}$ as a basis over $\Lambda_{\uparrow}$.  We can accordingly define a bilinear pairing \[ L_{H,k}^{F}\co CF_{2m-k}(H)\times CF_{k}(\hat{H})\to \Lambda_{\uparrow} \] by, for $\lambda_i,\mu_i\in\Lambda_{\uparrow}$, \begin{equation}\label{lkfdef} L_{H,k}^F\left(\sum_{i=1}^{r}\lambda_i[\gamma_i,w_i],\sum_{i=1}^{r}\mu_i[\bar{\gamma}_i,\bar{w}_i]\right) = \sum_{i=1}^{r}\lambda_i\mu_i.\end{equation}  Because, for $A\in \Gamma'$, one has $[\gamma_i,A\#w_i]=T^{I_{\omega}(A)}[\gamma_i,w_i]$ while $[\bar{\gamma}_i,\overline{A\#w_i}]=[\bar{\gamma}_i,(-A)\#\bar{w}_i]=T^{-I_{\omega}(A)}[\bar{\gamma}_i,\bar{w}_i]$, we see that $L_{H,k}^{F}$ is independent of the choice of $w_i$.  Moreover  we have, for any $a\in CF_{2m-k+1}(H),b\in CF_{k}(\hat{H})$, \[ L_{H,k}^{F}(\partial_{H}a,b)=(-1)^kL_{H,k-1}^{F}(a,\partial_{\hat{H}}b);\] modulo sign, this reflects the fact that a Floer trajectory $u\co \R\times S^1\to M$ for $\hat{H}$ corresponds, by reversing the directions of both the $\R$ and the $S^1$ variables, to a Floer trajectory for $H$, and the sign is calculated in Corollary \ref{floerdualdiff}.

Therefore $L_{H,k}^{F}$ descends to a bilinear pairing $\underline{L}_{H,k}^{F}\co HF_{2m-k}(H)\times HF_k(\hat{H})\to \Lambda_{\uparrow}$. Because $L_{H,k}^{F}$ is a non-degenerate pairing, and because $\Lambda_{\uparrow}$ is a field, the universal coefficient theorem implies that $\underline{L}_{H,k}^{F}$ is likewise non-degenerate, so that the map $a\mapsto \underline{L}_{H,k}^{F}(\cdot,a)$ defines an isomorphism of $\Lambda_{\uparrow}$-vector spaces $\lambda_{H,k}^{F}\co HF_{k}(\hat{H})\to HF_{2m-k}(H)^{*}$.  Applying the conjugation functor of Section \ref{conjsect}, the same map $\lambda_{H,k}^{F}$ can be viewed as an isomorphism of $\Lambda^{\downarrow}$-vector spaces $\overline{HF_k(\hat{H})}\cong {}^{\vee}\!HF_{-k}(H)$.  

This fits into a situation considered in more generality in \cite{U10}; the correspondence $[\bar{\gamma},\bar{w}]\leftrightarrow [\gamma,w]$ identifies the Floer-type complex $\mathbf{CF}(H)_{\uparrow}$   with what \cite{U10} calls the ``opposite complex'' to $\mathbf{CF}(\hat{H})_{\uparrow}$, and the pairing $L_{H,k}^{F}$ is identified with the pairing $L$ from \cite{U10}.   From this we infer:

\begin{prop}\label{dualcompute}
With ${}^{\vee}\!\rho_{\uparrow}^{H}\co {}^{\vee}\!H_{2m-k}\to\R\cup\{\infty\}$ defined from $\rho_{\uparrow}^{H}$ as in Section \ref{fmpdual}, for each $a\in HF_{k}(\hat{H})$ we have \[ -\rho_{\uparrow}^{\hat{H}}(a)= {}^{\vee}\!\rho_{\uparrow}^{H}(\lambda_{H,k}^{F}(a)).\]  
\end{prop}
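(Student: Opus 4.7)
Given the explicit identification, spelled out in the paragraph preceding the proposition, of $\mathbf{CF}(H)_{\uparrow}$ with the opposite complex of $\mathbf{CF}(\hat{H})_{\uparrow}$ (via $[\gamma_i, w_i] \leftrightarrow [\bar{\gamma}_i, \bar{w}_i]$) and of $L_{H,k}^F$ with the pairing studied in \cite{U10}, the plan is to deduce the proposition by applying the main duality theorem of \cite{U10} directly.  To set this up, I would verify that the hypotheses of that theorem hold: (i) the filtration identity $\ell_{\uparrow}^{\hat{H}}([\bar{\gamma}_i,\bar{w}_i]) = -\ell_{\uparrow}^H([\gamma_i,w_i])$, which is immediate from $\mathcal{A}_{\hat{H}} = -\mathcal{A}_H\circ\mathbb{I}$ and the definitions; and (ii) the differentials $\partial_H$ and $\partial_{\hat{H}}$ intertwine up to the sign $(-1)^k$ under the pairing $L_{H,k}^F$, which is what Corollary \ref{floerdualdiff} of the appendix provides.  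With these in place, the relevant theorem of \cite{U10} asserts that $\rho_{\uparrow}^{\hat{H}}$ on $HF_*(\hat{H})$ is identified, after the scalar-conjugating negation that converts a $\Lambda_{\uparrow}$-vector space into a $\Lambda^{\downarrow}$-vector space, via $\lambda_{H,k}^F$ with the dual filtration ${}^{\vee}\!\rho_{\uparrow}^H$ on ${}^{\vee}\!HF_{2m-k}(H)$; this is the content of the proposition.

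A direct proof avoiding this appeal would proceed in two steps.  For the inequality ${}^{\vee}\!\rho_{\uparrow}^H(\lambda_{H,k}^F(a)) \geq -\rho_{\uparrow}^{\hat{H}}(a)$, I would write cycles $c = \sum_i \lambda_i [\gamma_i,w_i] \in CF_{2m-k}(H)$ and $c' = \sum_i \mu_i [\bar{\gamma}_i,\bar{w}_i] \in CF_k(\hat{H})$ in the generator bases, use $\mathcal{A}_{\hat{H}}([\bar{\gamma}_i,\bar{w}_i]) = -\mathcal{A}_H([\gamma_i,w_i])$ together with the definition of the filtration functions to obtain $\nu_{\uparrow}(\lambda_i) + \nu_{\uparrow}(\mu_i) \geq -\ell_{\uparrow}^H(c) - \ell_{\uparrow}^{\hat{H}}(c')$ for each $i$, and then apply the non-Archimedean inequality to $L_{H,k}^F(c,c') = \sum_i \lambda_i \mu_i$ and pass to homology using optimal representatives (whose existence is \cite[Theorem 1.4]{U08}).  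The reverse inequality amounts to constructing paired $\rho_{\uparrow}^H$- and $\rho_{\uparrow}^{\hat{H}}$-orthogonal bases $\{b_i\},\{a_i\}$ of $HF_{2m-k}(H),HF_k(\hat{H})$ with $\underline{L}_{H,k}^F(b_i, a_j) = \delta_{ij}$ and $\rho_{\uparrow}^{\hat{H}}(a_i) = -\rho_{\uparrow}^H(b_i)$, analogous to Proposition \ref{bardual}.  Then $\lambda_{H,k}^F(a_i) = b_i^*$, and Remark \ref{dualbasisrem} gives ${}^{\vee}\!\rho_{\uparrow}^H(\lambda_{H,k}^F(a_i)) = \rho_{\uparrow}^H(b_i) = -\rho_{\uparrow}^{\hat{H}}(a_i)$; since both sides of the desired equality are non-Archimedean norms agreeing on an orthogonal basis, they agree everywhere on $\overline{HF_k(\hat{H})}$.

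The main obstacle is the sign and conjugation bookkeeping needed to translate the duality statement of \cite{U10} into the framework of the present paper, in particular since \cite{U10} does not explicitly distinguish $\Lambda_{\uparrow}$ from $\Lambda^{\downarrow}$ as we do here; the conjugation functor that turns the $\Lambda_{\uparrow}$-linear map $\lambda_{H,k}^F$ into a $\Lambda^{\downarrow}$-linear map $\overline{HF_k(\hat{H})} \to {}^{\vee}\!HF_{2m-k}(H)$ has to be tracked carefully through every step.  A subsidiary obstacle, if one takes the direct route, is the construction of paired orthogonal bases at the homology level, which requires combining the orthogonalizability of $HF_k(\hat{H})$ from \cite[Proposition 6.6]{UZ} with an approximation argument in the spirit of Lemma \ref{triconstruct} to ensure that the chosen representatives are simultaneously optimal on both sides of the pairing.
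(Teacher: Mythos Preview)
Your first approach---appealing to \cite{U10}---is what the paper does, but you are a bit vague about exactly which statement applies and you understate the translation needed.  The paper invokes \cite[Corollary~1.4]{U10}, which in the present setup yields the characterization
\[
-\rho_{\uparrow}^{\hat{H}}(a)=\inf\{\rho_{\uparrow}^{H}(b)\mid \underline{\Delta}(b,a)\neq 0\},
\]
where $\underline{\Delta}$ is the $\kappa$-valued pairing obtained by postcomposing $\underline{L}_{H,k}^{F}$ with the constant-term map $\tau\co \Lambda_{\uparrow}\to\kappa$.  This is \emph{not} literally the definition of ${}^{\vee}\!\rho_{\uparrow}^{H}(\lambda_{H,k}^{F}(a))=\inf_{b\neq 0}\{\rho_{\uparrow}^{H}(b)+\nu_{\uparrow}(\underline{L}_{H,k}^{F}(b,a))\}$, and the paper spends a few lines matching the two: one inequality because $\underline{\Delta}(b,a)\neq 0$ forces $\nu_{\uparrow}(\underline{L}_{H,k}^{F}(b,a))\leq 0$, and the other by the rescaling $b\mapsto b'=T^{-\nu_{\uparrow}(\underline{L}_{H,k}^{F}(b,a))}b$, which normalizes $\nu_{\uparrow}$ to zero and makes $\underline{\Delta}(b',a)\neq 0$.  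So the result does not drop out of \cite{U10} in one line; the short argument just described is the content of the proof.

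Your alternative direct route is also viable, and your first inequality is essentially the chain-level unpacking of the same estimate.  But for the reverse inequality, constructing paired orthogonal bases at the homology level (your ``subsidiary obstacle'') is considerably more work than the paper's one-line rescaling trick, and is unnecessary here.
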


\begin{proof}
Denote by $\underline{\Delta}\co HF_{2m-k}(H)\times HF_k(\hat{H})\to \kappa$ the map given by composing $\underline{L}_{H,k}^{F}$ with the function $\tau\co \Lambda_{\uparrow}\to\kappa$ defined by $\tau\left(\sum_gc_gT^g\right)=c_0$.  Then by \cite[Corollary 1.4]{U10} (generalizing \cite[Lemma 2.2]{EP}), we have, for each $a\in HF_k(\hat{H})$, \[ -\rho_{\uparrow}^{\hat{H}}(a)=\inf\{\rho_{\uparrow}^{H}(b)|\underline{\Delta}(b,a)\neq 0\}.\]  By definition, \[  {}^{\vee}\!\rho_{\uparrow}^{H}(\lambda_{H,k}^{F}(a))=\inf\left\{\left.\rho_{\uparrow}^{H}(b)+\nu_{\uparrow}\left(\underline{L}_{H,k}^{F}(b,a)\right)\right|b\neq 0\right\}.\]  By the definitions of $\nu_{\uparrow}$ and $\underline{\Delta}$, if $\underline{\Delta}(b,a)\neq 0$ then $\nu_{\uparrow}\left(\underline{L}_{H,k}^{F}(b,a)\right)\leq 0$, from which it follows readily that ${}^{\vee}\!\rho_{\uparrow}^{H}(\lambda_{H,k}^{F}(a))\leq -\rho_{\uparrow}^{\hat{H}}(a)$.  To establish the reverse inequality, note that if $b\neq 0$ but $\underline{L}_{H,k}^{F}(b,a)=0$ then $\rho_{\uparrow}^{H}(b)+\nu_{\uparrow}\left(\underline{L}_{H,k}^{F}(b,a)\right)=\infty$, while if $\underline{L}_{H,k}^{F}(b,a)\neq 0$ then the element $b'=T^{-\nu_{\uparrow}(\underline{L}_{H,k}^{F}(b,a))}b$ satisfies $\nu_{\uparrow}(\underline{L}_{H,k}^{F}(b',a))=0$, $\underline{\Delta}(b',a)\neq 0$, and $\rho_{\uparrow}^{H}(b')=\rho_{\uparrow}^{H}(b)+\nu_{\uparrow}\left(\underline{L}_{H,k}^{F}(b,a)\right)$.  Hence $-\rho_{\uparrow}^{\hat{H}}(a)\leq {}^{\vee}\!\rho_{\uparrow}^{H}(\lambda_{H,k}^{F}(a))$.
\end{proof}

By Proposition \ref{dualcompute}, up to isomorphism the $ \left({}^{\vee}\!HF_{2m-k}(H),{}^{\vee}\!\rho_{\uparrow}^{H}\right) $ appearing in (\ref{hfpn}) can be replaced by $(\overline{HF_k(\hat{H})},-\rho_{\uparrow}^{\hat{H}})$.  The following proposition specifies the map that correspondingly replaces $\tilde{S}_k$ in (\ref{hfpn}).  We again identify $\hat{\Lambda}$ with $\Lambda[q^{-1},q]$; as noted in Remark \ref{unnatural}, if $I_{c_1}\neq 0$ this identification depends on the choice of $A_0\in \hat{\Gamma}$ with $I_{c_1}(A_0)=N$.  (If $I_{c_1}=0$, $q$ should be interpreted as equal to $1$ in what follows, reflecting that $\hat{\Lambda}=\Lambda$.)  

\begin{prop}\label{floerpair}
Define $\mathfrak{c}\co \Lambda[q^{-1},q]\to \Lambda[q^{-1},q]$ by, for $\lambda_i\in\Lambda$, $\mathfrak{c}\left(\sum_i\lambda_iq^i\right)=\sum_i\bar{\lambda}_iq^i$, and for $a\in \hat{\Lambda}\otimes_{\kappa}H_{*}(M;\kappa)$ let $\bar{a}=(\mathfrak{c}\otimes 1_{H_*(M;\kappa)})(a)$.  Then, for all $a\in (\hat{\Lambda} \otimes_{\kappa}H_*(M;\kappa))_{k}$, the map $\tilde{S}_k$ of (\ref{hfpn}) satisfies \[ \tilde{S}_k(a)=\lambda_{H,k}^{F}\left(\Phi_{PSS}^{\hat{H}}(\bar{a})\right).\]
\end{prop}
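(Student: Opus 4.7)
My plan is as follows. First, I would use the characterizing property (\ref{tildesk}) of $\widetilde{S}_k$ together with the identification of $\lambda_{H,k}^{F}$ as the isomorphism $\overline{HF_k(\hat{H})}\to {}^{\vee}\!HF_{2m-k}(H)$ given by $a\mapsto \underline{L}_{H,k}^{F}(\cdot,a)$ to reduce the statement to the single identity
\[ \underline{L}_{H,k}^{F}\!\left(\Phi_{PSS}^{H}(y),\,\Phi_{PSS}^{\hat{H}}(\bar{a})\right) \;=\; (\mathcal{D}_k a)(y)\qquad\text{for all }a\in H_k,\ y\in H_{2m-k}, \]
where the right-hand side is as specified in (\ref{dquant}). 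By $\Lambda$-bilinearity of both sides (using the $\hat{\Lambda}$-linearity of the PSS maps together with the way $\mathfrak{c}$ conjugates $\Lambda$-coefficients but fixes $q$), it is enough to verify this on simple tensors $a=q^{i}T^{A}\otimes x$ and $y=q^{j}T^{B}\otimes y'$. By the dimension constraint in $\mathcal{D}_k$, both sides are automatically zero unless $j=-i$ and $\deg x+\deg y'=2m$, so the target identity becomes
\[ \underline{L}_{H,k}^{F}\!\left(\Phi_{PSS}^{H}(q^{-i}T^{B}y'),\,\Phi_{PSS}^{\hat{H}}(q^{i}T^{-A}x)\right)\;=\;T^{I_{\omega}(B-A)}\,(x\cap y'). \]

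Next, I would carry out the core moduli-space computation. The left-hand side is, by the definitions of $\Phi_{PSS}^H$, $\Phi_{PSS}^{\hat H}$ and of $L_{H,k}^F$ in (\ref{lkfdef}), a weighted count of configurations consisting of a PSS half-cylinder from a Morse representative of $y'$ into a critical point $[\gamma,w]\in\widetilde{\mathcal{L}_0M}$ for $\mathcal{A}_H$, together with a PSS half-cylinder from a Morse representative of $x$ into a critical point $[\bar{\gamma},\bar{w}']\in\widetilde{\mathcal{L}_0M}$ for $\mathcal{A}_{\hat{H}}$, where the pairing $L_{H,k}^{F}$ forces $[\gamma,w]$ and $[\bar{\gamma},\bar{w}']$ to be related by the involution $\mathbb{I}$, i.e., $w'=w$. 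Applying $\mathbb{I}$ to the $\hat{H}$-half-cylinder converts it into a reversed PSS half-cylinder for $H$ ending at the same periodic orbit with the opposite spanning disk orientation; gluing the two half-cylinders at the common orbit yields a one-parameter family of sphere-like configurations in $M$ passing through representatives of $x$ and $y'$. A standard neck-stretching / adiabatic argument shrinking the Floer perturbation to zero (analogous to the proof that $\Phi_{PSS}^{H}\circ (\Phi_{PSS}^{H})^{-1}$ is the identity, or Albers's PSS comparison argument) cobords this count to the signed intersection count $x\cap y'$, multiplied by the appropriate Novikov weight coming from the symplectic areas of the capping disks. The area bookkeeping shows that the combined capping sphere has total $\omega$-area $-I_{\omega}(A)+I_{\omega}(B)$, producing exactly the factor $T^{I_{\omega}(-A+B)}$ on the right-hand side of (\ref{dquant}); the sign reversal on $A$ is where the conjugation $\bar{a}$ enters, because $\mathbb{I}$ sends $[\gamma,A\#w]$ to $[\bar{\gamma},(-A)\#\bar{w}]$ while preserving the Chern number and hence the power of $q$.

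The step that I expect to be the main obstacle is the moduli-space/cobordism argument in the second paragraph, particularly the orientation and sign bookkeeping: one must confirm that the signs in the gluing degeneration agree with the intersection-pairing sign convention used in (\ref{dquant}) and with the sign convention entering the definition of $L_{H,k}^F$ (the latter being tied to Corollary \ref{floerdualdiff}). The algebraic reduction in the first paragraph and the Novikov-weight tracking are essentially formal; the geometric input---that pairing two PSS half-cylinders against each other via $L_{H,k}^F$ computes the quantum intersection pairing---is standard in Hamiltonian Floer theory, but making the signs precise in the presence of the involution $\mathbb{I}$, the conjugation $\mathfrak{c}$, and the non-trivial $q$-grading from $I_{c_1}$ (which requires consistency with the choice of $A_0$ discussed in Remark \ref{unnatural}) will be the delicate part of the verification.
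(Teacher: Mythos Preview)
Your initial reduction---using the characterizing property (\ref{tildesk}) to reduce to the identity $\underline{L}_{H,k}^{F}\!\left(\Phi_{PSS}^{H}(y),\Phi_{PSS}^{\hat{H}}(\bar a)\right)=(\mathcal{D}_ka)(y)$---matches the paper exactly (this is the paper's equation (\ref{needhat})). The subsequent geometric argument, however, takes a different route.

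You propose to glue an incoming PSS spiked plane for $H$ to an incoming PSS spiked plane for $\hat{H}$ (after applying the involution $\mathbb{I}$ to the latter), producing a one-parameter family of sphere-like configurations which you then cobord, via neck-stretching and shrinking the Hamiltonian perturbation, to the classical intersection count. This is a direct ``compose and degenerate'' argument. The paper instead avoids any cobordism at this stage: it observes that $z\mapsto 1/z$ induces a \emph{pointwise bijection} between the zero-dimensional moduli spaces defining $\tilde{\Phi}_{PSS}^{f,H^+}$ and those defining the \emph{inverse} PSS map $\tilde{\Psi}_{PSS}^{\hat{H}^-,-f}$, yielding the chain-level adjunction (\ref{pssadj}). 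Passing to homology and using that $\tilde{\Psi}_{PSS}^{\hat{H}^-,-f}$ is a homotopy inverse to $\tilde{\Phi}_{PSS}^{-f,\hat{H}^+}$ then reduces (\ref{needhat}) to the purely Morse-theoretic identity $\underline{L}^{M}_{f,k}(b,z)=(\mathcal{D}_k\bar z)(b)$, which follows from the fact that descending manifolds of $\pm f$ meet in the classical intersection number.

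Both approaches are valid. Yours bundles the two steps (adjunction and homotopy-inverse property) into a single cobordism, which is conceptually direct but requires you to re-run the transversality, compactness, and orientation analysis underlying Proposition~\ref{psscompose} in this slightly modified setting. The paper's route sidesteps this by factoring through the already-established homotopy-inverse property of $\Psi_{PSS}$; the only new geometric input is the elementary observation that the two spiked-plane moduli spaces are literally diffeomorphic via $z\mapsto 1/z$, with the sign verified in Corollary~\ref{orpssadj}. Your concern about signs is well placed---in the paper's approach the sign work is isolated entirely in that one corollary, whereas in yours it is distributed across the gluing and degeneration.
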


\begin{proof}
The map $a\mapsto \lambda_{H,k}^{F}(\Phi_{PSS}^{\hat{H}}(\bar{a}))$ is a homomorphism of $\Lambda$-modules from $(\hat{\Lambda}\otimes_{\kappa}H_*(M;\kappa))_{k}$ to the conjugated dual ${}^{\vee}\!HF_{2m-k}(H)$.  So since $\tilde{S}_k\co (\hat{\Lambda}\otimes_{\kappa}H_*(M;\kappa))_{k}\to {}^{\vee}\!HF_{2m-k}(H)$ is the unique $\Lambda$-module homomorphism obeying (\ref{tildesk}) with $n=2m$ and $S_{2m-k}$ equal to the restriction to $(\hat{\Lambda}\otimes_{\kappa}H_*(M;\kappa))_{2m-k}$ of the PSS isomorphism $\Phi_{PSS}^{H}\co QH_{2m-k}(M,\omega)\to HF_{2m-k}(H)$, it suffices to show that, for all $a\in   (\hat{\Lambda}\otimes_{\kappa}H_*(M;\kappa))_{k}$ and $b\in (\hat{\Lambda}\otimes_{\kappa}H_*(M;\kappa))_{2m-k}$ we have \begin{equation} \label{needhat}
\underline{L}_{H,k}^{F}(\Phi_{PSS}^{H}(b),\Phi_{PSS}^{\hat{H}}(\bar{a}))=(\mathcal{D}_ka)(b).
\end{equation}
In this regard, recall that $\Phi_{PSS}^{H}$ is defined via the identification of $H_*(M;\kappa)$ with the Morse homology $\mathbf{HM}_*(f;\kappa)$ of a Morse function $f$ on $M$.  (We assume familiarity with Morse homology in what follows; see also Section \ref{morseintro}.  Further explanation of the PSS maps, with a focus on orientations, appears in Section \ref{pssor}.)  Letting $\mathbf{CM}_*(f;\kappa)$ denote the Morse complex of $f$, $\Phi_{PSS}^{H}$ is defined as the map induced on homology by a chain map $\tilde{\Phi}^{f,H^+}_{PSS}\co \hat{\Lambda}_{\uparrow}\otimes_{\kappa}\mathbf{CM}_{*}(f;\kappa)\to CF_{*}(H)$ associated to a function $H^+\co \C\times M\to \R$ with $H^+(re^{2\pi it},x)=H(t,x)$ for $r>2$. Different choices of $H^+$ yield chain homotopic maps $\tilde{\Phi}^{f,H^+}_{PSS}$, and the usual continuation isomorphisms between the $\mathbf{HM}_{*}(\tilde{f})$ for different choices of $f$  identify the maps induced on homology by the corresponding $\tilde{\Phi}^{f,H}_{PSS}$. As these continuation isomorphisms also commute with the identifications between Morse and singular homology (\emph{cf}. (\ref{contcomm})) it follows that the same map $\Phi_{PSS}^{H}\co QH_{*}(M,\omega)\to HF_*(H)$ is induced by $\tilde{\Phi}^{f,H^+}_{PSS}$ for any choices of $f$ and $H^+$.  In particular, if we us $f$ in the construction of $\Phi_{PSS}^{H}$ then we may (and shall) use $-f$ in the construction of $\Phi_{PSS}^{\hat{H}}$. 

Because an index-$(2m-k)$ critical point for $f$ is the same thing as an index-$k$ critical point for $-f$, for each $k$ we have a nondegenerate $\kappa$-bilinear pairing $L^{M,\kappa}_{f,k}\co \mathbf{CM}_{2m-k}(f;\kappa)\times \mathbf{CM}_{k}(-f;\kappa)\to \kappa$ defined by $L^{M,\kappa}_{f,k}\left(\sum_i \alpha_ip_i,\sum_j\beta_jp_j\right)=\sum_i\alpha_i\beta_i$ for $\alpha_i,\beta_j\in \kappa$, if the index-$(2m-k)$ critical points of $f$ are $p_1,\ldots,p_r$.  The Morse boundary operators for  $\pm f$ are, up to a sign given by Corollary \ref{mor}, adjoint with respect to this pairing, so it descends to a nondegenerate pairing on homology.      In fact, by considering intersections of the descending manifolds for $f$ with those of $-f$, it is easy to see that the resulting pairing on homology coincides with the Poincar\'e intersection pairing under the canonical isomorphisms between $H_*(M;\kappa)$ and $\mathbf{HM}_*(\pm f;\kappa)$.  Now let us extend coefficients to obtain a non-degenerate pairing of $\Lambda_{\uparrow}$-vector spaces \[ L^{M}_{f,k}\co \left(\hat{\Lambda}_{\uparrow}\otimes_{\kappa}\mathbf{CM}_*(f;\kappa)\right)_{2m-k} \times \left(\hat{\Lambda}_{\uparrow}\otimes_{\kappa}\mathbf{CM}_*(-f;\kappa)\right)_{k}\to \Lambda_{\uparrow} \] by putting, for $\lambda_j,\mu_j\in \Lambda_{\uparrow}$, $x_j\in \mathbf{CM}_{2m-k+2Nj}(f;\kappa)$, and $y_j\in \mathbf{CM}_{k+2Nj}(-f;\kappa)$, \begin{equation}\label{lkmdef} L^{M}_{f,k}\left(\sum_{j\in \Z}q^j\lambda_j\otimes x_j,\sum_{j\in \Z}q^j\mu_j\otimes y_j\right) =\sum_{j\in \Z} \lambda_{-j}\mu_j L^{M,\kappa}_{f,k+2Nj}(x_{-j},y_j).\end{equation}   Passing to homology and identifying Morse homology with singular homology, we obtain an induced nondegenerate pairing $\underline{L}^{M}_{f,k}\co QH_{2m-k}(M,\omega)\times QH_{k}(M,\omega)\to \Lambda_{\uparrow}$ (which is in fact independent of $f$).  Given that $L^{M,\kappa}_{f,k}$ induces the Poincar\'e intersection pairing on homology, comparing to (\ref{dquant}) shows that the restriction of $\underline{L}^{M}_{f,k}$ to $(\hat{\Lambda}\otimes_{\kappa}H_*(M;\kappa))_{2m-k}\times (\hat{\Lambda}\otimes_{\kappa}H_*(M;\kappa))_{k}$ satisfies \[ (\mathcal{D}_ka)(b)=\underline{L}^{M}_{f,k}(b,\bar{a})\mbox{ for }a\in (\hat{\Lambda}\otimes_{\kappa}H_*(M;\kappa))_{k},\,b\in(\hat{\Lambda}\otimes_{\kappa}H_*(M;\kappa))_{2m-k}.\]   So, changing variables to $z=\bar{a}$, our desired conclusion (\ref{needhat}) reduces to the claim that, for $b\in (\hat{\Lambda}\otimes_{\kappa}H_*(M;\kappa))_{2m-k}$ and $z\in (\hat{\Lambda}\otimes_{\kappa}H_*(M;\kappa))_{k}$, we have \begin{equation}\label{needhat2} \underline{L}^{F}_{H,k}(\Phi_{PSS}^{H}b,\Phi_{PSS}^{\hat{H}}z)=\underline{L}^{M}_{f,k}(b,z).\end{equation}  

This follows from the construction \cite{PSS},\cite[Section 20.3]{ohbook} of the chain homotopy inverse $\tilde{\Psi}^{\hat{H}^-,-f}_{PSS}\co CF_*(\hat{H})\to \hat{\Lambda}_{\uparrow}\otimes_{\kappa}\mathbf{CM}_*(-f;\kappa)$ to $\tilde{\Phi}^{-f,\hat{H}^+}_{PSS}$, where $\hat{H}^-\co \left( (\C\setminus\{0\})\cup\{\infty\}\right)\times M\to \R$ satisfies $\hat{H}^-(re^{2\pi it},x)=\hat{H}(t,x)$ for $r<1/2$.  Taking $\hat{H}^-(z,m)=H^+(1/z,m)$,  one has a well-known relation (see \cite[Section 2.6.8]{EP},\cite[Proof of Proposition 3.13(vii)]{U11}, though these references do not address sign issues), for all and $c\in (\hat{\Lambda}_{\uparrow}\otimes_{\kappa}\mathbf{CM}_*(f;\kappa))_{2m-k}$ and $d\in CF_k(\hat{H})$, \begin{equation}\label{pssadj} L_{H,k}^{F}(\tilde{\Phi}_{PSS}^{f,H^+}(c), d)=L_{f,k}^{M}(c,\tilde{\Psi}^{\hat{H}^-,-f}_{PSS}(d)), \end{equation} essentially because the spiked planes enumerated by the left-hand side are in one-to-one correspondence, by reversing the time-variable of the gradient flowline and precomposing the perturbed-holomorphic plane by $z\mapsto 1/z$, with the spiked planes enumerated by the right-hand side.  Corollary \ref{floerdualdiff} shows that, with orientation conventions as described in the Appendix, (\ref{pssadj}) holds as stated with no sign.  Passing to homology and using that $\tilde{\Psi}^{H,f}_{PSS}$ induces on homology the inverse to $\Phi_{PSS}^{H}$, we infer that (\ref{needhat2}) holds for all $b\in QH_{2m-k}(M,\omega)$ and $z\in QH_{k}(M,\omega)$, and in particular for all $b\in (\hat{\Lambda}\otimes_{\kappa}H_*(M;\kappa))_{2m-k}$ and $z\in (\hat{\Lambda}\otimes_{\kappa}H_*(M;\kappa))_{k}$.  This establishes (\ref{needhat}) and hence the proposition.
\end{proof}

\begin{cor}\label{describepn}
The filtered matched pair $\mathcal{P}(\mathcal{N}(M,\omega,H))_{k}$ associated to the Poincar\'e--Novikov structure $\mathcal{N}(M,\omega,H)$ is isomorphic to the filtered matched pair \begin{equation}\label{betterpn} \xymatrix{ & \left(\overline{HF_k(\hat{H})},-\rho_{\uparrow}^{\hat{H}}\right) \\ (\hat{\Lambda}\otimes_{\kappa}H_*(M;\kappa))_{k} \ar[ru]^{\Phi^{\hat{H}}_{PSS}\circ(\mathfrak{c}\otimes 1)} \ar[rd]_{\Phi^{H}_{PSS}} & \\ & (HF_k(H),\rho_{\uparrow}^{H}) } \end{equation}  where $\mathfrak{c}\co \hat{\Lambda}\to\hat{\Lambda}$ is defined by $\mathfrak{c}\left(\sum \lambda_iq^i\right)=\bar{\lambda}_iq^i$ for $\lambda_i\in \Lambda$.
\end{cor}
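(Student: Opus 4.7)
The plan is to exhibit the isomorphism explicitly using the maps already produced in Propositions \ref{dualcompute} and \ref{floerpair}, after which verifying that the two filtered matched pairs match amounts to reading off what has been proved. Concretely, an isomorphism of filtered matched pairs (in the sense implicit in Definition \ref{tmorphdfn} with $t=0$) between $\mathcal{P}(\mathcal{N}(M,\omega,H))_k$ and the diagram (\ref{betterpn}) requires: a $\Lambda_\uparrow$-vector space isomorphism $\alpha_\uparrow$ between the bottom normed spaces; a $\Lambda^\downarrow$-vector space isomorphism $\alpha^\downarrow$ between the top normed spaces; agreement of the lower and upper slanted arrows after composition; and exact preservation of the filtration functions.

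First I will take $\alpha_\uparrow$ to be the identity on $HF_k(H)$, which trivially makes the two bottom normed spaces equal and intertwines $S_k$ with $\Phi_{PSS}^H$ since by Definition \ref{pnfloer} the map $S_k$ is by construction the degree-$k$ restriction of $\Phi^{H}_{PSS}$. For $\alpha^\downarrow$ I will use $\lambda^F_{H,k}\co \overline{HF_k(\hat{H})}\to {}^{\vee}\!HF_{2m-k}(H)$, the $\Lambda^\downarrow$-vector space isomorphism constructed just before Proposition \ref{dualcompute} out of the Floer pairing $\underline{L}^F_{H,k}$.

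The required filtration identity ${}^{\vee}\!\rho^H_{\uparrow}\circ \lambda^F_{H,k}=-\rho^{\hat{H}}_{\uparrow}$ is exactly Proposition \ref{dualcompute}, which transports $(\overline{HF_k(\hat{H})},-\rho^{\hat{H}}_{\uparrow})$ isometrically onto $({}^{\vee}\!HF_{2m-k}(H),{}^{\vee}\!\rho^H_{\uparrow})$. The commutativity of the upper triangle, namely $\tilde{S}_k = \lambda^F_{H,k}\circ \Phi^{\hat{H}}_{PSS}\circ(\mathfrak{c}\otimes 1)$ on $(\hat{\Lambda}\otimes_{\kappa}H_*(M;\kappa))_k$, is precisely Proposition \ref{floerpair}. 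Putting these two identifications together with the (tautological) bottom identification yields the isomorphism.

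There is essentially no remaining work beyond assembling these pieces; the conceptual content has been absorbed into Propositions \ref{dualcompute} and \ref{floerpair}. The only minor point to check is that $\lambda^F_{H,k}$ is genuinely an isomorphism of $\Lambda^\downarrow$-vector spaces (rather than merely of $\Lambda_\uparrow$-vector spaces after viewing the dual differently), but this is built into its definition via the conjugation functor of Section \ref{conjsect} applied to the nondegenerate $\Lambda_\uparrow$-bilinear pairing $\underline{L}^F_{H,k}$, and the symmetry condition (\ref{pdsym}) used in the proof of Proposition \ref{floerpair} ensures that the maps in the resulting diagram are the correct ones for the matched pair structure. Thus the corollary reduces to a one-line verification once the two propositions are in hand, and this is how I would present it.
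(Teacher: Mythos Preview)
Your proposal is correct and follows exactly the same approach as the paper's proof: use the identity on $HF_k(H)$ for the bottom and $\lambda^F_{H,k}$ for the top, then invoke Proposition~\ref{dualcompute} for the filtration identity and Proposition~\ref{floerpair} for the commutativity of the upper triangle. The paper's proof is the same argument compressed into a single sentence.
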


\begin{proof}
Indeed, the isomorphism with (\ref{hfpn}) is provided by $\lambda_{H,k}^{F}\co \overline{HF_k(\hat{H})}\to {}^{\vee}\!HF_{2m-k}(H)$, under which $-\rho_{\uparrow}^{\hat{H}}$ corresponds to ${}^{\vee}\!\rho_{\uparrow}^{H}$ and $\Phi^{\hat{H}}_{PSS}\circ(\mathfrak{c}\otimes 1)$ corresponds to $\tilde{S}_k$ by the preceding two propositions.
\end{proof}

Note that one can define $\mathcal{N}(M,\omega,H)$ and hence
$\mathcal{P}(\mathcal{N}(M,\omega,H))$ even for degenerate Hamiltonians $H$ for which the Floer complex is not directly defined, by continuity with respect to $H$, as the assignments $H\mapsto \rho_{\uparrow}^{H}\circ\Phi_{PSS}^H$ are continuous, \emph{cf}. \cite[Section 12.4]{MS}.  Corollary \ref{describepn} will likewise still hold for degenerate $H$, with $HF_k(H)$ and $HF_k(\hat{H})$ interpreted as direct limits over approximating generic nondegenerate Hamiltonians.

For any Hamiltonian $H\co S^1\times M\to \R$ let $\tilde{\phi}_H$ be the element of $\mathrm{Ham}(M,\omega)$ represented by the flow $\{\phi_{H}^{t}\}_{t\in [0,1]}$
Recall that if $a\in QH_{*}(M,\omega)$ and $\tilde{\phi}\in \widetilde{\mathrm{Ham}}(M,\omega)$ the \textbf{spectral invariant} $c(a,\tilde{\phi})$ is defined by $c(a,\tilde{\phi})=\rho_{\uparrow}^{H}(\Phi_{PSS}^{H}(a))$ for one and hence every mean-normalized Hamiltonian $H$ such that $\tilde{\phi}_H=\tilde{\phi}$. (If such $H$ are degenerate the right-hand-side is defined by continuity, approximating $H$ by generic nondegenerate Hamiltonians.) From Corollary \ref{describepn} it follows that, for   mean-normalized $H$, the essential barcode of $\mathcal{N}(M,\omega,H)$, if it exists (as it always does if $\Gamma$ is discrete), consists of intervals modulo $\Gamma$-translation $[c(e_j,\tilde{\phi}),-c(\bar{e}_j,\tilde{\phi}^{-1})]^{\Gamma}$ in degree $k$ or $(-c(\bar{e}_j,\tilde{\phi}^{-1}),c(e_j,\tilde{\phi}))^{\Gamma}$ in degree $k-1$ (whichever is nonempty), as $e_j$ varies through a doubly-orthogonal basis for the $\Lambda$-module $
(\Lambda\otimes_{\kappa}H_*(M,\kappa))_{k}$.  Here $\bar{e}_j=(\mathfrak{c}\otimes 1)e_j$.  (In particular, if $\Gamma=\{0\}$, \emph{e.g.} if $(M,\omega)$ is monotone, then $\bar{e}_j=e_j$.)  The $i$th gap $\mathcal{G}_{k,i}(\mathcal{N}(M,\omega,H))$ is, in this situation, equal to the $i$th largest of the values $-\left(c(e_j,\tilde{\phi})+c(\bar{e}_j,\tilde{\phi}^{-1})\right)$ by Proposition \ref{gapchar}.  Regardless of whether the essential barcode is defined (\emph{i.e.}, whether doubly-orthogonal bases exist for (\ref{betterpn}) for all $k$), we have \begin{equation}\label{hfgk} \mathcal{G}_{k,i}(\mathcal{N}(M,\omega,H))=-\inf\left\{\gamma\left|\begin{array}{c}(\exists S \subset (\Lambda\otimes_{\kappa}H_*(M,\kappa))_{k}\mbox{ independent over }\Lambda)\\ (\#S=i\mbox{ and }(\forall x\in S)\left(c(x,\tilde{\phi})+c(\bar{x},\tilde{\phi}^{-1})\leq\gamma\right)\end{array}\right.\right\} \end{equation} When one does have doubly-orthogonal bases, Corollary \ref{pndual} implies the non-obvious fact that, for any $k$, the set of quantities in (\ref{hfgk}) as $i$ varies is reflected across $0$ when $k$ is replaced by $2m-k$.

The appearance of quantities $c(x,\tilde{\phi})+c(\bar{x},\tilde{\phi}^{-1})$ above is obviously reminiscent of the spectral pseudonorm $\gamma(\tilde{\phi})=c([M],\tilde{\phi})+c([M],\tilde{\phi}^{-1})$ on $\widetilde{\mathrm{Ham}}(M,\omega)$ \cite[Section 22.1]{ohbook}\footnote{As we are taking $\gamma$ to be defined on $\widetilde{\mathrm{Ham}}$ rather than $\mathrm{Ham}(M,\omega)$ we can only call it a pseudonorm, as it is conceivable that $\gamma$ might vanish on some nontrivial element of $\pi_1(\mathrm{Ham}(M,\omega))$.}, noting that $\overline{[M]}=[M]$.  Unless $QH_{2m}(M,\omega)$ is one-dimensional (equivalently, either $I_{c_1}=0$ or $N>m$), though, it is not clear whether any of the $\mathcal{G}_{2m,i}(\mathcal{N}(M,\omega,H))$ will equal $\gamma(\tilde{\phi})$.   

As already noted in Remark \ref{htopyinv}, the $\mathcal{G}_{k,i}$ and (when defined) the essential barcode associated to a normalized Hamiltonian $H$ share with $\gamma$ the property of being invariant under conjugation by $\mathrm{Symp}(M,\omega)$, and they depend only on the class $\tilde{\phi}_H$ in $\widetilde{\mathrm{Ham}}(M,\omega)$ represented by the flow of $H$.  It is natural to ask under what circumstances they depend only on the time-one map $\phi_{H}^{1}\in \mathrm{Ham}(M,\omega)$. Certainly this is the case if the spectral invariants descend to $\mathrm{Ham}(M,\omega)$ as in \cite[Proposition 3.1 (i) and (ii)]{M10}, but this is a somewhat stringent condition on $(M,\omega)$. In general, an element $\eta\in\pi_1(\mathrm{Ham}(M,\omega))$, together with a homotopy class $\tilde{\eta}$ of sections of the fiber bundle over $S^2$ associated to $\eta$ by the clutching construction, induces an isomorphism $\mathcal{S}_{\tilde{\eta},H}\co CF_*(H)\to CF_*(\eta^*H)$ which shifts filtration by a uniform amount $\mathbb{A}(\tilde{\eta})$ (see \cite[Exercise 12.5.1]{MS}).  This is well-known to imply that the concise barcodes associated by \cite{UZ} to $\tilde{\phi}_H$ and $\tilde{\phi}_H\circ\eta$ are related by translating each interval by $\mathbb{A}(\tilde{\eta})$ and appropriately shifting the gradings; in particular, after adjusting for the grading shift, the lengths of the intervals do not change when $\tilde{\phi}_H$ is replaced by another element of $\widetilde{\mathrm{Ham}}(M,\omega)$ lifting $\phi_{H}^{1}$.  One might expect then that the gaps $\mathcal{G}_{k,i}$, which also represent lengths of bars in a barcode, would also be invariant modulo a grading shift.  However, since the map induced on homology by $\mathcal{S}_{\tilde{\eta},H}$ corresponds under the PSS map to the Seidel action $\mathcal{S}_{\tilde{\eta}}^{QH}\co QH_*(M,\omega)\to QH_*(M,\omega)$ from \cite{Se} of $\tilde{\eta}$ on quantum homology, such invariance may not hold when the Seidel representation is far from being trivial, as the following example shows.

\begin{ex}\label{cpn}
	Consider $M=\mathbb{C}P^n$  with its standard  symplectic structure $\omega_{FS}$ giving area $\pi$ to a complex projective line.  
	We have $\Gamma=\{0\}$ since $(\mathbb{C}P^n,\omega_{FS})$ is monotone.  As the minimal Chern number is $n+1$, in our conventions $QH_k(\mathbb{C}P^n,\omega_{FS})$ is trivial if $k$ is odd and is isomorphic to $\kappa$ for all even integers $k$, with multiplication by $q$ giving isomorphisms $QH_k(\mathbb{C}P^n,\omega_{FS})\cong QH_{k-2(n+1)}(\mathbb{C}P^n,\omega_{FS})$.   
	
	The identity element $\tilde{1}\in\widetilde{\mathrm{Ham}}(\mathbb{C}P^n,\omega_{FS})$ is generated by the Hamiltonian $0$ and so, for any nonzero $a\in H_k(\mathbb{C}P^n;\kappa)$ and $j\in \Z$, one has \[ c(q^ja,\tilde{1})=-j\pi \] (\emph{cf}. \cite[Theorem 21.3.7(3)]{ohbook}).  So for any even degree $k$, say with $k+2j(n+1)\in \{0,\ldots,2n\}$ so that $QH_k(M,\omega)=q^jH_{k+2j(n+1)}(\C P^n;\kappa)$, either the degree-$k$ essential barcode consists of a single element $[-j\pi,j\pi]$ or the degree $(k-1)$ essential barcode consists of a single element $(j\pi,-j\pi)$, depending on the sign of $j$; in either case the corresponding gap $\mathcal{G}_{k,1}(\mathbb{C}P^n)$ is $2j\pi$.
	
	Consider instead the loop $\eta\co S^1\to \mathrm{Ham}(\mathbb{C}P^n,\omega_{FS})$ given by $\eta(t)([z_0:z_1:\cdots:z_n])=[e^{2\pi it}z_0:z_1:\cdots:z_n]$, which is generated by the normalized Hamiltonian \[ G_{\eta}(t,[z_0:z_1:\cdots:z_n]) = \pi\left(\frac{1}{n+1}-\frac{|z_0|^2}{\sum_{i=0}^{n}|z_i|^2}\right).\]  A special case of the first example in \cite[Section 11]{Se} shows that, for one class of sections $\tilde{\eta}$ of the associated fiber bundle over $S^2$, the Seidel action $\mathcal{S}_{\tilde{\eta}}^{QH}\co QH_{*}(\mathbb{C}P^n,\omega_{FS})\to  QH_{*}(\mathbb{C}P^n,\omega_{FS})$ is given by quantum multiplication by the hyperplane class $h\in H_{2n-2}(\mathbb{C}P^n;\omega_{FS})$.  
	
	Recall (see \emph{e.g.} \cite[Example 11.1.10]{MS}) that if $h^{\cap i}\in H_{2n-2i}(\mathbb{C}P^n;\kappa)$ denotes the $i$th power of $h$ under the standard intersection product, the quantum product $\ast$ on $QH_{*}(\mathbb{C}P^n,\omega_{FS})$ is determined by the facts that it is associative and bilinear over $\kappa[q,q^{-1}]$, and that it  obeys $h\ast h^{\cap i}=h^{\cap(i+1)}$ for $0\leq i\leq n-1$ while $h\ast h^{\cap n}=q$.  By \cite[Lemma 12.5.3]{MS}, 
	the spectral invariants of $\tilde{1}$ and of the homotopy classes $[\eta],[\eta]^{-1}\in \pi_1(\mathrm{Ham}(M,\omega))\subset \widetilde{\mathrm{Ham}}(M,\omega)$ are related by \begin{equation}\label{etaact} c(a,[\eta]^{-1})=c(\mathcal{S}_{\tilde{\eta}}a,\tilde{1})+\mathbb{A}(\tilde{\eta}) \quad\mbox{and}\quad c(a,[\eta])=c(\mathcal{S}_{\tilde{\eta}}^{-1}a,\tilde{1})-\mathbb{A}(\tilde{\eta}) \end{equation}  for a constant $\mathbb{A}(\tilde{\eta})$.  To find this constant easily, we can observe that since $\mathcal{S}_{\tilde{\eta}}^{n+1}$ acts by quantum multiplication by the $(n+1)$st quantum power of $h$, which equals $q$, and since $[\eta]^{n+1}=\tilde{1}$, iterating (\ref{etaact}) gives $c(a,\tilde{1})=c(qa,\tilde{1})+(n+1)\mathbb{A}(\tilde{\eta})$ and hence \[ \mathbb{A}(\tilde{\eta})=\frac{\pi}{n+1}.\]

	Now if $i\in \{1,\ldots,n-1\}$ the generator $h^{\cap i}$ of $QH_{2n-2i}(\mathbb{C}P^n,\omega_{FS})$ has $\mathcal{S}_{\tilde{\eta}}^{\pm 1}h^{\cap i}=h^{\cap(i\pm 1)}$; we hence have $c(q^jh^{\cap i},[\eta])=-j\pi -\frac{\pi}{n+1}$ and $c(q^j h^{\cap i},[\eta]^{-1})=-j\pi +\frac{\pi}{n+1}$.  So for $0<i<n$ the degree-$(2n-2i-2(n+1)j)$ quantum homology of $\mathbb{C}P^n$ contributes to the essential barcode of $\mathcal{N}(\C P^n,\omega_{FS},G_{\eta})$  a single  interval $\left[-j\pi-\frac{\pi}{n+1},j\pi-\frac{\pi}{n+1}\right]$ or $\left(j\pi-\frac{\pi}{n+1},-j\pi-\frac{\pi}{n+1}\right)$, whichever is nonempty.  In particular the gaps in these gradings, namely $2\pi j$, are the same for both Hamiltonians $0$ and $G_{\eta}$.  On the other hand, we have \[ \mathcal{S}_{\tilde{\eta}}h^{\cap n}=q,\quad \mathcal{S}_{\tilde{\eta}}^{-1}h^{\cap n}=h^{\cap(n-1)},\quad \mathcal{S}_{\tilde{\eta}}h^{\cap 0}=h,\quad\mbox{and } \mathcal{S}_{\tilde{\eta}}^{-1}h^{\cap 0}=q^{-1}h^{\cap n}.\]
	Hence \[ c(q^jh^{\cap n},[\eta])=c(q^jh^{\cap(n-1)},\tilde{1})-\frac{\pi}{n+1}=-j\pi-\frac{\pi}{n+1} \] while \[ c(q^jh^{\cap n},[\eta]^{-1})=c(q^{j+1},\tilde{1})+\frac{\pi}{n+1}=-(j+1)\pi+\frac{\pi}{n+1},\] and similarly \[ c(q^jh^{\cap 0},[\eta])=-(j-1)\pi-\frac{\pi}{n+1}\quad\mbox{and} \quad c(q^jh^{\cap 0},[\eta]^{-1})=-j\pi +\frac{\pi}{n+1}.\]
So the contribution to the essential barcode from $QH_{-2(n+1)j}(\C P^n,\omega_{FS})$ consists of a single interval with endpoints $ -\pi\left(j+\frac{1}{n+1}\right)$ and $\pi\left(j+1-\frac{1}{n+1}\right)$ and that from   $QH_{2n-2(n+1)j}(\C P^n,\omega_{FS})$ consists of a single interval with endpoints $-\pi\left(j-1+\frac{1}{n+1}\right)$ and $\pi\left(j-\frac{1}{n+1}\right)$.  In particular, the gaps in these degrees for the Hamiltonian $H_{\eta}$, namely $(2j+1)\pi$ for degree $-2(n+1)j$ and $(2j-1)\pi$ for degree $2n-(2n+1)j$,  are distinct from the gaps (in either these degrees or any others) of the Hamiltonian $0$, even though these Hamiltonians generate the same time-one map.  As mentioned earlier, this contrasts with the situation for standard, sublevel-type Floer-theoretic barcodes as in \cite{PS},\cite{UZ}, in which the lengths of intervals depend only on the time-one map, possibly modulo a grading shift. 
\end{ex}

\section{Chain level constructions and persistence modules}\label{clsec}

In the present section we lift the notions of filtered matched pairs and Poincar\'e--Novikov structures to the level of chain complexes and (homotopy classes of) chain maps between them.  This will allow us to see the concise and essential barcodes as arising from a common algebraic structure, and eventually to relate these barcodes to interlevel persistence for $S^1$-valued Morse functions.

\subsection{Generalities about filtrations and cones}

We collect here some conventions, definitions, and simple results concerning modules that are filtered by partially ordered sets, and constructions associated to the filtered maps between them.

Let $(\mathcal{P},\preceq)$ be a partially ordered set, and $R$ be a commutative ring.  A \textbf{$\mathcal{P}$-filtered module} over $R$ is an $R$-module $M$ equipped with submodules $M^{\preceq p}$ for all $p\in\mathcal{P}$ such that $M^{ \preceq p}\subset M^{\preceq q}$ whenever $p\preceq q$, and $\cup_{p\in P}M^{\preceq p}=M$.  

\begin{ex}\label{orthfiltex} In the language of the preceding sections, if $(V_{\uparrow},\rho_{\uparrow})$ is a normed $\Lambda_{\uparrow}$-space, then $V_{\uparrow}$ obtains the structure of a $\R$-filtered module over $\kappa$\footnote{While $V_{\uparrow}$ is a vector space over $\Lambda_{\uparrow}$, the individual $V_{\uparrow}^{\preceq p}$ are not preserved by scalar multiplication by elements of $\Lambda_{\uparrow}$ so $V_{\uparrow}$ is not a filtered module over $\Lambda_{\uparrow}$.  It is a filtered module over the subring of $\Lambda_{\uparrow}$ consisting of elements with $\nu_{\uparrow}\geq 0$, though we will not use this in a direct way.} by setting, for $p\in \R$, $V_{\uparrow}^{\preceq p}=\{v\in V_{\uparrow}|\rho_{\uparrow}(v)\leq p\}$.  (The fact that $V_{\uparrow}^{\preceq p}$ is a $\kappa$-subspace  follows from the inequality $\rho_{\uparrow}(v+w)\leq \max\{\rho_{\uparrow}(v),\rho_{\uparrow}(w)\}$.)

On the other hand, if $(V^{\downarrow},\rho^{\downarrow})$ is a normed $\Lambda^{\downarrow}$-space, it is the superlevel sets, not the sublevel sets, of $\rho^{\downarrow}$ that are $\kappa$-subspaces of $V^{\downarrow}$.  So we obtain a $\R$-filtered module structure (with the usual partial order on $\R$) by setting \[ V^{\downarrow\preceq p}=\{v\in V^{\downarrow}|\rho^{\downarrow}(v)\geq -p\}.\]
\end{ex}

If $M$ and $N$ are $\mathcal{P}$-filtered modules over $R$, an $R$-module homomorphism $\phi\co M\to N$ is said to be filtered if $\phi(M^{\preceq p})\subset N^{\preceq p}$ for all $p$. If $M_1,\ldots,M_k$ are filtered $\mathcal{P}$-modules, their \textbf{filtered direct sum} is the module $\oplus_{i=1}^{k}M_i$ equipped with the filtration given by \[ \left(\oplus_{i=1}^{k}M_i\right)^{\preceq p}=\oplus_{i=1}^{k}(M_i^{\preceq p}).\]  If $(V_{\uparrow},\rho_{\uparrow})$ is a normed $\Lambda_{\uparrow}$-space, a $\Lambda_{\uparrow}$-basis $\{x_1,\ldots,x_k\}$ is a $\rho_{\uparrow}$-\emph{orthogonal} basis for $V_{\uparrow}$ (in the sense of Definition \ref{orthdef}) if and only if the obvious addition isomorphism \[ \oplus_{i=1}^{k}(\Lambda_{\uparrow}x_i)\to V_{\uparrow} \] is a \emph{filtered} isomorphism (where the $\Lambda_{\uparrow}$-spans $\Lambda_{\uparrow} x_i$ of the $x_i$ are separately given the filtrations that they inherit as submodules of $V_{\uparrow}$). 

 A \textbf{$\mathcal{P}$}-filtered chain complex of $R$-modules is a chain complex $\mathcal{C}=(C_*,\partial)$ of $R$-modules in which the chain modules $C_k$ are $\mathcal{P}$-filtered and the boundary maps $\partial\co C_k\to C_{k-1}$ are filtered homomorphisms.  Thus for each $p\in \mathcal{P}$ we have a subcomplex $\mathcal{C}^{\preceq p}$ of $\mathcal{C}$, with the chain modules of $\mathcal{C}^{\preceq p}$ given by the $C_{k}^{\preceq p}$.  Recalling that a \textbf{persistence module} over $\mathcal{P}$ is a functor to the category of $R$-modules from the category with object set $\mathcal{P}$ and a unique morphism from $p$ to $q$ whenever $p\preceq q$,  for each $k$ the maps $H_{k}^{\preceq p}(\mathcal{C})\to H_k^{\preceq q}(\mathcal{C})$, induced on homology by the inclusions $C_k^{\preceq p}\to C_{k}^{\preceq q}$, form the structure of a persistence module.  Persistence modules form a category, with morphisms given by natural transformations of functors.

If $\mathcal{C}=(C_*,\partial_C)$ and $\mathcal{D}=(D_*,\partial_D)$ are $\mathcal{P}$-filtered chain complexes, two filtered chain maps $f,g\co \mathcal{C}\to\mathcal{D}$ are said to be filtered homotopic if there is a filtered module homomorphism $K\co C_*\to D_{*+1}$ such that $f-g=\partial_D\circ K+K\circ \partial_C$,  A \textbf{filtered homotopy equivalence} from $\mathcal{C}$ to $\mathcal{D}$ is a filtered chain map $f\co \mathcal{C}\to\mathcal{D}$ such that there is another filtered chain map $h\co \mathcal{D}\to\mathcal{C}$ such that $f\circ h$ and $h\circ f$ are filtered homotopic to the respective identities. A \textbf{filtered quasi-isomorphism} from $\mathcal{C}$ to $\mathcal{D}$ is  filtered chain map $f\co\mathcal{C}\to\mathcal{D}$ such that each induced map on homology $f_*\co H_k(\mathcal{C}^{\preceq p})\to H_k(\mathcal{D}^{\preceq p})$ is an isomorphism, \emph{i.e.} such that $f$ induces an isomorphism between the corresponding homology persistence modules. Of course, a filtered homotopy equivalence is  a filtered quasi-isomorphism.  In general, we will use the terms ``homotopy'' and ``chain homotopy'' interchangeably.

Now suppose that $\mathcal{C}=(C_*,\partial_C)$ is a $\mathcal{P}$-filtered chain complex of $R$-modules and $\mathcal{D}=(D,\partial_D)$ is another complex of $R$-modules (which we do not assume to be filtered).  If $f\co \mathcal{C}\to\mathcal{D}$ is a chain map, the \textbf{mapping cone} of $f$, denoted $\mathrm{Cone}(f)$, is the chain complex whose $i$th graded part is $\mathrm{Cone}(f)_i:=C_{i-1}\oplus D_i$ and whose differential is given by $\partial_{\mathrm{Cone}}(c,d)=(-\partial_Cc,fc+\partial_Dd)$.  $\mathrm{Cone}(f)$ inherits the $\mathcal{P}$-filtration from $\mathcal{C}$, setting $\mathrm{Cone}(f)_{i}^{\preceq p}=C_{i-1}^{\preceq p}\oplus D_i$.

For all $p\in\mathcal{P}$ one then has a short exact sequence of chain complexes \[ \xymatrix{0 \ar[r] & \mathcal{D}\ar[r] & \mathrm{Cone}(f)^{\preceq p}\ar[r] & \mathcal{C}^{\preceq p}[-1] \ar[r] & 0 } \] (where the maps are the obvious inclusion and projection, and where we use the usual convention that, for a chain complex $\mathcal{E}=(E_*,\partial_E)$ and for $a\in \Z$, $\mathcal{E}[a]$ denotes the chain complex whose $k$th graded part is $E_{k+a}$ and whose differential is $(-1)^{a}\partial_E)$, and the induced long exact sequence on homology has its connecting homomorphism $H_k(\mathcal{C}^{\preceq p}[-1])=H_{k-1}(\mathcal{C}^{\preceq p})\to H_{k-1}(\mathcal{D})$ equal to the map on homology induced by $f|_{C_{k-1}^{\preceq p}}$.

\begin{prop}\label{filtinv}
Let $\mathcal{C},\hat{\mathcal{C}}$ be two $\mathcal{P}$-filtered chain complexes, and $\mathcal{D},\hat{\mathcal{D}}$ be two chain complexes
and consider a diagram of chain maps \begin{equation}\label{coneseq} \xymatrix{ \mathcal{C}\ar[r]^f  \ar[d]_{\gamma} & \mathcal{D} \ar[d]^{\delta} \\ \hat{\mathcal{C}}\ar[r]^{\hat{f}} & \hat{\mathcal{D}} }\end{equation} where $\gamma$ is a filtered quasi-isomorphism, $\delta$ is a quasi-isomorphism, and the two compositions $\delta\circ f,\hat{f}\circ \gamma\co \mathcal{C}\to \hat{\mathcal{D}}$ are homotopic.  Then there is a \textbf{filtered} quasi-isomorphism $\mathrm{Cone}(f)\to\mathrm{Cone}(\hat{f})$.
\end{prop}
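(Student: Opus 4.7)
The plan is to write down the standard morphism of cones induced by the chain homotopy and then verify (i) that it is a chain map, (ii) that it is filtered, and (iii) that it is a quasi-isomorphism on every filtration level via the five lemma.

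First I would choose a chain homotopy $K\co C_*\to \hat{D}_{*+1}$ witnessing the hypothesis that $\delta\circ f\simeq \hat f\circ\gamma$, so that
\[
\delta\circ f - \hat f\circ\gamma = \partial_{\hat D}\circ K + K\circ\partial_C.
\]
Then I would define $\Phi\co \mathrm{Cone}(f)\to \mathrm{Cone}(\hat f)$ by
\[
\Phi(c,d) = (\gamma c,\,\delta d + Kc).
\]
A direct computation using the formula $\partial_{\mathrm{Cone}}(c,d)=(-\partial_C c,\,fc+\partial_D d)$ shows that both $\partial_{\mathrm{Cone}(\hat f)}\Phi(c,d)$ and $\Phi\partial_{\mathrm{Cone}(f)}(c,d)$ equal $(-\gamma\partial_C c,\,\delta\partial_D d + \delta f c - K\partial_C c)$ once the homotopy relation above is substituted into $\hat f\gamma c + \partial_{\hat D}Kc$; hence $\Phi$ is a chain map.

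Next I would observe that $\Phi$ is filtered. Since $\mathrm{Cone}(f)^{\preceq p}_i = C_{i-1}^{\preceq p}\oplus D_i$ and similarly for $\hat f$, and since $\gamma$ is filtered while the second coordinate of $\mathrm{Cone}(\hat f)$ carries no filtration constraint, $\Phi$ carries $\mathrm{Cone}(f)^{\preceq p}$ into $\mathrm{Cone}(\hat f)^{\preceq p}$ for every $p\in\mathcal{P}$.

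Finally, for each fixed $p\in \mathcal{P}$ I would apply the five lemma to the commutative diagram of short exact sequences of chain complexes
\[
\xymatrix{
0 \ar[r] & \mathcal{D}\ar[d]^{\delta}\ar[r] & \mathrm{Cone}(f)^{\preceq p}\ar[d]^{\Phi^{\preceq p}}\ar[r] & \mathcal{C}^{\preceq p}[-1]\ar[d]^{\gamma^{\preceq p}[-1]}\ar[r] & 0 \\
0 \ar[r] & \hat{\mathcal{D}}\ar[r] & \mathrm{Cone}(\hat f)^{\preceq p}\ar[r] & \hat{\mathcal{C}}^{\preceq p}[-1]\ar[r] & 0.
}
\]
(Commutativity of the left-hand square is immediate from the formula for $\Phi$; commutativity of the right-hand square holds because the projection to $\mathcal{C}^{\preceq p}[-1]$ forgets the contribution of $K$.) Since $\delta$ is a quasi-isomorphism and $\gamma^{\preceq p}$ is an isomorphism on homology by the filtered quasi-isomorphism hypothesis, the five lemma applied to the associated long exact sequences on homology shows that $\Phi^{\preceq p}$ induces an isomorphism on homology as well. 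As this holds for every $p$, $\Phi$ is a filtered quasi-isomorphism.

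The construction itself is entirely routine, so the only thing that needs care is the sign and ordering bookkeeping in the cone differential—in particular verifying that the homotopy relation enters with precisely the sign needed for $\Phi$ to be a chain map, which is where the $-\partial_C$ in the cone convention interacts with the $-K\partial_C$ arising from applying $\Phi$ to $\partial_{\mathrm{Cone}(f)}(c,d)$. I expect that to be the only delicate point; the filteredness is immediate from the hypothesis on $\gamma$, and the five-lemma step is a formal consequence of the long exact sequences of the cones.
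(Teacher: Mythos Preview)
Your proof is correct and essentially identical to the paper's: you define the same map (the paper calls it $A$, you call it $\Phi$), verify it is a filtered chain map in the same way, and apply the five lemma to the same commutative diagram of short exact sequences to conclude.
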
 

\begin{proof}
Let $K\co C_{*-1}\to \hat{D}_*$ obey $\partial_{\hat{D}}K+K\partial_{C}=\delta f-\hat{f}\gamma$.  This identity together with the fact that $\gamma$ and $\delta$ are chain maps imply that the map $A\co \mathrm{Cone}(f)\to \mathrm{Cone}(\hat{f})$ defined by, for $c\in C_{k-1}$ and $d\in D_k$, \[ A(c,d)=(\gamma c, Kc+\delta d),\] is a chain map.  Evidently $A$ sends each filtered subcomplex $\mathrm{Cone}(f)^{\preceq p}=\mathcal{C}^{\preceq p}\oplus \mathcal{D}$ to $\mathrm{Cone}(\hat{f})^{\preceq p}$. Moreover $A$ intertwines the exact sequences as in (\ref{coneseq}) into a commutative diagram \[ \xymatrix{ 0 \ar[r] &  \mathcal{D}\ar[r]\ar[d]_{\delta} & \mathrm{Cone}(f)^{\preceq p}\ar[r]\ar[d]_{A} & \mathcal{C}^{\preceq p}[-1]\ar[d]_{\gamma} \ar[r] & 0  \\ 0 \ar[r] & \hat{\mathcal{D}}\ar[r] & \mathrm{Cone}(\hat{f})^{\preceq p}\ar[r] & \hat{\mathcal{C}}^{\preceq p}[-1] \ar[r] & 0 }  \]  The resulting diagram of long exact sequences on homology is therefore also commutative, and so the fact that $A$ induces isomorphisms $H_k(\mathrm{Cone}(f)^{\preceq p})\cong H_k(\mathrm{Cone}(\hat{f})^{\preceq p})$ for all $k$ and $p$ follows from the five lemma and the assumptions that $\delta$ is a quasi-isomorphism and $\gamma$ is a filtered quasi-isomorphism.
\end{proof}

\subsection{From chain-level filtered matched pairs to persistence modules}\label{pmdefsec}

\cite[Definition 4.1]{UZ} defines a \textbf{Floer-type complex} $\mathcal{C}_{\uparrow}=(C_{\uparrow *},\partial_{\mathcal{C}_{\uparrow}},\ell_{\uparrow})$ to be a chain complex $(C_{\uparrow *}=\oplus_kC_{\uparrow k},\partial_{\mathcal{C}_{\uparrow}})$ over the Novikov field $\Lambda_{\uparrow}$ equipped with a function $\ell_{\uparrow}\co C_{\uparrow*}\to\R\cup\{-\infty\}$ such that each $(C_{\uparrow k},\ell_{\uparrow}|_{C_{\uparrow k}})$ is an orthogonalizable $\Lambda_{\uparrow}$-space and, for all $x\in C_{*}$, we have $\ell_{\uparrow}(\partial_{\mathcal{C}_{\uparrow}}x)\leq \ell_{\uparrow}(x)$.  Said differently, endowing each $C_k$ with the $\R$-filtration described in Example \ref{orthfiltex}, the last condition amounts to each $\partial_{\mathcal{C}_{\uparrow}}|_{C_{k}}$ being a filtered homomorphism.

In this paper we will refer to the Floer-type complexes of \cite{UZ} as ``$\Lambda_{\uparrow}$-Floer-type complexes,'' as, just as in Definition \ref{orthdef}, one has the companion notion of a $\Lambda^{\downarrow}$\textbf{-Floer-type complex}: a structure $\mathcal{C}^{\downarrow}=(C^{\downarrow}_{*},\partial_{\mathcal{C}^{\downarrow}},\ell^{\downarrow})$ with each $(C^{\downarrow}_{k},\ell^{\downarrow}|_{C^{\downarrow}_{k}})$ an orthogonalizable $\Lambda^{\downarrow}$-space and, for each $x\in C^{\downarrow}_{k}$, $\ell^{\downarrow}(\partial_{\mathcal{C}^{\downarrow}}x)\geq \ell^{\downarrow}(x)$.  As in Remark \ref{switch}, $(C ^{\downarrow}_{*},\partial_{\mathcal{C}^{\downarrow}},\ell^{\downarrow})$ is a $\Lambda^{\downarrow}$-Floer-type complex if and only if $(\overline{C^{\downarrow}}_{*},\partial_{\mathcal{C}_{\downarrow}},-\ell^{\downarrow})$ is a $\Lambda_{\uparrow}$-Floer-type complex, so general results in \cite{UZ} about Floer-type complexes apply to either type (with appropriate sign adjustments in the $\Lambda^{\downarrow}$ case).

\begin{dfn}\label{cfmpdfn}
A \textbf{chain-level filtered matched pair} $\mathcal{CP}=(\mathcal{C},\mathcal{C}^{\downarrow},\mathcal{C}_{\uparrow},\phi^{\downarrow},\phi_{\uparrow})$ consists of the following data:
\begin{itemize}\item a chain complex $\mathcal{C}=(C_*,\partial)$ of modules over the group algebra $\Lambda=\kappa[\Gamma]$, such that each graded piece $C_k$ is finitely generated and free as a $\Lambda$-module;
\item a $\Lambda^{\downarrow}$-Floer-type complex $\mathcal{C}^{\downarrow}=(C^{\downarrow}_{*},\partial_{\mathcal{C}^{\downarrow}},\ell^{\downarrow})$, and a $\Lambda_{\uparrow}$-Floer-type complex $\mathcal{C}_{\uparrow}=(C_{\uparrow *},\partial_{\mathcal{C}_{\uparrow}},\ell_{\uparrow})$; 
\item chain maps (commuting with the $\Lambda$-actions)  $\phi^{\downarrow}\co C_*\to C^{\downarrow}_{*}$ and $\phi_{\uparrow}\co C_*\to C_{\uparrow *}$ such that the coefficient extensions $1_{\Lambda^{\downarrow}}\otimes\phi^{\downarrow}\co \Lambda^{\downarrow}\otimes_{\Lambda}C_*\to C^{\downarrow}_{*}$ and $1_{\Lambda_{\uparrow}}\otimes \phi_{\uparrow}\co \Lambda_{\uparrow}\otimes_{\Lambda}C_*\to C_{\uparrow *}$ are chain homotopy equivalences.
\end{itemize}
\end{dfn}

\begin{remark}\label{cfmpfmp}
For any $\Lambda_{\uparrow}$-Floer-type complex $\mathcal{C}_{\uparrow}=(C_{\uparrow *},\partial_{\mathcal{C}_{\uparrow}},\ell_{\uparrow})$ and any $k\in \Z$, \cite[Proposition 6.6]{UZ} shows that the $k$th homology $H_k(\mathcal{C}_{\uparrow})$ becomes an orthogonalizable $\Lambda_{\uparrow}$-space with respect to the filtration function $\rho_{\uparrow}\co H_k(\mathcal{C}_{\uparrow})\to\R\cup\{-\infty\}$ defined by \[ \rho_{\uparrow}(\alpha)=\inf\{\ell_{\uparrow}(c)|c\in C_{\uparrow k},\partial_{\mathcal{C}_{\uparrow}}c=0,\,[c]=\alpha\}.\]  Likewise, for a $\Lambda^{\downarrow}$-Floer-type complex $\mathcal{C}^{\downarrow}=(C^{\downarrow}_{*},\partial_{\mathcal{C}^{\downarrow}},\ell^{\downarrow})$ and $k\in \Z$, we have an orthogonalizable $\Lambda$-space $(H_k(\mathcal{C}^{\downarrow}),\rho^{\downarrow})$ with $\rho^{\downarrow}\co H_k(\mathcal{C}^{\downarrow})\to\R\cup\{+\infty\}$ defined by \[  \rho^{\downarrow}(\alpha)=\sup\{\ell^{\downarrow}(c)|c\in C^{\downarrow}_{k},\partial_{\mathcal{C}^{\downarrow}}c=0,\,[c]=\alpha\}.\]  
Hence a chain-level filtered matched pair as in Definition \ref{cfmpdfn} induces, for each $k$, a filtered matched pair  in the sense of Definition \ref{fmpdfn}, namely \[ \xymatrix{ & (H_k(\mathcal{C}^{\downarrow}),\rho^{\downarrow}) \\ H_k(\mathcal{C})\ar[ru]^{\phi^{\downarrow}_{*}}\ar[rd]_{\phi_{\uparrow *}} & \\ & (H_k(\mathcal{C}_{\uparrow}),\rho_{\uparrow})  }\]  We denote this latter filtered matched pair by $\mathcal{H}_k(\mathcal{CP})$.
\end{remark}

Give $\R^2$ the partial order defined by declaring $(s,t)\preceq (s',t')$ iff both $s\leq s'$ and $t\leq t'$.  Given a chain-level filtered matched pair, we now describe a construction of, for each $k\in \Z$, an isomorphism class of persistence modules $\mathbb{H}_k(\mathcal{CP})=\{\mathbb{H}_k(\mathcal{CP})_{s,t}\}_{(s,t)\in \R^2}$  over $\R^2$.  Writing $\mathcal{CP}=  (\mathcal{C},\mathcal{C}^{\downarrow},\mathcal{C}_{\uparrow},\phi^{\downarrow},\phi_{\uparrow})$ as in the definition, let $\psi^{\downarrow}\co C^{\downarrow}_{*}\to \Lambda^{\downarrow}\otimes_{\Lambda}C_*$ and $\psi_{\uparrow}\co C_{\uparrow *}\to \Lambda_{\uparrow}\otimes_{\Lambda}C_*$ be chain homotopy inverses to $\phi^{\downarrow},\phi_{\uparrow}$, respectively.  Note that these are defined uniquely up to homotopy: if $\psi'_{\uparrow}$ is a different choice of homotopy inverse to $\phi_{\uparrow}$ then $\psi_{\uparrow},\psi'_{\uparrow}$ are each homotopic to $\psi'_{\uparrow}\phi_{\uparrow}\psi_{\uparrow}$ and hence are homotopic to each other, and similarly for $\psi^{\downarrow}$.  Now recall the $\Lambda$-module $\Lambda_{\updownarrow}$ from (\ref{updown}), and the inclusions $j^{\downarrow}\co \Lambda^{\downarrow}\to \Lambda_{\updownarrow}$ and $j_{\uparrow}\co \Lambda_{\uparrow}\to\Lambda_{\updownarrow}$.  We  unite the maps $\psi^{\downarrow},\psi_{\uparrow}$ into a single map \begin{equation}\label{bigmap} -j^{\downarrow}\otimes \psi^{\downarrow}+j_{\uparrow}\otimes\psi_{\uparrow}\co C^{\downarrow}_{*}\oplus C_{\uparrow *}\to \Lambda_{\updownarrow}\otimes_{\Lambda}C_*.\end{equation}  This is a chain map (in the category of complexes over $\Lambda$), whose homotopy class depends only on the original data $\mathcal{CP}=  (\mathcal{C},\mathcal{C}^{\downarrow},\mathcal{C}_{\uparrow},\phi^{\downarrow},\phi_{\uparrow})$ since $\psi^{\downarrow},\psi_{\uparrow}$ are unique up to homotopy.  

The domain of (\ref{bigmap}) is naturally filtered by $\mathbb{R}^2$, by setting \[    (C^{\downarrow}_{*}\oplus C_{\uparrow *})^{\preceq (s,t)}=\{(x,y)\in C^{\downarrow}_{*}\oplus C_{\uparrow *}|\ell^{\downarrow}(x)\geq -s\mbox{ and } \ell_{\uparrow}(y)\leq t\} \] (\emph{cf}. Example \ref{orthfiltex}).  We then define \begin{equation}\label{hkdef} \mathbb{H}_k(\mathcal{CP})_{s,t}=H_{k+1}\left(\mathrm{Cone}\left(\xymatrix{\left(C^{\downarrow}_{*}\oplus C_{\uparrow *}\right)^{\preceq (s,t)}\ar[rrr]^{-j^{\downarrow}\otimes \psi^{\downarrow}+j_{\uparrow}\otimes\psi_{\uparrow}}&&&\Lambda_{\updownarrow}\otimes_{\Lambda}C_* }\right)\right).\end{equation}

The inclusions $\left(C^{\downarrow}_{*}\oplus C_{\uparrow *}\right)^{\preceq (s,t)}\hookrightarrow \left(C^{\downarrow}_{*}\oplus C_{\uparrow *}\right)^{\preceq (s',t')}$ induce maps that make $\mathbb{H}_k(\mathcal{CP})=\{\mathbb{H}_k(\mathcal{CP})_{s,t}\}_{(s,t)\in \R^2}$  into a persistence module of $\kappa$-vector spaces over $\R^2$.  In motivating examples, as demonstrated by Theorem \ref{introiso}, if $s+t\geq 0$ (so that the interval $[-s,t]$ is nonempty) then $\mathbb{H}_k(\mathcal{CP})_{s,t}$ is isomorphic to the singular homology $H_k(f^{-1}([-s,t]);\kappa)$ for some Morse function $f$.  However, in our conventions $\mathbb{H}_k(\mathcal{CP})_{s,t}$ is still defined when $s+t<0$ even though it is not meant to be interpreted as the homology of an interlevel set in this case.  

To give some hint of the motivation for (\ref{hkdef}), for a smooth function $f\co X\to \R$ and an interval $I\subset \R$ write $X_{I}=f^{-1}(I)$, and for any space $Y$ denote  the $\kappa$-coefficient singular chain complex of   $Y$ by $\mathbf{S}_*(Y)$.  Then, if $s,t\in \R$ are regular values of $f$ and  $s+t>0$, the standard proof of the Mayer--Vietoris sequence gives rise to an isomorphism \[ H_{k}(X_{[-s,t]};\kappa)\cong H_{k+1}\left(\mathrm{Cone}\left(\xymatrix{ \mathbf{S}_*(X_{[-s,\infty)})\oplus \mathbf{S}_{*}(X_{(-\infty,t]}) \ar[rr]^<<<<<<<<<<{-j_{[-s,\infty)}+j_{(-\infty,t]}} & & \mathbf{S}_*(X) }\right)\right)  \]
where $j_{[-s,\infty)}$ and $j_{(-\infty,t]}$ are the inclusions into $\mathbf{S}_*(X)$.
The definition (\ref{hkdef}) is designed to parallel this, in a way that also works in the context of Novikov homology.  This reasoning is made more precise in the proof of Theorem \ref{introiso} in Section \ref{isosect}.

While (\ref{hkdef}) requires a choice of homotopy inverses $\psi_{\uparrow},\psi^{\downarrow}$,  the fact that these homotopy inverses are unique up to homotopy implies that the persistence module isomorphism type of $\mathbb{H}_k(\mathcal{CP})$ depends only on the original data $\mathcal{CP}$ by  Proposition \ref{filtinv}. We will also see that we have invariance under the following relation:

\begin{dfn}\label{fmhedef}
Two chain-level filtered matched pairs $\mathcal{CP}=(\mathcal{C},\mathcal{C}^{\downarrow},\mathcal{C}_{\uparrow},\phi^{\downarrow},\phi_{\uparrow})$ and  $\widehat{\mathcal{CP}}=(\widehat{\mathcal{C}},\widehat{\mathcal{C}}^{\downarrow},\widehat{\mathcal{C}}_{\uparrow},\widehat{\phi}^{\downarrow},\widehat{\phi}_{\uparrow})$ are said to be \textbf{filtered matched homotopy equivalent} if there is a diagram of chain maps  \begin{equation}\label{sixdiagram} \xymatrix{ & \mathcal{C}^{\downarrow}\ar[rr]^f & & \widehat{\mathcal{C}}^{\downarrow} \\ \mathcal{C}\ar[ur]^<<<<<<{\phi^{\downarrow}}\ar[rr]^{g}\ar[dr]_<<<<<<{\phi_{\uparrow}} & & \widehat{\mathcal{C}}\ar[ur]_>>>>{\widehat{\phi}^{\downarrow}}\ar[dr]^>>>>{\widehat{\phi}_{\uparrow}} &\\ & \mathcal{C}_{\uparrow}\ar[rr]^h &  & \widehat{\mathcal{C}}_{\uparrow} & } \end{equation} where $f$ and $h$ are filtered homotopy equivalences, $g$ is a homotopy equivalence, and the two parallelograms commute up to homotopy.
\end{dfn}

\begin{prop}\label{persinv}
If the chain-level filtered matched pairs $\mathcal{CP}=(\mathcal{C},\mathcal{C}^{\downarrow},\mathcal{C}_{\uparrow},\phi^{\downarrow},\phi_{\uparrow})$ and  $\widehat{\mathcal{CP}}=(\widehat{\mathcal{C}},\widehat{\mathcal{C}}^{\downarrow},\widehat{\mathcal{C}}_{\uparrow},\widehat{\phi}^{\downarrow},\widehat{\phi}_{\uparrow})$ are filtered matched homotopy equivalent then for each $k$ there is an isomorphism of persistence modules $\mathbb{H}_k(\mathcal{CP})\cong \mathbb{H}_k(\widehat{\mathcal{CP}})$. 
\end{prop}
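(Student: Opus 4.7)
The plan is to apply Proposition \ref{filtinv} to an $\R^{2}$-filtered square relating the cones whose homologies define $\mathbb{H}_{k}(\mathcal{CP})$ and $\mathbb{H}_{k}(\widehat{\mathcal{CP}})$, and then pass to homology in degree $k+1$.

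First, choose homotopy inverses $\psi^{\downarrow}\co C^{\downarrow}_{*}\to \Lambda^{\downarrow}\otimes_{\Lambda}C_{*}$, $\psi_{\uparrow}\co C_{\uparrow *}\to \Lambda_{\uparrow}\otimes_{\Lambda}C_{*}$ to the coefficient-extended maps $1\otimes\phi^{\downarrow}$ and $1\otimes\phi_{\uparrow}$, and similarly $\widehat{\psi}^{\downarrow},\widehat{\psi}_{\uparrow}$ in the hatted setting. For each $(s,t)\in \R^{2}$ I then form the square
\[
\xymatrix@C=6em{
(C^{\downarrow}_{*}\oplus C_{\uparrow *})^{\preceq(s,t)} \ar[r]^{-j^{\downarrow}\psi^{\downarrow}+j_{\uparrow}\psi_{\uparrow}} \ar[d]_{f\oplus h} & \Lambda_{\updownarrow}\otimes_{\Lambda}C_{*} \ar[d]^{1\otimes g} \\
(\widehat{C}^{\downarrow}_{*}\oplus \widehat{C}_{\uparrow *})^{\preceq(s,t)} \ar[r]_{-j^{\downarrow}\widehat{\psi}^{\downarrow}+j_{\uparrow}\widehat{\psi}_{\uparrow}} & \Lambda_{\updownarrow}\otimes_{\Lambda}\widehat{C}_{*}
}
\]
where the left vertical is the restriction of the filtered direct sum of $f$ and $h$ to the sub at level $(s,t)$. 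Since $f,h$ are filtered homotopy equivalences, $f\oplus h$ admits a filtered homotopy inverse (the direct sum of the respective inverses), so each restriction to a filtered piece is a chain homotopy equivalence and in particular a filtered quasi-isomorphism. Since $g$ is a chain homotopy equivalence over $\Lambda$, tensoring its homotopy with $\Lambda_{\updownarrow}$ shows that $1\otimes g$ is a chain homotopy equivalence and hence a quasi-isomorphism.

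It remains to show the square commutes up to homotopy (independently of $s,t$, so that the same homotopy restricts to each filtered piece). This reduces to establishing $(1\otimes g)\circ\psi^{\downarrow}\simeq\widehat{\psi}^{\downarrow}\circ f$ and $(1\otimes g)\circ\psi_{\uparrow}\simeq\widehat{\psi}_{\uparrow}\circ h$. For the first, compose both sides with the homotopy equivalence $1\otimes \widehat{\phi}^{\downarrow}\co \Lambda^{\downarrow}\otimes_{\Lambda}\widehat{C}_{*}\to \widehat{C}^{\downarrow}_{*}$. The right-hand composite is $(1\otimes\widehat{\phi}^{\downarrow})\widehat{\psi}^{\downarrow} f\simeq f$, while the left-hand composite equals $(1\otimes(\widehat{\phi}^{\downarrow}g))\psi^{\downarrow}\simeq (1\otimes(f\phi^{\downarrow}))\psi^{\downarrow}=f\circ(1\otimes\phi^{\downarrow})\circ\psi^{\downarrow}\simeq f$, using the homotopy $\widehat{\phi}^{\downarrow}g\simeq f\phi^{\downarrow}$ provided by (\ref{sixdiagram}). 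Since $1\otimes \widehat{\phi}^{\downarrow}$ is a homotopy equivalence, the two maps $(1\otimes g)\psi^{\downarrow}$ and $\widehat{\psi}^{\downarrow} f$ are themselves homotopic; the second identity is analogous.

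Having produced the homotopy-commutative filtered square, Proposition \ref{filtinv} yields a filtered quasi-isomorphism between the mapping cones of the top and bottom rows. Taking $H_{k+1}$ and letting $(s,t)$ vary produces an isomorphism of persistence modules $\mathbb{H}_{k}(\mathcal{CP})\cong \mathbb{H}_{k}(\widehat{\mathcal{CP}})$. The main bookkeeping obstacle is arranging the homotopies in the square carefully enough that a single (filtered-independent) chain homotopy witnesses commutativity on every sublevel at once, so that Proposition \ref{filtinv} applies uniformly in $(s,t)$; this is handled by the cancellation argument above, since the witnessing homotopies are global and restrict to every filtered subobject.
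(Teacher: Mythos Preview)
Your proof is correct and follows essentially the same approach as the paper: you set up the same homotopy-commutative square with vertical maps $f\oplus h$ and $1\otimes g$, establish the key homotopies $(1\otimes g)\circ\psi^{\downarrow}\simeq\widehat{\psi}^{\downarrow}\circ f$ and $(1\otimes g)\circ\psi_{\uparrow}\simeq\widehat{\psi}_{\uparrow}\circ h$ from the homotopy-commutativity of the parallelograms in (\ref{sixdiagram}), and invoke Proposition \ref{filtinv}. The paper derives these homotopies by composing with $\widehat{\psi}^{\downarrow}$ directly, whereas you post-compose with $1\otimes\widehat{\phi}^{\downarrow}$ and cancel; these are equivalent manipulations. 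One small remark: your closing worry about needing a single ``filtered-independent'' homotopy is unnecessary, since in Proposition \ref{filtinv} the codomain $\hat{\mathcal{D}}$ carries no filtration and the cone filtration comes entirely from the domain, so any global homotopy $K$ automatically works at every level $(s,t)$.
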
 

\begin{proof}
Let $\psi^{\downarrow},\psi_{\uparrow},\widehat{\psi}^{\downarrow},\widehat{\psi}_{\uparrow}$ be homotopy inverses to $1_{\Lambda^{\downarrow}}\otimes \phi^{\downarrow},1_{\Lambda_{\uparrow}}\otimes \phi_{\uparrow},1_{\Lambda^{\downarrow}}\otimes \widehat{\phi}^{\downarrow}$ and $1_{\Lambda_{\uparrow}}\otimes \widehat{\phi}_{\uparrow}$, respectively.  Then, using $\sim$ denote the relation of chain homotopy between chain maps, the fact that the upper parallelogram in (\ref{sixdiagram}) commutes up to homotopy implies (after tensoring with $\Lambda^{\downarrow}$) that \[ (1_{\Lambda^{\downarrow}}\otimes \widehat{\phi}^{\downarrow})\circ (1_{\Lambda^{\downarrow}}\otimes g)\circ \psi^{\downarrow}\sim f\circ(1_{\Lambda^{\downarrow}}\otimes \phi^{\downarrow})\circ\psi^{\downarrow}\sim f \] and hence \[ \widehat{\psi}^{\downarrow}\circ f\sim 
\widehat{\psi}^{\downarrow}\circ(1_{\Lambda^{\downarrow}}\otimes \widehat{\phi}^{\downarrow})\circ (1_{\Lambda^{\downarrow}}\otimes g)\circ \psi^{\downarrow}\sim (1_{\Lambda^{\downarrow}}\otimes g)\circ \psi^{\downarrow}.\]  Similarly $\widehat{\psi}_{\uparrow}\circ h\sim (1_{\Lambda_{\uparrow}}\otimes g)\circ\psi_{\uparrow}$.  So we have commutative-up-to-homotopy diagrams \begin{equation}\label{twosquares} \xymatrix{\mathcal{C}^{\downarrow}\ar[r]^{\psi^{\downarrow}} \ar[d]_{f} & \Lambda^{\downarrow}\otimes_{\Lambda}\mathcal{C} \ar[d]^{1_{\Lambda^{\downarrow}}\otimes g} \\ \widehat{\mathcal{C}}^{\downarrow}\ar[r]_{\widehat{\psi}^{\downarrow}} & \Lambda^{\downarrow}\otimes_{\Lambda}\widehat{\mathcal{C}}}\qquad  \xymatrix{\mathcal{C}_{\uparrow}\ar[r]^{\psi_{\uparrow}} \ar[d]_{h} & \Lambda_{\uparrow}\otimes_{\Lambda}\mathcal{C} \ar[d]^{1_{\Lambda_{\uparrow}}\otimes g} \\ \widehat{\mathcal{C}}_{\uparrow}\ar[r]_{\widehat{\psi}_{\uparrow}} & \Lambda_{\uparrow}\otimes_{\Lambda}\widehat{\mathcal{C}}}   \end{equation}  It follows that the diagram \[ \xymatrix{ \mathcal{C}^{\downarrow}\oplus \mathcal{C}_{\uparrow}\ar[rrr]^{-j^{\downarrow}\otimes \psi^{\downarrow}+j_{\uparrow}\otimes\psi_{\uparrow}} \ar[d]_{f\oplus h} &&& \Lambda_{\updownarrow}\otimes_{\Lambda}\mathcal{C} \ar[d]^{1_{\Lambda_{\updownarrow}}\otimes g} \\ \widehat{\mathcal{C}}^{\downarrow}\oplus \widehat{\mathcal{C}}_{\uparrow}\ar[rrr]^{-j^{\downarrow}\otimes \widehat{\psi}^{\downarrow}+j_{\uparrow}\otimes\widehat{\psi}_{\uparrow}} &&&  \Lambda_{\updownarrow}\otimes_{\Lambda}\widehat{\mathcal{C}}  } \] also commutes up to homotopy, and so the result follows from Proposition \ref{filtinv}.
\end{proof}

\subsection{Building blocks}\label{bbs}

Let us compute the graded persistence modules $\mathbb{H}_*(\mathcal{CP})=\oplus_k\mathbb{H}_k(\mathcal{CP})$ associated by (\ref{hkdef}) to various rather simple examples of chain-level filtered matched pairs $\mathcal{CP}$.  As we will see later, when $\Gamma$ is discrete,  any chain-level filtered matched pair $\mathcal{CP}$ is filtered matched homotopy equivalent to a direct sum of examples such as those considered presently.

\subsubsection{$\mathcal{PE}_{\uparrow}(a,L,k)$}\label{peupsec}  First, borrowing notation from the second case of \cite[Definition 7.2]{UZ}, if $a\in \R$, $L\in[0,\infty)$, and $k\in\Z$, let $\mathcal{E}_{\uparrow}(a,L,k)$ denote the Floer-type complex $(E_*,\partial_E,\ell_E)$ given by setting $E_k$ and $E_{k+1}$ equal to one-dimensional vector spaces over $\Lambda_{\uparrow}$ generated by symbols $x$ and $y$ respectively while $E_j=\{0\}$ for $j\notin\{k,k+1\}$, putting $\partial_Ey=x$ and $\partial_Ex=0$, and defining the filtration function $\ell_E$ by $\ell_E(\lambda x)=a-\nu_{\uparrow}(\lambda)$ and $\ell_E(\lambda y)=a+L-\nu_{\uparrow}(\lambda)$.  The identity map on the chain complex $(E_*,\partial_E)$ is homotopic to the zero map (the chain homotopy is given by sending $x$ to $y$ and $y$ to $0$), in view of which we may define a chain-level filtered matched pair $\mathcal{PE}_{\uparrow}(a,L,k)$ by \[\xymatrix{ & 0 \\ 0 \ar[ru]^{0} \ar[rd]_{0} & \\ & \mathcal{E}_{\uparrow}(a,L,k)}\]  By definition, we have, for any $j\in\Z$ and $(s,t)\in \R^2$, \begin{align*} 
\mathbb{H}_j(\mathcal{PE}_{\uparrow}(a,L,k))_{s,t} & =H_{j+1}\left(\mathrm{Cone}\left((\{0\}\oplus \mathcal{E}_{\uparrow}(a,L,k))^{\preceq(s,t)}\to \{0\}\right)\right)
\\ &= H_j(\mathcal{E}_{\uparrow}(a,L,k)^{\leq t})=\frac{\{v\in E_j|\partial_Ev=0,\,\ell_E(v)\leq t\}}{\partial_E\left(\{w\in E_{j+1}|\ell_E(w)\leq t\}\right)},\end{align*} with structure maps $\mathbb{H}_j(\mathcal{PE}_{\uparrow}(a,L,k))_{s,t}\to \mathbb{H}_j(\mathcal{PE}_{\uparrow}(a,L,k))_{s',t'}$ for $s\leq s'$, $t\leq t'$ induced by inclusion.  Since $\partial_E$ has trivial kernel in degrees other than $k$ we thus have $\mathbb{H}_{j} (\mathcal{PE}_{\uparrow}(a,L,k))_{s,t} =\{0\}$ for $j\neq k$.  In degree $k$ we find \[
\mathbb{H}_k(\mathcal{PE}_{\uparrow}(a,L,k))_{s,t}=\frac{\{\lambda x|\lambda\in\Lambda_{\uparrow},a-\nu_{\uparrow}(\lambda)\leq t\}}{\{\lambda x|\lambda\in\Lambda_{\uparrow},(a+L)-\nu_{\uparrow}(\lambda)\leq t\}}.\]  Now an element $\lambda=\sum_{g\in \Gamma}c_gT^g$ of $\Lambda_{\uparrow}$ obeys $a-\nu_{\uparrow}(\lambda)\leq t$ if and only if all $g$ for which $c_g$ is nonzero obey $t\geq a-g$; the corresponding element $\lambda x$ will vanish in the above quotient if and only if all $g$ for which $c_g$ is nonzero obey $t\geq (a+L)-g$.  So, for any $t$, $\mathbb{H}_k(\mathcal{PE}_{\uparrow}(a,L,k))_{s,t}$ is isomorphic as a $\kappa$-vector space to the $\kappa$-vector space $V_{t,a,L}$ of elements $\sum_gc_gT^g$ of $\Lambda_{\uparrow}$ where the sum ranges over just those elements $g\in\Gamma$ with the property that $t\in [a-g,a+L-g)$.  The persistence module structure maps correspond under these isomorphisms to the maps $V_{t,a,L}\to V_{t',a,L}$ for $t\leq t'$ which truncate a sum $\sum_g c_gT^g\in V_{t,a,L}$ by deleting all terms with $t'\geq a+L-g$.  

Let us rephrase this calculation in terms of the ``block modules'' of \cite{CO}.  If $R\subset \R^2$ is any product of intervals $R=I\times J$ (including the case that $I$ and/or $J$ is all of $\R$), we have a persistence module $\kappa_R$ over $\R^2$ defined by \[ (\kappa_{R})_{s,t}=\left\{\begin{array}{ll} \kappa & \mbox{if }(s,t)\in R \\ \{0\} & \mbox{otherwise} \end{array}\right. \] with structure maps $(\kappa_R)_{s,t}\to(\kappa_R)_{s',t'}$ given by the identity if $(s,t),(s',t')\in R$ and $0$ otherwise.  The above discussion shows that \begin{equation}\label{peup} \mathbb{H}_j(\mathcal{PE}_{\uparrow}(a,L,k))\cong\left\{\begin{array}{ll} \bigoplus_{g\in\Gamma} \kappa_{\R\times[a-g,a+L-g)} & \mbox{if }j=k \\ \{0\} & \mbox{otherwise}\end{array}\right.,\end{equation} with the summand $\kappa_{\R\times[a-g,a+L-g)}$ corresponding to the coefficient $c_g$ in an element $\sum_g c_gT^g$ as in the previous paragraph.  (In particular, $\mathbb{H}_j(\mathcal{PE}_{\uparrow}(a,L,k))$ is trivial if $L=0$.) 

\subsubsection{$\mathcal{PE}^{\downarrow}(b,L,k)$}\label{pedownsec} Dually, if $b\in \R$, $L\in[0,\infty)$, and $k\in \Z$ we have a $\Lambda^{\downarrow}$-Floer-type complex $\mathcal{E}^{\downarrow}(b,L,k)=(E_*,\partial_E,\ell_E)$ given by taking $E_j$ to be $\{0\}$ for $j\notin\{k,k+1\}$ and to be generated in degree $k$ by an element $x$ with $\ell_E(x)=b$ and in degree $k+1$ by an element $y$ with $\ell_E(y)=b-L$ and $\partial_Ey=x$.  (This $\Lambda^{\downarrow}$-Floer type complex is obtained by conjugating the $\Lambda_{\uparrow}$-Floer-type complex $\mathcal{E}_{\uparrow}(-b,L,k)$ along the lines of Remark \ref{switch}.)  $\mathcal{E}^{\downarrow}(b,L,k)$ fits into the following chain-level filtered matched pair, denoted $\mathcal{PE}^{\downarrow}(b,L,k)$: \[  \xymatrix{ & \mathcal{E}^{\downarrow}(b,L,k) \\ 0 \ar[ru]^{0} \ar[rd]_{0} & \\ & 0} \]
Just as in the previous case, one has \begin{align*} \mathbb{H}_j(\mathcal{PE}^{\downarrow}(b,L,k))_{s,t}&=H_{j+1}\left(\mathrm{Cone}\left((\mathcal{E}^{\downarrow}(b,L,k)\oplus\{0\})^{\preceq(s,t)}\to \{0\}\right)\right) \\ &=H_j((\mathcal{E}^{\downarrow}(b,L,k)\oplus\{0\})^{\preceq(s,t)}).\end{align*}  This vanishes for $j\neq k$, and for $j=k$ it equals \[ \frac{\{\mu x|\mu\in \Lambda^{\downarrow},b-\nu^{\downarrow}(\mu)\geq -s\} }{\{\mu x|\mu\in\Lambda^{\downarrow},(b-L)-\nu^{\downarrow}(\mu)\geq -s\}}.\]  

The same analysis as in Section \ref{peupsec} then yields isomorphisms of persistence modules \begin{equation}\label{pedown} \mathbb{H}_j(\mathcal{PE}^{\downarrow}(b,L,k))\cong \left\{\begin{array}{ll} \bigoplus_{g\in\Gamma} \kappa_{[-b-g,-b+L-g)\times \R} & \mbox{if }j=k \\ \{0\} & \mbox{otherwise}\end{array}\right.,\end{equation} 

\subsubsection{$\mathcal{PM}(a,b,k)$} \label{mabsect}  Given $a,b\in \R$ and $k\in \Z$ we define a chain-level filtered matched pair as follows.  The chain complex $\mathcal{C}=(C_*,\partial)$ of $\Lambda$-modules is given by setting $C_k=\Lambda$ and $C_j=\{0\}$ for $j\neq k$, so necessarily $\partial=0$.  The $\Lambda_{\uparrow}$-Floer-type complex $\mathcal{C}_{\uparrow}$ is likewise zero in degrees other than $k$, has zero  differential, and has degree-$k$ chain group given by a copy of $\Lambda_{\uparrow}$ equipped with the filtration function $\ell_{\uparrow a}(\lambda)=a-\nu_{\uparrow}(\lambda)$.  Likewise, the $\Lambda^{\downarrow}$-Floer-type complex $\mathcal{C}^{\downarrow}$ is zero in degrees other than $k$, has zero differential, and has degree-$k$ chain group given by a copy of $\Lambda^{\downarrow}$ equipped with the filtration function $\ell^{\downarrow b}(\mu)=b-\nu^{\downarrow}(\mu)$.  The filtered matched pair $\mathcal{PM}(a,b,k)$ is then defined using these $\mathcal{C},\mathcal{C}_{\uparrow},\mathcal{C}^{\downarrow}$, taking the maps $\phi_{\uparrow}$ and $\phi^{\downarrow}$ to be given in degree $k$ by the standard inclusions of $\Lambda$ into $\Lambda_{\uparrow}$ and $\Lambda^{\downarrow}$.

The maps $1_{\Lambda_{\uparrow}}\otimes \phi_{\uparrow},1_{\Lambda^{\downarrow}}\otimes\phi^{\downarrow}$ are, in degree $k$, just the standard canonical isomorphisms $\Lambda_{\uparrow}\otimes_{\Lambda}\Lambda\cong \Lambda_{\uparrow}$ and $\Lambda^{\downarrow}\otimes_{\Lambda}\Lambda\cong \Lambda^{\downarrow}$, so for the homotopy inverses $\psi_{\uparrow},\psi^{\downarrow}$ we can (indeed must) take the inverses of these isomorphisms.  It then follows that (implicitly passing through the canonical isomorphism $\Lambda_{\updownarrow}\otimes_{\Lambda}\Lambda\cong \Lambda_{\updownarrow}$) the map $-j^{\downarrow}\otimes\psi^{\downarrow}+j_{\downarrow}\otimes\psi_{\uparrow}$ appearing in (\ref{hkdef}) is identified in degree $k$ with the map $\delta\co \Lambda^{\downarrow}\oplus\Lambda_{\uparrow}\to  \Lambda_{\updownarrow}$ defined by $(\mu,\lambda)\mapsto -j^{\downarrow}\mu+j_{\uparrow}\lambda$.  The $(s,t)$-filtered subcomplex $(C^{\downarrow}_{*}\oplus C_{\uparrow}*)^{\preceq(s,t)}$ consists in degree $k$ of the space $(\Lambda^{\downarrow}\oplus\Lambda_{\uparrow})^{\preceq_{a,b}(s,t)}$ of those $(\mu,\lambda)\in\Lambda^{\downarrow}\oplus\Lambda_{\uparrow}$ with $b-\nu^{\downarrow}(\mu)\geq -s$ and $a-\nu_{\uparrow}(\lambda)\leq t$.

The complex $\mathrm{Cone}\left(\xymatrix{\left(C^{\downarrow}_{*}\oplus C_{\uparrow *}\right)^{\preceq (s,t)}\ar[rrr]^{-j^{\downarrow}\otimes \psi^{\downarrow}+j_{\uparrow}\otimes\psi_{\uparrow}}&&&\Lambda_{\updownarrow}\otimes_{\Lambda}C_* }\right)$ of (\ref{hkdef}) is thus given in degree $k+1$ by $(\Lambda^{\downarrow}\oplus\Lambda_{\uparrow})^{\preceq_{a,b}(s,t)}$, in degree $k$ by $\Lambda_{\updownarrow}$, and in all other degrees by $\{0\}$, and the only nontrivial differential is given by the restriction of the subtraction map $\delta\co \Lambda^{\downarrow}\oplus\Lambda_{\uparrow}\to \Lambda_{\updownarrow}$.  Hence, noting the degree shift in (\ref{hkdef}), \[ \mathbb{H}_k(\mathcal{PM}(a,b,k))_{s,t}\cong \ker(\delta|_{(\Lambda^{\downarrow}\oplus\Lambda_{\uparrow})^{\preceq_{a,b}(s,t)}}),\quad \mathbb{H}_{k-1}(\mathcal{PM}(a,b,k))_{s,t}\cong\mathrm{coker}(\delta|_{(\Lambda^{\downarrow}\oplus\Lambda_{\uparrow})^{\preceq_{a,b}(s,t)}}),\] and $\mathbb{H}_{j}(\mathcal{PM}(a,b,k))_{s,t}=\{0\}$ for $j\notin\{k-1,k\}$, with the persistence module structure maps induced by the inclusons $(\Lambda^{\downarrow}\oplus\Lambda_{\uparrow})^{\preceq_{a,b}(s,t)}\hookrightarrow (\Lambda^{\downarrow}\oplus\Lambda_{\uparrow})^{\preceq_{a,b}(s',t')}$.  
 
Let us now compute the above kernel and cokernel.  A pair $(\mu,\lambda)\in \Lambda^{\downarrow}\oplus\Lambda_{\uparrow}$ lies in the kernel of the difference map $\delta\co \Lambda^{\downarrow}\oplus\Lambda_{\uparrow}\to\Lambda_{\updownarrow}$ if and only if $\mu$ and $\lambda$ both lie in the common submodule $\Lambda=\kappa[\Gamma]$ of $\Lambda_{\uparrow}$ and $\Lambda^{\downarrow}$, and are equal to each other.  Since $(\mu,\lambda)\in (\Lambda^{\downarrow}\oplus\Lambda_{\uparrow})^{\preceq_{a,b}(s,t)}$ if and only if $\nu^{\downarrow}(\mu)\leq b+s$ and $\nu_{\uparrow}(\lambda)\geq a-t$, this identifies $\mathbb{H}_k(\mathcal{PM}(a,b,k))_{s,t}$ with the sub-$\kappa$-vector space of $\Lambda$ consisting of sums $\sum_{g\in \Gamma}c_gT^g$ for which each $g$ appearing in the sum obeys $a-t\leq g\leq b+s$ (equivalently, $t\geq a-g$ and $s\geq -b+g$).  The persistence module maps for $s\leq s'$ and $t\leq t'$ are just the inclusions.  This yields an isomorphism of persistence modules \begin{equation}\label{hkm} \mathbb{H}_k(\mathcal{PM}(a,b,k))\cong \bigoplus_{g\in \Gamma}\kappa_{[-b+g,\infty)\times [a-g,\infty)}.\end{equation}

We now turn to the cokernel of $\delta|_{(\Lambda^{\downarrow}\oplus\Lambda_{\uparrow})^{\preceq_{a,b}(s,t)}}$.   Consider a general element $\gamma=\sum_{g\in\Gamma}c_g T^g$ of $\Lambda_{\updownarrow}$; by definition of $\Lambda_{\updownarrow}$, this in general may be an infinite sum, subject to the constraint that in each bounded interval there are only finitely many $g$ with $c_g\neq 0$.  If $a-t\leq b+s$ we may (perhaps non-uniquely) partition $\{g|c_g\neq 0\}$ into sets $S_1,S_2$ such that each $g\in S_1$ has $g\leq b+s$ and each $g\in S_2$ has $g\geq a-t$.  Then letting $\mu=-\sum_{g\in S_1}c_gT^g$ and $\lambda=\sum_{g\in S_2}c_gT^g$, we see that $(\mu,\lambda)\in  (\Lambda^{\downarrow}\oplus\Lambda_{\uparrow})^{\preceq_{a,b}(s,t)}$ and $\delta(\mu,\lambda)=\gamma$.  Thus, when $a-t\leq b+s$, the map $\delta|_{(\Lambda^{\downarrow}\oplus\Lambda_{\uparrow})^{\preceq_{a,b}(s,t)}}$ has trivial cokernel.  Assuming instead that $b+s<a-t$, given a general element $\gamma=\sum_{g\in \Gamma}c_gT^g$ as above, we may uniquely express $\gamma$ in the form \[ \gamma=-\mu+\left(\sum_{b+s<g<a-t}c_gT^g\right) +\lambda \] where $(\mu,\lambda)\in (\Lambda^{\downarrow}\oplus\Lambda_{\uparrow})^{\preceq_{a,b}(s,t)}$. This yields an isomorphism \[ \mathbb{H}_{k-1}(\mathcal{PM}(a,b,k))_{s,t}\cong\bigoplus_{g\in\Gamma\cap(b+s,a-t)}\kappa,\] with the persistence module structure maps associated to increasing both $s$ and $t$ (and hence shrinking the interval $(b+s,a-t)$) corresponding to the obvious projection.  Thus, in terms of block modules, \begin{equation}\label{hk-1m} \mathbb{H}_{k-1}(\mathcal{PM}(a,b,k))\cong \bigoplus_{g\in \Gamma}\kappa_{(-\infty,-b+g)\times (-\infty,a-g)}.\end{equation}

Since the graded persistence modules $\mathbb{H}_*(\mathcal{PE}_{\uparrow}(a,L,k))$, $\mathbb{H}_*(\mathcal{PE}^{\downarrow}(b,L,k))$, $\mathbb{H}_*(\mathcal{PM}(a,b,k))$ are meant to be building blocks for an algebraic model of interlevel persistence, it is not a coincidence that (when $\Gamma=\{0\}$) they resemble the block $MV$-systems of \cite[Definition 2.16]{BGO}, though the fact that \cite{BGO} works with persistence modules defined only over $\{(a,b)\in \R^2|a+b>0\}$ rather than all of $\R^2$ leads to some slight differences.  When $\Gamma$ is nontrivial, we have seen that these building blocks are modified from the $\Gamma=\{0\}$ case simply by making them $\Gamma$-periodic in a natural way.   On the other hand, consistently with \cite{BH17}, an additional type of building block arises when $\Gamma\neq\{0\}$ that is unlike the block $MV$-systems of \cite{BGO}, owing to the fact that, when $\Gamma\neq\{0\}$, modules over $\kappa[\Gamma]$ can have torsion.  We describe this final building block now.

\subsubsection{$\mathcal{PR}(T,k)$}\label{pr}  Let $T$ be a finitely-generated, torsion $\Lambda$-module, and $k\in \Z$.  Let \begin{equation}\label{rest} \cdots\to F_{m}\to F_{m-1}\to\cdots\to F_1\to F_0\to T\to 0 \end{equation} be a resolution of $T$ by finitely-generated, free $\Lambda$-modules, as in \cite[Proof of Lemma 7.19]{Rot}.  Let $\mathcal{C}$ be the chain complex given in degree $j$ by $F_{j-k}$ (interpreted as $\{0\}$ if $j<k$), with differentials given by the maps in the resolution.  Thus $H_k(\mathcal{C})\cong T$ while all other $H_j(\mathcal{C})$ vanish.   By Proposition \ref{kerext}, $\Lambda_{\uparrow}\otimes_{\Lambda}T\cong \Lambda^{\downarrow}\otimes_{\Lambda}T=\{0\}$ since $T$ is torsion.  So since $\Lambda_{\uparrow}$ and $\Lambda^{\downarrow}$ are flat $\Lambda$-modules, the exactness of (\ref{rest}) implies that the complexes $\Lambda_{\uparrow}\otimes_{\Lambda}\mathcal{C}$ and $\Lambda^{\downarrow}\otimes_{\Lambda}\mathcal{C}$ are acyclic.  Since an acyclic chain complex over a field is chain homotopy equivalent to the zero complex, the following diagram defines a chain-level filtered matched pair $\mathcal{PR}(T,k)$: \[ \xymatrix{ & 0 \\ \mathcal{C} \ar[ur]^{0} \ar[dr]_{0} & \\ & 0 } \]  (The notation $\mathcal{PR}(T,k)$ suppresses the choice of free resolution, but since any two free resolutions are homotopy equivalent, the filtered matched homotopy equivalence class of $\mathcal{PR}(T,k)$ depends only on $T$ and $k$.)  

By definition, we have \[ \mathbb{H}_j(\mathcal{PR}(T,k))_{s,t}=H_{j+1}\left(\mathrm{Cone}(0\to \Lambda_{\updownarrow}\otimes_{\Lambda}\mathcal{C})\right) = H_{j+1}(\Lambda_{\updownarrow}\otimes_{\Lambda}\mathcal{C}),\] independently of $s,t$ (and with structure maps $\mathbb{H}_j(\mathcal{PR}(T,k))_{s,t}\to \mathbb{H}_j(\mathcal{PR}(T,k))_{s',t'}$ equal to the identity).  Since $\Lambda_{\updownarrow}\otimes_{\Lambda}\mathcal{C}$ is given in degree $j$ by $\{0\}$ if $j<k$ and by $\Lambda_{\updownarrow}\otimes F_{j-k}$ otherwise, with differentials extended from (\ref{rest}), 
we conclude that $\mathbb{H}_{j}(\mathcal{PR}(T,k))_{s,t}\cong \mathrm{Tor}_{j+1-k}^{\Lambda}(T,\Lambda_{\updownarrow})$ (which trivially vanishes if $j<k-1$).  By Proposition \ref{tor}, since $T$ is torsion, $\mathrm{Tor}_{j+1-k}^{\Lambda}(T,\Lambda_{\updownarrow})$ vanishes if $j>k$ and is isomorphic to $T$ if $j=k$.  In the remaining case that $j=k-1$, we have $\mathrm{Tor}_{0}^{\Lambda}(T,\Lambda_{\updownarrow})\cong \Lambda_{\updownarrow}\otimes_{\Lambda}T$, which vanishes due to the exact sequence $0\to\Lambda\to\Lambda_{\uparrow}\oplus \Lambda^{\downarrow}\to\Lambda_{\updownarrow}\to 0$ and the right exactness of $(\cdot)\otimes_{\Lambda}T$.  So $\mathbb{H}_{j}(\mathcal{PR}(T,k))_{s,t}$ vanishes in all degrees other than $k$, and is isomorphic (independently of $(s,t)$) to $T$ for $j=k$.  So if the dimension of $T$ as a vector space over $\kappa$ is $d$, we have (as persistence modules of $\kappa$-vector spaces over $\mathbb{R}^2$) \[ \mathbb{H}_j(\mathcal{PR}(T,k))\cong \left\{\begin{array}{ll} 0 &\mbox{if }j\neq k \\ \left(\kappa_{(-\infty,\infty)\times(-\infty,\infty)}\right)^{\oplus d} & \mbox{if }j=k\end{array}\right..\]

\subsection{Direct sum decompositions and barcodes}\label{decompsect}

In this section we recall some facts about Floer-type complexes and their barcodes from \cite{UZ}, and we combine these with results from Sections \ref{fmpsec} and \ref{bbs} to show that, when $\Gamma$ is discrete, chain-level filtered matched pairs can be described up to filtered matched homotopy equivalence by a combination of the barcodes of \cite{UZ} and the basis spectra of Definition \ref{hbsdef}.  In \cite{UZ} our notational convention for handling the symmetry of our persistence modules with respect to the action of $\Gamma$ was to define the elements of our barcodes to be pairs $([a],L)\in (\mathbb{R}/\Gamma)\times [0,\infty]$; these were meant to correspond to equivalence classes intervals $[a,a+L)$ with the left endpoint $a$ (but not the length $L$) defined only modulo $\Gamma$.    To avoid confusion with the similar convention in Definition \ref{hbsdef}, and also because in the current context we will need to maintain distinctions between intervals of all four types $[a,b),(a,b],[a,b],(a,b)$, our convention in this paper is to represent elements of the barcode directly as equivalence classes $I^{\Gamma}$ of intervals $I$ modulo $\Gamma$-translation, as in Notation \ref{intnot}.

Let $\mathcal{C}_{\uparrow}=(C_{\uparrow*},\partial_{\mathcal{C}_{\uparrow}},\ell_{\uparrow})$ be a $\Lambda_{\uparrow}$-Floer-type complex. By \cite[Corollaries 2.17 and 2.18]{UZ}, the subspaces $\Img(\partial_{\mathcal{C}_{\uparrow}}|_{C_{\uparrow(k+1)}})$ and $\ker(\partial_{\mathcal{C}_{\uparrow}}|_{C_{\uparrow k}})$ of the orthogonalizable $\Lambda_{\uparrow}$-space $C_{\uparrow k}$ are both orthogonalizable, any $\ell_{\uparrow}$-orthogonal basis for $\Img(\partial_{\mathcal{C}_{\uparrow}}|_{C_{\uparrow(k+1)}})$ can be extended to an $\ell_{\uparrow}$-orthogonal basis for $\ker(\partial_{\mathcal{C}_{\uparrow}}|_{C_{\uparrow k}})$, and any $\ell_{\uparrow}$-orthogonal basis for $\ker(\partial_{\mathcal{C}_{\uparrow}}|_{C_{\uparrow k}})$ can be extended to an $\ell_{\uparrow}$-orthogonal basis for $C_{\uparrow k}$.  \cite[Theorem 3.4]{UZ} shows that each map $\partial_{\mathcal{C}_{\uparrow}}|_{C_{\uparrow(k+1)}}\co C_{\uparrow (k+1)}\to \ker(\partial_k)$ admits a (nonarchimedean) singular value decomposition: there are $\ell_{\uparrow}$-orthogonal bases $\{y_{1}^{k+1},\ldots,y_{r}^{k+1},\ldots,y_{n}^{k+1}\}$ for $C_{\uparrow (k+1)}$ and $\{x_{1}^{k},\ldots,x_{r}^{k},\ldots,x_{m}^{k}\}$ for $\ker(\partial_{\mathcal{C}_{\uparrow}}|_{C_{\uparrow k}})$ with $\{y_{r+1}^{k+1},\ldots,y_{n}^{k+1}\}$ a basis for $\ker(\partial_{\mathcal{C}_{\uparrow}}|_{C_{\uparrow (k+1)}})$  while $\partial_{\mathcal{C}_{\uparrow}} y_{i}^{k+1}=x_{i}^{k}$ for $i=1,\ldots,r$.  
Write $F_{\uparrow(k+1)}=\mathrm{span}_{\Lambda_{\uparrow}}\{y_{1}^{k+1},\ldots,y_{r}^{k+1}\}$.   Then $F_{\uparrow(k+1)}$ and $\ker(\partial_{\mathcal{C}_{\uparrow}}|_{C_{\uparrow (k+1)}})$ are orthogonal complements in $C_{\uparrow(k+1)}$ (\emph{i.e.} their direct sum is $C_{\uparrow(k+1)}$ and $\ell_{\uparrow}(f+z)=\max\{\ell_{\uparrow}(f),\ell_{\uparrow}(z)\}$ for $f\in F_{\uparrow(k+1)}$ and $z\in \ker(\partial_{\mathcal{C}_{\uparrow}}|_{C_{\uparrow (k+1)}})$), so, more generally, the union of any $\ell_{\uparrow}$-orthogonal basis for $F_{\uparrow(k+1)}$ and any $\ell_{\uparrow}$-orthogonal basis for $\ker(\partial_{\mathcal{C}_{\uparrow}}|_{C_{\uparrow (k+1)}})$  gives an orthogonal basis for $C_{\uparrow(k+1)}$.

Similarly, we obtain a singular value decomposition of $\partial_{\mathcal{C}_{\uparrow}}|_{C_{\uparrow k}}$, yielding in particular a subspace $F_{\uparrow k}\subset C_{\uparrow k}$ with an $\ell_{\uparrow}$-orthogonal basis $\{y_{1}^{k},\ldots,y_{s}^{k}\}$ such that $F_{\uparrow k}$ and $\ker(\partial_{\mathcal{C}_{\uparrow}}|_{C_{\uparrow k}})$  are orthogonal complements and such that $\{\partial_{\mathcal{C}_{\uparrow}}y_{1}^{k},\ldots,\partial_{\mathcal{C}_{\uparrow}}y_{s}^{k}\}$ is an $\ell_{\uparrow}$-orthogonal basis for $\Img(\partial_{\mathcal{C}_{\uparrow}}|_{C_{\uparrow k}})$.  Writing $B_{\uparrow k}=\Img(\partial_{\mathcal{C}_{\uparrow}}|_{C_{\uparrow (k+1)}})$ (so that $B_{\uparrow k}=\mathrm{span}_{\Lambda_{\uparrow}}\{x_{1}^{k},\ldots,x_{r}^{k}\}$) and $H_{\uparrow k}=\mathrm{span}_{\Lambda}\{x_{r+1}^{k},\ldots,x_{m}^{k}\}$, we have thus decomposed each $C_{\uparrow k}$ as a direct sum of three orthogonal subspaces $B_{\uparrow k},H_{\uparrow k},F_{\uparrow k}$ such that $\partial_{\mathcal{C}_{\uparrow}}|_{B_{\uparrow k}\oplus H_{\uparrow k}}=0$ and $\partial_{\mathcal{C}_{\uparrow}}$ maps the $\ell_{\uparrow}$-orthogonal basis $\{y_{1}^{k+1},\ldots,y_{r}^{k+1}\}$ for $F_{\uparrow(k+1)}$ to the $\ell_{\uparrow}$-orthogonal basis $\{x_{1}^{k},\ldots,x_{r}^{k}\}$ for $B_{\uparrow k}$.  

The \textbf{degree $k$ verbose barcode} of the $\Lambda_{\uparrow}$-Floer-type complex $\mathcal{C}_{\uparrow}$ is defined in \cite[Definition 6.3]{UZ} as the multiset of elements of $(\R/\Gamma)\times [0,\infty]$ consisting of the pairs $([\ell_{\uparrow}(x_{i}^{k})],\ell_{\uparrow}(y_{i}^{k})-\ell_{\uparrow}(x_{i}^{k}))$ for $i=1,\ldots,r$ as well as the pairs $([\ell_{\uparrow}(x_{i}^{k})],\infty)$ for $i=r+1,\ldots,m$ (where for $a\in \R$ we write $[a]$ for its equivalence class in $\R/\Gamma$).  \cite[Theorem 7.1]{UZ} shows that the verbose barcode is independent of the choice of singular value decomposition used to define it. 
The \emph{concise barcode} of $\mathcal{C}_{\uparrow}$ is, by definition, obtained from the verbose barcode by discarding all intervals of zero length, and is shown in \cite[Theorem B]{UZ} to be a complete invariant of $\mathcal{C}_{\uparrow}$ up to filtered chain homotopy equivalence (whereas the verbose barcode is a complete invariant up to the finer relation of filtered chain isomorphism); in particular, the distinction between the verbose and concise barcodes is immaterial from the standpoint of homology persistence modules.  In keeping with Notation \ref{intnot}, in this paper we represent elements of the concise barcode as equivalence classes $[\ell_{\uparrow}(x_{i}^{k}),\ell_{\uparrow}(y_{i}^{k+1}))^{\Gamma}$ (for $i\leq r$, provided that $\ell_{\uparrow}(x_{i}^{k})<\ell_{\uparrow}(y_{i}^{k+1})$) or $[\ell_{\uparrow}(x_{i}^{k}),\infty)^{\Gamma}$ (for $i>r$) of intervals modulo $\Gamma$-translation.

\begin{prop}\label{iPi}
With notation as above, the inclusion of chain complexes $\iota_{\uparrow}\co \left(\oplus_k H_{\uparrow k},0\right)\to \left(C_{\uparrow *},\partial_{\mathcal{C}_{\uparrow}}\right)$ is a homotopy equivalence, with homotopy inverse $\Pi_{\uparrow}$ restricting to each $H_{\uparrow k}$ as the identity and mapping both $B_{\uparrow k}$ and $F_{\uparrow k}$ to $\{0\}$.
\end{prop}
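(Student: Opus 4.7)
My plan is to prove Proposition \ref{iPi} by exploiting the orthogonal decomposition $C_{\uparrow k} = B_{\uparrow k} \oplus H_{\uparrow k} \oplus F_{\uparrow k}$ provided by the singular value decomposition, using the fact that $\partial_{\mathcal{C}_{\uparrow}}$ restricts to an isomorphism $F_{\uparrow(k+1)} \to B_{\uparrow k}$ and vanishes on $B_{\uparrow k} \oplus H_{\uparrow k}$. Since the statement concerns chain homotopy equivalence (no filtration compatibility is asserted), I can ignore filtrations entirely and work purely at the level of chain complexes.

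First I would verify that both $\iota_{\uparrow}$ and $\Pi_{\uparrow}$ are chain maps. For $\iota_{\uparrow}$ this is immediate because $\partial_{\mathcal{C}_{\uparrow}}$ vanishes on $H_{\uparrow k}$. For $\Pi_{\uparrow}$, note that $\partial_{\mathcal{C}_{\uparrow}}$ sends $B_{\uparrow(k+1)} \oplus H_{\uparrow(k+1)}$ to $0$ and sends $F_{\uparrow(k+1)}$ into $B_{\uparrow k}$, all of which lie in $\ker \Pi_{\uparrow}$; thus $\Pi_{\uparrow} \circ \partial_{\mathcal{C}_{\uparrow}} = 0 = 0 \circ \Pi_{\uparrow}$. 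The identity $\Pi_{\uparrow} \circ \iota_{\uparrow} = \mathrm{id}_{\oplus_k H_{\uparrow k}}$ is then immediate from the definitions.

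The key step is to exhibit a chain homotopy $K: C_{\uparrow *} \to C_{\uparrow *+1}$ with $\mathrm{id}_{C_{\uparrow *}} - \iota_{\uparrow} \circ \Pi_{\uparrow} = \partial_{\mathcal{C}_{\uparrow}} \circ K + K \circ \partial_{\mathcal{C}_{\uparrow}}$. I would define $K$ by declaring $K|_{H_{\uparrow k}} = 0$, $K|_{F_{\uparrow k}} = 0$, and $K|_{B_{\uparrow k}} = \left(\partial_{\mathcal{C}_{\uparrow}}|_{F_{\uparrow(k+1)}}\right)^{-1}$, which makes sense because $\partial_{\mathcal{C}_{\uparrow}}$ restricts to a $\Lambda_{\uparrow}$-linear isomorphism $F_{\uparrow(k+1)} \to B_{\uparrow k}$ (this is exactly the content of the singular value decomposition). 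Checking the identity on each of the three summands: for $h \in H_{\uparrow k}$, both sides vanish; for $b \in B_{\uparrow k}$ one has $\partial K(b) = b$ and $K\partial b = 0$, matching $(\mathrm{id} - \iota_{\uparrow}\Pi_{\uparrow})(b) = b$; for $f \in F_{\uparrow k}$ we have $K(f) = 0$ while $\partial f \in B_{\uparrow(k-1)}$ so $K(\partial f) \in F_{\uparrow k}$ is the unique element with $\partial K(\partial f) = \partial f$, forcing $K(\partial f) = f$ by injectivity of $\partial|_{F_{\uparrow k}}$, again matching $(\mathrm{id} - \iota_{\uparrow}\Pi_{\uparrow})(f) = f$.

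There is no real obstacle here — this is a standard Hodge-type splitting argument — but the one point requiring a moment's care is the verification on the $F_{\uparrow k}$ summand, where one has to invoke the injectivity of $\partial_{\mathcal{C}_{\uparrow}}|_{F_{\uparrow k}}$ (equivalently, the fact that $F_{\uparrow k}$ is a complement to $\ker \partial_{\mathcal{C}_{\uparrow}}|_{C_{\uparrow k}}$) to conclude $K(\partial f) = f$ rather than merely $\partial(K(\partial f) - f) = 0$.
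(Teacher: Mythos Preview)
Your proof is correct and essentially identical to the paper's: the homotopy $K$ you define via $K|_{B_{\uparrow k}} = (\partial_{\mathcal{C}_{\uparrow}}|_{F_{\uparrow(k+1)}})^{-1}$ is exactly the map the paper calls $L$, which sends the basis element $x_i^k \in B_{\uparrow k}$ to the corresponding $y_i^{k+1} \in F_{\uparrow(k+1)}$. Your write-up is simply more explicit in verifying that $\Pi_{\uparrow}$ is a chain map and in checking the homotopy identity on each summand.
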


\begin{proof}
By definition, $\Pi_{\uparrow}\circ \iota_{\uparrow}$ is the identity on $\oplus_k H_{\uparrow k}$.  If we define $L\co C_{\uparrow *}\to C_{\uparrow (*+1)}$ to be zero on each $H_{\uparrow k}\oplus F_{\uparrow k}$ and to send the basis element $x_{i}^{k}\in B_{\uparrow k}$ to the corresponding $y_{i}^{k+1}\in F_{\uparrow(k+1)}$, one readily checks that $L$ defines a homotopy between $\iota_{\uparrow}\circ\Pi_{\uparrow}$ and the identity on $C_{\uparrow *}$.
\end{proof}

Since the homology of a complex with zero differential is just the underlying chain module of the complex, the map $\iota_{\uparrow}$ of the previous proposition induces an isomorphism $\iota_{\uparrow *}$ from the submodule $H_*=\oplus_k H_{\uparrow k}$ of $\oplus_kC_{\uparrow k}$ to the homology $H_*(\mathcal{C}_{\uparrow})$.  The $(H_{\uparrow k},\ell_{\uparrow}|_{H_{\uparrow k}})$ are orthogonalizable $\Lambda_{\uparrow}$-spaces by \cite[Corollary 2.17]{UZ}, so by putting $\rho_{\uparrow k}=\ell_{\uparrow}\circ \iota_{\uparrow *}^{-1}|_{H_k(\mathcal{C}_{\uparrow})}$ we obtain orthogonalizable $\Lambda_{\uparrow}$-spaces $(H_k(\mathcal{C}_{\uparrow}),\rho_{\uparrow k})$.  It follows from \cite[Proposition 6.6]{UZ} that the map $\rho_{\uparrow k}\co H_k(\mathcal{C}_{\uparrow})\to \R\cup\{-\infty\}$ assigns to each $\alpha\in H_k(\mathcal{C}_{\uparrow})$ the ``spectral invariant'' \begin{equation}\label{specup} \rho_{\uparrow k}(\alpha)=\inf\{\ell_{\uparrow}(a)|a\in\ker\partial_{\mathcal{C}_{\uparrow}}|_{C_{\uparrow k}},\,[a]=\alpha\in H_k(\mathcal{C}_{\uparrow})\}.\end{equation}  In particular this function $\rho_{\uparrow k}$ is independent of the choice of the singular value decomposition that we initially used to obtain it.  By \cite[Proposition 5.5]{UZ}, for any $\rho_{\uparrow k}$-orthogonal basis $\{h_1,\ldots,h_{m-r}\}$ for $H_k(\mathcal{C}_{\uparrow})$, the multiset of values $\{\rho_{\uparrow k}(h_1)\mod \Gamma,\ldots,\rho_{\uparrow k}(h_{m-r})\mod\Gamma\}$ is, independently of the choice of orthogonal basis, equal to $\{\ell_{\uparrow}(x_{r+1}^{k})\mod\Gamma,\ldots,\ell_{\uparrow}(x_m^k)
\mod\Gamma\}$, \emph{i.e.}, to the collection of left endpoints (modulo $\Gamma$) of the infinite-length bars in the degree-$k$ (verbose or concise) barcode.

As usual, using Remark \ref{switch}, analogous statements hold for $\Lambda^{\downarrow}$-Floer-type complexes $\mathcal{C}^{\downarrow}=(C^{\downarrow}_{*},\partial_{\mathcal{C}^{\downarrow}},\ell^{\downarrow})$: each $C^{\downarrow}_{k}$ decomposes orthogonally as $C^{\downarrow}_{k}=B^{\downarrow}_{k}\oplus H^{\downarrow}_{k}\oplus F^{\downarrow}_{k}$, with $\partial_{\mathcal{C}^{\downarrow}}$ vanishing on $B_{k}^{\downarrow}\oplus H_{k}^{\downarrow}$ and mapping some orthogonal basis $\{y_{1}^{k+1},\ldots,y_{r}^{k+1}\}$ for $F^{\downarrow}_{k+1}$ bijectively to an orthogonal basis for $B^{\downarrow}_{k}$.  Writing $x_{i}^{k}=\partial_{\mathcal{C}^{\downarrow}}y_{i}^{k+1}$ for $i=1,\ldots,r$ and letting $\{x_{r+1}^{k},\ldots,x_{m}^{k}\}$ be an orthogonal basis for $H^{\downarrow}_{k}$, the degree-$k$ concise barcode of $\mathcal{C}^{\downarrow}$ is taken to consist of half-open intervals modulo $\Gamma$-translation $(\ell^{\downarrow}(y_{i}^{k+1}),\ell^{\downarrow}(x_{i}^{k})]^{\Gamma}$ for those $i\in\{1,\ldots,r\}$ with $\ell^{\downarrow}(y_{i}^{k+1})\neq \ell^{\downarrow}(x_{i}^{k})$, and $(-\infty,\ell^{\downarrow}(x_{i}^{k})]^{\Gamma}$ for $i=r+1,\ldots,m$.  The inclusions $\iota^{\downarrow}$ of $H^{\downarrow}_{k}$ (with zero differential) into $C^{\downarrow}_{k}$ give a chain homotopy equivalence with homotopy inverse given by the projection $\Pi^{\downarrow}\co C^{\downarrow}_{k}\to H^{\downarrow}_{k}$ associated to the direct sum decomposition $C^{\downarrow}_{k}=B^{\downarrow}_{k}\oplus H^{\downarrow}_{k}\oplus F^{\downarrow}_{k}$.  The resulting isomorphism $H^{\downarrow}_{k}\cong H_k(\mathcal{C}^{\downarrow})$ identifies $\ell^{\downarrow}|_{H^{\downarrow}_{k}}$ with the spectral invariant $\rho^{\downarrow}_{k}\co H_k(\mathcal{C}^{\downarrow})\to\R\cup\{\infty\}$ defined by \begin{equation}\label{specdown} \rho^{\downarrow}_{k}(\alpha)=\sup\{\ell^{\downarrow}(a)|a\in\ker(\partial_{\mathcal{C}^{\downarrow}}|_{C^{\downarrow}_{k}}),\,[a]=\alpha\in H_k(\mathcal{C}^{\downarrow})\}.\end{equation} 

Now suppose that we have a chain-level filtered matched pair $\mathcal{CP}=(\mathcal{C},\mathcal{C}^{\downarrow},\mathcal{C}_{\uparrow},\phi^{\downarrow},\phi_{\uparrow})$ as in Definition \ref{cfmpdfn}.  In particular $\mathcal{C}=(C_*=\oplus_kC_k,\partial)$ is a chain complex of free, finitely-generated $\Lambda$-modules.  In order to ensure that we can obtain suitable direct sum decompositions we \textbf{assume at this point that the subgroup $\Gamma$ of $\mathbb{R}$ is discrete}, and hence that $\Lambda$ is a PID. Then each $\ker(\partial|_{C_k})$ is also a finitely generated free $\Lambda$-module, and so by putting the differential $\partial|_{C_{k+1}}\co C_{k+1}\to \ker(\partial|_{C_k})$ in Smith
normal form we obtain bases $\{y_{1}^{k+1},\ldots,y_{r}^{k+1},y_{r+1}^{k+1},\ldots,y_{n}^{k+1}\}$ for $C_{k+1}$ and $\{x_{1}^{k},\ldots,x_{r}^{k},\ldots,x_{m}^{k}\}$ for $\ker(\partial|_{C_k})$, as well as nonzero elements $\alpha_1,\ldots,\alpha_r\in \Lambda$ such that $\alpha_i|\alpha_{i+1}$, with the property that $\partial y_{i}^{k+1}=\alpha_ix_{i}^{k}$ for $i=1,\ldots,r$ and $\partial y_{i}^{k+1}=0$ for $i=r+1,\ldots,n$.  

Analogously to our discussion of $\mathcal{C}_{\uparrow}$ and $\mathcal{C}^{\downarrow}$, but with modest additional complication due to the fact that the $\alpha_i$ may not be invertible, write $F_{k+1}=\mathrm{span}_{\Lambda}\{y_{1}^{k+1},\ldots,y_{r+1}^{k+1}\}$, $\tilde{B}_k=\mathrm{span}_{\Lambda}\{x_{1}^{k},\ldots,x_{r}^{k}\}$, and $H_{k}^{\mathrm{free}}=\mathrm{span}_{\Lambda}\{x_{r+1}^{k},\ldots,x_{m}^{k}\}$.  (Thus $\ker(\partial|_{C_k})=H_{k}^{\mathrm{free}}\oplus \tilde{B}_k$, and the $k$th homology $H_k(\mathcal{C})$ is the direct sum of $H_{k}^{\mathrm{free}}$ with the torsion submodule $\frac{\tilde{B}_k}{\mathrm{span}_{\Lambda}\{\alpha_1x_{1}^{k},\ldots,\alpha_rx_{r}^{k}\}}$.)  In just the same way, the Smith normal form of $\partial|_{C_k}$ yields submodules $F_k$ of $C_k$ and $H_{k-1}^{\mathrm{free}}$ and $\tilde{B}_{k-1}$ of $\ker(\partial|_{C_{k-1}})$.  In particular, we have direct sum decompositions $C_k=F_k\oplus \ker(\partial|_{C_k})=F_k\oplus H_{k}^{\mathrm{free}}\oplus \tilde{B}_k$.

Writing $H_{*}^{\mathrm{free}}=\oplus_k H_{k}^{\mathrm{free}}$, let $\iota\co H_{*}^{\mathrm{free}}\to C_*$ be the inclusion and let $\Pi\co C_*\to H_{*}^{\mathrm{free}}$ be given in each degree $k$ by the projection associated to the direct sum decomposition $C_k=F_k\oplus H_{k}^{\mathrm{free}}\oplus \tilde{B}_k$.  Regarding $H_{*}^{\mathrm{free}}$ as a chain complex with zero differential, we see that $\iota$ is a chain map because $\partial|_{H_{k}^{\mathrm{free}}}=0$, and $\Pi$ is a chain map because $\Img(\partial|_{C_{k+1}})$ is contained in $\tilde{B}_k$ which is annihilated by $\Pi$.  In contrast to the situation of Proposition \ref{iPi}, $\iota$ and $\Pi$ should not be expected to be homotopy inverses since $H_k(\mathcal{C})$ may have torsion in which case it is not isomorphic to $H_{k}^{\mathrm{free}}$.  However, recalling that we are assuming $\mathcal{C}$ to be part of a chain-level filtered matched pair $\mathcal{CP}=(\mathcal{C},\mathcal{C}^{\downarrow},\mathcal{C}_{\uparrow},\phi^{\downarrow},\phi_{\uparrow})$, we have:

\begin{prop}\label{phipi}
The chain map $\phi_{\uparrow}\co \mathcal{C}\to\mathcal{C}_{\uparrow}$ is chain homotopic to $\phi_{\uparrow}\circ\iota\circ\Pi$.  Similarly $\phi^{\downarrow}\co \mathcal{C}\to\mathcal{C}^{\downarrow}$ is chain homotopic to $\phi^{\downarrow}\circ\iota\circ\Pi$.
\end{prop}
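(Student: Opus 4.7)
The plan is to produce an explicit chain homotopy on $\Lambda_{\uparrow}\otimes_{\Lambda}\mathcal{C}$ relating the identity to the $\Lambda_{\uparrow}$-linear extension of $\iota\circ\Pi$, and then push this down to a homotopy between $\phi_{\uparrow}$ and $\phi_{\uparrow}\circ\iota\circ\Pi$ by composing with $\phi_{\uparrow}$ (at the $\Lambda_{\uparrow}$-level) on the left and with the coefficient extension map on the right. The $\mathcal{C}^{\downarrow}$ case is identical after replacing $\Lambda_{\uparrow}$ with $\Lambda^{\downarrow}$ throughout.

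Recall the Smith normal form bases $\{y_{i}^{k+1}\}$ and $\{x_{i}^{k}\}$, with $\partial y_{i}^{k+1}=\alpha_ix_{i}^{k}$ for $i=1,\ldots,r$ (with $\alpha_i\in\Lambda\setminus\{0\}$) and $\partial y_{i}^{k+1}=0$ for $i>r$, giving direct sum decompositions $C_k=F_k\oplus H_{k}^{\mathrm{free}}\oplus \tilde{B}_k$. Since $\Lambda_{\uparrow}$ is a field, each nonzero $\alpha_i$ is invertible in $\Lambda_{\uparrow}$. The key step is thus to define a $\Lambda_{\uparrow}$-linear map
\[
K_{\uparrow}\co \Lambda_{\uparrow}\otimes_{\Lambda}C_{*}\to \Lambda_{\uparrow}\otimes_{\Lambda}C_{*+1}
\]
on basis elements by $K_{\uparrow}(1\otimes y_{i}^{k+1})=0$ for all $i$, $K_{\uparrow}(1\otimes x_{i}^{k})=\alpha_{i}^{-1}\otimes y_{i}^{k+1}$ for $i\leq r$, and $K_{\uparrow}(1\otimes x_{i}^{k})=0$ for $i>r$ (extending the second definition over the basis of $\ker(\partial|_{C_k})$ to all of the $\Lambda_{\uparrow}$-span of lifts of $F_k$-basis elements by $K_{\uparrow}\equiv 0$ there as well). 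A direct check on each basis element shows that
\[
\partial\circ K_{\uparrow}+K_{\uparrow}\circ\partial\;=\;\mathrm{id}_{\Lambda_{\uparrow}\otimes_{\Lambda}C_{*}}-\bigl(1_{\Lambda_{\uparrow}}\otimes(\iota\circ\Pi)\bigr),
\]
since the left-hand side acts as the identity on the $\Lambda_{\uparrow}$-spans of the $y_{i}^{k+1}$ with $i\leq r$ and of the $x_{i}^{k}$ with $i\leq r$ (i.e.\ on $\Lambda_{\uparrow}\otimes_{\Lambda}(F_{*}\oplus\tilde{B}_{*})$) and as zero on $\Lambda_{\uparrow}\otimes_{\Lambda}H_{*}^{\mathrm{free}}$.

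With $K_{\uparrow}$ in hand, let $\iota_{\Lambda_{\uparrow}}\co \mathcal{C}\to\Lambda_{\uparrow}\otimes_{\Lambda}\mathcal{C}$ denote the coefficient extension map, and define
\[
h\;:=\;(1_{\Lambda_{\uparrow}}\otimes\phi_{\uparrow})\circ K_{\uparrow}\circ \iota_{\Lambda_{\uparrow}}\co C_{*}\to C_{\uparrow(*+1)}.
\]
Since $\iota_{\Lambda_{\uparrow}}$ and $1_{\Lambda_{\uparrow}}\otimes\phi_{\uparrow}$ are chain maps and using $\phi_{\uparrow}=(1_{\Lambda_{\uparrow}}\otimes\phi_{\uparrow})\circ \iota_{\Lambda_{\uparrow}}$, the identity displayed above gives
\[
\partial_{\mathcal{C}_{\uparrow}}\circ h+h\circ\partial=(1_{\Lambda_{\uparrow}}\otimes\phi_{\uparrow})\circ(\partial K_{\uparrow}+K_{\uparrow}\partial)\circ\iota_{\Lambda_{\uparrow}}=\phi_{\uparrow}-\phi_{\uparrow}\circ\iota\circ\Pi,
\]
as desired. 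The map $h$ is $\Lambda$-linear (even though $K_{\uparrow}$ is only $\Lambda_{\uparrow}$-linear, composing with $\iota_{\Lambda_{\uparrow}}$ on the right yields something defined on $C_{*}$, and $C_{\uparrow(*+1)}$ is a $\Lambda$-module), so this is a genuine chain homotopy of $\Lambda$-linear maps.

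The only potential obstacle is ensuring that the $\alpha_i$ can be inverted, which is precisely where the assumption that $\Gamma$ is discrete (so that $\Lambda$ is a PID admitting Smith normal form, and so that $\Lambda_{\uparrow}$ is a field extension of $\Lambda$) enters; this is already in force at this stage of the paper. For the $\mathcal{C}^{\downarrow}$ statement one repeats the construction verbatim, replacing $\Lambda_{\uparrow},\iota_{\Lambda_{\uparrow}},\phi_{\uparrow}$ by $\Lambda^{\downarrow},\iota_{\Lambda^{\downarrow}},\phi^{\downarrow}$, and using that $\alpha_i$ is likewise invertible in the field $\Lambda^{\downarrow}$.
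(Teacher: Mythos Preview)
Your proof is correct and essentially identical to the paper's: your homotopy $h=(1_{\Lambda_{\uparrow}}\otimes\phi_{\uparrow})\circ K_{\uparrow}\circ\iota_{\Lambda_{\uparrow}}$ evaluates on $x_i^k$ (for $i\le r$) to $\alpha_i^{-1}\phi_{\uparrow}(y_i^{k+1})$ and vanishes on $F_k\oplus H_k^{\mathrm{free}}$, which is exactly the map $L$ the paper writes down directly. The only difference is cosmetic---you first build the homotopy on $\Lambda_{\uparrow}\otimes_{\Lambda}\mathcal{C}$ and then push forward, whereas the paper writes $L\co C_*\to C_{\uparrow(*+1)}$ in one step.

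Two small clean-ups: (1) your clause ``$K_{\uparrow}(1\otimes y_i^{k+1})=0$ for all $i$'' is misindexed---the $\{y_i^{k+1}\}$ form a basis of all of $C_{k+1}$, so this would force $K_{\uparrow}\equiv 0$ in degree $k{+}1$, contradicting the nonzero values on $\tilde{B}_{k+1}$; you mean $K_{\uparrow}$ vanishes on the $F_k$-basis $\{y_i^k\}$ (which your parenthetical then says correctly). (2) $\Lambda_{\uparrow}$ is a field for every $\Gamma$, so the invertibility of $\alpha_i$ in $\Lambda_{\uparrow}$ needs no hypothesis; the discreteness of $\Gamma$ enters only to ensure the Smith normal form decomposition exists in the first place, and that is already assumed in the setup.
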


\begin{proof}
The two cases are identical, so we just consider $\phi_{\uparrow}$. Recalling that $\mathcal{C}_{\uparrow}$ is, among other properties, a chain complex of vector spaces over the \emph{field} $\Lambda_{\uparrow}\supset \Lambda$, we may define a homomorphism $L\co C_{*}\to C_{\uparrow(*+1)}$ by setting $L$ equal to zero on each $F_k\oplus H_{k}^{\mathrm{free}}$ and by letting $L(x_{i}^{k})=\frac{1}{\alpha_i}\phi_{\uparrow}(y_{i}^{k+1})$ for $i=1,\ldots,r$, where as before $\{y_{1}^{k+1},\ldots,y_r^{k+1}\}$ is a basis for $F_{k+1}$ and $\{x_{1}^{k},\ldots,x_{r}^{k}\}$ is a basis for $\tilde{B}_{k}$, with $\partial y_{i}^{k+1}=\alpha_ix_{i}^{k}$.  It is then straightforward to verify that $\phi_{\uparrow}-\phi_{\uparrow}\circ\iota\circ\Pi=\partial_{\mathcal{C}_{\uparrow}}\circ L+L\circ\partial$.
\end{proof}

Now let us write $\mathcal{A}(\mathcal{C})$, $\mathcal{A}(\mathcal{C}_{\uparrow})$, and $\mathcal{A}(\mathcal{C}^{\downarrow})$ for the subcomplexes of $\mathcal{C},\mathcal{C}_{\uparrow}$, and $\mathcal{C}^{\downarrow}$ whose degree-$k$ parts are, respectively, $F_k\oplus \tilde{B}_k$, $F_{\uparrow k}\oplus B_{\uparrow k}$, and $F^{\downarrow}_{ k}\oplus B^{\downarrow}_{ k}$.  So $\mathcal{A}(\mathcal{C}_{\uparrow})$is a $\Lambda_{\uparrow}$-Floer-type complex with filtration function given by the restriction of $\ell_{\uparrow}$, and likewise restricting $\ell^{\downarrow}$ makes $\mathcal{A}(\mathcal{C}^{\downarrow})$ into a $\Lambda^{\downarrow}$-Floer-type complex; both $\mathcal{A}(\mathcal{C}_{\uparrow})$ and $\mathcal{A}(\mathcal{C}^{\downarrow})$ have trivial homology, while the homology of $\mathcal{A}(\mathcal{C})$ in degree $k$ is the torsion submodule $tH_k(\mathcal{C})$ of $H_k(\mathcal{C})$.  Let $\mathcal{H}^{\mathrm{free}}(\mathcal{C})$ be the graded $\Lambda$-module given in degree $k$ by $\frac{H_k(\mathcal{C})}{tH_{k}(\mathcal{C})}$, regarded as a chain complex with zero differential; the inclusions $\iota\co H_{k}^{\mathrm{free}}\to C_k$ induce on homology a map which, when composed with the quotient projection $H_k(\mathcal{C})\to \frac{H_k(\mathcal{C})}{tH_k(\mathcal{C})}$, is an isomorphism to $\mathcal{H}^{\mathrm{free}}(\mathcal{C})$ from the subcomplex (again with zero differential) of $\mathcal{C}$ given in degree $k$ by $H_{k}^{\mathrm{free}}(\mathcal{C})$.    Finally, let $\mathcal{H}(\mathcal{C}_{\uparrow})$ and $\mathcal{H}(\mathcal{C}^{\downarrow})$ denote the $\Lambda_{\uparrow}$- and $\Lambda^{\downarrow}$-Floer-type complexes, with zero differential, given in degree $k$ by the respective homologies $H_k(\mathcal{C}_{\uparrow})$ and $H_k(\mathcal{C}^{\downarrow})$ and with filtration functions given by the spectral invariants $\rho_{\uparrow k},\rho^{\downarrow}_{k}$.  The maps induced on homology by $\iota_{\uparrow}\co H_{\uparrow k}\to C_{\uparrow k}$ and $\iota^{\downarrow}\co H_{k}^{\downarrow}\to C^{\downarrow}_{k}$ provide isomorphisms to these Floer-type complexes from the sub-Floer-type complexes of $\mathcal{C}_{\uparrow},\mathcal{C}^{\downarrow}$ given in degree $k$ by $H_{\uparrow k}$ and $H^{\downarrow}_{k}$.

The direct sum of normed $\Lambda_{\uparrow}$- or $\Lambda^{\downarrow}$-spaces is defined in the obvious way that makes the summands orthogonal subspaces, leading to  corresponding notions of direct sum of Floer-type complexes an of chain-level filtered matched pairs implicit below.

\begin{prop}\label{splitcfmp}
With notation as above, we have a diagram, commutative up to homotopy,
\begin{equation}\label{splitsix} \xymatrix{ & \mathcal{A}(\mathcal{C}^{\downarrow})\oplus \mathcal{H}(\mathcal{C}^{\downarrow}) \ar[rr]^f & &  \mathcal{C}^{\downarrow} \\ \mathcal{A}(\mathcal{C})\oplus\mathcal{H}^{\mathrm{free}}(\mathcal{C})\ar[ur]^{\Phi^{\downarrow}}\ar[rr]^{g}\ar[dr]_{\Phi_{\uparrow}} & & \mathcal{C}\ar[ur]_<<<<<<<<{\phi^{\downarrow}}\ar[dr]^{\phi_{\uparrow}} &\\ & \mathcal{A}(\mathcal{C}_{\uparrow})\oplus \mathcal{H}(\mathcal{C}_{\uparrow}) \ar[rr]^h & &  \mathcal{C}_{\uparrow} } \end{equation}
where $f$ and $h$ are filtered chain isomorphisms, $g$ is a chain isomorphism, $\Phi^{\downarrow}$ restricts to $\mathcal{A}(\mathcal{C}^{\downarrow})$ as $0$ and to $\mathcal{H}^{\mathrm{free}}(\mathcal{C})$ as the map $\phi^{\downarrow}_{*}\co \mathcal{H}^{\mathrm{free}}(\mathcal{C})\to \mathcal{H}(\mathcal{C}^{\downarrow})$ induced on homology by $\phi^{\downarrow}$, and  $\Phi_{\uparrow}$ restricts to $\mathcal{A}(\mathcal{C}_{\uparrow})$ as $0$ and to $\mathcal{H}^{\mathrm{free}}(\mathcal{C})$ as the map $\phi_{\uparrow *}\co \mathcal{H}^{\mathrm{free}}(\mathcal{C})\to \mathcal{H}(\mathcal{C}_{\uparrow})$ induced on homology by $\phi_{\uparrow}$.\end{prop}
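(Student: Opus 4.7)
The plan is to use the direct sum decompositions arising from Smith normal form on $\mathcal{C}$ and from the singular value decompositions of $\mathcal{C}_\uparrow,\mathcal{C}^\downarrow$ to explicitly construct the horizontal arrows $f,g,h$, and then to use Propositions \ref{iPi} and \ref{phipi} to verify that the two parallelograms commute up to homotopy.

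First I would construct $g$. The chosen bases give a chain-complex direct sum decomposition $\mathcal{C}=\mathcal{A}(\mathcal{C})\oplus H_*^{\mathrm{free}}$, where $H_*^{\mathrm{free}}$ is regarded as a subcomplex with zero differential (since $\partial|_{H_k^{\mathrm{free}}}=0$ by construction). The projection $\Pi$ restricted to cycles composed with the quotient map $H_*(\mathcal{C})\to H_*(\mathcal{C})/tH_*(\mathcal{C})$ identifies $H_*^{\mathrm{free}}$ with $\mathcal{H}^{\mathrm{free}}(\mathcal{C})$, and $g$ is defined to be the direct sum of the inclusion $\mathcal{A}(\mathcal{C})\hookrightarrow\mathcal{C}$ and the inverse of this identification followed by $\iota\co H_*^{\mathrm{free}}\hookrightarrow\mathcal{C}$. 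Next I would construct $h$: the orthogonal decomposition $C_{\uparrow k}=(F_{\uparrow k}\oplus B_{\uparrow k})\oplus H_{\uparrow k}$, which holds in each degree by \cite[Corollaries 2.17, 2.18]{UZ} and \cite[Theorem 3.4]{UZ}, exhibits $\mathcal{C}_\uparrow$ as a \emph{filtered} direct sum of $\mathcal{A}(\mathcal{C}_\uparrow)$ and the zero-differential Floer-type complex $H_{\uparrow *}$. Combining with the filtered isomorphism $H_{\uparrow *}\to \mathcal{H}(\mathcal{C}_\uparrow)$ supplied by $\iota_{\uparrow *}$ (using (\ref{specup}) to match filtrations) gives the filtered chain isomorphism $h$. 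The construction of $f$ is parallel.

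With $f,g,h$ in hand, $\Phi_\uparrow$ and $\Phi^\downarrow$ are defined exactly as prescribed in the statement: zero on the $\mathcal{A}$-summands and $\phi_{\uparrow *}$, $\phi^{\downarrow}_{*}$ on $\mathcal{H}^{\mathrm{free}}(\mathcal{C})$. The core step is verifying homotopy commutativity of the two parallelograms; by symmetry I consider only the lower one, showing $h\circ\Phi_\uparrow\sim \phi_\uparrow\circ g$. I would check this separately on each summand of $\mathcal{A}(\mathcal{C})\oplus\mathcal{H}^{\mathrm{free}}(\mathcal{C})$. On $\mathcal{A}(\mathcal{C})$ the map $h\circ\Phi_\uparrow$ vanishes by definition, so I must show $\phi_\uparrow|_{\mathcal{A}(\mathcal{C})}$ is null-homotopic. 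Here the key observation is that the projection $\Pi\co C_*\to H_*^{\mathrm{free}}$ from Proposition \ref{phipi} annihilates $\mathcal{A}(\mathcal{C})=F_*\oplus \tilde{B}_*$ by its very definition, so $\phi_\uparrow\circ\iota\circ\Pi$ vanishes on $\mathcal{A}(\mathcal{C})$; Proposition \ref{phipi} then gives $\phi_\uparrow|_{\mathcal{A}(\mathcal{C})}\sim 0$. On $\mathcal{H}^{\mathrm{free}}(\mathcal{C})$ both compositions unwind to things homotopic to $\phi_\uparrow\circ \iota$: directly, $\phi_\uparrow\circ g|_{\mathcal{H}^{\mathrm{free}}(\mathcal{C})}=\phi_\uparrow\circ \iota$ after the identification $\mathcal{H}^{\mathrm{free}}(\mathcal{C})\cong H_*^{\mathrm{free}}$; meanwhile $h\circ \Phi_\uparrow|_{\mathcal{H}^{\mathrm{free}}(\mathcal{C})}$ equals $\iota_\uparrow\circ \iota_{\uparrow *}^{-1}\circ \phi_{\uparrow *}\circ(\Pi|_{\ker\partial})_*$, and after substituting $\iota_{\uparrow *}^{-1}=\Pi_{\uparrow *}$ this becomes the homology-induced map of $\iota_\uparrow\circ\Pi_\uparrow\circ\phi_\uparrow\circ\iota$, which is homotopic to $\phi_\uparrow\circ \iota$ by Proposition \ref{iPi}.

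The main technical point that requires care is distinguishing between chain-level equalities and homotopy-level equalities: $g$ is a strict isomorphism but the composition $h\circ\Phi_\uparrow$ only matches $\phi_\uparrow\circ g$ up to an explicit homotopy built from those provided by Propositions \ref{iPi} and \ref{phipi}, and these homotopies must be assembled compatibly across the two summands. A secondary point is that the filtered (rather than merely chain) nature of $h$ and $f$ relies essentially on orthogonality of the decompositions $C_{\uparrow k}=F_{\uparrow k}\oplus H_{\uparrow k}\oplus B_{\uparrow k}$ and its $\downarrow$-counterpart; without orthogonality the summation map would be a chain isomorphism but not a filtered one.
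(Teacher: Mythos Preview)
Your proposal is correct and follows essentially the same approach as the paper. The only organizational difference is that the paper identifies the transported map $h\circ\Phi_\uparrow\circ g^{-1}$ as the single composition $\iota_\uparrow\circ\Pi_\uparrow\circ\phi_\uparrow\circ\iota\circ\Pi$ on all of $\mathcal{C}$ and then applies Propositions~\ref{iPi} and~\ref{phipi} in sequence, whereas you split the domain into $\mathcal{A}(\mathcal{C})$ and $H_*^{\mathrm{free}}$ and apply one proposition to each summand; both routes use the same ingredients and yield the same homotopy.
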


\begin{proof}
As noted above the proposition, in each degree $k$ we have isomorphisms $H_{\uparrow k}\cong H_{k}(\mathcal{C}_{\uparrow})$, $H_k^{\downarrow}\cong H_k(\mathcal{C}^{\downarrow})$, and $H_k^{\mathrm{free}}\cong \frac{H_k(\mathcal{C})}{tH_k(\mathcal{C})}$ induced on homology by $\iota_{\uparrow},\iota^{\downarrow},\iota$ respectively; the chain isomorphisms $f,g,h$ are just the compositions of (the inverses of) these isomorphisms with the maps given in each degree $k$ by obvious addition maps associated to the direct sum decompositions $C_k=(F_k\oplus \tilde{B}_k)\oplus H_{k}^{\mathrm{free}}$, $C_{\uparrow k}=(F_{\uparrow k}\oplus B_{\uparrow k})\oplus H_{\uparrow k}$, $C^{\downarrow}_{k}=(F^{\downarrow}_{k}\oplus B^{\downarrow}_{k})\oplus H_{k}^{\downarrow}$.  (The fact that $f$ and $h$ are \emph{filtered} isomorphisms follows from the corresponding direct sum decompositions being orthogonal.)

It remains to show that the diagram commutes up to homotopy.  Under the same identifications of $\frac{H_k(\mathcal{C})}{tH_k(\mathcal{C})}$ with $H_{k}^{\mathrm{free}}$ and $H_k(\mathcal{C}_{\uparrow})$ with $H_{\uparrow k}$ as in the previous paragraph, the map $\phi_{\uparrow *}\co \frac{H_k(\mathcal{C})}{tH_k(\mathcal{C})}\to H_k(\mathcal{C}_{\uparrow})$ becomes identified with the map $\Pi_{\uparrow}\circ (\phi_{\uparrow}|_{H_k^{\mathrm{free}}})\co H_{k}^{\mathrm{free}}\to H_{\uparrow k}$ between submodules of $C_k$ and $C_{\uparrow k}$.  The map $\Phi_{\uparrow}$ then becomes identified, in degree $k$, with the map $C_k\to C_{\uparrow k}$ that vanishes on $F_k\oplus \tilde{B}_k$ and sends $ H_{k}^{\mathrm{free}}$ to $ H_{\uparrow k}$ via $\Pi_{\uparrow}\circ (\phi_{\uparrow}|_{H_k^{\mathrm{free}}})$; this map $C_k\to C_{\uparrow k}$ can be written as $\iota_{\uparrow}\circ\Pi_{\uparrow}\circ\phi_{\uparrow}\circ\iota\circ\Pi$.    By Propositions \ref{iPi} and \ref{phipi}, this latter map is chain homotopic to $\phi_{\uparrow}$, and hence the bottom half of (\ref{splitsix}) commutes up to homotopy.  That the top half commutes up to homotopy follows by the same argument.
\end{proof}

Thus our original chain-level filtered matched pair $\mathcal{CP}$ is filtered matched homotopy equivalent to the chain-level filtered matched pair
 given by \begin{equation} \xymatrix{ & \mathcal{A}(\mathcal{C}^{\downarrow})\oplus \mathcal{H}(\mathcal{C}^{\downarrow}) \\ \mathcal{A}(\mathcal{C})\oplus\mathcal{H}^{\mathrm{free}}(\mathcal{C})\ar[ur]^{\Phi^{\downarrow}}\ar[dr]_{\Phi_{\uparrow}} & \\ & \mathcal{A}(\mathcal{C}_{\uparrow})\oplus \mathcal{H}(\mathcal{C}_{\uparrow})   } \label{splitversion}\end{equation}  We regard this latter chain-level filtered matched pair as a direct sum of three simpler ones: \[  \xymatrix{ & 0 \\ \mathcal{A}(\mathcal{C})\ar[ur]^{0}\ar[dr]_{0} & \\ & 0 }\qquad \xymatrix{ & \mathcal{A}(\mathcal{C}^{\downarrow}) \\ 0\ar[ur]^{0}\ar[dr]_{0} & \\ & \mathcal{A}(\mathcal{C}_{\uparrow}) }\qquad  \xymatrix{ & \mathcal{H}(\mathcal{C}^{\downarrow}) \\ \mathcal{H}^{\mathrm{free}}(\mathcal{C})\ar[ur]^{\phi^{\downarrow}_{*}}\ar[dr]_{\phi_{\uparrow *}} & \\ &  \mathcal{H}(\mathcal{C}_{\uparrow})   } \] Denoting these, respectively, by $\mathcal{T}(\mathcal{CP}),\mathcal{A}(\mathcal{CP}),\mathcal{H}^{\mathrm{free}}(\mathcal{CP})$ it follows readily from Proposition \ref{splitcfmp} and the definitions that, in the category of persistence modules, \begin{equation}\label{hkdec} \mathbb{H}_k(\mathcal{CP})\cong \mathbb{H}_k(\mathcal{T}(\mathcal{CP}))\oplus \mathbb{H}_k(\mathcal{A}(\mathcal{CP})) \oplus \mathbb{H}_k(\mathcal{H}^{\mathrm{free}}(\mathcal{CP})).\end{equation}  The three summands on the right-hand side can be described in terms of the building blocks in Section \ref{bbs}.

First of all, letting $tH_k(\mathcal{C})$ denote the torsion part of $H_k(\mathcal{C})$, for each $k$ we have a free resolution $0\to F_{k+1}\to \tilde{B}_k\to tH_k(\mathcal{C})\to 0$.  So since $\mathcal{A}(\mathcal{C})$ is the subcomplex $\oplus_j(F_j\oplus \tilde{B}_j)$ of $\mathcal{C}$,  we can identify $\mathcal{A}(\mathcal{C})$ with the direct sum over $k$ of the chain-level filtered matched pairs $\mathcal{PR}(tH_k(\mathcal{C}),k)$ from Section \ref{pr}.  Thus \[ \mathbb{H}_k(\mathcal{T}(CP))\cong (\kappa_{(-\infty,\infty)\times(-\infty,\infty)})^{\oplus (\dim_{\kappa}tH_k(\mathcal{C}))}.\]

As for $\mathcal{A}(\mathcal{CP})$, the subcomplex $\mathcal{A}(\mathcal{C}_{\uparrow})$ of $\mathcal{C}_{\uparrow}$ decomposes as a (filtered) direct sum of simple subcomplexes generated by pairs $\{y_{i}^{k+1},x_{i}^{k}\}$ coming from the singular value decomposition of $\partial_{\mathcal{C}_{\uparrow}}$, with $\partial_{\mathcal{C}_{\uparrow}}y_{i}^{k+1}=x_{i}^{k}$; these simple subcomplexes are isomorphic to the elementary complexes $\mathcal{E}_{\uparrow}(\ell_{\uparrow}(x_{i}^{k}),\ell_{\uparrow}(y_{i}^{k})-\ell_{\uparrow}(x_{i}^{k}),k)$ mentioned in Section \ref{peupsec}, and, except for those $i$ for which $\ell_{\uparrow}(y_{i}^{k+1})=\ell_{\uparrow}(x_{i}^{k})$, they also correspond to the finite-length bars $[\ell_{\uparrow}(x_{i}^{k}),\ell_{\uparrow}(y_{i}^{k+1}))^{\Gamma}$ in the concise barcode of $\mathcal{C}_{\uparrow}$.  Similarly, $\mathcal{A}(\mathcal{C}^{\downarrow})$ decomposes as a filtered direct sum of elementary complexes $\mathcal{E}^{\downarrow}(b,b-a,k)$ for each finite-length bar $(a,b]^{\Gamma}$ in the degree-$k$ concise barcode of $\mathcal{C}^{\downarrow}$, together with any summands of form $\mathcal{E}^{\downarrow}(b,b,k)$ arising from the distinction between the verbose and concise barcodes.  It follows that $\mathcal{A}(\mathcal{C})$ decomposes as a direct sum of the various $\mathcal{PE}_{\uparrow}(a,b-a,k)$ and $\mathcal{PE}^{\downarrow}(b,b-a,k)$ associated respectively to finite-length bars $[a,b)^{\Gamma}$ in the concise barcode of $\mathcal{C}_{\uparrow}$ and to finite-length bars $(a,b]^{\Gamma}$ in the concise barcode of $\mathcal{C}^{\downarrow}$, plus perhaps some $\mathcal{PE}_{\uparrow}(a,a,k)$ or $\mathcal{PE}^{\downarrow}(b,b,k)$.  So by the calculations in Section \ref{bbs}, each finite-length bar in the degree-$k$ concise barcode of $\mathcal{C}_{\uparrow}$ contributes a summand $\oplus_{g\in\Gamma}\kappa_{(-\infty,\infty)\times [a-g,b-g)}$ to $\mathbb{H}_k(\mathcal{A}(\mathcal{CP})$, and likewise each finite-length bar $(a,b]^{\Gamma}$ in the degree-$k$ concise barcode of $\mathcal{C}^{\downarrow}$ contributes a summand $\oplus_{g\in\Gamma}\kappa_{[-b-g,-a-g)\times(-\infty,\infty)}$ to $\mathbb{H}_k(\mathcal{A}(\mathcal{CP}))$.  (Any potential summands  $\mathcal{PE}_{\uparrow}(a,a,k)$ or $\mathcal{PE}^{\downarrow}(b,b,k)$ do not affect the outcome because $\mathbb{H}_k(\mathcal{PE}_{\uparrow}(a,a,k))\cong \mathbb{H}_k(\mathcal{PE}^{\downarrow}(b,b,k))\cong \{0\}$.)

Finally, since all differentials on the complexes involved in the chain-level filtered matched pair $\mathcal{H}^{\mathrm{free}}(\mathcal{CP})$ are zero, for each $k\in \Z$ the degree-$k$ part $\mathcal{H}_{k}^{\mathrm{free}}(\mathcal{CP})$ of $\mathcal{H}^{\mathrm{free}}(\mathcal{CP})$ is a filtered matched pair as in Section \ref{fmpsec} and so, due to our standing assumption that $\Gamma$ is discrete, $\mathcal{H}_{k}^{\mathrm{free}}(\mathcal{CP})$ admits a doubly orthogonal basis $\{[e_1],\ldots,[e_d]\}$ by Theorem \ref{basisconstruct}.  Here each $[e_i]\in\frac{H_k(\mathcal{C})}{tH_k(\mathcal{C})}$, and we may regard $[e_i]$ as obtained via the quotient projection from a doubly orthogonal basis for the filtered matched pair $\mathcal{H}_k(\mathcal{CP})$ discussed in Remark \ref{cfmpfmp}.  It is then easy to see that $\mathcal{H}_{k}^{\mathrm{free}}(\mathcal{CP})$ splits as a (filtered) direct sum of filtered matched pairs, for $i=1,\ldots,d$, \[ \xymatrix{ & \mathrm{span}_{\Lambda^{\downarrow}}\{\phi^{\downarrow}_{*}e_i\} \\ \mathrm{span}_{\Lambda} \{[e_i]\} \ar[rd]_{\phi_{\uparrow *}} \ar[ru]^{\phi^{\downarrow}_{*}} & \\ & \mathrm{span}_{\Lambda_{\uparrow}}\{\phi_{\uparrow *}e_i\}}\] and these (regarded as chain-level filtered matched pairs with zero differential) are respectively isomorphic to the chain-level filtered matched pairs $\mathcal{PM}(\rho_{\uparrow}(\phi_{\uparrow *}e_i),\rho^{\downarrow}(\phi^{\downarrow}_{*}e_i),k)$ of Section \ref{mabsect}.  Recall that the basis spectrum $\Sigma(\mathcal{H}_{k}^{\mathrm{free}}(\mathcal{CP}))$ is by definition the collection of pairs $([\rho_{\uparrow}(\phi_{\uparrow *}e_i)],\rho^{\downarrow}(\phi^{\downarrow}_{*}e_i)-\rho_{\uparrow}(\phi_{\uparrow *}e_i))\in (\R/\Gamma)\times \R$; this is the same as the basis spectrum $\Sigma(\mathcal{H}_k(\mathcal{CP}))$.  It then follows from the calculation in Section \ref{mabsect} that \begin{align*} \mathbb{H}_k(\mathcal{H}(\mathcal{CP}))\cong& \left(\bigoplus_{([a],L)\in \Sigma(\mathcal{H}_k(\mathcal{CP}))}\bigoplus_{g\in\Gamma}\kappa_{[-a-L+g,\infty)\times[a-g,\infty)} \right)\\ & \quad\oplus\left(\bigoplus_{([a],L)\in\Sigma(\mathcal{H}_{k+1}(\mathcal{CP}))}\bigoplus_{g\in\Gamma}\kappa_{(-\infty,-a-L+g)\times(-\infty,a-g)}\right).\end{align*}

We summarize this discussion as follows:

\begin{theorem}\label{bigdecomp}
Assume that $\Gamma$ is discrete and let $\mathcal{CP}=(\mathcal{C},\mathcal{C}^{\downarrow},\mathcal{C}_{\uparrow},\phi^{\downarrow},\phi_{\uparrow})$ be a chain-level filtered matched pair.  For each $k\in\Z$, the persistence module $\mathbb{H}_k(\mathcal{CP})$ splits as a direct sum of the following block modules:
\begin{itemize} \item[(i)] ($\dim_{\kappa}tH_k(\mathcal{C})$)-many copies of $\kappa_{(-\infty,\infty)\times(-\infty,\infty)}$;
\item[(ii)] for each finite-length bar in the degree-$k$ concise barcode of $\mathcal{C}_{\uparrow}$, represented as $[a,b)^{\Gamma}$ for some $a,b\in\R$, and for each $g\in \Gamma$, a copy of $\kappa_{(-\infty,\infty)\times [a-g,b-g)}$;
\item[(iii)] for each finite-length bar in the degree-$k$ concise barcode of $\mathcal{C}^{\downarrow}$, represented as $(a,b]^{\Gamma}$ for some $a,b\in \R$, and for each $g\in\Gamma$, a copy of $\kappa_{[-b-g,-a-g)\times(-\infty,\infty)}$;
\item[(iv)] for each element of $\Sigma(\mathcal{H}_k(\mathcal{CP}))$, represented as $([a],b-a)$ for some $a,b\in \R$, and for each $g\in \Gamma$, a copy of $\kappa_{[-b+g,\infty)\times[a-g,\infty)}$; and 
\item[(v)] for each element of $\Sigma(\mathcal{H}_{k+1}(\mathcal{CP}))$, represented as $([a],b-a)$ for some $a,b\in\R$, and for each $g\in \Gamma$, a copy of $\kappa_{(-\infty,-b+g)\times(-\infty,a-g)}$.  \end{itemize} Here all elements of concise barcodes or basis spectra are counted with multiplicity.
\end{theorem}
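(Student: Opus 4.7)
The plan is to follow the route already sketched in the discussion preceding the theorem, systematizing it into a three-step decomposition. First I would apply Proposition \ref{splitcfmp} to replace $\mathcal{CP}$ by the filtered-matched-homotopy-equivalent chain-level filtered matched pair displayed in (\ref{splitversion}), which splits visibly into the direct sum $\mathcal{T}(\mathcal{CP})\oplus\mathcal{A}(\mathcal{CP})\oplus\mathcal{H}^{\mathrm{free}}(\mathcal{CP})$. Proposition \ref{persinv} then yields the persistence-module decomposition (\ref{hkdec}), reducing the theorem to the identification of each of the three summands with the block modules listed in the conclusion.

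For $\mathcal{T}(\mathcal{CP})$, the key observation is that $\mathcal{A}(\mathcal{C})$ has homology concentrated in the torsion parts $tH_k(\mathcal{C})$. Here I would use the Smith normal form decomposition of the differentials $\partial|_{C_{k+1}}\to \tilde{B}_k$ (available because $\Gamma$ discrete makes $\Lambda$ a PID) to exhibit $\mathcal{A}(\mathcal{C})$ in each degree as a direct sum of two-term free resolutions $0\to F_{k+1}\to \tilde{B}_k\to tH_k(\mathcal{C})\to 0$. Hence $\mathcal{T}(\mathcal{CP})$ is a direct sum of copies of $\mathcal{PR}(tH_k(\mathcal{C}),k)$, and the computation in Section \ref{pr}, combined with the fact that the dimension of $tH_k(\mathcal{C})$ over $\kappa$ is the sum of the $\kappa$-dimensions of the cyclic summands, yields the summands of type (i).

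For $\mathcal{A}(\mathcal{CP})$, I would invoke the nonarchimedean singular value decomposition \cite[Theorem 3.4]{UZ} for $\partial_{\mathcal{C}_{\uparrow}}$ (and its analogue for $\partial_{\mathcal{C}^{\downarrow}}$ obtained via Remark \ref{switch}) to split $\mathcal{A}(\mathcal{C}_{\uparrow})$ and $\mathcal{A}(\mathcal{C}^{\downarrow})$ as filtered orthogonal direct sums of elementary two-term complexes. Each elementary summand of $\mathcal{A}(\mathcal{C}_{\uparrow})$ on basis pair $(y_i^{k+1},x_i^k)$ realizes the chain-level filtered matched pair $\mathcal{PE}_{\uparrow}(\ell_{\uparrow}(x_i^k),\ell_{\uparrow}(y_i^{k+1})-\ell_{\uparrow}(x_i^k),k)$ of Section \ref{peupsec}, and symmetrically for $\mathcal{C}^{\downarrow}$ using Section \ref{pedownsec}. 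The degenerate summands with $\ell_{\uparrow}(y_i^{k+1})=\ell_{\uparrow}(x_i^k)$ (which are not seen by the concise barcode) contribute nothing to $\mathbb{H}_*$ by (\ref{peup}) and (\ref{pedown}), so applying those formulas to each surviving elementary summand yields summands (ii) and (iii).

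For $\mathcal{H}^{\mathrm{free}}(\mathcal{CP})$, since all three complexes carry the zero differential, the degree-$k$ part $\mathcal{H}^{\mathrm{free}}_k(\mathcal{CP})$ is literally a filtered matched pair in the sense of Definition \ref{fmpdfn}; this is where the hypothesis that $\Gamma$ is discrete becomes essential, since Theorem \ref{basistheorem} produces a doubly-orthogonal basis $\{[e_1],\ldots,[e_d]\}$. Each generator $[e_i]$ yields a rank-one sub-filtered-matched-pair orthogonal to the others, isomorphic to $\mathcal{PM}(\rho_{\uparrow}(\phi_{\uparrow*}e_i),\rho^{\downarrow}(\phi^{\downarrow}_{*}e_i),k)$, and the computations (\ref{hkm}) and (\ref{hk-1m}) in Section \ref{mabsect} produce the block-module summands (iv) (in degree $k$) and (v) (contributing in degree $k-1$, whence the shift that places (v) as an index of $\mathcal{H}_{k+1}(\mathcal{CP})$ when read in degree $k$). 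The main obstacle will be verifying that the splitting coming from a doubly-orthogonal basis is genuinely an orthogonal direct-sum decomposition simultaneously for the $\Lambda_{\uparrow}$- and $\Lambda^{\downarrow}$-structures on $\mathcal{H}(\mathcal{C}_{\uparrow})$ and $\mathcal{H}(\mathcal{C}^{\downarrow})$, and hence a splitting of chain-level filtered matched pairs and not just of underlying $\Lambda$-modules; this follows from unwinding Definition \ref{dodfn} and the standard fact that an orthogonal basis produces a filtered direct sum, but it is the only place where the various strands (PD structure, filtration functions, and basis spectra) have to be aligned with care.
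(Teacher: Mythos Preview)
Your proposal is correct and follows essentially the same approach as the paper: the argument in the paper is precisely the discussion preceding the theorem statement, which reduces via Propositions~\ref{splitcfmp} and~\ref{persinv} to the three summands $\mathcal{T}(\mathcal{CP})$, $\mathcal{A}(\mathcal{CP})$, $\mathcal{H}^{\mathrm{free}}(\mathcal{CP})$ and then invokes the building-block computations of Sections~\ref{peupsec}--\ref{pr} together with Theorem~\ref{basistheorem} exactly as you describe. Your flagged ``main obstacle'' about the doubly-orthogonal basis giving a simultaneous filtered splitting is handled in the paper by the one-line observation that this is ``easy to see,'' and indeed it follows directly from Definition~\ref{dodfn} as you note.
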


Consider in particular the $\kappa$-vector spaces $\mathbb{H}_k(\mathcal{CP})_{-s,s}$, which are meant to correspond to level-set homologies $H_k(f^{-1}(\{s\});\kappa)$. Theorem \ref{bigdecomp} shows that these decompose as a sum of the following contributions:
\begin{itemize} \item[(i)] the $\kappa$-vector space $tH_k(\mathcal{C})$, independently of $s$;
\item[(ii)] for each finite-length bar $[a,b)^{\Gamma}$ in the degree-$k$ concise barcode of $\mathcal{C}_{\uparrow}$, as many copies of $\kappa$ as the number of $g\in\Gamma$ for which $s+g\in [a,b)$; 
\item[(iii)] for each finite-length bar $(a,b]^{\Gamma}$ in the degree-$k$ concise barcode of $\mathcal{C}^{\downarrow}$, as many copies of $\kappa$ as the number of $g\in\Gamma$ for which $s+g\in (a,b]$; 
\item[(iv)] for each element $([a],b-a)\in \Sigma(\mathcal{H}_k(\mathcal{CP}))$ having $a\leq b$, as many copies of $\kappa$ as the number of $g\in\Gamma$ for which $s+g\in [a,b]$; and
\item[(v)] for each element $([a],b-a)\in \Sigma(\mathcal{H}_{k+1}(\mathcal{CP}))$ having $a>b$, as many copies of $\kappa$ as the number of $g\in\Gamma$ for which $s+g\in (b,a)$.\end{itemize}

Moreover, if $I$ is one of the intervals $[a,b),(a,b],[a,b],(a,b)$ listed above, and if $s<s'$ and $g\in \Gamma$ with $s+g,s'+g\in I$, then the corresponding summands of  $\mathbb{H}_k(\mathcal{CP})_{-s,s}$, $\mathbb{H}_k(\mathcal{CP})_{-s',s'}$ are related by the fact that they are sent under the persistence module structure maps to the same summand of  $\mathbb{H}_k(\mathbb{CP})_{-s,s'}$.  Thus the collection of these intervals (modulo $\Gamma$-translation) has the features that would be expected of an interlevel persistence barcode.  We accordingly make the following definition:

\begin{dfn}\label{fullbar}
Assume that $\Gamma$ is discrete, let $k\in\Z$ and let $\mathcal{CP}=(\mathcal{C},\mathcal{C}^{\downarrow},\mathcal{C}_{\uparrow},\phi^{\downarrow},\phi_{\uparrow})$ be a chain-level filtered matched pair.  The \textbf{full degree-$k$ barcode} of $\mathcal{CP}$ consists of the following intervals modulo $\Gamma$-translation:\begin{itemize}
\item[(i)] all finite-length intervals $[a,b)^{\Gamma}$ in the degree-$k$ concise barcode of $\mathcal{C}_{\uparrow}$;
\item[(ii)] all finite-length intervals $(a,b]^{\Gamma}$ in the degree-$k$ concise barcode of $\mathcal{C}^{\downarrow}$;
\item[(iii)] for each $([a],\ell)\in \Sigma(\mathcal{H}_{k}(\mathcal{CP}))$ with $\ell\geq 0$, an interval $[a,a+\ell]^{\Gamma}$; and
\item[(iv)] for each $([a],\ell)\in \Sigma(\mathcal{H}_{k+1}(\mathcal{CP}))$ with $\ell<0$, an interval $(a+\ell,a)^{\Gamma}$.
\end{itemize}
\end{dfn}

Note that a stability theorem for this barcode now follows trivially from stability theorems concerning the various ingredients in the barcode, namely Theorem \ref{stab} for the intervals in (iii) and (iv), and \cite[Theorem 8.17]{UZ} (and a conjugated version thereof) for the intervals in (i) and (ii).

\subsection{Chain-level Poincar\'e-Novikov structures}\label{cpnstr}

If $\mathcal{C}=(C_*,\partial)$ is a chain complex and $a\in\Z$, as before we let $\mathcal{C}[a]$ denote the chain complex with chain modules $C[a]_{k}=C_{k+a}$ and differential $(-1)^a\partial$.  Also we will write ${}^{\vee}\!\mathcal{C}$ for the chain complex whose $k$th chain module is $({}^{\vee}\!C)_{k}:={}^{\vee}\!(C_{-k})$ and whose differential acts on $({}^{\vee}\!C)_{k+1}$ by ${}^{\vee}\!((-1)^{k+1}\partial|_{C_{-k}})\co {}^{\vee}\!(C_{-k-1})\to {}^{\vee}\!(C_{-k})$.   (Thus ${}^{\vee}\!\mathcal{C}$ is obtained from the cochain complex $(C^*,\delta)$ as in Proposition \ref{cohtopdstr} by negating the grading so as to turn it into a chain complex.  Our sign conventions imply that the differential of $({}^{\vee}\!\mathcal{C})[-n]$ acts on $({}^{\vee}\!\mathcal{C})[-n]_{k+1}={}^{\vee}\!(C_{n-k-1})$ by $(-1)^{k+1}$ times the adjoint of $\partial\co C_{n-k}\to C_{n-k-1}$.)    

We define a \textbf{chain-level $n$-Poincar\'e-Novikov structure} $\mathcal{CN}=(\mathcal{C},\tilde{\mathcal{D}},\mathcal{C}_{\uparrow},\mathcal{S})$ to consist of a chain complex $\mathcal{C}$ of free, finitely-generated $\Lambda$-modules, a chain homotopy equivalence $\tilde{\mathcal{D}}\co \mathcal{C}\to ({}^{\vee}\!\mathcal{C})[-n]$, a $\Lambda_{\uparrow}$-Floer-type complex $\mathcal{C}_{\uparrow}$, and a chain map $\mathcal{S}\co \mathcal{C}\to\mathcal{C}_{\uparrow}$ which becomes a chain homotopy equivalence after tensoring with $1_{\Lambda_{\uparrow}}$.
So $\tilde{\mathcal{D}}$ maps $C_k$ to ${}^{\vee}(C_{n-k})$ and, in the notation of Proposition \ref{cohtopdstr}, induces isomorphisms $D_k\co H_k\to H^{n-k}$, so by that proposition one obtains maps $\mathrm{ev}\circ D_k\co H_k\to {}^{\vee}\!H_{n-k}$ which define a strong $n$-PD structure if $\Gamma$ is discrete, and a weak $n$-PD structure for any $\Gamma$.  Combining this PD structure with the map induced on homology by $\mathcal{S}$ yields a (respectively strong or weak) $n$-Poincar\'e-Novikov structure in the sense of Definition \ref{pnstr}.\vspace{.1 in}

\begin{center}
\begin{tikzpicture}
\node (cpn) [box] at (0,0) {chain-level Poincar\'e-Novikov structure};
\node (cfmp) [box] at (6,0) {chain-level filtered matched pair};
\node (pn) [box] at (0,-3) {Poincar\'e-Novikov structure};
\node (fmp) [box] at (6,-3) {graded filtered matched pair};
\draw [arrow] (cpn) -- node[anchor=east] {$H_*$} (pn);
\draw [arrow] (cfmp) -- node[anchor=west] {$\mathcal{H}_*$ (Remark \ref{cfmpfmp})} (fmp);
\draw [arrow] (pn) -- node[anchor=north] {(\ref{pnkdef})} (fmp);
\draw [dashed, arrow] (cpn) -- (cfmp);
\end{tikzpicture}
\end{center}

Let us complete the commutative diagram above, associating a chain-level filtered matched pair $\mathcal{CP}(\mathcal{CN})$ to a chain-level Poincar\'e-Novikov structure $\mathcal{CN}=(\mathcal{C},\tilde{\mathcal{D}},\mathcal{C}_{\uparrow},\mathcal{S})$  in a manner which, upon passing to  homology, induces in every degree $k$ the operation $\mathcal{N}\mapsto\mathcal{P}(\mathcal{N})_k$ from (\ref{pnkdef}). Thus we must specify data $(\mathcal{C},\mathcal{C}^{\downarrow},\mathcal{C}_{\uparrow},\phi^{\downarrow},\phi_{\uparrow})$ as in Definition \ref{cfmpdfn}.  For $\mathcal{C},\mathcal{C}_{\uparrow}$ we use the corresponding data from $\mathcal{CN}$, and for $\phi_{\uparrow}$ we use the map $\mathcal{S}$ from $\mathcal{CN}$.  For the $\Lambda^{\downarrow}$-Floer-type complex $\mathcal{C}^{\downarrow}$ we use the shifted dual $({}^{\vee}\!\mathcal{C}_{\uparrow})[-n]$, with $k$th chain module equal to ${}^{\vee}(C_{\uparrow (n-k)})$, equipped with the filtration function ${}^{\vee}\!(\ell_{\uparrow}|_{C_{\uparrow (n-k)}})$, and with differential acting on ${}^{\vee}\!(C_{\uparrow (n-k-1)})$  by $(-1)^{k+1}$ times the adjoint of the differential on $\mathcal{C}$,

It remains to specify a chain map $\phi^{\downarrow}\co \mathcal{C}\to ({}^{\vee}\!\mathcal{C}_{\uparrow})[-n]$ which becomes a chain homotopy equivalence after tensoring with $1_{\Lambda^{\downarrow}}$.  This is done just as in the construction of $\tilde{S}_{n-k}=\delta^{\downarrow}(S_{n-k})\circ \mathcal{D}$ in (\ref{pnkdef}).  Namely, letting $\mathcal{T}\co \mathcal{C}_{\uparrow}\to \Lambda_{\uparrow}\otimes_{\Lambda}\mathcal{C}$ denote a homotopy inverse to $1_{\Lambda_{\uparrow}}\otimes\mathcal{S}$, we take $\phi^{\downarrow}$ to be the composition \begin{equation}\label{phidowncpn} \xymatrix{ \mathcal{C}\ar[r]^-{\tilde{\mathcal{D}}} & ({}^{\vee}\!\mathcal{C})[-n]\ar[r] & \Lambda^{\downarrow}\otimes  ({}^{\vee}\!\mathcal{C})[-n]\ar[r]^-{\beta} & \left({}^{\vee}\!\Lambda_{\uparrow}\otimes_{\Lambda}\mathcal{C}\right)[-n]\ar[r]^-{{}^{\vee}\!\mathcal{T}} & ({}^{\vee}\!\mathcal{C}_{\uparrow})[-n]} \end{equation} where the second map is coefficient extension and $\beta$ is (in every grading) as in Proposition \ref{pullout}.  Since $\beta$ is an isomorphism and $\tilde{\mathcal{D}}$ and $\mathcal{T}$ are  homotopy equivalences, we see that $1_{\Lambda^{\downarrow}}\otimes\phi^{\downarrow}$ is likewise a homotopy equivalence.  This completes the definition of the chain-level filtered matched pair $\mathcal{CP}(\mathcal{CN})$ associated to a chain-level Poincar\'e-Novikov structure $\mathcal{CN}$.  Since the map on homology induced by $\mathcal{T}$ is inverse to that induced by $1_{\Lambda_{\uparrow}}\otimes \mathcal{S}$, it is easy to see that, given a chain-level Poincar\'e-Novikov structure $\mathcal{CN}$, one obtains the same filtered matched pair  by first constructing $\mathcal{CP}(\mathcal{CN})$ and then passing to homology as one does by first passing to homology to obtain a Poincar\'e-Novikov complex in the sense of Definition \ref{pnstr} and then taking the associated filtered matched pair as in (\ref{pnkdef}).  

From here one can, for any $k$, construct the two-parameter persistence module $\mathbb{H}_k(\mathcal{CP}(\mathcal{CN}))$ defined in (\ref{hkdef}).  This definition, in general, involves the choice of homotopy inverses $\psi_{\uparrow},\psi^{\downarrow}$ to $1_{\Lambda_{\uparrow}}\otimes\phi_{\uparrow}$ and $1_{\Lambda^{\downarrow}}\otimes \phi^{\downarrow}$ (with the resulting persistence module independent of this choice up to isomorphism, as noted above Definition \ref{fmhedef}).  Since $\phi_{\uparrow}=\mathcal{S}$, in the role of $\psi^{\downarrow}$ we may use the map $\mathcal{T}$ from the previous paragraph.  Since $\phi^{\downarrow}$ is given by (\ref{phidowncpn}) and since ${}^{\vee}\!\mathcal{T}$ has homotopy inverse ${}^{\vee}\!(1_{\Lambda_{\uparrow}}\otimes\mathcal{S})$, the role of $\psi^{\downarrow}$ is fulfilled by the composition \begin{equation}\label{psidowncpn}  \xymatrix{
({}^{\vee}\!\mathcal{C}_{\uparrow})[-n] \ar[r]^-{{}^{\vee}\!(1_{\Lambda_{\uparrow}}\otimes\mathcal{S})} & \left({}^{\vee}\!\Lambda_{\uparrow}\otimes_{\Lambda}\mathcal{C}\right)[-n] \ar[r]^-{\beta^{-1}} & \Lambda^{\downarrow}\otimes  ({}^{\vee}\!\mathcal{C})[-n]\ar[r]^-{1_{\Lambda^{\downarrow}}\otimes\tilde{\mathcal{D}}'} & \Lambda^{\downarrow}\otimes_{\Lambda}\mathcal{C}}   
\end{equation} for any choice of homotopy inverse $\tilde{\mathcal{D}}'$ to the chain-level Poincar\'e duality map $\mathcal{D}$.

Assume now that $\Gamma$ is discrete.  We define the \textbf{full barcode} $\cup_k\mathcal{B}_k(\mathcal{CN})$ of a chain-level $n$-Poincar\'e--Novikov structure $\mathcal{CN}$  to be the full barcode, as given by Definition \ref{fullbar}, of the corresponding chain-level filtered matched pair $\mathcal{CP}(\mathcal{CN})$.  The bars appearing in items (iii) and (iv) of Definition \ref{fullbar} evidently comprise the essential barcode (as defined in Definition \ref{pnbar}) of the $n$-Poincar\'e--Novikov structure $\mathcal{N}$ that is obtained from $\mathcal{CN}$ by passing to homology.  These satisfy the stability property from Corollary \ref{pnstab}, and the duality result of Corollary \ref{pndual}.  (For the latter we use that$\mathcal{N}$ is a \emph{strong} $n$-Poincar\'e--Novikov structure by Proposition \ref{cohtopdstr} and the fact that $\Gamma$ is discrete.) The bars in (i) and (ii) of Definition \ref{fullbar} (in the context of $\mathcal{CP}(\mathcal{CN})$ for our chain-level $n$-Poincar\'e--Novikov structure $\mathcal{CN}=(\mathcal{C},\tilde{\mathcal{D}},\mathcal{C}_{\uparrow},\mathcal{S})$) comprise the finite-length bars $[a,b)^{\Gamma}$ of the $\Lambda_{\uparrow}$-Floer-type complex $\mathcal{C}$, and the finite-length bars $(a,b]^{\Gamma}$ of the $\Lambda^{\downarrow}$-Floer-type complex $({}^{\vee}\!\mathcal{C})[-n]$.  These also satisfy a stability property based on \cite[Theorem 8.17]{UZ}.  After adjusting for conjugation and the degree shift by $n$, it follows from \cite[Proposition 6.7]{UZ} that, for any $k$, bars $[a,b)^{\Gamma}\in \mathcal{B}_k(\mathcal{CN})$ are in one-to-one correspondence with bars $(a,b]^{\Gamma}\in \mathcal{B}_{n-k-1}(\mathcal{CN})$.  

Combining this with Corollary \ref{pndual}, we see that \emph{all} types of bars in $\cup_k\mathcal{B}_k(\mathcal{CN})$ except for those of the form $[a,a]^{\Gamma}$ are subject to a symmetry which matches bars in degree $k$ with bars in degree $n-k-1$ with identical interiors but opposite endpoint conditions (closed intervals $[a,b]^{\Gamma}$ are matched to open intervals $(a,b)^{\Gamma}$, and intervals $[a,b)^{\Gamma}$ are matched to intervals $(a,b]^{\Gamma}$).  This symmetry is consistent with Poincar\'e duality in the regular level sets of a Morse function, and it is also analogous to the symmetry theorem in \cite{ext}.

\section{The chain-level Poincar\'e--Novikov structures of Morse and Novikov theory}\label{concretemorse}
In this section we review basic features of Morse and Novikov complexes in sufficient detail as to allow us to construct and study their associated chain-level Poincar\'e--Novikov structures. This culminates in the proof of Theorem \ref{introiso} which relates, in the case of an $\R$- or $S^1$-valued Morse function, the two-parameter persistence module from (\ref{hkdef}) to interlevel persistence.

\subsection{The Morse complex, continuation maps, and Poincar\'e--Novikov structures with $\Gamma=\{0\}$}\label{morseintro}
Let $X$ be a smooth, $n$-dimensional manifold, and let $f\co X\to \R$ be a Morse function on $X$.  Recall that a vector field $v$ on $X$ is said to be \emph{gradient-like} for $f$ provided that both $df_p(v)>0$ at all $p$ outside of the set $\mathrm{Crit}(f)$ of critical points of $f$, and, around each $p\in \mathrm{Crit}(f)$, there is a coordinate chart in which $v$ is represented in the standard form $\left(-\sum_{i=1}^{k}x_i\partial_{x_i}+\sum_{i=k+1}^{n}x_i\partial_{x_i}\right)$.  It is not hard to construct such vector fields: take $v$ equal to the gradient $\nabla f$ with respect to a Riemannian metric which is standard in Morse charts around the points of $\mathrm{Crit}(f)$; conversely, any gradient-like vector field for $f$ is the gradient of $f$ with respect to some Riemannian metric (\cite[Proposition II.2.11]{Pa}\footnote{Here and below we use Roman numerals to denote chapter numbers in \cite{Pa}, so this reference is to Proposition 2.11 in Chapter 2; the theorem numbering in \cite{Pa} does not include chapters}).  

Assuming that our manifold $X$ is compact and without boundary and that $v$ is gradient-like for $f$, write $\{\phi_{-v}^{t}\}_{t\in \R}$ for the flow of the vector field $-v$.   One then has, for each critical point $p$ of $f$, descending and ascending manifolds \[ \mathbf{D}^v(p)=\{q\in X|\lim_{t\to-\infty}\phi_{-v}^{t}(q)=p\},\qquad \mathbf{A}^v(p)=\{q\in X|\lim_{t\to+\infty}\phi_{-v}^{t}(q)=p\}. \]  These are smoothly embedded disks in $X$ of dimensions, respectively $\mathrm{ind}_f(p)$ and $n-\mathrm{ind}_f(p)$ where $\mathrm{ind}_f(p)$ is the Morse index of $p$ (equal to the integer $k$ appearing in the local representation for $v$ in the previous paragraph).  Obviously the vector field $-v$ is gradient-like for the Morse function $-f$, and we have $\mathbf{D}^{-v}(p)=\mathbf{A}^{v}(p)$.  For choices of $v$ that are generic amongst gradient-like vector fields for $f$,  the flow of $-v$ will be Morse-Smale in the sense that each intersection $\mathbf{W}^v(p,q):=\mathbf{D}^v(p)\cap \mathbf{A}^v(q)$ is transverse, and hence smooth of dimension $\mathrm{ind}_f(p)-\mathrm{ind}_f(q)$ (see, \emph{e.g.}, \cite[Theorem IV.2.13]{Pa}).  

Since the various $\mathbf{D}^v(p),\mathbf{A}^v(q)$ are invariant under the flow of $-v$, this flow induces an $\R$-action on $\mathbf{W}^v(p,q)$, which is free provided that $p\neq q$. So, assuming the Morse-Smale condition, the space $\mathbf{M}^v(p,q)=\frac{\mathbf{W}^v(p,q)}{\R}$ is a smooth manifold of dimension $\mathrm{ind}_f(p)-\mathrm{ind}_f(q)-1$.  Under the correspondence which sends a trajectory $\gamma\co \mathbb{R}\to X$ of the negative gradient-like vector field $-v$ to its initial condition $\gamma(0)$, we may regard $\mathbf{W}^v(p,q)$ as the space of trajectories of $-v$ that are asymptotic as $t\to-\infty$ to $p$ and as $t\to+\infty$ to $q$, and $\mathbf{M}^v(p,q)$  as the space of such trajectories modulo reparametrization. 

Continuing to assume the Morse-Smale condition, if $\mathrm{ind}_f(p)-\mathrm{ind}_f(q)=1$ then $\mathbf{M}_f(p,q)$ is a zero-dimensional manifold, which moreover is compact (\cite[Lemma VI.1.1]{Pa}), and so consists of finitely many points.  We recall in Section \ref{appmorse} how to orient the various $\mathbf{M}^v(p,q)$ (based on arbitrary orientations $\mathfrak{o}_{v,p}$ of the disks $\mathbf{D}^v(p)$); an orientation of a zero-dimensional manifold amounts to a choice of sign for each of its points, and so we obtain from these orientations counts $n_{f,v}(p,q)\in\Z$ of the elements of $\mathbf{M}^v(p,q)$, each point contributing $+1$ or $-1$ according to its orientation sign. 

For $k\in \N$ let $\mathrm{Crit}_k(f)$ denote the set of index-$k$ critical points of $f$. The integer-coefficient Morse chain complex $\mathbf{CM}_*(f;\Z)$ (or $\mathbf{CM}_{*}(f,v;\Z)$ if we wish to emphasize the dependence on the gradient-like vector field $v$) has its degree $k$ part equal to the free abelian group with one generator for each element of $\mathrm{Crit}_{k}(f)$, with the Morse boundary operator $\partial_{k+1}^{f}\co \mathbf{CM}_{k+1}(f;\Z)\to \mathbf{CM}_{k}(f;Z)$ defined to be the homomorphism given by, for each $p\in \mathrm{Crit}_{k+1}(f)$, \[ \partial_{k+1}^{f}p=\sum_{q\in\mathrm{Crit}_k(f)}n_{f,v}(p,q)q.\]
See, \emph{e.g.}, \cite[Section 4.1]{Sc} or \cite[Chapter 3]{AD} for self-contained proofs (based on spaces of broken gradient trajectories) that one has $\partial_k^f\circ\partial_{k+1}^f=0$, so that $\mathbf{CM}_{*}(f;\Z)$ is indeed a chain complex; its homology will be denoted by $\mathbf{HM}_{*}(f;\Z)$, and is isomorphic to the singular homology of $X$.  Indeed, in \cite[Section VI.3.3]{Pa}, Pajitnov constructs a chain homotopy equivalence $\mathcal{E}(f,v)$ from  $\mathbf{CM}_{*}(f;\Z)$ to the singular chain complex $\mathbf{S}_*(X)$.

A related classical (\emph{e.g.} \cite[Theorem 3.5]{Mil},\cite[Theorem 1]{Kal}) perspective on the Morse complex identifies it as the cellular chain complex of a CW complex.  \cite[Theorems 3.8 and 3.9]{Qin} show that our manifold $X$ admits a CW decomposition with open $k$-cells equal to the $\mathbf{D}_f(p)$ for $p\in \mathrm{Crit}_k(f)$, and moreover that, using the orientations $\mathfrak{o}_{v,p}$ of the $\mathbf{D}^v(p)$ to identify the $k$th cellular chain group of this CW complex with the free abelian group on $\mathrm{Crit}_k(f)$ (\emph{i.e.}, with $\mathbf{CM}_k(f;\Z)$), the cellular differential for the  CW complex agrees with the Morse differential defined above.   See also \cite[Section 4.9]{AD} for a different proof and \cite{Lau},
\cite{BH01} for earlier related results.

For any ring $R$ we may form the chain complex of $R$-modules $\mathbf{CM}_*(f,v;R):=\mathbf{CM}_{*}(f,v;\Z)\otimes_{\Z}R$.  For a field $\kappa$, $\mathbf{CM}_{*}(f,v;\kappa)$, can naturally be endowed with the structure of (using the language of Section \ref{clsec}, with the group $\Gamma$ equal to $\{0\}$) a $\Lambda_{\uparrow}$-Floer-type complex, by defining the filtration function $\ell_{\uparrow}^{f}\co C_k\to\R\cup\{-\infty\}$ by \[ \ell_{\uparrow}^{f}\left(\sum_{p\in \mathrm{Crit}_k(f)}n_pp\right)=\max\{f(p)|n_p\neq 0\},\] the maximum of the empty set being $-\infty$.  Indeed $\mathbf{CM}_k(f,v;\kappa)$ has basis given by $\mathrm{Crit}_k(f)$ which, as is clear from the above formula, is $\ell_{\uparrow}^{f}$-orthogonal, and the inequality $\ell_{\uparrow}^{f}(\partial_kx)\leq \ell_{\uparrow}^{f}(x)$ follows from the fact that $f$ decreases along the flow of the negative gradient-like field $-v$.  We write $\mathbf{CM}(f,v;\kappa)_{\uparrow}$ (or just $\mathbf{CM}(f)_{\uparrow}$ if $v$ and $\kappa$ are understood) for $\Lambda_{\uparrow}$-Floer-type complex consisting of the chain complex $\mathbf{CM}_*(f,v;\kappa)$ together with the filtration function $\ell_{\uparrow}^{f}$.

\begin{remark}
If one is only interested in the case that, as here, $\Gamma=\{0\}$, then our algebraic language is unnecessarily baroque: the fields $\Lambda_{\uparrow}$ and $\Lambda^{\downarrow}$, the ring $\Lambda$, and the module $\Lambda_{\updownarrow}$ are all equal to the field $\kappa$.  Even so, in this context  a ``$\Lambda_{\uparrow}$-Floer-type complex'' is still a slightly different object than a ``$\Lambda^{\downarrow}$-Floer-type complex'' as the filtration function on the former satisfies $\ell_{\uparrow}\circ\partial\leq \ell_{\uparrow}$ while that on the latter satisfies $\ell^{\downarrow}\circ\partial\geq \ell^{\downarrow}$.
\end{remark}

Now suppose that $f_-,f_+$ are two Morse functions on $X$, with respective gradient-like vector fields $v_-,v_+$ each having Morse-Smale flows.  As in, \emph{e.g.}, \cite[Section 4.1.3]{Sc}, one defines a \textbf{continuation map} $\Phi_{f_-f_+}\co \mathbf{CM}_*(f_-,v_-;\Z)\to \mathbf{CM}_{*}(f_+,v_+;\Z)$ as follows.  Choose a smooth, $\R$-parametrized family of vector fields $\mathbb{V}=\{v_s\}_{s\in \R}$ having the property that $v_s=v_-$ for $s\leq -1$ and $v_s=v_+$ for $s\geq 1$, and for $p_-\in \mathrm{Crit}(f_-)$ and $p_+\in \mathrm{Crit}(f_+)$ let $\mathbf{N}^{\mathbb{V}}(p_-,p_+)$ be the set of solutions $\gamma\co \R\to X$ to non-autonomous equation \begin{equation}\label{conteq} \gamma'(s)+v_s(\gamma(s))=0 \end{equation} such that $\gamma(s)\to p_-$ as $s\to-\infty$ and $\gamma(s)\to p_+$ as $s\to\infty$. If we denote by $\Psi^{\mathbb{V}}\co X\to X$ the diffeomorphism which sends $x\in X$ to the value $\eta_x(1)$ where $\eta_x\co [-1,1]\to X$ is the unique solution to $\eta'_{x}(s)=v_s(\eta_x(s))$ subject to the initial condition $\eta_x(-1)=x$, then any $\gamma\in \mathbf{N}^{\mathbb{V}}(p_-,p_+)$ has $\gamma(-1)\in \mathbf{D}^{v_-}(p_-)$, $\gamma(1)\in\mathbf{A}^{v_+}(p_+)$, and $\Psi^{\mathbb{V}}(\gamma(-1))=\gamma(1)$.  In this way we see that $\mathbf{N}^{\mathbb{V}}(p_-,p_+)$ is in bijection with the intersection $\mathbf{D}^{v_-}(p_-)\cap (\Psi^{\mathbb{V}})^{-1}(\mathbf{A}^{v_+}(p_+))$.  For generic choices of the interpolating family $\mathbb{V}$, this intersection will be transverse for all choices of $p_-$ and $p_+$ (\cite[Proposition VI.4.7]{Pa}), and hence a manifold of dimension $\mathrm{ind}_{f_-}(p_-)-\mathrm{ind}_{f_+}(p_+)$. When this dimension is zero,  $\mathbf{N}^{\mathbb{V}}(p_-,p_+)$ is a finite set (\cite[Lemma VI.4.5]{Pa}), and so   for generators $p_-$ of $\mathbf{CM}_k(f_-,v_-;\Z)$ we put $\Phi_{f_-f_+}p_-=\sum_{p_+\in \mathrm{Crit}_k(f_+)}\#\mathbf{N}^{\mathbb{V}}(p_-,p_+)p_+$ where $\#\mathbf{N}^{\mathbb{V}}(p_-,p_+)$ is the signed count of points in the compact zero-manifold $\mathbf{N}^{\mathbb{V}}(p_-,p_+)$, oriented as in Section \ref{appmorse}.


The continuation map $\Phi_{f_-f_+}$ is a chain homotopy equivalence between the chain complexes  $\mathbf{CM}_*(f_-,v_-;\Z)$ and $\mathbf{CM}_{*}(f_+,v_+;\Z)$, whose chain homotopy class is independent of the choice of interpolating family $\mathbb{V}$. Indeed, $\Phi_{f_-f_+}$ is evidently equal to the map which would be written in the notation of \cite[p. 221]{Pa} as $(\Psi^{\mathbb{V}})_{\flat}$, and so by \cite[Proposition VI.4.3 and Theorem VI.4.6]{Pa} and the fact that $\Psi^{\mathbb{V}}\co X\to X$ is homotopic to the identity, we have a homotopy-commutative diagram \begin{equation}\label{contcomm} \xymatrix{ \mathbf{CM}_*(f_-,v_-;\Z)\ar[rr]^{\Phi_{f_-f_+}} \ar[dr]_{\mathcal{E}(f_-,v_-)} & & \mathbf{CM}_*(f_-,v_-;\Z) \ar[ld]^{\mathcal{E}(f_+,v_+)} \\ & \mathbf{S}_{*}(X)  & } \end{equation} where $\mathcal{E}(f_{\pm},v_{\pm})$ are the chain homotopy equivalences from \cite[Section VI.3.3]{Pa}.
 Of course, we can then tensor with any ring and obtain the corresponding statement for the Morse and singular complexes with coefficients in that ring.  

The behavior of $\Phi_{f_-f_+}$ with respect to the filtration functions $\ell_{\uparrow}^{f_{\pm}}$ is also of interest.  For this purpose we take a family of functions $f_s$ to be of the form $f_s=f_-+\beta(s)(f_+-f_-)$ where $\beta$ is a smooth function such that $\beta'(s)\geq 0$ for all $s$, $\beta(s)=0$ for $s\leq -1$, and $\beta(s)=1$ for $s\geq 1$, and for the family of vector fields $\mathbb{V}=\{v_s\}_{s\in \R}$ we require that $(df_s)_x(v_s)\geq 0$ for all $s\in\R,p\in X$.  (For instance $v_s$ could be the gradient of $f_s$ with respect to a Riemannian metric that varies smoothly with $s$.)  Then if $\gamma$ is a solution to (\ref{conteq}) with $\gamma(s)\to p_{\pm}$ as $s\to\pm\infty$, we will have 
\begin{align} f_+(p_+)-f_-(p_-)&=\int_{-\infty}^{\infty}\frac{d}{ds}(f_s(\gamma(s)))ds=\int_{-\infty}^{\infty}\left((df_s)_{\gamma(s)}(\gamma'(s))+\frac{\partial f_s}{\partial s}(\gamma(s))\right)ds   \nonumber \\ &= -\int_{-\infty}^{\infty}(df_s)_{\gamma(s)}(v_s)ds+\int_{-\infty}^{\infty}\beta'(s)\left(f_+(\gamma(s))-f_-(\gamma(s))\right)ds\nonumber \\& \leq \max_X(f_+-f_-).\label{filtchange}\end{align}

With coefficients in the field $\kappa$ (so that we have $\Lambda_{\uparrow}$-Floer-type complexes $\mathbf{CM}(f_{\pm},v_{\pm};\kappa)_{\uparrow}$ with filtration functions $\ell_{\uparrow}^{f_{\pm}}$) it follows that $\ell_{\uparrow}^{f_+}(\Phi_{f_-f_+}x)\leq \ell_{\uparrow}^{f_-}(x)+\max_X(f_+-f_-)$ for all $x\in \mathbf{CM}_{*}(f_-,v_-;\kappa)$ provided that the family of vector fields $\mathbb{V}$ used to define $\Phi_{f_-f_+}$ is chosen as above.   In the special case that $f_-$ and $f_+$ are both equal to the same Morse function $f$ (but the gradient-like vector fields $v_{\pm}$ may differ) it follows that $\Phi_{ff}$ is a morphism of filtered complexes.  In fact, in this situation $\Phi_{ff}$ is a filtered homotopy equivalence, as can be seen by inspecting the construction of the appropriate chain homotopies from \cite[Proposition 4.6]{Sc}. In what follows we will again often suppress the relevant gradient-like vector fields and the field $\kappa$ from the notation for the Morse complex.

We now have almost all of the ingredients needed define the chain-level Poincar\'e--Novikov structure $\mathcal{CP}(f)$ for a Morse function $f$ on a compact manifold $X$ (with respect to the field $\kappa$, and with the group $\Gamma$ equal to $\{0\}$). If the characteristic of $\kappa$ is not $2$, then we also require that $X$ be oriented. To describe $\mathcal{CP}(f)$ we must specify data $(\mathcal{C},\tilde{\mathcal{D}},\mathcal{C}_{\uparrow},\mathcal{S})$ as at the start of Section \ref{cpnstr}. The role of the $\Lambda_{\uparrow}$-Floer-type complex $\mathcal{C}_{\uparrow}$ is played by the filtered Morse complex $\mathbf{CM}(f)_{\uparrow}$ of $f$ with coefficients in the field $\kappa$.  For the chain complex $\mathcal{C}$ we use the (unfiltered) Morse complex $\mathbf{CM}_{*}(h_0)$ of another Morse function $h_0$; this function $h_0$ should be regarded as fixed for normalization purposes at the same time that we fix the manifold $X$ (so we would by default use the same $h_0$ to construct $\mathcal{CP}(f)$ for different functions $f$ on the same manifold, though this is not entirely necessary as a different choice of $h_0$ would not change the barcode).  Since in the present context $\Gamma=\{0\}$ (so $\Lambda_{\uparrow}=\Lambda=\kappa$), the map $\mathcal{S}$ should then just be a chain homotopy equivalence from $\mathbf{CM}_{*}(h_0)$ to $\mathbf{CM}_{*}(f)$, and for this purpose we may use the continuation map $\Phi_{h_0f}$.

It remains to describe the ``chain-level Poincar\'e duality map'' $\tilde{\mathcal{D}}\co \mathbf{CM}_{*}(h_0)\to ({}^{\vee}\!(\mathbf{CM}_*(h_0))[-n]$ where $\dim X=n$.  For this purpose we establish the following proposition, whose proof in the case that $\mathrm{char}(\kappa)\neq 2$ depends in part on the sign analysis that we defer to Appendix \ref{orsect}.  Recall that if $h\co X\to \R$ is a Morse function we write $\mathrm{Crit}_k(f)$ for the set of index-$k$ critical points of $h$, and note that $\mathrm{Crit}_{k}(-h)=\mathrm{Crit}_{n-k}(h)$.  For the statement and proof of the proposition we restore the vector field $v$ and field $\kappa$ to the notation for $\mathbf{CM}_{*}(\pm h_0)$ in order to make explicit the various dependencies.

\begin{prop}\label{negdual}  Given an element $c=\sum_{p\in\mathrm{Crit}_{k}(-h_0)}a_pp$ define $\epsilon_c\co \mathbf{CM}_{n-k}(h_0,v;\kappa)\to \kappa$ by \[ \epsilon_c\left(\sum_{q\in\mathrm{Crit}_{n-k}(h_0)}b_qq\right)=\sum_{p\in \mathrm{Crit}_k(h_0)}a_pb_p.\]  Then, assuming that either $\mathrm{char}(\kappa)=2$ or that $X$ is oriented and that we choose orientations for the descending manifolds $\mathbf{D}^v(p),\mathbf{D}^{-v}(q)$ as in Section \ref{appmorse}, the assignment $c\mapsto \epsilon_c$ defines an isomorphism of chain complexes $\epsilon\co \mathbf{CM}_{*}(-h_0,-v;\kappa)\to ({}^{\vee}\!\mathbf{CM}_*(h_0,v;\kappa))[-n]$.
\end{prop}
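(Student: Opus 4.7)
The strategy is to separate the verification of Proposition \ref{negdual} into an essentially trivial bijective-basis statement and a sign-counting statement that I would delegate to the appendix.

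First, I would observe that on the level of graded modules the map $\epsilon$ is obviously an isomorphism in each degree. The degree-$k$ part of $\mathbf{CM}_*(-h_0,-v;\kappa)$ has basis $\mathrm{Crit}_k(-h_0)=\mathrm{Crit}_{n-k}(h_0)$, while $\bigl(({}^{\vee}\mathbf{CM}_*(h_0,v;\kappa))[-n]\bigr)_k={}^{\vee}\mathbf{CM}_{n-k}(h_0,v;\kappa)$ has basis given by the duals of the elements of $\mathrm{Crit}_{n-k}(h_0)$. Under the assignment $c\mapsto \epsilon_c$, a basis element $p\in\mathrm{Crit}_{n-k}(h_0)$ goes to its own dual functional, so $\epsilon$ is clearly a $\kappa$-linear isomorphism in each grading.

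Next, the real content is showing that $\epsilon$ is a chain map. Writing out what this means using the sign conventions for $({}^{\vee}\mathcal{C})[-n]$ from Section \ref{clsec} (the differential on $({}^{\vee}\mathbf{CM}_*(h_0,v;\kappa))[-n]$ in degree $k$ equals $(-1)^{k}$ times the adjoint of $\partial^{h_0}_{n-k+1}$), the chain-map condition reduces, for $p\in\mathrm{Crit}_k(-h_0)$ and $q\in\mathrm{Crit}_{k-1}(-h_0)$, to the identity
\[
n_{-h_0,-v}(p,q)=(-1)^{k}\,n_{h_0,v}(q,p).
\]
The underlying zero-dimensional moduli spaces here consist of the same unparametrized flowlines: an integral curve of $-v$ running from $p$ (viewed as an index-$k$ critical point of $-h_0$) to $q$ is the time-reversal of an integral curve of $v$ running from $q$ (viewed as an index-$(n-k+1)$ critical point of $h_0$) to $p$. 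So the claimed identity is purely a statement about orientation signs.

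Finally, the genuine obstacle—and the reason the hypothesis on characteristic/orientation is required when $\mathrm{char}(\kappa)\neq 2$—is the sign computation. In characteristic $2$ there is nothing further to prove, since all signs become trivial. Otherwise I would appeal to the appendix: given orientations $\mathfrak{o}_{v,p}$ of each $\mathbf{D}^v(p)$ and the induced orientations of $\mathbf{D}^{-v}(q)=\mathbf{A}^v(q)$ as described in Section \ref{appmorse} (these use the ambient orientation of $X$ together with $\mathfrak{o}_{v,q}$ to orient the complementary ascending manifold), the comparison between the two orientations of the zero-dimensional moduli space—one coming from $(\mathbf{D}^{-v}(p)\cap\mathbf{A}^{-v}(q))/\mathbb{R}$ and the other from $(\mathbf{D}^v(q)\cap\mathbf{A}^v(p))/\mathbb{R}$—produces precisely the factor $(-1)^k$ when $p$ has index $n-k$ and $q$ has index $n-k+1$. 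This is what the Morse-theoretic sign lemma (Corollary \ref{mor}, referenced in the proof of Proposition \ref{floerpair}) asserts, and with that in hand the chain-map identity follows, completing the proof.
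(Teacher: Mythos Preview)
Your proposal is correct and follows essentially the same approach as the paper's proof: first the easy basis-to-dual-basis isomorphism, then the chain-map verification reducing to a sign identity between $n_{-h_0,-v}$ and $n_{h_0,v}$, handled trivially in characteristic $2$ and by Corollary~\ref{mor} otherwise. The only difference is a harmless reindexing (you take the differential from degree $k$ to $k-1$ and get the factor $(-1)^k$, whereas the paper works from degree $k+1$ to $k$ and writes $(-1)^{k+1}$); after the substitution these are the same statement.
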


\begin{proof}
Note that the degree $k$-part of $({}^{\vee}\!\mathbf{CM}_*(h_0,v;\kappa))[-n]$ is, by definition, the dual of $\mathbf{CM}_{n-k}(h_0,v;\kappa)$.  (Since we are in the case that $\Gamma=\{0\}$ there is no distinction between the ``conjugated dual'' ${}^{\vee}$ and the usual vector space dual.)  Now, as $\kappa$-vector spaces, $\mathbf{CM}_{k}(-h_0,-v;\kappa)$ and $\mathbf{CM}_{n-k}(h_0,v;\kappa)$ are equal to each other, both having $\mathrm{Crit}_k(-h_0)$ as a basis.  In degree $k$, the map $\epsilon$ is the linear map that sends each element of the basis $\mathrm{Crit}_k(-h_0)$ for $\mathbf{CM}_{k}(-h_0,-v;\kappa)$ to the corresponding dual basis element for the dual of  $\mathbf{CM}_{n-k}(h_0,v;\kappa)$. Thus $\epsilon$ is an isomorphism of graded vector spaces; it remains to check that it is a chain map.

This latter statement amounts to the claim that, for any $q\in \mathrm{Crit}_{k+1}(-h_0)=\mathrm{Crit}_{n-k-1}(h_0)$ and $p\in \mathrm{Crit}_{k}(-h_0)=\mathrm{Crit}_{n-k}(h_0)$, we have $\left(\epsilon(\partial^{-h_0}q)\right)(p)= (-1)^{k+1}(({}^{\vee}\!\partial^{h_0})(\epsilon q))(p)$.  (The $(-1)^{k+1}$ on the right hand side results from the conventions mentioned at the start of Section \ref{cpnstr}: a factor $(-1)^{k+1-n}$ arises from the sign convention for the differentials on dual complexes from Proposition \ref{cohtopdstr}, and a factor $(-1)^n$ arises from the convention that shifting the grading of a complex by $n$ is accompanied by multiplying the differential by $(-1)^n$.)  But $\left(\epsilon(\partial^{-h_0}q)\right)(p)$ is the signed count $n_{-h_0,-v}(q,p)$ of elements of the zero-dimensional  space $\mathbf{M}^{-v}(q,p)$, while $(({}^{\vee}\!\partial^{h_0})(\epsilon q))(p)=(\epsilon q)(\partial^{h_0}p)$ is the corresponding signed count $n_{h_0,v}(p,q)$ of elements of $\mathbf{M}^{v}(p,q)$.  If $\mathrm{char}(\kappa)=2$ then there is no distinction between positive and negative contributions to the counts $n_{h_0,v}(p,q),n_{-h_0,-v}(q,p)$, and also $(-1)^{k+1}=1$ in $\kappa$, so the desired equality follows from the obvious bijection between $\mathbf{M}_{-h_0}(q,p)$ and $\mathbf{M}_{h_0}(p,q)$.  If instead we assume that $X$ is oriented then the equality $n_{-h_0,-v}(q,p)=(-1)^{k+1}n_{h_0,v}(p,q)$ is proven in Corollary \ref{mor}.
\end{proof}

We are thus justified in making the following definition, which will be generalized in Section \ref{novsect}:

\begin{dfn}\label{cmdef} Let $X$ be a smooth compact manifold  and if our ground field $\kappa$ has $\mathrm{char}(\kappa)\neq 2$ assume that $X$ is oriented.  Fix a Morse function $h_0\co X\to\R$.  Given any other Morse function $f\co X\to \R$ we take $\mathcal{CM}(f)$ to be the chain-level Poincar\'e-Novikov complex with the following data  $(\mathcal{C},\tilde{\mathcal{D}},\mathcal{C}_{\uparrow},\mathcal{S})$ as in Section \ref{cpnstr}:
\begin{itemize}
\item $\mathcal{C}=\mathbf{CM}_{*}(h_0)$ is the Morse complex of $h_0$;
\item $\tilde{\mathcal{D}}\co \mathbf{CM}_{*}(h_0)\to ({}^{\vee}\!\mathbf{CM}_{*}(h_0))[-n]$ is the composition of the continuation map $\Phi_{h_0,-h_0}\co \mathbf{CM}_*(h_0)\to \mathbf{CM}_{*}(-h_0)$ with the chain isomorphism $\epsilon\co \mathbf{CM}_{*}(-h_0)\to ({}^{\vee}\!\mathbf{CM}_*(h_0))[-n]$ from Proposition \ref{negdual}.
\item $\mathcal{C}_{\uparrow}=\mathbf{CM}(f)_{\uparrow}$ (\emph{i.e.}, the $\Lambda_{\uparrow}$-Floer-type complex with chain complex $\mathbf{CM}_{*}(f)$ and the filtration function  $\ell_{\uparrow}^{f}$).
\item $\mathcal{S}\co \mathbf{CM}_{*}(h_0)\to \mathbf{CM}_{*}(f)$ is the continuation map $\Phi_{h_0f}$.
\end{itemize}
\end{dfn}

\subsection{Lifted Morse complexes on covering spaces}

To generalize beyond Section \ref{morseintro}, we consider Morse theory for functions $\tilde{f}\co \tilde{X}\to\R$ defined on the domains of regular covering spaces $\pi\co \tilde{X}\to X$, where $X$ is a compact smooth $n$-dimensional manifold.  Let $\tilde{\Gamma}$ be the deck transformation group of $\pi$; we will generally be concerned with the case that $\tilde{\Gamma}$ is a free abelian group, isomorphic to the subgroup $\Gamma<\R$ considered throughout the paper, though for the moment we do not assume this. 

Unless $\tilde{\Gamma}$ is a finite group, the space $\tilde{X}$ will be noncompact, and so the algebraic structure (if any) that one obtains from Morse theory for such a function $\tilde{f}$ depends on how that function behaves at infinity.  The simplest case is that $\tilde{f}=\pi^{*}h$ for some Morse function $h\co X\to \R$.  In this case the index-$k$ critical points $\tilde{p}$ of $\tilde{f}$ are precisely the elements of $\tilde{X}$ which map by $\pi$ to index-$k$ critical points of $h$.  Let us use a vector field $\tilde{v}$ on $\tilde{X}$ that is the lift to $\tilde{X}$ of a gradient-like vector field $v$ for $h$ on $X$ whose flow is Morse-Smale; then the flow of $-\tilde{v}$ on $\tilde{X}$ will just be the lift of that of $-v$ on $X$ and in particular will also be Morse-Smale.  For $\tilde{p}\in \mathrm{Crit}_{k}(\pi^*h)$, the descending and ascending manifolds $\mathbf{D}^{\tilde{v}}(\tilde{p})$ and $\mathbf{A}^{\tilde{v}}(\tilde{p})$ will be the unique lifts passing through $\tilde{p}$ of, respectively, $\mathbf{D}^v(p)$ and $\mathbf{A}^v(p)$.

Let  $\tilde{p},\tilde{q}$ be two critical points of $\pi^*h$, with $\pi(\tilde{p})=p,\pi(\tilde{q})=q$.   
Identify as usual the one-dimensional manifold $\mathbf{W}^v(p,q)=\mathbf{D}^v(p)\cap \mathbf{A}^v(q)$ with the space of trajectories $\eta\co \R\to X$ for $-v$ having $\eta(t)\to p$ as $t\to-\infty$ and $\eta(t)\to q$ as $t\to +\infty$ (under the identification sending $\eta$ to $\eta(0)$).  Any such $\eta$ lifts uniquely to a negative gradient trajectory $\tilde{\eta}$ of $-\tilde{v}$ having $\tilde{\eta}(t)\to \tilde{p}$ as $t\to -\infty$; the limit $\lim_{t\to\infty}\tilde{\eta}(t)$ will project to $q$ and hence, since our covering space is regular, will be of the form $\gamma\tilde{q}$ for some $\gamma\in \tilde{\Gamma}$.  In this way we see that $\pi$ restricts to a diffeomorphism \[ \coprod_{\gamma\in\tilde{\Gamma}}\mathbf{W}^{\tilde{v}}(\tilde{p},\gamma\tilde{q})\cong \mathbf{W}^v(p,q)\] which is equivariant with respect to the $\R$-actions given by the flows of $-\tilde{v}$ and $-v$ and hence induces a diffeomorphism of the quotients by these actions,  \begin{equation}\label{projmiso} \coprod_{\gamma\in\tilde{\Gamma}}\mathbf{M}^{\tilde{v}}(\tilde{p},\gamma\tilde{q})\cong \mathbf{M}^{v}(p,q).\end{equation} In particular, if $\mathrm{ind}_{\pi^*h}(\tilde{p})-\mathrm{ind}_{\pi^*h}(\tilde{q})=1$, then since $\mathbf{M}^{v}(p,q)$ is a finite set there are only finitely many $\gamma$ such that $\mathbf{M}^{\tilde{v}}(\tilde{p},\gamma\tilde{q})$ is nonempty.  

We may orient the various $\mathbf{D}^{\tilde{v}}(\tilde{p})$ and hence (via Section \ref{appmorse}) the various $\mathbf{M}^{\tilde{v}}(\tilde{p},\tilde{q})$ by first choosing orientations of the each $\mathbf{D}^v(p)$ for $p\in\mathrm{Crit}(h)$ and then requiring that the projection $\pi$ map the orientation on $\mathbf{D}^{\tilde{v}}(\tilde{p})$ to the chosen one on  $\mathbf{D}^{v}(\pi(\tilde{p}))$.  Then (\ref{projmiso}) will be an orientation-preserving diffeomorphism, and moreover any deck transformation $\gamma\in\tilde{\Gamma}$ will induce an orientation-preserving diffeomorphism $\mathbf{M}^{\tilde{v}}(\tilde{p},\tilde{q})\cong \mathbf{M}^{\tilde{v}}(\gamma\tilde{p},\gamma\tilde{q})$.  So the signed counts of points $n_{\pi^*h,\tilde{v}},n_{h,v}$ obey, whenever $\tilde{p}\in \mathrm{Crit}_{k+1}(\pi^*h),\tilde{q}\in\mathrm{Crit}_{k}(\pi^*h)$, \[  n_{h,v}(\pi(\tilde{p}),\pi(\tilde{q}))=\sum_{\gamma\in\tilde{\Gamma}}n_{\pi^*h,\tilde{v}}(\tilde{p},\gamma\tilde{q})  \] and \begin{equation}\label{npi}n_{\pi^*h,\tilde{v}}(\gamma\tilde{p},\gamma\tilde{q})=n_{\pi^*h,\tilde{v}}(\tilde{p},\tilde{q})\,\,(\forall \gamma\in\tilde{\Gamma}).\end{equation}

Given a field $\kappa$, we may then, just as before, define the Morse complex $\widetilde{\mathbf{CM}}_{*}(\pi^*h;\kappa[\tilde{\Gamma}])$ to have chain groups $\widetilde{\mathbf{CM}}_{k}(\pi^*h;\kappa[\tilde{\Gamma}])$ equal to the $\kappa$-vector space with basis $\mathrm{Crit}_k(\pi^*h)$ and with differential $\partial_{k+1}^{\pi^*h}\co \widetilde{\mathbf{CM}}_{k+1}(\pi^*h;\kappa[\tilde{\Gamma}])\to \widetilde{\mathbf{CM}}_{k}(\pi^*h;\kappa[\tilde{\Gamma}])$ given by, for each $\tilde{p}\in \mathrm{Crit}_{k+1}(\pi^*h)$, \[ \partial_{k+1}^{\pi^*h}\tilde{p}=\sum_{\tilde{q}\in\mathrm{Crit}_{k}(\pi^*h)}n_{\pi^*h,\tilde{v}}(\tilde{p},\tilde{q})\tilde{q}.\]  Now $\tilde{\Gamma}$ acts on each set $\mathrm{Crit}_{k}(\pi^*h)$, inducing the structure of a left $\kappa[\tilde{\Gamma}]$-module on $\widetilde{\mathbf{CM}}_{k}(\pi^*h;\kappa[\tilde{\Gamma}])$, and then (\ref{npi}) shows that $\partial_{k+1}^{\pi^*h}$ is a homomorphism of $\kappa[\tilde{\Gamma}]$-modules.  The $\widetilde{\mathbf{CM}}_k(\pi^*h;\kappa[\tilde{\Gamma}])$ are finitely-generated and free as $\kappa[\tilde{\Gamma}]$-modules, a basis being given by selecting, for each $p\in \mathrm{Crit}_k(h)$, a preimage $\tilde{p}$ under $\pi$.

The usual argument that the Morse boundary operator squares to zero is readily seen to show that, likewise, $\partial_{k}^{\pi^*h}\circ\partial_{k+1}^{\pi^*h}=0$, and thus $\widetilde{\mathbf{CM}}_*(\pi^*h;\kappa[\tilde{\Gamma}])$ is a chain complex of free, finitely-generated left $\kappa[\tilde{\Gamma}]$-modules.  By \cite[Theorem A.5]{Pa95}, there is a chain homotopy equivalence of $\kappa[\tilde{\Gamma}]$-modules $\tilde{\mathcal{E}}(h)\co \widetilde{\mathbf{CM}}_*(\pi^*h;\kappa[\tilde{\Gamma}])\to \mathbf{S}_{*}(\tilde{X})$ (where the $\kappa$-coefficient singular chain complex $\mathbf{S}_{*}(\tilde{X})$ is made into a complex of $\kappa[\tilde{\Gamma}]$-modules in the obvious way using the action of $\tilde{\Gamma}$ on $\tilde{X}$).


\begin{remark} \label{actionkey} 
Because $\widetilde{\mathbf{CM}}_{*}(\pi^*h_0;\kappa[\tilde{\Gamma}])$  denotes a complex of modules over $\kappa[\tilde{\Gamma}]$, its structure depends in particular on the action of $\tilde{\Gamma}$ on $\tilde{X}$ and not merely on the topology of $\tilde{X}$ and the abstract group structure on $\tilde{\Gamma}$.  This distinction becomes relevant in Section \ref{assocpair} when the same subgroup $\Gamma<\R$ acts on $\tilde{X}$ in two different ways.  We will resolve this notationally by referring to $\Gamma$ alternately as $\Gamma_{\xi}$ or $\Gamma_{-\xi}$ depending upon which action we are considering.
\end{remark}

On the group algebra $\kappa[\tilde{\Gamma}]$, just as in Section \ref{conjsect}\footnote{Since we are regarding $\tilde{\Gamma}$ as an abstract group which in principle need not be abelian we will write the group operation multiplicatively here, though in the cases that are ultimately of interest $\tilde{\Gamma}$ will be isomorphic to the subgroup $\Gamma<\R$ considered in the rest of the paper.  Correspondingly elements of the group algebra are written here as $\sum_{\gamma}a_{\gamma}\gamma$ instead of $\sum_{g}a_gT^g$.} we have a conjugation operation given by, for each $\lambda=\sum_{\gamma\in\tilde{\Gamma}}a_{\gamma}\gamma$, setting $\bar{\gamma}=\sum_{\gamma\in\Gamma}a_{\gamma}\gamma^{-1}$.  
If $M$ is a left (resp. right) $\kappa[\tilde{\Gamma}]$-module we define its conjugate $\bar{M}$ to be the right (resp. left) module with the same abelian group structure and with the scalar multiplications on $M$ and $\bar{M}$ related by $\lambda m=m\bar{\lambda}$ for $m\in M$ and $\lambda\in\kappa[\tilde{\Gamma}]$.  If $M$ is a left $\kappa[\tilde{\Gamma}]$-module then as in Section \ref{conjsect} we write ${}^{\vee}\!M$ for the left $\kappa[\tilde{\Gamma}]$-module obtained as the conjugate of the right $\kappa[\tilde{\Gamma}]$-module $\mathrm{Hom}_{\kappa[\tilde{\Gamma}]}(M,\kappa[\tilde{\Gamma}])$. (Thus for $\phi\in {}^{\vee}\!M$, $\lambda\in\kappa[\tilde{\Gamma}]$, and $m\in M$ we have $(\lambda\phi)(m)=\phi(m)\bar{\lambda}$.)

In particular, for a Morse function $h$ on the base $X$ of our regular covering space $\pi\co \tilde{X}\to X$ (and for a suitably generic gradient-like vector field for $h$ on $X$ which we pull back to $\tilde{X}$), we have a chain complex of left $\kappa[\tilde{\Gamma}]$-modules $\widetilde{\mathbf{CM}}_{*}(\pi^*h;\kappa[\tilde{\Gamma}])$ from which we can form the conjugated dual chain complex (with grading shift) $({}^{\vee}\!\widetilde{\mathbf{CM}}_{*}(\pi^*h;\kappa[\tilde{\Gamma}]))[-n]$.  This latter complex of left $\kappa[\tilde{\Gamma}]$-modules has grading $k$ part equal to ${}^{\vee}\!(\widetilde{\mathbf{CM}}_{n-k}(\pi^*h;\kappa[\tilde{\Gamma}]))$, and the differential $({}^{\vee}\!\widetilde{\mathbf{CM}}_{*}(\pi^*h;\kappa[\tilde{\Gamma}]))[-n]_{k+1}\to ({}^{\vee}\!\widetilde{\mathbf{CM}}_{*}(\pi^*h;\kappa[\tilde{\Gamma}]))[-n]_k$ acts by $(-1)^{k+1}$ times the adjoint of the differential $\partial^{\pi^*h}_{n-k}$.  Just as in Proposition \ref{negdual}, we have:

\begin{prop}\label{negdualcover} 
 Given an element $c=\sum_{\tilde{p}\in\mathrm{Crit}_{k}(-\pi^*h)}a_{\tilde{p}}\tilde{p}\in \widetilde{\mathbf{CM}}_{k}(-\pi^*h;\kappa[\tilde{\Gamma}])$ define $\epsilon_c\co \widetilde{\mathbf{CM}}_{n-k}(\pi^*h;\kappa[\tilde{\Gamma}])\to \kappa[\tilde{\Gamma}]$ by \[ \epsilon_c\left(\sum_{\tilde{q}\in\mathrm{Crit}_{n-k}(\pi^*h)}b_{\tilde{q}}\tilde{q}\right)=\sum_{\tilde{p}\in \mathrm{Crit}_k(-\pi^*h)}\sum_{\gamma\in\tilde{\Gamma}}a_{\tilde{p}}b_{\gamma\tilde{p}}\gamma.\]  Then, assuming that either $\mathrm{char}(\kappa)=2$ or that $X$ is oriented and that we choose orientations for the descending manifolds $\mathbf{D}^{\tilde{v}}(p),\mathbf{D}^{-\tilde{v}}(q)$ as in Section \ref{appmorse}, the assignment $c\mapsto \epsilon_c$ defines an isomorphism of chain complexes $\epsilon\co \widetilde{\mathbf{CM}}_{*}(-\pi^*h;\kappa[\tilde{\Gamma}])\to ({}^{\vee}\!\widetilde{\mathbf{CM}}_{*}(\pi^*h;\kappa[\tilde{\Gamma}]))[-n]$.  
\end{prop}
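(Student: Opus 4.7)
My plan is to mirror the proof of Proposition \ref{negdual} while accounting for the left $\kappa[\tilde{\Gamma}]$-module structure and the conjugation involved in ${}^{\vee}$. The proof breaks into three tasks: (i) check that $\epsilon$ is well-defined as a $\kappa[\tilde{\Gamma}]$-linear map between the appropriate graded modules; (ii) check that it is a graded isomorphism; (iii) check that it intertwines the differentials. Tasks (i) and (ii) are bookkeeping with the group action, while task (iii) reduces to the sign identity of Corollary \ref{mor} pulled up to $\tilde{X}$.

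For (i), I would first verify, for fixed $c$, that $\epsilon_c$ is $\kappa[\tilde{\Gamma}]$-linear in its argument: given $b = \sum b_{\tilde{q}}\tilde{q}$ and $\delta\in\tilde{\Gamma}$, writing $\delta b=\sum b_{\delta^{-1}\tilde{q}}\tilde{q}$ and substituting $\gamma'=\delta\gamma$ in the defining sum yields $\epsilon_c(\delta b)=\delta\epsilon_c(b)$. Then I would verify that $c\mapsto\epsilon_c$ is itself $\kappa[\tilde{\Gamma}]$-linear with respect to the conjugation-twisted left module structure on ${}^{\vee}\widetilde{\mathbf{CM}}_{n-k}(\pi^*h;\kappa[\tilde{\Gamma}])$: the same substitution gives $\epsilon_{\delta c}(b)=\epsilon_c(b)\delta^{-1}=\epsilon_c(b)\bar\delta=(\delta\epsilon_c)(b)$, using $(\lambda\phi)(m)=\phi(m)\bar\lambda$. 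This shows $\epsilon$ is a graded $\kappa[\tilde{\Gamma}]$-module homomorphism.

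For (ii), I would fix a set of orbit representatives $\{\tilde{p}_i\}$ for the $\tilde{\Gamma}$-action on the common set $\mathrm{Crit}_k(-\pi^*h)=\mathrm{Crit}_{n-k}(\pi^*h)$; these form free $\kappa[\tilde{\Gamma}]$-bases for both $\widetilde{\mathbf{CM}}_k(-\pi^*h;\kappa[\tilde{\Gamma}])$ and $\widetilde{\mathbf{CM}}_{n-k}(\pi^*h;\kappa[\tilde{\Gamma}])$. Plugging into the defining formula shows $\epsilon_{\tilde{p}_i}(\tilde{p}_j)=\delta_{ij}$ (only $\gamma=1$ can send $\tilde{p}_i$ into the orbit of $\tilde{p}_j$, and only when $i=j$, since distinct representatives lie in disjoint orbits). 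Hence $\epsilon$ sends the basis $\{\tilde{p}_i\}$ to the dual basis and is a $\kappa[\tilde{\Gamma}]$-module isomorphism in each grading.

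For (iii), the chain-map condition reduces, upon evaluating both $\epsilon\circ\partial^{-\pi^*h}\tilde{q}$ and $(-1)^{k+1}\,{}^{\vee}(\partial^{\pi^*h})(\epsilon\tilde{q})$ at a generator $\tilde{p}'\in\mathrm{Crit}_{n-k}(\pi^*h)$, to the identity
\[
\sum_{\gamma\in\tilde{\Gamma}} n_{-\pi^*h,-\tilde{v}}(\tilde{q},\gamma^{-1}\tilde{p}')\,\gamma \;=\; (-1)^{k+1}\sum_{\gamma\in\tilde{\Gamma}} n_{\pi^*h,\tilde{v}}(\tilde{p}',\gamma\tilde{q})\,\gamma
\]
in $\kappa[\tilde{\Gamma}]$. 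Using the $\tilde{\Gamma}$-equivariance $n_{\pi^*h,\tilde{v}}(\gamma^{-1}\tilde{p}',\tilde{q})=n_{\pi^*h,\tilde{v}}(\tilde{p}',\gamma\tilde{q})$ (the orientability-consistent version of (\ref{npi})), this amounts, coefficient by coefficient in $\gamma$, to $n_{-\pi^*h,-\tilde{v}}(\tilde{q},\tilde{p}'')=(-1)^{k+1}n_{\pi^*h,\tilde{v}}(\tilde{p}'',\tilde{q})$ for each $\tilde{p}''\in\mathrm{Crit}_{n-k}(\pi^*h)$. In characteristic two this is the obvious time-reversal bijection between $\mathbf{M}^{-\tilde{v}}(\tilde{q},\tilde{p}'')$ and $\mathbf{M}^{\tilde{v}}(\tilde{p}'',\tilde{q})$; in the oriented case the sign $(-1)^{k+1}$ is precisely the output of Corollary \ref{mor}, applied to the pulled-back oriented data on $\tilde{X}$ (the sign analysis there is purely local near a single gradient trajectory and so lifts unchanged to any regular cover). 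The main ``obstacle,'' to the extent there is one, is the careful reconciliation of the conjugation in ${}^{\vee}$, the shift $[-n]$, the sign $(-1)^{k+1}$ built into the dual differential, and the $\tilde{\Gamma}$-equivariance convention---but none of these present difficulty once Corollary \ref{mor} is in hand.
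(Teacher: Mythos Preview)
Your proposal is correct and follows essentially the same route as the paper's proof: verify $\kappa[\tilde{\Gamma}]$-linearity on both sides (the paper does this via the single computation $\epsilon_{\xi c}(\eta d)=\eta\epsilon_c(d)\xi^{-1}$), check that a set of orbit representatives goes to its dual basis, and reduce the chain-map identity to Corollary~\ref{mor}. Your write-up of step (iii) is in fact more explicit than the paper's, which simply cites ``the same calculation as in Proposition~\ref{negdual}.''
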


\begin{proof}
The maps $\epsilon_c$ are clearly $\kappa$-linear, as is the map $c\mapsto \epsilon_c$.  If $c=\sum_{\tilde{p}}a_{\tilde{p}}\tilde{p}\in \widetilde{\mathbf{CM}}_{k}(-\pi^*h;\kappa[\tilde{\Gamma}])$ and $d=\sum_{\tilde{q}}b_{\tilde{q}}\tilde{q}\in \widetilde{\mathbf{CM}}_{n-k}(\pi^*h;\kappa[\tilde{\Gamma}])$ then for $\xi,\eta\in\tilde{\Gamma}$ we have $\xi c=\sum_{\tilde{p}}a_{\xi^{-1}\tilde{p}}\tilde{p}$ and $\eta d=\sum_{\tilde{q}}b_{\eta^{-1}\tilde{q}}\tilde{q}$ and hence (making the substitutions $\tilde{q}=\xi^{-1}\tilde{p}$ in the sum in the second equality and $\mu=\eta^{-1}\gamma\xi$ in the third) \begin{align*} \epsilon_{\xi c}(\eta d) &=\sum_{\tilde{p}\in\mathrm{Crit}_{k}(-\pi^*h)}\sum_{\gamma\in\tilde{\Gamma}}a_{\xi^{-1}\tilde{p}}b_{\eta^{-1}\gamma\tilde{p}}\gamma \\ &= \sum_{\tilde{q}\in\mathrm{Crit}_{k}(-\pi^*h)}\sum_{\gamma\in\tilde{\Gamma}}a_{\tilde{q}}b_{\eta^{-1}\gamma\xi\tilde{q}}\gamma = \sum_{\tilde{q}\in\mathrm{Crit}_{k}(-\pi^*h)}\sum_{\mu\in\tilde{\Gamma}}a_{\tilde{q}}b_{\mu\tilde{q}}\eta\mu\xi^{-1} \\ &= \eta\epsilon_c(d)\xi^{-1}.\end{align*}

With $\xi$ equal to the identity this shows that $\epsilon_c\in\mathrm{Hom}_{\kappa[\tilde{\Gamma}]}(\widetilde{\mathbf{CM}}_{n-k}(\pi^*h;\kappa[\tilde{\Gamma}]),\kappa[\tilde{\Gamma}])$.  With, instead, $\eta$ equal to the identity we see that for any $\lambda\in \kappa[\tilde{\Gamma}]$, $\epsilon_{\lambda c}(d)=\epsilon_c(d)\bar{\lambda}$ and so $\epsilon\co c\mapsto \epsilon_c$ defines a morphism of $\kappa[\tilde{\Gamma}]$-modules $\widetilde{\mathbf{CM}}_{k}(-\pi^*h;\kappa)\to {}^{\vee}\!\left(\widetilde{\mathbf{CM}}_{n-k}(\pi^*h;\kappa[\tilde{\Gamma}])\right)$.  Recalling that $\widetilde{\mathbf{CM}}_{k}(-\pi^*h;\kappa)$ has basis given by lifts $\tilde{p}_i$ to $\tilde{X}$ of the various index-$k$ critical points of $-\pi^*h$, it is easy to see that $\epsilon$ maps this basis to the corresponding dual basis of ${}^{\vee}\!\left(\widetilde{\mathbf{CM}}_{n-k}(\pi^*h;\kappa[\tilde{\Gamma}])\right)$ and so is an isomorphism of modules.

The fact that $\epsilon$ is an isomorphism of chain complexes then follows from the identity $\epsilon_{\partial_{k+1}^{-\pi^*h}\tilde{q}}(\tilde{p})=(-1)^{k+1}\epsilon_{\tilde{q}}(\partial_{n-k}^{\pi^*h}\tilde{p})$ for $\tilde{q}\in\mathrm{Crit}_{k+1}(-\pi^*h)$ and $\tilde{p}\in\mathrm{Crit}_k(-\pi^*h)$, and results from the same calculation as in Proposition \ref{negdual}.
\end{proof}

If now $h_-,h_+$ are two Morse functions on $X$, one obtains a continuation map $\tilde{\Phi}_{h_-h_+}\co \widetilde{\mathbf{CM}}_{*}(h_-;\kappa[\tilde{\Gamma}])\to\widetilde{\mathbf{CM}}_*(h_+;\kappa[\tilde{\Gamma}])$ by choosing a family of vector fields $\mathbb{V}=\{v_s\}_{s\in \R}$ on $X$ interpolating between gradient-like vector fields for $h_-$ and $h_+$, lifting these vector fields  to $\tilde{X}$, and counting solutions in $\tilde{X}$ of the obvious analogue of (\ref{conteq}) according to their asymptotics in the usual way.  (Alternatively, $\tilde{\Phi}_{h_-h_+}$ could be constructed directly from the spaces $\mathbf{N}^{\mathbb{V}}(p_-,p_+)$ underlying the construction of the original continuation map $\Phi_{h_-h_+}$ by keeping track of the asymptotics of the lifts to $\tilde{X}$ of the solutions to (\ref{conteq}).)  This yields a chain homotopy equivalence of complexes of $\kappa[\tilde{\Gamma}]$-modules (with chain homotopy inverse given by $\tilde{\Phi}_{h_+h_-}$).   

In particular, one obtains a chain-level Poincar\'e duality map $\tilde{\mathcal{D}}\co \widetilde{\mathbf{CM}}_{*}(h;\kappa[\tilde{\Gamma}])\to ({}^{\vee}\!\widetilde{\mathbf{CM}}_{*}(h;\kappa[\tilde{\Gamma}]))[-n]$ by composing the continuation map $\tilde{\Phi}_{h,-h}$ with the isomorphism $\epsilon$ from Proposition \ref{negdualcover}.

\subsection{Novikov complexes}\label{novsect}

We now consider a compact connected smooth manifold $X$ and a de Rham cohomology class $\xi\in H^1(X;\R)$.  Let $\pi \co\tilde{X}_{\xi}\to X$ be the covering space associated to the kernel of the evaluation map $\langle \xi,\cdot\rangle\co \pi_1(X)\to \R$.  Then $\pi^{*}\xi=0\in H^1(X_{\xi};\R)$, and we may and do identify the deck transformation group of $\tilde{X}_{\xi}$ with the subgroup $\Gamma_{\xi}=\Img(\langle \xi\,\cdot\rangle\co \pi_1(X)\to \R)$  of $\R$ in such a way that, for $\tilde{p}\in \tilde{X}_{\xi}$ and $g\in\Gamma_{\xi}$, $\xi$ evaluates as $-g$ on the image under $\pi$ of a path from $\tilde{x}$ to $g\tilde{x}$.  Note that, while $\tilde{X}_{\xi}=\tilde{X}_{-\xi}$ and $\Gamma_{\xi}=\Gamma_{-\xi}$ as sets, our conventions lead to the action of $\Gamma_{-\xi}$ on $\tilde{X}_{-\xi}$ being opposite to the action of $\Gamma_{\xi}$ on $\tilde{X}_{\xi}$.

If $\theta\in \Omega^1(X)$ is a closed $1$-form representing $\xi$, then we will have $\pi^*\theta=d\tilde{f}$ for a smooth function $\tilde{f}\co \tilde{X}_{\xi}\to \R$ that satisfies $\tilde{f}(g\tilde{p})=\tilde{f}(\tilde{p})-g$ for all $g\in\Gamma_{\xi}$ and $\tilde{p}\in\tilde{X}_{\xi}$.  The $1$-form $\theta$ is said to be Morse if, considered as a section of the cotangent bundle $T^*X$, it is transverse to the zero section; this is equivalent to $\tilde{f}$ being a Morse function.  In this case, the Novikov complex $\mathbf{CN}_{*}(\tilde{f};\xi)$ is defined using the flow of the negative of a gradient-like vector field $\tilde{v}$ for $\tilde{f}$ which is the lift of a Morse-Smale vector field $v$ on $X$. Each zero $p$ of $\theta$ will then have injectively immersed descending and ascending manifolds $\mathbf{D}^v(p),\mathbf{A}^v(p)$ for the flow of $v$, and by the Morse-Smale condition (which can be arranged to hold by taking $v$ equal to the metric dual of $\theta$ with respect to a suitably generic metric by \cite[Proposition 1]{BH01}) the various $\mathbf{D}^v(p)$ and $\mathbf{A}^v(q)$ will be transverse.  The critical points $\tilde{p}$ of $\tilde{f}$ are the preimages under $\pi$ of the zeros of $\theta$, and their descending and ascending manifolds $\mathbf{D}^{\tilde{v}}(\tilde{p}),\mathbf{A}^{\tilde{v}}(\tilde{p})$ are embedded disks which are lifts of the $\mathbf{D}^v(p),\mathbf{A}^{v}(p)$.  We orient the $\mathbf{D}^{\tilde{v}}(\tilde{p})$ by lifting chosen orientations on the $\mathbf{D}^{v}(p)$; thus for distinct critical points $\tilde{p},g\tilde{p}$ of $\tilde{f}$ in the same fiber of $\pi$ the orientations of $\mathbf{D}^{\tilde{v}}(\tilde{p})$ and $\mathbf{D}^{\tilde{v}}(g\tilde{p})$ correspond under the deck transformation $g$.

In this situation, as before, for distinct critical points $\tilde{p},\tilde{q}$ of $\tilde{f}$ we obtain oriented manifolds $\mathbf{W}^{\tilde{v}}(\tilde{p},\tilde{q})=\mathbf{D}^{\tilde{v}}(\tilde{p})\cap\mathbf{A}^{\tilde{v}}(\tilde{q})$ and $\mathbf{M}^{\tilde{v}}(\tilde{p},\tilde{q})=\frac{\mathbf{W}^{\tilde{v}}(\tilde{p},\tilde{q})}{\R}$, and, if $\mathrm{ind}_{\tilde{f}}(\tilde{p})-\mathrm{ind}_{\tilde{f}}(q)=1$, a signed count $n_{\tilde{f},\tilde{v}}(\tilde{p},\tilde{q})$ of the points of $\mathbf{M}^{\tilde{v}}(\tilde{p},\tilde{q})$.  In contrast to the situation of the previous section, though, given $\tilde{p}\in\mathrm{Crit}_{k+1}(\tilde{f})$ there may be infinitely many $\tilde{q}\in\mathrm{Crit}_k(\tilde{f})$ for which $n_{\tilde{f},\tilde{v}}(\tilde{p},\tilde{q})\neq 0$.  There will however be only finitely many such $\tilde{q}$ obeying any given lower bound $\tilde{f}(\tilde{q})\geq c$, and so one defines, following \cite{Nov}, the degree-$k$ part of the Novikov chain complex of $\tilde{f}$ to be consist of possibly-infinite sums as follows: \[ \mathbf{CN}_{k}(\tilde{f};\xi)=\left\{\left.\sum_{\tilde{p}\in\mathrm{Crit}_{k}(\tilde{f})}a_{\tilde{p}}\tilde{p}\right|a_{\tilde{p}}\in\kappa,(\forall c\in \R)(\#\{\tilde{p}|a_{\tilde{p}}\neq 0,\,\tilde{f}(\tilde{p})>c\}<\infty)\right\}.\]
This is  a vector space over the field $\Lambda_{\uparrow}$ from Section \ref{basic} (using $\Gamma_{\xi}$ in the role of $\Gamma$) with the obvious action extending the action of the group ring, bearing in mind our sign convention that $\tilde{f}(g\tilde{p})=\tilde{f}(\tilde{p})-g$ for $g\in\Gamma_{\xi}$.
Moreover, defining a boundary operator $\partial_{k+1}^{\tilde{f}}\co \mathbf{CN}_{k+1}(\tilde{f};\xi)\to  \mathbf{CN}_{k}(\tilde{f};\xi)$ by the usual prescription $\partial_{k+1}^{\tilde{f}}\tilde{p}=\sum_{\tilde{q}\in\mathrm{Crit}_{k}(\tilde{f})}n_{\tilde{f},\tilde{v}}(\tilde{p},\tilde{q})\tilde{q}$ makes $(\mathbf{CN}_{*}(\tilde{f});\xi),\partial^{\tilde{f}})$ into a chain complex of $\Lambda_{\uparrow}$-vector spaces.  If the zeros of $\theta$ having index $k$ are $p_1,\ldots,p_d$, and then any choice $\{\tilde{p}_1,\ldots,\tilde{p}_d\}$ of lifts to $\tilde{X}_{\xi}$ of the $p_i$ gives a basis for $\mathbf{CN}_k(\tilde{f};\xi)$.  See \cite{Lat},\cite{BH01} for more details regarding the construction of the Novikov complex.

The Novikov complex becomes a Floer-type complex by setting \[ \ell_{\uparrow}^{\tilde{f}}\left(\sum_{\tilde{p}}a_{\tilde{p}}\tilde{p}\right)=\max\{\tilde{f}(\tilde{p})|a_{\tilde{p}}\neq 0\}.\] A basis $\{\tilde{p}_1,\ldots,\tilde{p}_d\}$ as in the preceding paragraph will  then be $\ell_{\uparrow}^{\tilde{f}}$-orthogonal, with $\ell_{\uparrow}^{\tilde{f}}\left(\sum_{i}\lambda_i\tilde{p}_i\right)=\max_i\{\tilde{f}(\tilde{p}_i)-\nu_{\uparrow}(\lambda_i)\}$ for $\lambda_i\in\Lambda_{\uparrow}$.  We write $\mathbf{CN}(\tilde{f};\xi)_{\uparrow}$ for the Floer-type complex consisting of the Novikov chain complex $(\mathbf{CN}_{*}(\tilde{f};\xi),\partial^{\tilde{f}})$ and the filtration funtion $\ell_{\uparrow}^{\tilde{f}}$.  

Suppose that $\theta_-,\theta_+$ are closed Morse one-forms on $X$ that both represent the de Rham cohomology class $\xi$, and let $\tilde{f}_-,\tilde{f}_+\co \tilde{X}_{\xi}\to \R$ be (necessarily Morse) functions such that $d\tilde{f}_{\pm}=\pi^{*}\theta_{\pm}$.  Using the lift to $\tilde{X}$ of a suitable one-parameter family of vector fields on $X$, we then obtain continuation maps $\Phi_{\tilde{f}_-\tilde{f}_+}\co \mathbf{CN}_{*}(\tilde{f}_-;\xi)\to\mathbf{CN}_{*}(\tilde{f}_+;\xi)$ and $\Phi_{\tilde{f}_+\tilde{f}_-}\co \mathbf{CN}_{*}(\tilde{f}_+;\xi)\to\mathbf{CN}_{*}(\tilde{f}_-;\xi)$, by the same procedure as in Section \ref{morseintro}; these are chain maps defined over $\Lambda_{\uparrow}$ that are homotopy inverses to each other, and so the chain homotopy type of $\mathbf{CN}_{*}(\tilde{f};\xi)$ depends only on the cohomology class $\xi$.  It is important to note that the existence of these continuation maps depends on $\tilde{f}_{\pm}$ being pullbacks of \emph{cohomologous} $1$-forms; were this not the case, the difference $\tilde{f}_+-\tilde{f}_-$ would be unbounded, so (\ref{conteq}) would no longer provide bounds that are needed in the proof that $\Phi_{\tilde{f}_-\tilde{f}_+}$ is well-defined and respects the finiteness condition in the definition of the Novikov chain complexes.  In particular, if $\xi\neq 0$ there is no natural continuation map from $\mathbf{CN}_{*}(\tilde{f};\xi)$ to $\mathbf{CN}_{*}(-\tilde{f};-\xi)$ even though their homologies are isomorphic as graded vector spaces over the same field $\Lambda_{\uparrow}$; if there were such a map, much of the complexity of this paper might have been avoided.

Adapting part of the proof of \cite[Th\'eor\`eme 2.18]{Lat}, the homotopy type of $\mathbf{CN}_{*}(\tilde{f};\xi)$ can be identified as follows.  Begin with a Morse function $h_0\co X\to \R$ and a gradient-like vector field $v_0$ for $h_0$ whose flow is Morse-Smale. Then choose a closed $1$-form $\theta_0$ on $X$ which represents the cohomology class $\xi$ and has support contained in $X\setminus \Omega$ for some contractible neighborhood $\Omega$ of the zero locus of $dh_0$.  Then if $N\gg 1$, the closed one-form $\theta_N:=Ndh_0+\theta_0$ on $X$ will also represent the class $\xi$, will coincide with $Ndh_0$ on a neighborhood of $\mathrm{Crit}(h_0)$, and will have the property that $(\theta_N)_x(v_0)>0$ for all $x\in X\setminus \mathrm{Crit}(h_0)$.   Consequently, letting $\tilde{h}_N\co \tilde{X}_{\xi}\to \R$ be such that $d\tilde{h}_N=\pi^*(Ndh_0+\theta_0)$, the lift $\tilde{v}_0$ of $v_0$ to $\tilde{X}_{\xi}$ will be gradient-like both for $\tilde{h}_N$ and for $\pi^*h_0$.  Thus we may use the descending and ascending manifolds of this same vector field $\tilde{v}_0$ in  the  formation both of the lifted Morse complex $\widetilde{\mathbf{CM}}_{*}(\pi^*h_0;\kappa[\Gamma_{\xi}])$ and of the Novikov complex $\mathbf{CN}_{*}(\tilde{h}_N;\xi)$.  It follows readily from this that we have identical chain complexes \begin{equation}\label{cxid} \mathbf{CN}_{*}(\tilde{h}_N;\xi)=\Lambda_{\uparrow}\otimes_{\kappa[\Gamma_{\xi}]}\widetilde{\mathbf{CM}}_{*}(\pi^*h_0;\kappa[\Gamma_{\xi}]). \end{equation}  We define the \textbf{Latour map} $\mathcal{L}_{h_0\tilde{f}}\co \widetilde{\mathbf{CM}}_{*}(\pi^*h_0;\kappa[\Gamma_{\xi}])\to \mathbf{CN}_{*}(\tilde{f};\xi)$ to be the composition of the coefficient extension $\widetilde{\mathbf{CM}}_{*}(\pi^*h_0;\kappa[\Gamma_{\xi}])\hookrightarrow \Lambda_{\uparrow}\otimes_{\kappa[\Gamma_{\xi}]}\widetilde{\mathbf{CM}}_{*}(\pi^*h_0;\kappa[\Gamma_{\xi}])$ and the continuation map $\mathbf{CN}_{*}(\tilde{h}_N;\xi)\to \mathbf{CN}_{*}(\tilde{f};\xi)$, as (\ref{cxid}) identifies the codomain of the former with the domain of the latter.
 Then $1_{\Lambda_{\uparrow}}\otimes \mathcal{L}_{h_0\tilde{f}}$ is just the continuation map   $\mathbf{CN}_{*}(\tilde{h}_N;\xi)\to \mathbf{CN}_{*}(\tilde{f};\xi)$ and so is a chain homotopy equivalence.  We now have the ingredients needed to generalize Definition \ref{cmdef} to the Novikov context:

\begin{dfn}\label{cndef} Let $X$ be a smooth compact manifold   and if our ground field $\kappa$ has $\mathrm{char}(\kappa)\neq 2$ assume that $X$ is oriented.  Fix a Morse function $h_0\co X\to\R$. Let $\xi\in H^1(X;\R)$ with associated covering space $\pi\co \tilde{X}_{\xi}\to \R$ and let $\Gamma=\Gamma_{\xi}=\Img(\langle\xi,\cdot\rangle\co \pi_1(X)\to\R)$.   Given any Morse function $\tilde{f}\co \tilde{X}\to \R$ such that $d\tilde{f}$ is the pullback of a form in class $\xi$, we take $\mathcal{CN}(\tilde{f};\xi)$ to be the chain-level Poincar\'e-Novikov complex with the following data  $(\mathcal{C},\tilde{\mathcal{D}},\mathcal{C}_{\uparrow},\mathcal{S})$ as in Section \ref{cpnstr}:
\begin{itemize}
\item $\mathcal{C}=\widetilde{\mathbf{CM}}_{*}(\pi^*h_0;\kappa[\Gamma_{\xi}])$ is the lifted Morse complex of $h_0$;
\item $\tilde{\mathcal{D}}\co \mathbf{CM}_{*}(\pi^*h_0;\kappa[\Gamma_{\xi}])\to ({}^{\vee}\!\mathbf{CM}_{*}(\pi^*h_0;\kappa[\Gamma_{\xi}]))[-n]$ is the composition of the continuation map $\Phi_{\pi^*h_0,-\pi^*h_0}\co \widetilde{\mathbf{CM}}_*(\pi^*h_0;\kappa[\Gamma_{\xi}])\to \widetilde{\mathbf{CM}}_{*}(-\pi^*h_0;\kappa[\Gamma_{\xi}])$ with the chain isomorphism $\epsilon\co \widetilde{\mathbf{CM}}_{*}(-\pi^*h_0;\kappa[\Gamma_{\xi}])\to ({}^{\vee}\!\widetilde{\mathbf{CM}}_*(\pi^*h_0;\kappa[\Gamma_{\xi}]))[-n]$ from Proposition \ref{negdualcover}.
\item $\mathcal{C}_{\uparrow}=\mathbf{CN}(\tilde{f};\xi)_{\uparrow}$ (\emph{i.e.}, the $\Lambda_{\uparrow}$-Floer-type complex with chain complex $\mathbf{CN}_{*}(\tilde{f};\xi)$ and the filtration function  $\ell_{\uparrow}^{\tilde{f}}$).
\item $\mathcal{S}$ is the Latour map $\mathcal{L}_{h_0\tilde{f}}\co \widetilde{\mathbf{CM}}_{*}(\pi^*h_0;\kappa[\Gamma_{\xi}])\to \mathbf{CN}_{*}(\tilde{f};\xi)$  described above.
\end{itemize}
\end{dfn} 


\subsection{The associated chain-level filtered matched pairs}\label{assocpair}

Having defined the chain-level Poincar\'e-Novikov structures $\mathcal{CN}(\tilde{f};\xi)$ associated to Morse functions $\tilde{f}\co \tilde{X}_{\xi}\to \R$ on covers $\tilde{X}_{\xi}$ such that $d\tilde{f}$ is the pullback of a closed one-form in the cohomology class $\xi$, let us now discuss the chain-level filtered matched pairs $\mathcal{CP}(\mathcal{CN}(\tilde{f};\xi))$ associated to these by the construction in Section \ref{cpnstr}, as these are in turn the inputs for the rest of the constructions in Section \ref{clsec}.

By definition, the data $(\mathcal{C},\mathcal{C}^{\downarrow},\mathcal{C}_{\uparrow},\phi^{\downarrow},\phi_{\uparrow})$ of $\mathcal{CP}(\mathcal{CN}(\tilde{f};\xi))$ will include $\mathcal{C}=\widetilde{\mathbf{CM}}_{*}(\pi^*h_0;\kappa[\Gamma_{\xi}])$, $\mathcal{C}_{\uparrow}=\mathbf{CN}(\tilde{f};\xi)_{\uparrow}$, and $\phi_{\uparrow}=\mathcal{L}_{h_0\tilde{f}}$.  In the role of the $\Lambda^{\downarrow}$-Floer-type complex $\mathcal{C}^{\downarrow}$ is $\left({}^{\vee}\!\mathbf{CN}(\tilde{f};\xi)_{\uparrow}\right)[-n]$, but it is simpler to describe this using an analogue of Proposition \ref{negdualcover}.  Note first that, in our conventions, the covering spaces $\tilde{X}_{\xi}$ and $\tilde{X}_{-\xi}$ are the same, and we have well-defined $\Lambda_{\uparrow}$-Floer-type complexes $\mathbf{CN}_*(f;\xi)$ and $\mathbf{CN}_{*}(-f;-\xi)$ using the same subgroup $\Gamma=\Gamma_{\xi}=\Gamma_{-\xi}$ of $\R$ for the Novikov fields in both cases; however the ways in which this subgroup of $\R$ is identified with the deck transformation group of $\tilde{X}_{\xi}=\tilde{X}_{-\xi}$ are different due to the general rule that the action of an element $g\in \Gamma$ should decrease the value of the corresponding function (respectively $\tilde{f}$ or $-\tilde{f}$) by $g$. This leads to the need to conjugate the $\Lambda_{\uparrow}$-Floer-type complex $\mathbf{CN}(-\tilde{f};-\xi)_{\uparrow}$ to obtain a $\Lambda^{\downarrow}$-Floer-type complex in the following.  (Though we suppress the notation for the gradient-like vector fields in the statement of the proposition, it should be understood that if $\tilde{v}$ is the vector field used to define $\mathbf{CN}_{*}(f;\xi)$ then we use $-\tilde{v}$ to define $\mathbf{CN}_{*}(-f;-\xi)$.)

\begin{prop}\label{negdualnov}
 Given an element $c=\sum_{\tilde{p}\in\mathrm{Crit}_{k}(-\tilde{f})}a_{\tilde{p}}\tilde{p}\in \mathbf{CN}_{k}(-\tilde{f};-\xi)$ define $\epsilon_c\co \mathbf{CN}_{n-k}(\tilde{f};\xi)\to \Lambda_{\uparrow}$ by \begin{equation}\label{novdual} \epsilon_c\left(\sum_{\tilde{q}\in\mathrm{Crit}_{n-k}(\tilde{f})}b_{\tilde{q}}\tilde{q}\right)=\sum_{\tilde{p}\in \mathrm{Crit}_k(-\tilde{f})}\sum_{g\in\Gamma}a_{\tilde{p}}b_{g\tilde{p}}T^g \end{equation} where the notation $g\tilde{p}$ refers to the deck transformation action on $\tilde{X}_{\xi}$ associated to $\xi$ (not $-\xi$).  Then, assuming that either $\mathrm{char}(\kappa)=2$ or that $X$ is oriented and that we choose orientations for the descending manifolds $\mathbf{D}_{\tilde{f}}(p),\mathbf{D}_{-\tilde{f}}(q)$ as in Section \ref{appmorse}, the assignment $c\mapsto \epsilon_c$ defines an isomorphism of $\Lambda^{\downarrow}$-Floer-type complexes $\epsilon\co \overline{\mathbf{CN}(-\tilde{f};-\xi)_{\uparrow}  }\to ({}^{\vee}\!\mathbf{CN}(\tilde{f};\xi)_{\uparrow})[-n]$. 
\end{prop}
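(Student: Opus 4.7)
The argument will closely parallel that of Proposition \ref{negdualcover}, with two additional layers of technicality: (i) the Novikov chains are infinite sums, so we must verify that the formula (\ref{novdual}) produces a well-defined element of $\Lambda_{\uparrow}$, and (ii) we must check compatibility with the filtrations in order to obtain an isomorphism of Floer-type complexes, not merely of chain complexes. My first step would be the convergence check: if $c=\sum a_{\tilde{p}}\tilde{p}\in\mathbf{CN}_k(-\tilde{f};-\xi)$ and $d=\sum b_{\tilde{q}}\tilde{q}\in\mathbf{CN}_{n-k}(\tilde{f};\xi)$, then the defining conditions on Novikov chains give lower bounds on $\tilde{f}$-values where $a_{\tilde{p}}\neq 0$ and upper bounds on $\tilde{f}$-values where $b_{\tilde{q}}\neq 0$ (modulo finitely many exceptional terms). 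Since $\tilde{f}(g\tilde{p})=\tilde{f}(\tilde{p})-g$ in the action associated to $\xi$, for each fixed $g$ only finitely many $\tilde{p}$ can contribute to the coefficient of $T^g$; and fixing $C\in\R$, only finitely many $g<C$ can contribute at all. This puts $\epsilon_c(d)$ in $\Lambda_{\uparrow}$.

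Next I would verify the module-theoretic properties. The computation is essentially the one in the proof of Proposition \ref{negdualcover}, but using the abelian group $\Gamma$ with additive notation and tracking carefully that the $\Gamma_{-\xi}$-action on $\tilde{X}_{-\xi}=\tilde{X}_{\xi}$ is the opposite of the $\Gamma_{\xi}$-action. This opposite-action phenomenon is precisely what is absorbed by the conjugation functor on the source: writing out $\epsilon_{T^h c}(T^k d)$ and making the substitution $g'=k+g-h$ shows that $\epsilon_{T^h c}(T^k d)=T^k \epsilon_c(d)T^{-h}$, which says both that $\epsilon_c$ is $\Lambda_{\uparrow}$-linear and that $c\mapsto\epsilon_c$ is $\Lambda^{\downarrow}$-linear on $\overline{\mathbf{CN}(-\tilde{f};-\xi)_{\uparrow}}$. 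Choosing lifts $\tilde{p}_1,\ldots,\tilde{p}_r\in\tilde{X}_{\xi}$ of the index-$k$ critical points of the one-form on $X$ representing $-\xi$, $\{\tilde{p}_i\}$ is simultaneously a $\Lambda^{\downarrow}$-basis of $\overline{\mathbf{CN}_k(-\tilde{f};-\xi)}$ and (the dual of) a $\Lambda_{\uparrow}$-basis of $\mathbf{CN}_{n-k}(\tilde{f};\xi)$; one checks directly that $\epsilon$ sends $\tilde{p}_i$ to $\tilde{p}_i^*$, yielding an isomorphism in each degree.

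The chain map verification reduces, just as in Proposition \ref{negdualcover}, to the identity $\epsilon_{\partial^{-\tilde{f}}\tilde{q}}(\tilde{p})=(-1)^{k+1}\epsilon_{\tilde{q}}(\partial^{\tilde{f}}\tilde{p})$ for $\tilde{q}\in\mathrm{Crit}_{k+1}(-\tilde{f})$ and $\tilde{p}\in\mathrm{Crit}_k(-\tilde{f})$. Unlike the pullback case, both sides now carry a $T^g$ weight: the left-hand coefficient of $T^g$ counts $\mathbf{M}^{-\tilde{v}}(\tilde{q},g\tilde{p})$ (after noting that $a_{\partial^{-\tilde{f}}\tilde{q}}$ is concentrated at translates of $\tilde{p}$) and the right-hand coefficient of $T^g$ counts $\mathbf{M}^{\tilde{v}}(g\tilde{p},\tilde{q})$. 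These moduli spaces are in bijection via time-reversal of gradient flow lines, and the sign comparison between them is precisely the content of Corollary \ref{mor} in the appendix (applied on $\tilde{X}_{\xi}$ with the lifted orientations), giving the required factor $(-1)^{k+1}$.

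Finally, to upgrade to an isomorphism of $\Lambda^{\downarrow}$-Floer-type complexes, one needs to identify the filtration $-\ell_{\uparrow}^{-\tilde{f}}$ on $\overline{\mathbf{CN}(-\tilde{f};-\xi)_{\uparrow}}$ (see Remark \ref{switch}) with ${}^{\vee}\ell_{\uparrow}^{\tilde{f}}$ on $({}^{\vee}\mathbf{CN}(\tilde{f};\xi)_{\uparrow})[-n]$. On the generator $\tilde{p}_i$, the former gives $\tilde{f}(\tilde{p}_i)$, while on $\tilde{p}_i^*$ the latter gives $\ell_{\uparrow}^{\tilde{f}}(\tilde{p}_i)=\tilde{f}(\tilde{p}_i)$ by Remark \ref{dualbasisrem} applied to the orthogonal basis $\{\tilde{p}_i\}$ of $\mathbf{CN}_{n-k}(\tilde{f};\xi)$. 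Since $\epsilon$ intertwines these two orthogonal bases and preserves filtration values on them, it is a filtered isomorphism. The main obstacle, beyond bookkeeping, is isolating the sign in the chain map step; this is the principal purpose of the appendix and is already taken care of by Corollary \ref{mor}, so the proof is in essence an adaptation of Proposition \ref{negdualcover}'s argument to infinite Novikov sums together with a check of the filtrations on the dual complex.
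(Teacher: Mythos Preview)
Your proposal is correct and follows essentially the same approach as the paper. The only stylistic difference is that the paper establishes well-definedness and linearity in one stroke by rewriting the pairing in terms of the finite $\Lambda_{\uparrow}$-bases $\{\tilde{p}_i\}$ as $(\sum_i\lambda_i\tilde{p}_i,\sum_i\mu_i\tilde{p}_i)\mapsto\sum_i\lambda_i\mu_i$, whereas you do a separate convergence check on the infinite sums first and then verify linearity via the substitution argument from Proposition~\ref{negdualcover}; the chain-map sign via Corollary~\ref{mor} and the filtration check via Remark~\ref{dualbasisrem} are identical.
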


\begin{proof}
Let $\mathrm{Crit}_{k}(-f)=\{p_1,\ldots,p_d\}$ and choose lifts $\tilde{p}_1,\ldots,\tilde{p}_d$ of the $p_i$ to the covering space $\tilde{X}_{-\xi}=\tilde{X}_{\xi}$.  Thus $\{\tilde{p}_1,\ldots,\tilde{p}_d\}$ gives an orthogonal basis for both orthogonalizable $\Lambda_{\uparrow}$-spaces $\mathbf{CN}_{k}(-f;-\xi)$ and $\mathbf{CN}_{n-k}(f;\xi)$. 

Any $\tilde{p}\in\mathrm{Crit}_{k}(-\tilde{f})$ can be expressed as $h\tilde{p}_i$ for unique $i\in\{1,\ldots,d\}$ and $h\in \Gamma_{\xi}$, where as in the statement of the proposition we use in this notation the deck transformation action associated to $\xi$ rather than $-\xi$.  Thus the corresponding elements $\tilde{p}$ in the respective Novikov complexes will equal $T^{-h}\tilde{p}_i$ in $\mathbf{CN}_{k}(-\tilde{f};-\xi)$ and $T^h\tilde{p}_i$ in $\mathbf{CN}_{n-k}(\tilde{f};\xi)$.  So the pairing $(c,x)\mapsto \epsilon_c(x)$ can be rewritten in terms of the basis as \[ \left(\sum_{i=1}^{d}\sum_{s\in\Gamma}a_{i,s}T^s\tilde{p}_i, \sum_{i=1}^{d}\sum_{t\in\Gamma}b_{i,t}T^t\tilde{p}_i\right)\mapsto \sum_{i=1}^{d}\sum_{s,t\in\Gamma}a_{i,s}b_{i,t}T^{s+t},\] or equivalently as, for $\lambda_i,\mu_i\in\Lambda_{\uparrow}$, \[ \left(\sum_{i}\lambda_i\tilde{p}_i,\sum_i\mu_i\tilde{p}_i\right)\mapsto  \sum_i\lambda_i\mu_i.\]  This makes clear that (\ref{novdual}) gives a well-defined element of $\Lambda_{\uparrow}$, and that $\epsilon$ defines an isomorphism from $\mathbf{CN}_{k}(-f;-\xi)$ to the (unconjugated) dual of $\mathbf{CN}_{n-k}(f;\xi)$, and hence, after conjugation, from $\overline{\mathbf{CN}_{k}(-f;-\xi)}$ to ${}^{\vee}\!(\mathbf{CN}_{n-k}(f;\xi))$.  That $\epsilon$ is an isomorphism of chain complexes follows from the identity $n_{-\tilde{f},-\tilde{v}}(\tilde{q},\tilde{p})=(-1)^{k+1}n_{\tilde{f},\tilde{v}}(\tilde{p},\tilde{q})$ just as in Proposition \ref{negdualcover}.

To conclude that $\epsilon$ is an isomorphism of $\Lambda^{\downarrow}$-Floer-type complexes we need to check that it intertwines the filtration functions, namely $-\ell_{\uparrow}^{-\tilde{f}}$ on $\overline{\mathbf{CN}(-\tilde{f};-\xi)_{\uparrow}  }$ (see Remark \ref{switch}) and ${}^{\vee}\!\ell_{\uparrow}^{\tilde{f}}$ on $({}^{\vee}\!\mathbf{CN}(\tilde{f};\xi)_{\uparrow})[-n]$ (see Section \ref{dualsec}).  In grading $k$, it follows from the preceding paragraph that the $(-\ell_{\uparrow}^{-\tilde{f}})$-orthogonal basis $\{\tilde{p}_1,\ldots,\tilde{p}_{d}\}$ for $\overline{\mathbf{CN}_{k}(-\tilde{f};-\xi)}$ is mapped by $\epsilon$ to the dual basis   $\{\tilde{p}_{1}^{*},\ldots,\tilde{p}_{d}^{*}\}$ to the $\ell_{\uparrow}^{\tilde{f}}$-orthogonal basis $\{\tilde{p}_1,\ldots,\tilde{p}_{d}\}$ for $\mathbf{CN}_{n-k}(\tilde{f};\xi)$.  By Remark \ref{dualbasisrem}, this dual basis is an ${}^{\vee}\!\ell_{\uparrow}^{\tilde{f}}$-orthogonal basis for ${}^{\vee}\!\mathbf{CN}_{n-k}(f;\xi)$, and moreover we see that $(-\ell_{\uparrow}^{-\tilde{f}})(\tilde{p}_i)={}^{\vee}\!\ell_{\uparrow}^{\tilde{f}}(\tilde{p}_{i}^{*})=\tilde{f}(\tilde{p}_i)$.  Thus, in each grading, $\epsilon$ maps a $(-\ell_{\uparrow}^{-\tilde{f}})$-orthogonal basis to a ${}^{\vee}\!\ell_{\uparrow}^{\tilde{f}}$-orthogonal basis, preserving the filtration level of each basis element, from which it follows readily that ${}^{\vee}\!\ell_{\uparrow}^{\tilde{f}}=(-\ell_{\uparrow}^{-\tilde{f}})\circ \epsilon$, as desired.
\end{proof}

\begin{notation}
We write $\mathbf{CN}(\tilde{f};\xi)^{\downarrow}$ for the $\Lambda^{\downarrow}$-Floer-type complex $\overline{\mathbf{CN}(-\tilde{f};-\xi)_{\uparrow}}$.
\end{notation}

This notation is designed to emphasize that $\overline{\mathbf{CN}(-\tilde{f};-\xi)_{\uparrow}}$ can be interpreted directly in terms of $\tilde{f}$ instead of $-\tilde{f}$: it is a version of the filtered Novikov chain complex for $\tilde{f}$ in which the differential is defined in terms of the positive gradient flow of $\tilde{f}$ instead of the negative gradient flow as in the usual Novikov complex, and correspondingly the filtration is descending rather than ascending.  The general degree-$k$ chain in $\mathbf{CN}(\tilde{f};\xi)^{\downarrow}$ takes the form $\sum_{\tilde{p}\in\mathrm{Crit}_{n-k}(\tilde{f})}a_{\tilde{p}}\tilde{p}$ where for any $C\in\R$ only finitely many $\tilde{p}$ have both $a_{\tilde{p}}\neq 0$ and $\tilde{f}(\tilde{p})<C$, and the filtration function $\ell_{\tilde{f}}^{\downarrow}:=-\ell^{-\tilde{f}}_{\uparrow}$ is given by \[ \ell_{\tilde{f}}^{\downarrow}\left(\sum_{\tilde{p}\in\mathrm{Crit}_{n-k}(\tilde{f})}a_{\tilde{p}}\tilde{p}\right)=\min\{\tilde{f}(\tilde{p})|a_{\tilde{p}}\neq 0\}.\]

There is a version of the Latour map, which we will denote by $\bar{\mathcal{L}}_{h_0\tilde{f}}$, from the lifted Morse complex $\widetilde{\mathbf{CM}}_{*}(\pi^*h_0;\kappa[\Gamma_{\xi}])$ to the underlying chain complex $\overline{\mathbf{CN}_{*}(-\tilde{f};-\xi)}$ of $\mathbf{CN}(f;\xi)^{\downarrow}$.  Namely, as alluded to in Remark \ref{actionkey},  $\Gamma_{\xi}$ and $\Gamma_{-\xi}$ refer to the same group $\Gamma$ but with actions on $\tilde{X}_{\xi}=\tilde{X}_{-\xi}$ that are inverse to each other, so that we have an equality of complexes of $\kappa[\Gamma]$-modules \[ \widetilde{\mathbf{CM}}_{*}(\pi^*h_0;\kappa[\Gamma_{\xi}])=\overline{\widetilde{\mathbf{CM}}_{*}(\pi^*h_0;\kappa[\Gamma_{-\xi}])}.\]  We then have a Latour map $\mathcal{L}_{h_0,-\tilde{f}}\co   \widetilde{\mathbf{CM}}_{*}(\pi^*h_0;\kappa[\Gamma_{-\xi}])\to \mathbf{CN}_{*}(-\tilde{f};-\xi)$; applying the conjugation functor together with the above identification of $\kappa[\Gamma]$-modules yields the promised conjugate Latour map \[ 
\bar{\mathcal{L}}_{h_0\tilde{f}}\co \widetilde{\mathbf{CM}}_{*}(\pi^*h_0;\kappa[\Gamma_{\xi}]) \to \overline{\mathbf{CN}_{*}(-\tilde{f};-\xi)}.\]

\begin{prop}\label{downup}
The chain-level filtered matched pair $\mathcal{CP}(\mathcal{CN}(\tilde{f};\xi))$ associated to $\mathcal{CN}(\tilde{f};\xi)$ is filtered matched homotopy equivalent to the chain-level filtered matched pair \begin{equation}\label{eqcmp} \xymatrix{ & \mathbf{CN}(\tilde{f};\xi)^{\downarrow} \\ \widetilde{\mathbf{CM}}_{*}(\pi^*h_0;\kappa[\Gamma_{\xi}]) \ar[ur]^{\bar{\mathcal{L}}_{h_0\tilde{f}}} \ar[dr]_{\mathcal{L}_{h_0\tilde{f}}} & \\ & \mathbf{CN}(\tilde{f};\xi)_{\uparrow}   }\end{equation}
\end{prop}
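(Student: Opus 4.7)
The plan is to verify the conditions of Definition \ref{fmhedef} directly, taking $g=\mathrm{id}_{\widetilde{\mathbf{CM}}_{*}(\pi^*h_0;\kappa[\Gamma_{\xi}])}$, $h=\mathrm{id}_{\mathbf{CN}(\tilde{f};\xi)_{\uparrow}}$, and $f=\epsilon^{-1}$ where $\epsilon\co \mathbf{CN}(\tilde{f};\xi)^{\downarrow}\to({}^{\vee}\mathbf{CN}(\tilde{f};\xi)_{\uparrow})[-n]$ is the filtered chain isomorphism from Proposition \ref{negdualnov}.  With these choices $g,h$ are trivially (filtered) homotopy equivalences and $f$ is a filtered chain isomorphism; the lower parallelogram commutes strictly since $\phi_{\uparrow}=\mathcal{L}_{h_0\tilde{f}}$ is unaffected by $g$ and $h$.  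The substantive content lies in verifying that the upper parallelogram commutes up to chain homotopy, i.e.\ that $\phi^{\downarrow}\simeq \epsilon\circ\bar{\mathcal{L}}_{h_0\tilde{f}}$.

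To analyse $\phi^{\downarrow}$, I will unwind (\ref{phidowncpn}) with $\tilde{\mathcal{D}}$ and $\mathcal{S}=\mathcal{L}_{h_0\tilde{f}}$ as specified in Definition \ref{cndef}, after choosing a homotopy inverse $\mathcal{T}$ to $1_{\Lambda_{\uparrow}}\otimes\mathcal{L}_{h_0\tilde{f}}$.  For $c\in\widetilde{\mathbf{CM}}_{k}(\pi^*h_0;\kappa[\Gamma_{\xi}])$ and $y\in \mathbf{CN}_{n-k}(\tilde{f};\xi)$, a direct computation from the defining formula for $\beta$ in Proposition \ref{pullout} gives $(\phi^{\downarrow}c)(y)=(\beta(1\otimes\tilde{\mathcal{D}}c))(\mathcal{T}y)$, and when $y=\mathcal{L}_{h_0\tilde{f}}c'$ with $c'\in\widetilde{\mathbf{CM}}_{n-k}(\pi^*h_0;\kappa[\Gamma_{\xi}])$ this simplifies, via $\mathcal{T}\circ (1_{\Lambda_{\uparrow}}\otimes\mathcal{L}_{h_0\tilde{f}})\simeq\mathrm{id}$, to $(\tilde{\mathcal{D}}c)(c')=(\epsilon_{\mathrm{CM}}\Phi_{\pi^*h_0,-\pi^*h_0}c)(c')$, the chain-level Morse Poincar\'e pairing (where I write $\epsilon_{\mathrm{CM}}$ for the isomorphism of Proposition \ref{negdualcover} to distinguish it from the $\epsilon$ of Proposition \ref{negdualnov}).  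On the other hand, the defining formula for $\epsilon$ in Proposition \ref{negdualnov} evaluates $(\epsilon\bar{\mathcal{L}}_{h_0\tilde{f}}c)(y)$ as the chain-level Novikov Poincar\'e pairing of $\bar{\mathcal{L}}_{h_0\tilde{f}}c$ with $y$.  Since the image of $\mathcal{L}_{h_0\tilde{f}}$ generates $\mathbf{CN}(\tilde{f};\xi)_{\uparrow}$ over $\Lambda_{\uparrow}$, and since $\widetilde{\mathbf{CM}}_{*}(\pi^*h_0;\kappa[\Gamma_{\xi}])$ is a complex of free $\Lambda$-modules, the desired chain homotopy $\phi^{\downarrow}\simeq\epsilon\circ\bar{\mathcal{L}}_{h_0\tilde{f}}$ will follow from the compatibility statement
\[
(\epsilon\bar{\mathcal{L}}_{h_0\tilde{f}}c)(\mathcal{L}_{h_0\tilde{f}}c')\simeq (\epsilon_{\mathrm{CM}}\Phi_{\pi^*h_0,-\pi^*h_0}c)(c'),
\]
interpreted as an equality of chain maps in $(c,c')$ modulo chain homotopy.

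The hard part will be establishing this last compatibility, which states that the chain-level Morse intersection pairing on the cover, twisted by the continuation $\Phi_{\pi^*h_0,-\pi^*h_0}$, is intertwined by the Latour maps with the chain-level Novikov intersection pairing of $\tilde{f}$.  I expect to prove it in the spirit of the argument sketched after (\ref{pssadj}) in the proof of Proposition \ref{floerpair}: by constructing moduli spaces of paired gradient-flowline configurations on $\tilde{X}_{\xi}$---one for each side of the proposed equality---and extracting from a suitable one-parameter family of such configurations an explicit chain homotopy between them, with signs controlled by Corollary \ref{mor} and the conventions of Appendix \ref{orsect}.  Once this geometric compatibility is in hand the rest is essentially formal, using flatness of $\Lambda_{\uparrow}$ and $\Lambda^{\downarrow}$ over $\Lambda$ together with the orthogonality of the basis $\mathrm{Crit}_{*}(\tilde{f})$ in $\mathbf{CN}(\tilde{f};\xi)_{\uparrow}$ to lift the pairing identity to a chain homotopy between the full chain maps $\phi^{\downarrow}$ and $\epsilon\circ\bar{\mathcal{L}}_{h_0\tilde{f}}$.
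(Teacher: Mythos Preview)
Your setup matches the paper's: both take $g$ and $h$ to be identities and $f=\epsilon^{-1}$, and both reduce the problem to showing $\phi^{\downarrow}\simeq\epsilon\circ\bar{\mathcal{L}}_{h_0\tilde{f}}$.  From there the approaches diverge.

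Your reduction to the ``compatibility statement'' has a gap.  When you substitute $y=\mathcal{L}_{h_0\tilde{f}}c'$ and invoke $\mathcal{T}\circ(1_{\Lambda_{\uparrow}}\otimes\mathcal{L}_{h_0\tilde{f}})\simeq\mathrm{id}$, the homotopy correction terms depend on $c'$.  So the identity you obtain is of the form $(\phi^{\downarrow}c)(\mathcal{L}_{h_0\tilde{f}}c')-(\epsilon\bar{\mathcal{L}}_{h_0\tilde{f}}c)(\mathcal{L}_{h_0\tilde{f}}c')=(\text{terms involving a homotopy applied to }c')$.  This is a statement about a bilinear pairing, not about the two maps $\phi^{\downarrow},\,\epsilon\bar{\mathcal{L}}_{h_0\tilde{f}}\co \widetilde{\mathbf{CM}}_*\to({}^{\vee}\mathbf{CN}(\tilde{f};\xi)_{\uparrow})[-n]$ being chain homotopic.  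Knowing that two functionals agree on a spanning set up to a correction that itself depends on the test vector does not give you an element of the target for each $c$ (i.e.\ a chain homotopy $K(c)\in({}^{\vee}\mathbf{CN})_{k+1}$) in any obvious way.  You would need to disentangle the homotopy from the evaluation variable, and you have not indicated how.

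The paper avoids this difficulty by factoring through the auxiliary Novikov complexes $\mathbf{CN}_*(\tilde{h}_N;\xi)$ and $\mathbf{CN}_*(\tilde{g}_N;-\xi)$, for which the identifications with coefficient-extended lifted Morse complexes are \emph{equalities} (same gradient-like vector field $\tilde{v}$), not homotopy equivalences.  This turns the comparison into a diagram of explicit continuation maps, duality isomorphisms $\epsilon$, and coefficient extensions; each sub-square is checked separately, with the decisive one commuting \emph{on the nose} via the trajectory-reversal bijection $\gamma\mapsto\bar{\gamma}$ between $\mathbf{N}^{\mathbb{V}}$ and $\mathbf{N}^{\bar{\mathbb{V}}}$ (with sign controlled by Proposition~\ref{wor}).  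The homotopies in the argument then arise only from composing continuation maps, which is the standard situation.  Your sketched ``moduli spaces of paired configurations'' would essentially have to rediscover this factoring, and as written it is too vague to count as a proof.
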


\begin{proof} In the general notation of Definition \ref{cfmpdfn}, the two chain-level filtered matched pairs in question have the same data $\mathcal{C}$ and $\mathcal{C}_{\uparrow}$, while the role of $\mathcal{C}^{\downarrow}$ is played by $({}^{\vee}\!\mathbf{CN}(f;\xi)_{\uparrow})[-n]$ in one case and $\mathbf{CN}(f;\xi)^{\downarrow}$ in the other.  So we may use the isomorphism $\epsilon$ from Proposition \ref{negdualnov} in the role of the top horizontal map in Definition \ref{fmhedef}, and the respective identities in the roles of the other horizontal maps; it remains only to check that the diagram \begin{equation}\label{topsquare} \xymatrix{  \overline{\mathbf{CN}_{*}(-\tilde{f};-\xi)} \ar[rr]^{\epsilon} & & ({}^{\vee}\!\mathbf{CN}(f;\xi)_{\uparrow})[-n] \\ & \widetilde{\mathbf{CM}}_{*}(\pi^*h_0;\kappa[\Gamma_{\xi}]) \ar[ul]^{\bar{\mathcal{L}}_{h_0\tilde{f}}} \ar[ur]_{\phi^{\downarrow}} & } \end{equation} commutes up to homotopy, where $\phi^{\downarrow}$ is the map obtained as in (\ref{phidowncpn}), with the map $\mathcal{T}$ therein equal to a homotopy inverse to $1_{\Lambda_{\uparrow}}\otimes \mathcal{L}_{h_0\tilde{f}}$.  (The desired conclusion is independent of which homotopy inverse we use.)

As in the definition of the Latour map, let $\theta_0$ be a closed one-form representing the cohomology class $\xi$ which vanishes on a neighborhood of $\mathrm{Crit}(h_0)$, and choose $N$ sufficiently large that the lift $\tilde{v}$ to $\tilde{X}_{\xi}$ of a gradient-like vector field $v$ for $h_0$ is still gradient-like for a primitive $\tilde{h}_N$ for the one-form $\pi^*(Ndh_0+\theta_0)$.  Increasing $N$ if necessary, assume that $\tilde{v}$ is also gradient-like for a primitive $\tilde{g}_N$ for the one-form $\pi^*(Ndh_0-\theta_0)$.  Using $\tilde{v}$ in the definition of the differentials on the Morse and Novikov complexes for $\pi^*h_0$, $\tilde{h}_N$, and $\tilde{g}_N$, we then have identical complexes $\Lambda_{\uparrow}\otimes_{\kappa[\Gamma_{\xi}]}\widetilde{\mathbf{CM}}_{*}(\pi^*h_0;\kappa[\Gamma_{\xi}])=\mathbf{CN}_{*}(\tilde{h}_N;\xi)$, and also identical complexes $\Lambda_{\uparrow}\otimes_{\kappa[\Gamma_{-\xi}]}\widetilde{\mathbf{CM}}_{*}(\pi^*h_0;\kappa[\Gamma_{-\xi}])=\mathbf{CN}_*(\tilde{g}_N;-\xi)$. Using the first of these identifications,    $1_{\Lambda_{\uparrow}}\otimes \mathcal{L}_{h_0\tilde{f}}$ is equal to the continuation map $\Phi_{\tilde{h}_N\tilde{f}}$.  So for its homotopy inverse $\mathcal{T}$ as in (\ref{phidowncpn}) we may use the continuation map $\Phi_{\tilde{f}\tilde{h}_N}\co\mathbf{CN}_{*}(\tilde{f};\xi)\to \mathbf{CN}_{*}(\tilde{h}_N;\xi)$.  

We consider the diagram \begin{equation}\label{8term} \xymatrix{  \overline{\mathbf{CN}_{*}(-\tilde{f};-\xi)} \ar[rr]^{\epsilon} &   & ({}^{\vee}\!\mathbf{CN}(f;\xi)_{\uparrow})[-n]  \\ 
   \overline{\mathbf{CN}_{*}(\tilde{g}_N;-\xi)} \ar[r]_{\Phi_{\tilde{g}_N,-\tilde{h}_N}} \ar[u]^{\Phi_{\tilde{g}_N,-\tilde{f}}} & \overline{\mathbf{CN}_{*}(-\tilde{h}_N;-\xi)} \ar[r]_{\epsilon} & ({}^{\vee}\!\mathbf{CN}_*(\tilde{h}_N;\xi))[-n] \ar[u]_{{}^{\vee}\!\Phi_{\tilde{f}\tilde{h}_N}}
\\
\widetilde{\mathbf{CM}}_{*}(\pi^*h_0;\kappa[\Gamma_{\xi}]) \ar[r]_{\Phi_{\pi^*h_0,-\pi^*h_0}} \ar[u]^{f_1} & \widetilde{\mathbf{CM}}_{*}(-\pi^*h_0;\kappa[\Gamma_{\xi}]) \ar[r]_{\epsilon}\ar[u]^{f_2} & ({}^{\vee}\!\widetilde{\mathbf{CM}}_{*}(\pi^*h_0;\kappa[\Gamma_{\xi}]))[-n] \ar[u]_{f_3} 
}\end{equation}
Here the  maps denoted $\Phi$ are continuation maps, the maps denoted $\epsilon$ are appropriate versions of the maps from Propositions \ref{negdualcover} and \ref{negdualnov}, and the maps $f_1,f_2,f_3$ are as follows:
\begin{itemize}
\item $f_1\co \widetilde{\mathbf{CM}}_{*}(\pi^*h_0;\kappa[\Gamma_{\xi}])\to \overline{\mathbf{CN}_{*}(\tilde{g}_N;-\xi)}$ is the composition of the trivial identification of $\widetilde{\mathbf{CM}}_{*}(\pi^*h_0;\kappa[\Gamma_{\xi}])$ with $\overline{\widetilde{\mathbf{CM}}_{*}(\pi^*h_0;\kappa[\Gamma_{-\xi}])}$, the coefficient extension of the latter to $\overline{\Lambda_{\uparrow}\otimes_{\kappa[\Gamma_{-\xi}]}\widetilde{\mathbf{CM}}_{*}(\pi^*h_0;\kappa[\Gamma_{-\xi}])}$, and the identification of $\overline{\Lambda_{\uparrow}\otimes_{\kappa[\Gamma_{-\xi}]}\widetilde{\mathbf{CM}}_{*}(\pi^*h_0;\kappa[\Gamma_{-\xi}])}$ with $\overline{\mathbf{CN}_{*}(\tilde{g}_N;-\xi)}$ that results from $\tilde{v}$ being gradient-like for both $\pi^*h_0$ and $\tilde{g}_N$.
\item Similarly, $f_2$ is the composition \begin{align*} & \widetilde{\mathbf{CM}}_{*}(-\pi^*h_0;\kappa[\Gamma_{\xi}])=\overline{\widetilde{\mathbf{CM}}_{*}(-\pi^*h_0;\kappa[\Gamma_{-\xi}])}\\ & \,\,\to \overline{\Lambda_{\uparrow}\otimes_{\kappa[\Gamma_{-\xi}]}\widetilde{\mathbf{CM}}_{*}(-\pi^*h_0;\kappa[\Gamma_{-\xi}])}=\overline{\mathbf{CN}_{*}(-\tilde{h}_N;-\xi)} \end{align*} where we use the vector field $-\tilde{v}$ (which is gradient-like for both $-\pi^*h_0$ and $-\tilde{h}_N$) to define $\widetilde{\mathbf{CM}}_{*}(-\pi^*h_0;\kappa[\Gamma_{\xi}])$ and $\overline{\mathbf{CN}_{*}(-\tilde{h}_N;-\xi)}$.  
\item $f_3\co ({}^{\vee}\!\widetilde{\mathbf{CM}}_{*}(\pi^*h_0;\kappa[\Gamma_{\xi}]))[-n]\to ({}^{\vee}\!\mathbf{CN}_{*}(\tilde{h}_N;\xi))[-n]$ is the composition of the second and third maps in (\ref{phidowncpn}) (applied with $\mathcal{C}=\widetilde{\mathbf{CM}}_{*}(\pi^*h_0;\kappa[\Gamma_{\xi}])$) followed by the dual of the identification of $\Lambda_{\uparrow}\otimes_{\kappa[\Gamma_{\xi}]}\widetilde{\mathbf{CM}}_{*}(\pi^*h_0;\kappa[\Gamma_{\xi}])$ with $\mathbf{CN}_{*}(\tilde{h}_N;\xi)$.  
\end{itemize}

By definition, the composition on the left side of (\ref{8term}) is $\bar{\mathcal{L}}_{h_0\tilde{f}}$.  Also, the composition on the bottom of (\ref{8term}) is the Poincar\'e duality map $\tilde{\mathcal{D}}$, so the bottom and right sides together give the map $\phi^{\downarrow}$ in (\ref{topsquare}).  So to complete the proof it suffices to show that (\ref{8term}) is homotopy-commutative.\footnote{Strictly speaking, (\ref{8term}) is underspecified in that we have not said what time-dependent vector fields $\mathbb{V}=\{v_s\}_{s\in\R}$ are to be used in the construction of the various continuation maps; however since the homotopy classes of these continuation maps are independent of such choices we may make whatever choices are convenient.}   

The bottom left square of (\ref{8term}) is (homotopy)-commutative because we may use the same time-dependent vector fields $\mathbb{V}$ in the construction of $\Phi_{\tilde{g}_N,-\tilde{h}_N}$ as we use in the construction of $\Phi_{\pi^*h_0,-\pi^*h_0}$; this results in all relevant trajectory spaces being identical and hence in  $\Phi_{\tilde{g}_N,-\tilde{h}_N}$ being the coefficient extension to $\Lambda_{\uparrow}$ of the $\kappa[\Gamma_{\xi}]$-module homomorphism $\Phi_{\pi^*h_0,-\pi^*h_0}$.  

The bottom right square of (\ref{8term}) is also commutative.  Indeed, if $\tilde{S}\subset \mathrm{Crit}(\pi^*h_0)$ contains one point in the fiber over each critical point of the original Morse function $h_0\co X\to \R$, then $\tilde{S}$ serves as a basis simultaneously for the graded $\kappa[\Gamma_{\xi}]$-module $\widetilde{\mathbf{CM}}_{*}(-\pi^*h_0;\kappa[\Gamma_{\xi}])$ and for the graded $\Lambda_{\uparrow}$-module $\mathbf{CN}_{*}(\tilde{h}_N;\xi)[n]$.  It is easy to check that both the compositions $\epsilon\circ f_2$ and $f_3\circ \epsilon$ send the elements of this basis to their corresponding dual basis elements in ${}^{\vee}\!(\mathbf{CN}_{*}(\tilde{h}_N;\xi)[n])$ (which, as a graded vector space, coincides with $({}^{\vee}\!\mathbf{CN}_{*}(\tilde{h}_N;\xi))[-n]$).

Finally, we consider the top rectangle in (\ref{8term}).  Now $\Phi_{\tilde{g}_N,-\tilde{h}_N}$ has homotopy inverse $\Phi_{-\tilde{h}_N,\tilde{g}_N}$, and  $\Phi_{\tilde{g}_N,-\tilde{f}}\circ \Phi_{-\tilde{h}_N,\tilde{g}_N}$ is homotopic to $\Phi_{-\tilde{h}_N,-\tilde{f}}$.  So the top rectangle in (\ref{8term}) is commutative up to homotopy if and only if \begin{equation}\label{lastsquare} \xymatrix{  \overline{\mathbf{CN}_{*}(-\tilde{f};-\xi)} \ar[r]^{\epsilon} & ({}^{\vee}\!\mathbf{CN}_{*}(\tilde{f};\xi))[-n] \\ \overline{\mathbf{CN}_{*}(-\tilde{h}_N;-\xi)}\ar[u]^{\Phi_{-\tilde{h}_N,-\tilde{f}}}\ar[r]_{\epsilon} & ({}^{\vee}\!\mathbf{CN}_{*}(\tilde{h}_N;\xi))[-n] \ar[u]_{{}^{\vee}\!\Phi_{\tilde{f}\tilde{h}_N}} } \end{equation} is commutative up to homotopy.  Assume that $\Phi_{\tilde{f}\tilde{h}_N}$ is defined using (the lift to $\tilde{X}_{\xi}$ of) a time-dependent vector field $\mathbb{V}=\{v_s\}_{s\in \R}$; then in constructing $\Phi_{-\tilde{h}_N,-\tilde{f}}$ we may use the time-dependent vector field $\bar{\mathbb{V}}=\{-v_{-s}\}_{s\in \R}$.  With this choice we shall show that the above diagram commutes.  Indeed, it suffices to show that for each $\tilde{p}\in \mathrm{Crit}_k(-\tilde{h}_N)$ and each $\tilde{q}\in \mathrm{Crit}_{k}(-\tilde{f})=\mathrm{Crit}_{n-k}(\tilde{f})$ we have \begin{equation}\label{contadj} \left(\epsilon(\Phi_{-\tilde{h}_N,-\tilde{f}}\tilde{p})\right)(\tilde{q})=(\epsilon(\tilde{p}))(\Phi_{\tilde{f}\tilde{h}_N}\tilde{q}).\end{equation}  But these quantities are equal to the counts of points in the (oriented, if $\mathrm{char}(\kappa)\neq 2$) zero-dimensional continuation trajectory spaces $\mathbf{N}^{\bar{\mathbb{V}}}(\tilde{p},\tilde{q})$ and $\mathbf{N}^{\mathbb{V}}(\tilde{q},\tilde{p})$, respectively.   These continuation trajectory spaces are in bijection under the map which sends a path $\gamma$ to its time-reversal $\bar{\gamma}(s)=\gamma(-s)$; in the oriented case this bijection is orientation-preserving by Proposition \ref{wor}. (The sign in Proposition \ref{wor} is $1$ because $\mathrm{ind}_{\tilde{h}_N}(\tilde{p})=\mathrm{ind}_{\tilde{f}}(\tilde{q})$.) This completes the proof that (\ref{lastsquare}) commutes, and hence that (\ref{8term}) and (\ref{topsquare}) commute up to homotopy.
\end{proof}


\subsection{The isomorphism with interlevel persistence}\label{isosect}

As in Section \ref{novsect}, let $\xi$ be a class in the de Rham cohomology $H^1(X;\R)$ of a compact connected smooth manifold $X$.  This section is concerned with the special case in which the image $\Gamma=\Gamma_{\xi}$ of the integration homomorphism $\langle\xi,\cdot\rangle\co \pi_1(X)\to \R$ is discrete, and thus equal to $\lambda_0\Z$ for some $\lambda_0\geq 0$.  If $\lambda_0=0$ then $\xi=0$ and the integration cover $\tilde{X}_{\xi}$ is equal to $X$, so that the functions $\tilde{f}$ as in Section \ref{novsect} are just real-valued Morse functions on $X$ and their Novikov complexes are the usual Morse complexes.  Otherwise $\lambda_0>0$, $\tilde{X}_{\xi}$ is an infinite cyclic cover of $X$,  and a Morse function $\tilde{f}\co \tilde{X}_{\xi}\to \R$ as in Section \ref{novsect} fits into a commutative diagram  \[ \xymatrix{ \tilde{X}_{\xi}\ar[r]^{\tilde{f}} \ar[d] & \R \ar[d] \\ X \ar[r]^{f} & \R/\lambda_0\Z} \] for a circle-valued Morse function $f\co X\to \R/\lambda_0\Z$.  Note that $\tilde{f}$ is proper, and its critical values form a discrete subset of $\R$ (more specifically, a finite union of cosets of the discrete group $\Gamma_{\xi}$).

If $X$ is $\kappa$-oriented, associated to $\tilde{f}\co \tilde{X}_{\xi}\to \R$ we have a chain-level Poincar\'e--Novikov structure $\mathcal{CN}(\tilde{f};\xi)$ as in Definition \ref{cndef} (which specializes to Definition \ref{cmdef} in case $\xi=0$, as the one-form $\theta_0$ can then be taken to be zero).   By the general algebraic theory in Section \ref{clsec}, this yields a chain-level filtered matched pair $\mathcal{CP}(\mathcal{CN}(\tilde{f};\xi))$ and then, for each $k\in \Z$, a persistence module $\mathbb{H}_k(\mathcal{CP}(\mathcal{CN}(\tilde{f};\xi)))$ of $\kappa$-vector spaces over $\R^2$ given by (\ref{hkdef}). 

Our main comparison result with $\kappa$-coefficient singular homology, Theorem \ref{introiso}, asserts that the restriction of the persistence module $\mathbb{H}_k(\mathcal{CP}(\mathcal{CN}(\tilde{f};\xi)))$ to the index set $\{s+t\geq 0\}$ coincides with the persistence module given by singular homologies of interlevel sets (under the correspondence associating a pair $(s,t)$ with $-s\leq t$ to the interval $[-s,t]$).

In view of Propositions \ref{persinv} and \ref{downup}, the persistence module $\mathbb{H}_k(\mathcal{CP}(\mathcal{CN}(\tilde{f};\xi)))$ is isomorphic to the one given at $(s,t)\in \R^2$ by the $(k+1)$th homology of the mapping cone \begin{equation}\label{proofcone} \mathrm{Cone}\left(\xymatrix{ \mathbf{CN}(\tilde{f};\xi)^{\downarrow}_{\geq -s}\oplus \mathbf{CN}(\tilde{f};\xi)_{\uparrow}^{\leq t} \ar[rrr]^{-j^{\downarrow}\otimes\psi^{\downarrow}+j_{\uparrow}\otimes\psi_{\uparrow}} & & & \Lambda_{\updownarrow}\otimes_{\kappa[\Gamma_{\xi}]}\widetilde{\mathbf{CM}}_{*}(\pi^*h_0;\kappa[\Gamma_{\xi}]) }\right) \end{equation} where \[ \mathbf{CN}(\tilde{f};\xi)^{\downarrow}_{\geq -s}=\{c\in \mathbf{CN}(\tilde{f};\xi)^{\downarrow}|\ell_{\tilde{f}}^{\downarrow}(c)\geq -s\},\,\,\mathbf{CN}(\tilde{f};\xi)_{\uparrow}^{\leq t} =\{c\in \mathbf{CN}(\tilde{f};\xi)_{\uparrow}|\ell_{\uparrow}^{\tilde{f}}(c)\leq t\} \]
and the rest of the notation is as in Section \ref{pmdefsec}, with persistence module structure maps induced by inclusion of subcomplexes. So to prove Theorem \ref{introiso} it suffices to prove the isomorphism with this persistence module (which we abbreviate $\mathbb{H}_k(\tilde{f})$) in the role of $\mathbb{H}_k(\mathcal{CP}(\mathcal{CN}(\tilde{f};\xi)))$.  To state the result explicitly:  

\begin{theorem}\label{bigiso}
Assume that $\Gamma_{\xi}$ is discrete and let $\tilde{f}\co \tilde{X}_{\xi}\to \R$ be a Morse function such that $d\tilde{f}$ is the pullback to $\tilde{X}_{\xi}$ of a one-form representing the class $\xi\in H^1(X;\R)$. For $(s,t)\in\R^2$, let $\mathbb{H}_k(\tilde{f})_{s,t}$ denote  the $(k+1)$th homology of (\ref{proofcone}). Then, for $s+t\geq 0$, there are isomorphisms \[ \sigma_{s,t}\co \mathbb{H}_k(\tilde{f})_{s,t}\to H_k(\tilde{f}^{-1}([-s,t]);\kappa) \] such that, when $s\leq s'$ and $t\leq t'$, we have a commutative diagram \[ \xymatrix{ \mathbb{H}_k(\tilde{f})_{s,t} \ar[r] \ar[d]_{\sigma_{s,t}} & \mathbb{H}_k(\tilde{f})_{s',t'}  \ar[d]^{\sigma_{s',t'}} \\  H_k(\tilde{f}^{-1}([-s,t]);\kappa) \ar[r] & H_k(\tilde{f}^{-1}([-s',t']);\kappa) }  \]  where the horizontal maps are induced by inclusion.
\end{theorem}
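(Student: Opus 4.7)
The plan is to realize $\mathbb{H}_k(\tilde{f})_{s,t}$, when $s+t\geq 0$, as the $(k{+}1)$st homology of a Mayer--Vietoris-type mapping cone that computes $H_k(\tilde{f}^{-1}([-s,t]);\kappa)$. The condition $s+t\geq 0$ is precisely what makes $\tilde{X}_\xi=\tilde{X}^{\leq t}\cup\tilde{X}_{\geq -s}$ with intersection $\tilde{f}^{-1}([-s,t])$, and the classical Mayer--Vietoris short exact sequence of singular chain complexes then gives a canonical isomorphism
\[
H_k(\tilde{f}^{-1}([-s,t]);\kappa)\cong H_{k+1}\!\left(\mathrm{Cone}\bigl(\mathbf{S}_*(\tilde{X}^{\leq t};\kappa)\oplus\mathbf{S}_*(\tilde{X}_{\geq -s};\kappa)\xrightarrow{-i_A+i_B}\mathbf{S}^{\mathrm{MV}}_*(\tilde{X}_\xi;\kappa)\bigr)\right),
\]
where $\mathbf{S}^{\mathrm{MV}}_*$ is the subcomplex of simplices subordinate to the cover, quasi-isomorphic to $\mathbf{S}_*(\tilde{X}_\xi)$ by the small-simplices theorem. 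The goal is to match this cone, via a ladder of homotopy-commutative quasi-isomorphisms, with the defining cone (\ref{proofcone}) for $\mathbb{H}_k(\tilde{f})_{s,t}$; Proposition \ref{filtinv} then delivers the isomorphism $\sigma_{s,t}$ on $(k{+}1)$st cone homology.

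For the vertical quasi-isomorphism into the rightmost term $\Lambda_{\updownarrow}\otimes_{\kappa[\Gamma_\xi]}\widetilde{\mathbf{CM}}_*(\pi^*h_0;\kappa[\Gamma_\xi])$, I will use Pajitnov's chain homotopy equivalence $\tilde{\mathcal{E}}(h_0)\co\widetilde{\mathbf{CM}}_*(\pi^*h_0;\kappa[\Gamma_\xi])\xrightarrow{\simeq}\mathbf{S}_*(\tilde{X}_\xi;\kappa)$ from \cite[Theorem A.5]{Pa95}; being a homotopy equivalence of $\kappa[\Gamma_\xi]$-complexes, it survives tensoring with $\Lambda_{\updownarrow}$. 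Matching the resulting $\Lambda_{\updownarrow}$-coefficient complex with the Mayer--Vietoris target $\mathbf{S}^{\mathrm{MV}}_*(\tilde{X}_\xi)$ then proceeds via the short exact sequence (\ref{updownses}) tensored with $\mathbf{S}_*(\tilde{X}_\xi)$, which reorganizes the full cone (\ref{proofcone}) into the Mayer--Vietoris form.

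For the two leftmost vertical arrows, I need a filtered refinement of the Latour identification: chain equivalences between $\mathbf{CN}(\tilde{f};\xi)_\uparrow^{\leq t}$ (resp.\ $\mathbf{CN}(\tilde{f};\xi)^{\downarrow\geq -s}$) and a model for $\mathbf{S}_*(\tilde{X}^{\leq t};\kappa)$ (resp.\ $\mathbf{S}_*(\tilde{X}_{\geq -s};\kappa)$), compatibly with the maps $\mathcal{L}_{h_0\tilde{f}}$ and $\bar{\mathcal{L}}_{h_0\tilde{f}}$ of Section \ref{novsect}. I will first handle the special case $\tilde{f}=\tilde{h}_N$ (for $N$ large, as in the construction of the Latour map in Section \ref{novsect}), where the Novikov differential literally agrees with the lifted Morse differential of $\pi^*h_0$, so $\mathbf{CN}(\tilde{h}_N;\xi)_\uparrow^{\leq t}$ matches Pajitnov's cellular model of $\tilde{X}^{\leq t}$ on the nose. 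The continuation map $\Phi_{\tilde{h}_N\tilde{f}}$ then transports this identification to general $\tilde{f}$, with the filtration estimate (\ref{filtchange}) allowing the comparison to be carried out at the level of filtered subcomplexes. A density argument reduces everything to the case where $-s,t$ are regular values of $\tilde{f}$, which is harmless since both sides of the desired isomorphism are constant on connected components of the regular-value locus.

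The main obstacle will be this last step: arranging that the chain homotopies witnessing the Latour equivalence be chosen compatibly with the filtration cutoffs at $t$ and $-s$, and that the resulting ladder between (\ref{proofcone}) and the Mayer--Vietoris cone commutes up to chain homotopy in every square. Once this homotopy compatibility is in hand, Proposition \ref{filtinv} produces the quasi-isomorphism on cones, and naturality of all constructions with respect to inclusions of sublevel and superlevel sets yields the required commutativity of $\sigma_{s,t}$ with the persistence module structure maps $\varepsilon_{(s,t),(s',t')}$.
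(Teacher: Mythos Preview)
Your overall architecture---relate (\ref{proofcone}) to a Mayer--Vietoris cone via a ladder of chain equivalences, then invoke Proposition~\ref{filtinv}---is the paper's strategy as well, and your reduction to regular values $-s,t$ is correct. But in the $S^1$-valued case (nontrivial discrete $\Gamma_\xi$) your proposal has a genuine gap concerning completions.

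The filtered Novikov subcomplex $\mathbf{CN}(\tilde{f};\xi)_\uparrow^{\leq t}$ is \emph{not} chain-equivalent to the ordinary singular complex $\mathbf{S}_*(\tilde{X}^{\leq t};\kappa)$: elements of the former are infinite Novikov sums, while those of the latter are finite $\kappa$-linear combinations of simplices. In particular your claim that ``$\mathbf{CN}(\tilde{h}_N;\xi)_\uparrow^{\leq t}$ matches Pajitnov's cellular model of $\tilde{X}^{\leq t}$ on the nose'' is false: even when the differential agrees with that of $\widetilde{\mathbf{CM}}_*(\pi^*h_0)$, the filtered piece $\mathbf{CN}(\tilde{h}_N;\xi)_\uparrow^{\leq t}$ is a module over $\kappa[[x]]$ (where $x=T^{\lambda_0}$), not over $\kappa[x]$. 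The correct target is the $x$-adic completion $\kappa[[x]]\otimes_{\kappa[x]}\mathbf{S}_*(\tilde{X}^{\leq t})$, and building a chain equivalence to \emph{that} requires Pajitnov's inverse-limit construction $\Psi^t_{\tilde{f}}$ from \cite[Chapter XI]{Pa}, which you do not mention. Correspondingly, the target of the comparison ladder must be $\Lambda_{\updownarrow}\otimes_\Lambda\mathbf{S}_*(\tilde{X}_\xi)$ rather than $\mathbf{S}^{\mathrm{MV}}_*(\tilde{X}_\xi)$, and one needs a separate argument (via small simplices and an explicit splitting of elements of $\Lambda_{\updownarrow}$) to see that the resulting completed Mayer--Vietoris sequence still has $\mathbf{S}_*(\tilde{f}^{-1}([-s,t]))$ as its kernel.

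Your fallback plan---transport the identification from $\tilde{h}_N$ to general $\tilde{f}$ via continuation maps---also does not work at the filtered level: the estimate (\ref{filtchange}) gives only $\ell_\uparrow^{\tilde{f}}(\Phi_{\tilde{h}_N\tilde{f}}x)\leq \ell_\uparrow^{\tilde{h}_N}(x)+C$ for a nonzero constant $C$, so $\Phi_{\tilde{h}_N\tilde{f}}$ does not carry $\mathbf{CN}(\tilde{h}_N;\xi)_\uparrow^{\leq t}$ into $\mathbf{CN}(\tilde{f};\xi)_\uparrow^{\leq t}$. The paper avoids this by constructing $\Psi^t_{\tilde{f}}$ directly for arbitrary $\tilde{f}$ and then using Lemma~\ref{lmtonov} (which compares $\mathbb{P}_{\tilde{h}_N}$ with $1_{\Lambda_\uparrow}\otimes\tilde{\mathcal{E}}(h_0)$) only at the unfiltered level, where the continuation map is a genuine homotopy equivalence.
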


 The rest of this section is devoted to the proof of this theorem, which is in fact slightly more general than Theorem \ref{introiso} since $\mathbb{H}_k(\tilde{f})$ is defined even when $X$ is not $\kappa$-oriented.

We first argue that it suffices to construct the maps $\sigma_{s,t}$ of Theorem \ref{bigiso} in the case that $s+t>0$ and both $-s$ and $t$ are regular values of $\tilde{f}$.  In this direction, note that due to the discreteness of the set of critical values of $\tilde{f}$, for any $(s,t)\in \R^2$ there is $\epsilon_{s,t}>0$ such that there are no critical values in the union of open intervals $(t,t+\epsilon_{s,t})\cup(-(s+\epsilon_{s,t}),-s)$.  Then if $\delta,\epsilon\in [0,\epsilon_{s,t})$,  the inclusions of subcomplexes $\mathbf{CN}(\tilde{f};\xi)^{\downarrow}_{\geq -s}\subset \mathbf{CN}(\tilde{f};\xi)^{\downarrow}_{\geq -(s+\delta)}$ and $\mathbf{CN}(\tilde{f};\xi)_{\uparrow}^{\leq t}\subset \mathbf{CN}(\tilde{f};\xi)_{\uparrow}^{\leq t+\epsilon}$ are equalities.  Thus, for $0\leq \delta,\epsilon<\epsilon_{s,t}$, we have $\mathbb{H}_k(\tilde{f})_{s,t}=\mathbb{H}_k(\tilde{f})_{s+\delta,t+\epsilon}$, and the structure map $\mathbb{H}_k(\tilde{f})_{s,t}\to \mathbb{H}_k(\tilde{f})_{s+\delta,t+\epsilon}$ is the identity.  

Similarly, again assuming that $\delta,\epsilon\in [0,\epsilon_{s,t})$, the fact that the only critical values of $\tilde{f}$ lying in the interval $[-s-\delta,t+\epsilon]$ in fact lie in $[-s,t]$ implies that the gradient flow of $\tilde{f}$ may be used to construct a deformation retraction of $\tilde{f}^{-1}([-s-\delta,t+\epsilon])$ to $\tilde{f}^{-1}([-s,t])$.  (If neither $-s$ nor $t$ is a critical value of $\tilde{f}$ this follows straightforwardly as in \cite[Theorem 3.1]{Mil} applied first to $\tilde{f}$ and then to $-\tilde{f}$; if $-s$ and/or $t$ is a critical value one can instead use the argument in \cite[Remark 3.4]{Mil}.)  Hence if $0\leq \delta,\epsilon<\epsilon_{s,t}$ the inclusion-induced map $H_k(\tilde{f}^{-1}([-s,t]);\kappa)\to H_k(\tilde{f}^{-1}([-(s+\delta),t+\epsilon]);\kappa)$ is an isomorphism.  So supposing the maps $\sigma_{s,t}$ as in Theorem \ref{bigiso} have been constructed for all $(s,t)$ belong to the set $\mathcal{U}=\{(s,t)\in \R^2|s+t>0,s,t\notin\mathrm{Crit}(\tilde{f})\}$, we may uniquely extend the construction to all of $\{(s,t)|s+t\geq 0\}$ by, whenever $s+t\geq 0$, choosing arbitrary $\delta,\epsilon\in [0,\epsilon_{s,t})$ such that $(s+\delta,t+\epsilon)\in \mathcal{U}$ and requiring $\sigma_{s,t}$ to coincide with $\sigma_{s+\delta,t+\epsilon}$ under the equality $\mathbb{H}_k(\tilde{f})_{s,t}=\mathbb{H}_k(\tilde{f})_{s+\delta,t+\epsilon}$ and the inclusion-induced isomorphism $H_k(\tilde{f}^{-1}([-s,t]);\kappa)\cong H_k(\tilde{f}^{-1}([-(s+\delta),t+\epsilon]);\kappa)$; this prescription is clearly independent of the choice of $\delta,\epsilon$ and preserves the commutation of the relevant diagrams.  This justifies restricting attention to those $(s,t)$ that belong to the subset $\mathcal{U}$ in what follows.

\subsubsection{The classical Morse case}\label{easyiso}
Let us first prove Theorem \ref{bigiso} in the simpler case that $\xi=0$.  Thus $\Gamma_{\xi}=0$ and $\kappa[\Gamma_{\xi}],\Lambda_{\uparrow},\Lambda^{\downarrow}$, and $\Lambda_{\updownarrow}$ are all equal to $\kappa$, and the Novikov complexes simplify to Morse complexes of functions on the original compact manifold $X$. (In particular, the underlying chain complex of $\mathbf{CN}(\tilde{f};0)^{\downarrow}$ is the Morse complex $\mathbf{CM}_{*}(-\tilde{f})$, with $\mathbf{CN}(\tilde{f};0)^{\downarrow}_{\geq -s}$ equal to the subcomplex $\mathbf{CM}_{*}(-\tilde{f})^{\leq s}$ generated by critical points $p$ with $-\tilde{f}(p)\leq s$.) Referring to (\ref{proofcone}), the maps $j_{\uparrow},j^{\downarrow}$ are in this case just the identity on $\kappa$; the map $\psi_{\uparrow}\co \mathbf{CM}_{*}(\tilde{f})\to \mathbf{CM}_{*}(h_0)$ is a homotopy inverse to the continuation map $\Phi_{h_0\tilde{f}}$ and hence can be taken equal to the continuation map $\Phi_{\tilde{f}h_0}$; likewise, $\psi^{\downarrow}$ can be taken equal to $\Phi_{-\tilde{f},h_0}$.

For any space $Y$ let $\mathbf{S}_{*}(Y)$ denote the singular chain complex of $Y$ with coefficients in $\kappa$; for $Z\subset Y$ we let $\mathbf{S}_*(Y,Z)=\frac{\mathbf{S}_*(Y)}{\mathbf{S}_*(Z)}$ denote the relative singular chain complex.  If $g\co W\to [a,b]$ is a Morse function on a compact manifold with boundary $W$ such that $\partial W$ is a disjoint union of regular level sets $\partial_0 W=g^{-1}(\{a\})$ and $\partial_1 W=g^{-1}(\{b\})$, one has (suppressing notation for gradient-like vector fields) a Morse complex $\mathbf{CM}_{*}(g)$ and a chain homotopy equivalence $\mathcal{E}(g)\co \mathbf{CM}_*(g)\to \mathbf{S}_*(W,\partial W_0)$ from \cite[p. 218]{Pa}.  In particular, we can set $g$ equal to our Morse function $\tilde{f}\co X\to \R$ to obtain a chain homotopy equivalence $\mathcal{E}(f)\co \mathbf{CM}_*(f)\to \mathbf{S}_*(X)$; also, for any regular value $t$ of $f$, denoting $X^{\leq t}=\tilde{f}^{-1}((-\infty,t])$ and $\tilde{f}^t=\tilde{f}|_{X^{\leq t}}$ we can observe that $\mathbf{CM}_{*}(\tilde{f}^{t})=\mathbf{CM}_{*}(\tilde{f})^{\leq t}$ to obtain a chain homotopy equivalence $\mathcal{E}(\tilde{f}^t)\co \mathbf{CM}_{*}(\tilde{f})^{\leq t}\to \mathbf{S}_*(X^{\leq t})$.  It follows from arguments in \cite{Pa} that one has a homotopy-commutative diagram \begin{equation}\label{pajcomp} \xymatrix{ \mathbf{CM}_{*}(\tilde{f})^{\leq t} \ar[r] \ar[d]_{\mathcal{E}(\tilde{f}^t)} & \mathbf{CM}_{*}(\tilde{f}) \ar[d]^{\mathcal{E}(\tilde{f})} \\ \mathbf{S}_*(X^{\leq t}) \ar[r] & \mathbf{S}_*(X) } \end{equation} where the horizontal maps are inclusions.  
Indeed, as in the proof of \cite[Propositon VI.2.4]{Pa}, the cellular filtration used to construct $\mathcal{E}(\tilde{f}^t)$ can be taken to be contained in that used to construct $\mathcal{E}(f)$; the top arrow above can be interpreted as being induced by this inclusion of filtrations, and then the homotopy commutativity of the diagram follows from the functoriality statement in \cite[Corollary VI.3.8]{Pa}.  Likewise, if $-s$ is a regular value for $\tilde{f}$, we set $X_{\geq -s}=\tilde{f}^{-1}([-s,\infty))$  and $\tilde{f}_{-s}=\tilde{f}|_{X_{\geq -s}}$, and then we have chain homotopy equivalences $\mathcal{E}(-\tilde{f})\co \mathbf{CM}_{*}(-\tilde{f})\to \mathbf{S}_*(X)$ and $\mathcal{E}(-\tilde{f}_{-s})\co \mathbf{CM}_{*}(-\tilde{f})^{\leq s}\to \mathbf{S}_{*}(X_{\geq -s})$, compatible up to homotopy with the inclusions.  In view of the homotopy commutativity of diagrams such as 
(\ref{contcomm}) (applied to $\psi_{\uparrow}=\Phi_{\tilde{f}h_0}$ and $\psi^{\downarrow}=\Phi_{-\tilde{f},h_0}$ defined on the full Morse complexes $\mathbf{CM}_{*}(\pm \tilde{f})$) it then follows that we have a homotopy-commutative diagram 
\begin{equation}\label{tosing} \xymatrix{ \mathbf{CM}_{*}(-\tilde{f})^{\leq s}\oplus \mathbf{CM}_{*}(\tilde{f})^{\leq t} \ar[d]_{\mathcal{E}(-\tilde{f}_{-s})\oplus \mathcal{E}(\tilde{f}^{t})} \ar[rrr]^{-j^{\downarrow}\otimes\psi^{\downarrow}+j_{\uparrow}\otimes\psi_{\uparrow}} & & & \mathbf{CM}_{*}(h_0)  \ar[d]^{\mathcal{E}(h_0)} 
\\ \mathbf{S}_*(X_{\geq -s})\oplus \mathbf{S}_{*}(X^{\leq t}) \ar[rrr]^{-j_{-s}+j^{t}} & & & \mathbf{S}_*(X) } \end{equation} where $j_{-s},j^t$ are the maps on singular chains induced by inclusion of $X_{\geq -s},X^{\leq t}$ into $X$.  Since our persistence module $\mathbb{H}_{k}(\tilde{f})$ is given at $(s,t)$ by the $(k+1)$th homology of the cone of the top horizontal map above, it follows from Proposition \ref{filtinv} that $\mathbb{H}_k(\tilde{f})_{s,t}$ can equally well be computed as the $(k+1)$th homology of the cone of the bottom horizontal map.  This correspondence respects the inclusion induced maps associated to pairs $(s,t),(s',t')$ with $s\leq s',t\leq t'$, due to the fact that the Pajitnov chain homotopy equivalences $\mathcal{E}$ are likewise compatible up to homotopy with inclusions $X^{\leq t}\subset X^{\leq t'}$, $X_{\geq -s}\subset X_{\geq -s'}$.  (This follows by the same argument that was used above to yield  homotopy commutative diagrams such as (\ref{pajcomp}).)

So we now consider the cone of the map $-j_{-s}+j^{t}$ at the bottom of (\ref{tosing}).  As noted just before the start of this subsection, for the purposes of the proof of the theorem it suffices to confine attention to pairs $(s,t)$ where $-s$ and $t$ are regular values of $\tilde{f}$ and $s+t>0$.  In this case the interiors $\{x|\tilde{f}(x)<t\}$ and $\{x|\tilde{f}(x)>-s\}$ of $X^{\leq t}$ and $X_{\geq -s}$ cover $X$.  So if we let $\mathbf{S}^{\circ}_{*}(X)$ denote the subcomplex of $\mathbf{S}_*(X)$ generated by $\mathbf{S}_*(X^{\leq t})$ and $\mathbf{S}_{*}(X_{\geq -s})$, the theorem of small chains (\cite[Proposition 2.21]{Ha}) shows that the inclusion $\mathbf{S}_{*}^{\circ}(X)\hookrightarrow \mathbf{S}_*(X)$ is a chain homotopy equivalence. Of course the map $-j_{-s}+j^{t}$ factors through $\mathbf{S}_{*}^{\circ}(X)$, so the inclusion gives a chain homotopy equivalence \begin{equation}\label{coneq} \mathrm{Cone}(-j_{-s}+j^t\co \mathbf{S}_*(X_{\geq -s})\oplus \mathbf{S}_{*}(X^{\leq t})\to \mathbf{S}_{*}^{\circ}(X)) \sim \mathrm{Cone}(-j_{-s}+j^t\co \mathbf{S}_*(X_{\geq -s})\oplus \mathbf{S}_{*}(X^{\leq t})\to \mathbf{S}_{*}(X)).\end{equation}   Moreover, since $X^{\leq t}\cap X_{\geq -s}=\tilde{f}^{-1}([-s,t])$ we have a short exact sequence of chain complexes \begin{equation}\label{mvses} \xymatrix{  0 \ar[r] & \mathbf{S}_*(\tilde{f}^{-1}([-s,t]))\ar[r]^{(i_{-s},i^{t})} &  \mathbf{S}_*(X_{\geq -s})\oplus \mathbf{S}_{*}(X^{\leq t}) \ar[r]^<<<<{-j_{-s}+j^{t}} &  \mathbf{S}_{*}^{\circ}(X)\ar[r] & 0 }\end{equation} where $i_{-s},i^t$ are  inclusions.

Now it quite generally holds that if $\xymatrix{0 \ar[r]& A_* \ar[r]^{\alpha} & B_* \ar[r]^{\beta} & C_*\ar[r] & 0}$ is a short exact sequence of chain complexes then the map $\hat{\alpha}\co A_k\to B_k\oplus C_{k+1}=(\mathrm{Cone}(\beta)[1])_k$ defined by $\hat{\alpha}(a)=(\alpha a,0)$ is a quasi-isomorphism from $A_*$ to $\mathrm{Cone}(\beta)[1]$.  Applying  this (together with (\ref{coneq})) to (\ref{mvses}) gives an isomorphism \[ \mathfrak{p}_{s,t}\co H_k(\tilde{f}^{-1}([-s,t]);\kappa)\to H_{k+1}\left(\mathrm{Cone}\left(\xymatrix{ \mathbf{S}_*(X_{\geq -s})\oplus \mathbf{S}_{*}(X^{\leq t})\ar[r]^<<<<<{-j_{-s}+j^t} & \mathbf{S}_*(X)} \right)\right). \]  As $s,t$ vary, the $\mathfrak{p}_{s,t}$ are clearly compatible with the obvious inclusion-induced maps for $s\leq s',t\leq t'$.  Composing $\mathfrak{p}_{s,t}^{-1}$ with the isomorphism of cones induced by (\ref{tosing}) gives the isomorphisms $\sigma_{s,t}$ promised in Theorem \ref{bigiso}.  This completes the proof of that theorem in the case that $\xi=0$.

\subsubsection{The $S^1$-valued case}
We now turn to the somewhat more complicated case of Theorem \ref{bigiso} in which the image of $\langle\xi,\cdot\rangle\co \pi_1(X)\to \R$ is a nontrivial discrete subgroup $\Gamma_{\xi}=\lambda_0\Z$ of $\R$, where $\lambda_0>0$.  The Novikov field is then \[ \Lambda_{\uparrow}=\left\{\left.\sum_{m=m_0}^{\infty}a_mT^{\lambda_0m}\right|m_0\in \Z, a_m\in \kappa\right\}; \] substituting $x=T^{\lambda_0}$, we can identify $\Lambda_{\uparrow}$ with the Laurent series field $\kappa[x^{-1},x]]$; similarly $\Lambda$ is the Laurent polynomial ring $\kappa[x^{-1},x]$ and $\Lambda^{\downarrow}$ is $\kappa[[x^{-1},x]$  (the field of Laurent series in the variable $x^{-1}$).

For $t\in \R$ and $m\in \N$, the action of $x^{m}=T^{\lambda_0m}$ on the Novikov chain complex $\mathbf{CN}_{*}(\tilde{f};\xi)$ sends the filtered subcomplex $\mathbf{CN}(\tilde{f};\xi)^{\leq t}_{\uparrow}$ isomorphically to $\mathbf{CN}(\tilde{f};\xi)^{\leq (t-\lambda_0m)}_{\uparrow}$.  If $t$ is a regular value of $\tilde{f}$ and $m>0$, the quotient complex $\mathcal{N}_{*}(t,m):=\frac{\mathbf{CN}(\tilde{f};\xi)^{\leq t}_{\uparrow}}{x^{m}\mathbf{CN}(\tilde{f};\xi)^{\leq t}_{\uparrow}}$ is just the same as the (relative) Morse complex of the restriction of $\tilde{f}$ to $\tilde{f}^{-1}([t-\lambda_0m,t])$, and so the construction from \cite[p. 218]{Pa} mentioned in the previous subsection gives a chain homotopy equivalence $\mathcal{N}_{*}(t,m)\to \mathbf{S}_{*}(\tilde{f}^{-1}([t-\lambda_0m,t]),\tilde{f}^{-1}(\{t-\lambda_0m\}))$ and hence, using excision, a chain homotopy equivalence $\mathcal{E}(\tilde{f},t,m)\co\mathcal{N}_{*}(t,m)\to \mathbf{S}_*(X^{\leq t},X^{\leq(t-\lambda_0m)})$.  Here as in the previous subsection we in general write $X^{\leq t}=\tilde{f}^{-1}((-\infty,t])$ and (later) $X_{\geq -s}=\tilde{f}^{-1}([-s,\infty))$.

Now there is a straightforward identification of the filtered subcomplex $\mathbf{CN}(\tilde{f};\xi)^{\leq t}_{\uparrow}$ with the inverse limit \[ \varprojlim_{m\to\infty}\mathcal{N}_{*}(t,m)=\varprojlim_{m\to\infty}\frac{\mathbf{CN}(\tilde{f};\xi)^{\leq t}_{\uparrow}}{x^{m}\mathbf{CN}(\tilde{f};\xi)^{\leq t}_{\uparrow}} \] While $\mathbf{CN}(\tilde{f};\xi)^{\leq t}_{\uparrow}$ is not invariant under the action of the full Novikov field $\Lambda_{\uparrow}=\kappa[x^{-1},x]]$, it is preserved by the subring $\kappa[[x]]$ (\emph{i.e.} by the elements having $\nu_{\uparrow}\geq 0$). Likewise, since the action of $x$ on $\tilde{X}_{\xi}$ decreases the value of $\tilde{f}$, the singular chain complex $\mathbf{S}_{*}(X^{\leq t})$ is a complex of $\kappa[x]$-modules.  The discussion leading up to \cite[Definition XI.4.5]{Pa} explains how to refine the construction of the map $\mathcal{E}(\tilde{f},t,m)$ to make it a chain homotopy equivalence of complexes of $\frac{\kappa[x]}{x^m\kappa[x]}$-modules (not just $\kappa$-modules), canonical up to homotopy.  We have $\mathbf{S}_*(X^{\leq t},X^{\leq(t-\lambda_0m)})=\frac{\mathbf{S}_{*}(X^{\leq t})}{x^m\mathbf{S}_{*}(X^{\leq t})}$, and then $\kappa[[x]]\otimes_{\kappa[x]}\mathbf{S}_{*}(X^{\leq t})$ can be identified with $\varprojlim_{m\to\infty}\mathbf{S}_*(X^{\leq t},X^{\leq(t-\lambda_0m)})$.

In \cite[Proposition XI.6.3]{Pa}, Pajitnov constructs a chain homotopy equivalence of complexes of $\kappa[[x]]$-modules $\Psi^{t}_{\tilde{f}}\co \mathbf{CN}(\tilde{f};\xi)^{\leq t}\to \kappa[[x]]\otimes_{\kappa[x]}\mathbf{S}_{*}(X^{\leq t})$
with the property that each diagram \[ \xymatrix{ \mathbf{CN}(\tilde{f};\xi)^{\leq t}_{\uparrow}\ar[r]^<<<<<{\Psi^{t}_{\tilde{f}}} \ar[d] & \kappa[[x]]\otimes_{\kappa[x]} \mathbf{S}_{*}(X^{\leq t}) \ar[d] \\ \mathcal{N}_{*}(t,m)\ar[r]^<<<<<{\mathcal{E}(\tilde{f},t,m)} & \mathbf{S}_*(X^{\leq t},X^{\leq(t-\lambda_0m)}) } \] commutes up to homotopy, where the vertical maps are the projections from the respective inverse limits.  Moreover the full Novikov complex $\mathbf{CN}_{*}(\tilde{f};\xi)$ can be recovered from $\mathbf{CN}(\tilde{f};\xi)^{\leq t}_{\uparrow}$ as $\kappa[x^{-1},x]]\otimes_{\kappa[[x]]}\mathbf{CN}(\tilde{f};\xi)^{\leq t}_{\uparrow}$, and \cite[Theorem XI.6.2]{Pa} gives a chain homotopy equivalence $\mathbb{P}_{\tilde{f}}\co \mathbf{CN}_{*}(\tilde{f};\xi)\to \kappa[x^{-1},x]]\otimes_{\kappa[[x]]}\mathbf{S}_{*}(\tilde{X}_{\xi})$ characterized (in view of the construction in \cite[Section XI.6.1]{Pa}) uniquely up to homotopy by the property that, for each regular value $t$, $\mathbb{P}_{\tilde{f}}$ is chain homotopic to the coefficient extension of $\Psi^{t}_{\tilde{f}}$ from $\kappa[[x]]$ to $\kappa[x^{-1},x]]$. (Note that in the notation of the rest of the paper the codomain of $\mathbb{P}_{\tilde{f}}$ would be written as $\Lambda_{\uparrow}\otimes_{\Lambda}\mathbf{S}_{*}(\tilde{X}_{\xi})$.) With these chain homotopy equivalences in hand, the proof of Theorem \ref{bigiso} in the present case follows a strategy similar to the one in the case considered in Section \ref{easyiso}. 

Part of the proof will require identifying the map $-j^{\downarrow}\otimes\psi^{\downarrow}+j_{\uparrow}\otimes\psi_{\uparrow}$ in (\ref{proofcone}), up to homotopy equivalence, with a map between singular chain complexes.  Let $h_0,\tilde{h}_N,\tilde{g}_N$ be as in Section \ref{assocpair}, with $v$ a Morse-Smale gradient-like vector field on $X$ whose lift $\tilde{v}$ to $\tilde{X}_{\xi}=\tilde{X}_{-\xi}$ is gradient-like for both $\tilde{g}_N$ and $\tilde{h}_N$.  Thus, using $\tilde{v}$ to define the various Morse or Novikov complexes of $\pi^*h_0,\tilde{h}_N,\tilde{g}_N$, we have $\mathbf{CN}_{*}(\tilde{h}_N;\xi)=\Lambda_{\uparrow}\otimes_{\Lambda}\widetilde{\mathbf{CM}}_{*}(\pi^*h_0;\kappa[\Gamma_{\xi}])$ and $\mathbf{CN}_{*}(\tilde{g}_N;-\xi)=\Lambda_{\uparrow}\otimes_{\Lambda}\widetilde{\mathbf{CM}}_*(\pi^*h_0;\kappa[\Gamma_{-\xi}])$.  We also recall from \cite[Theorem A.5]{Pa95} the chain homotopy equivalence of complexes of $\kappa[\Gamma_{\xi}]$-modules $\tilde{\mathcal{E}}(h_0)\co\widetilde{\mathbf{CM}}_{*}(\pi^*h_0;\kappa[\Gamma_{\xi}])\to \mathbf{S}_{*}(\tilde{X}_{\xi})$.  Applying the conjugation functor of Section \ref{conjsect} (which does not change the underlying set-theoretic map), $\tilde{\mathcal{E}}(h_0)$ can equally well be regarded as a chain homotopy equivalence $\widetilde{\mathbf{CM}}_{*}(\pi^*h_0;\kappa[\Gamma_{-\xi}])\to \mathbf{S}_{*}(\tilde{X}_{-\xi})$ of complexes of $\kappa[\Gamma_{-\xi}]$-modules.  (Here as before, while $\tilde{X}_{-\xi}$ refers to the same covering space as $\tilde{X}_{\xi}$, and $\Gamma_{-\xi}$ is the same group as $\Gamma_{\xi}$ with $\kappa[\Gamma_{\xi}]=\kappa[\Gamma_{-\xi}]=\Lambda$, we use different notation for them to reflect that the action of $\Gamma_{-\xi}$ on $\tilde{X}_{-\xi}$ is inverse to that of $\Gamma_{\xi}$ on $\tilde{X}_{\xi}$.)

\begin{lemma}\label{lmtonov}
The chain homotopy equivalence \[ \mathbb{P}_{\tilde{h}_N}\co \mathbf{CN}_{*}(\tilde{h}_N;\xi)=\Lambda_{\uparrow}\otimes_{\Lambda}\widetilde{\mathbf{CM}}(\pi^*h_0;\kappa[\Gamma_{\xi}])\to\Lambda_{\uparrow}\otimes_{\Lambda}\mathbf{S}_{*}(\tilde{X}_{\xi}) \] is  homotopic to the coefficient extension to $\Lambda_{\uparrow}$ of $\tilde{\mathcal{E}}(h_0)\co\widetilde{\mathbf{CM}}_{*}(\pi^*h_0;\kappa[\Gamma_{\xi}])\to \mathbf{S}_{*}(\tilde{X}_{\xi})$.  Similarly, $\mathbb{P}_{\tilde{g}_N}\co \mathbf{CN}_{*}(\tilde{g}_N;-\xi)\to \Lambda_{\uparrow}\otimes_{\Lambda}\mathbf{S}_{*}(\tilde{X}_{-\xi})$ is  homotopic to the coefficient extension to $\Lambda_{\uparrow}$ of the  homotopy equivalence $\tilde{\mathcal{E}}(h_0)\co\widetilde{\mathbf{CM}}_{*}(\pi^*h_0;\kappa[\Gamma_{-\xi}])\to \mathbf{S}_{*}(\tilde{X}_{-\xi})$.
\end{lemma}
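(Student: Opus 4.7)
The plan is to verify, for each Morse function $\tilde{h}_N$, that the coefficient extension $1_{\Lambda_\uparrow}\otimes \tilde{\mathcal{E}}(h_0)$ satisfies the same defining property (up to homotopy) as Pajitnov's $\mathbb{P}_{\tilde{h}_N}$, and then invoke the uniqueness-up-to-homotopy of $\mathbb{P}_{\tilde{h}_N}$ that is implicit in \cite[Theorem XI.6.2]{Pa}. Recall that $\mathbb{P}_{\tilde{h}_N}$ is characterized (up to homotopy) by the property that for every regular value $t$ of $\tilde{h}_N$ its restriction to the filtered subcomplex $\mathbf{CN}(\tilde{h}_N;\xi)^{\leq t}_{\uparrow}$ equals, in the inverse limit, the coefficient extension of $\Psi^{t}_{\tilde{h}_N}$, which in turn is determined (up to homotopy) by the requirement that for each $m$ it project compatibly to Pajitnov's finite-level chain homotopy equivalence $\mathcal{E}(\tilde{h}_N,t,m)\co \mathcal{N}_{*}(t,m)\to \mathbf{S}_{*}(X^{\leq t},X^{\leq t-\lambda_0m})$.

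First I would exploit the essential input: because the lifted gradient-like vector field $\tilde{v}$ is gradient-like for both $\pi^*h_0$ and $\tilde{h}_N$, the descending manifolds, and hence the cellular filtrations of $\tilde{X}_\xi$ used by Pajitnov to construct the chain homotopy equivalences of \cite[Section VI.3.3]{Pa} and \cite[Proposition XI.6.3]{Pa}, are literally the \emph{same} for $\pi^*h_0$ and $\tilde{h}_N$. In particular, the critical points of $\pi^*h_0$ in $\tilde{h}_{N}^{-1}([t-\lambda_0 m,t])$ are precisely the critical points of $\tilde{h}_N$ at filtration $\leq t$ modulo those at filtration $\leq t-\lambda_0 m$, so after identifying $\mathcal{N}_*(t,m)$ with the corresponding relative Morse complex of $\pi^*h_0$ over the same cover, the map $\mathcal{E}(\tilde{h}_N,t,m)$ agrees (not merely up to homotopy but, once the obvious identifications have been made, literally on the cellular generators) with the relative version of $\tilde{\mathcal{E}}(h_0)$ obtained by restriction to the appropriate subcomplex and quotient by the deck-translate subcomplex.

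Next I would assemble these compatibilities: taking inverse limits over $m$ identifies the restriction of $1_{\Lambda_\uparrow}\otimes \tilde{\mathcal{E}}(h_0)$ to $\kappa[[x]]\otimes_\Lambda \widetilde{\mathbf{CM}}_*(\pi^*h_0;\kappa[\Gamma_\xi])$ (which, via the identification used throughout Section~\ref{assocpair}, coincides with $\mathbf{CN}(\tilde{h}_N;\xi)^{\leq t}_\uparrow$) with a map into $\kappa[[x]]\otimes_{\kappa[x]}\mathbf{S}_*(X^{\leq t})$ that projects compatibly to each Pajitnov $\mathcal{E}(\tilde{h}_N,t,m)$. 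By the uniqueness clause for $\Psi^{t}_{\tilde{h}_N}$, this restricted map is homotopic to $\Psi^{t}_{\tilde{h}_N}$ as a map of $\kappa[[x]]$-complexes. Passing to the further coefficient extension to $\Lambda_\uparrow=\kappa[x^{-1},x]]$ and using the uniqueness of $\mathbb{P}_{\tilde{h}_N}$ yields the first assertion of the lemma.

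The second assertion, concerning $\mathbb{P}_{\tilde{g}_N}$, is proved by the same argument with $-\xi$ in place of $\xi$; the only new point is the bookkeeping that the conjugation functor interchanges $\kappa[\Gamma_\xi]$- and $\kappa[\Gamma_{-\xi}]$-module structures without altering the underlying set-theoretic chain maps (nor the descending manifolds, since $\tilde v$ remains gradient-like for $\tilde g_N$), so the identification of the relevant finite-level Pajitnov maps goes through verbatim. The one step that requires care, and is the main potential obstacle, is to check that the various naturality-up-to-homotopy statements in Chapter XI of \cite{Pa} are strong enough to let us replace $\Psi^t_{\tilde{h}_N}$ by the restriction of $1_{\Lambda_\uparrow}\otimes\tilde{\mathcal{E}}(h_0)$ once one has verified the compatibility with all of the quotients $\mathcal{N}_*(t,m)$; if needed, this can be circumvented by taking $t$ larger than $\max_{\tilde{X}_\xi\,\mathrm{mod}\,\Gamma_\xi}\tilde{h}_N$, in which case $\mathbf{CN}(\tilde{h}_N;\xi)^{\leq t}_\uparrow$ generates all of $\mathbf{CN}_*(\tilde{h}_N;\xi)$ over $\Lambda_\uparrow$ and the identification is immediate.
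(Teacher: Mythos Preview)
Your strategy---verify that $1_{\Lambda_\uparrow}\otimes\tilde{\mathcal{E}}(h_0)$ satisfies the characterizing property of $\mathbb{P}_{\tilde{h}_N}$ and invoke uniqueness---is different from the paper's and has a genuine soft spot. The paper instead produces a \emph{common homotopy inverse}: it uses the intersection-counting map $\phi\co\Lambda_\uparrow\otimes_\Lambda\mathbf{S}_*^{\pitchfork}(\tilde{X}_\xi)\to\mathbf{CN}_*(\tilde{h}_N;\xi)$ from \cite[Section XI.7.2]{Pa}, for which \cite[Theorem XI.7.5]{Pa} shows $\mathbb{P}_{\tilde{h}_N}\circ\phi$ is homotopic to the inclusion of transverse chains. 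The key observation is that, because $\tilde{v}$ is lifted from a Morse--Smale vector field on compact $X$, the intersection sums defining $\phi$ are \emph{finite}, so $\phi$ is the $\Lambda_\uparrow$-extension of a $\Lambda$-map $\phi_0$; a lift of \cite[Theorem VI.4.12]{Pa} then gives $\tilde{\mathcal{E}}(h_0)\circ\phi_0\simeq$ inclusion. Hence $1_{\Lambda_\uparrow}\otimes\tilde{\mathcal{E}}(h_0)$ and $\mathbb{P}_{\tilde{h}_N}$ are homotopy inverses to the same map and therefore homotopic. No inverse limits or uniqueness clauses are needed.

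The weak point in your approach is the passage from ``the finite-level maps $\mathcal{E}(\tilde{h}_N,t,m)$ agree with the relative versions of $\tilde{\mathcal{E}}(h_0)$'' to ``the inverse-limit map $\Psi^t_{\tilde{h}_N}$ agrees with the restriction of $1\otimes\tilde{\mathcal{E}}(h_0)$.'' If the agreement at each finite level is only up to homotopy, assembling these into a homotopy on the inverse limit runs into a $\varprojlim^1$ obstruction that you have not addressed; Pajitnov's uniqueness statements are for $\mathbb{P}$, not for $\Psi^t$. Your suggested workaround (``take $t$ large'') does not resolve this, since $\Psi^t$ itself is an inverse-limit object for any $t$. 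Your approach can be salvaged if you argue that the Pajitnov maps can be chosen \emph{on the nose} compatibly across all $m$ (so that literal equality, not homotopy, propagates to the limit), and you gesture at this with ``literally on the cellular generators''---but that claim needs work, since both $\tilde{\mathcal{E}}(h_0)$ and $\mathcal{E}(\tilde{h}_N,t,m)$ involve choices of cellular approximations, and you must check these can be made coherently. The paper's route sidesteps all of this.
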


\begin{proof}
	The two cases are identical so we just prove the first statement, involving $h_0$ and $\tilde{h}_N$.  This is perhaps most easily done by using the characterization of a homotopy inverse to $\mathbb{P}_{\tilde{h}_N}$ in \cite[Section XI.7.2]{Pa}, based on \cite{HL}.  Namely, letting $\mathbf{S}_{*}^{\pitchfork}(\tilde{X}_{\xi})$ denote the subcomplex of $\mathbf{S}_{*}(\tilde{X}_{\xi})$ generated by $C^{\infty}$ simplices $\sigma\co \Delta^k\to \tilde{X}_{\xi}$ that are transverse to all ascending manifolds $\mathbf{A}^{\tilde{v}}(\tilde{p})$, one defines $\phi\co \Lambda_{\uparrow}\otimes_{\Lambda}\mathbf{S}_{k}^{\pitchfork}(\tilde{X}_{\xi})\to \mathbf{CN}_{k}(\tilde{h}_N;\xi)$ by setting $\phi(\sigma)=\sum_{\tilde{p}\in\mathrm{Crit}_{k}(\tilde{h}_N)}[\sigma:\tilde{p}]\tilde{p}$ where $[\sigma:\tilde{p}]$ is the intersection number of $\sigma$ with $\mathbf{A}^{\tilde{v}}(\tilde{p})$. This yields a chain map, and \cite[Theorem XI.7.5]{Pa} asserts that $\mathbb{P}_{\tilde{h}_N}\circ \phi$ is homotopic to the inclusion of the coefficient extension of   $\mathbf{S}_{*}^{\pitchfork}(\tilde{X}_{\xi})$ into $\mathbf{S}_{*}(\tilde{X}_{\xi})$, which is itself a chain homotopy equivalence.  

But in the present context where $\tilde{v}$ is the lift to $\tilde{X}_{\xi}$ of a Morse-Smale gradient-like vector field for $h_0$, intersections of $\sigma$ as above with ascending manifolds $\mathbf{A}^{\tilde{v}}(\tilde{p})$ in $\tilde{X}_{\xi}$ are in bijection with intersections of $\pi\circ\sigma$ with ascending manifolds $\mathbf{A}^{v}(p)$ in $X$, and there are only finitely many such intersections by compactness considerations.  Thus the sums defining $\phi$ are finite, and so $\phi$ is the coefficient extension
to $\Lambda_{\uparrow}$ of a similarly defined map $\phi_0\co \mathbf{S}_{*}^{\pitchfork}(\tilde{X}_{\xi})\to \widetilde{\mathbf{CM}}(\pi^*h_0;\kappa[\Gamma_{\xi}])$.  Then $\tilde{\mathcal{E}}(h_0)\circ \phi_0$ is homotopic to the inclusion of $\mathbf{S}_{*}^{\pitchfork}(\tilde{X}_{\xi})$ into $\mathbf{S}_{*}(\tilde{X}_{\xi})$ by a straightforward lifting of \cite[Theorem VI.4.12]{Pa} to covering spaces.

It follows that $1_{\Lambda_{\uparrow}}\otimes\tilde{\mathcal{E}}(h_0)$ and $\mathbb{P}_{\tilde{h}_N}$ are chain homotopy inverses to the same chain map (namely the coefficient extension to $\Lambda_{\uparrow}$ of the composition of $\phi_0$ with a chain homotopy inverse to the inclusion $\mathbf{S}_{*}^{\pitchfork}(\tilde{X}_{\xi})\hookrightarrow\mathbf{S}_{*}(\tilde{X}_{\xi})$); hence  $1_{\Lambda_{\uparrow}}\otimes\tilde{\mathcal{E}}(h_0)$ and $\mathbb{P}_{\tilde{h}_N}$ are  homotopic.  
\end{proof}

For the rest of the proof let $-s,t$ be regular values of $\tilde{f}$ with $s+t>0$.  In (\ref{proofcone}), the map $\psi_{\uparrow}$ is the composition of the inclusion $\mathbf{CN}(\tilde{f};\xi)^{\leq t}_{\uparrow}\hookrightarrow \mathbf{CN}_{*}(\tilde{f};\xi)$ with a chain homotopy inverse to the continuation map $\Phi_{\tilde{h}_N\tilde{f}}\co \Lambda_{\uparrow}\otimes_{\Lambda}\widetilde{\mathbf{CM}}_{*}(\pi^*h_0;\kappa[\Gamma_{\xi}])\to \mathbf{CN}_{*}(\tilde{f};\xi)$; for this chain homotopy inverse we may use the continuation map $\Phi_{\tilde{f}\tilde{h}_N}$.  By the same argument as in (\ref{contcomm}), $\mathbb{P}_{\tilde{h}_N}\circ\Phi_{\tilde{f}\tilde{h}_N}$ is chain homotopic to $\mathbb{P}_{\tilde{f}}$, so by Lemma \ref{lmtonov} we have a homotopy-commutative diagram of complexes of $\kappa[[x]]$-modules \[ \xymatrix{ \mathbf{CN}(\tilde{f};\xi)^{\leq t}_{\uparrow} \ar[rr]^{\psi_{\uparrow}} \ar[dd]_{\Psi^{t}_{\tilde{f}}} \ar@{^{(}->}[rd] & & \Lambda_{\uparrow}\otimes_{\Lambda}\widetilde{\mathbf{CM}}_{*}(\pi^*h_0;\kappa[\Gamma_{\xi}]) \ar[dd]^{1_{\Lambda_{\uparrow}}\otimes \tilde{\mathcal{E}}(h_0)}\\ & \mathbf{CN}_{*}(\tilde{f};\xi) \ar[ru]^<<<<<{\Phi_{\tilde{f}\tilde{h}_N}} \ar[rd]_{\mathbb{P}_{\tilde{f}}} & \\ \kappa[[x]]\otimes_{\kappa[x]}\mathbf{S}_{*}(X^{\leq t}) \ar@{^{(}->}[rr] & & \Lambda_{\uparrow}\otimes_{\Lambda}\mathbf{S}_{*}(\tilde{X}_{\xi}) } \] where the vertical arrows are chain homotopy equivalences.  Similarly, using the same reasoning with $\xi,\tilde{f},\tilde{h}_N$ replaced by $-\xi,-\tilde{f},\tilde{g}_N$, and using the conjugation functor and Proposition \ref{flipconj} to convert $\Lambda_{\uparrow}$-vector spaces to $\Lambda^{\downarrow}$-vector spaces, we have a homotopy-commutative diagram of complexes of $\kappa[[x^{-1}]]$-modules \[ \xymatrix{  \mathbf{CN}(\tilde{f};\xi)_{\geq -s}^{\downarrow} \ar[r]^<<<<<<<{\psi^{\downarrow}} \ar[d]_{\bar{\Psi}^{-s}_{-\tilde{f}}} & \Lambda^{\downarrow}\otimes_{\Lambda}\widetilde{\mathbf{CM}}_{*}(\pi^*h_0;\kappa[\Gamma_{\xi}]) \ar[d]^{1_{\Lambda^{\downarrow}}\otimes \tilde{\mathcal{E}}(h_0)} \\ \kappa[[x^{-1}]]\otimes_{\kappa[x^{-1}]}\mathbf{S}_{*}(X_{\geq -s}) \ar@{^{(}->}[r] & \Lambda^{\downarrow}\otimes_{\Lambda}\mathbf{S}_{*}(\tilde{X}_{\xi}) }  \]

Taking the direct sum of the two diagrams above, and combining with the map $-j^{\downarrow}+j_{\uparrow}\co \Lambda^{\downarrow}\oplus\Lambda_{\uparrow}\to\Lambda_{\updownarrow}$, we see that we have a homotopy-commutative diagram (at the level of complexes of $\kappa$-vector spaces) \begin{equation}\label{novtosing} \xymatrix{ \mathbf{CN}(\tilde{f};\xi)_{\geq -s}^{\downarrow}\oplus \mathbf{CN}(\tilde{f};\xi)_{\uparrow}^{\leq t}\ar[d]_{\Psi^{t}_{\tilde{f}}\oplus \bar{\Psi}^{-s}_{-\tilde{f}}} \ar[rrr]^{-j^{\downarrow}\otimes\psi^{\downarrow}+j_{\uparrow}\otimes\psi_{\uparrow}}   & & & \Lambda_{\updownarrow}\otimes_{\Lambda}\widetilde{\mathbf{CM}}_{*}(\pi^*h_0;\kappa[\Gamma_{\xi}]) \ar[d]^{1_{\Lambda_{\updownarrow}}\otimes \tilde{\mathcal{E}(h_0)}} \\ \left(\kappa[[x^{-1}]]\otimes_{\kappa[x^{-1}]}\mathbf{S}_{*}(X_{\geq -s})\right)\oplus\left(\kappa[[x]]\otimes_{\kappa[x]}\mathbf{S}_{*}(X^{\leq t})\right) \ar[rrr]^>>>>>>>>>>>>>>>{-j^{\downarrow}\otimes\iota_{X_{\geq -s}}+j_{\uparrow}+\otimes\iota_{X^{\leq t}}} & & & \Lambda_{\updownarrow}\otimes_{\Lambda}\mathbf{S}_{*}(X) } \end{equation}
where $\iota_{X_{\geq -s}},\iota_{X^{\leq t}}$ are the inclusions and the vertical maps are chain homotopy equivalences.  This induces isomorphisms between the homologies of the cones of the horizontal maps; as the pair $(s,t)$ varies these homology isomorphisms are compatible with the inclusion-induced maps for $s\leq s',t\leq t'$ due to \cite[Proposition XI.4.6]{Pa}.  So it remains only to identify $H_{k+1}(\mathrm{Cone}(-j^{\downarrow}\otimes\iota_{X_{\geq -s}}+j_{\uparrow}\otimes\iota_{X^{\leq t}}))$ with $H_k(\tilde{f}^{-1}([-s,t]);\kappa)$, compatibly with inclusions $[-s,t]\subset [-s',t']$, continuing to assume that $s+t>0$. Choose any number $\delta$ with $0<\delta<s+t$ and for any subset $Y$ of $\tilde{X}_{\xi}$ let $\mathbf{S}_{*}^{\delta}(Y)$ denote the subcomplex of $\mathbf{S}_{*}(Y)$ generated by simplices $\sigma\co \Delta^k\to Y$ with $\max(\tilde{f}\circ\sigma)-\min(\tilde{f}\circ \sigma)\leq \delta$.  By \cite[Proposition 2.21]{Ha} applied to the cover $\{\tilde{f}^{-1}([a,a+\delta])|a\in \R\}$ of $\tilde{X}_{\xi}$, the inclusion $\mathbf{S}_{*}^{\delta}(Y)\to \mathbf{S}_{*}(Y)$ is a homotopy equivalence, so we may instead consider the cone of \begin{equation}\label{smallcone} 
-j^{\downarrow}\otimes\iota_{X_{\geq -s}}+j_{\uparrow}\otimes\iota_{X^{\leq t}}\co \left(\kappa[[x^{-1}]]\otimes_{\kappa[x^{-1}]}\mathbf{S}_{*}^{\delta}(X_{\geq -s})\right)\oplus \left(\kappa[[x]]\otimes_{\kappa[x]}\mathbf{S}_{*}^{\delta}(X^{\leq t})\right)\to \Lambda_{\updownarrow}\otimes_{\Lambda}\mathbf{S}^{\delta}_{*}(\tilde{X}_{\xi}).\end{equation} Suppose that $\lambda=\sum_{m\in \Z}a_mx^m$ is a general element of $\Lambda_{\updownarrow}$ (with $a_m\in\kappa$) and  $\sigma\co \Delta^k\to \tilde{X}_{\xi}$ is a generator of $ \mathbf{S}_{*}^{\delta}(\tilde{X}_{\xi})$.  If $m_0\in \Z$ is the minimal integer such that $\max(\tilde{f}\circ \sigma)-m\lambda_0\leq t$, then $x^{m_0}\sigma$ has image contained in $X^{\leq t}$, while
the choice of $\delta$ implies that $x^{m_0-1}\sigma$ has image contained in $X_{\geq -s}$.  It follows readily from this that $\lambda\otimes\sigma$ lies in the image of (\ref{smallcone}); thus this map is surjective, and it is easy to see that its kernel is equal to the image of $\mathbf{S}_{*}^{\delta}(\tilde{f}^{-1}([-s,t]))$ under the diagonal embedding.  So just as in Section \ref{easyiso} we obtain an isomorphism between the $(k+1)$th homology of the cone of (\ref{smallcone}) and $H_k(\mathbf{S}_{*}^{\delta}(\tilde{f}^{-1}([-s,t])))$, which in turn is isomorphic to $H_k(\tilde{f}^{-1}([-s,t]);\kappa)$ by another appeal to the theorem of small chains.  If $s'\geq s$ and $t'\geq t$ the resulting isomorphisms are compatible with the obvious inclusions, since we can use the same value of $\delta$ for both pairs $(s,t),(s',t')$.  This completes the proof of Theorem \ref{bigiso}.

\appendix

\section{Orientations}
\label{orsect} 

In this appendix we describe our conventions for the orientations of various spaces that arise in Morse and Hamiltonian Floer theory and work out some signs that result from these.  Some authors use different conventions than ours (for instance in \cite{Lat}, among other references, Morse trajectory spaces are oriented by means of a choice of orientations of the ascending manifolds, not of the descending manifolds), and in general this can affect the signs that appear in some formulas.  A reader who is willing to work in characteristic $2$ can safely skip this material; in characteristics other than $2$, the calculations here are needed to justify the assertions that (with our conventions) the chain-level Poincar\'e duality map $\tilde{\mathcal{D}}$ appearing in Definitions \ref{cmdef} and \ref{cndef} is a chain map, and  that Proposition \ref{floerpair} and the identities (\ref{pssadj}) and (\ref{contadj}) hold without any sign correction.  

\subsection{Initial conventions}
First we establish some general rules that we will use repeatedly.

\subsubsection{Short exact sequences}\label{ses}
Given a short exact sequence of real vector spaces \begin{equation}\label{sesex} \xymatrix{0 \ar[r]& A\ar[r]^{\alpha} & B \ar[r]^{\beta} & C \ar[r] & 0 }\end{equation} in which two of the three vector spaces $A,B,C$ carry specified orientations,  the remaining vector space may be oriented by requiring that, if $(a_1,\ldots,a_k)$ is a positively-oriented ordered basis for $A$, and if $\hat{c}_1,\ldots,\hat{c}_{\ell}\in B$ have the property that $\left(\beta(\hat{c}_1),\ldots,\beta(\hat{c}_{\ell})\right)$ is a positively-oriented ordered basis for $C$, then $\left(\alpha(a_1),\ldots,\alpha(a_k),\hat{c}_1,\ldots,\hat{c}_{\ell}\right)$ is a positively-oriented ordered basis for $B$. (In the case that $\dim A=0$, so that $\beta$ is a vector space isomorphism  and an orientation of $A$ is by definition a choice of sign, this criterion should be interpreted as meaning that the orientation sign of $A$ is positive if $\beta\co B\to C$ is orientation-preserving and negative otherwise.) If, \emph{e.g.}, $B$ and $C$ already carry orientations, the orientation on $A$ determined by this prescription will be referred to as the one ``induced by the short exact sequence (\ref{sesex}).''  Rephrasing slightly, for any real vector space $V$ let $\det V=\Lambda^{\dim V}V$, so that $\det V$ is one-dimensional and an orientation of $V$ is equivalent to an orientation of $\det V$ (\emph{i.e.}, to a choice of connected component of $\det V\setminus\{0\}$). The short exact sequence (\ref{sesex}) yields an isomorphism $\gamma\co (\det A)\otimes (\det C)\to \det B$ which sends $(a_1\wedge\cdots\wedge a_k)\otimes(c_1\wedge\cdots\wedge c_{\ell})$ to $\alpha(a_1)\wedge\cdots\wedge \alpha(a_k)\wedge \hat{c}_1\wedge\cdots\wedge \hat{c}_{\ell}$ where the $\hat{c}_i$ are arbitrary subject to the condition that $\beta(\hat{c}_i)=c_i$, and our orientation prescription amounts to the condition that $\gamma$ be orientation-preserving.

\subsubsection{Preimages}\label{preim}
Suppose that $A$ and $Z$ are smooth manifolds and $f\co A\to Z$ is a smooth map that is transverse to some submanifold $B\subset Z$.  Assume further that $A$ comes equipped with an orientation, and that $B$ comes equipped with a \emph{co}orientation (\emph{i.e.}, an orientation of its normal bundle $NB$).  The preimage $f^{-1}(B)$ is then given the orientation induced at every point $x\in f^{-1}(B)$ by the short exact sequence \[ \xymatrix{ 0 \ar[r]& T_xf^{-1}(B)\ar[r] & T_xA\ar[r]^{\pi_{NB}\circ f_*} & N_{f(x)}B\ar[r]& 0},\] where $f_*\co T_xA\to T_{f(x)}Z$ is the derivative and $\pi_{NB}$ is the projection from $TZ|_B$ to the normal bundle $NB$. 

\subsubsection{Manifolds with boundary}\label{bdryor} If $A$ is a smooth oriented manifold with boundary, we orient $\partial A$ by the usual ``outer normal first'' convention: for each $x\in \partial A$, if $\mathsf{n}_x\in T_xA$ points away from $\mathrm{int}(A)$, a basis $\{v_1,\ldots,v_{a-1}\}$ for $T_x\partial A$ is positively-oriented iff $\{\mathsf{n}_x,v_1,\ldots,v_{a-1}\}$ is positively-oriented as a basis for $T_xA$.  Of course, if $\dim A=1$ this is interpreted as meaning that $x$ is a positively-oriented point of $\partial A$ if a positive generator for $T_xA$ points away from $\mathrm{int}(A)$ and a negatively-oriented point otherwise.

We likewise use an outer normal first convention for coorientations.  Namely, suppose that $B$ is a cooriented submanifold with boundary of $Z$, that $y\in \partial B$ and that $\mathsf{n}_y\in T_y B$ points away from $\mathrm{int}(B)$.  We then coorient $\partial B$ by the condition that, if $\{w_1,\ldots,w_{c}\}$ is a positively oriented basis for $N_yB$ then $\{\mathsf{n}_y,w_1,\ldots,w_c\}$ is a positively oriented basis for $N_y\partial B$.

To provide an initial consequence of these conventions, suppose that $A$ is a smooth manifold with boundary with a smooth map $f\co A\to Z$, and that $B$ is a smooth cooriented submanifold with boundary of $Z$.  Suppose further that $f^{-1}(\partial B)\cap \partial A=\varnothing$, that $f$ and $f|_{\partial A}$ are both transverse to $B$, and that $f$ is transverse to $\partial B$.   Then $f^{-1}(B)$, $(f|_{\partial A})^{-1}(B)$, and $f^{-1}(\partial B)$ each obtain orientations induced by (\ref{preim}), in the latter two cases using the orientation on $\partial A$ and the coorientation on $\partial B$ that we have just described.  At the same time, $f^{-1}(A)$ is a smooth manifold with boundary, with $\partial f^{-1}(A)$ consisting of the disjoint union of $(f|_{\partial A})^{-1}(B)$  and $f^{-1}(\partial B)$, so the latter two manifolds also obtain orientations as parts of $\partial f^{-1}(A)$.  One may easily check that, as oriented manifolds, \begin{equation}\label{prebdry} \partial f^{-1}(A)=(f|_{\partial A})^{-1}(B)\sqcup (-1)^{\dim f^{-1}(A)-1}f^{-1}(\partial B) \end{equation} where the manifolds on the right hand side are oriented via (\ref{preim}).   In particular, in the case that $\dim f^{-1}(A)=1$, the boundary orientations on both $(f|_{\partial A})^{-1}(B)$ and $f^{-1}(\partial B)$ are the same as their preimage orientations.  

\subsection{Orientations in Morse theory}\label{appmorse}

For critical points $p,q$ of a Morse function $f\co X\to \R$, let us recall how to orient the manifolds $\mathbf{W}^v(p,q)$ and $\mathbf{M}^v(p,q)$ (using the same notation as in Section \ref{morseintro}, and continuing to assume that $v$ is a gradient-like vector field for $f$ and that the flow of $v$ is Morse-Smale so that these are manifolds of the expected dimensions); one can check that our conventions below are equivalent to those in \cite[Section 6]{Qin} and \cite[Chapter 6]{Pa}.  One chooses first of all an orientation $\mathfrak{o}_{v,p}$ for each descending manifold $\mathbf{D}^v(p)$. These orientations yield orientations of the normal bundles $N\mathbf{A}^v(p)$ to the ascending disks $\mathbf{A}^v(p)$, as the fiber $N_p\mathbf{A}^v(p)$ is naturally identified with  $T_p\mathbf{D}^v(p)$, so that $\mathfrak{o}_{v,p}$ orients the fiber $N_p\mathbf{A}^v(p)$ and this orientation extends uniquely to an orientation of the bundle $N\mathbf{A}^v(p)$. For each $x\in\mathbf{W}^v(p,q)$, due to the transversality of $\mathbf{D}^v(p)$ and $\mathbf{A}^v(q)$, we obtain an orientation of the intersection $\mathbf{W}^v(p,q)=\mathbf{D}^v(p)\cap \mathbf{A}^v(q)$ by applying (\ref{preim}) to the inclusion $\iota\co \mathbf{D}^{v}(p)\hookrightarrow X$, as $\mathbf{W}^v(p,q)=\iota^{-1}(\mathbf{A}^v(q))$. Thus the orientation on each $T_x\mathbf{W}^v(p,q)$ is the one induced (in the sense of (\ref{ses})) by the short exact sequence \[ 0\to T_x\mathbf{W}^v(p,q)\to T_x\mathbf{D}^v(p)\to N_x\mathbf{A}^v(q)\to 0. \]  

Having oriented $\mathbf{W}^v(p,q)$, the unparametrized trajectory spaces $\mathbf{M}^v(p,q)=\frac{\mathbf{W}^v(p,q)}{\R}$ for $p\neq q$ are oriented as follows.  If $x\in \mathbf{W}^v(p,q)$ has image under the quotient projection equal to $[x]\in\mathbf{M}^v(p,q)$, the derivative of the quotient projection gives a surjection $T_x\mathbf{W}^v(p,q)\to T_{[x]}\mathbf{M}^v(p,q)$ with kernel spanned by $-v(x)$.  If $(-v(x),e_2,\ldots,e_{d})$ is a positively-oriented basis for $T_x\mathbf{W}^v(p,q)$ then we declare the image under the above surjection of $(e_2,\ldots,e_{d})$ to be a positively-oriented basis for $T_{[x]}\mathbf{M}^v(p,q)$.  (In other words, the orientation on $T_{[x]}\mathbf{M}^{v}(p,q)$ is the one induced by the short exact sequence \[ 0 \to \R\to T_x\mathbf{W}^v(p,q)\to T_{[x]}\mathbf{M}^v(p,q) \] where the first map sends $t$ to $(-tv(x))$ and the second is the derivative of the quotient projection.)

Thus, in the special case that $\mathrm{ind}_{f}(p)=\mathrm{ind}_f(q)+1$, $\mathbf{M}^v(p,q)$ is an oriented $0$-manifold, and for each $x\in\mathbf{W}^v(p,q)$ the sign associated to the equivalence class $[x]\in\mathbf{M}^{v}(p,q)$ is $+1$ if the flow direction $-v$ is positive for the orientation on $T_x\mathbf{W}^v(p,q)$ and $-1$ otherwise.

In general, the descending manifolds $\mathbf{D}^v(p)$ admit compactifications as described \emph{e.g.} in \cite{Qin}, whose codimension-one faces are identified with the various $\mathbf{M}^{v}(p,q)\times\mathbf{D}^v(q)$ for $\mathrm{ind}_f(q)<\mathrm{ind}_f(p)$; the inclusion of $\mathbf{D}^{v}(p)$ into $X$ extends to each such face as the inclusion of $\mathbf{D}^v(q)$ (see \cite[Theorem 3.4]{Qin}).  Similarly, the ascending manifolds $\mathbf{A}^v(q)$ have compactifications $\overline{\mathbf{A}^v(q)}$ with codimension-one strata $\mathbf{A}^v(p)\times \mathbf{M}^v(p,q)$.  

For both $\mathbf{D}^v(p)$ and $\mathbf{A}^v(q)$, let us restrict attention just to those strata corresponding to the case that $\mathrm{ind}_f(p)=\mathrm{ind}_f(q)+1$, as these are the only strata whose images under the extension of the inclusion continue to have codimension one.  Write $\hat{\mathbf{D}}^v(p)$ and $\hat{\mathbf{A}}^v(p)$ for the unions of $\mathbf{D}^v(p)$ (respectively, $\mathbf{A}^v(p)$) with these codimension-one strata; then $\hat{\mathbf{D}}^v(p)$ and $\hat{\mathbf{A}}^v(p)$ are smooth manifolds with boundary.  These codimension-one strata then decompose further as disjoint unions of the various $\{[x]\}\times \mathbf{D}^v(q)$ or $\mathbf{A}^v(p)\times\{[x]\}$ as $[x]$ varies through oriented zero-manifold $\mathbf{M}^{v}(p,q)$.  Given orientations of all of the descending manifolds of $v$, each $\{[x]\}\times\mathbf{D}^v(q)$ then obtains in principle two orientations, one directly from the orientation of $\mathbf{D}^v(q)$ and the other from its status as a boundary component of the partial compactification of $\mathbf{D}^{v}(p)$.  Similarly each $\mathbf{A}^v(p)\times \{[x]\}$ is \emph{co}oriented in two ways, one of which uses the boundary coorientation prescription in (\ref{bdryor}).\footnote{To be slightly more precise, the inclusion $i\co \mathbf{A}^v(q)\to X$ extends smoothly to a map $\hat{i}\co \hat{\mathbf{A}}^v(q)\to M$ which is an immersion, though it will no longer be injective if some $\mathbf{M}^v(p,q)$ has more than one element.  In this case a coorientation of $\hat{\mathbf{A}}^v(q)$ should be understood as an orientation of the bundle $\frac{\hat{i}^*TX}{T\hat{\mathbf{A}}^v(q)}$, and similarly for a coorientation of a stratum $\{[x]\}\times\mathbf{A}^v(p)$.}  For use in Section \ref{pssor}, we record how these orientations and coorientations  are related:

\begin{prop}[\cite{Qin}] \label{boundaryor} 
Given $[x]\in\mathbf{M}^v(p,q)$ with $\mathrm{ind}_f(p)=\mathrm{ind}_f(q)+1$, let $\ep([x])\in\{-1,1\}$ denote	
the orientation sign of $[x]$.  Then: \begin{itemize} 
	\item The orientation of $\{[x]\}\times \mathbf{D}^v(q)$ as a boundary component of $\hat{\mathbf{D}}^{v}(p)$ is $\ep([x])$ times the orientation of $\mathbf{D}^v(q)$. 
	\item  The coorientation of $\mathbf{A}^v(p)\times\{[x]\}$ as a boundary component of $\hat{\mathbf{A}}^v(q)$ is $-\ep([x])$ times the coorientation of $\mathbf{A}^v(p)$. 
	\end{itemize}
\end{prop}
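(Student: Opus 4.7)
The plan is to verify both bullets locally at each boundary point using the standard Morse gluing picture. For the first bullet, near a point $y\in\mathbf{D}^v(q)$ I would work in a gluing chart $G\colon[0,\delta)\times U\to\hat{\mathbf{D}}^v(p)$, where $U\subset\mathbf{D}^v(q)$ is a neighborhood of $y$, $G(\{0\}\times U)=\{[x]\}\times U$ is the boundary stratum, and $G((0,\delta)\times U)\subset\mathbf{D}^v(p)$ parametrizes a collar of the interior. The standard gluing theorem (see, e.g., \cite{Qin}) shows that the composition $\tilde G\colon[0,\delta)\times U\to X$ of $G$ with the natural map $\hat{\mathbf{D}}^v(p)\to X$ has, at each boundary point $(0,y)$, derivative $dG|_{TU}$ equal to the inclusion $T_y\mathbf{D}^v(q)\hookrightarrow T_y X$, while $d\tilde G(\partial_s)|_{s=0}\in T_y X$ is a vector whose parallel transport along the negative gradient flow from $y$ back to $x$ is a positive multiple of $v(x)\in T_x\mathbf{W}^v(p,q)\subset T_x\mathbf{D}^v(p)$.

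Granting this, I would unfold $\mathfrak{o}_{v,p}$ in the chart as follows. By the definition of $\epsilon([x])$ via the quotient sequence $0\to\R(-v)\to T_x\mathbf{W}^v(p,q)\to T_{[x]}\mathbf{M}^v(p,q)\to 0$, the positive direction of $T_x\mathbf{W}^v(p,q)$ is $\epsilon([x])(-v(x))$, which corresponds to $-\epsilon([x])\partial_s$ in the chart. Combining this with the defining sequence $0\to T_x\mathbf{W}^v(p,q)\to T_x\mathbf{D}^v(p)\to N_x\mathbf{A}^v(q)\to 0$ and the natural identification $N_x\mathbf{A}^v(q)\cong T_y\mathbf{D}^v(q)$, a positive basis of $T_{(0,y)}\hat{\mathbf{D}}^v(p)$ takes the form $(-\epsilon([x])\partial_s,\mathfrak{o}_{v,q})$; equivalently, $\mathfrak{o}_{v,p}=(-\partial_s)\wedge(\epsilon([x])\mathfrak{o}_{v,q})$. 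Since the outer normal at $s=0$ is $-\partial_s$, the outer-normal-first convention of \ref{bdryor} yields boundary orientation $\epsilon([x])\mathfrak{o}_{v,q}$, proving the first bullet.

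For the second bullet, an analogous chart $G'\colon[0,\delta)\times U'\to\hat{\mathbf{A}}^v(q)$ with $U'\subset\mathbf{A}^v(p)$ exhibits a collar of the stratum $\mathbf{A}^v(p)\times\{[x]\}$. At $y\in U'$, the image of $T_y\hat{\mathbf{A}}^v(q)$ in $T_y X$ decomposes as $T_y\mathbf{A}^v(p)\oplus\R\tau$, where $\tau$ is the image of $\partial_s$; parallel transport along the negative gradient flow from $y$ to $p$ identifies $\tau$ with a positive multiple of the initial tangent direction of the trajectory $x$ at $p$, which further transports to a positive multiple of $-v(x)\in T_x\mathbf{W}^v(p,q)$. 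The coorientation sign analysis now proceeds via the analogous short exact sequence relating the coorientation of $\mathbf{A}^v(q)$ (given by $\mathfrak{o}_{v,q}$ through $N_x\mathbf{A}^v(q)\cong T_q\mathbf{D}^v(q)$) to the coorientation of $\mathbf{A}^v(p)$ (given by $\mathfrak{o}_{v,p}$ through $N_p\mathbf{A}^v(p)\cong T_p\mathbf{D}^v(p)$) and the $\tau$-direction. Running the same sign chase as in the first bullet, but applied now within the normal bundles and with the outer-normal-first convention for coorientations from \ref{bdryor}, produces an additional sign flip: the $\tau$-factor, which played the role of a ``last direction'' in the tangent decomposition for $\mathbf{D}^v(p)$, now sits as a ``first direction'' when one compares the coorientation of $\mathbf{A}^v(q)$ with that of $\mathbf{A}^v(p)$, and moving it past the $(k-1)$-dimensional factor $N_x\mathbf{A}^v(q)$ contributes the extra $(-1)$ that turns $\epsilon([x])$ into $-\epsilon([x])$.

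The hard part is keeping the sign conventions consistent across short exact sequences, parallel transport identifications, and the distinction between orientations and coorientations. A clean way to carry this out is to work throughout at the level of determinant lines and the isomorphisms induced by the short exact sequences in \ref{ses}, so that every sign comparison reduces to checking the direction of an explicit isomorphism of one-dimensional real vector spaces. This is essentially the approach in Qin's treatment \cite{Qin}, from which the statement is drawn; the task here is simply to verify that our orientation conventions, which agree with Qin's, produce the signs stated.
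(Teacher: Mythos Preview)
Your first-bullet argument is essentially an unpacking of what the paper simply cites as \cite[Theorem~3.6(2)]{Qin}: you invoke Qin for the gluing-chart property and then trace the sign, which is fine.

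For the second bullet the paper takes a genuinely different route. Since Qin's orientation results are stated for $\mathbf{D}^v$ and $\mathbf{W}^v$ rather than for coorientations of $\mathbf{A}^v$, the paper reduces to an orientation statement by introducing an auxiliary critical point $a$ of top index $n$, so that $\mathbf{D}^v(a)$ is open in $X$ and $\mathbf{W}^v(a,p)$ is open and dense in $\mathbf{A}^v(p)$. It then applies \cite[Theorem~3.6(3)]{Qin} to the boundary orientation of $\mathbf{W}^v(a,p)\times\{[x]\}\subset\partial\overline{\mathbf{W}^v(a,q)}$, which carries a sign $(-1)^{n-k+1}\epsilon([x])$ with $k=\mathrm{ind}_f(p)$, and converts this to the coorientation statement via the defining sequence $0\to T\mathbf{W}^v(a,\cdot)\to T\mathbf{D}^v(a)\to N\mathbf{A}^v(\cdot)\to 0$. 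Moving the outer normal past the $(n-k)$ tangent directions of $\mathbf{W}^v(a,p)$ contributes a further $(-1)^{n-k}$, and the two factors cancel to give exactly $-\epsilon([x])$, independent of $k$.

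Your direct argument for the second bullet has a gap. You assert that ``moving $\tau$ past the $(k-1)$-dimensional factor $N_x\mathbf{A}^v(q)$ contributes the extra $(-1)$,'' but moving one vector past a $(k-1)$-dimensional block gives $(-1)^{k-1}$, not $-1$. The reason the answer is $-\epsilon([x])$ regardless of $k$ is precisely the cancellation between Qin's $(-1)^{n-k+1}$ and the reordering $(-1)^{n-k}$; without invoking the former (or reproving it), your heuristic does not determine the sign. If you want a direct gluing-chart proof you must also track that the outer normal for $\hat{\mathbf{A}}^v(q)$ points along the trajectory in the \emph{opposite} sense from the outer normal for $\hat{\mathbf{D}}^v(p)$, and then carry the full coorientation sequence through; done carefully this reproduces the paper's computation but is longer than the reduction-to-$\mathbf{W}^v$ trick.
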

\begin{proof}
	The statement about $\mathbf{D}^v(p)$ is explicitly contained in \cite[Theorem 3.6(2)]{Qin}. There is not an assertion in \cite{Qin} that immediately corresponds to our statement about $\mathbf{A}^v(q)$, but we can infer the desired result from \cite[Theorem 3.6(3)]{Qin} which concerns the boundaries of the compactifications of the spaces $\mathbf{W}^{v}(y,z)$.  Consider $a\in \mathrm{Crit}(f)$ having $\mathrm{ind}_f(a)=n=\dim X$, so that $\mathbf{D}^v(a)$ is an open subset of $X$; given our Morse-Smale assumptions the union of the $\mathbf{W}^v(a,p)$ over such $a$ is open and dense in $\mathbf{A}^v(p)$, and similarly for the union of the $\mathbf{W}^v(a,q)$.  So it suffices to show that, for any such $a$, if $y\in \mathbf{W}^v(a,p)\subset \mathbf{A}^v(p)$, then the boundary coorientation of $\mathbf{A}^v(p)\times\{[x]\}\subset \partial \hat{\mathbf{A}}^v(q)$ at $(y,[x])$ is $-\ep([x])$ times the coorientation of $\mathbf{A}^v(p)$.
	  
	The open subsets $\mathbf{W}^v(a,p)\subset \mathbf{A}^v(p)$ and $\mathbf{W}^v(a,q)\subset\mathbf{A}^v(q)$ are oriented using the originally-chosen orientation of $\mathbf{D}^v(a)$, in such a way that, if $\{w_1,\ldots,w_k\}$ is a lift to $T_y \mathbf{D}^v(a)=T_yX$ of a positively-oriented basis for $N_y\mathbf{A}^v(p)$, and if $\{v_1,\ldots,v_{n-k}\}$ is a positively-oriented basis for 	$\mathbf{W}^v(a,p)$, then $\{v_1,\ldots,v_{n-k},w_1,\ldots,w_k\}$ is a positively-oriented basis for $T_y\mathbf{D}^v(a)$, and similarly for $\mathbf{W}^v(a,q)$.  Now \cite[Theorem 3.6(3)]{Qin} implies that the boundary orientation of $\mathbf{W}^v(a,p)\times\{x\}\subset\partial \overline{\mathbf{W}^v(a,q)}$ is $(-1)^{n-k+1}\ep([x])$ times the orientation of $\mathbf{W}^f(a,p)$ that was described in the previous sentence.  So if $y\in\mathbf{W}^v(p)$ and  $\mathsf{n}_y\in T_{(y,[x])} \overline{\mathbf{W}^v(a,q)}$ points in the outer normal direction from the open subset $\mathbf{W}^v(a,p)\times \{[x]\}$ of $\mathbf{A}^v(p)\times\{[x]\}$, and if $\{w'_1,\ldots,w'_{k-1}\}$  is a lift to $T_y\mathbf{D}^v(a)$ of a positively oriented basis for $N_{(y,[x])}\hat{\mathbf{A}}^f(q)$, then $\{\mathsf{n}_y,v_1,\ldots,v_{n-k},w'_1,\ldots,w'_{k-1}\}$ has sign $(-1)^{n-k+1}\ep([x])$ with respect to the orientation on $T_y\mathbf{D}^v(a)$.  (Here we trivially identify tangent vectors to $X$ with tangent vectors to $X\times\{[x]\}$.)  Therefore $\{v_1,\ldots,v_{n-k},\mathsf{n}_y,w'_1,\ldots,w'_{k-1}\}$ has sign $-\ep([x])$ with respect to the orientation on $T_y\mathbf{D}^v(a)$.  Hence the original coorientation of $\mathbf{A}^v(p)$ 
	is given by $(-\ep([x]))$ times the coorientation of $\mathbf{A}^v(p)\times\{[x]\}$ that is induced by the outer normal first convention.
\end{proof}

The spaces $\mathbf{N}^{\mathbb{V}}(p_-,p_+)$ underlying the continuation maps $\Phi_{f_-f_+}\co \mathbf{CM}_*(f_-,v_-;\Z)\to\mathbf{CM}_*(f_+,v_+;\Z)$ are oriented in a similar way to the $\mathbf{W}^{v}(p,q)$.  We assume as in Section \ref{morseintro} that $\mathbb{V}=\{v_s\}_{s\in\R}$ is a smooth family of vector fields such that $v_s=v_{\pm}$ for $\pm s\geq 1$ where $v_+$ and $v_-$ are gradient-like vector fields for the Morse functions $f_+$ and $f_-$, respectively.  We also choose orientations of the descending manifolds $\mathbf{D}^{v_-}(p_-),\mathbf{D}^{v_+}(p_+)$ for $p_-\in\mathrm{Crit}(f_-)$ and $p_+\in\mathrm{Crit}(f_+)$.  Thus with $\Psi^{\mathbb{V}}$ the diffeomorphism of $X$ that sends the value at time $-1$ of a general integral curve of the time-dependent vector field $\mathbb{V}$ to the curve's value at time $1$, 
the continuation trajectory space $\mathbf{N}^{\mathbb{V}}(p_-,p_+)$ is identified with the intersection $\mathbf{D}^{v_-}(p_-)\cap (\Psi^{\mathbb{V}})^{-1}(\mathbf{A}^{v_+}(p_+))$, which we assume to be transverse through a generic choice of $\mathbb{V}$.  As before, the orientation of $\mathbf{D}^{v_+}(p_+)$ is equivalent to a coorientation of $\mathbf{A}^{v_+}(p_+)$.  Accordingly, we orient $\mathbf{N}^{\mathbb{V}}(p_-,p_+)$ by regarding it as the preimage under the map $\Psi^{\mathbb{V}}|_{\mathbf{D}^{v_-}(p_-)}\co \mathbf{D}^{v_-}(p_-)\to X$ of the cooriented submanifold $\mathbf{A}^{v_+}(p_+)$. Thus  the orientation on $\mathbf{N}^{\mathbb{V}}(p_-,p_+)$  is induced by the short exact sequence 
\[ 0  \to T_x\mathbf{N}^{\mathbf{V}}(p_-,p_+)\to T_x\mathbf{D}^{v_-}(p_-)\to N_{\Psi^{\mathbb{V}}(x)}\mathbf{A}^{v_+}(p_+)\to 0 \] where the the second nontrivial map is the composition of the derivative of $\Psi^{\mathbb{V}}$ with the quotient projection.


Our earlier trajectory spaces $\mathbf{W}^{v}(p,q)$ can be regarded as a special case of the continuation trajectory spaces $\mathbf{N}^{\mathbb{V}}(p,q)$ in which the Morse functions $f_-,f_+$ are both equal to $f$ and $\mathbb{V}=\{v_s\}_{s\in \R}$ has each $v_s$ equal to the same vector field $v$.  In this case, the diffeomorphism $\Psi^{\mathbb{V}}$ is the time-$2$ map of the vector field $-v$; since the flow of $-v$ preserves the submanifolds $\mathbf{A}^{v}(q)$ (and hence also their coorientations) the orientation prescription for $\mathbf{N}^{\mathbb{V}}(p,q)$ from the previous paragraph evidently agrees with our earlier orientation of $\mathbf{W}^v(p,q)$.

We now consider the effect of ``turning $X$ upside down'' by replacing Morse functions and vector fields by their negatives.  For a time-dependent vector field $\mathbb{V}=\{v_s\}_{s\in \R}$ as above let $\bar{\mathbb{V}}=\{-v_{-s}\}_{s\in \R}$; then for $p_-\in\mathrm{Crit}(f_-)$ and $p_+\in\mathrm{Crit}(f_+)$ we obtain  spaces $\mathbf{N}^{\bar{\mathbb{V}}}(p_+,p_-)$ which may be used to define continuation maps $\Phi_{-f_+,-f_-}$; regarding these spaces as consisting of solutions $\gamma\co \R\to X$ to the appropriate version of (\ref{conteq}), $\mathbf{N}^{\bar{\mathbb{V}}}(p_+,p_-)$ is in bijection with $\mathbf{N}^{\mathbb{V}}(p_-,p_+)$ via the map that sends $\gamma\co\R\to X$ to its reversal $\bar{\gamma}(s)=\gamma(-s)$.  Our prescriptions above give orientations of $\mathbf{N}^{\bar{\mathbb{V}}}(p_+,p_-)$ dependent upon choices of orientations $\mathfrak{o}_{-v_-,p_-},\mathfrak{o}_{-v_+,p_+}$ of  $\mathbf{D}^{-v_{\pm}}(p_{\pm})$, (\emph{i.e.}, of the ascending manifolds $\mathbf{A}^{v_{\pm}}(p_{\pm})$). Let us compare the orientations of $\mathbf{N}^{\mathbb{V}}(p_-,p_+)$ and $\mathbf{N}^{\bar{\mathbb{V}}}(p_+,p_-)$, where these spaces are implicitly identified using the bijection $\gamma\mapsto \bar{\gamma}$.  Note that if we use the assignment $\gamma\mapsto\gamma(-1)$ to identify $\mathbf{N}^{\mathbb{V}}(p_-,p_+)$ with $\mathbf{D}^{v_-}(p_-)\cap (\Psi^{\mathbb{V}})^{-1}(\mathbf{A}^{v_+}(p_+))$, and likewise for $\mathbf{N}^{\bar{\mathbb{V}}}(p_+,p_-)$, then the bijection $\gamma\mapsto\bar{\gamma}$ from  $\mathbf{N}^{\mathbb{V}}(p_-,p_+)$ to $\mathbf{N}^{\bar{\mathbb{V}}}(p_+,p_-)$ corresponds to the diffeomorphism 
 \[ \Psi^{\mathbb{V}}\co \mathbf{D}^{v_-}(p_-)\cap (\Psi^{\mathbb{V}})^{-1}(\mathbf{A}^{v_+}(p_+))\to \mathbf{A}^{v_+}(p_+)\cap\Psi^{\mathbb{V}}(\mathbb{D}^{v_-}(p_-)).\]

\begin{prop}\label{wor}
	Assume that the $n$-dimensional smooth manifold $X$ is oriented and that, for each $p\in\mathrm{Crit}(f_{\pm})$, the orientations $\mathfrak{o}_{v_{\pm},p}$ on each $\mathbf{D}^{v_{\pm}}(p)$ and $\mathfrak{o}_{-v_{\pm},p}$ on $\mathbf{A}^{v_{\pm}}(p)$ are chosen in such a way that, as oriented vector spaces, $T_{p}\mathbf{A}^{v_{\pm}}(p)\oplus T_p\mathbf{D}^{v_{\pm}}(p)=T_pX$ for each critical point $p$ of $f_+$ or $f_-$.  Then, for $p_-\in\mathrm{Crit}(f_-)$ and $p_+\in\mathrm{Crit}(f_+)$, the resulting orientations on $\mathbf{N}^{\mathbb{V}}(p_-,p_+)$ and $\mathbf{N}^{\bar{\mathbb{V}}}(p_+,p_-)$ are related by the sign $(-1)^{(n-\mathrm{ind}_{f_-}(p_-))(\mathrm{ind}_{f_-}(p_-)-\mathrm{ind}_{f_+}(p_+))}$.
\end{prop}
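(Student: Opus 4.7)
The statement is a point-wise linear-algebra computation once we keep careful track of which subspaces of $T_yX$ (where $y=\Psi^{\mathbb{V}}(x)$) carry which orientations or coorientations. I would argue as follows.

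First, pick $x\in\mathbf{N}^{\mathbb{V}}(p_-,p_+)$, set $y=\Psi^{\mathbb{V}}(x)$, and use $\Psi^{\mathbb{V}}_*\co T_xX\to T_yX$ (orientation-preserving, since $\Psi^{\mathbb{V}}$ is a diffeomorphism isotopic to the identity) to transport everything to $T_yX$. Write $\tilde D=\Psi^{\mathbb{V}}_*(T_x\mathbf{D}^{v_-}(p_-))$ and $\tilde N_{\mathbf{D}}=\Psi^{\mathbb{V}}_*(N_x\mathbf{D}^{v_-}(p_-))$. Then $U:=\tilde D\cap T_y\mathbf{A}^{v_+}(p_+)$ is the (transverse) intersection of dimension $k_--k_+$ on which both orientations live, and $\tilde D+T_y\mathbf{A}^{v_+}(p_+)=T_yX$ for dimension reasons. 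Unpacking the two preimage prescriptions (of \ref{preim}) via $\Psi^{\mathbb{V}}$ and $(\Psi^{\mathbb{V}})^{-1}$ respectively, the orientation $\omega_1$ of $\mathbf{N}^{\mathbb{V}}(p_-,p_+)$ at $x$ is induced by the SES $0\to U\to\tilde D\to N_y\mathbf{A}^{v_+}(p_+)\to 0$, and the orientation $\omega_2$ of $\mathbf{N}^{\bar{\mathbb{V}}}(p_+,p_-)$ at $y$ (identified via $\gamma\mapsto\bar\gamma$) is induced by the SES $0\to U\to T_y\mathbf{A}^{v_+}(p_+)\to\tilde N_{\mathbf{D}}\to 0$.

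Next, I record the two crucial ``ambient'' identities. At $y$, the hypothesis and flow-invariance of the constructions give $T_y\mathbf{A}^{v_+}(p_+)\oplus N_y\mathbf{A}^{v_+}(p_+)=T_yX$ as \emph{oriented} spaces. At $x$, the hypothesis reads $T_{p_-}\mathbf{A}^{v_-}(p_-)\oplus T_{p_-}\mathbf{D}^{v_-}(p_-)=T_{p_-}X$; using the identification $N_{p_-}\mathbf{D}^{v_-}(p_-)\cong T_{p_-}\mathbf{A}^{v_-}(p_-)$ that defines the coorientation and swapping the $k_-$- and $(n-k_-)$-dimensional factors, one gets $T_{p_-}\mathbf{D}^{v_-}(p_-)\oplus N_{p_-}\mathbf{D}^{v_-}(p_-)=(-1)^{k_-(n-k_-)}T_{p_-}X$, which propagates along $\mathbf{D}^{v_-}(p_-)$ to $\tilde D\oplus\tilde N_{\mathbf{D}}=(-1)^{k_-(n-k_-)}T_yX$.

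Now I do the comparison. Fix a basis $\{e_i\}$ of $U$ that is positively oriented for $\omega_1$, and extend to $\{e_i,c_j\}$, positive for $\tilde D$, with $\pi_N c_j=:g_j$ positive for $N_y\mathbf{A}^{v_+}(p_+)$ (possible by definition of $\omega_1$). Pick any $\{a'_j\}$ in $T_y\mathbf{A}^{v_+}(p_+)$ complementing $U$, let $\epsilon$ be the sign of $\{e_i,a'_j\}$ in $T_y\mathbf{A}^{v_+}(p_+)$, and $\delta$ be the sign of $\{\pi_{\tilde N_{\mathbf{D}}}a'_j\}$ in $\tilde N_{\mathbf{D}}$; by the definition of $\omega_2$, the sign of $\{e_i\}$ for $\omega_2$ is $\epsilon\delta$. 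I then compute $\det\{e_i,c_j,a'_j\}$ relative to the orientation of $T_yX$ in two ways. First, decomposing $c_j=g_j+c_j^T$ with $c_j^T\in T_y\mathbf{A}^{v_+}(p_+)$, column operations reduce $\det\{e_i,c_j,a'_j\}$ to $\det\{e_i,g_j,a'_j\}=(-1)^{k_+(n-k_-)}\det\{e_i,a'_j,g_j\}=(-1)^{k_+(n-k_-)}\epsilon$ by the oriented splitting $T_y\mathbf{A}^{v_+}\oplus N_y\mathbf{A}^{v_+}=T_yX$. Second, decomposing $a'_j=a^N_j+a^D_j$ with $a^N_j\in\tilde N_{\mathbf{D}}$, column operations yield $\det\{e_i,c_j,a'_j\}=\det\{e_i,c_j,a^N_j\}=\delta\cdot(-1)^{k_-(n-k_-)}$ via the relation $\tilde D\oplus\tilde N_{\mathbf{D}}=(-1)^{k_-(n-k_-)}T_yX$. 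Equating the two gives $\epsilon\delta=(-1)^{(n-k_-)(k_--k_+)}$, which is the claimed sign.

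The main obstacle is purely notational rather than conceptual: keeping straight which of the two candidate coorientations of $\mathbf{D}^{v_-}(p_-)$ (the one from the hypothesis $T\mathbf{A}\oplus T\mathbf{D}=TX$ versus the standard $T\mathbf{D}\oplus N\mathbf{D}=TX$) enters the definition of $\omega_2$, and absorbing the swap sign $(-1)^{k_-(n-k_-)}$ at the right moment. Once that is pinned down, the double computation of $\det\{e_i,c_j,a'_j\}$ is straightforward column-reduction and produces the exponent $(n-k_-)(k_--k_+)$ directly.
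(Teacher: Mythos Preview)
Your proof is correct and follows essentially the same approach as the paper's: both transport everything to $T_yX$ via $\Psi^{\mathbb{V}}_*$, identify the two competing orientations on the common subspace $U$ via the short exact sequences coming from the two preimage constructions, and then compare by evaluating a single ordered basis of $T_yX$ against the two oriented splittings $T\mathbf{A}^{v_+}\oplus N\mathbf{A}^{v_+}$ and $N\mathbf{D}^{v_-}\oplus T\mathbf{D}^{v_-}$. The only difference is organizational: the paper fixes one explicit basis $(u_i,v_j,w_\ell)$ and tracks the sign through a single permutation, whereas you compute $\det\{e_i,c_j,a'_j\}$ two ways and equate; your $\{e_i,c_j,a'_j\}$ are exactly the paper's $(u_i,w_\ell,v_j)$ up to relabeling, and the swap signs you absorb are the same ones that appear in the paper's final comparison.
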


\begin{proof}
	As explained before the proposition, for $p$ a critical point either of $f_-$ or of $f_+$,  the choice of $\mathfrak{o}_{v_{\pm},p}$ induces orientations both of the tangent bundle $T\mathbf{D}^{v_{\pm}}(p)$ and of the normal bundle $N\mathbf{A}^{v_{\pm}}(p)$. Likewise, then, $\mathfrak{o}_{-v_{\pm},p}$ induces orientations both of the tangent bundle $T\mathbf{A}^{v_{\pm}}(p)$ and of the normal bundle $N\mathbf{D}^{v_{\pm}}(p)$. From now on we identify normal bundles to submanifolds $S$ as subbundles (as opposed to quotient bundles) of $TX|_S$ using a Riemannian metric.  Our prescription on the relation between $\mathfrak{o}_{v_{\pm},p}$ and $\mathfrak{o}_{-v_{\pm},p}$ yields that $N_{p}\mathbf{D}^{v_{\pm}}(p)\oplus T_{p}\mathbf{D}^{v_{\pm}}(p)=T_{p}X$ as oriented vector spaces, and hence (since $\mathbf{D}^{v_{\pm}}(p)$ is connected) that $N_x\mathbf{D}^{v_{\pm}}(p)\oplus T_x\mathbf{D}^{v_{\pm}}(p)=T_xX$ as oriented vector spaces for all $x\in \mathbf{D}^{v_{\pm}}(p)$ and all critical points $p$ of $f_{\pm}$.  Since $N_{p}\mathbf{A}^{v_{\pm}}(p)=T_{p}\mathbf{D}^{v_{\pm}}(p)$ and $T_{p}\mathbf{A}^{v_{\pm}}(p)=N_{p}\mathbf{D}^{v_{\pm}}(p)$ and since $\mathbf{A}^{v_{\pm}}(p)$ is connected, it likewise follows that $T_x\mathbf{A}^{v_{\pm}}(p)\oplus N_x\mathbf{A}^{v_{\pm}}(p)=T_xX$ as oriented vector spaces for all $x\in \mathbf{A}^{v_{\pm}}(p)$ and all critical points $p$ of $f_{\pm}$. 
	
	
	Let us abbreviate $\Psi=\Psi^{\mathbb{V}}$, and note that then $\Psi^{\bar{\mathbb{V}}}=\Psi^{-1}$.  We then have identifications $\mathbf{N}^{\mathbb{V}}(p_-,p_+)\cong (\Psi|_{\mathbf{D}^{v_-}(p_-)})^{-1}( \mathbf{A}^{v_+}(p_+))$ and $\mathbf{N}^{\bar{\mathbf{V}}}(p_+,p_-)\cong (\Psi^{-1}|_{\mathbf{A}^{v_+}(p_+)})^{-1}( \mathbf{D}^{v_-}(p_-))$ and our task is to see how the preimage orientations on these spaces are related under the diffeomorphism $\Psi$ between them. We write $\Psi_*$ for the derivative of $\Psi$.
	
	Given $x\in \mathbf{N}^{\mathbb{V}}(p_-,p_+)$, consider an ordered basis for $T_xX$ of the form \[ \mathcal{B}_x=\left(u_1,\ldots,u_k,v_1,\ldots,v_{\ell},w_1,\ldots,w_m\right)\] where $(u_1,\ldots,u_k)$ is a positively-oriented basis for $T_x\mathbf{N}^{\mathbb{V}}(p_-,p_+)$, $(u_1,\ldots,u_k,w_1,\ldots,w_{m})$ is a positively-oriented basis for $T_x\mathbf{D}^{v_-}(p_-)$, and $(\Psi_*u_1,\ldots,\Psi_*u_k,\Psi_*v_1,\ldots,\Psi_*v_{\ell})$ is a positively-oriented basis for $T_{\Psi(x)}\mathbf{A}^{v_+}(p_+)$.  Thus $k+m=|p_-|$,  $k+l=n-|p_+|$, and $k+\ell+m=n$, so $\ell=n-|p_-|$ and $k=|p_-|-|p_+|$, where we abbreviate $\mathrm{ind}_{f_{\pm}}(p_{\pm})$ as $|p_{\pm}|$. The definition of the orientation on $T_x\mathbf{N}^{\mathbb{V}}(p_-,p_+)$ implies that $(w_1,\ldots,w_{m})$ maps by $\Psi_*$ to a positively-oriented basis for $N_{\Psi(x)}\mathbf{A}^{v_+}(p_+)$.   So since $T_{\Psi(x)}\mathbf{A}^{v_+}(p_+)\oplus N_{\Psi(x)}\mathbf{A}^{v_+}(p_+)=T_{\Psi(x)}X$ as oriented vector spaces, it follows that $\Psi_*\mathcal{B}_x$ is a positively-oriented basis for $T_{\Psi(x)}X$.
	
	Let $\epsilon$ denote the sign of $(\Psi_*u_1,\ldots,\Psi_*u_k)$ relative to the orientation on $\mathbf{N}^{\bar{\mathbf{V}}}(p_+,p_-)$; the conclusion of the proposition is that $\epsilon=(-1)^{k\ell}$.  Since $(\Psi_*u_1,\ldots,\Psi_*u_k,\Psi_*v_1,\ldots,\Psi_*v_{\ell})$ is a positively-oriented basis for $T_{\Psi(x)}\mathbf{A}^{v_+}(p_+)$, it follows that $\epsilon$ is also the sign of $(v_1,\ldots,v_{\ell})$ relative to the orientation of $N_x\mathbf{D}^{v_-}(p_-)$. Hence since $N_x\mathbf{D}^{v_-}(p_-)\oplus T_x\mathbf{D}^{v_-}(p_-)=T_xX$ as oriented vector spaces and since $(u_1,\ldots,u_k,w_1,\ldots,w_{m})$ is a positively-oriented basis for   $T_x\mathbf{D}^{v_-}(p_-)$, the ordered basis $(v_1,\ldots,v_{\ell},u_1,\ldots,u_k,w_1,\ldots,w_{m})$ has sign $\epsilon$ with respect to the orientation on $T_xX$. Comparing this to our original ordered basis $\mathcal{B}_x$, which we previously showed to be positively-oriented, we see that indeed $\epsilon=(-1)^{k\ell}$.
\end{proof} 

As noted earlier, in the special case that $f_-=f_+=f$ and $v_s=v$ independently of $s$, for $p,q\in\mathrm{Crit}(f)$ the spaces $\mathbf{N}^{\mathbb{V}}(p,q)$ and $\mathbf{N}^{\bar{\mathbb{V}}}(q,p)$ coincide as oriented manifolds with the respective spaces $\mathbf{W}^{v}(p,q),\mathbf{W}^{-v}(q,p)$.  Taking the quotients of these spaces by $\R$-translation to obtain $\mathbf{M}^{v}(p,q),\mathbf{M}^{-v}(q,p)$, we have, consistently with \cite[Lemma VI.1.17]{Pa}:

\begin{cor}\label{mor}
	Assume that $X$ is oriented and that the orientations $\mathfrak{o}_{v,p},\mathfrak{o}_{-v,p}$ are related as in Proposition \ref{wor}.  Let $p,q$ be critical points of $f$ with $\mathrm{ind}_f(p)=n-k,\,\mathrm{ind}_{f}(q)=n-k-1$ (and hence $\mathrm{ind}_{-f}(p)=k,\,\mathrm{ind}_{-f}(q)=k+1$).  Then the signed counts of points $n_{f,v}(p,q),n_{-f,-v}(q,p)$ in $\mathbf{M}^v(p,q)$ and $\mathbf{M}^{-v}(q,p)$ are related by \[ n_{-f,-v}(q,p)=(-1)^{k+1}n_{f,v}(p,q).\]
\end{cor}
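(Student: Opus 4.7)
The plan is to derive this as a direct consequence of Proposition \ref{wor} (specialized to the autonomous case $f_{\pm}=f$, $v_s\equiv v$) together with a careful bookkeeping of signs under the passage from $\mathbf{W}^v(p,q)$ to $\mathbf{M}^v(p,q)=\mathbf{W}^v(p,q)/\R$. First I would note that when $f_-=f_+=f$ and $\mathbb{V}=\{v_s\equiv v\}_{s\in\R}$, the spaces $\mathbf{N}^{\mathbb{V}}(p,q)$ and $\mathbf{N}^{\bar{\mathbb{V}}}(q,p)$ coincide (as oriented manifolds) with $\mathbf{W}^v(p,q)$ and $\mathbf{W}^{-v}(q,p)$ respectively, as already observed in the text just after the statement of Proposition \ref{wor}. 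Plugging $\mathrm{ind}_f(p)=n-k$ and $\mathrm{ind}_f(q)=n-k-1$ into Proposition \ref{wor}, the resulting sign is $(-1)^{(n-(n-k))((n-k)-(n-k-1))}=(-1)^k$; thus the tautological identification of $\mathbf{W}^v(p,q)$ and $\mathbf{W}^{-v}(q,p)$ (both equal, as subsets of $X$, to $\mathbf{D}^v(p)\cap\mathbf{A}^v(q)$) changes orientation by $(-1)^k$.

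Next I would pass to the quotients $\mathbf{M}^v(p,q)$ and $\mathbf{M}^{-v}(q,p)$. By the orientation prescription for unparametrized trajectory spaces recalled in Section \ref{appmorse}, for $[x]\in\mathbf{M}^v(p,q)$ the sign $\epsilon_v([x])\in\{\pm 1\}$ is determined by whether $-v(x)\in T_x\mathbf{W}^v(p,q)$ spans a positively or negatively oriented one-dimensional subspace. Similarly, $[x]\in \mathbf{M}^{-v}(q,p)$ carries sign $\epsilon_{-v}([x])$ equal to the orientation sign of $-(-v)(x)=v(x)\in T_x\mathbf{W}^{-v}(q,p)$. Since $v(x)=-(-v(x))$ and since the orientation of $T_x\mathbf{W}^{-v}(q,p)$ differs from that of $T_x\mathbf{W}^v(p,q)$ by $(-1)^k$, we get $\epsilon_{-v}([x])=(-1)(-1)^k\epsilon_v([x])=(-1)^{k+1}\epsilon_v([x])$.

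Summing over the (finite) set $\mathbf{M}^v(p,q)=\mathbf{M}^{-v}(q,p)$ (they agree as unoriented sets since the bijection $\gamma\mapsto\bar\gamma$ on trajectories descends to the identity on unparametrized trajectories, which are nothing more than the set of $\R$-orbits in $\mathbf{D}^v(p)\cap\mathbf{A}^v(q)$ under the flow) yields the desired equality $n_{-f,-v}(q,p)=(-1)^{k+1}n_{f,v}(p,q)$. The main---and really only---conceptual obstacle is making sure that the two sign contributions combine correctly: the $(-1)^k$ coming from Proposition \ref{wor} (comparing $T_x\mathbf{W}^v(p,q)$ with $T_x\mathbf{W}^{-v}(q,p)$) and the additional $-1$ coming from swapping the flow direction from $-v$ to $v$ when using the quotient-orientation convention; once these are untangled the corollary is immediate. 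No further calculation with descending/ascending manifolds is required beyond what Proposition \ref{wor} already provides.
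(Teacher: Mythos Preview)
Your proof is correct and follows essentially the same approach as the paper's own proof: apply Proposition \ref{wor} in the autonomous case to get the sign $(-1)^k$ relating the orientations of the one-manifolds $\mathbf{W}^v(p,q)$ and $\mathbf{W}^{-v}(q,p)$, then observe that the quotient-orientation convention (using $-v$ versus $-(-v)=v$ as the infinitesimal generator) contributes an additional factor of $-1$, yielding $(-1)^{k+1}$ overall.
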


\begin{proof}
	Proposition \ref{wor} shows that the (set-theoretically equal) oriented $1$-manifolds $\mathbf{W}^v(p,q)$ and $\mathbf{W}^{-v}(q,p)$ have their orientations related by the sign $(-1)^{k}$.  By our definition of the orientation on the $0$-manifold $\mathbf{M}^{v}(p,q)$, for any $x\in \mathbf{W}^{v}(p,q)$ the projection of $x$ to $\mathbf{M}^{v}(p,q)$ is positively-oriented iff $-v(x)$ agrees with the orientation of $\mathbf{W}^v(p,q)$.  On the other hand the projection of $x$ to $\mathbf{M}^{-v}(q,p)$ is positively-oriented iff $-(-v)(x)=+v(x)$ agrees with the orientation of $\mathbf{W}^{-v}(p,q)$.  So the contributions of (the respective equivalence classes of) $x$ to $n_{f,v}(p,q)$ and to $n_{-f,-v}(q,p)$ are related by $-(-1)^{k}=(-1)^{k+1}$.
\end{proof}

\begin{remark}
	The conclusion of Corollary \ref{mor} is dependent on our convention in Proposition \ref{wor} about the relation of $\mathfrak{o}_{v,p}$ and $\mathfrak{o}_{-v,p}$; if we had instead assumed that $T_p\mathbf{D}^{v}(p)\oplus T_p\mathbf{A}^{v}(p)=T_pX$ as oriented vector spaces then the sign relating $n_{-f,-v}(q,p)$ and $n_{f,v}(p,q)$ would have been $(-1)^{n-k}$.
\end{remark}

\begin{remark} \label{morsepairsign}
	In terms of the bilinear maps $L^{M,\kappa}_{f,k}\co \mathbf{CM}_{n-k}(f;\kappa)\times \mathbf{CM}_{k}(-f;\kappa)\to\kappa$ defined as in the proof of Proposition \ref{floerpair} by $(\sum_ia_ip_i,\sum_ib_jp_j)\mapsto \sum_ia_ib_i$, Corollary \ref{mor} shows that one has, for $a\in\mathbf{CM}_{n-k}(f;\kappa)$ and $y\in \mathbf{CM}_{k+1}(-f;\kappa)$, \[ L_{f,k}^{M,\kappa}(a,\partial^{-f}b)=(-1)^{k+1}L_{f,k+1}^{M,\kappa}(\partial^fa,b).\]	
\end{remark}

\subsection{Orientations for Hamiltonian Floer theory and PSS maps} \label{pssor}

We now turn to the matter of orienting moduli spaces relevant to Hamiltonian Floer homology.  The general technique for doing this goes back to \cite{FHor}, in which it was shown that Floer trajectory spaces can be oriented in a way that is coherent with respect to the gluing maps that are foundational to the theory.   Coherent orientations are not unique; while the chain complexes associated to different choices of coherent orientations for a given Hamiltonian are isomorphic, a statement such as Proposition \ref{floerpair} implicitly requires some relationship between the coherent orientations used for the Floer complex of a Hamiltonian $H$ and that of its time reversal $\hat{H}$, as well as relationships between the orientations used in the constructions of the PSS isomorphism with quantum homology for the two Hamiltonians $H$ and $\hat{H}$.  In much of the literature on the PSS map in Hamiltonian Floer theory, the orientations used are not described specifically enough to identify such relationships, and our goal here is to remedy this. Some related issues are discussed in detail in the context of the (more complicated) Lagrangian Floer-theoretic version of the PSS map in \cite{KM} and especially \cite{Zap}; the latter briefly addresses the Hamiltonian version as well, though not in a way that makes it straightforward to extract the signs that we need in the proof of Proposition \ref{floerpair}.

First we recall some relevant background; we will gloss over many standard analytical subtleties addressed in, \emph{e.g.}, \cite{FHor},\cite{ScT}, and \cite{MS}, and we will frequently assume without explanation that data have been chosen generically so that various moduli spaces are cut out transversely.

\subsubsection{$\delbar$-operators and determinant lines}\label{dets}
Let $(M,\omega)$ be a closed semipositive $2m$-dimensional symplectic manifold and $H\co S^1\times M\to \R$ a generic nondegenerate Hamiltonian.  The orientations needed to define the Floer differential $\partial_H$ on the Hamiltonian Floer complex $CF_*(H)$ in general characteristic arise from consideration of Fredholm operators $D_B\co W^{1,p}(Z,\mathbb{C}^m)\to L^p(Z,\mathbb{C}^m)$ taking the form \begin{equation}\label{gendb} (D_B\xi)(s,t)=\frac{\partial \xi}{\partial s}+J_0\left(\frac{\partial \xi}{\partial t}-B(s,t)\xi(s,t)\right) . \end{equation}  Here $p>2$, $Z=\R\times S^1$ with coordinates $(s,t)$, $J_0$ is the standard complex structure on $\mathbb{C}^m=\R^{2m}$, and $B\co Z\to \mathfrak{gl}_{2m}$ has the property that, for some $s_0>0$ and all $s$ with $\pm s\geq s_0$, we have $B(s,t)=B_{\pm}(t)$ for fixed $B_+(t)$ and $B_-(t)$ which lie in the Lie algebra $\mathfrak{sp}_{2m}$ of the symplectic linear group.  Moreover, the paths $t\mapsto B_{\pm}(t)$ are required to have the property that, for the symplectic linear maps $\Psi_{\pm}(t)$ given by $\Psi_{\pm}(0)=\mathbf{1}_{\R^{2m}}$ and $\frac{d\Psi_{\pm}(t)}{dt}=B_{\pm}(t)\Psi_{\pm}(t)$, the maps $\Psi_{\pm}(1)$ do not have $1$ as an eigenvalue.  One then has Conley--Zehnder indices $\mu_{CZ}(\Psi_{\pm})$, and the operator $D_B$ is Fredholm with index $\mu_{CZ}(\Psi_+)-\mu_{CZ}(\Psi_-)$ (\cite[Theorem 2.2]{Sal}).

The Floer boundary operator is constructed from spaces $\tilde{\mathcal{M}}_{J,H}([\gamma^-,w^-],[\gamma^+,w^+])$ of finite-energy solutions $u\co Z\to M$ to the Floer equation $\frac{\partial u}{\partial s}+J(u(s,t))\left(\frac{\partial u}{\partial t}-X_{H_t}(u,s,t) \right)=0$ such that $u(s,\cdot)\to \gamma^{\pm}$ as $s\to\pm\infty$ and such that the capping disk for $\gamma^+$ formed by gluing $w^-$ to $u$ represents the same element of $\widetilde{\mathcal{L}_0M}$ as $w^+$. (We use the same notation as in Section \ref{floersect}.)  
The linearization of the left-hand side of the Floer equation at $u$ then takes the form (\ref{gendb}) when expressed in terms of a suitable trivialization of $u^*TM$; thus an orientation prescription for $\tilde{\mathcal{M}}_{J,H}([\gamma^-,w^-],[\gamma^+,w^+])$ (assuming that it is cut out transversely) is induced by a continuous prescription for orienting the determinant lines of Fredholm operators such as $D_B$.  Recall here that, in general, the determinant line of a Fredholm operator $D\co E\to F$ between two Banach spaces is the one-dimensional vector space $\mathrm{Det}(D)=\det(\ker(D))\otimes \det(\mathrm{coker}(D))^*$ (where as in Section \ref{ses} $\det$ denotes top exterior power).  As is reviewed in the appendix to \cite{FHor}, if $\alpha\co Y\to \mathrm{Fred}(E,F)$ is a continuous map from a topological space $Y$ to the space of Fredholm operators from $E$ to $F$, the various $\mathrm{Det}(\alpha(y))$ for $y\in Y$ fit together to give a real line bundle $\alpha^*\mathrm{Det}$ over $Y$.  So if $Y$ is simply connected, choosing an orientation of $\mathrm{Det}(\alpha(y_0))$ for a single $y_0\in Y$ canonically induces orientations of every $\mathrm{Det}(\alpha(y))$, as there will be a unique-up-to-homotopy nowhere-vanishing section of $\alpha^*\mathrm{Det}$ that agrees with the chosen orientation of $\mathrm{Det}(\alpha(y_0))$.  In this case we say that the orientation on $\mathrm{Det}(\alpha(y))$ is obtained by continuation from that on $\mathrm{Det}(\alpha(y_0))$.  For example, an orientation of $\mathrm{Det}(D_B)$ for a single choice of $B$ induces by continuation orientations for all other $B$ having the same asymptotic operators $B_{+}$ and $B_-$, because the space of such $D_B$ is a convex set of Fredholm operators.

More generally, as in \cite[Chapter 3]{ScT}, one may replace the cylinder $Z$ by a complex one-manifold $\Sigma$ having (a possibly empty set of) cylindrical ends $C_i$ each modeled on either $(-\infty,0)\times S^1$ or $(0,\infty)\times S^1$, such that $\Sigma\setminus \cup_iC_i$ is compact, and consider a complex-rank-$m$ Hermitian vector bundle $E$ over $\Sigma$ together with an operator $D\co W^{1,p}(\Sigma,E)\to L^p(\Sigma,T^{0,1}\Sigma\otimes_{\mathbb{C}} E)$ which, in suitable local trivializations, is given as a zeroth-order perturbation of the standard $\bar{\partial}$ operator, coinciding on the cylindrical ends with operators $D_{B_i}$ as in (\ref{gendb}). Such operators, following \cite[Definition 3.1.6]{ScT} to which we refer for a more precise formulation, will be called admissible $\delbar$-operators over $\Sigma$. For our purposes it will suffice to restrict to the cases in which $\Sigma$ has either $0$ or $1$ positive cylindrical ends and either $0$ or $1$ negative cylindrical ends, so that, up to biholomorphism, $\Sigma$ is either the cylinder, the standard complex plane $\mathbb{C}$ (which has a positive cylindrical end), the plane $\mathbb{C}^-$ with the negative of the standard complex structure (and hence a negative cylindrical end), or $\mathbb{C}P^1$.  We will identify $\mathbb{C}P^1$ with $\mathbb{C}\cup\{\infty\}$ in the standard way, and $\mathbb{C}^-$ with $(\mathbb{C}\setminus\{0\})\cup\{\infty\})$ (with its standard complex structure).  If $\Sigma=Z$ then $E$ has a global trivialization in which an admissible $\delbar$-operator $D$ is given by an operator $D_B$ as above.  If $\Sigma=\mathbb{C}$ or $\Sigma=\mathbb{C}^-\subset \mathbb{C}\cup\{\infty\}$ then its cylindrical end is parametrized by $(s,t)\mapsto e^{2\pi(s+it)}$ and the associated matrices $B(s,t)$ from the expression of $D$ on the cylindrical end in terms of a trivialization that extends over $\Sigma$ have limiting values $B_+(t)$ (in the case $\Sigma=\mathbb{C}$) or $B_-(t)$ (in the case $\Sigma=\mathbb{C}^-$) generating paths of symplectic matrices $\Psi_+(t)$ or $\Psi_-(t)$ as before.   \cite[Proposition 3.3.8]{ScT} gives the Fredholm index of $D$ in either of these cases as $m\pm \mu_{CZ}(\Psi^{\pm})$.  Finally, in the case that $\Sigma=\mathbb{C}P^1$, the Fredholm index of $D$ is $2(m+\langle c_1(E),[\mathbb{C}P^1]\rangle)$ by the Riemann-Roch theorem.
 
If $D$ is an admissible $\delbar$-operator over a closed complex one-manifold such as $\mathbb{C}P^1$, then $\mathrm{Det}(D)$ has a canonical orientation: in case $D$ is complex-linear this is the orientation induced by the complex orientations on $\ker(D)$ and $\mathrm{coker}(D)$, while more generally the complex-linear part $D^{\mathbb{C}}$ of $D$ will also be an admissible $\delbar$-operator, as will convex combinations of $D^{\mathbb{C}}$ and $D$, so that an orientation of $\mathrm{Det}(D)$ is induced from the complex orientation of $\mathrm{Det}(D^{\mathbb{C}})$ by continuation along a line segment connecting $D$ and $D^{\mathbb{C}}$.  

When $\Sigma$ has nonempty boundary, the determinant lines of admissible $\delbar$-operators do not generally have canonical orientations; our plan will be to choose orientations for $\mathrm{Det}(D)$ for some admissible $\delbar$-operators $D$ (specifically, ones defined over $\mathbb{C}^{-}$) arbitrarily, and then obtain orientations for other $D$ by requiring compatibility under the gluing operation described in \cite[Section 3]{FHor}, \cite[Section 3.2]{ScT}.  In terms of the underlying geometry, this is very similar to the approach in \cite[Section 1.4]{Ab}, though Abouzaid translates this into Floer theory in a more manifestly choice-independent way.

The gluing procedure takes as input admissible $\delbar$-operators $D^j\co W^{1,p}(\Sigma^j,E^j)\to L^p(\Sigma,T^{0,1}\Sigma\otimes_{\mathbb{C}}E^j)$ for $j=1,2$ such that $\Sigma^1$ has a positive cylindrical end $C^1$ and $\Sigma^2$ has a negative end $C^2$, with trivializations of $E^j$ over $C^j$ in terms of which the $D^j$ agree with $D_{B^j}$ as in (\ref{gendb}) where, for some $S_0$ and all $s>S_0$, $B^1(s,\cdot)=B^2(-s,\cdot)=B_0$ for a fixed loop $B_0\co S^1\to \mathfrak{sp}_{2m}$.   For $S>S_0$, one may then form a complex one-manifold $\Sigma^1\#_S\Sigma^2$  from the disjoint union of $\Sigma^1$ and $\Sigma^2$ and a tube $\mathcal{T}=(-2S,2S)\times S^1$ by removing the loci in $C^1$ where $s\geq 2S$ and in $C^2$ where $s\leq -2S$ and gluing $(-2S,-S)\times S^1\subset \mathcal{T}$ to $(S,2S)\times S^1\subset C^1$ and $(S,2S)\times S^1\subset \mathcal{T}$ to $(-2S,-S)\times S^1\subset C^2$ by the maps $(s,t)\mapsto (s\pm 3S,t)$.  The vector bundles $E^1\to \Sigma^1$ and $E^2\to \Sigma^2$ likewise glue in an obvious way to a vector bundle over  $\Sigma^1\#_S\Sigma^2$ which is trivial over $\mathcal{T}$, and our assumptions about the asymptotics of $B^1$ and $B^2$ imply that we have a glued admissible $\delbar$-operator $D^1\#_S D^2$ agreeing with $D^j$ over $\Sigma^j\setminus C^j$ and equal on $\mathcal{T}$ to the version of $D_B$ as in (\ref{gendb}) with $B(s,t)=B_0(t)$.  
The key fact that we require is that gluing elements of stabilized versions of the $D^j$ as in \cite[Corollary 3.2.11]{ScT} gives rise to an isomorphism $\mathrm{Det}(D^1)\otimes \mathrm{Det}(D^2)\cong \mathrm{Det}(D^1\#_S D^2)$, see \cite[p. 25]{FHor}, \cite[Lemma 1.4.5]{Ab}.  Via this isomorphism, orientations of two of $\mathrm{Det}(D^1),\mathrm{Det}(D^2),\mathrm{Det}(D^1\#_S D^2)$ canonically determine an orientation of the third.  Moreover this is natural with respect continuation of orientations along paths, and is associative in the appropriate sense (\cite[Proposition 10]{FHor}).

Now given a generic nondegenerate Hamiltonian $H\co S^1\times M\to \R$ where $(M,\omega)$ is a semipositive symplectic manifold and $S^1=\R/\Z$,  and given a Morse function $f\co M\to \R$ with Morse-Smale gradient-like vector field $v$, we shall describe a prescription for determining signs for the matrix elements associated to the Floer boundary operator $\partial_H\co CF_*(H)\to CF_{*-1}(H)$ and the PSS maps $\tilde{\Phi}_{PSS}^{f,H^+}\co \hat{\Lambda}_{\uparrow}\otimes_{\kappa}\mathbf{CM}_*(f)\to CF_*(H)$ and $\tilde{\Psi}_{PSS}^{H^-,f}\co CF_*(H)\to \hat{\Lambda}_{\uparrow}\otimes_{\kappa}\mathbf{CM}_*(f)$. Let $P(H)$ denote the set of contractible one-periodic orbits $\gamma\co S^1\to M$.  Denote by $\mathbb{D}$ the closed unit disk in $\mathbb{C}$, and set \[ \mathbb{D}^-=\{|z|\in\mathbb{C}||z|\geq 1\}\cup\{\infty\}\subset \mathbb{C}\cup\{\infty\}. \]  For any $\gamma\in P(H)$, a \textbf{left-cap} for $\gamma$ is a smooth map $w\co \mathbb{D}\to M$ such that $w(e^{2\pi it})=\gamma(t)$ for all $t\in S^1$, and a \textbf{right-cap} for $\gamma$ is a smooth
map $y\co \mathbb{D}^-\to M$ such that $y(e^{2\pi it})=\gamma(t)$ for all $t\in S^1$.  Defining $\hat{H}\co S^1\times M\to \R$ by $\hat{H}(t,x)=-H(1-t,x)$, the time-reversal $\bar{\gamma}\co S^1\to M$ (given by $\bar{\gamma}(t)=\gamma(1-t)$) belongs to $P(\hat{H})$.  If $w$ is a left-cap for $\gamma\in P(H)$, we use the following notation:
\begin{itemize} \item $w^R$ is the right-cap for $\gamma$ defined by $w^R(z)=w(1/\bar{z})$ for $z\in \mathbb{D}^-\subset \C\cup\{\infty\}$;
	\item $\bar{w}$ is the left-cap for $\bar{\gamma}$ defined by $\bar{w}(z)=w(\bar{z})$ for $z\in  \mathbb{D}$.
	\item $w^I$ is the right-cap for $\bar{\gamma}$ defined by $w^I(z)=w(1/z)$ for $z\in \mathbb{D}^-$. (So $w^I=\bar{w}^R$.) 
\end{itemize}

Let $\{\phi_{H}^{t}\}_{t\in [0,1]}$ denote the Hamiltonian flow of $H$ and choose for each $\gamma\in P(H)$ a left-cap $w_{\gamma}$, with corresponding right-cap $w_{\gamma}^{R}$.  A symplectic trivialization of $\gamma^{*}TM$ that extends to a trivialization of $w_{\gamma}^{*}TM$ yields a path $t\mapsto \Psi_{\gamma}(t)$ in the symplectic linear group, by taking $\Psi_{\gamma}(t)$ to be the expression of $\phi_{H*}^{t}\co T_{\gamma(0)}M\to T_{\gamma(t)}M$ in terms of the trivialization.  Let $B_{\gamma}\co S^1\to \mathfrak{sp}_{2m}$ be the loop given by $\frac{d\Psi_{\gamma}}{dt}=B_{\gamma}(t)\Psi_{\gamma}(t)$.  Now let $B_{\gamma}^{-}\co \mathbb{C}^{-}\to \mathfrak{gl}_{2m}$ be smooth with $B_{\gamma}^{-}(e^{2\pi(s+it)})=B_{\gamma}(t)$ for $s\ll 0$ and $B_{\gamma}^{-}(e^{2\pi(s+it)})=0$ for $s\gg 0$ (and hence $B_{\gamma}^{-}(\infty)=0$).  There is then an admissible $\delbar$-operator $D_{w_{\gamma}}^{-}$ on sections of the trivial bundle over $\mathbb{C}^-$ that agrees with $2\pi$ times the standard $\delbar$-operator near $\infty$ and with $2\pi\left(r\frac{\partial}{\partial r}+J_0\frac{\partial}{\partial\theta}\right)-J_0B_{\gamma}^{-}(re^{i\theta})$ in standard polar coordinates $(r,\theta)$ on $\C^-\setminus\{\infty\}=\C\setminus\{0\}$.  (Thus in terms of the  coordinates $s,t$ parametrizing $\C^{-}\setminus\{\infty\}$ via $(s,t)\mapsto e^{2\pi(s+it)}$, $D_{w_{\gamma}}^{-}$ restricts to the cylindrical end as (\ref{gendb}) with $B=B_{\gamma}^{-}$.)  

As input for the rest of our Floer-theoretic orientations we now choose, arbitrarily, orientations for $\mathrm{Det}(D_{w_{\gamma}}^{-})$ for each $\gamma\in P(H)$.  These orientations play a role analogous to the orientations of descending manifolds that are chosen in the construction of the Morse complex.  As both the set of possible trivializations of $w_{\gamma}^{*}TM$ and the set of possible maps $B_{\gamma}^{-}\co \C^-\to \mathrm{gl}_{2m}$ restricting as above to $B_{\gamma}\co S^1\to \mathfrak{sp}_{2m}$ on small circles around the origin are contractible, our chosen orientation for $\mathrm{Det}(D_{w_{\gamma}}^{-})$ canonically induces orientations of the operators that can be obtained from different choices of the trivialization or of $B_{\gamma}^{-}$ by continuation.  
 In fact, the choice of orientation of $\mathrm{Det}(D_{w_{\gamma}}^{-})$ canonically induces orientations of the determinant lines of the analogous operators $D_{w'_{\gamma}}^{-}$ associated to different left-caps $w'_{\gamma}$ for $\gamma$ because, up to homotopy, $D_{w'_{\gamma}}^{-}$ can be identified with an operator obtained by gluing $D_{w_{\gamma}}^{-}$ to a complex linear $\delbar$-operator $D^{\C}$ over $\mathbb{C}P^1$, and so $\mathrm{Det}(D_{w'_{\gamma}}^{-})$ obtains an orientation by gluing the orientation of $\mathrm{Det}(D_{w_{\gamma}}^{-})$ to the complex orientation of $\mathrm{Det}(D^{\C})$.  (See \cite[Lemma 1.4.10]{Ab}.)  From now on we implicitly use these orientations of the determinant lines on operators that are related by gluing and/or continuation to $D_{w_{\gamma}}^{-}$.

\subsubsection{Floer caps and the PSS maps} \label{caps}
Choose an $\omega$-compatible almost complex structure $J$ on $M$ and a smooth map $H^-\co \C^-\times M\to \R$ such that $H^-(z,x)=0$ for all $x\in M$ whenever $|z|>2$, and $H^-(re^{2\pi it},x)=H(t,x)$ for all $x\in M$ whenever $r<\frac{1}{2}$.  An \textbf{outward Floer cap} for an orbit $\gamma\in \mathcal{P}(H)$  is a finite-energy solution $u\co \C^-\times M\to \R$ to the equation (expressed in polar coordinates $(r,\theta)$ on the open dense subset $\C^-\setminus\{\infty\}$ of $\C^-$) \begin{equation}\label{rfc} 2\pi r\frac{\partial u}{\partial r}+J(u)\left(2\pi \frac{\partial u}{\partial \theta}-X_{H^-(re^{i\theta},\cdot)}(u)\right)=0 \end{equation}
	such that $u(re^{2\pi it})\to \gamma(t)$ as $t\to 0$.  For any such $u$, the map $re^{i\theta}\mapsto u((r-1)e^{i\theta})$ (defined on the interior of $\mathbb{D}^-$) extends continuously to the boundary of $\mathbb{D}^-$ to give a right-cap for $\gamma$; composing this latter map with $z\mapsto \frac{1}{\bar{z}}$ then gives a left-cap for $\gamma$ which we will denote by $u^L$.  With notation as at the start of Section \ref{floersect}, given a left-cap $w\co \mathbb{D}\to M$ for $\gamma$, let \[ \mathcal{M}_{J,H^-}^{\mathrm{out}}([\gamma,w])\] denote the space of outward Floer caps $u$ for $\gamma$ (with auxiliary data $J,H^-$) such that $[\gamma,u^L]=[\gamma,w]\in \widetilde{\mathcal{L}_0M}$. For generic data $(J,H^-)$ the linearization $D_{u,J,H^-}$ of the left-hand side of (\ref{rfc}) at each $u\in  
	\mathcal{M}_{J,H^-}^{\mathrm{out}}([\gamma,w])$ is surjective, and so $\mathcal{M}_{J,H^-}^{\mathrm{out}}([\gamma,w])$ is a smooth manifold of dimension equal to the index of $D_{u,J,H^-}$, which is the quantity denoted $\mu_{H}([\gamma,w])$ at the start of Section \ref{floersect}.  Indeed, for a suitable choice of trivialization  and of the function $B_{\gamma}^{-}$, $D_{u,J,H}$ agrees with an operator $D_{w}^{-}$ as in the preceding paragraph; hence our original choice of orientation for $\mathrm{Det}(D_{w_{\gamma}}^{-})$ induces, by continuation and gluing, orientations of the $\mathrm{Det}(D_{u,J,H})$ and hence of the smooth manifold  $\mathcal{M}_{J,H^-}^{\mathrm{out}}([\gamma,w])$.  Furthermore, we have a smooth map $ev_{\infty,[\gamma,w]}^{J,H^-}\co \mathcal{M}_{J,H^-}^{\mathrm{out}}([\gamma,w])\to M$ defined by $ev_{\infty,[\gamma,w]}^{J,H^-}(u)=u(\infty)$.
	
	Symmetrically, we consider \textbf{inward Floer caps} for orbits $\gamma\in P(H)$ with respect to a compatible almost complex structure $J$ and a smooth function $H^+\co \C\times M\to \R$ such that $H^+(z,\cdot)\equiv 0$ for $|z|<\frac{1}{2}$ and $H^+(re^{2\pi i t},x)=H(t,x)$ for all $r>2$ and $x\in M$.  These are finite-energy solutions $u\co \C\to M$ to the equation that restricts to $\C\setminus\{0\}$ as the same equation (\ref{rfc}) with $H^-$ replaced by $H^+$, and with $u(re^{2\pi it})\to \gamma(t)$ as $r\to\infty$.  (Such solutions restrict to the disk of radius $\frac{1}{2}$ around $0$ as $J$-holomorphic curves, just as outward Floer caps are $J$-holomorphic in a neighborhood of $\infty$.)  After reparametrizing by a diffeomorphism $\mathrm{int}(D)\cong \C$ (say for definiteness $re^{i\theta}\mapsto \frac{e^{i\theta}}{1-r}$), any inward Floer cap extends to $\mathbb{D}$ as a left-cap $u^L$ for $\gamma$; given a left-cap $w\co \mathbb{D}\to M$ for $\gamma$ we let $\mathcal{M}_{J,H^+}^{\mathrm{in}}([\gamma,w])$ denote the space of inward Floer caps for $\gamma$ with $[\gamma,u^L]=[\gamma,w]\in \widetilde{\mathcal{L}_0M}$.
	
	The linearization $D_{u,J,H^+}$ of the nonlinear operator whose zero locus is $\mathcal{M}^{\mathrm{in}}_{J,H^+}([\gamma,w])$ at any inward Floer cap $u$ is an admissible $\delbar$-operator over $\C$ with Fredholm index equal to $2m-\mu_H([\gamma,w])$.  For generic $(J,H^+)$, $\mathcal{M}^{J,H^+}([\gamma,w])$ will be cut out transversely and so will be a smooth manifold of dimension 
	$2m-\mu_H([\gamma,w])$.  To orient this smooth manifold, just as with $\mathcal{M}_{J,H^-}^{\mathrm{out}}([\gamma,w])$, it suffices to orient the determinant line of a single admissible $\delbar$-operator $D_{w_{\gamma}}^{+}$ over $\C$ for a left-cap $w_{\gamma}$ for $\gamma$, constructed analogously to our earlier $D_{w_{\gamma}}^{-}$, as this allows one to orient  the various $\mathrm{Det}(D_{u,J,H^+})$ by continuation and gluing. 
	For definiteness, if $D_{w_{\gamma}}^{-}$ is given on $\C^-\setminus\{\infty\}$ by $2\pi(r\frac{\partial}{\partial r}+J_0\frac{\partial}{\partial\theta})-J_0B_{\gamma}^{-}(z)$ where $B_{\gamma}^{-}\co \C^-\to \mathfrak{gl}_{2m}$ vanishes near $\infty$ and has $B_{\gamma}^{-}(re^{2\pi it})=B_{\gamma}(t)$ for small $r$, we can take $D_{w_{\gamma}}^{+}$ equal on $\C\setminus\{0\}$ to 
	$2\pi(r\frac{\partial}{\partial r}+J_0\frac{\partial}{\partial\theta})-J_0B_{\gamma}^{-}(1/\bar{z})$ (and to $2\pi$ times the standard $\delbar$ operator on a neighborhood of $0$).  
	
	The needed orientation on $\mathrm{Det}(D_{w_{\gamma}}^{+})$ is then induced by requiring that the gluing isomorphism (defined for $S\gg 0$) $\mathrm{Det}(D_{w_{\gamma}}^{+})\otimes\mathrm{Det}(D_{w_{\gamma}}^{-})\cong \mathrm{Det}(D_{w_{\gamma}}^{+}\#_SD_{w_{\gamma}}^{-})$ be orientation-preserving.  Here the orientation of $\mathrm{Det}(D_{w_{\gamma}}^{-})$ is the one that we chose arbitrarily earlier, and the orientation of the determinant line of the admissible $\delbar$-operator $D_{w_{\gamma}}^{+}\#_SD_{w_{\gamma}}^{-}$ over $\mathbb{C}P^1$  is the one induced by continuation to the determinant line of a complex-linear operator.  This suffices to prescribe orientations on the smooth manifolds $\mathcal{M}_{J,H^+}^{\mathrm{in}}([\gamma,w])$ for left-caps $w$ of orbits $\gamma\in P(H)$.  Write $ev^{J,H^+}_{0,[\gamma,w]}\co \mathcal{M}_{J,H^+}^{\mathrm{in}}([\gamma,w])\to M$ for the smooth map defined by $u\mapsto u(0)$.
	
Given $\gamma^-,\gamma^+\in\mathcal{P}(H)$, with left-caps $w^-,w^+$, we orient the space $\tilde{\mathcal{M}}_{J,H}([\gamma^-,w^-],[\gamma^+,w^+])$ of Floer trajectories from $[\gamma^-,w^-]$ to $[\gamma^+,w^+]$ as follows (\emph{cf}. \cite[Theorem 1.5.1]{Ab}).  By our initial choices and continuation and gluing, we have orientations of outward-cap operators $D_{w^-}^{-}$ and $D_{w^+}^{-}$ for $\gamma^-$ and $\gamma^+$.  For $u\in \tilde{\mathcal{M}}_{J,H}([\gamma^-,w^-],[\gamma^+,w^+])$ (so in particular $u(s,\cdot)\to \gamma^{\pm}$ as $s\to\pm\infty$), the linearization $D_u$ of the left-hand side of the Floer equation at $u$ is an admissible $\delbar$-operator over $Z$, and for suitable choices of the $D_{w^{\pm}}^{-}$ we may form the glued operator $D_u\#_S D_{w_+}^{-}$, whose determinant line is also oriented by continuation and gluing from our initial choice of orientation of $\mathrm{Det}(D_{w_-}^{-})$.   So we orient $\mathrm{Det}(D_u)$ by requiring the gluing isomorphism $\mathrm{Det}(D_u)\otimes \mathrm{Det}(D_{w_+}^{-})\cong \mathrm{Det}(D_u\#_S D_{w_+}^{-})$ to be orientation-preserving.  This gives orientations on the smooth manifolds $\tilde{\mathcal{M}}_{J,H}([\gamma^-,w^-],[\gamma^+,w^+])$ (assuming these to be cut out transversely).  For $[\gamma^-,w_-]\neq [\gamma^+,w^+]$ one has a free $\R$-action under which $s_0\in \R$ acts by precomposition with $(s,t)\mapsto (s+s_0,t)$; thus the infinitesimal generator at $u\in \tilde{\mathcal{M}}_{J,H}([\gamma^-,w^-],[\gamma^+,w^+])$ is $\partial_su$ and the quotient $\mathcal{M}_{J,H}([\gamma^-,w^-],[\gamma^+,w^+])=\tilde{\mathcal{M}}_{J,H}([\gamma^-,w^-],[\gamma^+,w^+])/\R$ is oriented by means of the exact sequence \[ 0 \to \R\to T_u\tilde{\mathcal{M}}_{J,H}([\gamma^-,w^-],[\gamma^+,w^+])\to T_{[u]}\mathcal{M}_{J,H}([\gamma^-,w^-],[\gamma^+,w^+])\to 0 \] where the first nontrivial map sends $1$ to $\partial_su$ and the second nontrivial map is the derivative of the quotient projection.  

The Floer boundary operator $\partial_{H}\co CF_*(H)\to CF_{*-1}(H)$ is of course defined by $\partial[\gamma^-,w^-]=\sum \#(\mathcal{M}_{J,H}([\gamma^-,w^-],[\gamma^+,w^+]))[\gamma^+,w^+] $ where the sum is over $[\gamma^+,w^+]$ such that $\mu_H([\gamma^-,w^-])-\mu_H([\gamma^+,w^+])=1$ and ``$\#$'' refers to the signed count of points (according to the orientation we have just specified, and valued in the field $\kappa$ used throughout the paper) of the zero-manifold $\mathcal{M}_{J,H}([\gamma^-,w^-],[\gamma^+,w^+])$.  It follows as in \cite{FHor},\cite{Ab} that our orientations are coherent and hence that $\partial_H\circ\partial_H=0$.

We turn now to the PSS maps $\tilde{\Phi}_{PSS}^{f,H^+}\co\hat{\Lambda}_{\uparrow}\otimes_{\kappa}\mathbf{CM}_*(f)\to CF_*(H)$ and $\tilde{\Psi}_{PSS}^{H^-,f}\co CF_*(H)\to \hat{\Lambda}_{\uparrow}\otimes_{\kappa}\mathbf{CM}_*(f)$.  We assume $\mathbf{CM}_*(f)$ to be constructed, as in Sections \ref{morseintro} and \ref{appmorse}, from a gradient-like vector field $v$ for $f$ with associated ascending and descending manifolds $\mathbf{A}^v(p),\mathbf{D}^v(p)$ for $p\in\mathrm{Crit}(f)$, and using (arbitrarily-chosen but fixed) orientations of the various $\mathbf{D}^v(p)$.  We also assume that our data are chosen suitably generically so as to ensure that the various evaluation maps $ev_{0,[\gamma,w]}^{J,H^+}\co \mathcal{M}^{\mathrm{in}}_{J,H^+}([\gamma,w])\to M$ and $ev_{\infty,[\gamma,w]}^{J,H^-}\co \mathcal{M}_{J,H^-}^{\mathrm{out}}([\gamma,w])\to M$ are transverse to each of the $\mathbf{D}^v(p)$ and $\mathbf{A}^v(p)$.
 
As in Section \ref{appmorse}, our orientations of the $\mathbf{D}^v(p)$ induce coorientations of the $\mathbf{A}^v(p)$ by extending the coorientation induced by the obvious identification $N_p\mathbf{A}^v(p)=T_p\mathbf{D}^v(p)$ over the rest of $\mathbf{A}^v(p)$.  For what follows we also use the coorientation on $\mathbf{D}^v(p)$ induced by the convention in Proposition \ref{wor} and its proof: thus we orient $\mathbf{A}^v(p)$ by requiring that $T_p\mathbf{A}^v(p)\oplus T_p\mathbf{D}^v(p)=T_pM$ as oriented vector spaces and then coorient $\mathbf{D}^v(p)$ using the identification $N_p\mathbf{D}^v(p)=T_p\mathbf{A}^v(p)$.  (More concisely, we have $N\mathbf{D}^v(p)\oplus T\mathbf{D}^v(p)=TM|_{\mathbf{D}^v(p)}$ as oriented bundles; of course here we are using the canonical orientation on $M$ associated to its symplectic structure.)

Now for $p\in\mathrm{Crit}(f)$ and $[\gamma,w]\in\widetilde{\mathcal{L}_0M}$ with $\gamma\in P(H)$ we define \[ \mathcal{M}_{J,H^+}^{\mathrm{in}}(p;[\gamma,w])=(ev_{0,[\gamma,w]}^{J,H^+})^{-1}(\mathbf{D}^v(p))\subset \mathcal{M}^{\mathrm{in}}_{J,H^+}([\gamma,w])\] and \[ \mathcal{M}_{J,H^-}^{\mathrm{out}}([\gamma,w];p)=(ev_{\infty,[\gamma,w]}^{J,H^-})^{-1}(\mathbf{A}^v(p))\subset \mathcal{M}^{\mathrm{in}}_{J,H^-}([\gamma,w]).  \]  Given our transversality assumptions, these are smooth manifolds, with $\dim \mathcal{M}_{J,H^+}^{\mathrm{in}}(p;[\gamma,w])=\mathrm{ind}_f(p)-\mu_H([\gamma,w])$ and $\dim\mathcal{M}_{J,H^-}^{\mathrm{out}}([\gamma,w];p)=\mu_H([\gamma,w])-\mathrm{ind}_f(p)$.  Moreover they obtain orientations by the prescription in Section \ref{preim}, using as input the orientations that we have prescribed on $\mathcal{M}^{\mathrm{in}}_{J,H^+}([\gamma,w])$ and $\mathcal{M}^{\mathrm{out}}_{J,H^-}([\gamma,w])$ and the coorientations that we have prescribed on $\mathbf{D}^v(p)$ and $\mathbf{A}^v(p)$.  In the case that $\mathrm{ind}_f(p)=\mu_H([\gamma,w])$, our transversality assumptions together with standard results about the compactifications of $\mathcal{M}^{\mathrm{in}}_{J,H^+}([\gamma,w])$, $\mathcal{M}^{\mathrm{out}}_{J,H^-}([\gamma,w])$, $\mathbf{D}^v(p)$, and $\mathbf{A}^v(p)$ imply that the oriented zero-manifolds $\mathcal{M}_{J,H^+}^{\mathrm{in}}(p;[\gamma,w])$ and 
$\mathcal{M}_{J,H^-}^{\mathrm{out}}([\gamma,w];p)$ are compact.

With this said, we can define the PSS map $\tilde{\Phi}_{PSS}^{f,H^+}\co \hat{\Lambda}_{\uparrow}\otimes_{\kappa}\mathbf{CM}_*(f)\to CF_*(H)$ by extending $\hat{\Lambda}_{\uparrow}$-linearly from, for each index-$k$ critical point $p$ of $f$: \[ \tilde{\Phi}_{PSS}^{f,H^+}p = \sum_{[\gamma,w]\in \tilde{P}_k(H)} \#\left(\mathcal{M}_{J,H^+}^{\mathrm{in}}(p;[\gamma,w])\right)[\gamma,w]  \] where as usual ``$\#$'' refers to the $\kappa$-valued, signed count of points in a compact oriented zero-manifold. Similarly, define $\tilde{\Psi}_{PSS}^{H^-,f}\co CF_*(H)\to\hat{\Lambda}_{\uparrow}\otimes_{\kappa}\mathbf{CM}_*(f)$ by extending $\hat{\Lambda}_{\uparrow}$-linearly from, for each $[\gamma,w]\in\tilde{P}_k(H)$, \[ \tilde{\Psi}_{PSS}^{H^-,f}[\gamma,w]=\sum_{\substack{\tiny p\in\mathrm{Crit}(f),\,A\in\hat{\Gamma}\\ \mathrm{ind}_f(p)=k+2I_{c_1}(A)}}\#\left(\mathcal{M}_{J,H^-}^{\mathrm{out}}([\gamma,w\#(-A)];p)\right)T^Ap.  \] 
(Recall that $T^A\in\hat{\Lambda}_{\uparrow}$ has grading $-2I_{c_1}(A)$.  The geometric reason for the negative sign in $[w\#(-A)]$ is that gluing a sphere in class $-A$ to a left-cap $w$ results in gluing a sphere in class $A$ to the corresponding right-cap $w^R$.)  At least modulo two, these are consistent with the standard definitions as in \cite{PSS},\cite[Section 20.3]{ohbook}, but for reassurance that our orientation prescriptions for $\mathcal{M}_{J,H^+}^{\mathrm{in}}(p;[\gamma,w])$ and 
$\mathcal{M}_{J,H^-}^{\mathrm{out}}([\gamma,w];p)$ are appropriate we should check that the signs work out properly in the following:

\begin{prop}\label{chainmaps}
	$\tilde{\Phi}_{PSS}^{f,H^+}\co \hat{\Lambda}_{\uparrow}\otimes_{\kappa}\mathbf{CM}_*(f)\to CF_*(H)$  and $\tilde{\Psi}_{PSS}^{H^-,f}\co CF_*(H)\to \hat{\Lambda}_{\uparrow}\otimes_{\kappa}\mathbf{CM}_*(f)$ are chain maps.
	\end{prop}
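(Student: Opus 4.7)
The plan is to prove both identities by the standard ``boundary-of-a-1-dimensional-moduli-space'' argument: for each matrix coefficient of $\tilde{\Phi}_{PSS}^{f,H^+}\circ \partial^f-\partial_H\circ \tilde{\Phi}_{PSS}^{f,H^+}$ (and similarly for $\tilde{\Psi}_{PSS}^{H^-,f}$), we realize it as the signed count of boundary points of a compact oriented one-dimensional manifold with boundary, which necessarily vanishes.

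For $\tilde{\Phi}_{PSS}^{f,H^+}$: fix $p\in \mathrm{Crit}_k(f)$ and $[\gamma^+,w^+]\in\tilde{P}_{k-1}(H)$ and consider the one-dimensional manifold $\mathcal{M}_{J,H^+}^{\mathrm{in}}(p;[\gamma^+,w^+])=(ev_{0,[\gamma^+,w^+]}^{J,H^+})^{-1}(\mathbf{D}^v(p))$. A standard Gromov--Floer compactification is obtained by extending $ev_{0,[\gamma^+,w^+]}^{J,H^+}$ to the compactification of $\mathcal{M}^{\mathrm{in}}_{J,H^+}([\gamma^+,w^+])$ (whose codimension-one strata are products $\mathcal{M}^{\mathrm{in}}_{J,H^+}([\gamma^-,w^-])\times \mathcal{M}_{J,H}([\gamma^-,w^-],[\gamma^+,w^+])$ with $\mu_H([\gamma^-,w^-])=\mu_H([\gamma^+,w^+])+1$) and taking the preimage of the partial compactification $\hat{\mathbf{D}}^v(p)$ (whose codimension-one strata are $\{[x]\}\times \mathbf{D}^v(q)$ for $[x]\in \mathbf{M}^v(p,q)$, $\mathrm{ind}_f(q)=k-1$). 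Transversality and gluing then identify the boundary as a disjoint union of two families, ``Morse-end'' points (parametrized by triples $(q,[x],u')$ with $u'\in\mathcal{M}^{\mathrm{in}}_{J,H^+}(q;[\gamma^+,w^+])$) and ``Floer-end'' points (parametrized by triples $([\gamma^-,w^-], u', v')$ with $u'\in \mathcal{M}^{\mathrm{in}}_{J,H^+}(p;[\gamma^-,w^-])$ and $v'\in\mathcal{M}_{J,H}([\gamma^-,w^-],[\gamma^+,w^+])$). Invoking (\ref{prebdry}) (with $\dim=1$, so no extra sign) together with Proposition \ref{boundaryor} assigns each Morse-end point the sign $\ep([x])$ relative to the orientation of $\mathcal{M}^{\mathrm{in}}_{J,H^+}(q;[\gamma^+,w^+])$, so that the Morse-end contribution is precisely the coefficient of $[\gamma^+,w^+]$ in $\tilde{\Phi}_{PSS}^{f,H^+}(\partial^fp)$. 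The Floer-end contribution is, by coherence of the orientations as prescribed in Section \ref{caps} (the gluing isomorphism $\mathrm{Det}(D_{u'})\otimes \mathrm{Det}(D_{v'})\otimes \mathrm{Det}(D^-_{w_{\gamma^+}})\cong \mathrm{Det}(D^-_{w^-})\otimes \mathrm{Det}(D^-_{w_{\gamma^+}})\cong \mathrm{Det}(D_{u'\#v'\# D^-_{w_{\gamma^+}}})$), the negative of the coefficient of $[\gamma^+,w^+]$ in $\partial_H\tilde{\Phi}_{PSS}^{f,H^+}(p)$. The vanishing of the total boundary count is the desired identity.

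The argument for $\tilde{\Psi}_{PSS}^{H^-,f}$ is entirely parallel, applied to $\mathcal{M}^{\mathrm{out}}_{J,H^-}([\gamma,w];p)$; the Morse-end contribution comes from the compactification $\hat{\mathbf{A}}^v(p)$, whose codimension-one strata are $\mathbf{A}^v(p')\times\{[x]\}$ for $[x]\in \mathbf{M}^v(p',p)$, and Proposition \ref{boundaryor} now gives the sign $-\ep([x])$ rather than $\ep([x])$. This extra minus sign is exactly what is required for cancellation because the Floer break now occurs at the incoming rather than the outgoing end of the Floer cap and so transfers through the gluing isomorphism with the opposite relative sign.

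The main obstacle is the careful sign bookkeeping, specifically reconciling: (a) the $\pm\ep([x])$ asymmetry between descending and ascending manifolds in Proposition \ref{boundaryor}; (b) the preimage-boundary sign in (\ref{prebdry}), which happens to be trivial for 1-dimensional fibers; (c) the gluing isomorphisms of determinant lines, via which the orientations on $\mathcal{M}^{\mathrm{in/out}}_{J,H^\pm}([\gamma,w])$ and $\mathcal{M}_{J,H}([\gamma^-,w^-],[\gamma^+,w^+])$ were defined from the reference orientations of $\mathrm{Det}(D^-_{w_\gamma})$; and (d) the convention that quotients by the free $\R$-action are oriented using $\partial_s u$ (respectively $-v(x)$) as the first basis vector. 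All of these conventions were installed consistently in Section \ref{pssor} and in Section \ref{appmorse}; verifying that they do produce cancellation in both the $\tilde{\Phi}_{PSS}$ and the $\tilde{\Psi}_{PSS}$ computations is the technical content of the proof and amounts to the statement that the orientation prescriptions for $\mathcal{M}^{\mathrm{in}}_{J,H^+}(p;[\gamma,w])$ and $\mathcal{M}^{\mathrm{out}}_{J,H^-}([\gamma,w];p)$ are the correct ones for use in the definitions of $\tilde{\Phi}_{PSS}^{f,H^+}$ and $\tilde{\Psi}_{PSS}^{H^-,f}$.
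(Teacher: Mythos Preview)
Your overall strategy is correct and matches the paper's: study the compactified one-dimensional moduli spaces $\mathcal{M}_{J,H^+}^{\mathrm{in}}(p;[\gamma,w])$ and $\mathcal{M}_{J,H^-}^{\mathrm{out}}([\gamma,w];q)$ and identify their boundary points with the relevant matrix elements. Your treatment of $\tilde{\Psi}_{PSS}^{H^-,f}$ is essentially the paper's argument.

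However, your sign analysis for $\tilde{\Phi}_{PSS}^{f,H^+}$ contains a genuine error. You claim that Proposition~\ref{boundaryor} together with (\ref{prebdry}) gives the Morse-end points sign $\ep([x])$ relative to the orientation of $\mathcal{M}^{\mathrm{in}}_{J,H^+}(q;[\gamma^+,w^+])$. But Proposition~\ref{boundaryor} computes the boundary \emph{orientation} of $\{[x]\}\times\mathbf{D}^v(q)$, whereas the preimage orientation on $(ev)^{-1}(\mathbf{D}^v(p))$ is defined via the \emph{coorientation} of $\mathbf{D}^v(p)$ (see Section~\ref{preim}). Converting the boundary orientation statement into a boundary coorientation statement introduces a factor $(-1)^{\mathrm{rank}\,N\mathbf{D}^v(p)}=(-1)^{2m-k}=(-1)^k$, so the correct Morse-end sign is $(-1)^k\ep([x])$, not $\ep([x])$. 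The paper carries out exactly this computation.

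On the Floer end you assert the sign is simply $-1$, citing coherence of the gluing isomorphisms. In fact the Floer breaking for the inward cap occurs at the \emph{outgoing} end, and the paper shows that the outer normal to the boundary stratum is $-\frac{du_S}{dS}$ sitting in the \emph{last} slot of the product orientation; moving it to the first slot across the $(2m-k)$-dimensional factor gives $(-1)^{2m-k}$, so the Floer-end sign is $(-1)^{k+1}\ep([u_0])$, not $-\ep([u_0])$. Your two errors differ by the same factor $(-1)^k$ and therefore cancel in the final identity, but the intermediate claims are false and the reasoning behind them does not hold. Since the entire purpose of this proposition (as the paper says explicitly) is to verify that the specific orientation conventions yield the correct signs, this is a gap rather than a harmless shortcut.
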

	
	\begin{prop} \label{psscompose} $\tilde{\Psi}_{PSS}^{f,H^+}\circ \tilde{\Phi}_{PSS}^{H^-,f}\co \hat{\Lambda}_{\uparrow}\otimes_{\kappa}\mathbf{CM}_*(f)\to \hat{\Lambda}_{\uparrow}\otimes_{\kappa}\mathbf{CM}_*(f)$ induces the identity on homology.
\end{prop}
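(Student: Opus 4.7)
The natural approach is to realize $\tilde{\Psi}_{PSS}^{H^-,f}\circ \tilde{\Phi}_{PSS}^{f,H^+}$ as one boundary component of a cobordism whose other boundary component manifestly induces the identity on homology. Specifically, for each $p,q\in\mathrm{Crit}(f)$ and each $A\in\hat{\Gamma}$ with $\mathrm{ind}_f(p)=\mathrm{ind}_f(q)+2I_{c_1}(A)$, I would construct a one-parameter family of moduli spaces $\mathcal{N}_R(p,q;A)$, $R\in[0,\infty]$. An element of $\mathcal{N}_R(p,q;A)$ would consist of a negative gradient trajectory for $v$ from $p$ to $u(0)$, a finite-energy solution $u\co(\mathbb{C}P^1,0,\infty)\to M$ in homology class $-A$ to a Cauchy--Riemann equation with an $R$-dependent Hamiltonian perturbation, and a negative gradient trajectory from $u(\infty)$ to $q$. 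The perturbation would be designed so that for large $R$ it localizes into an $H^-$-perturbation supported near $0$ and an $H^+$-perturbation supported near $\infty$, separated by a long cylindrical neck of length $\sim R$ carrying the full $H$-perturbation of the Floer equation; for $R=0$ the perturbation would be identically zero, so that $u$ is an honest $J$-holomorphic sphere.

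At $R=\infty$, standard Floer-theoretic gluing and compactness arguments (using the gluing formalism recalled in Section \ref{pssor}) would identify the points of $\mathcal{N}_\infty(p,q;A)$ with the configurations whose signed count gives the $T^Aq$-coefficient of $(\tilde{\Psi}_{PSS}^{H^-,f}\circ\tilde{\Phi}_{PSS}^{f,H^+})(p)$. At $R=0$, the count instead gives the matrix elements of an intersection-type map $\mathcal{I}\co\hat{\Lambda}_\uparrow\otimes_\kappa\mathbf{CM}_*(f)\to\hat{\Lambda}_\uparrow\otimes_\kappa\mathbf{CM}_*(f)$ defined by triples consisting of a Morse flowline into a point $x$, a $J$-holomorphic sphere in class $-A$ through $x$ and a second point $y$, and a Morse flowline out of $y$. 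The parameterized moduli space $\bigsqcup_R\mathcal{N}_R(p,q;A)$ then forms a one-higher-dimensional cobordism whose signed boundary count yields a chain homotopy between $\tilde{\Psi}_{PSS}^{H^-,f}\circ\tilde{\Phi}_{PSS}^{f,H^+}$ and $\mathcal{I}$.

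To conclude, one identifies $\mathcal{I}$ with a chain-level model for the identity on homology. The map $\mathcal{I}$ is a standard chain-level realization of quantum cap product by the fundamental class $[M]\in QH_{2m}(M,\omega)$: as $A$ varies, the $J$-holomorphic spheres through two points (attached to Morse flowlines) compute precisely the structure coefficients for quantum multiplication by $[M]$, which is the unit of $QH_*(M,\omega)$. The $A=0$ piece, where $u$ is a constant map to some $x\in M$, degenerates to the classical Morse-theoretic count of pairs of flowlines $p\to x\to q$; this pairing is chain-homotopic (as $x$ varies along a gradient trajectory) to the identity on $\mathbf{CM}_*(f)$. The contributions from $A\neq 0$ then sum, via the unit property of $[M]$, to a chain map that induces zero on homology.

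The chief technical obstacle will be handling the Morse--Bott phenomenon at $(R,A)=(0,0)$, where a constant $J$-holomorphic sphere possesses a $\mathbb{C}^*$-stabilizer fixing both marked points $0$ and $\infty$. This is handled in the standard way (\emph{cf.}\ \cite[Chapter 12]{MS}) either by introducing an auxiliary stabilizer-breaking perturbation, or by directly identifying the $A=0$ contribution at $R=0$ with the oriented intersection number $\#(\mathbf{D}^v(p)\cap\mathbf{A}^v(q))$, which for complementary-dimensional descending and ascending manifolds is precisely the Morse-homology matrix element of the identity. A secondary but non-trivial task is to confirm that our orientation conventions from Section \ref{pssor} transport coherently through both boundaries of the cobordism: this reduces to the coherence of the determinant-line gluing isomorphisms and should follow from the Floer--Hofer scheme \cite{FHor} without any additional signs, though the exchange factors appearing when comparing the orientation on $\mathcal{N}_\infty$ (inherited from the gluing of the cap operators through a periodic orbit) with that on $\mathcal{N}_0$ (inherited from the complex orientation of the sphere moduli space together with the coorientations of $\mathbf{D}^v(p),\mathbf{A}^v(q)$) require a careful bookkeeping.
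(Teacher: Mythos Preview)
Your approach is essentially the same as the paper's: both interpolate, via a one-parameter family of Hamiltonian perturbations on $\mathbb{C}P^1$, between the composition $\tilde{\Psi}_{PSS}^{H^-,f}\circ\tilde{\Phi}_{PSS}^{f,H^+}$ (at the large-parameter end, realized by gluing inward and outward Floer caps along a long neck) and a count of $J$-holomorphic spheres with two marked points constrained to $\mathbf{D}^v(p)$ and $\mathbf{A}^v(q)$ (at the zero-perturbation end). The paper packages this as a map $\Upsilon^{J,\mathcal{H}}$, shows it is independent of $\mathcal{H}$ up to chain homotopy, and then sets $\mathcal{H}=0$.

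Where your analysis of the $R=0$ endpoint diverges from the paper's is in precision. First, the $A\neq 0$ contributions vanish \emph{individually at chain level} by a direct dimension argument: since $\mathcal{M}_{J,0}(A)$ carries a locally free action of the two-dimensional group of automorphisms of $\mathbb{C}P^1$ fixing $0$ and $\infty$, the image of $ev_0\times ev_\infty$ lies in a pseudocycle of dimension at most $2m+2I_{c_1}(A)-2$, which generically misses the codimension-$(2m+2I_{c_1}(A))$ submanifold $\mathbf{D}^v(p)\times\mathbf{A}^v(q)$. Invoking the unit property of $[M]$ as a black box is not wrong, but it obscures that nothing ``sums to zero on homology''---each term is already zero. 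Second, your placement of the stabilizer issue is inverted: the $\mathbb{C}^*$-automorphisms are what make the $A\neq 0$ terms vanish, whereas at $A=0$ the moduli space $\mathcal{M}_{J,0}(0)$ of constant maps is simply a transversely-cut-out copy of $M$ (no quotient is taken), and $ev_0\times ev_\infty$ is the diagonal embedding. The preimage of $\mathbf{D}^v(p)\times\mathbf{A}^v(q)$ is then $\mathbf{D}^v(p)\cap\mathbf{A}^v(q)$, which by the Morse--Smale condition is $\{p\}$ if $p=q$ and empty otherwise---so $\Upsilon^{J,0}$ is the identity \emph{on the nose}, not merely up to chain homotopy. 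The paper's remaining work is verifying that the single point $p$ carries sign $+1$, which follows from the coorientation conventions.
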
 

(We do not check separately that $\tilde{\Phi}_{PSS}^{H^+,f}\circ\tilde{\Psi}_{PSS}^{f,H^-}$ induces the identity on $HF_*(H)$, since this follows formally from the corresponding fact about the composition in the opposite order together with the fact that $HF_k(H)$ has the same dimension over $\Lambda_{\uparrow}$ as the grading-$k$
part of the homology of $\hat{\Lambda}_{\uparrow}\otimes_{\kappa}\mathbf{CM}_*(f)$.)

\begin{proof}[Sketch of proof of Proposition \ref{chainmaps}] For the proofs of both propositions we focus on sign issues, taking as given facts that result from standard arguments related to transversality, compactness, and gluing.
	
Write $\partial^f$ for the differential on $\hat{\Lambda}_{\uparrow}\otimes_{\kappa}\mathbf{CM}_*(f)$ and $\partial_H$ for the differential on $CF_*(H)$.  We first consider $\tilde{\Psi}_{PSS}^{H^-,f}$.  Let $[\gamma,w]\in\tilde{P}_k(H)$, $A\in \hat{\Gamma}$, and $q\in \mathrm{Crit}(f)$ with $\mathrm{ind}_f(q)=k+2I_{c_1}(A)-1$.  The coefficients on $T^Aq$ in $\tilde{\Psi}_{PSS}^{H^-,f}\partial_H[\gamma,w]$ and in $\partial^f\tilde{\Psi}_{PSS}^{H^-,f}[\gamma,w]$ enumerate the two types of boundary points of the Gromov-Floer compactification of the one-manifold $\mathcal{M}_{J,H^-}^{\mathrm{out}}([\gamma,w\#(-A)];q)$.  To see that $\tilde{\Psi}_{PSS}^{H^-,f}$ is a chain map, \emph{i.e.} that $\tilde{\Psi}_{PSS}^{H^-,f}\partial_H-\partial^f\tilde{\Psi}_{PSS}^{H^-,f}=0$, we must show that these two types of boundary points contribute to the total signed count of points of $\partial \overline{\mathcal{M}_{J,H^-}^{\mathrm{out}}([\gamma,w\#(-A)];q)}$ with opposite sign.

The evaluation map $ev_{\infty,[\gamma,w\#(-A)]}^{J,H^-}$ extends continuously to a map $\overline{ev}_{\infty,[\gamma,w\#(-A)]}^{J,H^-}$ on the Gromov-Floer compactification of the $(k+2I_{c_1(A)})$-dimensional manifold of outward Floer caps $\mathcal{M}_{J,H^-}^{\mathrm{out}}([\gamma,w\#(-A)])$.  Under our transversality assumptions, the compact set\\ $(\overline{ev}_{\infty,[\gamma,w\#(-A)]}^{J,H^-})^{-1}(\overline{\mathbf{A}^v(q)})\setminus \mathcal{M}_{J,H^-}^{\mathrm{out}}([\gamma,w\#(-A)];q)$ will consist only of elements $u\in \mathcal{M}_{J,H^-}^{\mathrm{out}}([\gamma,w\#(-A)]$ that are mapped to some $\mathbf{A}^v(p)$ with $\mathrm{ind}_f(p)=\mathrm{ind}_f(q)+1$, together with elements $([u_0],u_1)\in \mathcal{M}_{J,H}([\gamma,w\#(-A)],[\gamma',w'\#(-A)])\times \mathcal{M}_{J,H^-}^{\mathrm{out}}([\gamma',w'\#(-A)])$ where  $u_1(\infty)\in \mathbf{A}^v(q)$ and $\mu_H([\gamma',w'\#(-A)])=\mu_H([\gamma,w\#(-A)])-1$.

Write $\hat{\mathbf{A}}^v(q)$ for the union of $\mathbf{A}^v(q)$ together with those strata of its compactification (as recalled before Proposition \ref{boundaryor}) taking the form $\mathbf{A}^v(p)\times\mathbf{M}^v(p,q)$ where $\mathrm{ind}_f(p)-\mathrm{ind}_f(q)=1$.  Thus $\hat{\mathbf{A}}^v(q)$ is a smooth manifold with boundary, whose boundary components are the various $\mathbf{A}^v(p)\times\{[x]\}$ for $[x]\in\mathbf{M}^v(p,q)$, each of whose boundary coorientations is given, according to Proposition \ref{boundaryor}, by $-\ep([x])$ times the coorientation of $\mathbf{A}^v(p)$ that is used to define the orientation of the zero-manifold $\mathcal{M}_{J,H^-}^{\mathrm{out}}([\gamma,w\#(-A)];p)$ and hence the coefficient of $T^Ap$ in $\tilde{\Psi}_{PSS}^{H^-,f}([\gamma,w])$. Here $\epsilon([x])$ is the sign with which $[x]$ contributes to the coefficient of $q$ in $\partial^fp$.  Therefore, using (\ref{prebdry}), the coefficient of $T^Aq$ in $\partial^f\tilde{\Psi}_{PSS}^{H^-,f}[\gamma,w]$ is $-1$ times the signed number of ends of the one-manifold $\mathcal{M}_{J,H^-}^{\mathrm{out}}([\gamma,w\#(-A)];q)$ whose images under $\mathrm{ev}^{J,H^-}_{\infty,[\gamma,w\#(-A)]}$ limit to some $\mathbf{A}^f(p)$ with $\mathrm{ind}_f(p)=\mathrm{ind}_f(q)+1$.  (Here the sign of an end of an oriented one-manifold  is positive if the orientation agrees with the direction pointing out towards the end, and negative otherwise.  Note that we get a different such end for each $[x]\in\mathbf{M}^v(p,q)$.)

Let $\hat{\mathcal{M}}^{\mathrm{out}}_{J,H^-}([\gamma,w\#(-A)])$ denote the union of $\mathcal{M}^{\mathrm{out}}_{J,H^-}([\gamma,w\#(-A)])$ with those strata of its Gromov-Floer compactification taking the form $\mathcal{M}_{J,H}([\gamma,w\#(-A)],[\gamma',w'\#(-A)])\times \mathcal{M}_{J,H^-}^{\mathrm{out}}([\gamma',w'\#(-A)])$ where $\mu_H([\gamma',w'\#(-A)])=\mu_H([\gamma,w\#(-A)])-1$.  This is a smooth manifold with boundary, and the remaining ends of $\mathcal{M}_{J,H^-}^{\mathrm{out}}([\gamma,w\#(-A)];q)$ (besides those discussed in the previous paragraph) correspond to the subset $(\overline{ev}_{\infty,[\gamma,w\#(-A)]}^{J,H^-}|_{\partial \hat{\mathcal{M}}_{J,H^-}^{\mathrm{out}}([\gamma,w\#(-A)])})^{-1}(\mathbf{A}^v(q))$ of $\partial \overline{\mathcal{M}_{J,H^-}^{\mathrm{out}}([\gamma,w\#(-A)];q)}$.  By (\ref{prebdry}), the orientation of  $(\overline{ev}_{\infty,[\gamma,w\#(-A)]}^{J,H^-}|_{\partial \hat{\mathcal{M}}_{J,H^-}^{\mathrm{out}}([\gamma,w\#(-A)])})^{-1}(\mathbf{A}^v(q))$ as a subset of $\partial \overline{\mathcal{M}_{J,H^-}^{\mathrm{out}}([\gamma,w\#(-A)];q)}$ coincides with its orientation as a preimage. Now $\partial
\hat{\mathcal{M}}_{J,H^-}^{\mathrm{out}}([\gamma,w\#(-A)])$ is the disjoint union of the manifolds  $V_{[u_0],[\gamma',w']}:=\{[u_0]\}\times \mathcal{M}_{J,H^-}^{\mathrm{out}}([\gamma',w'\#(-A)])$ as $[\gamma',w'\#(-A)]$ varies through elements of $\tilde{P}_{k+2I_{c_1}(A)-1}(H)$ and $[u_0]$ varies through elements of the zero-manifold $\mathcal{M}_{J,H}([\gamma,w\#(-A)],[\gamma',w'\#(-A)])$ (which in turn is the same as $\mathcal{M}_{J,H}([\gamma,w],[\gamma',w']))$.  The signed number of points in $(\overline{ev}^{J,H^-}_{\infty,[\gamma,w\#(-A)]}|_{V_{[u_0],[\gamma',w']}})^{-1}(\mathbf{A}^v(q))$ is then $\delta([u_0])$ times the coefficient of $T^Aq$ in $\tilde{\Psi}^{H^-,f}_{PSS}[\gamma',w'\#(-A)]$, where $\delta([u_0])$ is the sign relating the orientation of $V_{[u_0],[\gamma',w']}=\{[u_0]\}\times \mathcal{M}_{J,H^-}^{\mathrm{out}}([\gamma',w'\#(-A)])$ as a codimension-zero subset of $\partial\hat{\mathcal{M}}_{J,H^-}^{\mathrm{out}}([\gamma,w\#(-A)])$ to the orientation of $\mathcal{M}_{J,H^-}^{\mathrm{out}}([\gamma',w'])$.  

We claim that $\delta([u_0])$ coincides with the sign $\ep([u_0])$ with which $[u_0]$ contributes to the coefficient of $[\gamma',w']$ in $\partial_H[\gamma,w]$ (equivalently, to the coefficient of $[\gamma',w'\#(-A)]$ in $\partial_H[\gamma,w\#(-A)]$).  For $S\in \R$ let $u_S\in\tilde{\mathcal{M}}_{J,H}([\gamma,w],[\gamma',w'])$ denote the result of precomposing some representative $u_0$ of $[u_0]$ with the translation $(s,t)\mapsto (s+S,t)$.  A collar for $V_{[u_0],[\gamma',w']}$ is formed, for some $S_0\gg 0$, by means of a gluing map $\{u_S|S>S_0\}\times \mathcal{M}_{J,H^-}^{\mathrm{out}}([\gamma',w'\#(-A)])\hookrightarrow \mathcal{M}_{J,H^-}^{\mathrm{out}}([\gamma,w\#(-A)])$.  Our construction of orientations ensures that this map is orientation-preserving (using the orientation on $\{u_S|S>S_0\}$ as an open subset of $\tilde{\mathcal{M}}_{J,H}([\gamma,w],[\gamma',w'])$), and an outer normal vector for the collar is given by the image of $\frac{du_S}{dS}$.  Since $\frac{du_S}{dS}$ agrees with the orientation of $\tilde{\mathcal{M}}_{J,H}([\gamma,w],[\gamma',w'])$ if $\ep([u_0])=1$ and disagrees otherwise, it follows that $\delta([u_0])=\ep([u_0])$.  

Thus the signed count of points in $(\overline{ev}^{J,H^-}_{\infty,[\gamma,w\#(-A)]}|_{V_{[u_0],[\gamma',w']}})^{-1}(\mathbf{A}^v(q))$ is the contribution of $[u_0]$ to the coefficient of $[\gamma',w']$ in $\partial_H[\gamma,w]$ times the coefficient of $T^Aq$ in $\tilde{\Psi}_{PSS}^{H^-,f}[\gamma',w']$.  Summing over $[u_0],[\gamma',w']$ then shows that the coefficient of $T^Aq$ in $\tilde{\Psi}_{PSS}^{H^-,f}\partial_H[\gamma,w]$ enumerates with sign the elements of $(\overline{ev}_{\infty,[\gamma,w\#(-A)]}^{J,H^-}|_{\partial \hat{\mathcal{M}}_{J,H^-}^{\mathrm{out}}([\gamma,w\#(-A)])})^{-1}(\mathbf{A}^v(q))$.  Combining this with our earlier discussion of the ends of $\mathcal{M}_{J,H^-}^{\mathrm{out}}([\gamma,w\#(-A)];q)$ whose images under $ev_{\infty,[\gamma,w\#(-A)]}^{J,H^-}$ limit to $\mathbf{A}^v(p)$ with $\mathrm{ind}_f(p)=\mathrm{ind}_f(q)+1$, we conclude that the full set of boundary points of $\overline{\mathcal{M}_{J,H^-}^{\mathrm{out}}([\gamma,w\#(-A)];q)}$ is enumerated with sign by the coefficient of $T^Aq$ in $(\tilde{\Psi}_{PSS}^{H^-,f}\partial_H-\partial^f\tilde{\Psi}_{PSS}^{H^-,f})[\gamma,w]$.  Since the signed count of all the boundary points of $\overline{\mathcal{M}_{J,H^-}^{\mathrm{out}}([\gamma,w\#(-A)];q)}$ is zero for all $[\gamma,w],A$, and $q$, we conclude that $\tilde{\Psi}^{H^-,f}_{PSS}$ is a chain map.

The analysis of $\tilde{\Phi}_{PSS}^{f,H^+}$ is similar though the signs are slightly more complicated.  Given $p\in \mathrm{Crit}(f)$ with $\mathrm{ind}_f(p)=k$, and $[\gamma,w]\in \tilde{P}_{k-1}(H)$, we shall show that the boundary points of the compact oriented one-manifold with boundary $\overline{\mathcal{M}_{J,H^+}^{\mathrm{in}}(p;[\gamma,w])}$ are enumerated by the coefficient of $[\gamma,w]$ in $(-1)^k(\tilde{\Phi}_{PSS}^{f,H^+}\partial^f-\partial_H\tilde{\Phi}_{PSS}^{f,H^+})p$.

The first type of boundary point of $\overline{\mathcal{M}_{J,H^+}^{\mathrm{in}}(p;[\gamma,w])}$ arises from breaking of a sequence of Morse trajectories connecting the critical point $p$ to the image of $ev_{0,[\gamma,w]}^{J,H^+}$, and corresponds to a component of the boundary of the partial compactification $\hat{\mathbf{D}}^v(p)$ of $\mathbf{D}^v(p)$ having the form $\{[x]\}\times \mathbf{D}^v(q)$ where $\mathrm{ind}_f(q)=k-1$ and $[x]\in\mathbf{M}^v(p,q)$.  Such boundary points are enumerated with sign by $\eta([x])$ times the coefficient of $[\gamma,w]$ in $\tilde{\Phi}_{PSS}^{f,H^+}q$, where $\eta([x])$ is the sign of the boundary coorientation of $\{[x]\}\times \mathbf{D}^v(q)$ with respect to the coorientation of $\mathbf{D}^v(q)$ given by our original prescription.  Let us show that $\eta([x])=(-1)^k\ep([x])$ where $\ep([x])$ is the sign with which $[x]$ contributes to the coefficient of $q$ in $\partial^fp$.  Now by Proposition \ref{bdryor}, the \emph{orientation} of $\{x\}\times \mathbf{D}^v(q)$ as a boundary component of $\hat{\mathbf{D}}^v(p)$ is $\ep([x])$ times its original orientation, so if $\mathsf{n}_x$ denotes an outer normal vector field to $ \{x\}\times \hat{\mathbf{D}}^v(q)$ in $\hat{\mathbf{D}}^v(q)$ we have $\langle \mathsf{n}_x\rangle\oplus T\mathbf{D}^v(q)=\ep([x])T\hat{\mathbf{D}}^v(p)|_{\{[x]\}\times \mathbf{D}^v(q)}$ as oriented bundles.  Since, for general critical points $y$, we have $N\mathbf{D}^v(y)\oplus T\mathbf{D}^v(y)=TM|_{\mathbf{D}^v(y)}$, it follows that $N\mathbf{D}^v(q)=\ep([x])N\hat{\mathbf{D}}^v(p)|_{\{[x]\}\times \mathbf{D}^v(q)}\oplus\langle n_x\rangle$ as oriented bundles.  Since by definition the sign $\eta([x])$ is given by $N\mathbf{D}^v(q)=\eta([x])\langle n_x\rangle\oplus N\hat{\mathbf{D}}^v(p)|_{\{[x]\}\times \mathbf{D}^v(q)}$, it follows that \[\eta([x])=(-1)^{\mathrm{rank}(N\mathbf{D}^v(p))}\ep([x])=(-1)^{2m-k}(\ep([x]))=(-1)^k\ep([x]).\]
Consequently, the boundary points of $\overline{\mathcal{M}_{J,H^+}^{\mathrm{in}}(p;[\gamma,w])}$ that arise from Morse trajectory breaking are enumerated by the coefficient of $[\gamma,w]$ in $(-1)^k\tilde{\Phi}^{f,H^+}_{PSS}\partial^fp$.

The other points on the boundary of $\overline{\mathcal{M}_{J,H^+}^{\mathrm{in}}(p;[\gamma,w])}$ comprise the preimages under $ev_{0,[\gamma,w]}^{J,H^+}$ of the strata $\mathcal{M}_{J,H^+}^{\mathrm{in}}(p;[\gamma',w'])\times\{[u_0]\}$ of the compactification of $\mathcal{M}_{J,H^+}^{\mathrm{in}}(p;[\gamma,w])$, where $\mu_H([\gamma',w'])=k$ and $[u_0]\in\mathcal{M}_{J,H}([\gamma',w'],[\gamma,w])$.  Using (\ref{prebdry}), these are enumerated with sign by $\delta([u_0])$ times the coefficient of $[\gamma',w']$ in $\tilde{\Phi}_{PSS}^{f,H^+}p$, where $\delta([u_0])$ is the sign relating the orientation of 
$\mathcal{M}_{J,H^+}^{\mathrm{in}}([\gamma',w'])\times\{[u_0]\}$ as part of the boundary of the compactification of $\mathcal{M}_{J,H^+}^{\mathrm{in}}([\gamma,w])$ to the original orientation of $\mathcal{M}_{J,H^+}^{\mathrm{in}}([\gamma',w'])$.  Letting $u_0$ be a representative in $\tilde{\mathcal{M}}_{J,H}([\gamma',w'],[\gamma,w])$ for $[u_0]$ and letting $u_S$ denote the precomposition of $u_0$ with $(s,t)\mapsto (s+S,t)$, a collar for $\mathcal{M}_{J,H^+}^{\mathrm{in}}([\gamma',w'])\times\{[u_0]\}$ is given, for $S_0\gg 0$, by an orientation-preserving gluing map $\mathcal{M}_{J,H^+}^{\mathrm{in}}([\gamma',w'])\times \{u_S|S<-S_0\}\hookrightarrow \mathcal{M}_{J,H^+}^{\mathrm{in}}([\gamma,w])$, with the outer normal vector corresponding to $-\frac{du_S}{dS}$.  Denoting by $\epsilon([u_0])$ the sign with which $[u_0]$ contributes to the coefficient of $[\gamma,w]$ in $\partial_H[\gamma',w']$, this outer normal agrees with the orientation of $\tilde{\mathcal{M}}_{J,H}([\gamma',w'],[\gamma,w])$ iff $\epsilon([u_0])=-1$.  So after incorporating a sign $(-1)^{\dim \mathcal{M}_{J,H^+}^{\mathrm{in}}([\gamma',w'])}=(-1)^{2m-k}$ from moving $-\frac{d u_S}{dS}$ from the last to the first slot of an oriented basis for a tangent space to $\mathcal{M}_{J,H^+}^{\mathrm{in}}([\gamma',w'])\times \{u_S|S<-S_0\}$, we find that $\delta([u_0])=(-1)^{k+1}\ep([u_0])$.  Thus the boundary points of $\overline{\mathcal{M}_{J,H^+}^{in}(p;[\gamma,w])}$ that arise from breaking of Floer trajectories are enumerated with sign by the coefficient of $[\gamma,w]$ in $(-1)^{k+1}\partial_H\tilde{\Phi}_{PSS}^{f,H^+}p$.

By combining this with our earlier count of the boundary points associated to Morse trajectory breaking, and using that the signed count of points on the boundary of a compact oriented one-manifold is zero, we deduce that $(-1)^k\tilde{\Phi}_{PSS}^{f,H^+}\partial^f+(-1)^{k+1}\partial_H\tilde{\Phi}_{PSS}^{f,H^+}=0$ and hence that $\tilde{\Phi}_{PSS}^{f,H^+}$ is a chain map.
\end{proof}

\begin{proof}[Sketch of proof of Proposition \ref{psscompose}]
Let $p,q$ be critical points of $f$ and $A\in \hat{\Gamma}$ with $\mathrm{ind}_f(p)=k$ and $\mathrm{ind}_f(q)=k+2I_{c_1}(A)$. The coefficient of $T^Aq$ in $\tilde{\Psi}^{H^-,f}_{PSS}\tilde{\Phi}^{f,H^+}_{PSS}p$ is the sum, over $[\gamma,w]\in \tilde{P}_k(H)$, of the signed counts of points in the oriented zero-manifolds \begin{align}\label{prodpreim} & \mathcal{M}_{J,H^+}^{\mathrm{in}}(p;[\gamma,w])\times \mathcal{M}_{J,H^-}^{\mathrm{out}}([\gamma,w\#(-A)];q)=(ev_{0,[\gamma,w]}^{J,H^+})^{-1}(\mathbf{D}^v(p))\times (ev_{\infty,[\gamma,w\#(-A)]}^{J,H^-})^{-1}(\mathbf{A}^v(q)) \\ & \qquad \qquad \subset \mathcal{M}_{J,H^+}^{\mathrm{in}}([\gamma,w])\times \mathcal{M}_{J,H^-}^{\mathrm{out}}([\gamma,w\#(-A)]). \nonumber
\end{align}	
We observe that (\ref{prodpreim}) is equal, as an \emph{oriented} zero-manifold, to the preimage under the product map $ev_{0,[\gamma,w]}^{J,H^+}\times  ev_{\infty,[\gamma,w\#(-A)]}^{J,H^-}\co \mathcal{M}_{J,H^+}^{\mathrm{in}}([\gamma,w])\times \mathcal{M}_{J,H^-}^{\mathrm{out}}([\gamma,w\#(-A)])\to M\times M$ of the submanifold $\mathbf{D}^v(p)\times \mathbf{A}^v(q)$ of $M\times M$, cooriented using the obvious identification $N\left(\mathbf{D}^v(p)\times \mathbf{A}^v(q) \right)\cong N\mathbf{D}^v(p)\oplus N\mathbf{A}^v(q)$. Indeed, by definition, a point $(u_0,u_{\infty})\in (ev_{0,[\gamma,w]}^{J,H^+}\times  ev_{\infty,[\gamma,w\#(-A)]}^{J,H^-})^{-1}(\mathbf{D}^v(p)\times\mathbf{A}^v(q))$ is positively-oriented iff the derivative of $ev_{0,[\gamma,w]}^{J,H^+}\times  ev_{\infty,[\gamma,w\#(-A)]}^{J,H^-}$ at $(u_0,u_{\infty})$ maps $T_{u_0}\mathcal{M}_{J,H^+}^{\mathrm{in}}([\gamma,w])\times T_{u_{\infty}}\mathcal{M}_{J,H^-}^{\mathrm{out}}([\gamma,w\#(-A)])$ orientation-preservingly to $N_{u_0(0)}\mathbf{D}^v(p)\oplus N_{u_{\infty}(\infty)}\mathbf{A}^v(q)$, which holds iff $u_0$ and $u_{\infty}$ have the same orientation signs as elements of $\mathcal{M}_{J,H^+}^{\mathrm{in}}([\gamma,w])$ and $\mathcal{M}_{J,H^-}^{\mathrm{out}}([\gamma,w\#(-A)])$, \emph{i.e.} iff $(u_0,u_1)$ is positively-oriented as an element of $\mathcal{M}_{J,H^+}^{\mathrm{in}}([\gamma,w])\times \mathcal{M}_{J,H^-}^{\mathrm{out}}([\gamma,w\#(-A)])$.

Identify $\C P^1$ with $\C\cup\{\infty\}$, which contains both $\C$ and $\C^-$.  Given a suitably generic smooth map $\mathcal{H}\co \C P^1\times M\to\R$ having support contained in $(\C\setminus\{0\})\times M$, and given $B\in\hat{\Gamma}$, let $\mathcal{M}_{J,\mathcal{H}}(B)$ denote the space of smooth maps $u\co \C P^1\to M$ that represent $B$ in $\hat{\Gamma}$, are $J$-holomorphic at $0$ and $\infty$, and, on $\C\setminus\{0\}$ with its usual polar coordinates $r,\theta$, obey \[ 2\pi r\frac{\partial u}{\partial r}+J(u)\left(2\pi\frac{\partial u}{\partial \theta}-X_{\mathcal{H}(re^{i\theta},\cdot)}(u)\right)=0.\]  
For generic $J,\mathcal{H}$ this is a smooth manifold of dimension $2m+2I_{c_1}(B)$, carrying an orientation induced by our prescription for orienting determinant lines of admissible $\delbar$-operators over $\C P^1$ by continuation from complex-linear operators.  We write $ev_{0,B}^{J,\mathcal{H}},ev_{\infty,B}^{J,\mathcal{H}}\co \mathcal{M}_{J,\mathcal{H}}(B)\to M$ for the maps given by $u\mapsto u(0)$ and $u\mapsto u(\infty)$; generically, these will be transverse to all ascending and descending manifolds of critical points of $f$.  Due to the semipositivity of $(M,\omega)$, all boundary strata of the Gromov compactification of $\mathcal{M}_{J,\mathcal{H}}(B)$ will generically have codimension at least two.

For $J,\mathcal{H}$ satisfying these transversality properties, we may define a map $\Upsilon^{J,\mathcal{H}}\co \hat{\Lambda}_{\uparrow}\otimes_{\kappa}\mathbf{CM}_*(f)\to \hat{\Lambda}_{\uparrow}\otimes_{\kappa}\mathbf{CM}_*(f)$ 
by extending linearly from, for $p\in\mathrm{Crit}(f)$ with index $k$, \[ \Upsilon^{J,\mathcal{H}}p=\sum_{\substack{\tiny q\in \mathrm{Crit}(f),B\in\hat{\Gamma}\\ \mathrm{ind}_f(q)=k+2I_{c_1}(B)}}\#\left(\left(ev_{0,B}^{J,\mathcal{H}}\times ev_{\infty,B}^{J,\mathcal{H}}\right)^{-1}(\mathbf{D}^v(p)\times \mathbf{A}^v(q))\right)T^Bq. \]  This can be verified to be a chain map by standard arguments together with a sign analysis like those appearing in parts of the proof of the preceding proposition; moreover, a standard argument involving one-parameter families of choices of $(J,\mathcal{H})$ shows that different such choices yield  homotopic chain maps $\Upsilon^{J,\mathcal{H}}$.

Returning to our consideration of $\tilde{\Psi}^{H^-,f}_{PSS}\tilde{\Phi}^{f,H^+}_{PSS}$, recall that the functions $H^-\co \C^-\times M\to \R$ and $H^+\co \C\times M\to \R$ that are used to define the PSS maps satisfy $H^-(re^{2\pi it},x)=H(t,x)$ for $r<\frac{1}{2}$ and $H^+(re^{2\pi it},x)=H(t,x)$ for $r<2$.  So if $S>2$, we may define $\mathcal{H}_S\co \C P^1\times M\to \R$ by setting $\mathcal{H}_S(z,x)=H^+(Sz,x)$ for $|z|\leq 1$ and $\mathcal{H}_S(z,x)=H^-(z/S,x)$ for $|z|\geq 1$, so that $\mathcal{H}_S$ restricts to $\{\frac{2}{S}<|z|<\frac{S}{2}\}\times M$ as $(re^{2\pi it},x)\mapsto H(t,x)$.  

For $A\in\hat{\Gamma}$, a standard gluing construction gives rise, for all sufficiently large $S$, to a diffeomorphism \[ \bigsqcup_{\substack{{\tiny k\in\Z},\\{\tiny [\gamma,w]\in\mathcal{P}_k(H)}}}\mathcal{M}_{J,H^+}^{\mathrm{in}}([\gamma,w])\times \mathcal{M}_{J,H^-}^{\mathrm{out}}([\gamma,w\#(-A)])\cong \mathcal{M}_{J,\mathcal{H}_S}(A).\]  Moreover the orientation on $\mathcal{M}_{J,H^+}^{\mathrm{in}}([\gamma,w])$ was constructed in just such a way as to make this diffeomorphism orientation-preserving.  In the limit as $S\to\infty$, the evaluation maps $ev_{0,[\gamma,w]}^{J,H^+}\times  ev_{\infty,[\gamma,w\#(-A)]}^{J,H^-}\co \mathcal{M}_{J,H^+}^{\mathrm{in}}([\gamma,w])\times \mathcal{M}_{J,H^-}^{\mathrm{out}}([\gamma,w\#(-A)])\to M\times M$ agree under these diffeomorphisms with $ev_{0,A}^{J,\mathcal{H}}\times ev_{\infty,A}^{J,\mathcal{H}}\co \mathcal{M}_{J,\mathcal{H}_S}(A)\to M\times M$.  Hence, for any $p,q,A$ as at the start of the proof, if $S$ is sufficiently large the coefficient of $T^Aq$ in $\tilde{\Psi}^{H^-,f}_{PSS}\tilde{\Phi}^{f,H^+}_{PSS}p$ agrees with the corresponding coefficient in $\Upsilon^{J,\mathcal{H}_S}p$.  

Thus  $\tilde{\Psi}^{H^-,f}_{PSS}\tilde{\Phi}^{f,H^+}_{PSS}$ induces the same map on homology as  $\Upsilon^{J,\mathcal{H}_S}$ for large $S$.  But the map induced on homology by $\Upsilon^{J,\mathcal{H}}$ is independent of $\mathcal{H}$, and so may be determined by setting $\mathcal{H}=0$. We will now show that $\Upsilon^{J,0}$ is the identity, which will complete the proof.  Assuming that $J$ is suitably generic, since $\mathcal{M}_{J,0}(A)$ is acted on locally-freely by the two-dimensional group of automorphisms of $\mathbb{C}P^1$ which fix $0$ and $\infty$, the image $(ev_{0,A}^{J,0}\times ev_{\infty,A}^{J,0})(\mathcal{M}_{J,0}(A))$ will be contained in the image of a pseudocycle of dimension at most $2m+2I_{c_1}(A)-2$ which misses the codimension-$(2m+2I_{c_1}(A))$ submanifolds $\mathbf{D}^f(p)\times\mathbf{A}^f(q)$ whenever $\mathrm{ind}_f(q)-\mathrm{ind}_f(p)=2I_{c_1}(A)$.  Thus \[ \Upsilon^{J,0}p=\sum_{\mathrm{ind}_f(q)=\mathrm{ind}_f(p)}\#\left(\left(ev_{0,0}^{J,0}\times ev_{\infty,0}^{J,0}\right)^{-1}(\mathbf{D}^v(p)\times\mathbf{A}^v(q))\right).\]  
The domain $\mathcal{M}_{J,0}(0)$ of $ev_{0,0}^{J,0}\times ev_{\infty,0}^{J,0}$ is just the space of constant maps $\mathbb{C}P^1\to M$, which we identify with $M$, and then $ev_{0,0}^{J,0}\times ev_{\infty,0}^{J,0}$ is identified with the diagonal embedding  $\Delta\co M\to M\times M$. The space $\mathcal{M}_{J',0}(0)$ is cut out transversely for any compatible almost complex structure $J'$ by \cite[Lemma 6.7.6]{MS}; using this fact and a continuation argument involving a choice of $J'$ that is integrable on some neighborhood, it is not hard to check that the orientation of $\mathcal{M}_{J,0}(0)$ (defined using continuation to complex-linear operators) agrees with the orientation of $M$ induced by its symplectic structure.
	
	By the Morse-Smale condition, the preimage under the diagonal embedding of $\mathbf{D}^v(p)\times\mathbf{A}^v(q)$ is empty when $p,q$ are distinct critical points of equal index, and it consists only of $p$ if $p=q$.  So to confirm that $\Upsilon^{J,0}$ is the identity we just need to check that the orientation sign of each critical point $p$ as an element of $\Delta^{-1}(\mathbf{D}^v(p)\times \mathbf{A}^v(p))$ is $+1$ rather than $-1$.  This is equivalent to the statement that the composition \[ T_pM \to T_pM\oplus T_pM\to N_p\mathbf{D}^v(p)\oplus N_p\mathbf{A}^v(p)\] is orientation-preserving, where the first map is the diagonal inclusion and the second map is the direct sum of the quotient projections.   This holds because, by our conventions, the orientations of $N_p\mathbf{D}^v(p)$ and $N_p\mathbf{A}^v(p)$ are given by their identifications with (respectively) $T_p\mathbf{A}^v(p)$ and $T_p\mathbf{D}^v(p)$, and the orientation of $T_p\mathbf{A}^v(p)$ is chosen so that $T_pM=T_p\mathbf{A}^v(p)\oplus T_p\mathbf{D}^v(p)$ as oriented vector spaces.
\end{proof}

\subsubsection{Time reversal}  Given a generic nondegenerate Hamiltonian $H\co S^1\times M\to \R$ and Morse function $f\co M\to \R$, the preceding subsection describes a prescription for orienting the moduli spaces underlying the boundary operator $\partial_H$ and the PSS maps $\tilde{\Phi}_{PSS}^{f,H^+}$ and $\tilde{\Psi}_{PSS}^{H^-,f}$.  The prescription takes as input orientations of determinant lines of admissible $\delbar$-operators over $\C^-$ associated to arbitrarily-chosen right-caps $w_{\gamma}^{R}$ of the various one-periodic orbits $\gamma\in P(H)$; for the PSS maps we also require the orientations of the descending manifolds $\mathbf{D}^v(p)$ that are used in the construction of the Morse complex of $f$.

Consider now replacing $H$ by its time-reversal $\hat{H}\co S^1\times M\to \R$ given by $\hat{H}(t,x)=-H(1-t,x)$, so that the time-one maps of $H$ and $\hat{H}$ are inverse to each other, and the one-periodic orbits of the Hamiltonian flow of $\hat{H}$ are given by $\bar{\gamma}(t)=\gamma(1-t)$ as $\gamma$ varies through $P(H)$.  We also replace $f$ by $-f$.  We could apply the prescription of the previous subsection to the data $(\hat{H},-f)$, with arbitrary orientations of the appropriate determinant lines and descending manifolds as input, to construct the Floer boundary operator $\partial_{\hat{H}}$ and the PSS maps $\tilde{\Phi}_{PSS}^{-f,\hat{H}^+}$ and $\tilde{\Psi}_{PSS}^{\hat{H}^-,-f}$.  However, in order to obtain the relationships in Section \ref{floersect} between $CF_*(H)$ and $CF_*(\hat{H})$ we shall insist that the input orientations used for $\hat{H}$ and $-f$ be related to those used for $H$ and $f$ in particular ways.  (If the input orientations are not related in these ways,   additional signs might be needed in Section \ref{floersect}.)

First of all, consistently with Proposition \ref{wor}, if $v$ is the gradient-like vector field for $f$ used to construct $\mathbf{CM}_*(f)$ we use $-v$ as the gradient-like vector  field for $-f$  in the construction of $\mathbf{CM}_*(-f)$ and we orient the manifolds $\mathbf{D}^{-v}(p)=\mathbf{A}^v(p)$ by the condition that $T_p\mathbf{A}^v(p)\oplus T_p\mathbf{D}^v(p)=T_pM$ as oriented vector spaces.

As for the relevant $\delbar$-operators over $\C^-$, first note that if $\bar{\gamma}\in P(\hat{H})$, then a right-cap $w^I\co \mathbb{D}^-\to M$ for $\bar{\gamma}$ may be obtained by starting with a left-cap $w\co \mathbb{D}\to M$ for $\gamma\in P(H)$ (where $\bar{\gamma}(t)=\gamma(1-t)$) and then setting $w^I(z)=w(1/z)$. (Thus $w^I=\bar{w}^R$ where $\bar{w}(z)=w(\bar{z})$.)  A symplectic trivialization of $\mathcal{T}_w\co w^*TM\xrightarrow{\sim}\C\times \mathbb{R}^{2m}$ induces a trivialization $\mathcal{T}_{w^I}\co w^{I*}TM\xrightarrow{\sim}\C^-\times \mathbb{R}^{2m}$ by composing with $(z,\vec{v})\mapsto (1/z,\vec{v})$.  If $B_{\gamma}\co S^1\mapsto \mathfrak{sp}_{2m}$ generates the linearized Hamiltonian flow of $H$ along $\gamma$ in terms of the trivialization  $\mathcal{T}_w$, then the linearized Hamiltonian flow of $\hat{H}$ along $\bar{\Gamma}$ is generated in terms of $\mathcal{T}_{w^I}$ by $B_{\bar{\gamma}}(t)=-B_{\gamma}(1-t)$. We can then form an admissible $\delbar$-operator $D_{\bar{w}}^{-}$ over $\C^-$ as in Section \ref{dets} by choosing an arbitrary $B_{\bar{\gamma}}^-\co \C^-\to\mathfrak{gl}_{2m}$ with $B_{\bar{\gamma}}^-(e^{2\pi(s+it)})=B_{\bar{\gamma}}(t)$ for $s\ll 0$ and  $B_{\bar{\gamma}}^-(e^{2\pi(s+it)})=0$ for $s\gg 0$, and taking $D_{\bar{w}}^{-}=2\pi\left(r\frac{\partial}{\partial r}+J_0\frac{\partial}{\partial \theta}\right)-B_{\bar{\gamma}}^-(re^{i\theta})$.  Setting $B_{\gamma}^{+}(z)=-B_{\bar{\gamma}}^-(1/z)$, we obtain a similar operator $D_{w}^{+}=2\pi\left(r\frac{\partial}{\partial r}+J_0\frac{\partial}{\partial \theta}\right)-B_{\gamma}^+(re^{i\theta})$ over $\C$.  

Now, assuming that our orientation prescriptions for $CF_*(H)$ have already been chosen, these determine an orientation for $\mathrm{Det}(D_{w}^{+})$.  Namely, our orientation prescription directly gives an orientation for the determinant line of the admissible $\delbar$-operator $D_{w}^{-}$  over $\C^-$ given by replacing $B_{\gamma}^{+}(z)$ in the formula for $D_{w}^{+}$ by $B_{\gamma}^{+}(1/\bar{z})$, and then $\mathrm{Det}(D_{w}^{+})$ is oriented by requiring  the gluing map $\mathrm{Det}(D_{w}^{+})\otimes \mathrm{Det}(D_{w}^{-})\to \mathrm{Det}(D_w^+\#_S D_w^-)$ to be orientation-preserving for large $S$, where the admissible $\delbar$-operator $D_w^+\#_S D_w^-$ over $\C P^1$ has its determinant line oriented by continuation to a complex-linear operator.  But the biholomorphism $z\mapsto 1/z$ between $\C$ and $\C^-$ is easily seen to induce an isomorphism $\mathrm{Det}(D_{\bar{w}}^-)\cong \mathrm{Det}(D_w^+)$.  We hereby orient the determinant lines $\mathrm{Det}(D_{\bar{w}}^-)$ for cappings $\bar{w}$ of the various elements $\bar{w}$ of $P(\hat{H})$ by requiring these isomorphisms  $\mathrm{Det}(D_{\bar{w}}^-)\cong \mathrm{Det}(D_w^+)$ to be orientation-preserving.  

This suffices to determine the orientations of the spaces involved in the Floer differential and PSS maps for $CF_*(\hat{H})$, via the procedures described in Section \ref{caps}.  Namely, spaces of outward Floer caps $\mathcal{M}_{J,\hat{H}^-}^{\mathrm{out}}([\bar{\gamma},\bar{w}])$ are oriented using continuation from the orientations of $\mathrm{Det}(D_{\bar{w}}^-)$ that we have just chosen.  For elements $u$ of a Floer trajectory space $\tilde{\mathcal{M}}_{J,\hat{H}}([\bar{\gamma},\bar{w}],[\bar{\gamma}',\bar{w}']))$, the linearization of the Floer equation at $u$ is an admissible $\delbar$-operator $D_u$ over $\R\times S^1$ whose determinant line is oriented by the condition that the gluing map $\mathrm{Det}(D_u)\otimes\mathrm{Det}(D_{\bar{w}'}^-)\to \mathrm{Det}(D_u\#_S D_{\bar{w}'}^-)$ is orientation-preserving with respect to the orientation on the codomain induced by continuation from the chosen orientation on $\mathrm{Det}(D_{\bar{w}}^-)$.  Finally, inward Floer cap spaces $\mathcal{M}_{J,H^+}^{\mathrm{in}}([\bar{\gamma},\bar{w}])$ are oriented using continuation from the orientations on determinant lines of admissible $\delbar$-operators $D_{\bar{w}}^+$ over $\C$, which are determined via the condition that the gluing map $\mathrm{Det}(D_{\bar{w}}^+)\otimes 
\mathrm{Det}(D_{\bar{w}}^-)\to \mathrm{Det}(D_{\bar{w}}^+\#_SD_{\bar{w}}^-)$ is orientation-preserving, using the previously-chosen orientation of 
$\mathrm{Det}(D_{\bar{w}}^-)$ and the standard orientation on 
$\mathrm{Det}(D_{\bar{w}}^+\#_SD_{\bar{w}}^-)$ induced by continuation from a complex-linear operator.

For $\gamma,\gamma'\in P(H)$ with left-caps $w,w'$, we have diffeomorphisms of Floer trajectory spaces $\mathbb{I}_{\mathrm{cyl}}\co \tilde{\mathcal{M}}_{J,H}([\gamma,w],[\gamma',w'])\to  \tilde{\mathcal{M}}_{J,\hat{H}}([\bar{\gamma}',\bar{w}'],[\bar{\gamma},\bar{w}])$ defined by $(\mathbb{I}_{\mathrm{cyl}}u)(s,t)=u(-s,1-t)$ for $(s,t)\in \R\times S^1$.  Similarly, if $H^+\co \C\times M\to \R$, $H^-\co\C^-\times M\to \R$, $\hat{H}^+\co \C\times M\to \R$, and $\hat{H}^-\co \C^-\times M\to \R$ are related by $\hat{H}^{\pm}(z,x)=H^{\mp}(1/z,x)$ we have diffeomorphisms $\mathbb{I}_{\pm}\co \mathcal{M}_{J,H^+}^{\mathrm{in}}([\gamma,w])\to \mathcal{M}_{J,\hat{H}^-}^{\mathrm{out}}([\bar{\gamma},\bar{w}])$ and $\mathbb{I}_{\mp}\co \mathcal{M}_{J,H^-}^{\mathrm{out}}([\gamma,w])\to \mathcal{M}_{J,\hat{H}^+}^{\mathrm{in}}([\bar{\gamma},\bar{w}])$, each of which sends a map $u$ to its precomposition with $z\mapsto 1/z$.

\begin{prop}\label{floerflip}
	The diffeomorphisms $\mathbb{I}_{\mathrm{cyl}},\mathbb{I}_{\pm},\mathbb{I}_{\mp}$ behave as follows with respect to orientations:
	\begin{itemize}
	\item[(i)] $\mathbb{I}_{\pm}\co \mathcal{M}_{J,H^+}^{\mathrm{in}}([\gamma,w])\to \mathcal{M}_{J,\hat{H}^-}^{\mathrm{out}}([\bar{\gamma},\bar{w}])$ is orientation-preserving.
	\item[(ii)] $\mathbb{I}_{\mp}\co \mathcal{M}_{J,H^-}^{\mathrm{out}}([\gamma,w])\to \mathcal{M}_{J,\hat{H}^+}^{\mathrm{in}}([\bar{\gamma},\bar{w}])$ affects orientations by the sign $(-1)^{\mu_H([\gamma,w])}$.
	\item[(iii)] $\mathbb{I}_{\mathrm{cyl}}\co \tilde{\mathcal{M}}_{J,H}([\gamma,w],[\gamma',w'])\to  \tilde{\mathcal{M}}_{J,\hat{H}}([\bar{\gamma}',\bar{w}'],[\bar{\gamma},\bar{w}])$ affects orientations by the sign $(-1)^{\mu_H([\gamma,w])(\mu_H([\gamma,w])-\mu_{H}([\gamma',w']))}$.	
	\end{itemize}
\end{prop}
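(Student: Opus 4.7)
The plan is to reduce each statement to a computation of the sign of an isomorphism between determinant lines of admissible $\delbar$-operators, induced by the relevant biholomorphism ($z\mapsto 1/z$ between $\C$ and $\C^-$, or $\sigma(s,t)=(-s,1-t)$ on $Z$). Under our transversality hypotheses, each moduli space is cut out transversely, so its tangent space at an element is the kernel of the linearized operator $D$ (with $\mathrm{coker}(D)=0$), and the orientation of the moduli space matches that of $\mathrm{Det}(D)$. Since all four orientation conventions (for $\mathcal{M}^{\mathrm{in}}$ and $\mathcal{M}^{\mathrm{out}}$ for $H$ and $\hat H$, and for the Floer trajectory spaces of $H$ and $\hat H$) are propagated by continuation and gluing from the initial arbitrary orientations of $\mathrm{Det}(D_{w_\gamma}^-)$ for $\gamma\in P(H)$, and since each $\mathbb{I}_\ast$ is induced by a biholomorphism that intertwines the corresponding linearized operators, each statement reduces to determining how the biholomorphism acts on determinant lines.

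Statement (i) is then essentially tautological: the orientation of $\mathrm{Det}(D_{\bar w}^-)$ was defined in the paragraph preceding the proposition precisely to make the biholomorphism-induced map $\mathrm{Det}(D_w^+)\to\mathrm{Det}(D_{\bar w}^-)$ orientation-preserving, and $\mathbb{I}_\pm$ is obtained from this map by the continuation/gluing construction relating $D_w^+$ and $D_{\bar w}^-$ to linearizations at points of the respective moduli spaces. For (ii), abbreviating $k=\mu_H([\gamma,w])$, the task is to compute the sign $\epsilon$ of $\mathrm{Det}(D_w^-)\to\mathrm{Det}(D_{\bar w}^+)$. I would consider the square
\[
\xymatrix{ \mathrm{Det}(D_w^+)\otimes\mathrm{Det}(D_w^-)\ar[r]\ar[d] & \mathrm{Det}(D_w^+\#_SD_w^-)\ar[d] \\ \mathrm{Det}(D_{\bar w}^-)\otimes\mathrm{Det}(D_{\bar w}^+)\ar[r] & \mathrm{Det}(D_{\bar w}^+\#_SD_{\bar w}^-) }
\]
The top horizontal map is the standard gluing iso (orientation-preserving), and the right vertical map is induced by $z\mapsto 1/z$ on $\C P^1$ and is orientation-preserving because both sides are oriented by continuation to complex-linear operators. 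The left vertical map is the tensor of the iso from (i) (orientation-preserving) with the desired map (sign $\epsilon$). The bottom map factors as the Koszul swap $\mathrm{Det}(D_{\bar w}^-)\otimes\mathrm{Det}(D_{\bar w}^+)\to\mathrm{Det}(D_{\bar w}^+)\otimes\mathrm{Det}(D_{\bar w}^-)$ (sign $(-1)^{(2m-k)k}$, using the index calculations coming from Conley--Zehnder) followed by the standard gluing iso. Since $k(2m-k)\equiv k\pmod 2$, commutativity forces $\epsilon=(-1)^k$.

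For (iii), writing $k=\mu_H([\gamma,w])$, $k'=\mu_H([\gamma',w'])$, and letting $\epsilon_u$ denote the sign of the biholomorphism-induced iso $\mathrm{Det}(D_u)\to\mathrm{Det}(D_{\hat u})$, I would compute the total sign of the composite $\mathrm{Det}(D_u)\otimes\mathrm{Det}(D_{w'}^-)\to\mathrm{Det}(D_{\bar w'}^+\#_SD_{\hat u})$ in two ways. Route A: apply the orientation-preserving gluing $\alpha_u\co \mathrm{Det}(D_u)\otimes\mathrm{Det}(D_{w'}^-)\cong\mathrm{Det}(D_u\#_SD_{w'}^-)$, then the biholomorphism iso to $\mathrm{Det}(D_{\bar w'}^+\#_SD_{\hat u})$; since the glued operators are continuation-equivalent to $D_w^-$ and $D_{\bar w}^+$, by (ii) this second map has sign $(-1)^k$, for total sign $(-1)^k$. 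Route B: apply the tensor $\epsilon_u\otimes(-1)^{k'}$ of biholomorphism isos, then the Koszul swap (sign $(-1)^{(k-k')k'}$), then the orientation-preserving standard gluing $\mathrm{Det}(D_{\bar w'}^+)\otimes\mathrm{Det}(D_{\hat u})\cong\mathrm{Det}(D_{\bar w'}^+\#_SD_{\hat u})$. Equating the two totals gives $\epsilon_u(-1)^{k'+(k-k')k'}=(-1)^k$, and the mod-$2$ identity $k+k'+(k-k')k'\equiv k(k-k')\pmod{2}$ then yields $\epsilon_u=(-1)^{k(k-k')}$, as claimed.

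The main obstacle will be the bookkeeping of gluing conventions and Koszul signs: in (iii) the glued operator arising naturally from the biholomorphism places the capping factor on the ``wrong'' side relative to the convention used to define $\mathrm{Det}(D_{\hat u})$, and it is precisely this mismatch that produces the dependence of $\epsilon_u$ on both $k$ and $k'$. A related subtlety is that the sign claimed in (ii) for the map $\mathrm{Det}(D_w^-)\to\mathrm{Det}(D_{\bar w}^+)$ extends by continuation to the map $\mathrm{Det}(D_u\#_SD_{w'}^-)\to\mathrm{Det}(D_{\bar w'}^+\#_SD_{\hat u})$; verifying this requires checking that the relevant families of admissible $\delbar$-operators with matching asymptotics are path-connected, which is standard but not entirely automatic and relies on the contractibility of the space of such operators.
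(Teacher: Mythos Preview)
Your proof is correct and, for parts (i) and (ii), follows essentially the same approach as the paper: part (i) is immediate from the defining convention, and part (ii) uses the same commutative square (your bottom arrow simply absorbs the Koszul swap $\Sigma$ that the paper writes as a separate step), yielding the sign $(-1)^{(2m-k)k}=(-1)^k$.

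For part (iii) you take a genuinely different route. The paper glues on the \emph{left} with $D_w^+$, obtaining the diagram
\[
\xymatrix{
\mathrm{Det}(D_w^+)\otimes\mathrm{Det}(D_u)\ar[r]\ar[d]_{\mathbb{I}_*\otimes\mathbb{J}_*} & \mathrm{Det}(D_w^+\#_SD_u)\ar[dd]^{\mathbb{I}_*} \\
\mathrm{Det}(D_{\bar w}^-)\otimes\mathrm{Det}(D_{\hat u})\ar[d]_{\Sigma} & \\
\mathrm{Det}(D_{\hat u})\otimes\mathrm{Det}(D_{\bar w}^-)\ar[r] & \mathrm{Det}(D_{\hat u}\#_SD_{\bar w}^-)
}
\]
in which the right vertical arrow is orientation-preserving (by continuation to the $D_{w'}^+\to D_{\bar w'}^-$ case, i.e.\ essentially by (i)), so the sign comes purely from the swap $\Sigma$, giving $(-1)^{\mathrm{ind}(D_w^+)\mathrm{ind}(D_u)}=(-1)^{(2m-k)(k-k')}=(-1)^{k(k-k')}$ directly. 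Your approach instead glues on the \emph{right} with $D_{w'}^-$ and feeds part (ii) in as a lemma (twice: once for the glued operator with asymptotic $\gamma$, once for $D_{w'}^-$ itself), then unwinds via the mod-$2$ identity $k+k'+(k-k')k'\equiv k(k-k')$. Both are valid; the paper's version is a bit cleaner since it parallels (ii) exactly and reads the sign off a single Koszul swap, whereas your version has the advantage of making the dependence on both endpoints $k,k'$ visible from the start. The ``related subtlety'' you flag---that the sign from (ii) propagates by continuation to the glued operators over $\C^-$ and $\C$---is handled in the paper's framework by the contractibility of the space of admissible $\delbar$-operators with fixed asymptotics, which you correctly identify as the needed input.
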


\begin{remark}
	The sign in (iii) should be regarded as directly analogous to that in Proposition \ref{wor}, noting that in the present context the number $n=\dim M$ appearing in Proposition \ref{wor} is even.
\end{remark}

\begin{proof}
	Each of the statements corresponds to relations between the determinant lines of operators arising as linearizations of the equations that cut out the various moduli spaces.  Part (i) is immediate from our convention that $z\mapsto 1/z$ induces an orientation-preserving isomorphism between the determinant lines of the operators denoted earlier as $D_{w}^{+}$ and $D_{\bar{w}}^{-}$.  
	
	The other signs are related to the following general point (see, \emph{e.g.}, \cite[p. 150]{SeBook}).  Suppose that $A_1\co X_1\to Y_1$ and $A_2\co X_2\to Y_2$ are two Fredholm operators, giving rise to direct sum operators $A_1\oplus A_2\co X_1\oplus X_2\to Y_1\oplus Y_2$ and $A_2\oplus A_1\co X_2\oplus X_1\to Y_2\oplus Y_1$.  One has natural isomorphisms $\mathrm{Det}(A_1)\otimes \mathrm{Det}(A_2)\cong \mathrm{Det}(A_1\oplus A_2)$, and similarly with $A_1$ and $A_2$ reversed. On the other hand there is a straightforward identification of $\mathrm{Det}(A_1\oplus A_2)$ with $\mathrm{Det}(A_2\oplus A_1)$, and in terms of the isomorphisms mentioned in the previous sentence this identification is given by the map $\Sigma\co \mathrm{Det}(A_1)\otimes \mathrm{Det}(A_2)\to \mathrm{Det}(A_2)\otimes \mathrm{Det}(A_1)$ defined by $\Sigma(a_1\otimes a_2)=(-1)^{\mathrm{ind}(A_1)\mathrm{ind}(A_2)}a_2\otimes a_1$.
	
	With this said, the sign in (ii) is the sign associated to the map $\mathrm{Det}(D_w^-)\to \mathrm{Det}(D_{\bar{w}}^+)$ induced by $z\mapsto 1/z$. Now $z\mapsto 1/z$ also induces maps $\mathrm{Det}(D_w^+)\to \mathrm{Det}(D_{\bar{w}}^-)$ and $\mathrm{Det}(D_w^+\#_SD_w^-)\to \mathrm{Det}(D_{\bar{w}}^+\#_SD_{\bar{w}}^-)$, and these maps are both orientation-preserving. (In the first case this just follows from our orientation convention; in the second, by using continuation to appropriate complex-linear operators, it results from $z\mapsto 1/z$ being holomorphic as a map $\C P^1\to \C P^1$.)  We have a commutative diagram \[ \xymatrix{ \mathrm{Det}(D_w^+)\otimes \mathrm{Det}(D_w^-) \ar[d]_{\mathbb{I}_*\otimes\mathbb{I}_*} \ar[r] & \mathrm{Det}(D_w^+\#_SD_w^-)
	 \ar[dd]^{\mathbb{I}_*} \\ \mathrm{Det}(D_{\bar{w}}^-)\otimes \mathrm{Det}(D_{\bar{w}}^+) \ar[d]_{\Sigma} & \\ \mathrm{Det}(D_{\bar{w}}^+)\otimes \mathrm{Det}(D_{\bar{w}}^-) \ar[r] &  \mathrm{Det}(D_{\bar{w}}^+\#_SD_{\bar{w}}^-)
	}\] where the horizontal maps are the gluing maps, which are orientation-preserving, and all maps labeled $\mathbb{I}_*$ are induced (on appropriate domains) by $z\mapsto 1/z$.   Since the right arrow is orientation-preserving, so is the composition on the left. Since $\mathbb{I}_*\co \mathrm{Det}(D_w^+)\to \mathrm{Det}(D_{\bar{w}}^-)$ preserves orientation, it then follows that  $\mathbb{I}_*\co \mathrm{Det}(D_w^-)\to \mathrm{Det}(D_{\bar{w}}^+)$ affects orientation by the sign apprearing in the formula for $\Sigma$, namely $(-1)^{\mathrm{ind}(D_w^+)\mathrm{ind}(D_w^-)}=(-1)^{(2m-\mu_H([\gamma,w]))\mu_H([\gamma,w])}=(-1)^{\mu_H([\gamma,w])}$.
	
	(iii) follows similarly: for $u\in \tilde{\mathcal{M}}([\gamma,w],[\gamma',w'])$ with $D_u$ the linearization of the left-hand-side of the Floer equation at $u$, reversal of the $s$ and $t$ variables induces an isomorphism $\mathbb{J}_*\co \mathrm{Det}(D_u)\to \mathrm{Det}(D_{\mathbb{I}_{\mathrm{cyl}}u})$, while the glued operators $D_w^+\#_SD_{u}$ and $D_{\mathbb{I}_{\mathrm{cyl}}u}\#_SD_{\bar{w}}^-$ are intertwined by $z\mapsto 1/z$. The induced map $\mathbb{I}_*\co \mathrm{Det}(D_w^+\#_SD_{u})\to \mathrm{Det}(D_{\mathbb{I}_{\mathrm{cyl}}u}\#_SD_{\bar{w}}^-)$ preserves orientation, as the orientations on domain and range are related by continuation to the orientations on $\mathrm{Det}(D_{w'}^+)$ and $\mathrm{Det}(D_{\bar{w}'}^-)$ and our orientation conventions dictate that $z\mapsto 1/z$ induces an orientation-preserving map between $\mathrm{Det}(D_{w'}^+)$ and $\mathrm{Det}(D_{\bar{w}'}^-)$.
	We then get a commutative diagram \[ \xymatrix{ \mathrm{Det}(D_w^+)\otimes \mathrm{Det}(D_u) \ar[d]_{\mathbb{I}_*\otimes\mathbb{J}_*} \ar[r] & \mathrm{Det}(D_w^+\#_SD_u)
		\ar[dd]^{\mathbb{I}_*} \\ \mathrm{Det}(D_{\bar{w}}^-)\otimes \mathrm{Det}(D_{\mathbb{I}_{\mathrm{cyl}}u}) \ar[d]_{\Sigma} & \\ \mathrm{Det}(D_{\mathbb{I}_{\mathrm{cyl}}u})\otimes \mathrm{Det}(D_{\bar{w}}^-) \ar[r] &  \mathrm{Det}(D_{\mathbb{I}_{\mathrm{cyl}}u}\#_SD_{\bar{w}}^-)
	}\] where the horizontal gluing maps and both maps denoted $\mathbb{I}_*$ are orientation-preserving; hence $\mathbb{J}_*$ affects orientation by the sign $(-1)^{\mathrm{ind}(D_{w}^+)\mathrm{ind}(D_u)}=(-1)^{(2m-\mu_{H}([\gamma,w]))(\mu_H([\gamma,w])-\mu_H([\gamma',w']))}=(-1)^{\mu_{H}([\gamma,w])(\mu_H([\gamma,w])-\mu_H([\gamma',w']))}$.
\end{proof}

Recall from (\ref{lkfdef}) and (\ref{lkmdef}) the $\Lambda_{\uparrow}$-bilinear pairings $L^{F}_{H,k}\co CF_{2m-k}(H)\times CF_k(\hat{H})\to \Lambda$ and $L^{M}_{f,k}\co (\hat{\Lambda}_{\uparrow}\otimes_{\kappa}\mathbf{CM}_*(f;\kappa))_{2m-k}\times (\hat{\Lambda}_{\uparrow}\otimes_{\kappa}\mathbf{CM}_*(-f;\kappa))_k\to\Lambda_{\uparrow}$ induced in both cases by the obvious bijections between the standard bases for the two factors.  The foregoing allows us to determine the signs arising from the interaction of various maps with these pairings. (Compare Remark \ref{morsepairsign}.)

\begin{cor}\label{floerdualdiff}
	For $c\in CF_{2m-k}(H)$ and $d\in CF_{k+1}(\hat{H})$, we have \[  L_{H,k}^{F}(c,\partial_{\hat{H}}d)=(-1)^{k+1}L_{H,k+1}^{F}(\partial_H c,d). \]
\end{cor}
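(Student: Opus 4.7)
The plan is to reduce the identity to a sign comparison on basis elements and then apply the time-reversal analysis of Proposition \ref{floerflip}. By $\Lambda_{\uparrow}$-bilinearity it suffices to check the identity when $c=[\gamma,w]$ with $\mu_H([\gamma,w])=2m-k$ and $d=[\bar{\gamma}',\bar{w}']$ with $\mu_{\hat{H}}([\bar{\gamma}',\bar{w}'])=k+1$, the notation reflecting that $[\bar{\gamma}',\bar{w}']$ is the time reversal of some $[\gamma',w']\in\tilde{P}_{2m-k-1}(H)$. Unwinding (\ref{lkfdef}) then gives $L_{H,k}^F(c,\partial_{\hat H} d)=n_{\hat H}([\bar\gamma',\bar w'],[\bar\gamma,\bar w])$ and $L_{H,k+1}^F(\partial_H c,d)=n_H([\gamma,w],[\gamma',w'])$, where $n_H,n_{\hat H}$ denote the signed counts of points in the oriented zero-manifolds $\mathcal{M}_{J,H}([\gamma,w],[\gamma',w'])$ and $\mathcal{M}_{J,\hat H}([\bar\gamma',\bar w'],[\bar\gamma,\bar w])$. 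So the corollary reduces to showing that these two counts differ by the sign $(-1)^{k+1}$.

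The two zero-manifolds are in bijection via the descent to the $\R$-quotient of the diffeomorphism $\mathbb{I}_{\mathrm{cyl}}$ of Proposition \ref{floerflip}. To compare the induced orientations I combine two sign contributions. First, at the level of the parametrized trajectory spaces, Proposition \ref{floerflip}(iii) gives a sign of $(-1)^{\mu_H([\gamma,w])(\mu_H([\gamma,w])-\mu_H([\gamma',w']))}$; since the index difference is $1$ and $\mu_H([\gamma,w])=2m-k$, this equals $(-1)^{k}$. Second, a direct computation from $(\mathbb{I}_{\mathrm{cyl}}u)(s,t)=u(-s,1-t)$ gives the intertwining relation $\sigma^{\hat H}_{s_0}\circ\mathbb{I}_{\mathrm{cyl}}=\mathbb{I}_{\mathrm{cyl}}\circ\sigma^{H}_{-s_0}$, so the derivative of $\mathbb{I}_{\mathrm{cyl}}$ sends the infinitesimal generator of translation at $u$ to the \emph{negative} of the corresponding generator at $\mathbb{I}_{\mathrm{cyl}}u$.

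To convert these two facts into a sign on the $\R$-quotient I apply the short-exact-sequence orientation prescription of Section \ref{ses} to the commutative diagram of quotient sequences $0\to\R\to T_u\tilde{\mathcal{M}}\to T_{[u]}\mathcal{M}\to 0$ and its counterpart at $\mathbb{I}_{\mathrm{cyl}}u$: starting from a positive basis $(g_u,v_1,\dots,v_\ell)$ at $u$, its image $(-g_v,d\mathbb{I}_{\mathrm{cyl}}v_1,\dots,d\mathbb{I}_{\mathrm{cyl}}v_\ell)$ has orientation sign $(-1)^k$ in the target; flipping the first vector to $g_v$ multiplies this by $-1$, so the projected basis $(d\mathbb{I}_{\mathrm{cyl}}v_i)$ has sign $(-1)^{k+1}$ relative to the positive orientation on the target quotient. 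This is the desired sign for the induced bijection $\mathcal{M}_{J,H}([\gamma,w],[\gamma',w'])\to\mathcal{M}_{J,\hat H}([\bar\gamma',\bar w'],[\bar\gamma,\bar w])$, so $n_{\hat H}([\bar\gamma',\bar w'],[\bar\gamma,\bar w])=(-1)^{k+1}n_H([\gamma,w],[\gamma',w'])$, which is exactly the identity to be proved.

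The main bookkeeping obstacle is the quotient-orientation step: one has to convert the separate signs on the total tangent space and on the translation generator into a single sign on the complementary quotient, which is easy to get wrong without being careful about the ordering convention (translation generator first) in the short exact sequence. Everything else is essentially a direct citation of Proposition \ref{floerflip} and the definition (\ref{lkfdef}).
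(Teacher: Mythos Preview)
Your argument is correct and follows essentially the same route as the paper's proof: reduce to generators, identify the two moduli spaces via $\mathbb{I}_{\mathrm{cyl}}$, extract the sign $(-1)^k$ from Proposition \ref{floerflip}(iii), and pick up the extra $-1$ from the anti-equivariance of $\mathbb{I}_{\mathrm{cyl}}$ with respect to the $\R$-actions when passing to the quotient. The only point where the paper is slightly more careful is in tracking the $\Lambda_{\uparrow}$-valued pairing coefficient by coefficient (comparing coefficients of each $T^{I_{\omega}(B)}$ and allowing the capping to shift by $B$), whereas your equation $L_{H,k}^F(c,\partial_{\hat H}d)=n_{\hat H}([\bar\gamma',\bar w'],[\bar\gamma,\bar w])$ implicitly collapses this to a single $\kappa$-valued count; but this is a cosmetic imprecision rather than a gap, since the same sign comparison applies uniformly to every such coefficient.
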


\begin{proof}
	By bilinearity it suffices to check the identity on generators for the appropriate graded pieces of the Floer complexes.    Let $[\gamma,w]\in \tilde{P}_{2m-k}(H)$ and $[\gamma',w']\in \tilde{P}_{2m-k-1}(H)$, so that $[\bar{\gamma},\bar{w}]\in \tilde{P}_k(\hat{H})$ and $[\bar{\gamma}',\bar{w}']\in\tilde{P}_{k+1}(H)$.  If $B\in \Gamma'$, the coefficient of $T^{I_{\omega}(B)}$ in $L_{H,k}^{F}([\gamma,w],\partial_{\hat{H}}[\bar{\gamma}',\bar{w}'])$ is the signed count of elements of the zero-manifold $\mathcal{M}_{J,\hat{H}}([\bar{\gamma}',\bar{w}'],[\bar{\gamma},B\#\bar{w}])=\frac{\tilde{\mathcal{M}}_{J,\hat{H}}([\bar{\gamma}',\bar{w}'],[\bar{\gamma},B\#\bar{w}])}{\R}$, while the coefficient of $T^{I_{\omega}(B)}$ in $L_{H,k+1}^{F}(\partial_H[\gamma,w],[\bar{\gamma}',\bar{w}'])$ is the signed count of elements of $\frac{\tilde{\mathcal{M}}_{J,H}([\gamma,w],[\gamma',B\#w'])}{\R}$.  
	
	Now $\tilde{\mathcal{M}}_{J,\hat{H}}([\bar{\gamma}',\bar{w}'],[\bar{\gamma},B\#\bar{w}])$ is equal, as an oriented manifold, to $\tilde{\mathcal{M}}_{J,\hat{H}}([\bar{\gamma}',(-B)\#\bar{w}'],[\bar{\gamma},\bar{w}])$ which (since $[\bar{\gamma}',(-B)\#\bar{w}']=[\bar{\gamma}',\overline{B\#w'}]$) is diffeomorphic via $\mathbb{I}_{\mathrm{cyl}}$ to $\tilde{\mathcal{M}}_{J,H}([\gamma,w],[\gamma',B\#w'])$ with orientation sign $(-1)^{\mu_H([\gamma,w])}=(-1)^k$ according to Proposition \ref{floerflip}(iii). Since the map $\mathbb{I}_{\mathrm{cyl}}$ is anti-equivariant with respect to the $\R$-actions (given in both cases by translation in the positive $s$ direction), it descends to a diffeomorphism on the quotients that affects orientation by $(-1)^{k+1}$.  Thus the coefficients on all powers of $T$ in $L_{H,k}^{F}([\gamma,w],\partial_{\hat{H}}[\bar{\gamma}',\bar{w}'])$ and $L_{H,k+1}^{F}(\partial_H[\gamma,w],[\bar{\gamma}',\bar{w}'])$ are related by the sign $(-1)^{k+1}$, from which the result follows.\end{proof}
	
\begin{cor}\label{orpssadj}
	For $c\in (\hat{\Lambda}_{\uparrow}\otimes_{\kappa}\mathbf{CM}_*(f;\kappa))_{2m-k}$ and $d\in CF_{k}(\hat{H})$, we have   \begin{equation}\label{psspair}  L_{H,k}^{F}(\tilde{\Phi}_{PSS}^{f,H^+}(c), d)=L_{f,k}^{M}(c,\tilde{\Psi}^{\hat{H}^-,-f}_{PSS}(d)).\end{equation}
	\end{cor}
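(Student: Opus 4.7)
The plan is to reduce the identity to a comparison of signed counts of points in two moduli spaces related by the biholomorphism $z\mapsto 1/z$, and then to verify that our orientation conventions make this correspondence sign-preserving.

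First I would use $\Lambda_{\uparrow}$-bilinearity of both sides to reduce to the case $c = T^A\otimes p$ for some $p\in\mathrm{Crit}(f)$ and $A\in\hat{\Gamma}$ with $\mathrm{ind}_f(p)-2I_{c_1}(A) = 2m-k$, and $d = [\bar{\gamma},\bar{w}]\in\tilde{P}_k(\hat{H})$ where $w$ is a left-cap for $\gamma$ with $\bar{w}(z) = w(\bar{z})$. Unwinding the definition of $\tilde{\Phi}^{f,H^+}_{PSS}$ and expanding $T^A\tilde{\Phi}^{f,H^+}_{PSS}(p)$, and then extracting the coefficient of each $T^B\in\Lambda_{\uparrow}$ under the pairing \eqref{lkfdef} with the matched lift $[\bar{\gamma},\bar{w}]$, I find that this coefficient equals the signed count of points in $\mathcal{M}^{\mathrm{in}}_{J,H^+}(p;[\gamma,(B-A)\#w])$. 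A parallel unwinding of $\tilde{\Psi}^{\hat{H}^-,-f}_{PSS}$ paired via \eqref{lkmdef} with $T^Ap$ shows that the coefficient of $T^B$ on the right-hand side equals the signed count of $\mathcal{M}^{\mathrm{out}}_{J,\hat{H}^-}([\bar{\gamma},\bar{w}\#(A-B)];p)$. Thus it suffices to show that, for all $C\in\hat{\Gamma}$, the oriented zero-manifolds $\mathcal{M}^{\mathrm{in}}_{J,H^+}(p;[\gamma,C\#w])$ and $\mathcal{M}^{\mathrm{out}}_{J,\hat{H}^-}([\bar{\gamma},\bar{w}\#(-C)];p)$ have the same signed count.

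Next, Proposition \ref{floerflip}(i) supplies an orientation-preserving diffeomorphism
\[ \mathbb{I}_{\pm}\co \mathcal{M}^{\mathrm{in}}_{J,H^+}([\gamma,C\#w])\to \mathcal{M}^{\mathrm{out}}_{J,\hat{H}^-}([\bar{\gamma},\overline{C\#w}]) \]
given by $u\mapsto u\circ(z\mapsto 1/z)$; and under the standard identification $\overline{C\#w} = \bar{w}\#(-C)$ in $\widetilde{\mathcal{L}_0M}$ the target matches $\mathcal{M}^{\mathrm{out}}_{J,\hat{H}^-}([\bar{\gamma},\bar{w}\#(-C)])$. Moreover $\mathbb{I}_{\pm}$ intertwines the two evaluation maps, since $ev^{J,\hat{H}^-}_{\infty}(\mathbb{I}_{\pm}u) = (\mathbb{I}_{\pm}u)(\infty) = u(0) = ev^{J,H^+}_0(u)$. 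So it remains only to check that the two coorientations placed on the submanifold $\mathbf{D}^v(p) = \mathbf{A}^{-v}(p)$ agree, so that the preimage orientations from Section \ref{preim} coincide.

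For this final step I would trace through the conventions of Section \ref{caps}. On the left-hand side, the relevant coorientation of $\mathbf{D}^v(p)$ is induced, via the identification $N_p\mathbf{D}^v(p) = T_p\mathbf{A}^v(p)$, by the orientation of $\mathbf{A}^v(p)$ prescribed by $T_p\mathbf{A}^v(p)\oplus T_p\mathbf{D}^v(p) = T_pM$. On the right-hand side, the coorientation of $\mathbf{A}^{-v}(p) = \mathbf{D}^v(p)$ is induced, via $N_p\mathbf{A}^{-v}(p) = T_p\mathbf{D}^{-v}(p) = T_p\mathbf{A}^v(p)$, by the chosen orientation $\mathfrak{o}_{-v,p}$ of $\mathbf{D}^{-v}(p)$. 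By Proposition \ref{wor} we have arranged $\mathfrak{o}_{-v,p}$ to equal the very same orientation of $\mathbf{A}^v(p)$. Hence the two coorientations coincide, the two preimage orientations on the zero-manifolds agree, and the signed counts match, completing the proof.

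The main obstacle will be precisely this orientation bookkeeping: while the identity on the nose is geometrically natural, making sure no spurious sign appears requires tracking four layers of conventions — the choice of $\mathfrak{o}_{v,p}$ and its associated coorientation of $\mathbf{A}^v(p)$, the compatibility of $\mathfrak{o}_{-v,p}$ with $\mathfrak{o}_{v,p}$ from Proposition \ref{wor}, the orientation rule for Floer caps set up in Section \ref{dets}, and the time-reversal compatibility recorded in Proposition \ref{floerflip}(i). The payoff of having set up each of these carefully in the earlier subsections is that all signs cancel, yielding \eqref{psspair} with no correction.
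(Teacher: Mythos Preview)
Your proposal is correct and follows essentially the same approach as the paper's own proof: reduce by bilinearity to generators, identify both sides as signed counts of zero-dimensional cap moduli spaces, apply Proposition \ref{floerflip}(i) to match the cap spaces orientation-preservingly via $\mathbb{I}_{\pm}$ (which also intertwines the evaluation maps), and finally verify that the two coorientations on $\mathbf{D}^v(p)=\mathbf{A}^{-v}(p)$ agree by the convention of Proposition \ref{wor}. The only cosmetic difference is that you track the coefficient of a general $T^B$ directly, whereas the paper computes the $T^0$ coefficient and then shifts $A$ to recover the rest.
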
	
	
	Here, as will be apparent in the proof, we use the orientation rules for the Morse complexes of $f$ and $-f$ (relating the descending manifolds for $\pm v$) dictated in the statement of Proposition \ref{wor}.
\begin{proof}	
	Again it suffices to prove the result when $c$ and $d$ are standard generators for the relevant vector spaces over $\Lambda_{\uparrow}$. Let $p\in \mathrm{Crit}(f)$ and $A\in \hat{\Gamma}$ with $\mathrm{ind}_{f}(p)-2I_{c_1}(A)=k$, and let $[\gamma,w]\in \tilde{P}_{2m-k}(H)$ (so $[\bar{\gamma},\bar{w}]\in \tilde{P}_k(\hat{H})$).  The coefficient (valued in $\kappa$) of $T^0$ in $L_{H,k}^{F}(\tilde{\Phi}_{PSS}^{f,H^+}(T^Ap),[\gamma,w])$ is then the signed count of points in \begin{equation}\label{preim1} \mathcal{M}_{J,H^+}^{\mathrm{in}}(p;[\gamma,w\#(-A)])=(ev_{0,[\gamma,w\#(-A)]}^{J,H^+})^{-1}(\mathbf{D}^v(p)),\end{equation} while the corresponding coefficient in $L_{f,k}^{M}(T^Ap,\tilde{\Psi}^{\hat{H}^-,-f}_{PSS}[\bar{\gamma},\bar{w}])$ is the signed count of points in \begin{equation}\label{preim2}\mathcal{M}_{J,\hat{H}^-}^{\mathrm{out}}([\bar{\gamma},\bar{w}\#A]) = (ev_{\infty,[\bar{\gamma},\bar{w}\#A]}^{J,\hat{H}^-})^{-1}(\mathbf{A}^{-v}(p)).\end{equation}  Since $[\bar{\gamma},\bar{w}\#A]=[\bar{\gamma},\overline{w\#(-A)}]$, Proposition \ref{floerflip}(i) shows that the domains $\mathcal{M}_{J,H^+}^{\mathrm{in}}([\gamma,w\#(-A)])$ and $\mathcal{M}_{J,\hat{H}^-}^{\mathrm{out}}([\bar{\gamma},\bar{w}\#A])$ of $ev_{0,[\gamma,w\#(-A)]}^{J,H^+}$ and $ev_{\infty,[\bar{\gamma},\bar{w}\#A]}^{J,\hat{H}^-}$  are identified orientation-preservingly by the diffeomorphism $\mathbb{I}_{\pm}$; moreover one has $ev_{0,[\gamma,w\#(-A)]}^{J,H^+}= ev_{\infty,[\bar{\gamma},\bar{w}\#A]}^{J,\hat{H}^-}\circ\mathbb{I}_{\pm}$. Of course the submanifolds $\mathbf{D}^v(p)$ and $\mathbf{A}^{-v}(p)$ appearing in (\ref{preim1}) and (\ref{preim2}) are equal to each other as sets.  Their coorientations, as defined by our conventions and used in the definitions of the orientations on (\ref{preim1}) and (\ref{preim2}) are also equal to each other: $\mathbf{D}^v(p)$ was cooriented so that $N_p\mathbf{D}^v(p)\oplus T_p\mathbf{D}^v(p)=T_pM$ as oriented vector spaces; meanwhile, by general rule, we coorient $\mathbf{A}^{-v}(p)$ by identifying $N_p\mathbf{A}^{-v}(p)$ with $T_p\mathbf{D}^{-v}(p)$, and the prescription in Proposition \ref{wor} dictates that $T_p\mathbf{D}^{-v}(p)\oplus T_p\mathbf{D}^v(p)=T_pM$ as oriented vector spaces.  Given that these coorientations agree, $\mathbb{I}_{\pm}$ restricts as an orientation-preserving diffeomorphism between the zero-manifolds in (\ref{preim1}) and (\ref{preim2}). Thus for any $\hat{A}\in \hat{\Gamma}$ with $\mathrm{ind}_f(p)-2I_{c_1}(A)=\mu_H([\gamma,w])=2m-k$, the coefficients of $T^0$ in  $L_{H,k}^{F}(\tilde{\Phi}_{PSS}^{f,H^+}(T^Ap),[\gamma,w])$ and $L_{f,k}^{M}(T^Ap,\tilde{\Psi}^{\hat{H}^-,-f}_{PSS}[\bar{\gamma},\bar{w}])$ agree.  Applying this with $A$ replaced by $A-B$ for arbitrary $B$ having $I_{c_1}(B)=0$ shows the respective coefficients of $T^{\omega(B)}$ also all agree, implying the result.
 \end{proof}

\begin{remark}
	On both the Morse and Floer sides, one could remove some of the arbitrariness in our definitions by taking the approach in \cite[(12f)]{SeBook},\cite[Section 1.5]{Ab}: instead of taking the summand in the Morse complex $\mathbf{CM}_*(f;\kappa)$ corresponding to a critical point $p$ to be an explicit copy of the field $\kappa$ (with a distingushed generator $1$), one would take it to be a one-dimensional $\kappa$-vector space $o_{p,f}$ generated by the two possible orientations of $\mathbf{D}^v(p)$ subject to the relation that their sum is zero.  Similarly each $\gamma\in P(H)$ for which some left-cap $w$ has $[\gamma,w]\in\tilde{P}_k(H)$ would contribute to $CF_k(H)$ a one-dimensional $\Lambda_{\uparrow}$-vector space $o_{\gamma,H}$ generated by the two possible orientations of $\mathrm{Det}(D_{w}^{-})$, with their sum being set to zero. Our versions are isomorphic to these by using our choices of orientations of $\mathbf{D}^v(p)$ or of $\mathrm{Det}(D_{w}^-)$ as preferred bases for $o_{p,f}$ or $o_{\gamma,H}$, but the formulation as in \cite{SeBook},\cite{Ab} does not require one to choose such orientations; one would construct versions of  the various maps that we have considered by assigning, to each element of the appropriate zero-dimensional moduli spaces, not a sign $\pm 1$ but rather an isomorphism between the appropriate orientation modules $o_{p,f}$ or $o_{\gamma,H}$.
	
	In connection with the calculations in this subsection, one somewhat arbitrary pair of choices would remain after this, though they can be established as uniform conventions: since the $o_{p,f},o_{\gamma,H}$ do not have preferred generators, in order to define the pairings $L_{f,k}^{M},L_{H,k}^{F}$ one would need choices of isomorphisms $o_{p,f} \otimes o_{p,-f}\to \kappa$ and $o_{\gamma,H}\otimes o_{\bar{\gamma},\hat{H}}\to \Lambda_{\uparrow}$.  In our context these correspond respectively to the rules that $T_p\mathbf{D}^{-v}(p) \oplus T_p\mathbf{D}^v(p)=T_pM$ as oriented vector spaces, and that $\mathrm{Det}(D_{\bar{w}}^-)$ and $\mathrm{Det}(D_w^-)$ are related by identifying the former with $\mathrm{Det}(D_w^+)$ and then using the standard orientation on $\mathrm{Det}(D_w^+\#_SD_w^-)$ to relate the orientations on $\mathrm{Det}(D_w^+)$ and $\mathrm{Det}(D_w^-)$.  
	The facts that no sign appears in Corollary \ref{orpssadj}, and that the signs in Remark \ref{morsepairsign} and Corollary \ref{floerdualdiff} are identical, can be regarded as reflecting that these orientation conventions are compatible with each other.
\end{remark}

\end{document}